              \def\version{March 20, 2017}     	                  %

\newif\ifmarek
\marektrue

\documentclass[reqno,twoside,11pt]{amsart}

\usepackage{srcltx}
\usepackage{amsmath}
\usepackage{amsfonts}
\usepackage{amssymb}
\usepackage{verbatim}
\usepackage{epsfig}
\usepackage{color}
\usepackage{hyperref}

\usepackage{amssymb,amsthm,latexsym,amscd,bbm,stmaryrd}

\usepackage[english]{babel}
\selectlanguage{english}

\usepackage{graphicx}
\usepackage{tikz}
\usepackage{courier}
\usepackage{enumerate}
\usepackage{psfrag}
\usepackage{wasysym}
\usepackage{type1cm}



\IfFileExists{epsf.def}{\input epsf.def}{\usepackage{epsf}}

\IfFileExists{mathpazo.sty}{\usepackage{mathpazo}\usepackage{mathrsfs}}
{\usepackage{times}\usepackage{mathrsfs}}

%

\DeclareFontFamily{OT1}{eusb}{} \DeclareFontShape{OT1}{eusb}{m}{n} {<5> <6> <7> <8> <9> <10> <11> <12> <14.4> eusb10}{}
\DeclareMathAlphabet{\eusb}{OT1}{eusb}{m}{n}

\DeclareFontFamily{OT1}{eusm}{} \DeclareFontShape{OT1}{eusm}{m}{n} {<5> <6> <7> <8> <9> <10> <11> <12> <14.4> eusm10}{}
\DeclareMathAlphabet{\eusm}{OT1}{eusm}{m}{n}

\DeclareFontFamily{OT1}{eufm}{} \DeclareFontShape{OT1}{eufm}{m}{n} {<5> <6> <7> <8> <9> <10> <11> <12> <14.4> eufm10}{}
\DeclareMathAlphabet{\mathfrak}{OT1}{eufm}{m}{n}

\DeclareFontFamily{OT1}{fraktura}{}
\DeclareFontShape{OT1}{fraktura}{m}{n} {<5> <6> <7> <8> <9> <10> <11> <12> <13> <14.4> [1.1] eufm10}{}
\DeclareMathAlphabet{\fraktura}{OT1}{fraktura}{m}{n}

\DeclareFontFamily{OT1}{cmfi}{} \DeclareFontShape{OT1}{cmfi}{m}{n} {<5> <6> <7> <8> <9> <10> <11> <12> <13> <14.4> [0.9] cmfi10}{}
\DeclareMathAlphabet{\cmfi}{OT1}{cmfi}{b}{n}

\DeclareFontFamily{OT1}{cmss}{} \DeclareFontShape{OT1}{cmss}{m}{n} {<5> <6> <7> <8> <9> <10> <11> <12> <13> <14.4> cmss10}{}
\DeclareMathAlphabet{\cmss}{OT1}{cmss}{m}{n}

\setlength{\topmargin}{0in}
\setlength{\headheight}{0.12in}
\setlength{\headsep}{.40in}
\setlength{\oddsidemargin}{-0.1in}
\setlength{\evensidemargin}{-0.1in}

\ifmarek
\setlength{\topmargin}{0.0truein}
\setlength{\oddsidemargin}{0.325truein}
\setlength{\evensidemargin}{0.325truein}
\setlength{\textheight}{8.3truein}
\setlength{\textwidth}{5.85truein}

\else
\marginparwidth 48pt
\marginparsep 10pt
\oddsidemargin-0.06truein
\evensidemargin-0.06truein
\headheight 12pt
\headsep 25pt
\footskip 30pt
\textheight  625pt 
\textwidth 170mm
\columnsep 10pt
\columnseprule 0pt
\setlength{\unitlength}{1mm}
\setlength{\parindent}{20pt}
\setlength{\parskip}{2pt}
\fi

\newtheoremstyle{thm}{1.5ex}{1.5ex}{\itshape\rmfamily}{} {\bfseries\rmfamily}{}{2ex}{}

\newtheoremstyle{def}{1.5ex}{1.5ex}{\slshape\rmfamily}{} {\bfseries\rmfamily}{}{2ex}{}

\newtheoremstyle{rem}{1.3ex}{1.3ex}{\rmfamily}{} {\itshape}
{} {1.5ex}{}


\theoremstyle{thm}
\newtheorem{theorem}{Theorem}[section]
\newtheorem{lemma}[theorem]{Lemma}
\newtheorem{proposition}[theorem]{Proposition}

\newtheorem*{Main Theorem}{Main Theorem.}
\newtheorem{corollary}[theorem]{Corollary}
\newtheorem*{special theorem}{Lindeberg-Feller Theorem for Martingales}

\theoremstyle{def}

\newtheorem{assumption}[theorem]{Assumption}

\theoremstyle{rem}
\newtheorem{remark}[theorem]{{\itshape Remark}}

\numberwithin{equation}{section}


\renewcommand{\section}{\secdef\sct\sect}
\newcommand{\sct}[2][default]{%
\refstepcounter{section}
\addcontentsline{toc}{section}{{\tocsection {}{\thesection}{\!\!\!\!#1\dotfill}}{}}
\vspace{0.7cm}
\centerline{\scshape\thesection.\ #1} \nopagebreak \vspace{0.2cm}}
\newcommand{\sect}[1]{%
\vspace{0.4cm} \centerline{\large\scshape\rmfamily #1}
\vspace{0.2cm}}

\renewcommand{\subsection}{\secdef\subsct\sbsect}
\newcommand{\subsct}[2][default]{\refstepcounter{subsection}
\addcontentsline{toc}{subsection}
{{\tocsection{\!\!}{\hspace{1.2em}\thesubsection}{\!\!\!\!#1\dotfill}}{}}\nopagebreak\vspace{0.45\baselineskip} {\flushleft\bf
\thesubsection~\bf #1.~}
\\*[3mm]\noindent
\nopagebreak}
\newcommand{\sbsect}[1]{\vspace{0.1cm}\noindent
\textbf{#1.~}\vspace{0.1cm}}

\renewcommand{\subsubsection}{%
\secdef \subsubsect\sbsbsect}
\newcommand{\subsubsect}[2][default]{%
\refstepcounter{subsubsection} 
\addcontentsline{toc}{subsubsection}{{\tocsection{\!\!}
{\hspace{3.05em}\thesubsubsection}{\!\!\!\!#1\dotfill}}{}}
\nopagebreak
\vspace{0.15\baselineskip} \nopagebreak {\flushleft\rmfamily
\itshape\thesubsubsection
\ \rmfamily #1\/.}\ }
\newcommand{\sbsbsect}[1]{\vspace{0.1cm}\noindent
\rmfamily \itshape
\arabic{section}.\arabic{subsection}.\arabic{subsubsection} \
\sffamily #1\/.\ }

\renewcommand{\caption}[1]{%
\vglue0.5cm
\refstepcounter{figure}
\begin{minipage}{0.9\textwidth}\small {\sc Figure~\thefigure. }#1\end{minipage}}


\newcommand{\dist}{\operatorname{dist}}
\newcommand{\supp}{\operatorname{supp}}
\newcommand{\diam}{\operatorname{diam}}
\newcommand{\esssup}{\operatorname{esssup}}
\newcommand{\essinf}{\operatorname{essinf}}

\newcommand{\e}{\operatorname{e}}

\newcommand{\textd}{\text{\rm d}\mkern0.5mu}

\newcommand{\texte}{\text{\rm e}}

\newcommand{\1}{\operatorname{\mathbbm{1}}\!}

\newcommand{\Prob}{\text{\rm Prob}}
\newcommand{\eps}{\varepsilon}

\renewcommand{\AA}{\mathcal A}

\newcommand{\CC}{\mathcal C}
\newcommand{\DD}{\mathcal D}
\newcommand{\EE}{\mathcal E}
\newcommand{\FF}{\mathcal F}
\newcommand{\GG}{\mathcal G}
\newcommand{\HH}{\mathcal H}
\newcommand{\II}{\mathcal I}

\newcommand{\LL}{\mathcal L}
\newcommand{\MM}{\mathcal M}
\newcommand{\NN}{\mathcal N}

\newcommand{\PP}{\mathcal P}

\newcommand{\RR}{\mathcal R}

\newcommand{\TT}{\mathcal T}

\newcommand{\E}{\mathbb E}

\newcommand{\N}{\mathbb N}

\newcommand{\Q}{\mathbb Q}
\newcommand{\R}{\mathbb R}

\newcommand{\Z}{\mathbb Z}

\newcommand{\scrC}{\mathscr{C}}

\newcommand{\scrM}{\mathscr{M}}

\newcommand{\scrP}{\mathscr{P}}

\newcommand{\twoeqref}[2]{(\ref{#1}--\ref{#2})}
\newcommand{\cc}{{\text{\rm c}}}

\def\myffrac#1#2 in #3{\raise 2.6pt\hbox{$#3 #1$}\mkern-1.5mu\raise 0.8pt\hbox{$#3/$}\mkern-1.1mu\lower 1.5pt\hbox{$#3 #2$}}


\newcommand{\ssup}[1] {{\scriptscriptstyle{({#1}})}}

\renewcommand{\P}{\mathbb P}
\newcommand{\scrMp}{\scrM_{\text{\rm P}}}

\newcommand{\RV}{}
\newcommand{\eRV}{\normalcolor}

\newcommand{\RS}{}
\newcommand{\eRS}{\normalcolor}


\begin{document}

\title[Parabolic Anderson model\hfill]
{Mass concentration and aging in the parabolic\\Anderson model with doubly-exponential tails}

\author[\hfill  Biskup, K\"onig and dos~Santos]
{Marek Biskup$^{1,2}$,\,\, Wolfgang K\"onig$^{3,4}$\, \and\,\,Renato S.\ dos Santos$^3$}
\thanks{\hglue-4.5mm\fontsize{9.6}{9.6}\selectfont\copyright\,\textrm{2017}\ \ \textrm{M.~Biskup, W.~K\"onig and R.S.~dos Santos.
Reproduction, by any means, of the entire
article for non-commercial purposes is permitted without charge.\vspace{2mm}}}
\maketitle

\vglue-4mm

\maketitle

\centerline{\textit{$^1$Department of Mathematics, UCLA, Los Angeles, California, USA}}
\centerline{\textit{$^2$Center for Theoretical Study, Charles University, Prague, Czech Republic}}
\centerline{\textit{$^3$Weierstra\ss-Institut f\"ur Angewandte Analysis und Stochastik, Berlin, Germany}}
\centerline{\textit{$^4$Institut f\"ur Mathematik, Technische Universit\"at Berlin, Berlin, Germany}}

\vskip2mm
\centerline{{Version: \version}}

\vspace{-2mm}
\begin{abstract}
We study the non-negative solution $u=u(x,t)$ to the Cauchy problem for the parabolic equation $\partial_t u=\Delta u+\xi u$ on $\Z^d\times[0,\infty)$  with initial data $u(x,0)=\1_0(x)$. Here $\Delta$ is the discrete Laplacian on~$\Z^d$ and~$\xi=(\xi(z))_{z\in\Z^d}$ is an i.i.d.\ random field with doubly-exponential upper tails.
We prove that, for large~$t$ and with large probability, 
 most of the total mass $U(t):=\sum_x u(x,t)$ of the solution
resides in a boun\-ded neighborhood of a site~$Z_t$
that achieves an optimal compromise between the local Dirichlet eigenvalue of the Anderson Hamiltonian $\Delta+\xi$ and the distance to the origin.
The processes~$t\mapsto Z_t$ and~$t \mapsto \tfrac1t \log U(t)$ are shown to converge in distribution under suitable scaling of space and time. Aging results for $Z_t$, as well as for the solution to the parabolic problem, are also established. 
The proof uses the characterization of eigenvalue order statistics for~$\Delta+\xi$ in large sets
recently proved by the first two authors.
\end{abstract}

\tableofcontents

\section{Introduction}
\label{sec-Intro}
\noindent
Random Schr\"odinger operators --- most notably, the Anderson Hamiltonian $H=\Delta + \xi$ --- have been a subject of intense research over several decades. Most of the attention has been paid to the character of the spectrum and the ensuing physical consequences for the \emph{quantum} evolution. However, the associated 
\emph{parabolic} problem 
--- characterized by the PDE $\partial_t u=\Delta u +\xi u $ --- is of as much interest both for theory and applications.
Here we study the latter facet of this problem for a specific class of random potentials.
Our main result is the proof of localization of 
the solution to the above PDE for large time 
in a neighborhood of a process determined solely by the random potential.

A standard  way to describe the \emph{parabolic Anderson model} (PAM) is via a non-negative solution $u \colon \Z^d \times [0,\infty) \to [0,\infty)$
of the Cauchy problem
\begin{alignat}{3}
\label{PAM}
\partial_t u(z,t) &=  \Delta u(z,t) + \xi(z) u(z,t),
\qquad &&z \in \Z^d, \, t \in (0,\infty),
\\
\label{PAMinitial}
u(z,0) &=\1_0(z),\qquad &&z\in\Z^d.
\end{alignat}
Here $\xi=(\xi(z))_{z\in \Z^d}$ is an i.i.d.~random potential taking values in $[-\infty,\infty)$,
$\1_x$ is the indicator function of a point $x \in \Z^d$,
$\partial_t$ abbreviates the derivative with respect to $t$, and~$\Delta$ is the discrete Laplacian acting on~$f\colon\Z^d\to\R$ as
\begin{equation}
\Delta f(z):= \sum_{y\colon|y-z|=1} \bigl[f(y)-f(z)\bigr],
\end{equation}
where $|\cdot|$ denotes the $\ell^1$ norm on $\Z^d$. 

The interest in \twoeqref{PAM}{PAMinitial} for mathematics as well as applications 
comes from the competing effect of the two terms on the right-hand side  of~\eqref{PAM}. 
Indeed, the Laplacian tends to make the solution smoother over time, while the field makes it rougher. 
The problem~\eqref{PAM} appears in the studies of chemical kinetics \cite{GM90}, 
hydrodynamics \cite{CM94}, and magnetic phenomena \cite{MR94}. 
We refer to the reviews \cite{M94,CM94} for more background, and to \cite{GM90} for the fundamental mathematical properties of the model. 
A recent comprehensive survey of mathematical results on the PAM and related models can be found in \cite{K15}; the related spectral order-statistics questions are reviewed in~\cite{A16}.

A non-negative solution to the Cauchy problem \twoeqref{PAM}{PAMinitial} exists and is unique as soon as the upper tail of $[\xi(0)/\log\xi(0)]^d$ is integrable~\cite{GM90}. Under this condition, there is also a representation in terms of the changed-path measure,
\begin{equation}
\label{QT}
Q_t^{\ssup\xi}(\textd X):=\frac1{U(t)}\exp\Bigl\{\int_0^t\xi(X_s)\textd s\Bigr\}\P_0(\textd X),
\end{equation}
on nearest-neighbor paths~$X=(X_s)_{s \ge 0}$ on~$\Z^d$, where $\P_0$ stands for the law of a continuous-time random walk on~$\Z^d$ (with generator $\Delta$)
started at zero. Indeed, the Feynman-Kac formula shows
\begin{equation}
\label{FK}
u(z,t)=U(t)Q_t^{\ssup\xi}(X_t=z) = \E_0 \left[\e^{ \int_0^t \xi(X_s) \textd s } \1_{\{X_t = z\}}\right],
\end{equation}
whereby the normalization constant~$U(t)$ obtains the meaning
\begin{equation}
\label{e:deftotalmass}
U(t)=\sum_{x\in\Z^d}u(x,t)
= \E_0\biggl[\exp \int_0^t\xi(X_s)\textd s \biggr].
\end{equation}
The aforementioned competition is now obvious probabilistically: the walk would like to maximize the ``energy'' $\int_0^t\xi(X_s)\textd s$, by spending its time at the places where $\xi$ is large, against the ``entropy'' of such trajectories under the path measure $\P_0$.

An alternative and equally useful way to view~\eqref{PAM} is
as the definition of a semigroup $t\mapsto\texte^{t(\Delta+\xi)}$ on~$\ell^2(\Z^d)$.
The solution to \twoeqref{PAM}{PAMinitial} is then given by
\begin{equation}
\label{e:introl2descr}
u(x,t)=\bigl\langle \1_x,\texte^{t(\Delta+\xi)}\1_0\bigr\rangle_{\ell^2(\Z^d)}.
\end{equation}
This opens up the possibility to control the large-$t$ behavior through spectral analysis of the Anderson Hamiltonian.
To this end, it is useful to restrict the problem to a 
sufficiently large (in $t$-dependent fashion) 
finite volume $\Lambda \subset \Z^d$ (with $0 \in \Lambda$) as follows.
Denote by $H_\Lambda$ the Anderson Hamiltonian in $\Lambda$ with (zero) Dirichlet boundary conditions,
i.e., for $\phi \in \R^\Lambda$, $H_\Lambda \phi = H \tilde{\phi}$ where $H=\Delta+\xi$ and~$\tilde \phi$ is the extension of~$\phi$
to $\R^{\Z^d}$ that is equal to zero on $\Lambda^\cc$.
Let $u_\Lambda$ be the solution to \twoeqref{PAM}{PAMinitial} restricted to~$\Lambda$ and with the right-hand side of~\eqref{PAM}
substituted by $H_\Lambda u$.
Then the above interpretation yields

\begin{equation}
\label{e:specdecompLambda}
u_\Lambda(x,t)=\sum_{k=1}^{|\Lambda|} \texte^{t\lambda^{\ssup k}_\Lambda}\phi^{\ssup k}_\Lambda(x)\phi_\Lambda^{\ssup k}(0),
\end{equation}
where $\lambda^{\ssup k}_\Lambda$ are the eigenvalues and~$\phi^{\ssup k}_\Lambda$ the corresponding eigenvectors of $H_\Lambda$, 
which we assume to be orthonormal in~$\ell^2(\Lambda)$.
Hereafter, we extend both the solution $u_\Lambda(\cdot, t)$ and the eigenfunctions of $H_\Lambda$ to $\Z^d$ by setting them to be equal to $0$ on $\Lambda^\cc$.

The competition we described in the context of the changed-path measure~\eqref{QT} now manifests itself as follows.
The term in the sum in~\eqref{e:specdecompLambda} that grows the \emph{fastest} in $t$ is that with the largest eigenvalue. However, 
there is no \emph{a priori} reason for it to be
the \emph{dominant} term at a fixed time. Indeed,
an eigenvalue will only contribute to~\eqref{e:specdecompLambda} when its eigenvector puts non-trivial mass on both~$0$ and~$x$. Since the leading eigenvectors decay exponentially away from their localization centers (Anderson localization), $|\phi^{\ssup k}_\Lambda(0)|$ will in fact be typically extremely small. It is thus the combined effect of both $\texte^{t\lambda^{\ssup k}_\Lambda}$ and~$\phi^{\ssup k}_\Lambda(x)\phi_\Lambda^{\ssup k}(0)$ that decides which index $k$ will give the main contribution to the sum. 

In the present paper, we analyze these competing effects for a class of random potentials 
with upper tails close to the doubly-exponential distribution, characterized by
\begin{equation}
\label{tail}
\Prob\bigl(\xi(0)>r\bigr) = \exp\bigl\{-\texte^{r/\rho}\bigr\}, \qquad r \in \R,
\end{equation}
where~$\rho\in(0,\infty)$. (Precise definitions will appear in Section~\ref{s:results}.) 
For these potentials we show that, at all large~$t$, most of the total mass $U(t)$ of the solution
resides in a bounded neighborhood of a random point $Z_t$ determined entirely by~$\xi$. 
This point marks the optimal local peak of~$\xi$
for the strategy where
the random walk in~\eqref{QT} traverses to~$Z_t$ in time~$o(t)$,  and thereafter ``sticks around'' $Z_t$ in order to enjoy the benefits of a ``strong'' local Dirichlet eigenvalue.
We also characterize the scaling limits of $Z_t$ and $\tfrac1t \log U(t)$,
and obtain aging results for both $Z_t$ and $u(x,t)$.

Our results build on a large body of literature on the PAM whose full account here would divert from the main message of the paper. 
For now let us just say that we extend results from \cite{MOS11, LM12, ST14, FM14},
dealing with localization on one lattice site,
to a benchmark class of random potentials exemplified by~\eqref{tail},
where the localization takes place in large domains, albeit not growing with~$t$. 
An important technical input for us is the recent work~\cite{BK16}, where eigenvalue order statistics for the Anderson Hamiltonian~$H=\Delta+\xi$ was characterized for this class of~$\xi$. Further connections will be given in Section~\ref{s:literature}.

\section{Main results}
\label{s:results}\nopagebreak\noindent
We now move to the statements of our main results. Throughout the paper, $\ln x$ denotes the natural logarithm of~$x$, 
and $\ln_2x:=\ln\ln x$, $\ln_3 x := \ln \ln \ln x$, etc denote its iterates. 
We will use ``Prob'' to denote the probability law of the i.i.d.\ random field~$\xi$.

\subsection{Assumptions}
\label{ss:assumptions}\noindent
We begin by identifying the class of potentials that we will consider in the sequel. 
Besides some regularity, the following ensures that the upper tails of~$\xi(0)$ are in the vicinity of the doubly-exponential distribution~\eqref{tail}.

\begin{assumption}[Upper tails]
\label{A:dexp}
Suppose that $\esssup \xi(0) = \infty$ and let
\begin{equation}\label{e:defF}
F(r) := \ln_2 \,\frac{1}{\Prob (\xi(0) > r)} \; , \qquad r > \essinf \xi(0).
\end{equation}
We assume that $F$ is differentiable on its domain and that
\begin{align}\label{e:assumpF'}
\lim_{r \to \infty} F'(r) = \frac{1}{\rho} \;\; \text{ for some } \;\;\rho \in (0,\infty).
\end{align}
\end{assumption}

The assumption above is exactly as Assumption~1.1 in \cite{BK16},
and implies Assumption~(F) of~\cite{GM98}.
While the latter would be enough for most of our needs,
the extra requirements of Assumption~\ref{A:dexp} are used in the crucial step, performed in \cite{BK16},
of identifying the max-order class of the local principal eigenvalues of the Anderson Hamiltonian.
In order to avoid technical inconveniences, we will also assume the following
condition on the lower tail of $\xi$.

\begin{assumption}[Lower tails]
\label{A:integlowtailxi}
Let $\xi^-(x) := \max\{0, -\xi(x)\}$. We assume that
\begin{equation}\label{e:integlowtailxi}
\int_0^{\infty} \textnormal{Prob} \bigl( \xi^-(0) > \texte^s \bigr)^{\frac1d} \textd s < \infty.
\end{equation}
\end{assumption}

Assumption~\ref{A:integlowtailxi} is only used in the proof of Lemma~\ref{l:lbinitialmasscontrib},
which is used in Proposition~\ref{prop:lowerbound_forproof} to give a lower bound for the total mass $U(t)$.
Note that~\eqref{e:integlowtailxi} holds whenever $\ln(1+\xi^{-}(0))$ has a $(d+\eps)$-th finite moment (cf.\
\cite{M02}).
We believe that, with the use of percolation arguments, this assumption can be relaxed to $\xi(0) > -\infty$ almost surely in $d\ge2$. 
In $d=1$,~\eqref{e:integlowtailxi} is equivalent to $\ln(1+\xi^{-}(0))$ having the first moment,
which is known in the case of bounded potentials to be ``essentially necessary'' in the sense that,
when $|\ln(1+\xi^{-}(0))|^\delta$ is not integrable for some $\delta \in (0,1)$, the solution might scale differently.
See \cite{BK01b}, in particular Remarks~3 and 4 therein.

We will assume the validity of Assumptions~\ref{A:dexp}--\ref{A:integlowtailxi} throughout the rest of the paper without explicitly stating this in each instance.

\subsection{Results: Mass concentration}
\label{ss:results}\noindent 
Recall that $|x|$ denotes the $\ell^1$-norm of~$x$. Our first result concerns the concentration of the total mass of the solution to the Cauchy problem~\twoeqref{PAM}{PAMinitial}:

\begin{theorem}[Mass concentration]
\label{thm:massconc}
There is a $\Z^d$-valued c\`adl\`ag stochastic process $(Z_t)_{t > 0}$
depending only on $\xi$ such that $t\mapsto |Z_t|$ is non-decreasing and such that the following holds: 
For each $\delta > 0$, there exists $R \in \N$ such that,
for any $l_t>0$ satisfying $\lim_{t \to \infty} \frac{1}{t} l_t = 0$,
\begin{equation}\label{e:massconc}
\lim_{t \to \infty} \textnormal{Prob} \left( \sup_{s \in [t-l_t,t+l_t]} \; \sum_{x \colon\, |x-Z_t|>R} \frac{u(x,s)}{U(s)} > \delta \right) = 0.
\end{equation}
\end{theorem}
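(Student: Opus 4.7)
The plan is to establish~\eqref{e:massconc} via the spectral decomposition~\eqref{e:specdecompLambda}, combined with the eigenvalue order-statistics results of~\cite{BK16} and a gap argument isolating a single dominant term. The central feature of the doubly-exponential tails~\eqref{tail} is that the top eigenfunctions of the Anderson Hamiltonian on large boxes are sharply localized in bounded (in~$t$) sub-domains with a deterministic exponential decay rate~$\chi=\chi(\rho)>0$; this will make the notion of a ``winning'' localization centre unambiguous.

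First, I would restrict the Cauchy problem to a box $V_t\subset\Z^d$ of polynomial-in-$t$ radius, absorbing a negligible error into the solution. The Feynman--Kac representation~\eqref{FK} together with Assumption~\ref{A:integlowtailxi} and standard large-deviation bounds for the random walk show that the mass escaping~$V_t$ is much smaller than~$U(t)$ with high probability. Next, I would associate to every $z\in V_t$ the local principal Dirichlet eigenvalue~$\lambda(z)$ of~$H$ on a small fixed box around~$z$, and introduce the penalized functional
\begin{equation*}
\Psi_t(z) := \lambda(z) - \frac{\chi\,|z|}{t},
\end{equation*}
defining~$Z_t$ as its $V_t$-maximizer. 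The monotonicity of $t\mapsto|Z_t|$ follows because the penalty weight $\chi/t$ is decreasing in~$t$, so as time grows the competition can only be taken over by new sites farther from the origin.

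The core of the argument is then to show that the term in~\eqref{e:specdecompLambda} associated with the eigenfunction localized near~$Z_t$ dominates. Writing each $k$-th summand essentially as $\texte^{t\Psi_t(z_k)}$ times a function supported on a bounded neighborhood of the localization centre~$z_k$ (using the exponential decay of~$\phi^{\ssup k}_{V_t}$ away from~$z_k$ established in~\cite{BK16}, which controls both $|\phi^{\ssup k}_{V_t}(0)|$ and $|\phi^{\ssup k}_{V_t}(x)|$), one reduces to a quantitative gap estimate
\begin{equation*}
t\bigl[\Psi_t(Z_t)-\max_{z\neq Z_t}\Psi_t(z)\bigr]\ge C(\delta)
\end{equation*}
with high probability. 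I expect this gap bound to be the main obstacle: it will rely on the Poisson-process convergence for the top values of the local eigenvalues derivable from~\cite{BK16}, plus a modicum of continuity for the random map $z\mapsto\Psi_t(z)$ near its maximum, to exclude near-ties. Once the gap is secured, summing the exponentially smaller contributions over the polynomially many candidates in~$V_t$ yields that at most a fraction~$\delta$ of the total mass lies outside~$B(Z_t,R)$, provided $R=R(\delta)$ is large enough to contain the bulk of the winning eigenfunction.

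Finally, I would extend the argument to the window $[t-l_t,t+l_t]$ with $l_t=o(t)$ via two observations. First, $\Psi_s(z)-\Psi_t(z)=O(|z|\,l_t/t^2)$ uniformly over $z\in V_t$, which is much smaller than the gap above; hence $Z_s=Z_t$ throughout the window with probability tending to one. Second, the Lyapunov-type asymptotics of $\tfrac{1}{t}\log U(t)$ are slowly varying, so lower bounds on $U(s)$ that will be supplied by Proposition~\ref{prop:lowerbound_forproof} carry over uniformly in~$s$. Combining these two points with the pointwise concentration of the dominant spectral term yields~\eqref{e:massconc}.
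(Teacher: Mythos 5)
Your penalized functional is not the one the theorem can be built on. You set $\Psi_t(z)=\lambda(z)-\chi|z|/t$ and justify the constant rate $\chi$ by calling it a ``deterministic exponential decay rate'' of the top eigenfunctions. That is a misreading of $\chi$: in~\eqref{e:variationalproblemchi} $\chi$ is the limiting gap $\widehat a_L-\lambda^{\ssup 1}_{B_L}\to\chi$, not a decay rate, and more importantly the correct penalty rate is not constant. The heuristic in Section~3.2 shows that the entropic cost of steering the walk to $z$ in the optimal time $s=|z|/\lambda$ is $\exp\{-|z|\ln\lambda\}\approx\exp\{-|z|\ln_3 t\}$, so the penalty multiplier grows like $\ln_3 t$ (equivalently $\ln_3^+|z|$, since $|Z_t|$ is of order $r_t$); see~\eqref{e:defPsi} and Remark~\ref{r:formfunctional}. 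With your constant rate $\chi$ the maximizer lands at scale $td_t/\chi\asymp t/\ln t$, which is larger by a factor $\ln_3 t$ than the correct scale $r_t=td_t/\ln_3 t$, and then the upper bound on $\tfrac1t\ln U(t)$ that your gap argument needs (the analogue of Lemma~\ref{l:upperboundUt}) no longer matches the lower bound of Proposition~\ref{prop:lowerbound_forproof}. In short, your $Z_t$ is not the localization centre.

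A second structural gap: you want to conclude by ``summing the exponentially smaller contributions over the polynomially many candidates.'' The number of terms in the spectral decomposition~\eqref{e:specdecompLambda} on $B_{L_t}$ is $|B_{L_t}|\asymp t^d(\ln_2 t)^d$, and the typical gap $\Psi_t^{\ssup 1}-\Psi_t^{\ssup 2}$ is only of order $d_t\asymp 1/\ln t$ (not a fixed constant $C(\delta)$), so the naive bound $\text{(number of candidates)}\times\texte^{-t\cdot\text{gap}}$ does not close by itself; one needs the gap only to beat the entropy of paths visiting competing high-potential islands, which the paper delivers through the path-expansion machinery of Section~\ref{s:pathexpansions} (Propositions~\ref{prop:massclass}--\ref{prop:massclasslargeR}) rather than a raw union bound over eigenmodes. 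The paper also never diagonalises $u$ globally: Propositions~\ref{prop:randomtruncation_forproof}--\ref{prop:pathsavoidingZ_forproof} truncate the Feynman--Kac formula geometrically, and only then is the principal eigenfunction of the $t$-dependent domain $D^\circ_{t,s}$ used as an envelope (Propositions~\ref{prop:boundbyprincipalef_forproof}--\ref{prop:localizationinnerdomain}). Your last step, passing to $[t-l_t,t+l_t]$ via $\Psi_s-\Psi_t=O(|z|l_t/t^2)$, is roughly the right instinct, but it needs both the gap stability statement of Proposition~\ref{prop:stabilitygap} and the aging Theorem~\ref{thm:aging_locus} to rule out a jump of the maximizer inside the window; continuity of $\Psi_\cdot$ alone does not suffice because the second-best candidate could overtake at a point where the two values cross.
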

In words,~\eqref{e:massconc} means that the solution at time~$t$ is with large probability concentrated near a single point~$Z_t$, and the control in fact extends to sublinearly-growing intervals of time around~$t$.
This cannot be extended to linearly growing time-intervals due to the jumps of the process $s\mapsto Z_s$ (cf.\ Theorem~\ref{thm:aging_locus} below), 
but a refinement of our methods would show that, in this case,
\emph{two} islands would suffice, i.e., \eqref{e:massconc} would still hold if the sum is taken over boxes of radius~$R$ centered around two processes $Z^{\ssup 1}_s$, $Z^{\ssup 2}_s$ (see~\eqref{e:defZk}).
We also believe that the almost-sure version of this statement, 
dubbed as a ``two-cities theorem''
and proved in \cite{KLMS09} for the case of Pareto potentials,
could be obtained with more work but prefer not to pursue this here.

In terms of the path measure $Q^{\ssup \xi}_t$, Theorem~\ref{thm:massconc} can be interpreted as concentration for the law of the position of the path at time~$t$.  By letting the radius $R$ grow slowly to infinity, this can be improved to include a majority of the random walk path:
\begin{theorem}[Path localization]
\label{thm:pathconcentration}
For any $\epsilon_t \in (0,1)$ satisfying $\lim_{t \to \infty} \epsilon_t  \ln_3 t = \infty$,
\begin{equation}\label{e:pathconc}
\lim_{t \to \infty} Q^{\ssup \xi}_t \left(\, \sup_{s \in [\epsilon_t t , t]}  \left|X_s-Z_t\right|>  \epsilon_t \ln t \right) = 0 \;\;\; \text{ in probability,}
\end{equation}
where $(Z_t)_{t > 0}$ is the stochastic process in Theorem~\ref{thm:massconc}.
\end{theorem}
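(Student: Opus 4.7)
The approach rests on the Markov-property decomposition
\[
Q_t^{\ssup\xi}(X_s=y)=\frac{u(y,s)\,U_y(t-s)}{U(t)},\qquad U_y(r):=\E_y\Bigl[\exp\int_0^r\xi(X_u)\,\textd u\Bigr],
\]
which holds for every $0<s<t$ and $y\in\Z^d$ and reduces the claim to controlling, uniformly in $s\in[\epsilon_t t,t]$, the sum of the right-hand side over $|y-Z_t|>\epsilon_t\ln t$.

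For each fixed such $s$, pass to a sufficiently large Dirichlet box $\Lambda$ containing $0$ and $Z_t$ and expand $u(y,s)$, $U_y(t-s)$ and $U(t)$ via \eqref{e:specdecompLambda}. By the eigenvalue order-statistics results of~\cite{BK16} (which are the technical engine of Theorem~\ref{thm:massconc}), the dominant contribution to $U(t)$ comes from a single principal eigenpair $(\lambda^\star,\phi^\star)$, with $\phi^\star$ sharply localised near $Z_t$ and exponentially decaying, $|\phi^\star(y)|\le Ce^{-c|y-Z_t|}$, at a rate $c>0$ bounded away from zero — a quantitative Anderson-localisation consequence of Assumption~\ref{A:dexp}. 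After cancellation of the dominant exponentials in the numerator and denominator, the leading contribution reduces to $\phi^\star(y)^2$, so that
\[
Q_t^{\ssup\xi}(X_s=y)\;\le\; C\phi^\star(y)^2+(\text{subleading error})\;\le\;C'e^{-2c|y-Z_t|}+o(1).
\]
Summing over $|y-Z_t|>\epsilon_t\ln t$ yields a bound of order $t^{-2c\epsilon_t}$, which tends to zero because $\epsilon_t\ln_3 t\to\infty$ implies $\epsilon_t\ln t\to\infty$.

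To upgrade the single-time estimate to the supremum over $s\in[\epsilon_t t,t]$, discretise the interval along a polynomially-fine grid (spacing of order $1/t$), union-bound over the grid, and use that under $Q_t^{\ssup\xi}$ the walker makes only polynomially many jumps in each unit interval with overwhelming probability. Consequently the supremum and the grid maximum differ by at most $o(\epsilon_t\ln t)$.

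The main obstacle is to justify the spectral approximation \emph{uniformly} in $s\in[\epsilon_t t,t]$. Two regimes are delicate: (i) $s$ close to $\epsilon_t t$, where subleading eigenvectors centred at competing high local peaks of $\xi$ threaten to overwhelm $\phi^\star$ — this is handled via the sharp eigenvalue-gap input from~\cite{BK16}; and (ii) $s$ very close to $t$, where $t-s$ is too small for the spectral expansion of $U_y(t-s)$ to resolve the principal eigenvector. In the latter regime one instead invokes Theorem~\ref{thm:massconc} directly at time $s$ with a sublinear window to conclude that $u(\cdot,s)/U(s)$ is concentrated near $Z_t$, and combines this with a crude deterministic pointwise bound on $U_y(t-s)$ from the local extrema of $\xi$. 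The precise hypothesis $\epsilon_t\ln_3 t\to\infty$ is tuned exactly to match the localisation-rate lower bound $c$ against the sub-polynomial losses from the spectral-gap and modulus-of-continuity estimates above.
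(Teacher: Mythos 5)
Your proposal takes a genuinely different route from the paper, and unfortunately it contains a gap at its core that the hypothesis $\epsilon_t\ln_3 t\to\infty$ is not designed to cover.

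Your mechanism for the decay is the exponential localization of the principal eigenfunction, $|\phi^\star(y)|\le C\texte^{-c|y-Z_t|}$ with a \emph{constant} rate $c>0$ (this matches Proposition~\ref{prop:localizationinnerdomain}(i), where $c_2$ is indeed a fixed constant). Summing $\texte^{-2c|y-Z_t|}$ over $|y-Z_t|>\epsilon_t\ln t$ gives a bound of order $\texte^{-2c\,\epsilon_t\ln t}=t^{-2c\epsilon_t}$ for each fixed time $s$. You then union-bound over a time grid. The issue is that the hypothesis permits $\epsilon_t\to0$ (the paper explicitly notes this), and a grid covering $[\epsilon_t t,t]$ fine enough to control the intermediate displacements necessarily has polynomially many points in $t$. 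Multiplying a polynomial number of grid points by $t^{-2c\epsilon_t}$ does \emph{not} tend to zero once $\epsilon_t<1/c$, so the union bound does not close. Your closing sentence that the hypothesis $\epsilon_t\ln_3 t\to\infty$ ``is tuned exactly to match the localisation-rate lower bound $c$'' is the wrong match: a constant rate $c$ would require $\epsilon_t$ bounded away from zero, not merely $\epsilon_t\ln_3 t\to\infty$.

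The paper closes the gap by a different mechanism that achieves a per-lattice-step decay of order $\ln_3 t$, not a constant. It does not decompose at a fixed intermediate time and union-bound; instead it works with the full path event. The argument splits $\{\sup_{s\in[\epsilon_t t,t]}|X_s-Z_t|>\epsilon_t\ln t\}$ according to $\widetilde\tau:=\tau_{B_\nu(Z_t)}$: either the path never comes close to $Z_t$ or leaves $D^\circ_{t,t}$ (handled by the machinery of Theorem~\ref{thm:massconc}), or it reaches $B_\nu(Z_t)$ only after time $\epsilon_t t$ (Proposition~\ref{prop:closetoZinshorttime}), or it reaches $B_\nu(Z_t)$ early but subsequently wanders distance $>\tfrac12\epsilon_t\ln t$. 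For the last case, the strong Markov property at $\widetilde\tau$ reduces to Proposition~\ref{prop:boundforpathconc}, which is proved by path expansions (Proposition~\ref{prop:massclasslargeR}): a path that traverses many sites where $\xi\le(1-\varepsilon)\widehat a_{L}$ pays a factor of order $\texte^{-(\ln_3 L)}$ \emph{per site}, giving cost $\texte^{-\Theta(\epsilon_t(\ln t)\ln_3 t)}$ and hence the bound $t^{-k}$ for any $k$. This super-polynomial decay is exactly what $\epsilon_t\ln_3 t\to\infty$ buys, and it survives summation over starting points and times. If you want to salvage a spectral route you would need to upgrade the eigenfunction decay rate away from the relevant island from a constant to something of order $\ln_3 t$ at distances $\gtrsim R_{L_t}$, a nontrivial refinement that the paper bypasses entirely by working with path expansions.

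Two smaller issues worth noting: (a) the identity $Q_t^{\ssup\xi}(X_s=y)=u(y,s)U_y(t-s)/U(t)$ is fine, but the subsequent spectral expansion of $U_y(t-s)$ and $U(t)$ requires a macrobox truncation and control of boundary effects that you omit, and (b) the modulus-of-continuity step is under the tilted measure $Q_t^{\ssup\xi}$, not $\P_0$, and the jump intensity under the tilt is not immediately comparable to the free one; the paper sidesteps this entirely since it never discretizes in time.
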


To the best of our knowledge,
statements about path localization
such as Theorem~\ref{thm:pathconcentration}
were not yet available in the literature of the Parabolic Anderson Model.
The scales above come out of our methods and may be artificial;
in particular, we do not know if $\ln t / \ln_3 t$ is the correct scaling for $\sup_{\epsilon_t t \le s \le t}|X_s - Z_t|$.

\subsection{Results: Scaling limit}
Our next theorem identifies the large-$t$ behavior of the pair of processes $t\mapsto Z_t$ and $t \mapsto \tfrac1t \ln U(t)$. 
While $U(t)$ is continuous, $Z_t$ is only c\`adl\`ag and thus it is natural to use
the Skorohod topology to discuss distributional convergence. 
Two relevant scales are
\begin{equation}\label{e:def_fundam_scales}
d_t := \frac{\rho}{d \ln t} \;\;\; \text{ and } \;\;\;  r_t:= \frac{t \, d_t }{\ln_3 t} =  \frac{\rho}{d \ln t}
\, \frac{t}{\ln_3t},
\end{equation}
marking, respectively, the size of fluctuations
of $\tfrac1t \ln U(t)$, and the typical size of $|Z_t|$.

To describe the scaling limit, consider a sample $\{(\lambda_i,z_i)\colon i\in\N\}$ from the Poisson point process on~$\R\times\R^d$  with intensity measure $\texte^{-\lambda}\textd\lambda\otimes\textd z$.
For~$\theta>0$, define
\begin{equation}\label{e:defpsitheta}
\psi_\theta(\lambda,z):=\lambda-\frac{|z|}{\theta}, \qquad (\lambda, z) \in \R \times \R^d.
\end{equation}
It can be checked that, for every~$\theta>0$, the set $\{\psi_\theta(\lambda_i,z_i)\colon i\in\N\}$ is bounded and locally finite.  Moreover, the maximizing point is unique at all but at most a countable set of~$\theta$'s and we can thus define $(\overline{\Lambda}_\theta, \overline{Z}_\theta)$ to be the c\`adl\`ag maximizer of $\psi_\theta$ over the sample points of the process
 (cf.\ Section~\ref{ss:order_stat_pen_func}). 
We set
\begin{equation}
\label{E:2.8ua}
\overline{\Psi}_\theta := \psi_\theta(\overline{\Lambda}_\theta, \overline{Z}_\theta).
\end{equation}
Then we have:

\begin{theorem}[Scaling limit of the localization process and the total mass]
\label{thm:locus}
There is a non-decreasing scale function $a_t>0$ obeying
\begin{equation}
\lim_{t \to \infty} \frac{a_t}{\ln_2 t} = \rho
\end{equation}
such that the following holds: 
The stochastic process $(Z_t)_{t > 0}$ in Theorems~\ref{thm:massconc} and \ref{thm:pathconcentration}
can be chosen such that, 
for all 
$s \in (0,\infty)$
and relative to the Skorohod topology on $\DD([s, \infty), \R \times \R^d)$,
\begin{equation}
\left( \frac{\tfrac{1}{\theta t} \ln U(\theta t) - a_{r_t}}{d_t}, \frac{Z_{\theta t}}{{r_t}} \right)_{\theta \in [s,\infty)} \,\,\underset{t\to\infty}{\overset{\text{\rm law}}\longrightarrow}\,\, \left(\overline{\Psi}_\theta, \overline{Z}_\theta \right)_{\theta \in [s, \infty)}.
\end{equation}
In particular, for each $\theta > 0$, the pair $([\tfrac{1}{\theta t}\ln U(\theta t) - a_{r_t}]/d_t, Z_{\theta t}/r_t)$ converges in law
to the pair $(\overline{\Psi}_\theta, \overline{Z}_\theta) \in \R \times \R^d$ whose coordinates are independent and distributed as follows: 
$\overline{\Psi}_\theta$ follows a Gumbel distribution with scale $1$ and location $d \ln(2\theta)$, 
while $\overline{Z}_\theta$ has i.i.d.\ coordinates, each of which is Laplace-distributed with location $0$ and scale $\theta$.
\end{theorem}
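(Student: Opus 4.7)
The plan is to express $\tfrac{1}{\theta t}\ln U(\theta t)$ as the value of the penalized functional $\psi_\theta$ from \eqref{e:defpsitheta} at its maximizer over a rescaled spectrum of $H$, and to define $Z_{\theta t}$ as the corresponding spatial argmax---then transfer the Poisson convergence of that spectrum, established in \cite{BK16}, to the pair of processes in question. Concretely, one restricts $H$ to a box $V_L\subset\Z^d$ of side $L$ slightly larger than $r_t$ with Dirichlet boundary conditions, lists its eigenvalues $\lambda^{(k)}_{V_L}$ in decreasing order, and attaches to each an approximate localization center $z^{(k)}_{V_L}$. The key input from \cite{BK16} is that, as $L\to\infty$, the rescaled spectrum
\begin{equation}
\Pi_L:=\Bigl\{\bigl((\lambda^{(k)}_{V_L}-a_L)/d_L,\;z^{(k)}_{V_L}/L\bigr):k\in\N\Bigr\}
\end{equation}
converges in distribution to a Poisson point process on $\R\times\R^d$ with intensity $\texte^{-\lambda}\textd\lambda\otimes\textd z$, where $a_L\sim\rho\ln_2 L$ and $d_L=\rho/(d\ln L)$. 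One then chooses $L=L(t)$ so that $L$ matches $r_t$ up to slowly varying factors and denotes by $\Pi_t$ the resulting rescaled point process.

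The next and most substantial step is to prove the two-sided asymptotic
\begin{equation}
\label{E:asymp_lnU}
\tfrac{1}{\theta t}\ln U(\theta t)=a_{r_t}+d_t\max_{(\lambda,z)\in\Pi_t}\psi_\theta(\lambda,z)+o(d_t),
\end{equation}
uniformly in $\theta$ on compact subsets of $[s,\infty)$. The upper bound would come from the spectral decomposition \eqref{e:specdecompLambda}: the contribution of the $k$th eigenstate is controlled by $\texte^{\theta t\lambda^{(k)}_{V_L}}|\phi^{(k)}_{V_L}(0)|^2|V_L|$, with the decay of the eigenfunction at the origin estimated (via Anderson-localization inputs available through \cite{BK16}) by $\exp\{-|z^{(k)}_{V_L}|\ln_3 t\,(1+o(1))\}$ on the scale $|z^{(k)}_{V_L}|\asymp r_t$; summing over $k$ and converting to the rescaled variables produces the upper bound. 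The matching lower bound would follow from the Feynman--Kac strategy in Proposition~\ref{prop:lowerbound_forproof}: force the walk to traverse to a neighborhood of the argmax of $\psi_\theta$ in time $o(t)$, paying entropic cost $\exp\{-|Z_{\theta t}|\ln_3 t\,(1+o(1))\}$, and let it sit there for the remainder, gaining $\exp\{\theta t\,\lambda(Z_{\theta t})\}$.

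With $Z_{\theta t}$ defined as the spatial argmax of $\psi_\theta$ over $\Pi_t$ (compatible with Theorems~\ref{thm:massconc}--\ref{thm:pathconcentration}), the remaining task is a continuous-mapping argument: the map that assigns to a locally finite configuration on $\R\times\R^d$ the c\`adl\`ag trajectory $\theta\mapsto\arg\max\psi_\theta$ is continuous at configurations for which the maximizer is unique for all but countably many $\theta$, an almost-sure property under the Poisson limit. Combined with \eqref{E:asymp_lnU} and the convergence of $\Pi_t$ to the limiting PPP from \cite{BK16}, this promotes Poisson convergence to the claimed Skorokhod convergence on $\DD([s,\infty),\R\times\R^d)$. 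The marginal distributions then follow from a direct PPP computation: the intensity of $\{\psi_\theta(\lambda,z)>u\}$ equals $\int_{\R^d}\texte^{-(u+|z|/\theta)}\textd z=(2\theta)^d\texte^{-u}$, giving $\overline{\Psi}_\theta$ a Gumbel distribution with location $d\ln(2\theta)$ and scale $1$; conditionally on $\overline{\Psi}_\theta$ the argmax has spatial density $\propto\texte^{-|z|/\theta}$, i.e.\ product-Laplace with scale $\theta$; and the independence of $\overline{\Psi}_\theta$ and $\overline{Z}_\theta$ is the standard independence of marks in the decomposition of a Poisson process. The hardest step will be the upper bound in \eqref{E:asymp_lnU}: one must exclude the possibility that eigenstates far from the $\psi_\theta$-argmax but with near-optimal eigenvalues collectively dominate the main contribution, requiring sharp uniform control of eigenfunction overlap at the origin as well as uniformity in $\theta$ to upgrade finite-dimensional convergence to full Skorokhod convergence.
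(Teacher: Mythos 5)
Your overall strategy matches the paper's: represent $\tfrac1t\ln U(t)$ up to $o(d_t)$ by a penalized eigenvalue functional, invoke Poisson convergence of rescaled local spectra from~\cite{BK16}, and pass to the limit by a continuous-mapping argument. Three points, however, are glossed over in ways that would require substantial additional work, and the paper handles each of them differently.

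\textbf{(1) The continuous-mapping step has a genuine gap.} You assert that the argmax map on configurations is continuous under vague convergence on $\R\times\R^d$ at configurations with a.e.\ unique maximizer. This is false as stated: vague convergence on $\R\times\R^d$ only controls the restriction of $\PP_t$ to compact sets, while the functional $\psi_\theta(\lambda,z)=\lambda-|z|/\theta$ is unbounded above outside every compact --- a sequence of point measures could carry approaching configurations with simultaneously growing $\lambda$ and $|z|$ that dominate the argmax yet are invisible in the vague topology. The paper fixes this by embedding $\R\times\R^d$ into a locally compact space $\mathfrak{E}$ (Appendix~\ref{s:compactification}) in which the ``half-spaces'' $\HH^\theta_\eta=\{\lambda>|z|/\theta+\eta\}$ are relatively compact, and by proving the tail estimate~\eqref{e:PPPconvcompact_cond2} that controls escape to infinity (this is Lemma~\ref{l:PPPconvcompact}, proved via Lemma~\ref{l:controltaillocev} in Appendix~\ref{s:tailestimate}). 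Without this compactification, the convergence $\Phi^{\ssup i}(\PP_t)\to\Phi^{\ssup i}(\PP)$ in Lemma~\ref{l:continuityPhiskorohod} simply does not follow.

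\textbf{(2) The upper bound via eigenfunction overlap is not the route the paper takes.} You propose to bound each term $\texte^{\theta t\lambda^{(k)}_{V_L}}|\phi^{(k)}_{V_L}(0)|^2$ using Anderson-localization decay $|\phi^{(k)}_{V_L}(0)|\lesssim\exp\{-|z^{(k)}_{V_L}|\ln_3 t(1+o(1))\}$. At the relevant precision --- the required error is $o(d_t)=o(1/\ln t)$ in the exponent per unit time --- this localization rate with the sharp constant $\ln_3 t$ is not off the shelf from~\cite{BK16}; moreover you must then sum over all $k$ and show the many suboptimal-but-close eigenstates do not conspire to dominate. You flag this as the hardest step, correctly. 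The paper avoids the issue entirely: the upper bound on $U(t)$ (Lemma~\ref{l:upperboundUt}) is obtained directly from the Feynman--Kac formula via the path expansion of Section~\ref{s:pathexpansions} (Proposition~\ref{prop:massclass}), not by inverting the spectral decomposition. This sidesteps controlling eigenfunction overlaps at $0$ altogether.

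\textbf{(3) Global vs.\ local spectra.} You define $\Pi_L$ from the full Dirichlet spectrum of $H_{V_L}$. For a single large box the eigenvalues do not decouple into i.i.d.-like blocks, so the Poisson approximation and the assignment of localization centers $z^{(k)}_{V_L}$ require essentially the same coarse-graining structure the paper sets up. The paper's construction instead works from the outset with \emph{local} principal eigenvalues $\lambda^{\scrC}(z)$ over small boxes $B_{\varrho_z}(z)$ around capitals, which are effectively independent across well-separated capitals (Lemma~\ref{l:properties_opt_comps}, Lemma~\ref{l:comparisoncapitalsislands}); the comparison to a strictly i.i.d.\ block family $\widehat{Y}_t(x)=\lambda^{\ssup 1}_{B_{\widehat N_t}(x)}$ then gives the Poisson limit cleanly (Proposition~\ref{prop:PPPconvNNt}). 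Your approach would need to replicate this machinery before the ``key input from~\cite{BK16}'' in the form you state it is actually available. Your marginal computations at the end are correct and agree with~\eqref{e:limitingdensity1ddist}.
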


The scaling function~$a_t$ characterizes the leading-order scale 
of the principal Dirichlet eigenvalue of the Anderson Hamiltonian
in a box of radius $t$, as identified in \cite{BK16}.
See~\eqref{e:defat} below for a precise definition.

\subsection{Results: Aging}
The techniques used to prove the above theorems also 
permit us to address the phenomenon of \emph{aging} in the problem under consideration.
The term ``aging'' usually refers to the fact that certain decisive changes in the system occur 
at time scales that increase \emph{proportionally} to the age of the system. 
Our next result addresses aging in the process~$(Z_t)_{t>0}$:
\begin{theorem}[Aging for the localization process]
\label{thm:aging_locus}
For each $s>0$, and for $(Z_t)_{t > 0 }$ and $(\overline Z_t)_{t > 0 }$ as in Theorems~\ref{thm:massconc}, \ref{thm:pathconcentration} and~\ref{thm:locus},
\begin{equation}
\begin{aligned}
\label{e:aging_locus}
\lim_{t\to\infty}\textnormal{Prob}\bigl(Z_{t+\theta t}  =Z_t \; \forall \theta \in [0,s] \bigr)
& = \lim_{t\to\infty}\textnormal{Prob}\bigl(Z_{t+st}=Z_t\bigr) \\
& = \textnormal{Prob} \bigl( \overline{Z}_{1+s} = \overline{Z}_1 \bigr) = \textnormal{Prob} \left( \Theta > s \right),
\end{aligned}
\end{equation}
where the random variable
\begin{equation}\label{e:defTheta}
\Theta := \inf\{\theta>0 \colon\; \overline{Z}_{1+\theta} \neq \overline{Z}_1\}
\end{equation}
is positive and finite almost surely.
Moreover,
\begin{equation}\label{e:tailTheta}
\lim_{s \to \infty} \frac{s^d}{(\log s)^d} \Prob \left( \Theta > s \right)= \frac{d^d}{d!}.
\end{equation}
\end{theorem}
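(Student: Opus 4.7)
The plan is to reduce every assertion to an explicit analysis of the limiting Poisson point process (PPP) $\{(\lambda_i, z_i):i\in\N\}$, then transfer aging statements for $Z_t$ to the limit through the Skorohod convergence of Theorem~\ref{thm:locus}. The central structural input is a \emph{no-return monotonicity} for the process $\overline Z_\theta$: once it leaves $\overline Z_1$ it never returns. Writing $(\lambda_*, z_*) := (\overline\Lambda_1, \overline Z_1)$, the $\theta=1$ maximality constrains every other PPP point by $\lambda' - \lambda_* \leq |z'| - |z_*|$, so if $(\lambda',z')$ takes over the arg-max at some $\theta_1 > 1$, one must have $\lambda' > \lambda_*$ and $|z'| > |z_*|$, whence $\psi_\theta(\lambda',z') > \psi_\theta(\lambda_*,z_*)$ strictly for all $\theta > \theta_1$; transitivity extends this to any subsequent replacement. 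Consequently $\{\Theta > s\} = \{\overline Z_{1+\theta} = \overline Z_1 \,\forall \theta \in [0,s]\} = \{\overline Z_{1+s} = \overline Z_1\}$ almost surely. Positivity $\Theta > 0$ a.s.~follows from uniqueness of the $\theta=1$ maximizer (since $\overline\Psi_1$ has a continuous law) combined with the c\`adl\`ag property of $\theta\mapsto\overline Z_\theta$; finiteness $\Theta < \infty$ a.s.~follows from the existence of infinitely many PPP points with $\lambda' > \lambda_*$, each of which dominates $(\lambda_*, z_*)$ for $\theta$ sufficiently large. The two aging identities for $Z_t$ in~\eqref{e:aging_locus} then follow from $Z_{\theta t}/r_t \Rightarrow \overline Z_\theta$ (Skorohod): since $r_t\to\infty$ and PPP points are well-separated in $\R^d$, both $\{Z_{t+st}=Z_t\}$ and $\{Z_{t+\theta t}=Z_t\,\forall\theta\in[0,s]\}$ converge in probability to $\{\overline Z_{1+s} = \overline Z_1\}$, giving the common limit $\Prob(\Theta > s)$.

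To compute $\Prob(\Theta > s)$ explicitly, apply Palm theory: conditionally on $(\lambda_*, z_*) = (\lambda_0, z_0)$ the rest of the PPP is an independent PPP on $\{(\lambda,z):\lambda-|z| < \lambda_0 - |z_0|\}$ with unchanged intensity $e^{-\lambda}\,d\lambda\,dz$. By the monotonicity analysis above, $\{\Theta > s\}$ is the event that this PPP has no points in
$$B_s := \bigl\{(\lambda,z):\lambda > \lambda_0,\ |z_0|+(\lambda-\lambda_0) < |z| \leq |z_0|+(1+s)(\lambda-\lambda_0)\bigr\}.$$
Evaluating the $\ell^1$-annulus volume $\tfrac{2^d}{d!}(r_2^d - r_1^d)$ and substituting $u = \lambda - \lambda_0$, the PPP void formula yields
$$\Prob(\Theta > s\,|\,\lambda_*, z_*) = \exp\bigl(-e^{-\lambda_0}g_s(|z_0|)\bigr),\quad g_s(r) := \tfrac{2^d}{d!}\int_0^\infty e^{-u}\bigl[(r+(1+s)u)^d - (r+u)^d\bigr]\,du.$$
By Theorem~\ref{thm:locus}, $\lambda_* = \overline\Psi_1 + |\overline Z_1|$ with $\overline\Psi_1\sim\textrm{Gumbel}(d\ln 2, 1)$ independent of the Laplace-distributed $\overline Z_1$. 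The $\overline\Psi_1$-expectation is therefore the Laplace transform of $V := e^{-\overline\Psi_1}\sim\textrm{Exp}(2^d)$, and since $R := |\overline Z_1|$ has $\textrm{Gamma}(d,1)$ density $r^{d-1}e^{-r}/(d-1)!$ on $(0,\infty)$,
$$\Prob(\Theta > s) = \int_0^\infty \frac{r^{d-1}e^{-r}}{(d-1)!}\cdot\frac{2^d}{2^d + e^{-r}g_s(r)}\,dr.$$

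For the tail \eqref{e:tailTheta}, expanding the binomial $(r+(1+s)u)^d$ in $g_s$ gives $g_s(r) = 2^d[(1+s)^d - 1] + O\bigl(r(1+s)^{d-1}\bigr)$ uniformly for $r$ in bounded intervals, so the integrand above is $\tfrac{r^{d-1}e^{-r}}{(d-1)!\,(1 + (1+s)^d e^{-r})}(1+o(1))$ as $s\to\infty$ for $r = o(s)$. Substituting $\tau = r - d\ln(1+s)$ converts the integral into
$$\frac{(1+s)^{-d}}{(d-1)!}\int_{-d\ln(1+s)}^\infty\frac{(d\ln(1+s)+\tau)^{d-1}}{e^\tau + 1}\,d\tau,$$
and the dominant contribution comes from $\tau < 0$, where $(e^\tau+1)^{-1}\to 1$ and the inner integral evaluates to $(d\ln(1+s))^d/d = d^{d-1}(\ln s)^d(1+o(1))$; the $\tau > 0$ tail contributes only $O((\ln s)^{d-1})$. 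Combining, $\Prob(\Theta > s)\sim d^d(\ln s)^d/(d!\,s^d)$, which is~\eqref{e:tailTheta}. I expect the main obstacle to be the fully rigorous passage from the Skorohod convergence (of $\R$-valued rescaled processes) to the \emph{discrete} equality $\{Z_{t+st}=Z_t\}$ in~\eqref{e:aging_locus}: this rests on the no-return monotonicity above, a quantitative separation of distinct PPP points at the macroscopic scale, and the fact that $r_t\to\infty$ makes unit-scale lattice artifacts irrelevant.
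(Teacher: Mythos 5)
Your proposal is correct and follows essentially the same route as the paper: the no-return monotonicity you describe is exactly the content of Lemmas~\ref{l:propPhi}(i) and~\ref{l:nojumpintermsofPP} (formalized there via $\FF^{\vartheta}_\theta$ and transferred to the pre-limit process $Z_t$ through Lemmas~\ref{l:nojumpsPPclosetoID}--\ref{l:convergencenojumps}), and your Palm/void-formula computation of $\Prob(\Theta>s)$ with the change of variables $\tau=r-d\ln(1+s)$ is the same asymptotic as the paper's \eqref{e:prtailTheta2}--\eqref{e:prtailTheta4}, merely organized with $|\overline Z_1|\sim\textnormal{Gamma}(d,1)$ in place of the $\textd z$-integral.
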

In light of Theorem~\ref{thm:locus}, Theorem~\ref{thm:aging_locus} can be seen as a reflection of the fact that 
the functional convergence stated in Theorem~\ref{thm:locus} is not achieved through 
a large number of microscopic jumps, but rather through sporadic macroscopic jumps.

Our second aging result deals with the jumps in the profile of the normalized solution $u(\cdot,t)/U(t)$.
It comes as a consequence of the mass concentration of the normalized solution around $Z_t$ together with Theorem~\ref{thm:aging_locus}.
\begin{theorem}[Aging for the solution]
\label{thm:aging_solution}
For any $\varepsilon \in (0,1)$, the random variable
\begin{equation}
\label{e:agingsolution}
\frac{1}{t} \inf \left\{ s > 0 \colon\, \sum_{x \in \Z^d} \left| \frac{u(x,t+s)}{U(t+s)} - \frac{u(x,t)}{U(t)} \right| > \varepsilon \right\}
\end{equation}
converges in distribution as $t \to \infty$ to the random variable $\Theta$ defined in~\eqref{e:defTheta}.
\end{theorem}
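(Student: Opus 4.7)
The plan is to reduce this statement to Theorem~\ref{thm:aging_locus} via the mass concentration from Theorem~\ref{thm:massconc}. Denote the infimum in~\eqref{e:agingsolution} by $T_\varepsilon(t)$ and set $\tau_t := \inf\{s>0 \colon Z_{t+s}\neq Z_t\}$. Since Theorem~\ref{thm:aging_locus} gives $\tau_t/t\to\Theta$ in distribution, it suffices to show that $\Prob(T_\varepsilon(t)/t > s) - \Prob(\tau_t > st) \to 0$ for every continuity point $s > 0$ of the distribution of $\Theta$.

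Fix $s>0$, set $\delta := \varepsilon/4$, and let $R = R(\delta)$ be the radius provided by Theorem~\ref{thm:massconc}. On the event $\{\tau_t > st\}$, the localization point is constant on $[t, t+st]$, i.e.\ $Z_{t+s'} = Z_t$ for every $s' \in [0, st]$. Applying Theorem~\ref{thm:massconc} along a mesh of sub-linearly spaced base times inside $[t, t+st]$ (the number of which must be allowed to grow with $t$, but only so fast that the union bound over the mass-concentration failures still vanishes), one infers that with probability tending to~$1$ the normalized profile stays within $B_R(Z_t)$ throughout $[t, t+st]$. This yields
\[
\sum_{x \in \Z^d} \Bigl|\frac{u(x, t+s')}{U(t+s')} - \frac{u(x, t)}{U(t)}\Bigr| \le 2\delta < \varepsilon \qquad \text{for all } s' \in [0, st],
\]
so $T_\varepsilon(t) > st$ with high probability on $\{\tau_t > st\}$.

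Conversely, on $\{\tau_t \le st\}$, the process $Z$ jumps from $Z_t$ to $Z_{t+\tau_t}$ at some time $t + \tau_t \in [t, t+st]$. By Theorem~\ref{thm:locus}, the rescaled trajectory $(Z_{\theta t}/r_t)_\theta$ converges in the Skorohod topology to $\overline Z$, whose jumps are almost surely of strictly positive Euclidean size. A continuous-mapping argument then ensures that the first post-$t$ jump of $Z$ satisfies $|Z_{t+\tau_t} - Z_t| \ge c\, r_t$ on an event of probability tending to~$1$, for a random $c > 0$. Since $r_t \to \infty$ while $R$ is fixed, the balls $B_R(Z_t)$ and $B_R(Z_{t+\tau_t})$ are eventually disjoint, and applying Theorem~\ref{thm:massconc} at both base times $t$ and $t + \tau_t$ forces the $\ell^1$-distance of the two normalized profiles to be at least $2(1-\delta) > \varepsilon$. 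Hence $T_\varepsilon(t) \le \tau_t \le st$ with high probability on $\{\tau_t \le st\}$.

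Combining both directions, $\Prob(T_\varepsilon(t)/t > s) \to \Prob(\Theta > s)$ at every continuity point of $\Theta$, which establishes the claimed convergence in distribution. The principal technical obstacle is twofold: first, propagating the pointwise-in-time mass concentration of Theorem~\ref{thm:massconc} to a uniform statement over the macroscopic interval $[t,t+st]$ through a union bound across a suitable mesh of sub-linear windows; and second, extracting macroscopic jump sizes of $Z$ on the scale $r_t$ from the Skorohod convergence in Theorem~\ref{thm:locus}, via a continuous-mapping argument exploiting the absence of infinitesimal jumps in the limiting process~$\overline Z$.
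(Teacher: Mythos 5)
Your overall strategy --- reduce to Theorem~\ref{thm:aging_locus} by showing that the normalized profile stays close to its time-$t$ shape exactly as long as $Z$ does not jump --- is the same as the paper's. However, there are two genuine gaps in how you execute the reduction.

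First, the mesh-plus-union-bound argument in the ``no jump'' direction does not go through as written. Theorem~\ref{thm:massconc} asserts a probability tending to~$0$ with no quantitative rate, so a union bound over a number of mesh points that grows with~$t$ (as it must, since the spacing is $o(t)$ but the interval has length $st$) cannot be controlled --- you acknowledge this as the ``principal technical obstacle'' but then do not actually resolve it. The paper avoids this entirely by invoking Proposition~\ref{prop:improvmassconc}, which already gives a statement \emph{uniform} over $s \in [at,bt]$: the $\ell^1$-distance between $u(\cdot,s)/U(s)$ and $\phi^\bullet_{t,s}$ goes to zero in probability uniformly over the macroscopic interval. Combined with the observation that $Z_s \equiv Z_t$ forces $\phi^\bullet_{t,s}\equiv\phi^\bullet_t$, and with Propositions~\ref{prop:goodevent_forproof}--\ref{prop:stabilitygap} to guarantee the gap event $\GG_{t,s}$ holds throughout, the ``if'' direction falls out directly without any mesh.

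Second, in the ``jump'' direction you apply Theorem~\ref{thm:massconc} at the \emph{random} time $t+\tau_t$, which is not justified: the statement is for deterministic base times (or a supremum over a deterministic interval). The paper circumvents this via Proposition~\ref{prop:stabilitygap}: on the high-probability gap events $\GG_{t,t}\cap\GG_{t,bt}$, a jump anywhere in $(t,bt]$ forces $Z_{bt}\neq Z_t$, so one can compare profiles at the two \emph{deterministic} times $t$ and $bt$. The macroscopic separation you try to extract from Theorem~\ref{thm:locus} via a continuous-mapping argument is also delivered more directly by Proposition~\ref{prop:seprelcap}, which gives $|Z_{bt}-Z_t|>\sqrt{t}$ outright and so $B_R(Z_t)\cap B_R(Z_{bt})=\emptyset$ for any fixed $R$, at which point the triangle-inequality computation you perform (and the paper performs) gives the $\ell^1$-distance lower bound $1-2\delta>\varepsilon$.

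In short: the reduction to Theorem~\ref{thm:aging_locus} is correct and matches the paper, but you need Proposition~\ref{prop:improvmassconc} (not a mesh of applications of Theorem~\ref{thm:massconc}) for the uniform control, and Propositions~\ref{prop:stabilitygap} and~\ref{prop:seprelcap} to turn the random jump time into a deterministic one with quantified separation.
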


A key point to note about Theorem~\ref{thm:aging_solution} is that the limiting random variable does not depend on~$\varepsilon$. 
This suggests that, in fact, the sum in~\eqref{e:agingsolution} jumps from values near~$0$ to values near~$1$ 
as~$s$ varies in a time interval of sublinear length in~$t$.

\subsection{Results: Limit profiles}
The localization stated in Theorem~\ref{thm:massconc} can be given in a more precise form 
provided that we make an additional uniqueness assumption. 
In order to state this assumption, we need further definitions. Given a potential $V\colon\Z^d\to\R$, let
\begin{equation}
\label{defcL}
\LL(V) := \sum_{x \in \Z^d} \texte^{\frac{V(x)}{\rho}}.
\end{equation}
The functional $\LL$ plays the role of a large deviation rate function 
for random potentials~$\xi$ with doubly-exponential tails.
Whenever~$\LL(V) < \infty$ (in fact, whenever~$V(x)\to-\infty$ as $|x|\to\infty$), 
$\Delta + V$ has a compact resolvent as an operator on $\ell^2(\Z^d)$,
and its largest eigenvalue $\lambda^{\ssup 1}(V)$ is well-defined and simple.
The constant
\begin{equation}\label{e:variationalproblemchi}
\chi = \chi(\rho) := - \sup \{\lambda^{\ssup 1}(V) \colon\, V \in \R^{\Z^d},\, \LL(V) \le 1\} \; \in \, [0,2d]
\end{equation}
 is key in the analysis of the asymptotic growth of $U(t)$. 
The set of centered maximizers
\begin{equation}\label{e:defMrho}
\MM^*_{\rho} := \left\{ V \in \R^{\Z^ d} \colon\, 0 \in \textnormal{argmax}(V), \mathcal{L}(V) \le 1 \, \textnormal{ and } \, \lambda^{\ssup 1}(V) = - \chi \right\}
\end{equation}
is known to be non-empty.
The assumption below deals with uniqueness:
\begin{assumption}[Uniqueness of maximizer]
\label{A:uniqueness}
We assume that $\MM^*_{\rho} = \{V_\rho\}$, i.e., the variational problem~\eqref{e:variationalproblemchi} admits a unique centered solution~$V_\rho$.
\end{assumption}

The uniqueness of the centered minimizer is conjectured to hold for all $\rho > 0$, but has so far only been proved for~$\rho$ large enough; see \cite{GH99}.
In the latter paper it is also shown that, for any $V \in \MM^*_\rho$,
the non-negative principal eigenfunction of the operator $\Delta + V$ 
is strictly positive and lies in $\ell^1(\Z^d)$. 
Under Assumption~\eqref{A:uniqueness},
we will denote henceforth by $v_\rho$ the principal eigenfunction of $\Delta + V_\rho$, normalized so that
\begin{equation}
%
 v_\rho > 0
\quad\text{and}\quad \|v_\rho\|_{\ell^1(\Z^d)}=1.
\end{equation}
Then we have:
\begin{theorem}[Limiting profiles]
\label{thm:massconc_withuniq}
Suppose that Assumption~\ref{A:uniqueness} holds and let~$(Z_t)_{t > 0}$ be the process from 
Theorems~\ref{thm:massconc}, \ref{thm:pathconcentration} and~\ref{thm:locus}. 
There exist $\mu_t \in \N$ and $\widehat{a}_t > 0$ satisfying $\lim_{t \to \infty} \mu_t=\infty$ and $\lim_{t \to \infty} \widehat{a}_t/(\rho \ln_2 t)=1$
such that, for all $\epsilon\in(0,1)$,
\begin{equation}\label{e:shapepot_withuniq}
\sup_{s \in [\epsilon t,\, \epsilon^{-1}t]} \, \sup_{x \in \Z^d \colon |x| \le \mu_t} \bigl| \xi(x+Z_s)- \widehat{a}_t - V_\rho(x) \bigr| \,\underset{t\to\infty}\longrightarrow\, 0 \;\; \text{ in probability.}
\end{equation}
Moreover, for any $l_t>0$ satisfying $\lim_{t \to \infty} \frac{1}{t} l_t  =0 $,
\begin{equation}\label{e:massconc_withuniq}
\sup_{s \in [t-l_t,t+l_t]} \sum_{x \in \Z^d} \left| \frac{u(Z_t+x,s)}{U(s)} - v_\rho(x) \right|  \,\underset{t\to\infty}\longrightarrow\, 0 \;\; \text{ in probability.}
\end{equation}
\end{theorem}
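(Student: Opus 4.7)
The plan is to establish \eqref{e:shapepot_withuniq} first and then deduce \eqref{e:massconc_withuniq} by a spectral-perturbation argument. From the order-statistics construction of $Z_t$ underlying Theorem~\ref{thm:locus} (based on~\cite{BK16}), each $Z_t$ should arise as the center of a ball $B_{R_t}(Z_t)$, with $R_t\to\infty$ slowly, whose Dirichlet principal eigenvalue of $H=\Delta+\xi$ attains, up to the penalty $|Z_t|/(\theta_t r_t)$, the maximum value over centers in a polynomially-large box. The natural centering is $\widehat{a}_t := a_{r_t} + \chi$ (plus a vanishing correction), so that $\lambda^{\ssup 1}_{B_{R_t}(Z_t)}-\widehat{a}_t \to -\chi$ in probability; the claimed asymptotic $\widehat{a}_t/(\rho\ln_2 t)\to 1$ then follows from the scale asymptotics of $a_{r_t}$ in Theorem~\ref{thm:locus}.

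For \eqref{e:shapepot_withuniq}, set $\widetilde\xi_t(x):=\xi(x+Z_t)-\widehat{a}_t$. The near-maximality of the local eigenvalue at $Z_t$ forces two conditions: (i) $\mathcal{L}(\widetilde\xi_t\1_{B_{R_t}(0)})\le 1+o(1)$, since a larger $\mathcal{L}$-budget would, with high probability, produce a competing center with larger shifted eigenvalue, contradicting optimality; and (ii) $\lambda^{\ssup 1}(\Delta+\widetilde\xi_t\1_{B_{R_t}(0)}) \ge -\chi - o(1)$, which is immediate. Compactness of the sublevel sets of $\mathcal{L}$ in the pointwise topology then shows that every subsequential deterministic limit of $\widetilde\xi_t$ is a centered maximizer of the variational problem~\eqref{e:variationalproblemchi}, hence equals $V_\rho$ by Assumption~\ref{A:uniqueness}. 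Uniformity on the growing ball $B_{\mu_t}(0)$ follows from a diagonal extraction combined with the decay entailed by $\mathcal{L}(V_\rho)<\infty$, and uniformity over $s\in[\epsilon t,\epsilon^{-1}t]$ is provided by Theorem~\ref{thm:aging_locus}: on this time window $Z_s$ takes only finitely many distinct values, each subject to the same near-optimality.

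Given~\eqref{e:shapepot_withuniq}, turn to~\eqref{e:massconc_withuniq}. Fix large $R$ and restrict to $B_R(Z_t)$; Theorem~\ref{thm:massconc} makes the tail $\sum_{|x-Z_t|>R} u(x,s)/U(s)$ arbitrarily small uniformly in $s\in[t-l_t,t+l_t]$, while Theorem~\ref{thm:aging_locus} (using $l_t/t\to 0$) guarantees $Z_s=Z_t$ throughout this interval with probability tending to one. Inside the box, the spectral decomposition~\eqref{e:specdecompLambda} of $u_{B_R(Z_t)}(\cdot,s)$ is dominated by the top eigenpair, thanks to a uniform spectral gap between $\lambda^{\ssup 1}_{B_R(Z_t)}$ and $\lambda^{\ssup 2}_{B_R(Z_t)}$ inherited from~\eqref{e:shapepot_withuniq} and the simplicity of the top eigenvalue of $\Delta+V_\rho$. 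By~\eqref{e:shapepot_withuniq}, the shifted Dirichlet operator on $B_R(0)$ converges in norm to the Dirichlet restriction of $\Delta+V_\rho$, so Rayleigh--Schr\"odinger perturbation theory yields $\phi^{\ssup 1}_{B_R(Z_t)}(\cdot+Z_t)\to v_\rho$ in $\ell^1(B_R(0))$ after normalization. The common factor $\phi^{\ssup 1}_{B_R(Z_t)}(0)$ cancels in the ratio $u(x,s)/U(s)$, reducing the profile to $v_\rho\1_{B_R(0)}/\|v_\rho\1_{B_R(0)}\|_{\ell^1}$; letting $R\to\infty$ absorbs the tail via $v_\rho\in\ell^1(\Z^d)$.

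The principal obstacle is the $\mathcal{L}$-budget condition~(i) above: one must show that the distance penalty $|Z_t|/(\theta_t r_t)$ in the definition of $Z_t$ calibrates precisely to the constraint $\mathcal{L}\le 1$ with the sharp constant $1$, and not some inflated value. This requires a careful large-deviation count of potential configurations satisfying $\mathcal{L}(\widetilde\xi)\le 1+\eps$ with prescribed top eigenvalue, weighted against the expected number of competing centers at distance $|Z_t|$. The doubly-exponential tails in~\eqref{e:assumpF'} make this delicate because the $\ln_2$-scale of $F$ converts small errors in the centering $\widehat{a}_t$ into large multiplicative changes in $\mathcal{L}(\widetilde\xi_t)$; controlling these errors is precisely what ties together the three scales $r_t$, $d_t$ and $\widehat{a}_t$ introduced in~\eqref{e:def_fundam_scales}.
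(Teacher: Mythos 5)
Your overall architecture matches the paper's: establish the potential shape~\eqref{e:shapepot_withuniq} via compactness of $\{\mathcal{L}\le 1\}$ together with the uniqueness Assumption~\ref{A:uniqueness} (this is Proposition~\ref{prop:shapes}), then pass to the solution profile~\eqref{e:massconc_withuniq} via a spectral-gap argument comparing $u(\cdot,s)/U(s)$ to the principal eigenfunction on a local box (Proposition~\ref{prop:improvmassconc}), and patch in the aging result~\eqref{e:nojumpandgap} to handle the time window. However, there is a genuine gap in your justification of the $\mathcal{L}$-budget condition~(i). You argue that $\mathcal{L}(\widetilde\xi_t\1_{B_{R_t}(0)})\le 1+o(1)$ must hold ``since a larger $\mathcal{L}$-budget would, with high probability, produce a competing center with larger shifted eigenvalue, contradicting optimality.'' This is the wrong mechanism and does not give the bound. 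If the realization near $Z_t$ happened to have $\mathcal{L}(\widetilde\xi_t)>1+\eps$, that would not contradict the definition of $Z_t$ as a maximizer of $\Psi_t$ --- $Z_t$ would simply localize at that favorable spot. The reason the budget is at most $1+o(1)$ is \emph{not} that $Z_t$ is chosen optimally; it is a conditional law of large numbers for the potential itself, specific to the doubly-exponential universality class: conditioned on a local peak being at height $\widehat{a}_L$, the normalized exponential sum $\sum_x e^{(\xi(x)-\widehat{a}_L)/\rho}$ over the surrounding island concentrates at $\le 1$. This is Corollary~2.12 of~\cite{GM98}, packaged in the paper as Lemma~\ref{l:uppbddcurlyL}, and it holds \emph{uniformly over all islands} $\CC\in\mathfrak{C}_{L,A}$ with high probability, independent of any optimality consideration. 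You flag this budget bound as ``the principal obstacle'' and gesture at a ``careful large-deviation count,'' but the mechanism you propose for resolving it (interaction with competing centers) would not close the gap; the correct input is the conditional concentration result for the $\mathcal{L}$-functional. Once that bound is available, the rest of your compactness/uniqueness argument for~\eqref{e:shapepot_withuniq} is in line with the paper's proof of Proposition~\ref{prop:shapes}.

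Two smaller remarks on the second half. First, you restrict to a fixed box $B_R(Z_t)$ and later let $R\to\infty$; the paper works directly on a growing box $B_{\mu_t}(Z_s)$ and splits $u=u^{\ssup 1}+u^{\ssup 2}$ according to whether the path stays in $B_{\mu_t}(Z_s)$ for an auxiliary time $T_t$ with $\sqrt{\mu_t}\ll T_t\ll\mu_t$ (Lemmas~\ref{l:doesnotexitlocalboxveryfast}--\ref{l:localcomparison}). A naive fixed-$R$-then-$R\to\infty$ argument faces a non-trivial exchange of limits because the spectral gap on $B_R(Z_t)$ and the closeness of eigenfunctions both degrade as you change $R$, and the tail control requires interleaving the mass-concentration Theorem~\ref{thm:massconc} with the box enlargement. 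Second, you invoke Theorem~\ref{thm:aging_locus} to control uniformity in $s\in[\epsilon t,\epsilon^{-1}t]$ by the finiteness of the number of jumps of $Z_s$; the paper's approach instead builds uniformity over the time window directly into the intermediate estimates (Proposition~\ref{prop:PPPconv} and Lemma~\ref{l:compPsitildePsi} are all stated as uniform over $s\in[at,bt]$), so aging is only needed for~\eqref{e:massconc_withuniq} to pin down $Z_s=Z_t$ on $[t-l_t,t+l_t]$. Your route could work, but it requires you to additionally establish that the shape convergence is uniform over the (random, finite but unbounded) set of values taken by $Z_s$.
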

The scale $\widehat{a}_t$ in~\eqref{e:shapepot_withuniq} coincides 
(up to terms that vanish as~$t\to\infty$) with the maximum of~$\xi$ inside a box of radius $t$ 
(see \eqref{e:defhata} for the definition, and also Lemma~\ref{l:maxpotential}).
Moreover, the scales $a_t$ and $\widehat{a}_t$
 (with $a_t$ as in Theorem~\ref{thm:locus}) satisfy $\lim_{t \to \infty} \widehat{a}_t - a_t = \chi$. The scale $\mu_t$ provided in the proof of Theorem~\ref{thm:massconc_withuniq}
satisfies $\mu_t \ll (\ln t)^\kappa$ for some arbitrary $\kappa < 1/d$,
but its actual rate of growth is not controlled explicitly.

\vspace{10pt}
The rest of the paper is organized as follows.
In Section~\ref{s:connections} below we discuss connections to the literature and provide some heuristics.
Section~\ref{s:proofmassconc} contains an extensive overview of our proofs
including the definition of the localization process $Z_t$.
The technical core of the paper is formed by Section~\ref{s:preparation} (properties of the potential and spectral bounds),
Section~\ref{s:pathexpansions} (path expansions) and Section~\ref{s:cost} (a point process approach).
The bulk of the proofs related to our main results is carried out in Sections~\ref{s:massdecomp}--\ref{s:localprofiles}, concerning respectively negligible contributions to the Feynman-Kac formula, localization of relevant eigenfunctions, path localization properties and the analysis of local profiles.
The proofs of some technical results are given in
Appendices~\ref{s:tailestimate}--\ref{s:propertiescost}.

\section{Connections and heuristics}
\label{s:connections}
\nopagebreak\noindent
In this section, we make connections to earlier work on this problem,
and also provide a short heuristic argument motivating the definition of the scales in \eqref{e:def_fundam_scales}.

\subsection{Relations to  earlier work}
\label{s:literature}\noindent
Let us give a quick survey on earlier works on the particular question that we consider; 
we refer to \cite{K15} for a comprehensive account on the parabolic Anderson model, 
and to \cite{M11} for a survey on certain aspects closely related to the present paper.

Since 1990, much of the effort went into developing a characterization of the logarithmic asymptotics of $t\mapsto U(t)$ and its moments, which are all finite if and only if all the positive exponential moments of $\xi(0)$ are finite. For this case, under a mild regularity assumption, \cite{HKM06} identified \emph{four universality classes} of asymptotic behaviors:
potentials with tails heavier than~\eqref{tail}
(corresponding formally to $\rho=\infty$),
double-exponential tails of the form~\eqref{tail},
the so-called ``almost bounded'' potentials (corresponding formally to $\rho=0$),
and bounded potentials. The first two cases were treated in \cite{GM98}, and the last two in \cite{HKM06} and \cite{BK01a},
respectively.
Potentials with infinite exponential moments were analysed in \cite{HMS08} (more precisely, Pareto and Weibull tails),
where weak limits and almost sure asymptotics for $U(t)$ were obtained.

In all of the classes mentioned above, the asymptotics of~$U(t)$ is expressed in terms of a variational principle for the local time of the path in~$Q_t^{\ssup\xi}$ and/or the ``profile'' of~$\xi$ that maximizes a local eigenvalue. The picture that emerges is that a typical path sampled from~$Q_t^{\ssup\xi}$ for~$t$ large will spend an overwhelming majority of time in a relatively small volume whose location is characterized by a favourable value of the local Dirichlet eigenvalue. Proofs of such statements have first been available 
for a related version of the model using the method of enlargement of obstacles~\cite{S98} 
and later also for the double-exponential class by probabilistic path expansions~\cite{GKM07}. 
However, neither of these approaches was sharp enough to distinguish among the many ``favourable eigenvalues.''
In fact, while the expectation was that only a finite number of such eigenvalues needs to be considered, the best available bound on their number was~$t^{o(1)}$.

For distributions with tails heavier than~\eqref{tail},
progress on the mass-concentration question has been made in \cite{KLMS09} and more recently in \cite{LM12, ST14, FM14}.
The distributions therein considered are, respectively, Pareto, exponential, Weibull with parameter $\gamma \in (0,2)$ and general Weibull.
In these papers it is proven that, with large probability, the solution is asymptotically concentrated on a single lattice point, 
which is an extremely strong localization property.
In the doubly-exponential case considered here, due to less-heavy tails, the localization phenomenon is not so strong; 
indeed, restricting to any bounded region misses some fraction of the total mass of the solution.

The analysis leading to our result depends crucially on the characterization of the order statistics of local principal eigenvalues for the Anderson Hamiltonian performed in \cite{BK16},
which allows us to conveniently represent local eigenvalues through a point process approach.
In this aspect, our paper shares similarities with \cite{FM14}, 
which draws heavily upon the analysis of the spectral order statistics in~\cite{A12,A13}. 
However, our case also harbors many significant differences, caused mainly by the non-degenerate structure of the dominant eigenfunctions.

For the remaining two universality classes of~$\xi$ --- namely, 
the bounded and ``almost bounded'' fields --- the mass-concentration question is yet more difficult 
because the relevant eigenvectors extend over spatial scales that diverge with time. 
Nevertheless, we believe that our approach could provide a strategy to study these cases as well.

\subsection{Some heuristics}
We present next a heuristic calculation based on \cite{BK16} to motivate the appearance of the scale~$r_t$ defined in~\eqref{e:def_fundam_scales}.
We will describe a strategy to obtain a lower bound for the total mass $U(t)$ defined in \eqref{e:deftotalmass}.
Our actual proof of the corresponding result (cf.\ Proposition~\ref{prop:lowerbound_forproof} below) follows similar but somewhat different steps.

Write $B_t \subset \Z^d$ for the $\ell^\infty$-ball with radius $t$,
and denote by $\lambda^{\ssup k}_{B_t}$, $\phi^{\ssup k}_{B_t}$, $1 \le k \le |B_t|$, 
the eigenvalues and corresponding orthonormal eigenfunctions of the Anderson Hamiltonian in $B_t$ with zero Dirichlet boundary conditions. If $Y^{\ssup k}_{B_t} \in B_t$ are points maximizing $|\phi^{\ssup k}_{B_t}|^2$,
it can be shown via spectral methods that

\begin{equation}
\E_{Y^{\ssup k}_{B_t}} \left[\e^{\int_0^t \xi(X_r) \textd r} \1\{X_r \in B_t \,\forall\, r \in [0,t]\} \right]
\gtrsim \e^{t \lambda^{\ssup k}_{B_t}}.
\end{equation}
Inserting in \eqref{e:deftotalmass} the event where the random walk
$X$ reaches $Y^{\ssup k}_{B_t}$ at a time $s<t$ and then remains in $B_t$ until time $t$,
and using the Markov property at time $s$, we obtain

\begin{align}\label{e:heurist1}
U(t) \ge \E_0\Bigl[\e^{\int_0^t \xi(X_r)\,\textd r}\1\{X_s= Y^{\ssup k}_{B_t}, X_r\in B_t \,\forall r\in[s,t]\}\Bigr] 
& \gtrsim \P_0(X_s = Y^{\ssup k}_{B_t} )\e^{(t-s)\lambda^{\ssup k}_{B_t}} \nonumber\\
&\approx \e^{-|Y^{\ssup k}_{B_t}|\ln (| Y^{\ssup k}_{B_t}|/s)}\e^{(t-s)\lambda^{\ssup k}_{B_t}},
\end{align}
 where for simplicity we assumed that $\xi$ is non-negative,
and to approximate the probability $\P_0(X_s= Y^{\ssup k}_{B_t})$, 
we assume $|Y^{\ssup k}_{B_t}|\gg s$.
Optimizing over $s$ gives the candidate $s=|Y^{\ssup k}_{B_t}|/\lambda^{\ssup k}_{B_t}$, which we may plug in \eqref{e:heurist1}
provided that we also assume $|Y^{\ssup k}_{B_t}|/\lambda^{\ssup k}_{B_t} < t$.
With this choice, \eqref{e:heurist1} becomes approximately
\begin{equation}\label{e:heurist2}
\exp \left\{ t \lambda^{\ssup k}_{B_t} - |Y^{\ssup k}_{B_t}| \ln \lambda^{\ssup k}_{B_t} \right\} = \e^{t a_t} \exp \left\{t d_t \frac{\lambda^{\ssup k}_{B_t} - a_t}{d_t} - |Y^{\ssup k}_{B_t}| \ln \lambda^{\ssup k}_{B_t} \right\},
\end{equation}
where $a_t \sim \rho \ln_2 t$ is the leading order of the principal Dirichlet eigenvalue of $H$ in a box of radius $t$
as identified in \cite{BK16} 
 (and is also the same scale appearing in Theorem~\ref{thm:locus}).
In \cite{BK16}, it is shown that the collection of rescaled points $\{ (\lambda^{\ssup k}_{B_t} - a_t)/d_t\}_{1 \le k \le |B_t|}$ 
converges in distribution to (the support of) a Poisson point process.
Assuming thus that $(\lambda^{\ssup k}_{B_t} - a_t)/d_t$ is of finite order,
an index $k$ optimizing \eqref{e:heurist2} should balance out the two competing terms,
implying $|Y^{\ssup k}_{B_t}| \approx r_t$.


\section{Main results from key propositions}
\label{s:proofmassconc}\nopagebreak\noindent
We give in this section an outline to the proof of Theorems~\ref{thm:massconc}, \ref{thm:pathconcentration}, \ref{thm:aging_solution} and~\ref{thm:massconc_withuniq}. 
This will be achieved by way of a sequence of propositions that encapsulate the key technical aspects of the whole argument. 
The proofs of these propositions and of Theorems~\ref{thm:locus}--\ref{thm:aging_locus} constitute the remainder of this paper and are the subject of Sections~\ref{s:preparation}--\ref{s:localprofiles} as well as the three appendices.
Note that Theorem~\ref{thm:aging_locus} will be assumed in Sections~\ref{ss:proofconcentration}--\ref{ss:proofaging} below.

Throughout the rest of this work, we set $\N:=\{1,2,\ldots\}$ and $\N_0:= \N \cup \{0\}$.
We denote by $\dist(\cdot, \cdot)$ the metric derived from the $\ell^1$-norm $|\cdot|$, and by $\diam(\cdot)$ the corresponding diameter.
For a real-valued function~$f$ and a positive function~$g$,
we write $f(t) = O(g(t))$ as $t \to \infty$ to denote that there exists $C >0$ such that $|f(t)| \le C g(t)$ for all large enough $t$,
and we write $f(t) = o(g(t))$ in place of $\lim_{t \to \infty} |f(t)|/g(t) = 0$.
In the latter case, we may also alternatively write $|f(t)| \ll g(t)$ or $g(t) \gg |f(t)|$.
By $o(\cdot)$ or $O(\cdot)$ we will always mean \emph{deterministic} bounds, i.e., independent of the realization of $\xi$.

\subsection{Definition of the localization process}
\label{ss:concentration_locus}\noindent
For $\Lambda \subset \Z^d$ finite, we denote by $\lambda^{\ssup 1}_\Lambda$ 
the largest Dirichlet eigenvalue (i.e., with zero boundary conditions)
of $\Delta + \xi$ in $\Lambda$.
For $L \in \N$ and $x \in \Z^d$,  we let
\begin{equation}
B_L(x) := x+[-L,L]^d \cap \Z^d,
\end{equation}
and when $x=0$ we write $B_L$ instead of $B_L(0)$.

Fix $ \kappa \in (0,1/d)$. 
For each $z \in \Z^d$, we define a $\xi$-dependent radius
\begin{equation}\label{e:defvarrhoz}
\varrho_z := \left\lfloor \exp \left\{ \frac{\kappa}{\rho} \, \xi(z) \right\} \right\rfloor
\end{equation}
and we let
\begin{equation}\label{e:defcapitals}
\scrC := \left\{z \in \Z^d \colon\, \xi(z) \ge \xi(y) \;\forall\, y \in B_{\varrho_z}(z) \right\}
\end{equation}
denote the set of local maxima of $\xi$ in neighborhoods of radius $\varrho_z$,
which we call \emph{capitals}.
For $z \in \scrC$, we abbreviate
\begin{equation}\label{e:deflambdascrCz}
\lambda^\scrC(z) := \lambda^{\ssup 1}_{B_{\varrho_z}(z)}.
\end{equation}
For $t > 0$, we define a \emph{cost functional} over the points $z \in \scrC$ by setting 
\begin{equation}\label{e:defPsi}
\Psi_t(z) := \lambda^{\scrC}(z) - \frac{\ln^+_3 |z|}{t}|z|, \;\;\;\; \text{ where } \ln_3^+ x := \ln_3 (x \vee \texte^\texte).
\end{equation}
\noindent
The functional $\Psi_t$ measures the relevance at time $t$ of a capital $z \in \scrC$ by weighting 
the principal eigenvalue in $B_{\varrho_z}(z)$ against the $\ell^1$-distance to the origin $|z|$.
The next proposition shows that $\Psi_t$ admits a maximizer:

\begin{proposition}\label{prop:welldefined}
Almost surely, $|\scrC|=\infty$ and,
for all $t > 0$ and all $\eta \in \R$,
\begin{equation}
\label{e:welldefined}
\left| \left\{z \in \scrC \colon\, \Psi_t(z) > \eta \right\}\right| < \infty.
\end{equation}
\end{proposition}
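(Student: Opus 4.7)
The proof separates into the two claims. For the finiteness assertion~\eqref{e:welldefined}, the key elementary input is the spectral bound $\lambda^\scrC(z)\le \xi(z)$ for every $z\in\scrC$. Indeed, since $\langle\phi,\Delta\phi\rangle\le 0$ for any $\phi\in\ell^2(\Z^d)$, the Rayleigh quotient gives $\lambda^{\ssup 1}_\Lambda\le \max_\Lambda\xi$ for any finite $\Lambda$, and the capital condition in~\eqref{e:defcapitals} identifies $\max_{B_{\varrho_z}(z)}\xi = \xi(z)$. Consequently,
$$\{z\in\scrC:\Psi_t(z)>\eta\}\;\subseteq\;\bigl\{z\in\Z^d:\xi(z)>\eta+\tfrac{\ln^+_3|z|}{t}\,|z|\bigr\}.$$
By Assumption~\ref{A:dexp}, for every $\eps>0$ there exists $r_0$ with $\Prob(\xi(0)>r)\le \exp(-\exp((1-\eps)r/\rho))$ for $r\ge r_0$; applied at the threshold $r\sim \tfrac{|z|\ln_3|z|}{t}$ (valid for $|z|$ large), this probability decays faster than any polynomial in $|z|^{-1}$ and is therefore summable over $\Z^d$. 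First Borel--Cantelli yields almost-sure finiteness for every fixed $(t,\eta)$. Since $\{z:\Psi_t(z)>\eta\}$ is non-decreasing in $t$ and non-increasing in $\eta$, it suffices to establish the claim along the countable sequence $(t,\eta)=(n,-n)$, $n\in\N$; the general statement then holds on a single event of full measure by inclusion.

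For the claim $|\scrC|=\infty$, the plan is a second Borel--Cantelli argument on independent trials placed in disjoint boxes. The main obstacle --- and the reason for the truncation below --- is that the neighborhood in~\eqref{e:defcapitals} has a $\xi$-dependent size, so that the event $\{y\in\scrC\}$ is not a priori measurable with respect to $\xi$ on any pre-specified box. Fix $R\in\N$ large enough that $c_R:=(\rho/\kappa)\ln(R+1)$ exceeds $\essinf\xi(0)$ by a positive amount, and pick any sequence $(y_n)\subset\Z^d$ with $|y_n|\to\infty$ such that the boxes $B_R(y_n)$ are pairwise disjoint (for instance $y_n=(3R+1)n\,e_1$). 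Set
$$E_n := \Bigl\{\xi(y_n)=\max_{y\in B_R(y_n)}\xi(y)\Bigr\}\cap\bigl\{\xi(y_n)\le c_R\bigr\}.$$
On $E_n$, the definition~\eqref{e:defvarrhoz} forces $\varrho_{y_n}\le R$, so $B_{\varrho_{y_n}}(y_n)\subseteq B_R(y_n)$ and the max condition implies $y_n\in\scrC$. The events $E_n$ depend on $\xi$ only inside the pairwise disjoint boxes $B_R(y_n)$, so they are mutually independent; stationarity gives a uniform lower bound $\Prob(E_n)\ge p_R>0$, which can be checked by conditioning on $\xi(y_n)$ and using that $F_{\xi(0)}$ places positive mass on some subinterval of $(\essinf\xi(0),c_R]$ (a consequence of the continuity of the upper tail in Assumption~\ref{A:dexp}). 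Second Borel--Cantelli then yields that infinitely many $E_n$ occur almost surely, and hence $|\scrC|=\infty$.

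The hardest conceptual point is precisely handling the $\xi$-dependence of $\varrho_z$: the truncation $\{\xi(y_n)\le c_R\}$ is what restores the independence needed for the second Borel--Cantelli step, while the bound $\lambda^\scrC(z)\le\xi(z)$ is what permits a completely separate and clean first Borel--Cantelli argument for the finiteness part, entirely bypassing any further analysis of $\varrho_z$.
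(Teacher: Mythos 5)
Your proof is correct and follows essentially the same route as the paper: the spectral bound $\lambda^\scrC(z)\le\xi(z)$ together with the light tails of $\xi$ yields finiteness of the level sets, and the truncation $\{\xi(y_n)\le c_R\}$ (restoring independence over disjoint boxes $B_R(y_n)$) combined with the second Borel--Cantelli lemma gives $|\scrC|=\infty$, which is exactly the paper's construction via~\eqref{E:5.7}. The only minor difference is in the finiteness step, where you apply the first Borel--Cantelli lemma per $(t,\eta)$ plus monotonicity while the paper instead invokes the almost-sure upper bound $\xi(z)\le 2\rho\ln_2|z|$ from Lemma~\ref{l:maxpotential} and then concludes deterministically via~\eqref{e:restrictionmaximizationPsi}; both are valid and rest on the same tail behaviour of $\xi$.
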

\noindent
The proof of Proposition~\ref{prop:welldefined}
will be given in Section~\ref{s:preparation}.
In order to define $Z_t$ as a c\`adl\`ag maximizer of $\Psi_{t}$, we proceed as follows.
Write $(\lambda,z)\succeq(\lambda',z')$ 
for the usual lexicographical order of $\R \times \R^d$,
i.e., $(\lambda, z) \succeq (\lambda', z')$ if either
$\lambda > \lambda'$, or $\lambda = \lambda'$ and $z \succeq z'$ according to the usual (non-strict) lexicographical order of $\R^d$.
Now define,
recursively for $k \in \N$,
\begin{equation}
\Psi^{\ssup k}_t := \sup_{z \in \scrC \setminus \{Z^{\ssup 1}_t, \ldots, Z^{\ssup {k-1}}_t\} } \Psi_t(z), \label{e:defPsik}
\end{equation}
\begin{equation}
\mathfrak{S}^{\ssup k}_t  := \left\{ z \in \scrC \setminus \{Z^{\ssup 1}_t, \ldots, Z^{\ssup {k-1}}_t\} \colon\, \Psi_t(z) = \Psi^{\ssup k}_t \right\},
\end{equation}
and
\begin{align}
\label{e:defZk}
Z^{\ssup k}_t \; \in \, \left\{ z \in \mathfrak{S}^{\ssup k}_t \colon\, \bigl(\lambda^{\scrC}(z), z \bigr) \succeq \bigl(\lambda^{\scrC}(\hat{z}), \hat{z} \bigr) \; \forall\,  \hat{z} \in \mathfrak{S}^{\ssup k}_{t} \right\}. 
\end{align}
Observe that~\eqref{e:defZk} determines~$Z^{\ssup k}_t$ uniquely.
Then we set
\begin{equation}\label{defZt}
Z_t := Z^{\ssup 1}_t.
\end{equation}
The above definitions ensure that the maps $t \mapsto \Psi^{\ssup k}_t$ are continuous while $t \mapsto Z^{\ssup k}_t$ are c\`adl\`ag, with $t \mapsto |Z_t|$ non-decreasing (see Lemma~\ref{l:propPhi} and~\eqref{e:reprZt} below).
 We point out that the choice of $\kappa$ in \eqref{e:defvarrhoz} is of minor relevance,
not affecting the asymptotic behaviour of $\Psi_t$ or its maximizers.

Note that we can have $B_{\varrho_z}(z) \cap B_{\varrho_{z'}(z')} \neq \emptyset$ for distinct $z, z' \in \scrC$.
Nevertheless, as is shown next, the relevant points of $\scrC$ are well-separated with large probability:
\begin{proposition}[Separation of relevant capitals]
\label{prop:seprelcap}
There exist subsets $\scrC_t \subset \scrC$
such that, for any $k \in \N$, $\beta \in (0,1)$, and $0<a \le b < \infty$, with probability tending to $1$ as $t \to \infty$,
\begin{equation}\label{e:seprelcap}
\{Z^{\ssup 1}_s, \ldots, Z^{\ssup k}_s \} \subset \scrC_t \; \forall \, s \in [at, bt] \;\;\; \text{ and } \;\;\; \dist(z, z') > t^\beta \, \text{ for all distinct } z, z' \in \scrC_t.
\end{equation}
\end{proposition}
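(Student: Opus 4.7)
The plan is to fix $k \in \N$, $\beta \in (0,1)$, and $0 < a \le b < \infty$, and to set
\begin{equation*}
\scrC_t = \scrC_t(k, \beta, a, b) := \bigl\{ z \in \scrC \colon\, |z| \le R\, r_t \ \text{ and } \ \lambda^{\scrC}(z) \ge a_{r_t} - K\, d_t \bigr\},
\end{equation*}
with $R = R(k, a, b)$ and $K = K(k, a, b)$ constants to be chosen sufficiently large. Here $r_t, d_t$ are the scales in \eqref{e:def_fundam_scales} and $a_{r_t}$ is as in Theorem~\ref{thm:locus}. Two things then need to be verified: that the top-$k$ maximizers $Z^{\ssup i}_s$ for $i \le k$ and $s \in [at, bt]$ all belong to $\scrC_t$ with probability tending to $1$, and that distinct points of $\scrC_t$ are separated by more than $t^\beta$ with probability tending to $1$.

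For the inclusion, I would first establish the uniform lower bound $\Psi_s(Z^{\ssup k}_s) \ge a_{r_t} - K\, d_t$ for $s \in [at, bt]$. This reduces, via the order-statistics results of \cite{BK16}, to exhibiting at least $k$ capitals inside $B_{O(r_t)}$ with $\lambda^{\scrC}(\cdot) \ge a_{r_t} - O(d_t)$, noting that the penalty $|z|\ln_3^+|z|/s$ is of order $d_t$ for $|z| \asymp r_t$ and $s \asymp t$ (since $r_t \ln_3 r_t/t = d_t$) and thus absorbs into $K$. Because $\Psi_s(z) \le \lambda^{\scrC}(z)$, this immediately yields $\lambda^{\scrC}(Z^{\ssup i}_s) \ge a_{r_t} - K\, d_t$ for every $i \le k$. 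For the spatial bound $|Z^{\ssup i}_s| \le R\, r_t$, I would invoke the identity $|z|\ln_3^+|z|/s = \lambda^{\scrC}(z) - \Psi_s(z)$ at $z = Z^{\ssup i}_s$, combined with the a~priori upper bound $\lambda^{\scrC}(z) \le \max_{B_{\varrho_z}(z)} \xi + 2d \le a_{r_t} + O(d_t)$, the latter holding with high probability uniformly over capitals in any box of polynomial radius in $t$ by standard extreme-value estimates.

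For the separation, observe that any $z \in \scrC_t$ satisfies $\xi(z) = \max_{B_{\varrho_z}(z)} \xi \ge \lambda^{\scrC}(z) - 2d \ge a_{r_t} - (K+2d)\, d_t$, since $\lambda^{\ssup 1}_{\Lambda} \le \max_{\Lambda} \xi + 2d$ in general and $z$ is by definition the argmax of $\xi$ on its ball. From the doubly-exponential tail \eqref{tail} and the definition of $a_{r_t}$, a direct computation yields
\begin{equation*}
q_t := \Prob\bigl(\xi(0) \ge a_{r_t} - (K+2d)\, d_t\bigr) \le C\, r_t^{-d}.
\end{equation*}
By i.i.d.-ness of $\xi$, for distinct sites $z \ne z'$ the events $\{\xi(z) \ge \tau\}$ and $\{\xi(z') \ge \tau\}$ are independent, so a union bound over pairs in $B_{Rr_t}$ at distance at most $t^\beta$ gives
\begin{equation*}
\Prob\bigl(\exists\, z \ne z' \in \scrC_t\colon |z - z'| \le t^\beta\bigr) \le C'\, (R r_t)^d\, t^{\beta d}\, q_t^2 = O\bigl(t^{(\beta-1)d}(\ln t \cdot \ln_3 t)^d\bigr) = o(1),
\end{equation*}
since $\beta < 1$.

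The main obstacle will be the simultaneous-in-$s$ control required in the inclusion step. A fixed-$s$ version of the lower bound on $\Psi_s(Z^{\ssup k}_s)$ is essentially an extremal statement for the Poisson-type point process $(\lambda^{\scrC}(z), z)_{z \in \scrC}$ studied in \cite{BK16}, but the dependence of the penalty $|z|\ln_3^+|z|/s$ on $s$ couples space and ``time'' nontrivially. Upgrading the fixed-$s$ estimate to one uniform over $s \in [at, bt]$ should exploit the c\`adl\`ag and monotonicity structure of $s \mapsto Z^{\ssup k}_s$ granted by Proposition~\ref{prop:welldefined}, together with the fact that only finitely many distinct capitals can serve as top-$k$ maximizers over such a macroscopic window.
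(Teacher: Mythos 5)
Your separation argument has a genuine gap, originating in the tail estimate $q_t = \Prob(\xi(0) \ge a_{r_t} - (K+2d)d_t) \le C r_t^{-d}$. That bound would hold if the threshold $a_{r_t}$ were within $O(d_t)$ of $\widehat{a}_{r_t}$, but the scale $a_t$ satisfies $a_t = \widehat{a}_t - \chi + o(1)$ (see the discussion after \eqref{e:maxorderlambda}), and the constant $\chi$ from \eqref{e:variationalproblemchi} is strictly positive. From the doubly-exponential tail \eqref{tail} (or Assumption~\ref{A:dexp} in general) one computes $\Prob(\xi(0) > \widehat{a}_L - c) = L^{-d\,e^{-c/\rho}(1+o(1))}$, so in fact $q_t = r_t^{-d\,e^{-\chi/\rho}(1+o(1))}$, which is \emph{polynomially} larger than $r_t^{-d}$. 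Plugging this into your pair union bound $O(r_t^d\, t^{\beta d}\, q_t^2)$ shows that it vanishes only when $\beta < 2e^{-\chi/\rho}-1$, a strict subinterval of $(0,1)$ that is empty whenever $\chi \ge \rho\ln 2$. The root cause is that you replaced the event ``$\lambda^{\scrC}(z)$ is close to $a_{r_t}$'' by the single-site event ``$\xi(z)$ is close to $a_{r_t}$''; these thresholds differ by the constant $\chi$, and an additive shift by a constant in the threshold produces a polynomial change in the doubly-exponential tail, which your union bound cannot afford.

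The paper sidesteps pair union bounds entirely. It takes $\scrC_t := \{z \in \scrC \colon B_{\varrho_z}(z) \subset B_{L_t},\ \lambda^{\scrC}(z) > a_{r_t} - d_t g_t\}$, notes that $a_{r_t} - d_t g_t = \widehat{a}_{L_t} - \chi + o(1)$, and invokes Lemma~\ref{l:comparisoncapitalsislands}(ii) to identify each $z \in \scrC_t$ with the capital $z_\CC$ of a relevant island $\CC \in \mathfrak{C}^\delta_{L_t,A}$. Lemma~\ref{l:properties_opt_comps}(iii) then gives $\dist(\CC,\CC') > 4dN_{L_t}$ for distinct relevant islands, and since $N_L$ can be taken with $L^{\beta'} \ll N_L \ll L^{\alpha'}$ for any $\beta < \beta' < \alpha' < 1$, this yields separation $\gg t^\beta$ for \emph{every} $\beta \in (0,1)$. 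The island separation there is structural rather than a tail-probability bound: as established in \cite{BK16}, if two relevant islands were within $O(N_L)$ of each other, merging them into one island would force $\LL_\CC(\xi - \widehat{a}_{L_t}) > 1$, contradicting Lemma~\ref{l:uppbddcurlyL}. Your inclusion step, by contrast, is essentially sound, and is what Proposition~\ref{prop:PPPconv} delivers through the event $\EE^{\ssup k}_{t,a,b}$, which controls $\Psi^{\ssup k}_s$ and $|Z^{\ssup i}_s|$ uniformly over $s \in [at,bt]$.
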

\noindent
Proposition~\ref{prop:seprelcap} will be proved in Section~\ref{s:cost}.

\begin{remark}\label{r:formfunctional}
It would have been perhaps more natural to define $\Psi_t$
with $\ln_3^+|z|$ substituted by $\ln \lambda^{\scrC}(z)$,
which is a form that appears in the literature
(see also the proof of Proposition~\ref{prop:lowerbound_forproof}).
The analysis is slightly simpler with our definition,
cf.\ Section~\ref{s:cost} below.
Substituting however $\ln_3^+ |z|$ by $\ln_3 t$ (which is the leading order of $\ln_3^+ |Z_t|$)
would not be as convenient, as this would complicate our proof of functional convergence.
\end{remark}

\subsection{Properties of the cost functional}
\label{ss:propertiescost}\noindent
The technical statements start with a discussion of the properties of the above cost functional~$\Psi_{t}$ and the process~$Z_t$. 
Recall the definitions of~$r_t$ and $d_t$ from~\eqref{e:def_fundam_scales}.
The various error estimates that are to follow will require a host of auxiliary scales. 
First we fix $t \mapsto \epsilon_t \in (0,1)$, $\epsilon_t \gg (\ln_3 t)^{-1}$ arbitrary as in the statement of Theorem~\ref{thm:pathconcentration}; 
 note that $\epsilon_t$ may converge to $0$.
Then, similarly to \cite{MP16}, 
we fix~$e_t$, $f_t$, $g_t$, $h_t$ and~$b_t$ such that
\begin{equation}
\label{e:relscales1}
e_t,f_t,h_t,b_t\,\underset{t\to\infty}\longrightarrow\,0\quad\text{and}\quad
g_t\,\underset{t\to\infty}\longrightarrow\,\infty
\end{equation}
while also
\begin{equation}\label{e:relation_scales}
\frac{g_t}{\epsilon_t \ln_3 t} \ll b_t \ll f_t h_t
\quad\text{and}\quad g_t h_t \ll  e_t.
\end{equation}
As an example of scales satisfying \twoeqref{e:relscales1}{e:relation_scales}, one may take 
suitable
powers of $\epsilon_t \ln_3 t$. We then have:

\begin{proposition}
\label{prop:goodevent_forproof}
Fix $0<a \le b < \infty$. Then, with probability tending to $1$ as $t \to \infty$,
\RV
\begin{equation}
\inf_{s \in [at, bt]} \Psi^{\ssup 1}_s > \left(\rho + o(1) \right) \ln_2 t,
\end{equation}
\eRV
\begin{equation}
\label{e:defgoodevent_forproof}
\left(\Psi^{\ssup 1}_{at}-\Psi^{\ssup 2}_{at} \right) \wedge \left(\Psi^{\ssup 1}_{bt}-\Psi^{\ssup 2}_{bt} \right) > d_te_t
\end{equation}
and
\begin{equation}\label{e:boundsZ}
\quad r_t f_t <  \inf_{s \in [at,bt]}\, |Z_s| \le \sup_{s \in [at,bt]}\, |Z_s| < r_t g_t.
\end{equation}
\end{proposition}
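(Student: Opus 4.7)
The plan is to analyze $\Psi_s$ by comparing it, on the relevant spatial scale $r_t$, with the idealized functional $\psi_\theta$ from~\eqref{e:defpsitheta} evaluated on a Poisson point process; the three claims would then be read off almost-sure properties of this limiting object.

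First I would parametrize: for a capital $z \in \scrC$, set $Z_z := z/r_t$ and $\Lambda_z := (\lambda^\scrC(z) - a_{r_t})/d_t$. Using $a_{r_t} = (\rho + o(1))\ln_2 t$ and, for $|z| = \Theta(r_t)$, $\ln_3^+|z| = (1+o(1))\ln_3 t$ together with $t d_t/\ln_3 t = r_t$, a direct rearrangement gives
\[
\frac{\Psi_s(z) - a_{r_t}}{d_t} \;=\; \psi_{s/t}(\Lambda_z, Z_z) + o(1),
\]
uniformly over $s \in [at, bt]$ and over $z$ with $|Z_z|$ in any compact set bounded away from~$0$ and $\Lambda_z = O(1)$. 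I would then invoke the eigenvalue order-statistics result of~\cite{BK16}, combined with the separation estimate of Proposition~\ref{prop:seprelcap} and the point-process tools developed in Section~\ref{s:cost}, to obtain convergence of $\{(\Lambda_z, Z_z): z \in \scrC_t,\ |Z_z| \in [f_t, g_t]\}$ to a Poisson point process on $\R \times \R^d$ with intensity $\texte^{-\lambda}\textd\lambda \otimes \textd z$.

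Granting these two inputs, each of the three claims follows from an a.s.\ property of the limit. For~\eqref{e:defgoodevent_forproof}, the gap between the first and second maximum of $\psi_\theta$ over the limiting PPP is strictly positive and has a continuous distribution at each of $\theta = a$ and $\theta = b$, so its pre-limiting avatar $(\Psi^{\ssup 1}_s - \Psi^{\ssup 2}_s)/d_t$ exceeds any slow sequence $e_t \to 0$ with probability tending to~$1$. For the $\ln_2 t$ lower bound, the same framework shows tightness of $(\Psi^{\ssup 1}_s - a_{r_t})/d_t$, hence $\Psi^{\ssup 1}_s \ge a_{r_t} - O(d_t) = (\rho + o(1))\ln_2 t$ with high probability; uniformity in $s \in [at,bt]$ is obtained via the c\`adl\`ag structure of $s \mapsto \Psi^{\ssup 1}_s$ noted after~\eqref{defZt} together with finiteness (a.s.) of the jumps of the limit on $[a,b]$. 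For~\eqref{e:boundsZ}, the maximizer location $\overline Z_\theta$ of the limiting PPP lies a.s.\ at positive, finite distance from the origin, and the monotonicity of $s\mapsto|Z_s|$ allows passage from pointwise control at $s \in \{at,bt\}$ to uniform control on $[at,bt]$; slow divergence of $g_t$ and slow decay of $f_t$ then translate to $|Z_s|/r_t \in [f_t, g_t]$.

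The hard part will be making the approximation and the Poisson convergence quantitative enough to accommodate the slowly varying scales: the window $[f_t, g_t]$ diverges while the error in the comparison $\Psi_s \approx a_{r_t} + d_t \psi_{s/t}$ grows toward its edges, and the gap threshold $e_t$ decays. Specifically, one must (i) upgrade the intensity control of~\cite{BK16} to the growing window $B_{r_t g_t}$, (ii) rule out contributions from $|z| < r_t f_t$ by a small-box upper bound on $\lambda^\scrC(z)$, and (iii) show that for $|z| > r_t g_t$ the penalty $|z|\ln_3^+|z|/t \gg d_t$ overwhelms any eigenvalue fluctuation. All three estimates rely crucially on the precise calibration of scales \twoeqref{e:relscales1}{e:relation_scales}, which is exactly what the intricate chain $g_t/(\epsilon_t \ln_3 t) \ll b_t \ll f_t h_t$ and $g_t h_t \ll e_t$ is engineered to guarantee.
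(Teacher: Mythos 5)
Your proposal is essentially the same approach as the paper's, just written out in expanded form: the paper proves this in one line, citing Proposition~\ref{prop:PPPconv} together with \eqref{e:def_fundam_scales} and $a_t \sim \rho\ln_2 t$, and the three claims are exactly what the event $\EE_{t,a,b}$ in \eqref{e:defgoodevents} packages. What you are doing is re-deriving the content of Proposition~\ref{prop:PPPconv}: your rescaling $\Lambda_z = (\lambda^\scrC(z)-a_{r_t})/d_t$, $Z_z = z/r_t$ and the identity $(\Psi_s(z)-a_{r_t})/d_t = \psi_{s/t}(\Lambda_z, Z_z) + o(1)$ (uniformly for $|Z_z|$ in compacts away from $0$) is precisely the content of \twoeqref{e:reprPsiPP}{e:defvarthetat}, and your "hard part" paragraph flags, correctly, what the compactification $\mathfrak E$ of Appendix~B and the Skorohod-continuity Lemma~\ref{l:continuityPhiskorohod} are designed to handle, including the slow growth of $g_t$ and slow decay of $f_t$.

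One caution, however: you invoke Proposition~\ref{prop:seprelcap} as an input to the point-process convergence, but in the paper Proposition~\ref{prop:seprelcap} is itself a \emph{consequence} of Proposition~\ref{prop:PPPconv}. If you wire things up this way you get an apparent circularity. It is not a fatal gap, because the separation fact you actually need to make the coarse-graining work is Lemma~\ref{l:properties_opt_comps}(iii) together with Lemma~\ref{l:comparisoncapitalsislands}, which are available independently of Proposition~\ref{prop:PPPconv}; but as written the dependency chain is backwards, and you should cite the coarse-graining lemmas directly rather than Proposition~\ref{prop:seprelcap}. Similarly, your step~(ii) on ruling out $|z| < r_t f_t$ is framed as needing a small-box upper bound on $\lambda^\scrC(z)$; this is not actually how it is handled, and would be delicate since $a_{r_t f_t}$ and $a_{r_t}$ differ only by $o(1)$. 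The clean argument is simply that the limiting maximizer $\overline Z_\theta$ lies a.s.\ in $\R^d\setminus\{0\}$ because the intensity measure $\texte^{-\lambda}\textd\lambda\otimes\textd z$ gives no mass to $\R\times\{0\}$, so $\inf_{\theta\in[a,b]}|\overline Z_\theta|>0$ a.s., and $f_t\to 0$ does the rest. Your use of monotonicity of $s\mapsto|Z_s|$ to pass from endpoint to uniform control in \eqref{e:boundsZ} is a valid alternative to the Skorohod-level argument the paper uses.
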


Proposition~\ref{prop:goodevent_forproof} is proved in Section~\ref{s:cost},
together with Theorems~\ref{thm:locus}--\ref{thm:aging_locus}.
The proofs rely strongly on the extreme order statistics of the principal Dirichlet eigenvalue in a box identified in \cite{BK16} 
and, similarly to the approach of \cite{KLMS09, MOS11, LM12, ST14, FM14, MP16}, on a Poisson point process approximation.
However, in order to deal with the fact that the local eigenvalues do not depend on bounded regions in space, 
a coarse-graining scheme taken from \cite{BK16} is required.
Our approach provides a quite direct implication of functional convergence and aging for $Z_t$
from the convergence of the underlying point process (in a suitable topology),
see in particular Lemmas~\ref{l:PPPconvcompact}, \ref{l:continuityPhiskorohod} and \ref{l:convergencenojumps} below.
We believe that this approach could be useful to prove analogous results in other contexts, e.g., 
the PAM with lighter potential tails.

Notice that in~\eqref{e:defgoodevent_forproof} we only require a gap between $\Psi^{\ssup 1}_{s}$ and $\Psi^{\ssup 2}_{s}$
for $s \in \{at, bt\}$. 
This is because, 
while the gap is greater than $d_t e_t$ with large probability 
at both $at$ and $bt$, 
there is by~\eqref{e:aging_locus} 
a non-zero probability that $s \mapsto Z_{s}$ jumps in the interval $[at,bt]$,
leading to a zero gap at the jump time.
Notwithstanding, if no such jump occurs,
then the gap remains uniformly positive throughout the interval.
Indeed, define
\begin{equation}
\label{e:defgapevent}
\GG_{t,s} := \left\{ \Psi^{\ssup 1}_{s} - \Psi^{\ssup 2}_{s} \ge d_t e_t \right\}.
\end{equation}
Then we have:
\begin{proposition}
\label{prop:stabilitygap}
With probability one,
for any $0 < a \le b < \infty$ and any $t > 0$,
\begin{equation}\label{e:stabilitygap}
\GG_{t, at} \cap \GG_{t, bt} \cap \left\{Z_{at}=Z_{bt} \right\} = \bigcap_{s \in [at,bt]} \bigl(\GG_{t, s} \cap \{Z_s = Z_{at} \}\bigr).
\end{equation}
\end{proposition}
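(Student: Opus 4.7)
My plan rests on a monotonicity property intrinsic to the cost functional. For any two points $z,z'\in\scrC$, the definition \eqref{e:defPsi} of $\Psi_s$ gives
\begin{equation*}
\Psi_s(z) - \Psi_s(z') \;=\; \lambda^{\scrC}(z) - \lambda^{\scrC}(z') \;-\; \frac{|z|\ln_3^+|z| \,-\, |z'|\ln_3^+|z'|}{s},
\end{equation*}
which is a monotone function of $s\in(0,\infty)$ --- non-decreasing if $|z|\ln_3^+|z|\ge|z'|\ln_3^+|z'|$ and non-increasing otherwise. In particular, on any compact interval $[at,bt]$ this map attains its minimum at one of the two endpoints.

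The inclusion $\supseteq$ in~\eqref{e:stabilitygap} is immediate by specializing the right-hand intersection to $s=at$ and $s=bt$. For the reverse inclusion, I would work on the probability-one event of Proposition~\ref{prop:welldefined} on which every supremum $\Psi^{\ssup k}_s$ defining~\eqref{e:defPsik} is attained. Suppose $\GG_{t,at}\cap\GG_{t,bt}\cap\{Z_{at}=Z_{bt}\}$ occurs and abbreviate $z^*:=Z_{at}=Z_{bt}$. Because $z^*$ is the strict maximizer of both $\Psi_{at}$ and $\Psi_{bt}$ over $\scrC$ (thanks to the two gap events), the definition of $\Psi^{\ssup 2}$ in~\eqref{e:defPsik} yields
\begin{equation*}
\Psi_{at}(z^*) - \Psi_{at}(z) \;\ge\; d_t e_t \quad\text{and}\quad \Psi_{bt}(z^*) - \Psi_{bt}(z) \;\ge\; d_t e_t \qquad\text{for every } z \in \scrC\setminus\{z^*\}.
\end{equation*}

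Now I would apply the monotonicity observation separately to each $z\in\scrC\setminus\{z^*\}$: since the map $s\mapsto\Psi_s(z^*)-\Psi_s(z)$ is monotone on $(0,\infty)$, its minimum over $[at,bt]$ is realized at an endpoint, so it is $\ge d_t e_t$ uniformly in $s\in[at,bt]$. Consequently $z^*$ is the unique maximizer of $\Psi_s$ over $\scrC$ for every such $s$, whence $Z_s=z^*$ and $\Psi^{\ssup 1}_s-\Psi^{\ssup 2}_s\ge d_t e_t$; i.e., $\GG_{t,s}$ holds. This proves the inclusion $\subseteq$ and completes the argument. No real obstacle is expected: the entire argument is deterministic in $\xi$ once the supremum in~\eqref{e:defPsik} is known to be attained, and the apparent issue of the lexicographic tie-breaking in~\eqref{e:defZk} is rendered moot by the strict gap that prevents ties at every $s\in[at,bt]$.
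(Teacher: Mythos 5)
Your proof is correct. The core insight---that $s\mapsto\Psi_s(z^*)-\Psi_s(z)$ is monotone for each fixed pair, so a gap present at both endpoints persists on the whole interval---is precisely what drives the paper's argument as well. Where you differ is organizational: the paper splits the claim across two abstract lemmas in its point-process framework. Lemma~\ref{l:nojumpintermsofPP} first shows that $Z_{at}=Z_{bt}$ forces $Z_s=Z_{at}$ on all of $[at,bt]$ (via the monotonicity of $\theta\mapsto\Lambda^{\ssup 1}_\vartheta(\PP)(\theta)$ from Lemma~\ref{l:propPhi}(i)), and only then does Lemma~\ref{l:gapstability} show the gap $\Psi^{\ssup 1}-\Psi^{\ssup 2}$ is minimized at an endpoint. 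The gap-stability lemma works with the time-varying pair $\bigl(\Xi^{\ssup 1}_\vartheta(\PP)(\theta),\Xi^{\ssup 2}_\vartheta(\PP)(\theta)\bigr)$, using a case split on which of $|\vartheta(\Xi^{\ssup 1})|,|\vartheta(\Xi^{\ssup 2})|$ is larger to decide whether to compare with the gap at $a$ or at $b$. Your argument instead fixes $z^*$ and compares it pairwise against each $z\in\scrC\setminus\{z^*\}$, deducing the constancy of $Z_s$ and the uniform gap simultaneously in one pass, which avoids having to track the moving second maximizer. The paper's split is chosen because Lemma~\ref{l:nojumpintermsofPP} is reused independently in the proof of Theorem~\ref{thm:aging_locus} and Lemma~\ref{l:gapstability} is phrased in the generality of $\scrM_{\textnormal{P},\vartheta}$; your version is more self-contained and elementary for this specific proposition, at the cost of not producing intermediate statements reusable elsewhere. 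One minor point worth making explicit: on the event $\GG_{t,at}\cap\{Z_{at}=z^*\}$ you need $\Psi^{\ssup 1}_{at}$ to be attained at $z^*$, which is indeed guaranteed on the probability-one event of Proposition~\ref{prop:welldefined} (finiteness of super-level sets of $\Psi_t$ plus $\scrC\neq\emptyset$), as you correctly invoke.
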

\noindent
The proof of Proposition~\ref{prop:stabilitygap}
is related to that of Theorem~\ref{thm:aging_locus}, and so it
is relegated to Section~\ref{s:cost} as well.

\subsection{Mass decomposition and negligible contributions}
\label{ss:massdecomp}\noindent
Having dealt with the cost functional and localization process, 
we proceed by giving estimates on the solution to \twoeqref{PAM}{PAMinitial}. 
As noted already earlier, this solution can be written using the Feynman-Kac formula 
\eqref{FK}, which offers the strategy to control $u(t,x)$ 
by decomposing the expectation based on various restrictions on the underlying random walk. 
A starting point is a good lower bound on the total mass~$U(t)$:

\begin{proposition}\label{prop:lowerbound_forproof}
For any $0<a \le b < \infty$,
\begin{equation}
\label{e:lowerbound_forproof}
\inf_{s \in [at,bt]}\Bigl\{\ln U(s) - s \Psi^{\ssup 1}_s\Bigr\} \ge  o(t  d_t  b_t \epsilon_t)
\end{equation}
holds with probability tending to $1$ as $t\to\infty$.
\end{proposition}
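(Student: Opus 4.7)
The plan is to implement the random-walk strategy suggested by the heuristic of Section~\ref{s:connections}: the walk travels from $0$ to the island $B_{\varrho_{Z_s}}(Z_s)$ during a short initial interval $[0,\tau_s]$, and then remains inside this island for the remainder of the time. To appreciate the target tolerance, note that by Proposition~\ref{prop:goodevent_forproof} we have, on the good event and uniformly in $s \in [at,bt]$,
\begin{equation}
|Z_s| \le r_t g_t = \frac{t d_t g_t}{\ln_3 t} \ll t d_t b_t \epsilon_t,
\end{equation}
the last inequality following from~\eqref{e:relation_scales}. Hence it suffices to prove a bound of the form $\ln U(s) - s\Psi^{\ssup 1}_s \ge -O(|Z_s|)$ uniformly in $s \in [at,bt]$, on this event.

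Write $B_s := B_{\varrho_{Z_s}}(Z_s)$, let $\phi_s > 0$ be the $\ell^2$-normalized principal Dirichlet eigenfunction of $H_{B_s}$ (simple by Perron-Frobenius), and pick $y^*_s \in B_s$ maximizing $\phi_s$. Set $\tau_s := \lfloor |Z_s|/\ln_2^+|Z_s| \rfloor$, which satisfies $\tau_s \ll |Z_s|$ and lands the walk in the large-deviations regime. Feynman-Kac~\eqref{e:deftotalmass} together with the Markov property at time $\tau_s$, restricted to paths that are at $y^*_s$ at that time and stay in $B_s$ afterwards, yields
\begin{equation}
U(s) \ge \E_0\Bigl[\e^{\int_0^{\tau_s}\xi(X_r)\textd r}\1\{X_{\tau_s}=y^*_s\}\Bigr] \cdot \E_{y^*_s}\Bigl[\e^{\int_0^{s-\tau_s}\xi(X_r)\textd r}\1\{X_r \in B_s\,\forall r\}\Bigr].
\end{equation}
For the second factor, the pointwise inequality $\1_{B_s} \ge \phi_s/\|\phi_s\|_\infty$ combined with the eigenfunction identity $(\e^{tH_{B_s}}\phi_s)(y) = \e^{t\lambda^\scrC(Z_s)}\phi_s(y)$ gives
\begin{equation}
\E_{y^*_s}\Bigl[\e^{\int_0^{s-\tau_s}\xi(X_r)\textd r}\1\{X_r \in B_s\,\forall r\}\Bigr] \ge \frac{\phi_s(y^*_s)}{\|\phi_s\|_\infty}\e^{(s-\tau_s)\lambda^\scrC(Z_s)} = \e^{(s-\tau_s)\lambda^\scrC(Z_s)}.
\end{equation}
The first factor is exactly what Lemma~\ref{l:lbinitialmasscontrib} is tailored for; the latter uses Assumption~\ref{A:integlowtailxi} to bound the accumulated $\xi^-$ along a minimal-length lattice path from $0$ to $y^*_s$ and delivers
\begin{equation}
\ln \E_0\Bigl[\e^{\int_0^{\tau_s}\xi(X_r)\textd r}\1\{X_{\tau_s}=y^*_s\}\Bigr] \ge -|y^*_s|\ln\!\tfrac{|y^*_s|}{\tau_s} - O(|y^*_s|+\tau_s).
\end{equation}

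Combining the two estimates and using that $\varrho_{Z_s} \le |Z_s|^{\kappa}$ (so $|y^*_s| = |Z_s|(1+o(1))$ and the discrepancy is absorbed by $O(|Z_s|)$), together with $\ln(|y^*_s|/\tau_s) = \ln_3^+|Z_s|(1+o(1))$ and $\tau_s\lambda^\scrC(Z_s) = O(|Z_s|)$ (since $\lambda^\scrC(Z_s) \le (\rho+o(1))\ln_2 t$ while $\ln_2^+|Z_s| \asymp \ln_2 t$), one arrives at
\begin{equation}
\ln U(s) \ge s\lambda^\scrC(Z_s) - |Z_s|\ln_3^+|Z_s| - O(|Z_s|) = s\Psi^{\ssup 1}_s - O(|Z_s|),
\end{equation}
which proves~\eqref{e:lowerbound_forproof} thanks to the first paragraph.

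The main obstacle lies in Lemma~\ref{l:lbinitialmasscontrib} itself: the bare random-walk probability $\P_0(X_{\tau_s} = y^*_s)$ is controlled from below by $\e^{-|y^*_s|\ln(|y^*_s|/\tau_s) - O(|y^*_s|+\tau_s)}$ via Stirling's formula applied to the event that the walk makes exactly $|y^*_s|$ jumps in time $\tau_s$ along a fixed geodesic, but one must also verify that the potential term $\int_0^{\tau_s}\xi(X_r)\textd r$ along this path does not undo this bound. It is precisely here that the integrability condition~\eqref{e:integlowtailxi} enters, ensuring that very negative potential values are sufficiently sparse along the chosen route (or a sausage-like thickening thereof) from $0$ to $y^*_s$.
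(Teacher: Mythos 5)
Your strategy is the same as the paper's: decompose the Feynman--Kac formula into a fast journey to the island $B_{\varrho_{Z_s}}(Z_s)$ followed by a long residence there, bound the journey with Lemma~\ref{l:lbinitialmasscontrib}, bound the residence by the principal eigenvalue, and check that the total error is $O(|Z_s|) \ll t d_t b_t \epsilon_t$. Your residence bound via $\1_{B_s} \ge \phi_s/\|\phi_s\|_\infty$ is a clean variant of the paper's Lemma~\ref{l:bounds_mass} that trades the $|\phi^{\ssup 1}_\Lambda(x_0)|^2 \ge |\Lambda|^{-1}$ step for the identity $\phi_s(y^*_s)=\|\phi_s\|_\infty$; either way the resulting loss is negligible. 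Your choice $\tau_s = \lfloor|Z_s|/\ln_2^+|Z_s|\rfloor$ and the paper's $\theta = K|x_0|/\lambda^\scrC(Z_s)$ are of the same order, and the scaling check $|Z_s| \le r_t g_t \ll t d_t b_t \epsilon_t$ via~\eqref{e:relation_scales} is correct.

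There is one genuine, if small, gap. You apply the Markov property at the \emph{fixed} time $\tau_s$ and insist $X_{\tau_s}=y^*_s$, so the journey factor is $\E_0\bigl[\e^{\int_0^{\tau_s}\xi(X_r)\textd r}\1\{X_{\tau_s}=y^*_s\}\bigr]$. That is not what Lemma~\ref{l:lbinitialmasscontrib} controls: the lemma bounds $\E_0\bigl[\e^{\int_0^{\tau_x}\xi}\1\{\tau_x\le\theta\}\bigr]$, with the integral cut off at the \emph{hitting time}, so no time is ever spent sitting at the target. In your version, after the geodesic is traversed at time $T_{|y^*_s|}$, the walk accrues $(\tau_s - T_{|y^*_s|})\,\xi(y^*_s)$ while waiting at $y^*_s$, and $\xi(y^*_s)$ is not a priori bounded below --- $y^*_s$ is defined as an $\ell^\infty$-maximizer of $\phi_s$, not a point of high potential, so one would need an extra argument (e.g.\ eigenfunction concentration plus Lemma~\ref{l:potential_rel_islands_lwbd}) to rule out a very negative $\xi(y^*_s)$. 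The paper avoids this entirely by using the \emph{strong} Markov property at the stopping time $\tau_{x_0}$, which matches Lemma~\ref{l:lbinitialmasscontrib} on the nose and leaves only the cheap prefactor $\e^{-\theta|\lambda^{\ssup 1}_\Lambda|}$ to absorb. Switching to the hitting-time formulation closes your gap at essentially no cost and is what the paper does.
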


For $\Lambda \subset \Z^d$, let
\begin{equation}\label{e:deftauLambda}
\tau_{\Lambda} := \inf \{s > 0 \colon\, X_s \in \Lambda\}
\end{equation}
denote the first hitting time of $\Lambda$ \RS by the random walk $X$. \eRS 
Our decomposition of~\eqref{FK} begins by restricting the expectation to paths that never leave a box of side-length
\begin{equation}\label{e:defLt}
L_t := \lfloor t \ln^+_2 t\rfloor, \quad \text{ where } \; \ln^+_2 t := \ln_2 (t \vee \texte).
\end{equation}
This restriction comes at little loss since we have:
\begin{proposition}
\label{prop:macroboxtruncation_forproof}
For any $0<a\le b < \infty$, there is a $t_0=t_0(\xi)$ with $t_0<\infty$ a.s.\ such that
\begin{equation}
\label{e:macroboxtruncation_forproof}
\sup_{s \in [at,bt]}\ln \E_0 \left[ \texte^{\int_0^s \xi(X_u) \textd u } \1\{ \tau_{B^\cc_{L_t}} \le s\} \right] \le 
-\frac18 t(\ln_2 t)\ln_3 t
\end{equation}
holds whenever $t>t_0$.
\end{proposition}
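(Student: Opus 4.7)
The plan is to exploit the fact that any path reaching $B_{L_t}^c$ from the origin by time $s \le bt$ must perform at least $L_t=\lfloor t\ln_2^+ t\rfloor$ jumps, which for the Poisson$(2ds)$ number of jumps $N_s$ of $X$ is controlled via the Chernoff bound:
\begin{equation*}
\mathbb{P}(N_s\ge L_t) \le \Bigl(\tfrac{2edbt}{\ln_2 t}\Bigr)^{L_t} \le \exp\bigl\{-L_t\,\ln_3 t\,(1+o(1))\bigr\}
\end{equation*}
uniformly in $s\in[at,bt]$. This purely probabilistic cost, of order $-t(\ln_2 t)\ln_3 t$, must be shown to overwhelm any energy the path could extract from the potential along the way.

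To this end, I would bound $\int_0^s \xi(X_u)\,du \le s\,\Xi_{N_s}$ with $\Xi_k:=\max_{x\in B_k}\xi(x)$, and use the independence of $\xi$ and $X$ to write
\begin{equation*}
\mathbb{E}_0\Bigl[e^{\int_0^s \xi(X_u)\,du}\mathbf{1}\{\tau_{B_{L_t}^c}\le s\}\Bigr] \le \sum_{k\ge L_t} e^{s\Xi_k}\,\mathbb{P}(N_s=k).
\end{equation*}
I would then split this sum at $R_t:=L_t\ln_2 t$. On $L_t\le k\le R_t$, the doubly-exponential upper tails yield (via $\mathbb{P}(\Xi_R>r)\le|B_R|\exp\{-e^{r/\rho(1+o(1))}\}$) that $\Xi_{R_t}\le 2\rho\ln_2 t$ a.s.\ for all $t$ large enough, so this range contributes at most $\exp\{2\rho bt\ln_2 t\}\,\mathbb{P}(N_s\ge L_t)$, which is still of order $\exp\{-t\,\ln_2 t\,\ln_3 t\,(1+o(1))\}$ because $\ln_3 t\gg 2\rho b$. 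For $k>R_t$, a second Borel--Cantelli argument gives $\Xi_k\le C\ln k$ a.s.\ for all large $k$, so $e^{s\Xi_k}\le e^{Cbt\ln k}$; multiplied against $\mathbb{P}(N_s=k)\le(2edbt/k)^k$, this yields a convergent geometric sum whose first term $\exp\{-R_t\ln_2 t(1+o(1))\}$ is negligible relative to the target bound.

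The main delicacy, in my view, is upgrading ``with high probability'' to the almost-sure conclusion that $t_0(\xi)<\infty$ a.s. I would apply Borel--Cantelli along a geometric sequence $t_n=2^n$ for both auxiliary events $\{\Xi_{R_{t_n}}\le 2\rho\ln_2 t_n\}$ and $\{\Xi_k\le C\ln k\ \forall\, k\ge R_{t_n}\}$, whose failure probabilities decay super-polynomially in $n$ thanks to the doubly-exponential tails. Interpolation to all $t\in[t_n,t_{n+1})$ is immediate from the monotonicity of $k\mapsto\Xi_k$ together with the slow variation of the thresholds $2\rho\ln_2 t$ and $C\ln k$. Since the estimates above depend on $s\in[at,bt]$ only through $s\le bt$, the supremum over $s$ is automatic, and the constant $1/8$ in the stated bound is reached with ample room.
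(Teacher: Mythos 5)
Your overall strategy — bound $\int_0^s\xi(X_u)\,\textd u$ by $s$ times a running maximum of the potential, and pay for the walk's $\ge L_t$ steps via the Poisson tail $(2edbt/k)^k$ — is the same as the paper's (which decomposes over the maximal $\ell^1$-radius $n$ reached and invokes $\P_0(\sup_{\theta\le s}|X_\theta|=n)\le(2ds)^n/n!$). However, your treatment of the far range $k>R_t:=L_t\ln_2 t$ has a quantitative gap: the a.s.\ bound $\Xi_k\le C\ln k$ is far too crude there. At $k\approx R_t\approx t(\ln_2 t)^2$ your summand is bounded by
\begin{equation*}
\exp\bigl\{Cbt\ln k - k\ln(k/(2edbt))\bigr\}
\approx \exp\bigl\{Cbt\ln t - 2t(\ln_2 t)^2\ln_3 t\bigr\},
\end{equation*}
and since $\ln t \gg (\ln_2 t)^2\ln_3 t$, the exponent is \emph{positive} and diverging — your claimed first term $\exp\{-R_t\ln_2 t(1+o(1))\}$ does not hold. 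The culprit is using $\Xi_k=O(\ln k)$ when in fact the doubly-exponential tail gives $\Xi_k\le 2\rho\ln_2 k$ a.s.\ eventually (this is precisely Lemma~\ref{l:maxpotential}, via Corollary~2.7 of \cite{GM98}); with $C\ln k$ the estimate only begins to close around $k\gtrsim t\ln t/\ln_2 t$, well beyond your split point.

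The fix — which also simplifies your argument — is to use the sharper bound $\max_{x\in B_n}\xi(x)\le 2\rho\ln_2 n$ throughout, as the paper does. Then the $n$-th summand is at most $\exp\{2\rho bt\ln_2 n - n(\ln n-\ln t-c)\}$; for $n\ge L_t$ one has $\ln n-\ln t-c\ge\tfrac12\ln_3 t$, and since $x\mapsto 2\rho bt\ln_2 x-\tfrac{x}{4}\ln_3 t$ is decreasing on $[L_t,\infty)$ and negative at $x=L_t$, the whole tail is dominated by $\sum_{n\ge L_t}\texte^{-\frac{n}{4}\ln_3 t}\le 2\,\texte^{-\frac{L_t}{4}\ln_3 t}$ — no split needed, and no separate dyadic Borel--Cantelli argument for the a.s.\ $t_0(\xi)$ either, because the single a.s.\ statement from Lemma~\ref{l:maxpotential} (monotone in $n$) already supplies the eventual-in-$t$ validity. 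Your dyadic scheme would also work, but it is extra machinery addressing a problem the cited lemma already solves.
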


Next we show that the bulk of the contribution to
the Feynman-Kac formula comes from paths that do not 
even leave the random domain
\begin{equation}\label{e:def_innerdomain}
D^\circ_{t,s} := \bigl\{x \in \Z^d \colon\, |x| \le |Z_s|(1+h_t)\bigr \}.
\end{equation}
Indeed, the contribution of paths that leave this set is bounded via: 
\begin{proposition}\label{prop:randomtruncation_forproof}
For any $0<a \le b < \infty$,
\begin{multline}
\qquad
\sup_{s \in [at,bt]} \biggl\{\ln \E_0 \left[ \texte^{\int_0^{s} \xi(X_u) \textd u } \1 \{ \tau_{(D^\circ_{t,s})^\cc} \le s < \tau_{B^\cc_{L_t}} \} \right]
\\
- \max \left\{ s \Psi^{\ssup 2}_{s} \,,\; s \Psi^{\ssup 1}_{s} - h_t |Z_s| \ln_3 t \right\} \biggr\}\le o(t d_t b_t)
\qquad
\end{multline}
holds with probability tending to $1$ as $t \to \infty$.
\end{proposition}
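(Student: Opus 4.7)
The plan is to reduce the Feynman--Kac expectation to a sum over eigenvalues of the Dirichlet restriction of $\Delta+\xi$ to $B_{L_t}$ (as in~\eqref{e:specdecompLambda}) and then to exploit (i)~the localization of the relevant eigenfunctions around the capitals $z\in\scrC_t$ together with (ii)~the cost estimate encoded in $\Psi_s$. Throughout, I would work on the intersection of the high-probability events from Propositions~\ref{prop:welldefined}--\ref{prop:lowerbound_forproof}.

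First, I would dominate the Feynman--Kac expectation restricted to $\{\tau_{(D^\circ_{t,s})^c}\le s<\tau_{B^c_{L_t}}\}$ by a bilinear expression of the form
\begin{equation*}
\sum_k \e^{s\lambda^{\ssup k}_{B_{L_t}}}\,|\phi^{\ssup k}_{B_{L_t}}(0)|\sum_{y\in(D^\circ_{t,s})^c\cap B_{L_t}}|\phi^{\ssup k}_{B_{L_t}}(y)|,
\end{equation*}
up to polynomial corrections in $|B_{L_t}|$. This follows either from~\eqref{e:specdecompLambda} combined with Cauchy--Schwarz, or from the path-expansion machinery of the paper applied after a strong Markov decomposition at the first exit time from $D^\circ_{t,s}$, with the continuation expanded spectrally in $B_{L_t}$.

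Next, I would invoke the eigenfunction-localization statements (to be established in a later section of the paper): each eigenfunction $\phi^{\ssup k}_{B_{L_t}}$ with eigenvalue near the top is concentrated around a unique capital $z_k\in\scrC_t$, with $\lambda^{\ssup k}_{B_{L_t}}=\lambda^{\scrC}(z_k)+o(d_tb_t)$ and a Combes--Thomas-type decay $|\phi^{\ssup k}_{B_{L_t}}(x)|\le\e^{-(1+o(1))|x-z_k|\ln_3 t}$ uniform over distances up to $L_t$, the rate coming from $\ln\lambda^{\ssup k}_{B_{L_t}}=(1+o(1))\ln_3 t$. The argument then splits into two regimes. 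If $z_k\neq Z_s$, then $\Psi_s(z_k)\le\Psi^{\ssup 2}_s$; after absorbing the decay from $z_k$ down to $0$ into the distance penalty $\frac{|z_k|}{s}\ln^+_3|z_k|$ hidden in $\Psi_s$, the total contribution is bounded by $\exp\{s\Psi^{\ssup 2}_s+o(td_tb_t)\}$. If $z_k=Z_s$, then $y\in(D^\circ_{t,s})^c$ enforces $|y-Z_s|\ge h_t|Z_s|$, so the eigenfunction decay yields the extra factor $\e^{-(1+o(1))h_t|Z_s|\ln_3 t}$, producing the bound $\exp\{s\Psi^{\ssup 1}_s-h_t|Z_s|\ln_3 t+o(td_tb_t)\}$.

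Taking the maximum of both regimes, summing over $k$ (effectively a finite number of relevant indices by Proposition~\ref{prop:welldefined}) and dominating $\sum_y$ by $|B_{L_t}|=\e^{O(\ln t\,\ln_2 t)}$, the claim follows once one checks that these entropic counting factors are absorbed in the slack $o(td_tb_t)\asymp tb_t/\ln t$, as guaranteed by the scale relations~\eqref{e:relation_scales}. Uniformity over $s\in[at,bt]$ is obtained from the piecewise-constancy of $s\mapsto Z_s$ (so only finitely many distinct values are taken on $[at,bt]$) together with the continuity of $s\mapsto\Psi^{\ssup k}_s$ noted after~\eqref{defZt}. The principal technical obstacle is to produce eigenfunction-decay estimates with the sharp leading rate $(1+o(1))\ln_3 t$ uniformly over distances up to $L_t$: this rate must precisely match the distance penalty appearing in $\Psi_s$, and any slackness would degrade the delicate $h_t|Z_s|\ln_3 t$ correction. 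A secondary difficulty is pairing the global spectral data of $H_{B_{L_t}}$ with the local eigenvalues $\lambda^{\scrC}(z)$ defined on the much smaller neighborhoods $B_{\varrho_z}(z)$, a step which is expected to rely on the eigenvector clustering techniques developed in~\cite{BK16}.
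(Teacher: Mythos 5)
Your approach is genuinely different from the paper's. The paper proves this proposition via the path-expansion machinery of Section~6 (Proposition~\ref{prop:massclass} together with Lemmas~\ref{l:existoptisl}--\ref{l:reduce_to_inner_box}): one classifies random-walk paths by the islands of $\mathfrak C_{L_t,A}$ they visit and by the resulting value of $\lambda_{L_t,A}(\pi)$, applies Lemma~\ref{l:fixed_class} excursion by excursion, and obtains the distance penalty $(\ln_3(dL_t)-c_A)|z_\pi-x|$ from counting steps through moderately-low potential sites (Lemma~\ref{l:mass_in}), not from eigenfunction decay. You instead propose a spectral decomposition in the full macrobox $B_{L_t}$ combined with Combes--Thomas decay of its eigenfunctions.

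There is a genuine gap at the very first step. The bilinear expression
\begin{equation*}
\sum_k \texte^{s\lambda^{\ssup k}_{B_{L_t}}}\,\bigl|\phi^{\ssup k}_{B_{L_t}}(0)\bigr|\sum_{y\in(D^\circ_{t,s})^\cc\cap B_{L_t}}\bigl|\phi^{\ssup k}_{B_{L_t}}(y)\bigr|
\end{equation*}
is what you get from the spectral representation \eqref{e:specdecompLambda} for the event $\{X_s\in(D^\circ_{t,s})^\cc\}$, i.e.\ for the \emph{endpoint} lying outside $D^\circ_{t,s}$. It does \emph{not} bound the Feynman--Kac expectation restricted to $\{\tau_{(D^\circ_{t,s})^\cc}\le s\}$: a path can exit $D^\circ_{t,s}$ before time $s$ and re-enter, ending at a point near $Z_s$, and such paths are precisely the dangerous ones (they reap the benefit of $\lambda^{\ssup 1}_{\CC_{t,s}}$ while having traversed a long distance). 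To see that the exit event is captured, you must decompose at the exit time via strong Markov. But then the first leg (from $0$ to $\partial D^\circ_{t,s}$, staying inside $D^\circ_{t,s}$) needs a bound in terms of spectral data for $D^\circ_{t,s}$, and the ratio $\phi^{\ssup 1}_{D^\circ_{t,s}}(0)/\phi^{\ssup 1}_{D^\circ_{t,s}}(y)$ arising from Lemma~\ref{l:reprEF} is uncontrolled when both $0$ and $y$ are far from $Z_s$; alternatively, Lemma~\ref{l:bound_top_ef} with $\Gamma=(D^\circ_{t,s})^\cc\cap B_{L_t}$ gives a prefactor $\sup_{y\in\Gamma}\phi^{\ssup 1}_{B_{L_t}}(y)^{-3}$ which is enormous. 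Making either route work essentially forces you back into accounting for excursions --- which is what the path expansion does. Also, the eigenfunction-localization dictionary you invoke (every eigenfunction of $H_{B_{L_t}}$ with eigenvalue near $\widehat{a}_{L_t}-\chi$ is tied to a unique capital with $\lambda^{\ssup k}_{B_{L_t}}=\lambda^{\scrC}(z_k)+o(d_tb_t)$ and uniform Combes--Thomas decay up to scale $L_t$) is a strong input that the present paper does not establish; the localization results here (Propositions~\ref{prop:localizationinnerdomain} and Lemma~\ref{l:localizationlocalbox}) concern only the principal eigenfunction of the much smaller random domains $D^\circ_{t,s}$ and $B_{\mu_t}(Z_s)$, and the spectral clustering of \cite{BK16} is invoked only for islands $\CC\in\mathfrak C^\delta_{L,A}$, not the full spectrum of $B_{L_t}$.

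On the secondary issue you flag as the ``principal technical obstacle'' --- the sharp $(1+o(1))\ln_3 t$ decay rate --- the arithmetic is actually more forgiving than you fear. The paper's distance penalty is $\ln_3(dL_t)-c_A$ with an $O(1)$ constant $c_A$, and the difference between $(1+h_t)|Z_s|\bigl(\ln_3 t + O(1)\bigr)$ and $|Z_s|\ln_3^+|Z_s|+h_t|Z_s|\ln_3 t$ is $O(|Z_s|)=O(r_tg_t)$, which is $o(td_tb_t)$ by \eqref{e:relation_scales} and \eqref{e:boundsZ}. So an $O(1)$ slack in the rate is harmless; the real obstacle is the mismatch between ``has exited by time $s$'' and ``is outside at time $s$''.
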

\noindent
We also control the contribution of paths that do not enter a fixed 
neighborhood of~$Z_t$:

\begin{proposition}
\label{prop:pathsavoidingZ_forproof}
For all large enough $\nu \in \N$ and all $0 < a \le b < \infty$,
\begin{equation}
\sup_{s \in [at,bt]} \left\{\ln \E_0 \left[ \texte^{\int_0^{s} \xi(X_u) \textd u } \1 \{ \tau_{B_\nu(Z_s)} \wedge \tau_{B_{L_t}^\cc} > s \} \right]
- s \Psi^{\ssup 2}_s
\right\} \le o(t d_t b_t)
\end{equation}
holds with probability tending to $1$ as $t \to \infty$.
\end{proposition}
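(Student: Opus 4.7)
The plan is to apply a Dirichlet spectral bound in $\Lambda_s := B_{L_t} \setminus B_\nu(Z_s)$, where excising $Z_s$ forces the top eigenvalues of $H_{\Lambda_s}$ to sit near $\lambda^{\scrC}(z')$ for capitals $z' \neq Z_s$, and to combine it with an exponential-decay estimate for the corresponding eigenfunctions from their localization centers to the origin, which supplies the travel-cost term that converts $\lambda^{\scrC}(z')$ into $\Psi_s(z')$. After restricting to the high-probability event of Propositions~\ref{prop:seprelcap} and~\ref{prop:goodevent_forproof}, so that the relevant capitals belong to a $t^\beta$-separated set $\scrC_t \ni Z_s$ and $|Z_s| \in (r_tf_t, r_tg_t)$ uniformly for $s \in [at,bt]$, I would use that on $\{\tau_{B_\nu(Z_s)} \wedge \tau_{B_{L_t}^\cc} > s\}$ the path is killed upon leaving $\Lambda_s$, so that the Feynman--Kac expectation equals $\sum_{y\in\Lambda_s}\langle\1_y, \texte^{sH_{\Lambda_s}}\1_0\rangle_{\ell^2(\Lambda_s)}$. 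Expanding the semigroup in the orthonormal Dirichlet eigenbasis $(\phi^{\ssup k}_{\Lambda_s})_{k\ge 1}$ and applying Cauchy--Schwarz to $\sum_{y}\phi^{\ssup k}_{\Lambda_s}(y)$ gives
\begin{equation}
\label{plan:specbound}
\E_0\!\Bigl[\texte^{\int_0^s \xi(X_u)\textd u}\1\{\tau_{B_\nu(Z_s)} \wedge \tau_{B_{L_t}^\cc} > s\}\Bigr]
\;\le\; |\Lambda_s|^{1/2}\sum_{k=1}^{|\Lambda_s|}\texte^{s\lambda^{\ssup k}_{\Lambda_s}}\bigl|\phi^{\ssup k}_{\Lambda_s}(0)\bigr|,
\end{equation}
and the prefactor plus number of terms contribute only $O(\ln t) = o(td_tb_t)$ to the logarithm.

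The key input is an eigenfunction localization statement that I would assemble from Sections~\ref{s:preparation}--\ref{s:pathexpansions} and the spectral order-statistics of~\cite{BK16}: each eigenvalue $\lambda^{\ssup k}_{\Lambda_s}$ sufficiently close to the top corresponds to a unique capital $z'_k \in \scrC_t \setminus \{Z_s\}$ with $|\lambda^{\ssup k}_{\Lambda_s} - \lambda^{\scrC}(z'_k)| = o(d_t)$, and the associated eigenfunction is concentrated inside $B_{\varrho_{z'_k}}(z'_k)$ and obeys the Combes--Thomas-type bound
\begin{equation}
\label{plan:CTdecay}
\bigl|\phi^{\ssup k}_{\Lambda_s}(0)\bigr| \;\le\; \exp\!\Bigl\{-|z'_k|\ln_3 t\,(1-o(1))\Bigr\}.
\end{equation}
Eigenvalues falling below $\lambda^{\scrC}(Z^{\ssup 2}_s) - d_te_t/2$ can be controlled by the trivial bound $|\phi^{\ssup k}_{\Lambda_s}(0)|\le 1$; their joint contribution to~\eqref{plan:specbound} is at most $|\Lambda_s|\texte^{s\lambda^{\scrC}(Z^{\ssup 2}_s) - std_te_t/2}$, which by the gap estimate~\eqref{e:defgoodevent_forproof} is much smaller than the target $\texte^{s\Psi^{\ssup 2}_s}$.

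Inserting~\eqref{plan:CTdecay} into~\eqref{plan:specbound} yields
\begin{equation}
\ln\E_0\!\Bigl[\texte^{\int_0^s \xi(X_u)\textd u}\1\{\tau_{B_\nu(Z_s)} \wedge \tau_{B_{L_t}^\cc} > s\}\Bigr] \;\le\; \max_{z' \in \scrC_t \setminus \{Z_s\}} \bigl\{s\lambda^{\scrC}(z') - |z'|\ln_3 t\bigr\} + o(td_tb_t).
\end{equation}
For $|z'| \in (r_tf_t, r_tg_t)$ one has $\ln_3^+|z'| = \ln_3 t + O(1/\ln t)$, so $|z'|\ln_3 t = |z'|\ln_3^+|z'| + O(r_tg_t/\ln t)$; combined with $r_t = td_t/\ln_3 t$ and the scale relation $g_t \ll b_t\epsilon_t\ln_3 t$, this absorbs into the $o(td_tb_t)$ error. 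The maximum is therefore at most $s\Psi_s(z') + o(td_tb_t) \le s\Psi^{\ssup 2}_s + o(td_tb_t)$ by the very definition of $\Psi^{\ssup 2}_s$, and a union bound on a fine discrete grid in $s$ (together with the Lipschitz dependence of the integrand in $s$) handles the supremum.

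The main obstacle is the sharp decay~\eqref{plan:CTdecay}: a generic Combes--Thomas argument gives exponential decay at rate $\sim \ln\lambda^{\scrC}(z')$, which is $\ln_3 t + O(1)$ to leading order, but upgrading to rate $\ln_3 t(1-o(1))$ at the relevant distances $|z'| \asymp r_t$ will require leveraging the $t^\beta$-separation of capitals from Proposition~\ref{prop:seprelcap} (to control cross-talk between eigenfunctions localized at distinct capitals) and bootstrapping from the single-capital spectral problem on $B_{\varrho_{z'}}(z')$ up to the global $H_{\Lambda_s}$ via the path-expansion machinery of Section~\ref{s:pathexpansions}.
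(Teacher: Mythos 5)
Your spectral route is a genuinely different strategy from the paper's, which never sets up an eigenfunction expansion for this proposition. The paper observes that $\tau_{B_\nu(Z_s)}\wedge\tau_{B_{L_t}^\cc}>s$ forces the path into one of two classes -- paths that never touch $\Pi_{L_t,A_1}$ (handled trivially by \eqref{e:pathsoutsidePi}) and paths that do while avoiding $B_\nu(Z_s)$ (set $\NN^{\ssup 4}_{t,s}$, handled via Proposition~\ref{prop:massclass}, with Lemma~\ref{l:potential_rel_islands_upbd} guaranteeing, for $\nu$ large enough, that such paths only pick up eigenvalues from capitals other than $Z_s$) -- and then converts the resulting $\widetilde{\Psi}_{t,s,c}$ bound into $\Psi^{\ssup 2}_s$ via Lemma~\ref{l:compPsitildePsi}. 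That route deliberately sidesteps any need for eigenfunction decay estimates.

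Two concrete problems with your proposal. First, the eigenvalue cutoff $\lambda^{\scrC}(Z^{\ssup 2}_s)-d_te_t/2$ does not close the trivial-bound regime. You claim the contribution $|\Lambda_s|\,\texte^{s(\lambda^{\scrC}(Z^{\ssup 2}_s)-d_te_t/2)}$ is much smaller than $\texte^{s\Psi^{\ssup 2}_s}$ ``by the gap estimate \eqref{e:defgoodevent_forproof}'', but \eqref{e:defgoodevent_forproof} concerns $\Psi^{\ssup 1}_s-\Psi^{\ssup 2}_s$, not $\lambda^{\scrC}(Z^{\ssup 2}_s)-\Psi^{\ssup 2}_s$. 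The latter equals $|Z^{\ssup 2}_s|\ln_3^+|Z^{\ssup 2}_s|/s$, which on $\EE_{t,a,b}$ is of order $d_t f_t/b$ at least and $d_t g_t$ at most; since $g_t\to\infty$ while $e_t\to 0$, the subtraction of $d_te_t/2$ is negligible and the exponent stays far \emph{above} $\Psi^{\ssup 2}_s$. The correct threshold for discarding eigenvalues via the trivial bound is around $\widehat{a}_{L_t}-2A$ (below the bulk/high-capital separation), where $\widehat{a}_{L_t}-2A < a_{r_t}-d_tg_t < \Psi^{\ssup 2}_s$ for $A$ large.

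Second, the decay \eqref{plan:CTdecay} at rate $\ln_3 t\,(1-o(1))$ over distances of order $r_t$ is exactly as hard as the proposition itself, and you cannot get it from Combes--Thomas pointwise: the potential has intermediate peaks along the route from $z'_k$ to $0$ where $\xi$ is only moderately below $\widehat a_{L_t}$, and the decay rate is sensitive to their frequency. The paper controls this by counting intermediate peaks along discrete paths (the $M^{L,\varepsilon}_\pi$ bookkeeping and Lemma~\ref{l:bound_mediumpoints}), which is precisely the content of Proposition~\ref{prop:massclass}. The decay estimate the paper actually proves for eigenfunctions (Proposition~\ref{prop:localizationinnerdomain}(i), via \cite{BK16}) gives a \emph{fixed} rate $c_2$, not $\ln_3 t$. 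So pushing your spectral plan through would mean re-deriving the path-expansion machinery inside the eigenfunction estimate, after which the eigenvalue expansion in \eqref{plan:specbound} is a detour -- you may as well bound the Feynman--Kac expectation directly as the paper does.
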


The above propositions will allow us to restrict the Feynman-Kac formula to the event
\begin{equation}\label{e:relevantevent}
\mathcal{R}^\nu_{t,s} := \left\{\tau_{(D^\circ_{t,s})^\cc} > s \ge \tau_{B_{\nu}(Z_s)}\right\},
\end{equation}
and proceed to control the result using spectral techniques; see Section~\ref{ss:localization}.

Our proofs of Propositions~\ref{prop:lowerbound_forproof} and~\ref{prop:macroboxtruncation_forproof},
given respectively in Sections~\ref{ss:lower_bound} and~\ref{ss:macrobox_truncation},
are relatively simple and follow similar results in the literature.
Propositions~\ref{prop:randomtruncation_forproof} and~\ref{prop:pathsavoidingZ_forproof} 
are proven in Section~\ref{ss:negligcontrib};
their main technical point is a path expansion scheme developed in Section~\ref{s:pathexpansions},
based on an approach from \cite{MP16}.
Additional difficulties arise in our case
due to smaller gaps in the potential, and to
the fact that the effective support of the relevant local eigenvalues is unbounded in the limit of large times.
This is
overcome through a careful analysis of the connectivity properties of the level sets of the potential
and their implications for the bounds derived via path expansions.

\RS An important observation is \eRS that $\lambda^\scrC(Z_s)$
is the largest possible over all capitals inside $D^\circ_{t,s}$ (cf.\ Lemma~\ref{l:spectralgap}). 
This comes as a consequence of the choice of $h_t$ in~\eqref{e:relation_scales}, which is of special relevance
as it simultaneously allows the proofs of Proposition~\ref{prop:randomtruncation_forproof} above
(for which $h_t$ should be large enough) and Proposition~\ref{prop:localizationinnerdomain} below
(for which $h_t$ should be small enough).
We also note that a \RV complementary upper bound \eRV to~\eqref{e:lowerbound_forproof} holds as well
(cf.\ Lemma~\ref{l:upperboundUt}), 
which will be important for the proof of Theorem~\ref{thm:locus} in Section~\ref{ss:upperboundUt}.

\subsection{Localization}
\label{ss:localization}\noindent
Once the path has been shown to enter a neighborhood of~$Z_t$ by time~$t$ with large probability, the next 
item of concern is to show that it will actually not be found far away from~$Z_t$ at time~$t$. This will be 
done 
by bounding the end-point distribution using the principal eigenfunction $\phi^\circ_{t,s}$ corresponding to the largest Dirichlet eigenvalue of the Anderson Hamiltonian in~$D^\circ_{t,s}$,
which we assume to be normalised so that
\begin{equation}
\phi^\circ_{t,s} > 0 \text{ on } D^\circ_{t,s}, \quad \phi^\circ_{t,s} =  0 \text{ on } (D^\circ_{t,s})^\cc \quad \text{and} \quad \|\phi^\circ_{t,s}\|_{\ell^2(\Z^d)}=1.
\end{equation}
We have:
\begin{proposition}\label{prop:boundbyprincipalef_forproof}
For any $\nu \in \N$ and $0 < a \le b < \infty$, 
the following holds with probability tending to $1$ as $t \to \infty$: For
all $s \in [at, bt]$ and all $x \in D^\circ_{t,s}$,
\begin{equation}\label{e:boundbyprincipalef_forproof}
\E_0 \left[ \texte^{\int_0^s \xi(X_u) \textd u} \1_{\mathcal{R}^\nu_{t,s} \cap \{X_s = x\}} \right]
\le U(s) \sup_{y \in B_\nu(Z_s)} \left\{\phi^\circ_{t,s}(y)^{-3} \right\} \, \phi^\circ_{t,s}(x).
\end{equation}
\end{proposition}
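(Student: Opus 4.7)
The plan is to split the Feynman-Kac expectation by applying the strong Markov property at the hitting time $\tau := \tau_{B_\nu(Z_s)}$, and then compare the result to $U(s)$ via two bounds on the Dirichlet heat kernel of $D := D^\circ_{t,s}$ that come from the \emph{same} eigenfunction identity. Write $\lambda^\circ := \lambda^{\ssup 1}_D$, $\phi^\circ := \phi^\circ_{t,s}$, and
\[
p^D_r(y,x) := \E_y\bigl[\texte^{\int_0^r \xi(X_u)\textd u}\,\1_{\{X_r=x,\,X_u\in D\,\forall\,u\le r\}}\bigr]
\]
for the Dirichlet heat kernel. Self-adjointness and the eigenvalue equation give $\sum_{y\in D}p^D_r(y,x)\phi^\circ(y)=\texte^{r\lambda^\circ}\phi^\circ(x)$, and since $p^D_r\ge 0$ and $\phi^\circ>0$ on $D$ this yields simultaneously the pointwise upper bound $p^D_r(y,x)\le\texte^{r\lambda^\circ}\phi^\circ(x)/\phi^\circ(y)$ and, after summing over $x\in D$ and using $\phi^\circ\le\|\phi^\circ\|_\infty$, the lower bound $\sum_{x\in D}p^D_r(y,x)\ge \texte^{r\lambda^\circ}\phi^\circ(y)/\|\phi^\circ\|_\infty$.

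On $\mathcal{R}^\nu_{t,s}$ the path stays in $D$ throughout $[0,s]$ and satisfies $\tau\le s$, so the strong Markov property at $\tau$ gives
\[
\E_0\bigl[\texte^{\int_0^s \xi}\1_{\mathcal{R}^\nu_{t,s}\cap\{X_s=x\}}\bigr]=\E_0\bigl[\texte^{\int_0^\tau\xi}\,\1_{\{\tau\le s,\,X_u\in D\,\forall u\le\tau\}}\,p^D_{s-\tau}(X_\tau,x)\bigr].
\]
Applying the upper heat-kernel bound and using $X_\tau\in B_\nu(Z_s)$ to extract $\phi^\circ(X_\tau)^{-1}\le\sup_{y\in B_\nu(Z_s)}\phi^\circ(y)^{-1}$, the right-hand side is at most $\phi^\circ(x)\cdot\sup_{y\in B_\nu(Z_s)}\phi^\circ(y)^{-1}\cdot R$, where
\[
R:=\E_0\bigl[\texte^{\int_0^\tau\xi}\,\1_{\{\tau\le s,\,X_u\in D\,\forall u\le\tau\}}\,\texte^{(s-\tau)\lambda^\circ}\bigr].
\]

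To bound $R$ by $U(s)$, insert the same hitting event into $U(s)\ge\E_0[\texte^{\int_0^s\xi}\1_{\{X_u\in D\,\forall u\le s\}}]$ and apply the Markov property together with the lower heat-kernel bound: $U(s)\ge \|\phi^\circ\|_\infty^{-1}\,\E_0[\texte^{\int_0^\tau\xi}\1_{\{\tau\le s,\,X_u\in D\,\forall u\le\tau\}}\,\texte^{(s-\tau)\lambda^\circ}\,\phi^\circ(X_\tau)]$. Bounding $\phi^\circ(X_\tau)\ge\inf_{y\in B_\nu(Z_s)}\phi^\circ(y)$ and rearranging yields $R\le U(s)\,\|\phi^\circ\|_\infty\,\sup_{y\in B_\nu(Z_s)}\phi^\circ(y)^{-1}$. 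Combining the two steps produces a prefactor of $\|\phi^\circ\|_\infty\bigl(\sup_{y\in B_\nu(Z_s)}\phi^\circ(y)^{-1}\bigr)^2$ multiplying $U(s)\phi^\circ(x)$. Since $\|\phi^\circ\|_{\ell^2(\Z^d)}=1$ forces $\|\phi^\circ\|_\infty\le 1$ and trivially $1\le\sup_y\phi^\circ(y)^{-1}$, this prefactor is dominated by $\bigl(\sup_{y\in B_\nu(Z_s)}\phi^\circ(y)^{-1}\bigr)^3$, giving~\eqref{e:boundbyprincipalef_forproof}.

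The ``probability tending to $1$'' qualifier is needed only to guarantee the geometric inclusion $B_\nu(Z_s)\subset D^\circ_{t,s}$ uniformly in $s\in[at,bt]$ (equivalently $|Z_s|h_t\ge\nu$), so that $\phi^\circ>0$ on $B_\nu(Z_s)$ by Perron-Frobenius and the supremum in the bound is finite; this follows from $|Z_s|>r_tf_t$ in Proposition~\ref{prop:goodevent_forproof} together with $r_tf_th_t\to\infty$ under the scales~\eqref{e:relation_scales}. Conceptually, the main point --- and essentially the only non-routine ingredient --- is that the \emph{same} eigenfunction identity provides simultaneously an upper bound for $p^D_r(y,x)$ and a lower bound for $\sum_z p^D_r(y,z)$, so the factor $\sup_y\phi^\circ(y)^{-1}$ enters twice and the third power is produced for free by the elementary estimate $\|\phi^\circ\|_\infty\le 1$; no path analysis or fine eigenfunction control is required beyond the inputs already listed.
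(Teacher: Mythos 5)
Your proof is correct, and it takes a genuinely different route from the paper's, which cites and adapts Lemma~\ref{l:bound_top_ef}. Both proofs decompose at the hitting time $\tau_\Gamma = \tau_{B_\nu(Z_s)}$, but the technical machinery differs. The paper's Lemma~\ref{l:bound_top_ef} uses a time-reversal reformulation and then two distinct eigenfunction facts: the lower bound $p^D_r(y,y)\ge \texte^{r\lambda}\phi(y)^2$ on the diagonal heat kernel (Lemma~\ref{l:bounds_mass}) to handle the post-$\tau$ propagation, and the Feynman--Kac representation $\phi(x)/\phi(y)=\E_x[\texte^{\int_0^{\tau_y}(\xi-\lambda)}\1_{\tau_{\Lambda^\cc}>\tau_y}]$ (Lemma~\ref{l:reprEF}) to handle the pre-$\tau$ weight; this gives $\phi(y)^{-3}$ exactly. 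You instead derive both an upper bound $p^D_r(y,x)\le\texte^{r\lambda}\phi(x)/\phi(y)$ and a total-mass lower bound $\sum_x p^D_r(y,x)\ge\texte^{r\lambda}\phi(y)/\|\phi\|_\infty$ directly from the single eigenfunction identity $\sum_y p^D_r(y,x)\phi(y)=\texte^{r\lambda}\phi(x)$ and nonnegativity, then compare the pre-$\tau$ factor to $U(s)$ via a second application of the strong Markov property. This produces the prefactor $\|\phi\|_\infty\,(\sup_{y\in\Gamma}\phi(y)^{-1})^2$, and you absorb the extra $\|\phi\|_\infty \le 1 \le \sup_y\phi(y)^{-1}$ to reach the stated exponent~$3$ — so your bound is in fact marginally sharper. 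Your version is arguably more self-contained (it needs only the eigenfunction identity and positivity of the heat kernel, not the Feynman--Kac representation of the eigenfunction), at the cost of being slightly less efficient in how it accounts for the exponent. The identification of the large-probability event as $B_\nu(Z_s)\subset D^\circ_{t,s}$, via \eqref{e:relation_scales} and \eqref{e:boundsZ}, matches the paper.
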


In order to use the bound in~\eqref{e:boundbyprincipalef_forproof}, we will need an estimate on the decay of~$\phi^\circ_{t,s}$ away
from~$Z_s$. On the event~$\GG_{t,s}$ from~\eqref{e:defgapevent}, this
is the subject of:
\begin{proposition}
\label{prop:localizationinnerdomain}
There exist $c_1, c_2>0$ and,
for all $\nu \in \N$, also
$\varepsilon_\nu >0$ such that,
for all $0 < a \le b < \infty$,
the following holds on with probability tending to~$1$ as $t \to \infty$: 
For all $s \in [at,bt]$, on $\GG_{t,s}$ we have
\begin{align}
\text{(i) } \;\;\; \phi^\circ_{t,s}(x) & \le c_1 \texte^{-c_2|x-Z_s|} \, \quad \forall  x \in \Z^d,\label{e:localizationinnerdomain1}\\
\text{(ii) } \;\;\; \phi^\circ_{t,s}(y) & \ge \varepsilon_\nu \qquad \qquad \quad \, \forall  y \in B_\nu(Z_s). \label{e:localizationinnerdomain2}
\end{align}
\end{proposition}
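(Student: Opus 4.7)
The plan is to compare $\phi^\circ_{t,s}$ to the $\ell^2$-normalized principal Dirichlet eigenfunction $\psi$ of $H$ on $B_{\varrho_{Z_s}}(Z_s)$ (extended by zero outside), which enjoys universal structural properties around $Z_s$. First I would establish the spectral setup: on the high-probability event provided by Propositions~\ref{prop:seprelcap} and~\ref{prop:goodevent_forproof}, the corrections $|z|\ln_3^+|z|/s$ in $\Psi_s$ are of order $d_t g_t = o(d_t e_t)$ by~\eqref{e:relation_scales}, so the gap $\Psi^{\ssup 1}_s-\Psi^{\ssup 2}_s\ge d_t e_t$ on $\GG_{t,s}$ translates into a clean dominance of $\lambda^{\scrC}(Z_s)$ over $\lambda^{\scrC}(z)$ by a margin of order $d_t e_t$ for every other capital $z$ inside $D^\circ_{t,s}$. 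Using the eigenvalue order statistics from~\cite{BK16}, this ensures that the top Dirichlet eigenvalue $\lambda_1$ of $H$ on $D^\circ_{t,s}$ equals $\lambda^{\scrC}(Z_s)+o(d_t e_t)$ with a spectral gap of the same order to the second eigenvalue.

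Next I would verify that $\psi$ is an approximate eigenfunction of $H_{D^\circ_{t,s}}$. Since $\psi$ vanishes on $\partial B_{\varrho_{Z_s}}(Z_s)$ and the capitals inside $D^\circ_{t,s}$ are separated by at least $t^\beta$, the residual $\|(H-\lambda^{\scrC}(Z_s))\psi\|_{\ell^2}$ is super-polynomially small in $t$ (controlled by the values of $\xi$ outside a neighborhood of $Z_s$ times the exponentially small boundary values of $\psi$). Combined with the spectral gap of order $d_t e_t$, a standard orthogonal-projection argument then yields $\|\phi^\circ_{t,s}\mp\psi\|_{\ell^2}\to 0$, with the sign fixed by the Perron-Frobenius positivity of both functions.

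Given this comparison, (ii) follows from the $\ell^\infty$-version of the closeness (obtained from $\ell^2$-closeness via uniform boundedness of $H$, or by direct iteration of the eigenvalue equation on $B_\nu(Z_s)$), together with the fact that $\psi$ is strictly positive on $B_\nu(Z_s)$ with a lower bound depending only on $\nu$; this latter fact comes from the tightness of the distribution of the local profile of $\xi$ around a top capital and the non-degeneracy of the limit, as in the variational framework of~\cite{GH99}. Claim~(i) is then obtained by combining the universal exponential decay of $\psi$ away from $Z_s$ --- proved via a Feynman--Kac representation on the high-probability event that $\xi<\lambda^{\scrC}(Z_s)-\Omega(1)$ outside a bounded neighborhood of $Z_s$ --- with the closeness $\phi^\circ_{t,s}\approx\psi$ at an intermediate spatial scale. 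The main obstacle is that the spectral gap $d_t e_t$ vanishes as $t\to\infty$, so the universality of the decay constants $c_1,c_2$ cannot come from pure perturbation theory; one must exploit the super-polynomial smallness $\texte^{-c(\ln t)^\kappa}$ of the tunneling residual, which easily dominates $1/(d_t e_t)$ and allows the deterministic structural properties of $\psi$ around $Z_s$ to be transferred to $\phi^\circ_{t,s}$.
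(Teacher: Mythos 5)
Your plan inverts the logical order of the paper's proof, and that inversion creates a gap in part (i) that you cannot close by the means you describe. You propose to first establish $\ell^2$-closeness $\|\phi^\circ_{t,s}\mp\psi\|_{\ell^2}\to 0$ to a reference eigenfunction $\psi$ on $B_{\varrho_{Z_s}}(Z_s)$, then deduce the pointwise exponential bound~\eqref{e:localizationinnerdomain1} by "combining" $\psi$'s decay with this closeness. That combination does not work: an $\ell^2$ (or $\ell^\infty$) error of size $o(1)$ is useless for bounding $\phi^\circ_{t,s}(x)$ at distances $|x-Z_s|$ of order $\varrho_{Z_s}$ or beyond, where $\psi(x)$ is super-exponentially small or identically zero. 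A bound $\phi^\circ_{t,s}(x)\le\psi(x)+o(1)$ simply does not yield $c_1 e^{-c_2|x-Z_s|}$ uniformly in $x$. Your phrase "at an intermediate spatial scale" suggests you sense the problem, but you do not supply the missing ingredient: a \emph{direct} decay argument for $\phi^\circ_{t,s}$ in the far field, which must cope with the fact that $D^\circ_{t,s}$ contains \emph{other} relevant islands $\CC\in\mathfrak{C}_{L_t,A}$ on which $\xi$ is also high and whose principal eigenvalues differ from $\lambda^{\ssup 1}_{\CC_{t,s}}$ only by $O(d_t e_t)\to 0$. This is precisely what the paper handles by proving (i) \emph{first}, via an adaptation of \cite[Thm.~1.4 and~2.5]{BK16}: one establishes $\|\phi^\circ_{t,s}\1_{\CC_{t,s}}\|_2>\tfrac12$ with a resolvent estimate on $D:=D^\circ_{t,s}\setminus\CC_{t,s}$ (using the spectral gap~\eqref{e:spectralgap2} and the exponentially small boundary values from \cite[Lem.~4.2]{BK16}, cf.~\eqref{e:localis2}--\eqref{e:localis5}), and then transfers the two-regime decay machinery of [BK16] to $\phi^\circ_{t,s}$ itself. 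Only \emph{then} does the paper derive (ii) from (i) via the Feynman--Kac ratio formula~\eqref{e:FKrepr_ef} and the potential lower bound of Lemma~\ref{l:potential_rel_islands_lwbd}, which fixes the eigenvalue--potential difference $\lambda^{\ssup 1}_{D^\circ_{t,s}}-\xi(z)\le A'$ on $B_\nu(Z_s)$.

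Two further points. First, your justification "$\ell^\infty$ from $\ell^2$ via uniform boundedness of $H$" is wrong as stated since $\xi$ has unbounded upper tails; the alternative you mention (iterating the eigenvalue equation locally on $B_\nu(Z_s)$) is the right idea and is essentially what \eqref{e:FKrepr_ef} plus~\eqref{e:lwbdef4} implement. Second, your observation that the residual $\|(H-\lambda^{\scrC}(Z_s))\psi\|_{\ell^2}\sim e^{-c(\ln t)^\kappa}$ dominates $1/(d_te_t)$ is correct and corresponds to the resolvent step~\eqref{e:localis5}; this part of your argument is sound and close in spirit to the paper's. The essential missing piece is the far-field decay of $\phi^\circ_{t,s}$ itself, which cannot be borrowed from $\psi$ and requires the [BK16]-style analysis separating the region near $\CC_{t,s}$ from the rest of $D^\circ_{t,s}$.
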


Propositions~\ref{prop:boundbyprincipalef_forproof}--\ref{prop:localizationinnerdomain} are proven in Section~\ref{s:localization}.
Proposition~\ref{prop:boundbyprincipalef_forproof}
is similar to Proposition~3.11 in~\cite{MP16}, and is obtained by adaptation of \cite[Theorem~4.1]{GKM07}.
The proof of Proposition~\ref{prop:localizationinnerdomain}(i)
an adaptation of \cite[Theorem~1.4]{BK16},
while part (ii) relies on results from \cite{GM98}, \cite{GH99} and \cite{GKM07}
regarding the optimal shapes of the potential.


\subsection{Proof of mass concentration results}
\label{ss:proofconcentration}\noindent
We have now amassed enough information for the proof of Theorem~\ref{thm:massconc},
assuming Theorem~\ref{thm:aging_locus} and the above propositions:
\begin{proof}[Proof of Theorem~\ref{thm:massconc}]
Fix $\nu \in \N$ large enough 
so that Proposition~\ref{prop:pathsavoidingZ_forproof} is available.
Fix $0<a \le b < \infty$. 
We will first
show that, for all $\delta>0$, there exists an $R \in \N$ such that
\begin{equation}\label{e:massconcwithgap}
\lim_{t \to \infty} \textnormal{Prob} \left(\exists \, s \in [at, bt] \colon\, \Psi^{\ssup 1}_s - \Psi^{\ssup 2}_s \ge d_t e_t, \,  Q^{\ssup \xi}_s \left(|X_s - Z_s| > R \right) > \delta \right) = 0,
\end{equation}
and derive the desired claim from this at the very end.

We begin by noting that Propositions~\ref{prop:lowerbound_forproof}--\ref{prop:pathsavoidingZ_forproof} imply that
\begin{multline}
\label{e:proofthm_massconc1}
\qquad
\ln \left( \frac{1}{U(s)} \E_0 \left[ \texte^{\int_0^s \xi(X_u) \textd u} \1_{(\mathcal{R}^\nu_{t,s})^\cc} \right] \right) \\
\le - s \min\left\{\Psi^{\ssup 1}_s-\Psi^{\ssup 2}_s, \, h_t |Z_s| \ln_3 t, \, 
\frac{t \ln_2 t\ln_3 t}{8s} \RV + \Psi^{\ssup 1}_s \eRV \right\} + o(t d_t b_t)
\end{multline}
holds true for all $s \in [at,bt]$ \RS with probability tending to $1$ as $t \to \infty$. \eRS
By \RV Proposition~\ref{prop:goodevent_forproof}, \eRV
on $\GG_{t,s} = \{\Psi^{\ssup 1}_s - \Psi^{\ssup 2}_s \ge d_t e_t \}$
we may further bound~\eqref{e:proofthm_massconc1} by 
\begin{equation}\label{e:proofthm_massconc2}
- at \min\left\{d_t e_t, \, h_t r_t f_t \ln_3 t, \, \RV \tfrac12 \rho \ln_2 t \eRV \right\} + o(t d_t b_t)
\end{equation}
which goes to $-\infty$ as $t \to \infty$ by~\eqref{e:def_fundam_scales} and~\eqref{e:relation_scales} 
--- indeed,~\eqref{e:relation_scales} shows that $e_t\ln_3t\to\infty$ (in fact, $e_t\gg g_t/\ln_3t$ with $g_t\to\infty$) and so $td_te_t\gg ct/[(\ln t)\ln_3 t]$ --- implying that
\begin{equation}\label{e:proofthm_massconc3}
\lim_{t \to \infty} \sup_{s \in [at, bt]} \frac{\1_{\GG_{t,s}}}{U(s)} \E_0 \left[ \texte^{\int_0^s \xi(X_u) \textd u} \1_{(\mathcal{R}^\nu_{t,s})^\cc} \right] = 0 \;\;\; \text{ in probability.}
\end{equation}
Fix now $\delta>0$ and let $R \in \N$ be large enough such that
\begin{equation}\label{e:proofthm_massconc4}
\varepsilon_\nu^{-3} c_1 \sum_{|x| > R} \texte^{-c_2 |x|} < \frac{\delta}{2},
\end{equation}
where $c_1$, $c_2$ and $\varepsilon_\nu$ are as in Proposition~\ref{prop:localizationinnerdomain}.
By Propositions~\ref{prop:boundbyprincipalef_forproof}--\ref{prop:localizationinnerdomain},
\begin{equation}\label{e:proofthm_massconc5}
\sup_{s \in [at,bt]}\frac{\1_{\GG_{t,s}}}{U(s)} \sum_{x\colon |x-Z_s| > R} \E_0 \left[ \texte^{\int_0^s \xi(X_u) \textd u} \1_{\mathcal{R}^\nu_{t,s} \cap \{X_s = x \}} \right] < \frac{\delta}{2}
\end{equation}
with probability tending to $1$ as $t \to \infty$,
which together with~\eqref{e:proofthm_massconc3} implies~\eqref{e:massconcwithgap}.

To conclude the desired statement from~\eqref{e:massconcwithgap},
fix $l_t > 0$, $l_t = o(t)$ and note that, by Theorem~\ref{thm:aging_locus} and
Propositions~\ref{prop:goodevent_forproof}--\ref{prop:stabilitygap},
with probability tending to $1$ as $t \to \infty$,
\begin{equation}\label{e:nojumpandgap}
Z_s = Z_t \; \text{ and } \; \Psi^{\ssup 1}_s - \Psi^{\ssup 2}_s \ge d_t e_t\qquad \forall  s \in [t - l_t, t+l_t].
\end{equation}
This together with~\eqref{e:massconcwithgap} (with $a<1<b$) implies~\eqref{e:massconc}.
\end{proof}

\RV
The presence of the scale $\epsilon_t$ in \eqref{e:lowerbound_forproof}
was not needed in the proof above, but it will be important for the proof of Theorem~\ref{thm:pathconcentration}.
More precisely, it will be used to obtain the following improvement of Proposition~\ref{prop:pathsavoidingZ_forproof}:
\eRV
\begin{proposition}\label{prop:closetoZinshorttime}
For all sufficiently large $\nu \in \N$,
\begin{equation}\label{e:closetoZinshorttime}
\frac{1}{U(t)} \E_0 \left[ \texte^{\int_0^t \xi(X_s) \textd s} \1\{ \tau_{(D^\circ_{t,t})^\cc} > t \ge \tau_{B_\nu(Z_t)} > \epsilon_t t\} \right] \,\underset{t \to \infty}\longrightarrow\, 0 \quad \text{ in probability.}
\end{equation}
\end{proposition}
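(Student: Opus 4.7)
The plan is to decompose the Feynman--Kac integral at the deterministic intermediate time $\epsilon_t t$ via the Markov property, and then compare the product of the two resulting factors to the refined lower bound on $U(t)$ from Proposition~\ref{prop:lowerbound_forproof}, whose error term $o(t d_t b_t\epsilon_t)$ was expressly designed to absorb the losses incurred on the initial segment $[0,\epsilon_t t]$.

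On the event in~\eqref{e:closetoZinshorttime} the walk stays in $D^\circ_{t,t}$ throughout $[0,t]$ and avoids $B_\nu(Z_t)$ during $[0,\epsilon_t t]$. Applying the Markov property at time $\epsilon_t t$ and enlarging the post-$\epsilon_t t$ event by dropping the constraint that $B_\nu(Z_t)$ be visited in $[\epsilon_t t,t]$, the left-hand side of~\eqref{e:closetoZinshorttime} times $U(t)$ is bounded above by $I_1(t)\cdot\sup_{y\in D^\circ_{t,t}}I_2(t,y)$, where
\begin{align*}
I_1(t) & := \E_0\bigl[\texte^{\int_0^{\epsilon_t t}\xi(X_u)\textd u}\1\{\tau_{B_\nu(Z_t)\cup(D^\circ_{t,t})^\cc}>\epsilon_t t\}\bigr],\\
I_2(t,y) & := \E_y\bigl[\texte^{\int_0^{(1-\epsilon_t)t}\xi(X_u)\textd u}\1\{\tau_{(D^\circ_{t,t})^\cc}>(1-\epsilon_t)t\}\bigr].
\end{align*}
For $I_2(t,y)$ a spectral expansion coupled with the localization of $\phi^\circ_{t,t}$ near $Z_t$ from Proposition~\ref{prop:localizationinnerdomain}, together with the identification $\lambda^{\ssup 1}_{D^\circ_{t,t}}=\lambda^\scrC(Z_t)+o(d_t b_t)$ provided by Lemma~\ref{l:spectralgap}, refines the bare bound $I_2(t,y)\le|D^\circ_{t,t}|\,\texte^{(1-\epsilon_t)t\,\lambda^{\ssup 1}_{D^\circ_{t,t}}}$ by the exponential decay of $\phi^\circ_{t,t}(y)$ in $|y-Z_t|$. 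For $I_1(t)$ I would invoke an adaptation to the time scale $\epsilon_t t$ and to the fixed center $Z_t$ of the path-expansion estimate behind Proposition~\ref{prop:pathsavoidingZ_forproof}, yielding $\ln I_1(t)\le\epsilon_t t\,\Psi^{\ssup 2}_t+o(t d_t b_t\epsilon_t)$ and expressing that paths avoiding $B_\nu(Z_t)$ are dominated by the second-best $\Psi$-maximizer $Z^{\ssup 2}_t$ (which by Proposition~\ref{prop:goodevent_forproof} and Lemma~\ref{l:spectralgap} lies in $D^\circ_{t,t}$).

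Combining the two estimates with Proposition~\ref{prop:lowerbound_forproof} on the gap event $\GG_{t,t}$ from Proposition~\ref{prop:goodevent_forproof}, and carefully matching the ``travel-cost'' correction $|Z_t|\ln_3^+|Z_t|$ hidden in the $\phi^\circ_{t,t}(y)$-decay with the analogous correction relating $\lambda^\scrC$ to $\Psi$ on the first factor, the ratio in~\eqref{e:closetoZinshorttime} is bounded by
\begin{equation*}
\exp\bigl\{-\epsilon_t t(\Psi^{\ssup 1}_t-\Psi^{\ssup 2}_t)+o(t d_t b_t\epsilon_t)\bigr\}\le\exp\bigl\{-\epsilon_t t d_t(e_t-o(b_t))\bigr\},
\end{equation*}
which tends to $0$ because $e_t\gg b_t$ by~\eqref{e:relation_scales}. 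Since $\GG_{t,t}$ holds with probability tending to $1$, this yields~\eqref{e:closetoZinshorttime}.

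The main technical obstacle is adapting Proposition~\ref{prop:pathsavoidingZ_forproof}, which is stated uniformly on fixed compact intervals $[at,bt]$ with the \emph{moving} center $Z_s$, to the vanishing scale $\epsilon_t t$ with the \emph{fixed} center $Z_t$. The underlying path expansions of Section~\ref{s:pathexpansions} should transfer with only minor modifications, but delicate bookkeeping is required to trace the $|Z_t|\ln_3^+|Z_t|$ travel-cost term through both factors so that it cancels in the final exponent, leaving only the clean $\Psi$-gap $\Psi^{\ssup 1}_t-\Psi^{\ssup 2}_t$; this cancellation is what ultimately allows the tiny spectral discrepancy $\lambda^\scrC(Z_t)-\lambda^\scrC(Z^{\ssup 2}_t)$ (which by itself could be too small) to be replaced by the larger $\Psi$-gap guaranteed by Proposition~\ref{prop:goodevent_forproof}.
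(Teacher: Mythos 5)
The decomposition at time $\epsilon_t t$ is the right starting point, but the way you combine the two factors breaks the argument. Bounding the left-hand side of \eqref{e:closetoZinshorttime} times $U(t)$ by $I_1(t)\cdot\sup_{y}I_2(t,y)$ decouples the intermediate position $y=X_{\epsilon_t t}$ between the two segments. The travel cost from the origin to $Z_t$ must be accumulated \emph{across both segments}: the paper's actual proof keeps the full event $\{X_{\epsilon_t t}=x\}$, obtains for the first segment a bound of the form $\epsilon_t t\max_{\CC\neq\CC_t}\lambda^{\ssup 1}_\CC-(\ln_3(dL_t)-c)|x|$ (Lemma~\ref{l:uptotimeepsilont}), and for the second segment $(1-\epsilon_t)t\lambda^{\ssup 1}_{\CC_t}-(\ln_3(dL_t)-c)|Z_t-x|$ (\eqref{e:prfastappr3}, which crucially \emph{keeps} the constraint $\tau_{B_\nu(Z_t)}\le t$ so that Proposition~\ref{prop:massclass} applies on the second segment too), and then the triangle inequality $|x|+|Z_t-x|\ge|Z_t|$ produces the full travel cost $-(\ln_3(dL_t)-c)|Z_t|$ when one takes $\sup_x$ of the \emph{product}. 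Your bound takes the sup of each factor separately: $\sup_x$ of the first-segment bound is achieved near $x=0$ (no travel cost), and $\sup_y I_2(t,y)$ is achieved at $y\approx Z_t$ (killing the $\phi^\circ(y)$-decay you invoke), so the combined bound loses the entire $|Z_t|\ln_3|Z_t|\sim t d_t$ contribution. That is far larger than the gain $\epsilon_t t d_t e_t$ from the spectral gap (since $\epsilon_t e_t\to 0$), so the resulting exponent is positive and the ratio does not go to zero.

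Two secondary issues reinforce this. First, dropping the constraint $\tau_{B_\nu(Z_t)}\le t$ from $I_2$ is exactly what prevents a $|Z_t-y|\ln_3 t$-scale travel cost from appearing in the second factor; the eigenfunction decay in Proposition~\ref{prop:localizationinnerdomain}(i) is only $c_1\texte^{-c_2|y-Z_t|}$ with a fixed constant $c_2$, which is far too weak to encode the required $\ln_3 t$-rate travel cost and in any event is annihilated by the $\sup_y$. Second, the path-expansion bound (Proposition~\ref{prop:massclass}) requires the path to exit a box of radius $\ln L_t$; paths that linger near the origin for all of $[0,\epsilon_t t]$ must be treated separately, which the paper does in Lemma~\ref{l:exitsmallboxfast} and your outline omits. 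To repair your proof, you must retain the joint dependence on $x=X_{\epsilon_t t}$ (i.e., bound $\sup_x\{\,\cdot\,\}$ of the product, with the second-segment event keeping $\tau_{B_\nu(Z_t)}\le t$), and add the Lemma~\ref{l:exitsmallboxfast}-type split.
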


We will also need the following proposition, 
which bounds the contribution of paths starting at a point $x \in B_\nu(Z_t)$
and reaching a distance greater than $\tfrac12 \epsilon_t \ln t$:
\begin{proposition}\label{prop:boundforpathconc}
\RV For any $k \in \N$ \eRV and any $\nu \in \N$,
the following holds with probability tending to $1$ as $t \to \infty$: 
For all $x \in B_\nu(Z_t)$ and all $0 \le s \le t$,
\begin{equation}
\label{e:boundforpathconc}
\E_x \left[ \texte^{\int_0^s \xi(X_u) \textd u} \1 \Big\{ \tau_{(D^{\circ}_{t,t})^\cc} > s, \, \sup_{0\le u \le s}|X_u-x|> \tfrac12 \epsilon_t \ln t  \Big\} \right]  \le  \RV t^{-k} \eRV \, \E_x \left[ \texte^{\int_0^s \xi(X_u) \textd u} \right].
\end{equation}
\end{proposition}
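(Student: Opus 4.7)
The key mechanism is that an excursion of diameter $R_t := \tfrac{1}{2}\epsilon_t \ln t$ starting in $B_\nu(Z_t)$ forces the walk to visit a site at distance at least $R_t - \nu$ from the capital $Z_t$, where the principal eigenfunction $\phi^\circ_{t,t}$ of $H_{D^\circ_{t,t}}$ is already exponentially small by Proposition~\ref{prop:localizationinnerdomain}(i). I would apply the strong Markov property at the first exit time $\sigma := \inf\{u \ge 0 \colon |X_u - x| > R_t\}$, so that the left-hand side of~\eqref{e:boundforpathconc} equals
$$
\E_x\!\left[ \texte^{\int_0^\sigma \xi(X_u)\,\textd u}\, \1_{\{\sigma \le s,\, \sigma \le \tau_{(D^\circ_{t,t})^\cc}\}}\, g(X_\sigma, s-\sigma) \right],
$$
with $g(z,t') := \E_z[\texte^{\int_0^{t'} \xi(X_u)\,\textd u}\, \1_{\{\tau_{(D^\circ_{t,t})^\cc} > t'\}}]$.

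Next, the spectral decomposition of the Dirichlet semigroup in $D^\circ_{t,t}$, combined with the spectral gap afforded on the good event of Proposition~\ref{prop:goodevent_forproof}, should yield a bound of the form $g(z,t') \le C\, \phi^\circ_{t,t}(z)\, \texte^{t'\lambda^\circ_{t,t}}$ in the relevant range of~$t'$; the short-time regime is handled separately via the crude estimate $g(z,t') \le \texte^{t'\max_{D^\circ_{t,t}}\xi}$. Inserting the decay $\phi^\circ_{t,t}(X_\sigma) \le c_1 \texte^{-c_2(R_t-\nu)}$ from Proposition~\ref{prop:localizationinnerdomain}(i) and then applying optional stopping to the nonnegative martingale $M_u := \phi^\circ_{t,t}(X_u)\, \texte^{\int_0^u (\xi - \lambda^\circ_{t,t})\,\textd r}$ at $u=\sigma\wedge s\wedge \tau_{(D^\circ_{t,t})^\cc}$, one arrives at an upper bound of the form $C\, \phi^\circ_{t,t}(x)\, \texte^{s\lambda^\circ_{t,t}}\cdot \texte^{-c_2(R_t-\nu)}$. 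The right-hand side of~\eqref{e:boundforpathconc} I would lower-bound by the ground-state contribution: the spectral representation of $\E_x[\texte^{\int_0^s \xi\,\textd u}\, \1_{\{\tau_{(D^\circ_{t,t})^\cc} > s\}}]$ together with part~(ii) of Proposition~\ref{prop:localizationinnerdomain} yields a matching lower bound of the form $c'\varepsilon_\nu\, \phi^\circ_{t,t}(x)\, \texte^{s\lambda^\circ_{t,t}}$. Taking the ratio leaves essentially the excursion-decay factor.

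The main obstacle is that the eigenfunction-decay factor $\texte^{-c_2 R_t}$ alone equals $t^{-c_2\epsilon_t/2}$, which is too weak to beat $t^{-k}$ when $\epsilon_t$ vanishes with $t$. An additional layer of smallness must therefore be harvested from the path-expansion machinery of Section~\ref{s:pathexpansions}: the combinatorial/entropic cost per lattice step of the forced excursion through regions where $\xi \ll \lambda^\circ_{t,t}$ contributes a multiplicative loss per step, and compounding this over the $\Theta(R_t)$ steps produces an additional factor of the form $\exp(-R_t \ell_t)$ with $\ell_t$ of order at least $\ln_2 t$. Since the standing assumption $\epsilon_t \ln_3 t \to \infty$ in particular gives $\epsilon_t \ln_2 t \to \infty$, the exponent $R_t \ell_t = \tfrac{1}{2}\epsilon_t\, \ell_t\, \ln t$ exceeds $k \ln t$ for any fixed $k$ at all sufficiently large $t$, hence the combined decay is smaller than $t^{-k}$. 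The technical work is therefore to merge the path expansion with the spectral bounds above, and to verify their uniformity in $s \in [0,t]$ and $x \in B_\nu(Z_t)$ on a single event of high probability, following the same general line as the proofs of Propositions~\ref{prop:randomtruncation_forproof} and~\ref{prop:pathsavoidingZ_forproof} but sharpened by the eigenfunction-decay bound at scale $R_t$.
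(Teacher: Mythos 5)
You have correctly identified the central difficulty: with only the exponential eigenfunction decay $\phi^\circ_{t,t}(y)\le c_1\texte^{-c_2|y-Z_t|}$, the excursion to distance $R_t=\tfrac12\epsilon_t\ln t$ produces a factor $t^{-c_2\epsilon_t/2}$, which fails to beat $t^{-k}$ once $\epsilon_t\to 0$. You also correctly diagnose that the path-expansion machinery, which loses a factor of order $1/\ln_2 L$ per ``moderately low'' lattice step, is what must supply the missing smallness, and that $\epsilon_t\ln_3 t\to\infty$ is the assumption that makes the bookkeeping come out. This is the right intuition, and the matching lower bound via the ground state and Proposition~\ref{prop:localizationinnerdomain}(ii) is exactly what the paper uses.

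However, two parts of your proposal have genuine gaps. First, the martingale step is internally inconsistent: optional stopping gives $\E_x\bigl[\phi^\circ_{t,t}(X_\sigma)\,\texte^{\int_0^\sigma(\xi-\lambda^\circ_{t,t})\,\textd u}\,\1\{\sigma\le s\wedge\tau\}\bigr]\le\phi^\circ_{t,t}(x)$, and if you additionally replace $\phi^\circ_{t,t}(X_\sigma)$ by its upper bound $c_1\texte^{-c_2(R_t-\nu)}$ \emph{inside} this expectation, you are left with $\E_x\bigl[\texte^{\int_0^\sigma(\xi-\lambda^\circ_{t,t})\,\textd u}\,\1\{\sigma\le s\wedge\tau\}\bigr]$, which is \emph{not} controlled by $\phi^\circ_{t,t}(x)$ (it has no $\phi^\circ$ factor left to integrate against), and in fact can be as large as $\E_x[\texte^{\int_0^{\sigma\wedge\tau}(\xi-\lambda^\circ)\textd u}]$ with $\lambda^\circ$ equal to the top eigenvalue of the domain, so Lemma~\ref{l:mass_out} is of no use either. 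You cannot simultaneously spend the $\phi^\circ(X_\sigma)$ factor on the decay and on the optional-stopping closure. Tellingly, the paper's proof does not invoke Proposition~\ref{prop:localizationinnerdomain}(i) at all.

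Second, and more fundamentally, the path expansion underlying Propositions~\ref{prop:randomtruncation_forproof}--\ref{prop:pathsavoidingZ_forproof} is built on islands of radius $R_L$ satisfying~\eqref{e:addpropR_L}, i.e.\ $R_L\ll(\ln L)^\alpha$ for some $\alpha<1/d$. The excursion scale you need to control here is $\tfrac12\epsilon_t\ln t$, which is of the order $\ln L_t/\sqrt{\ln_3 L_t}$ in the worst allowed regime and thus vastly larger than any $(\ln L)^\alpha$. Proposition~\ref{prop:massclass} therefore does not apply on this scale, and ``following the same general line'' of those proofs does not produce a usable estimate. The paper gets around this by introducing the enlarged islands $\widetilde{\mathfrak{C}}_{L,A}$ with radius $\widetilde{R}_L\asymp\widetilde{\epsilon}_L\ln L$ and proving a separate path-expansion bound, Proposition~\ref{prop:massclasslargeR}, with the weaker requirement $\gamma_\pi\ge\lambda_{L,A}(\pi)\vee(\widehat a_L-A)+\texte^{-\vartheta_L R_L}$ for $\vartheta_L\ll\ln_3 L$, together with a separate spectral-gap statement (Lemma~\ref{l:specgaplnL}) tailored to $\widetilde{\mathfrak{C}}_{L,A}$. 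That modification is the actual mechanism that closes the proof, and it is absent from your proposal. Finally, a minor point: the per-step gain from the expansion is $\ln_3 L$ (cf.\ Lemma~\ref{l:mass_in}, where the ``moderately low'' per-step factor is $1/\ln_2 L=\texte^{-\ln_3 L}$), not $\ln_2 L$; the needed divergence still holds since $\epsilon_t\ln_3 t\to\infty$, but the stronger rate you asserted is not available.
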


\RS Propositions~\ref{prop:closetoZinshorttime}--\ref{prop:boundforpathconc} will be proved in Section~\ref{s:pathconc}.
They allow us to give: \eRS

\begin{proof}[Proof of Theorem~\ref{thm:pathconcentration}]
Fix $\nu \in \N$ large enough so that the conclusion of Proposition~\ref{prop:closetoZinshorttime} becomes available.
Write $\widetilde{\tau} := \tau_{B_\nu(Z_t)}$ and 
note that, \RS since $\epsilon_t \gg (\ln_3 t)^{-1}$, \eRS when $t$ is large,
\begin{equation}
\label{e:prpathconc1}
\left\{ \sup_{s \in [\epsilon_t t, t]} \left| X_s - Z_t \right| > \epsilon_t \ln t \right\} 
\subset (\RR^\nu_{t,t})^\cc \cup \left\{\tau_{(D^{\circ}_{t,t})^\cc} > t \ge \widetilde{\tau} > \epsilon_t t \right\} \cup A_t,
\end{equation}
where 
\begin{equation}
A_t:=\left\{ \tau_{(D^{\circ}_{t,t})^\cc} > t, \widetilde{\tau} \le \epsilon_t t, \, \sup_{s \in [\widetilde{\tau}, t]} \left| X_s - X_{\widetilde{\tau}}\right| > \tfrac12 \epsilon_t \ln t \right\}.
\end{equation}
By~\eqref{e:proofthm_massconc3}, Proposition~\ref{prop:goodevent_forproof} and Proposition~\ref{prop:closetoZinshorttime},
\begin{equation}\label{e:prpathconc2}
Q^{\ssup \xi}_t \left( (\RR^{\nu}_{t,t})^\cc \right) \; \vee \; Q^{\ssup \xi}_t \left(\tau_{(D^\circ_{t,t})^\cc} > t \ge \widetilde{\tau} > \epsilon_t t \right) \underset{t\to\infty}\longrightarrow\, 0 \; \text{ in probability.}
\end{equation}
To control $Q^{\ssup \xi}_t(A_t)$, let 
\begin{equation}
G_t(x,s)
:= \E_x \left[ \texte^{\int_0^{s} \xi(X_u) \textd u} \1{\scriptstyle \big\{\tau_{(D^\circ_{t,t})^{\cc}}>s, \; \sup_{0 \le u \le s} |X_u - x| > \tfrac12 \epsilon_t \ln t \big\}} \right]
\end{equation}
and use the strong Markov property and Proposition~\ref{prop:boundforpathconc} to get
\begin{align}
\label{e:prpathconc3}
\E_0 \Bigl[\texte^{\int_0^t \xi(X_s) \textd s} \1_{A_t}\Bigr] 
& = \sum_{x \in B_{\nu}(Z_t)} \E_0 \Bigl[ \texte^{\int_0^{\widetilde{\tau}} \xi(X_s) \textd s} \1_{\{\tau_{(D^\circ_{t,t})^\cc}> \widetilde{\tau} =\tau_x \le \epsilon_t t\}} G_t(x,t-\widetilde\tau) \Bigr]
\nonumber \\
& \le t^{-1} U(t)
\end{align}
with probability tending to $1$ as $t \to \infty$.
The desired claim now readily follows from \eqref{e:prpathconc1}, \eqref{e:prpathconc2} and~\eqref{e:prpathconc3}. 
\end{proof}


\subsection{Proof of aging and limit profiles}
\label{ss:proofaging}\noindent
The last set of propositions to be introduced here concern the proof of Theorems~\ref{thm:aging_solution} and~\ref{thm:massconc_withuniq}. 
We start with some supporting notation. 
Given a function $t\mapsto\mu_t$ with $\mu_t\in \N$, let~$\phi^\bullet_{t,s}$ 
denote the eigenfunction corresponding to the largest Dirichlet eigenvalue of the Anderson operator
in~$B_{\mu_t}(Z_s)$, normalised so that
\begin{equation}
\phi^\bullet_{t,s} > 0 \text{ on } B_{\mu_t}(Z_s), \quad \phi^\bullet_{t,s} = 0 \text{ on } B_{\mu_t}^\cc(Z_s) \quad\text{and}\quad
\|\phi^{\bullet}_{t,s}\|_{\ell^1(\Z^ d)}=1.
\end{equation}
(Notice our use of the $\ell^1$-norm here.) 
When $s=t$ we omit one index from the notation. 
Recall the choice of $\kappa \in (0,1/d)$ in~\eqref{e:defvarrhoz}.
We then have:
\begin{proposition}\label{prop:improvmassconc}
For any $\mu_t \in \N$ with $1 \ll \mu_t \ll (\ln t)^\kappa$, and any $0 < a \le b < \infty$,
\begin{equation}\label{e:improvmassconc}
\lim_{t \to \infty} \sup_{s \in [at,bt]} \1_{\GG_{t,s}} \, \left\| \frac{u(\cdot,s)}{U(s)} - \phi^\bullet_{t,s}(\cdot) \right\|_{\ell^1(\Z^d)} = 0 \;\; \textnormal{ in probability.}
\end{equation}
\end{proposition}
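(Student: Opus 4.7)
The plan is two-step: first, show that $u(\cdot,s)/U(s)$ is $\ell^1$-close to the properly normalized principal eigenfunction of $H_{D^\circ_{t,s}}$; second, show this latter object is $\ell^1$-close to $\phi^\bullet_{t,s}$.

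For the first step, by Propositions~\ref{prop:macroboxtruncation_forproof}--\ref{prop:randomtruncation_forproof} together with the spectral gap implicit in $\GG_{t,s}$, with probability tending to one the Feynman--Kac expression satisfies
\[
\bigl\|u(\cdot,s) - u_{D^\circ_{t,s}}(\cdot,s)\bigr\|_{\ell^1(\Z^d)} = o(U(s))
\]
uniformly in $s \in [at,bt]$. Expanding $u_{D^\circ_{t,s}}$ via the spectral decomposition~\eqref{e:specdecompLambda}, the top term is $\texte^{s\lambda^\circ_{t,s}} \phi^\circ_{t,s}(0)\, \phi^\circ_{t,s}(x)$, while the remaining sum has $\ell^1$-norm bounded, via Cauchy--Schwarz, by $|D^\circ_{t,s}|^{3/2}\, \texte^{s\lambda^{\ssup 2}_{D^\circ_{t,s}}}$. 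The eigenvalue gap enforced by $\GG_{t,s}$---combined with the linkage between the ordered $\Psi^{\ssup k}_s$ and the top Dirichlet eigenvalues of $H_{D^\circ_{t,s}}$ established in \cite{BK16}---is large enough to push this remainder below $U(s)$, after a crude but sufficient lower bound on $\phi^\circ_{t,s}(0)$ (positive by Perron--Frobenius and bounded below by a Feynman--Kac estimate using $0\in D^\circ_{t,s}$). Summing over $x$ to identify the asymptotics of $U(s)$ and dividing then yields
\[
\biggl\| \frac{u(\cdot,s)}{U(s)} - \frac{\phi^\circ_{t,s}}{\|\phi^\circ_{t,s}\|_{\ell^1(\Z^d)}} \biggr\|_{\ell^1(\Z^d)} = o(1)
\]
on $\GG_{t,s}$, with high probability.

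For the second step, Proposition~\ref{prop:localizationinnerdomain}(i) gives exponential decay of $\phi^\circ_{t,s}$ away from $Z_s$, so the $\ell^1$-mass of $\phi^\circ_{t,s}$ outside $B_{\mu_t}(Z_s)$ is $O(\mu_t^{d-1} \texte^{-c_2\mu_t}) = o(1)$ as $\mu_t \to \infty$. On $B_{\mu_t}(Z_s)$, the restriction $\phi^\circ_{t,s}\,\1_{B_{\mu_t}(Z_s)}$ nearly solves the Dirichlet eigenvalue equation for $H_{B_{\mu_t}(Z_s)}$ at eigenvalue $\lambda^\circ_{t,s}$, the defect being supported on the inner boundary of $B_{\mu_t}(Z_s)$ and exponentially small in $\mu_t$. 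Standard resolvent/perturbation theory---leveraging the spectral gap of $H_{B_{\mu_t}(Z_s)}$, which inherits the gap on $\GG_{t,s}$ via the same eigenvalue characterization of~\cite{BK16}---then forces the normalized restriction of $\phi^\circ_{t,s}$ to be $\ell^1$-close to the unique positive $\ell^1$-normalized principal eigenfunction $\phi^\bullet_{t,s}$ of $H_{B_{\mu_t}(Z_s)}$; chaining this with the first step gives the claim.

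The principal technical obstacle lies in this second step: quantifying the closeness in $\ell^1$-norm while $\mu_t$ may grow arbitrarily slowly. A perturbation argument typically produces $\ell^2$-closeness, whose conversion to $\ell^1$ costs a factor of $|B_{\mu_t}(Z_s)|^{1/2}=O(\mu_t^{d/2})$, so one must balance the exponential tail of $\phi^\circ_{t,s}$ against the spectral gap of $H_{B_{\mu_t}(Z_s)}$ (of order $d_t e_t$ by the definition of $\GG_{t,s}$) with sufficient precision. Care is also needed to verify that the spectral characterization of~\cite{BK16}, originally formulated for $H_{D^\circ_{t,s}}$, transfers faithfully to the smaller Dirichlet box $B_{\mu_t}(Z_s)$ centered at $Z_s$.
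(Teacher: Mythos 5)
Your two-step plan diverges from the paper's argument in a way that creates a genuine gap in step one. You propose to expand $u_{D^\circ_{t,s}}(\cdot,s)$ spectrally over the full time horizon $s$ and then absorb the remainder using the spectral gap of $H_{D^\circ_{t,s}}$ together with a lower bound on $\phi^\circ_{t,s}(0)$. But the numbers do not close. By Lemma~\ref{l:spectralgap}(ii) the gap in $D^\circ_{t,s}$ is only of order $d_t e_t$, so the remainder from Cauchy--Schwarz carries a factor $\texte^{-s(\lambda^{(1)}_{D^\circ}-\lambda^{(2)}_{D^\circ})}\le\texte^{-atd_te_t(1+o(1))}$; meanwhile a lower bound for $\phi^\circ_{t,s}(0)$ obtained from \eqref{e:FKrepr_ef} via a direct-path estimate (Lemma~\ref{l:path_eval}) is only $\texte^{-|Z_s|\,\ln_3 t\,(1+o(1))}$, and $|Z_s|\asymp r_t$ makes the exponent $r_t\ln_3 t = t\,d_t$. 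So you would need $at\,d_te_t \gg t\,d_t$, i.e.\ $e_t\gtrsim 1$, which contradicts $e_t\to 0$. In other words, the crude bound $|D^\circ_{t,s}|^{3/2}\texte^{s\lambda^{(2)}}$ discards the fact that $\phi^{(k)}_{D^\circ_{t,s}}(0)$ is itself exponentially small for $k\ge 2$; it is precisely the \emph{difference} of these exponents that the functional $\Psi$ encodes, and a term-by-term analysis (not Cauchy--Schwarz) is required to see that the $\GG_{t,s}$ gap is sufficient. Your proposal as written does not perform that analysis.

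A secondary but significant misreading: you state that the spectral gap of $H_{B_{\mu_t}(Z_s)}$ is inherited from $\GG_{t,s}$ and is of order $d_te_t$. In fact $B_{\mu_t}(Z_s)\subset\CC_{t,s}$ lies inside a \emph{single} relevant island, and by Lemma~\ref{l:properties_opt_comps}(i) the gap there is constant-order; this is exactly what Lemma~\ref{l:specgap_localbox} records ($\lambda^{\ssup1}_{t,s}-\lambda^{\ssup2}_{t,s}\ge\tfrac13\rho\ln2$). The paper's proof exploits this constant gap rather than the tiny gap in $D^\circ_{t,s}$, by applying the Markov property at time $s-T_t$ with $\sqrt{\mu_t}\ll T_t\ll\mu_t$ and performing the spectral expansion in $B_{\mu_t}(Z_s)$ over the short horizon $T_t$ starting from the initial profile $u(\cdot,s-T_t)$. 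This profile, unlike $\1_0$, is already concentrated near $Z_s$ (so the analogue of ``$\phi(0)$'' is not exponentially small), and the error contracts by $\texte^{-cT_t}$ with $c$ constant; the complementary paths that exit $B_{\mu_t}(Z_s)$ before time $T_t$ are handled by comparing to $\phi^\circ_{t,s}$ and using Corollary~\ref{cor:bdEFwithtimetrans}. If you wish to keep the intermediate comparison with $\phi^\circ_{t,s}$ in step one, you would need to replace the Cauchy--Schwarz bound by a bound that tracks $\phi^{(k)}(0)$ individually for each $k$ and identifies $\texte^{s\lambda^{(k)}}\phi^{(k)}(0)$ with $\texte^{s\Psi^{(k)}_s+o(\cdot)}$, turning the argument into a disguised form of what the paper does more directly.
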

We may thus obtain information about the profile of $u(\cdot, s)$
via that of $\phi^\bullet_{t,s}$.
\RV As shown next, the latter can be controlled under Assumption~\ref{A:uniqueness},
along with the shape of $\xi$: \eRV
\begin{proposition}\label{prop:shapes}
If Assumption~\ref{A:uniqueness} holds,
then there exists $\mu_t\in \N$ 
with $1 \ll \mu_t \ll (\ln t)^\kappa$ 
and a function $\widehat a_t$
satisfying $\lim_{t \to \infty}\widehat{a}_t/ \ln_2 t = \rho$
such that,
for any $0<a \le b < \infty$, both
\begin{equation}\label{e:shapepotdeps}
\sup_{s \in [at,bt]} \sup_{x \in B_{\mu_t}} \left| \xi(x + Z_s) - \widehat{a}_t - V_\rho(x) \right|
\end{equation}
and
\begin{equation}\label{e:efcomparison_uniq}
\sup_{s \in [at,bt]}\left\| \phi^\bullet_{t,s}(Z_s+\cdot) - v_\rho(\cdot) \right\|_{\ell^1(\Z^d)}
\end{equation}
converge to~$0$ in probability as $t \to \infty$.
\end{proposition}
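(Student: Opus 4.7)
The plan is to argue that, for $Z_s$ to realize a large local eigenvalue $\lambda^\scrC(Z_s)$, the potential $\xi$ near $Z_s$ must be shaped atypically like the unique centered maximizer $V_\rho$ of the variational problem~\eqref{e:variationalproblemchi}, shifted up by a deterministic scale $\widehat a_t$. I would define $\widehat{a}_t := a_{r_t g_t} + \chi$ (or a comparable max-scale for $\xi$ in a box of radius $r_t g_t$); since $\lim a_t/\ln_2 t = \rho$ from Theorem~\ref{thm:locus} and $g_t$ grows slowly, this yields $\widehat{a}_t/\ln_2 t \to \rho$. With probability tending to $1$, Proposition~\ref{prop:goodevent_forproof} gives $\Psi^{\ssup 1}_s \ge (\rho + o(1))\ln_2 t$ while the distance penalty $|Z_s|\ln_3^+|Z_s|/s$ is of order $d_t g_t/\ln_3 t = o(1)$. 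Combining this with the leading-order eigenvalue result from \cite{BK16} one deduces $\lambda^\scrC(Z_s) = \widehat{a}_t - \chi + o(1)$ in probability, uniformly on $[at,bt]$.

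Set $W_s(x) := \xi(x+Z_s) - \widehat{a}_t$ and view it as a potential on $B_{\varrho_{Z_s}}$, extended by $-\infty$ elsewhere. By additivity of the eigenvalue, $\lambda^{\ssup 1}(W_s) \ge -\chi + o(1)$. To exploit the variational principle, I would establish that $\LL(W_s) \le 1 + o(1)$ in probability, which reduces to bounding $\sum_{x \in B_{\varrho_{Z_s}}} \exp(\xi(x+Z_s)/\rho)$ by $\exp(\widehat{a}_t/\rho)(1+o(1))$. The origin contribution is forced to be close to $1$ (from the matching of $\xi(Z_s)$ with $\widehat{a}_t$ that follows from the eigenvalue lower bound), while the remaining terms are controlled via a Borel-Cantelli and union-bound argument based on \eqref{tail}, using the definition of $\widehat{a}_t$ as the max-order scale over $B_{r_t g_t}$ and the fact that $\kappa < 1/d$ in~\eqref{e:defvarrhoz} keeps the ball $B_{\varrho_{Z_s}}$ subpolynomial in $\ln t$.

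Once both $\LL(W_s) \le 1 + o(1)$ and $\lambda^{\ssup 1}(W_s) \ge -\chi + o(1)$ are in hand, $W_s$ is an approximate centered maximizer of~\eqref{e:variationalproblemchi}. The set $\{V \colon \LL(V) \le 2,\, V(0) \in \textnormal{argmax}(V)\}$ is compact for the topology of pointwise convergence on $\Z^d$ (with values in $[-\infty, 0]$ off the origin), and $V \mapsto \lambda^{\ssup 1}(V)$ is upper semi-continuous there. Assumption~\ref{A:uniqueness} then forces $V_\rho$ to be the only limit point of any sequence of near-maximizers, so $W_s \to V_\rho$ pointwise in probability. A diagonal extraction produces some $\mu_t \in \N$ with $1 \ll \mu_t \ll (\ln t)^\kappa$ along which the convergence is uniform on $B_{\mu_t}$, giving~\eqref{e:shapepotdeps}.

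For~\eqref{e:efcomparison_uniq}, the operator $(\Delta+\xi)|_{B_{\mu_t}(Z_s)}$ translated to the origin is $\widehat{a}_t + (\Delta + W_s)|_{B_{\mu_t}(0)}$, and its principal eigenfunction equals $\phi^\bullet_{t,s}(Z_s + \cdot)$. Pointwise convergence of the potentials on $B_{\mu_t}$, together with the simplicity of the top eigenvalue of $\Delta + V_\rho$ and the $\ell^1$-localization of $v_\rho$ established in \cite{GH99}, delivers $\ell^1$-convergence via standard analytic perturbation theory, with tails beyond $\mu_t$ absorbed by the exponential decay bound of Proposition~\ref{prop:localizationinnerdomain}(i) applied after translation. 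The main obstacle I anticipate is the tightness step $\LL(W_s) \le 1 + o(1)$: it requires a union-bound carefully calibrated against the max-order scale $\widehat{a}_t$ and uniform over all candidate locations $z$ in the macroscopic set $B_{r_t g_t}$ where $Z_s$ resides, with essentially no slack because of the very heavy effective tails in the doubly-exponential class.
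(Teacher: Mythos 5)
Your outline follows essentially the same strategy as the paper's proof: show that $\lambda^\scrC(Z_s)$ is pinned at $\widehat a_t-\chi+o(1)$, establish an $\LL$-bound that, combined with the eigenvalue near-maximality, forces the shifted potential near $Z_s$ into a small neighborhood of $V_\rho$, and then transfer this to eigenfunctions. Two remarks. First, the step you flag as the main obstacle --- the bound $\LL(W_s)\le 1+o(1)$ --- is already supplied by Lemma~\ref{l:uppbddcurlyL} (a variant of Corollary~2.12 in \cite{GM98}), applied to the island $\CC_{t,s}\supset B_{\varrho_{Z_s}}(Z_s)$ via Lemma~\ref{l:compPsitildePsi}; there is no need to redo a union-bound over $B_{r_tg_t}$. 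Second, rather than arguing compactness and upper semi-continuity of $\lambda^{\ssup 1}$ from scratch, the paper directly invokes the quantitative uniqueness statement from Lemma~3.2(i) of \cite{GKM07}, namely that
$\FF(\varepsilon):=-\chi-\sup\{\lambda^{\ssup 1}(V):\LL(V)\le1,\,0\in\textnormal{argmax}(V),\,\dm(V,V_\rho)\ge\varepsilon\}$
is strictly positive; this lets one convert the $o(1)$-closeness of $\lambda^{\ssup 1}(V^*)$ to $-\chi$ into $\dm(V^*,V_\rho)<\varepsilon_{\mu_t}$, and thence to the uniform estimates on $B_{\mu_t}$ with an explicit construction of $\mu_t$ (not just a diagonal extraction). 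Your approach is a valid variant but would still need to verify, e.g., lower semi-continuity of $\LL$ and upper semi-continuity of the principal eigenvalue on the compact $[-\infty,0]^{\Z^d}$, which is exactly what the cited lemma packages. For \eqref{e:efcomparison_uniq}, the paper's final step is the finite-box eigenfunction continuity \eqref{e:prapproxlocef1} together with Lemma~3.3(iii) of \cite{GKM07} (giving $\|v^{\mu_t}_\rho-v_\rho\|_{\ell^1}\to0$), which is a cleaner route than appealing to general perturbation theory plus Proposition~\ref{prop:localizationinnerdomain}(i), though either should work.
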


The proofs of Propositions~\ref{prop:improvmassconc}--\ref{prop:shapes} are based on an approach from \cite{GKM07}
and will be given in Section~\ref{s:localprofiles} below.
Together with Theorem~\ref{thm:aging_locus}, they imply:
\begin{proof}[Proof of Theorem~\ref{thm:massconc_withuniq}]
Note that~\eqref{e:shapepot_withuniq} follows directly from~\eqref{e:shapepotdeps}.
For~\eqref{e:massconc_withuniq}, use~\eqref{e:improvmassconc}, \eqref{e:efcomparison_uniq},
the triangle inequality for the $\ell^1$-norm and~\eqref{e:nojumpandgap}.
\end{proof}

\RS We finish the section with: \eRS

\begin{proof}[Proof of Theorem~\ref{thm:aging_solution}]
We adapt the proof of Theorem 1.1 of \cite{MOS11}.
By Theorem~\ref{thm:aging_locus}, 
it is enough to show that, for any $\varepsilon \in (0,1)$ and $b > 1$,
\begin{equation}\label{e:pragingsol1}
\sup_{s \in [t,bt]} \sum_{z \in \Z^d} \left| \frac{u(z,s)}{U(s)} - \frac{u(z,t)}{U(t)} \right| < \varepsilon \;\;\; \text{ if and only if } \;\;\;  Z_s = Z_t \,\forall\, s \in [t,bt]
\end{equation}
holds with probability tending to $1$ as $t \to \infty$.

Assume first that $Z_s \neq Z_t$ for some $s \in (t,bt]$. By Propositions~\ref{prop:goodevent_forproof} and~\ref{prop:stabilitygap},
we may assume that $Z_{bt} \neq Z_t$,
\RV
and therefore by Proposition~\ref{prop:seprelcap} also that
e.g.\ $|Z_{bt}-Z_t| > \sqrt{t}$.
\eRV
Fixing $R$ so that~\eqref{e:massconcwithgap} holds with $\delta < \tfrac12 (1-\varepsilon)$, we obtain
\begin{multline}
\label{e:pragingsol2}
\sum_{z \in \Z^d} \left| \frac{u(z,bt)}{U(bt)} - \frac{u(z,t)}{U(t)}\right|
\ge \sum_{|z -Z_{bt}| \le R} \left|\frac{u(z,bt)}{U(bt)} \right| - \sum_{|z -Z_t| > R} \left|\frac{u(z,t)}{U(t)} \right|
\ge 1-2\delta > \varepsilon
\end{multline}
with probability tending to~$1$ as $t \to \infty$, proving the ``only if'' part of~\eqref{e:pragingsol1}.

Assume now that $Z_s = Z_t \;\forall\; s \in [t,bt]$.
Then $\phi^\bullet_{t,s} = \phi^\bullet_t$ for all $s \in [t, bt]$,
and the ``if'' part of~\eqref{e:pragingsol1} follows by~\eqref{e:improvmassconc} with $a=1<b$
together with Propositions~\ref{prop:goodevent_forproof}--\ref{prop:stabilitygap}.
\end{proof}


\section{Preparations}
\label{s:preparation}\nopagebreak\noindent
In this section, we collect auxiliary results that will be used in the remainder of the paper.
We start with a few basic properties of the potential field and of the principal Dirichlet eigenvalue 
of the Anderson Hamiltonian in subdomains of $\Z^d$,
leading to the proof of Proposition~\ref{prop:welldefined}.
The two subsequent subsections concern additional properties of the potential field,
and the last one contains spectral bounds for the Feynman-Kac formula.

\subsection{Potentials and eigenvalues}
First we consider the maximum of the potential in a box.
Let $\widehat{a}_L$ be the minimal number satisfying
\begin{equation}
\label{e:defhata}
\Prob \left( \xi(0) > \widehat{a}_L \right) =L^{-d},
\end{equation}
which exists since, by Assumption~\ref{A:dexp}, $\xi(0)$ has a continuous distribution.
Note that, in the notation of \cite{GM98}, $\widehat{a}_L=\psi(d \ln L)$.
Then we have:
\begin{lemma}[Maximum of the potential]
\label{l:maxpotential}
\begin{equation}\label{e:maxpotential}
\lim_{L \to \infty} \max_{x \in B_L} \xi(x) - \widehat{a}_L =0 \;\; \textnormal{ a.s.}
\end{equation}
\end{lemma}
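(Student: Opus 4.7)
\begin{proofsect}{Proof plan for Lemma~\ref{l:maxpotential}}
This is a standard extreme-value-theoretic statement, and the plan is to
combine a union bound (for the upper tail), independence (for the lower tail), and Borel--Cantelli along a geometric subsequence together with monotonicity in $L$.

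First I would record the basic relation between $\widehat{a}_L$ and $F$.
From the definition~\eqref{e:defhata}, $\Prob(\xi(0) > \widehat{a}_L) = L^{-d}$ translates into
$F(\widehat{a}_L) = \ln(d \ln L)$, so in particular $\widehat{a}_L \to \infty$.
Next, using Assumption~\ref{A:dexp} in its integrated form
$F(r+\eps) - F(r) = \int_r^{r+\eps} F'(s)\,\textd s$ together with $F'(s)\to 1/\rho$,
for any $\eps > 0$ and $\delta \in (0,\eps/\rho)$ I can ensure, for all sufficiently large $L$, that
\begin{equation}
F(\widehat{a}_L \pm \eps) = \ln(d \ln L) \pm \eps/\rho + \theta_L, \qquad |\theta_L| < \delta .
\end{equation}
Exponentiating twice then gives the key estimate
\begin{equation}
\label{e:plan_tail}
\Prob\bigl(\xi(0) > \widehat{a}_L \pm \eps\bigr) = L^{-d\,\exp(\pm\eps/\rho)\,(1 + o(1))}.
\end{equation}

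The next step is to exploit~\eqref{e:plan_tail} to bound the two one-sided deviations.
For the upper side, a union bound over $x \in B_L$ gives
\begin{equation}
\Prob\Bigl(\max_{x \in B_L} \xi(x) > \widehat{a}_L + \eps\Bigr)
\le (2L+1)^d \, L^{-d\exp(\eps/\rho)(1+o(1))},
\end{equation}
which vanishes because $e^{\eps/\rho} > 1$. For the lower side, by independence,
\begin{equation}
\Prob\Bigl(\max_{x \in B_L} \xi(x) < \widehat{a}_L - \eps\Bigr)
= \bigl(1 - L^{-d\exp(-\eps/\rho)(1+o(1))}\bigr)^{(2L+1)^d}
\le \exp\bigl(-c L^{d(1-\exp(-\eps/\rho))+o(1)}\bigr),
\end{equation}
which also vanishes since $e^{-\eps/\rho} < 1$. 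Both right-hand sides are summable along the geometric subsequence $L_n := 2^n$, so by the Borel--Cantelli lemma,
\begin{equation}
\limsup_{n \to \infty} \bigl|\max_{x \in B_{L_n}} \xi(x) - \widehat{a}_{L_n}\bigr| \le \eps \quad \text{a.s.}
\end{equation}

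Finally I would pass from the subsequence to all $L$ by monotonicity.
For $L \in [L_n, L_{n+1}]$, the map $L \mapsto \max_{x \in B_L} \xi(x)$ is nondecreasing, and $L \mapsto \widehat{a}_L$ is also nondecreasing.
Since the relation $F(\widehat a_L) = \ln(d \ln L)$ and $F'(r) \to 1/\rho$ give $\widehat a_L = \rho \ln_2 L + O(1)$, the increments $\widehat{a}_{L_{n+1}} - \widehat{a}_{L_n}$ tend to $0$, so the gap between consecutive subsequence terms is negligible.
Taking $\eps \to 0$ through a countable sequence concludes the proof.
The only delicate point is making~\eqref{e:plan_tail} quantitatively precise from the qualitative assumption $F'(r) \to 1/\rho$, which however is already a routine consequence of the mean value theorem once $r$ is taken large enough that $F'$ lies in $(1/\rho - \delta, 1/\rho + \delta)$.
\end{proofsect}
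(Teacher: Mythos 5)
Your proof is correct, but it is worth noting that the paper does not actually supply a proof here: it simply invokes Corollary~2.7 of \cite{GM98}, which (under their Assumption~(F), implied by Assumption~\ref{A:dexp}) establishes exactly this maximum-of-the-field asymptotic. What you have written is a self-contained derivation of that cited result, and it follows the standard extreme-value template: (a) translate $\Prob(\xi(0)>\widehat a_L)=L^{-d}$ into $F(\widehat a_L)=\ln(d\ln L)$; (b) use $F'(r)\to 1/\rho$ to get the two-sided tail estimate $\Prob(\xi(0)>\widehat a_L\pm\eps)=L^{-d\,\exp(\pm\eps/\rho)(1+o(1))}$; (c) union-bound the upper deviation, exploit independence for the lower deviation; (d) Borel--Cantelli along a geometric subsequence; (e) interpolate via monotonicity of both $L\mapsto\max_{B_L}\xi$ and $L\mapsto\widehat a_L$, using that $F'(\zeta)\to 1/\rho>0$ forces $\widehat a_{L_{n+1}}-\widehat a_{L_n}\to 0$. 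Each of these steps is correct as stated. The main thing your approach buys is transparency --- the argument is entirely elementary and exposes exactly how the doubly-exponential tail gives geometric summability --- at the cost of reproducing what GM98 already prove in slightly more generality (their Corollary~2.7 covers the case where $F$ need only satisfy their Assumption~(F) rather than the stronger differentiability hypothesis of Assumption~\ref{A:dexp}). Since the paper needs GM98's framework elsewhere anyway (e.g.\ their Corollary~2.12 in Lemma~\ref{l:uppbddcurlyL}), citing the reference is the economical choice; your proof would be the right thing to write if one wanted the paper to be self-contained on this point.
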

\begin{proof}
See Corollary 2.7 of \cite{GM98}.
\end{proof}

Let us mention here some properties of $\widehat{a}_L$.
By equation (2.1) of \cite{GM98},
\begin{equation}\label{e:prophata}
\widehat{a}_{k_L} = \widehat{a}_{L} + o(1) \;\; \text{ as } L \to \infty \;\;\; \text{ whenever } \;\; \ln k_L = \ln L (1+ o(1))
\end{equation}
and, by Remark 2.1 therein, it is straightforward to verify that $\widehat{a}_L = (\rho+o(1)) \ln_2 L$.

Next we recall the Rayleigh-Ritz formula for the principal eigenvalue of the Anderson Hamiltonian.
For $\Lambda \subset \Z^d$ and $V: \Z^d \to [-\infty, \infty)$,
let $\lambda^{\ssup 1}_\Lambda(V)$ denote the largest eigenvalue of the operator $\Delta + V$ in $\Lambda$ with Dirichlet boundary conditions.
Then
\begin{equation}\label{e:RRformula}
\begin{aligned}
\lambda^{\ssup 1}_\Lambda(V) & = \sup \left\{ \langle (\Delta + V) \phi, \phi \rangle_{\ell^2(\Z^d)} \colon\, \phi \in \R^{\Z^d}, \,\supp \phi \subset \Lambda, \, \|\phi\|_{\ell^2(\Z^d)}=1 \right\}.
\end{aligned}
\end{equation}
When $V = \xi$ we sometimes write~$\lambda^{\ssup 1}_\Lambda$ instead of $\lambda^{\ssup 1}_\Lambda(\xi)$.
Here are some straightforward consequences of the Rayleigh-Ritz formula:
\begin{enumerate}
\item for any $\Gamma \subsetneq \Lambda$, 
\begin{equation}\label{e:monot_princev}
\max_{z \in \Gamma} V(z) - 2d \le \lambda^{\ssup 1}_\Gamma(V) \le \lambda^{\ssup 1}_\Lambda(V) \le \max_{z \in \Lambda} V(z);
\end{equation}
\item the eigenfunction corresponding to $\lambda^{\ssup 1}_\Lambda(V)$ can be taken non-negative;
\item if $V$ is real-valued and $\Lambda$ is finite and connected (in the graph-theoretical sense according to the usual nearest-neighbor structure of $\Z^d$), 
then the middle inequality in ~\eqref{e:monot_princev} is strict and, moreover, the non-negative eigenfunction corresponding to~$\lambda^{\ssup 1}_\Lambda(V)$ is strictly positive;
\item for $\Lambda, \Lambda' \subset \Z^d$ such that $\dist(\Lambda, \Lambda') \ge 2$,
\begin{equation}\label{e:ev_separatedsets}
\lambda^{\ssup 1}_{\Lambda \cup \Lambda'}(V) = \max \{ \lambda^{\ssup 1}_\Lambda(V), \lambda^{\ssup 1}_{\Lambda'}(V) \}.
\end{equation}
\end{enumerate}

We can now give the proof of Proposition~\ref{prop:welldefined}.
\begin{proof}[Proof of Proposition~\ref{prop:welldefined}]
Note that, for any $R\in \N$ and $z\in\Z^d$,
\begin{equation}
\label{E:5.7}
\{z \in \scrC\} \supseteq \left\{\xi(z) \le \rho\kappa^{-1}\ln R,\,\, \xi(z) = \max_{x \in B_R(z)} \xi(x) \right\},
\end{equation}
and the probability of the event on the
right-hand side does not depend on $z$ and is  positive for some fixed large enough~$R$. 
As the events on the right of~\eqref{E:5.7} depend only on a finite number of coordinates, the second Borel-Cantelli lemma shows $|\scrC| = \infty$ almost surely.
Now, by~\eqref{e:monot_princev}, $\lambda^{\scrC}(z) \le \xi(z)$ for any $z \in \scrC$
while, by Lemma~\ref{l:maxpotential}, almost surely $\xi(z) \le 2 \rho \ln_2 |z|$ for all $|z|$ large enough.
This implies that, almost surely,
\begin{equation}\label{e:restrictionmaximizationPsi}
\limsup_{R \to \infty} \sup_{z \in \mathscr{C}, |z| = R} \Psi_t(z) \le \lim_{R \to \infty} 
\Bigl(2 \rho \ln_2 R - R \frac{\ln_3 R}{t}\Bigr) = -\infty
\end{equation}
for each $t > 0$, finishing the proof.
\end{proof}

\vspace{10pt}
Next we generalise \twoeqref{defcL}{e:variationalproblemchi} as follows.
For $\Lambda \subset \Z^d$ and $V: \Z^d \to [-\infty,\infty)$, let
\begin{equation}\label{e:defcurlyL}
\LL_\Lambda(V) := \sum_{x \in \Lambda} \texte^{\frac{V(x)}{\rho}},
\end{equation}
with the interpretation $\texte^{-\infty}:=0$.
Then set
\begin{equation}\label{e:defchiLambda}
\chi_{\Lambda} = \chi_{\Lambda}(\rho) := - \sup \left\{ \lambda^{\ssup 1}_{\Lambda}(V) \colon\, V \in [-\infty, 0]^{\Z^d}, \LL_\Lambda(V) \le 1 \right\}.
\end{equation}
When $\Lambda = \Z^d$ we  write just~$\chi$.
From the definition it follows that, if $\Gamma \subset \Lambda$, then $\chi_\Gamma \ge \chi_\Lambda$;
in particular, $0 \le \chi \le \chi_\Lambda \le 2d$ since $\chi_{\{x\}} = 2d$ for any $x \in \Z^d$.

\subsection{Islands}
\label{ss:properties_components}\noindent
Central to our analysis is a domain truncation method taken from \cite{BK16}, which we describe next. 
Recall the choice of $\kappa \in (0,1/d)$ in~\eqref{e:defvarrhoz} and
fix an increasing sequence $R_L \in \N$ such that
\begin{equation}\label{e:propertiesR_L}
R_L \le (\ln L) \vee 1 \;\; \text{ and } \;\; R_L \gg (\ln L)^{\beta} \text{ as } L \to \infty \;\; \text{ for some } \beta \in (\kappa, 1/d).
\end{equation}
This sequence will control the spatial size of the regions in $B_L$ 
where the field is large, and thus the (principal) local eigenvalue has a chance to be close to maximal.
We will often work with $R_L$ satisfying additionally
\begin{equation}\label{e:addpropR_L}
R_L \ll (\ln L)^\alpha \text{ as } L \to \infty \;\; \text{ for some } \alpha \in (\beta, 1/d), 
\end{equation}
but for the proof of Proposition~\ref{prop:boundforpathconc} in Section~\ref{ss:pathconclocus} 
we will need to consider $R_L$ growing as $\ln L$.
Given $A>0$ and $L \in \N$, let
\begin{equation}\label{defPi}
\Pi_{L,A} := \{z \in B_L \colon\, \xi(z) > \widehat{a}_L - 2A\}
\end{equation}
be the set of high exceedances of the field inside the box $B_L$,
and put
\begin{equation}
\label{def_D_L,A}
D_{L,A} := \bigcup_{z \in \Pi_{L,A}} B_{R_L}(z) \cap B_L.
\end{equation}
The parameter~$A$, providing the cutoff between the ``high'' and ``small'' values of the field, will be later fixed to a suitably large value that depends only on the dimension~$d$ and the parameter~$\rho$.

Let $\mathfrak{C}_{L,A}$ denote the set of all connected components of~$D_{L,A}$, to be called~\emph{islands}.
For $\CC \in \mathfrak{C}_{L,A}$, let
\begin{equation}\label{defzC}
z_\CC := \textnormal{argmax}\{\xi(z) \colon\, z \in \CC\}
\end{equation}
be the point of highest potential within $\CC$. 
Since $\xi(0)$ has a continuous law, $z_\CC$ is a.s.\ well defined
 for all $\CC \in \mathfrak{C}_{L,A}$.

Next we gather some useful properties of $\mathfrak{C}_{L,A}$.
The first result concerns 
a uniform bound on the size of the islands.
Hereafter we will say that an $L$-dependent event occurs ``almost surely eventually as $L \to \infty$''
if there exists a.s.\ a (random) $L_0 \in \N$ such that the event happens for all $L \ge L_0$.
Similar language will be used for events depending on other parameters (e.g.\ $t$).
\begin{lemma}[Maximum size of the islands]
\label{l:size_comps}
For any $A > 0$, there exists $n_A \in \N$ such that, 
for any $R_L$ satisfying~\eqref{e:propertiesR_L},
a.s.\ eventually as $L\to \infty$,
all $\CC \in \mathfrak{C}_{L,A}$ satisfy 
$|\CC \cap \Pi_{L,A}| \le n_A$ and $\diam(\CC) \le n_A R_L$.
\end{lemma}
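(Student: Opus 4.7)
The plan is to use a first-moment argument combined with the Borel--Cantelli lemma. The key combinatorial observation is that if a component $\CC \in \mathfrak{C}_{L,A}$ contains $n$ or more points of $\Pi_{L,A}$, then the ``intersection graph'' on $\CC\cap\Pi_{L,A}$ (with $z\sim z'$ iff $B_{R_L}(z)\cap B_{R_L}(z')\neq\emptyset$, equivalently $|z-z'|_{\ell^\infty}\le 2R_L$) is connected, since $\CC$ is a connected union of the balls $B_{R_L}(z)$ indexed by $z\in\CC\cap\Pi_{L,A}$. A breadth-first traversal of any spanning tree then yields an ordered sequence $(z_1,\ldots,z_n)$ of distinct points of $\Pi_{L,A}$ inside $B_L$ such that each $z_i$ with $i\ge 2$ lies within $\ell^\infty$-distance $2R_L$ of some $z_j$ with $j<i$. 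Counting such ``rooted chains'' and showing their expected number vanishes fast enough in $L$ will then preclude large components.

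The first ingredient is a tail estimate. Using Assumption~\ref{A:dexp}, $F(\widehat{a}_L)=\ln(d\ln L)$ by \eqref{e:defhata}, and since $F'(r)\to 1/\rho$ the mean value theorem gives $F(\widehat{a}_L-2A)=\ln(d\ln L)-2A/\rho+o(1)$, so
\begin{equation*}
p_L:=\textnormal{Prob}(\xi(0)>\widehat{a}_L-2A)=\exp\bigl\{-\texte^{F(\widehat{a}_L-2A)}\bigr\}\le L^{-\gamma_A}
\end{equation*}
for all large $L$, for any fixed $\gamma_A<d\,\texte^{-2A/\rho}$. The second ingredient is the chain count. Picking a root $z_1\in B_L$ ($\le CL^d$ choices) and for each $i\ge 2$ a parent index $j<i$ together with $z_i$ in the $\ell^\infty$-ball of radius $2R_L$ around $z_j$ yields at most $(i-1)\cdot CR_L^d$ options, so the number of such ordered chains of length $n$ in $B_L$ is bounded by $C_n L^d R_L^{d(n-1)}$. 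Since the coordinates $\xi(z_1),\ldots,\xi(z_n)$ are i.i.d., the expected number of chains whose vertices all lie in $\Pi_{L,A}$ is
\begin{equation*}
\le C_n\, L^d R_L^{d(n-1)} p_L^{\,n}\le C_n\, (\ln L)^{d(n-1)}\, L^{d-\gamma_A n},
\end{equation*}
using $R_L\le \ln L$ from~\eqref{e:propertiesR_L}. Choosing $n=n_A$ large enough that $\gamma_A n_A>d+2$, this is summable in $L$, so by Borel--Cantelli almost surely eventually no such chain exists, which forces $|\CC\cap\Pi_{L,A}|<n_A$ for every $\CC\in\mathfrak{C}_{L,A}$.

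For the diameter, note that $\CC$ is covered by at most $n_A$ boxes $B_{R_L}(z)$ with $z\in\CC\cap\Pi_{L,A}$, and any two of these centers can be joined in the intersection graph by a path of at most $n_A-1$ steps, each of $\ell^\infty$-length $\le 2R_L$ and hence $\ell^1$-length $\le 2dR_L$; adding at most $dR_L$ on each end to travel inside the endpoint boxes gives $\diam(\CC)\le 2dn_A R_L$, and enlarging $n_A$ by a factor of $2d$ yields the stated bound. The main obstacle is the tail estimate: we do not have an exact doubly-exponential law, so the control of $p_L$ has to go through the asymptotic $F'(r)\to 1/\rho$; the saving grace for Borel--Cantelli is that $R_L\le \ln L$, so the combinatorial factor $R_L^{d(n-1)}$ only contributes polylogarithmically and is easily absorbed into the polynomial decay $L^{-2}$.
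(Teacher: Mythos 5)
Your first-moment/Borel--Cantelli argument via rooted chains is correct in substance and is the natural self-contained route (the paper itself simply cites \cite{BK16}, Lemma~6.6). The tail estimate $p_L\le L^{-\gamma_A}$ for any fixed $\gamma_A<d\,\texte^{-2A/\rho}$, derived from $F(\widehat a_L)=\ln(d\ln L)$ and $F'\to1/\rho$, the chain count $O_n(L^dR_L^{d(n-1)})$ via a BFS ordering of a spanning tree of the nerve, the choice $\gamma_A n_A>d+2$, and the use of $R_L\le\ln L$ to absorb the polylogarithmic factor into $L^{-2}$ are all sound.

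One imprecision worth fixing: the nerve relation on $\CC\cap\Pi_{L,A}$ should be ``$B_{R_L}(z)\cup B_{R_L}(z')$ is connected in $\Z^d$'', not ``$B_{R_L}(z)\cap B_{R_L}(z')\ne\emptyset$''. Two boxes can be merely \emph{adjacent} (share no lattice point but admit a nearest-neighbour pair, one in each), which already happens at $|z-z'|_{\ell^\infty}=2R_L+1$; with your intersection relation (threshold $2R_L$) the nerve on $\CC\cap\Pi_{L,A}$ need not be connected even though $\CC$ is, so the BFS chain you extract could miss part of $\CC\cap\Pi_{L,A}$. Replacing the threshold by $2R_L+1$ restores connectedness: if $S\sqcup T$ were a nontrivial partition of $\CC\cap\Pi_{L,A}$ with no nerve edge, then $\bigcup_{z\in S}\bigl(B_{R_L}(z)\cap B_L\bigr)$ and $\bigcup_{z\in T}\bigl(B_{R_L}(z)\cap B_L\bigr)$ would be disjoint and at $\ell^1$-distance $\ge 2$, contradicting connectedness of $\CC$. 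This only changes the constant in the chain count (balls of radius $2R_L+1$ instead of $2R_L$) and the per-step $\ell^1$-length $\le d(2R_L+1)$ in the diameter bound, both harmless after enlarging $n_A$ as you already do. Your approach is in the same first-moment, lattice-animal spirit as Lemmas~\ref{l:bound_mediumpoints}--\ref{l:boundhighexceedances} in the paper, which use Chernoff bounds on binomials because they require a relative rather than an absolute count.
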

\begin{proof}
See the proof of Lemma 6.6 in \cite{BK16}.
\end{proof}

For $\delta >0$, $A>0$ and $L \in \N$, let
\begin{equation}\label{e:reducbcomp1}
\mathfrak{C}^\delta_{L,A}:= \{\CC \in \mathfrak{C}_{L,A} \colon\, \lambda^{\ssup 1}_\CC > \widehat{a}_L - \chi - \delta\}
\end{equation}
denote the set of islands with large principal eigenvalue.
We call these \emph{relevant islands}, as their eigenvalue is close to the principal eigenvalue of $B_L$ (cf.\ Lemma~6.8 of \cite{BK16}).
\RV
In the proofs of our main theorems, $\delta$ will be fixed at some small enough value
so as to satisfy the requirements of some intermediate results given below.
\eRV

The next lemma is crucial for the proof of Proposition~\ref{prop:PPPconv},
which implies Proposition~\ref{prop:goodevent_forproof}
\RS and is one of the main ingredients in the proof of Theorem~\ref{thm:locus}. \eRS
It allows us to compare the principal eigenvalues of relevant islands to those of disjoint boxes.
\begin{lemma}[Coarse-graining for local principal eigenvalues]
\label{l:properties_opt_comps}
Assume $R_L$ satisfies~\eqref{e:propertiesR_L} and~\eqref{e:addpropR_L}.
Let $N_L \in \N$ satisfy $L^{\beta} \ll N_L \ll L^{\alpha}$ as $L \to \infty$ for some $0< \beta < \alpha < 1$.
For all $A > 0$ sufficiently large and $\delta > 0$ small enough, the following
occurs with probability tending to one as $L \to \infty$:
\begin{enumerate}
\item[(i)]  Each  $\CC \in \mathfrak{C}^\delta_{L,A}$ satisfies $\lambda^{\ssup 1}_{\CC}-\lambda^{\ssup 2}_{\CC} \ge \tfrac12 \rho \ln 2$.

\item[(ii)] For each  $\CC \in \mathfrak{C}^\delta_{L,A}$, there exists $z \in (2 N_L +1) \Z^d $ such that $\CC \subset B_{N_L}(z) \subset B_L$.

\item[(iii)]  Every two distinct $\CC,\CC' \in \mathfrak{C}^\delta_{L,A}$ satisfy $\dist(\CC,\CC') > 4d N_L$.

\item[(iv)] 
Let $\eta_A := \{1 + A/(4d)\}^{-1}$.
For any $z \in (2 N_L + 1)\Z^d$ such that $B_{N_L}(z) \subset B_L$ and $\lambda^{\ssup 1}_{B_{N_L}(z)} > \widehat{a}_L - \chi - \delta + (\eta_A)^{R_L}$, 
there exists a $\CC \in \mathfrak{C}^\delta_{L,A}$ satisfying $\CC \subset B_{N_L}(z)$ and
\begin{equation}\label{e:comp_eigenvalues}
\lambda^{\ssup 1}_\CC > \lambda^{\ssup 1}_{B_{N_L}(z)}  - (\eta_A)^{R_L}.
\end{equation}
\end{enumerate}
\end{lemma}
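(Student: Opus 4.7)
My plan is to combine the order-statistics input of~\cite{BK16} for the high values of $\xi$ with spectral perturbation bounds for $\Delta + \xi$. The first step is to observe that, by~\eqref{e:monot_princev}, any $\CC \in \mathfrak{C}^\delta_{L,A}$ contains at least one point $z_\CC$ with $\xi(z_\CC) \ge \lambda^{\ssup 1}_\CC > \widehat{a}_L - \chi - \delta$. Hence every relevant island harbors a near-extreme value of $\xi$, and the Poisson convergence of the point process of near-maxima established in~\cite{BK16} tells me that, with probability tending to one, such points are (a) at most polynomially many, (b) mutually separated by distances much larger than $N_L$, and (c) remain at distance $\gg N_L$ from $\partial B_L$. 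Combined with the bound $\diam(\CC) \le n_A R_L \ll N_L$ from Lemma~\ref{l:size_comps}, items (ii) and (iii) follow: each relevant $\CC$ sits deep inside exactly one cell of the tiling $\{B_{N_L}(z)\colon z \in (2N_L+1)\Z^d\}$, and distinct relevant islands lie far apart. The only subtlety is that $z_\CC$ might fall within distance $n_A R_L$ of the cell boundary, but that event has probability $O\bigl((\ln L)^c R_L/N_L\bigr) \to 0$ by (a) and the local equidistribution of near-maxima.

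For (i), I would use that inside a single island of diameter at most $n_A R_L$ the second-largest value of $\xi$ is, with high probability, at least $\rho \ln 2 - o(1)$ below the largest one. This is precisely the content of the doubly-exponential gap phenomenon derived in~\cite[Lemma~6.8]{BK16}: conditional on a high value at some $z$, the chance of a second value within $\rho\ln 2 - \eta$ anywhere in $B_{n_A R_L}(z)$ is negligible as $\eta \downarrow 0$. A straightforward Rayleigh--Ritz argument --- projecting onto the orthogonal complement of $\1_{z_\CC}$ --- then bounds $\lambda^{\ssup 2}_\CC$ by the second-largest value of $\xi$ on $\CC$ plus an $O(1)$ perturbation, which can be absorbed by choosing $A$ large and $\delta$ small so that the surviving gap is at least $\tfrac12 \rho \ln 2$.

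Part (iv) is the step I expect to be the technical heart of the argument. Given $z \in (2N_L+1)\Z^d$ with $\lambda^{\ssup 1}_{B_{N_L}(z)} > \widehat{a}_L - \chi - \delta + (\eta_A)^{R_L}$, I first locate the maximum point of $\xi$ in $B_{N_L}(z)$; by~\eqref{e:monot_princev} this point must lie in $\Pi_{L,A}$ provided $A$ is large (so that any region on which $\xi \le \widehat{a}_L - 2A$ is too weak to realize the required eigenvalue). Let $\CC$ be its island, which by (ii) is contained in $B_{N_L}(z)$. The comparison $\lambda^{\ssup 1}_{B_{N_L}(z)} - \lambda^{\ssup 1}_\CC \le (\eta_A)^{R_L}$ should follow from an Agmon-type eigenfunction decay estimate: the non-negative principal eigenfunction $\phi$ satisfies the pointwise bound
\begin{equation*}
\phi(x) \;\lesssim\; (\eta_A)^{\dist(x,\,\Pi_{L,A})}, \qquad x \in B_{N_L}(z)\setminus D_{L,A},
\end{equation*}
because at each such $x$ the potential sits below $\widehat{a}_L - 2A$ while $\lambda^{\ssup 1}_{B_{N_L}(z)}$ is of order $\widehat{a}_L$, making the effective transfer operator contract at rate $\eta_A = (1 + A/(4d))^{-1}$. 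Substituting $\phi\1_{\CC}$ into the Rayleigh--Ritz formula for $\lambda^{\ssup 1}_\CC$ and using $\dist(\CC^\cc, \Pi_{L,A}\cap\CC) \ge R_L$ then yields~\eqref{e:comp_eigenvalues}, after absorbing polynomial prefactors into $(\eta_A)^{R_L}$ by a small adjustment of~$A$.

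The main obstacle will be the quantitative eigenfunction decay in (iv) --- producing a rate sharp enough to match the explicit $(\eta_A)^{R_L}$ correction while simultaneously keeping the estimate deterministic given the structural properties of the relevant islands. Parts (i)--(iii) amount, by comparison, to bookkeeping on top of the max-order-statistics input supplied by~\cite{BK16}.
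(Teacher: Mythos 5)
The paper itself disposes of this lemma almost entirely by citation: items (i)--(iii) are taken verbatim from Lemma~6.7 of \cite{BK16}, and item (iv) is a two-line consequence of Theorem~2.1 of \cite{BK16} combined with~\eqref{e:ev_separatedsets} and item (ii). You instead attempt a reconstruction from scratch. Your outlines of (ii)--(iii) (point-process convergence plus the $\diam\CC \le n_A R_L \ll N_L$ bound and a simple boundary-avoidance estimate) are in the right spirit, and the Agmon-type idea you describe for (iv) is indeed the mechanism inside \cite[Theorem~2.1]{BK16}. There are, however, two genuine gaps in the reasoning as written.

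For (i), the claim that the $O(1)$ perturbation ``can be absorbed by choosing $A$ large and $\delta$ small'' fails. The min-max bound with $\psi=\1_{z_\CC}$ gives $\lambda^{\ssup 2}_\CC \le \max_{x\in\CC\setminus\{z_\CC\}}\xi(x)$, and for the principal eigenvalue you only have $\lambda^{\ssup 1}_\CC \ge \xi(z_\CC)-2d$ (or the weaker $\lambda^{\ssup 1}_\CC > \widehat{a}_L-\chi-\delta$). So even granting a $\xi$-gap of $\rho\ln 2 - o(1)$, the spectral gap you obtain is at best $\rho\ln 2 - 2d - o(1)$ (or $\rho\ln 2 - \chi - \delta - o(1)$). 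The offsets $2d$ and $\chi$ are fixed constants that do not shrink as $A$ grows or $\delta$ shrinks; the bound therefore delivers $\tfrac12\rho\ln 2$ only when $\rho\ln 2 > 4d$, whereas the lemma is asserted for all $\rho\in(0,\infty)$. Closing this requires a structurally different argument for small $\rho$ (for instance via the $\LL$-constraint, showing that a relevant island then carries a unique point of $\Pi_{L,A}$ so that $\lambda^{\ssup 2}_\CC\le\widehat{a}_L-2A$), and this is exactly what the cited lemma in \cite{BK16} supplies.

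For (iv), committing to ``the island $\CC$ containing the maximum of $\xi$ in $B_{N_L}(z)$'' is premature. The Rayleigh--Ritz step requires $\|\phi\1_\CC\|_2$ to be close to $1$, i.e.\ the principal eigenfunction $\phi$ of $H_{B_{N_L}(z)}$ must concentrate on $\CC$. But $\phi$ concentrates on the island with the largest \emph{eigenvalue}, which need not be the one containing the largest \emph{potential value}: by~\eqref{e:monot_princev} the two can disagree by up to $2d$, and the maximum's island may well fail the threshold $\lambda^{\ssup 1}_\CC > \widehat{a}_L - \chi - \delta$. If $\phi$ lives on a competing island, truncating it to your $\CC$ gives an exponentially small vector and the quotient becomes useless. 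The robust version of the argument, which is what \cite[Theorem~2.1]{BK16} proves, compares $\lambda^{\ssup 1}_{B_{N_L}(z)}$ to $\lambda^{\ssup 1}_{D_{L,A}\cap B_{N_L}(z)}$ and then extracts some $\CC$ via~\eqref{e:ev_separatedsets}, without specifying which one in advance; item (ii) subsequently pins it inside $B_{N_L}(z)$. You would need (i) and (iii) already in hand to rule out a competing island under your choice, which makes the argument circular as stated.
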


\begin{proof}
Let $A,\delta$ be as in the statement of Lemma 6.7 of \cite{BK16}; we may assume that $A>\chi+\delta$.
Items (i)--(iii) follow from items (1)--(3) in this lemma (the scales there do not match ours exactly, but the proof is the same).
For (iv), assume that $L$ is so large that $2d (\eta_A)^{2R_L-1} < (\eta_A)^{R_L}$,
and note that $\lambda^{\ssup 1}_{B_{N_L}}(z) - A > \widehat{a}_L - 2A$.
By Theorem 2.1 of \cite{BK16} applied to $D = B_{N_L}(z)$ and~\eqref{e:ev_separatedsets},
there exists $\CC \in \mathfrak{C}_{L,A}$, $\CC \cap B_{N_L}(z) \neq \emptyset$
such that~\eqref{e:comp_eigenvalues} holds. In particular, $\CC \in \mathfrak{C}^{\delta}_{L,A}$ so, by item (ii), $\CC \subset B_{N_L}(z)$.
\end{proof}

Our next goal is to control the behavior of the potential inside relevant islands.
This will be important for the proofs of Propositions~\ref{prop:pathsavoidingZ_forproof} and~\ref{prop:localizationinnerdomain} 
as well as Lemma~\ref{l:comparisoncapitalsislands} below. 
First we will need two lemmas concerning lower and upper bounds for $\LL$. 
\begin{lemma}
\label{l:lbddcurlyL}
For any $\Lambda \subset \Z^d$ and any $a \in \R$,
if $\lambda^{\ssup 1}_\Lambda \ge a$ then $\LL_\Lambda(\xi - a - \chi_\Lambda) \ge 1$.
\end{lemma}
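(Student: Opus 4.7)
The plan is to exhibit, using the hypothesis $\lambda^{\ssup 1}_\Lambda(\xi) \ge a$, an explicit competitor $V$ in the variational problem~\eqref{e:defchiLambda} defining $\chi_\Lambda$, and to read off the conclusion by translation-invariance properties of $\lambda^{\ssup 1}_\Lambda$ and $\LL_\Lambda$.

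First, I would dispose of the trivial case: if $\LL_\Lambda(\xi) = \infty$, then by the definition of $\LL_\Lambda$ we also have $\LL_\Lambda(\xi - a - \chi_\Lambda) = \infty \ge 1$, so there is nothing to prove. Assume henceforth $\LL_\Lambda(\xi) < \infty$ and set
\begin{equation}
c := \rho \ln \LL_\Lambda(\xi),
\end{equation}
noting that $e^{c/\rho} = \LL_\Lambda(\xi) \ge e^{\xi(x)/\rho}$ for every $x \in \Lambda$, hence $c \ge \max_\Lambda \xi$. Therefore the function $V$ defined by $V(x) := \xi(x) - c$ for $x \in \Lambda$ and $V(x) := -\infty$ for $x \notin \Lambda$ lies in $[-\infty, 0]^{\Z^d}$, and by construction $\LL_\Lambda(V) = e^{-c/\rho} \LL_\Lambda(\xi) = 1$, so $V$ is an admissible competitor in~\eqref{e:defchiLambda}.

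Next, using that $\lambda^{\ssup 1}_\Lambda$ is translation-covariant in its second argument (which is immediate from the Rayleigh–Ritz formula~\eqref{e:RRformula}, since adding a constant to the potential shifts every quadratic form $\langle (\Delta + V)\phi, \phi\rangle$ by the same constant when $\|\phi\|_2 = 1$), I have $\lambda^{\ssup 1}_\Lambda(V) = \lambda^{\ssup 1}_\Lambda(\xi) - c$. The definition of $\chi_\Lambda$ then yields
\begin{equation}
-\chi_\Lambda \;\ge\; \lambda^{\ssup 1}_\Lambda(V) \;=\; \lambda^{\ssup 1}_\Lambda(\xi) - c \;\ge\; a - c,
\end{equation}
so that $c \ge a + \chi_\Lambda$. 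Substituting back gives
\begin{equation}
\LL_\Lambda(\xi - a - \chi_\Lambda) \;=\; e^{-(a+\chi_\Lambda)/\rho}\LL_\Lambda(\xi) \;=\; e^{(c - a - \chi_\Lambda)/\rho} \;\ge\; 1,
\end{equation}
which is the claim.

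There is no real obstacle here; the only point requiring a moment's thought is guessing the correct test function, and the guess is forced once one notices that the constraint $\LL_\Lambda(V) \le 1$ is saturated exactly by subtracting the constant $c = \rho\ln\LL_\Lambda(\xi)$ from $\xi$, which automatically satisfies $V \le 0$ on $\Lambda$ thanks to the elementary inequality $\rho\ln\sum_{x\in\Lambda} e^{\xi(x)/\rho} \ge \max_\Lambda \xi$.
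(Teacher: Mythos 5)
Your proof is correct and is essentially the paper's own argument spelled out: the paper proves this lemma in one line by invoking the definitions~\eqref{e:defcurlyL}--\eqref{e:defchiLambda} together with the translation covariance $\lambda^{\ssup 1}_\Lambda(V+a) = \lambda^{\ssup 1}_\Lambda(V)+a$, and your choice of competitor $V = \xi - \rho\ln\LL_\Lambda(\xi)$ (with $-\infty$ off $\Lambda$) is exactly the normalization that makes this explicit. The only loose thread, which the paper does not address either, is the degenerate case $\LL_\Lambda(\xi)=0$, but this is precluded by $\lambda^{\ssup 1}_\Lambda \ge a > -\infty$ via~\eqref{e:monot_princev}.
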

\begin{proof}
This is a consequence of \twoeqref{e:defcurlyL}{e:defchiLambda} and
the fact that $\lambda^{\ssup 1}_\Lambda(V+a) = \lambda^{\ssup 1}_\Lambda(V)+a$.
\end{proof}

\begin{lemma}
\label{l:uppbddcurlyL}
Let $R_L$ satisfy \twoeqref{e:propertiesR_L}{e:addpropR_L}.
For any $A>0$,
\begin{equation}\label{e:uppbddcurlyL}
\limsup_{L \to \infty} \sup_{\CC \in \mathfrak{C}_{L,A}} \, \mathcal{L}_{\CC}(\xi-\widehat{a}_L)  \le 1 \;\; \textnormal{ a.s.}
\end{equation}
\end{lemma}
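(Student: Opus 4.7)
The plan is to split
\[
\mathcal{L}_\CC(\xi - \widehat{a}_L) = \sum_{x\in\CC,\,\xi(x)\le \widehat a_L - 2A} e^{(\xi(x)-\widehat a_L)/\rho} + \sum_{x\in\CC,\,\xi(x)>\widehat a_L - 2A} e^{(\xi(x)-\widehat a_L)/\rho},
\]
and to prove, uniformly over $\CC\in\mathfrak{C}_{L,A}$ and almost surely, that the first (``low-$\xi$'') sum tends to $0$ while the second (``exceedance'') sum is bounded by $1+o(1)$; the two bounds combined give the lemma.

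For the low-$\xi$ sum, each summand is bounded by $e^{-2A/\rho}$. A direct computation using Assumption~\ref{A:dexp} gives $\E[e^{(\xi(x)-\widehat a_L)/\rho}\1_{\xi(x)\le \widehat a_L-2A}] = O((\ln L)^{-1})$. Since $|\CC|\le(2n_AR_L+1)^d$ by Lemma~\ref{l:size_comps}, the expected sum is of order $R_L^d/\ln L = O((\ln L)^{d\alpha-1}) = o(1)$ by \eqref{e:addpropR_L}. A Bernstein-type concentration inequality, exploiting the boundedness $e^{-2A/\rho}$ of individual summands and a variance of order $(\ln L)^{-2}$, together with a union bound over the at most $L^d$ possible positions of the center of $\CC$, lifts this to the desired uniform almost-sure statement.

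For the exceedance sum I plan to apply a \emph{varying-threshold} $k$-tuple union bound. For each $k\ge 1$, rational $\tau>0$, and rational tuple $(c_1,\ldots,c_k)\in(0,2A]^k$ with $\sum_j e^{-c_j/\rho}\ge 1+\tau$, the expected number of $k$-tuples of distinct points sitting in some ball of diameter $n_AR_L$ inside $B_L$ with $\xi(x_j)>\widehat a_L-c_j$ for every $j$ is, by independence and the tail asymptotics $\Prob(\xi>\widehat a_L-c)=L^{-de^{-c/\rho}(1+o(1))}$ from Assumption~\ref{A:dexp}, bounded by $C_k\, L^{d(1-\sum_j e^{-c_j/\rho}(1+o(1)))}\, R_L^{d(k-1)}$, which tends to $0$ since the $L$-exponent is strictly negative and \eqref{e:addpropR_L} makes $R_L^{d(k-1)}$ subpolynomial. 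Borel--Cantelli along the subsequence $L_n=2^n$ (extended to general $L$ by monotonicity in the thresholds), combined with a countable union over $k\le n_A$ and rational $(c_1,\ldots,c_k,\tau)$, yields that a.s.\ eventually, for every island $\CC$ and every ordering $\xi^{(1)}\ge\ldots\ge\xi^{(K)}$ of its exceedance values, $\sum_{j=1}^K e^{(\xi^{(j)}-\widehat a_L)/\rho}\le 1+o(1)$.

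The main obstacle is recognizing that the \emph{varying-threshold} version of the $k$-tuple bound (one independent $c_j$ per point) is the correct tool: the more familiar \emph{uniform-threshold} version (all $c_j$ equal to some $M$) only produces the pointwise order-statistic bound $\xi^{(k)}\le \widehat a_L-\rho\ln k+o(1)$ and hence the loose estimate $\sum_j e^{(\xi^{(j)}-\widehat a_L)/\rho}\le H_K=\sum_{j\le K}1/j$, which already exceeds $1$ once $K\ge 2$ and is therefore insufficient. A secondary technical point is verifying that the error $(1+o(1))$ in $\Prob(\xi>\widehat a_L-c)=L^{-de^{-c/\rho}(1+o(1))}$ is uniform in $c$ over the relevant range $(0,2A]$, which is a straightforward consequence of the differentiability assumption on $F$ in Assumption~\ref{A:dexp}.
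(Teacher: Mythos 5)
Your core idea — a varying-threshold $k$-tuple bound to exploit the ``collective rarity'' of several high values in a small ball — is the right mechanism, and your observation that the uniform-threshold order-statistic bound is too weak is genuinely sharp. The paper itself just invokes Corollary 2.12 of \cite{GM98} (which controls $\LL_{B_R(y)}(\xi-\widehat a_L)$ uniformly in $y$ for fixed $R$) together with Lemma~\ref{l:size_comps} to cover the growing radius $n_A R_L$; your proposal is an attempt to reconstruct that corollary from scratch. Unfortunately, the decomposition at the fixed level $\widehat a_L-2A$ creates a gap that cannot be repaired by tightening the constants.

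The claim that the ``low-$\xi$'' sum tends to $0$ uniformly over $\CC\in\mathfrak{C}_{L,A}$ is false. Consider pairs $(z,x)$ with $z\in\Pi_{L,A}$, $|x-z|\le R_L$, and $\xi(x)\in(\widehat a_L-c,\,\widehat a_L-2A]$ for a fixed $c>2A$. The expected number of such pairs in $B_L$ is, up to $L^{o(1)}$ factors, $R_L^d\,L^{d(1-\texte^{-2A/\rho}-\texte^{-c/\rho})}$, which grows polynomially once $\texte^{-2A/\rho}+\texte^{-c/\rho}<1$; such $c>2A$ always exists because $\texte^{-c/\rho}\to0$ as $c\to\infty$ while $1-\texte^{-2A/\rho}>0$. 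For the island containing such a pair, a single term of your low-$\xi$ sum already equals $\texte^{(\xi(x)-\widehat a_L)/\rho}\ge\texte^{-c/\rho}$, a positive constant. Bounding the two pieces separately therefore yields only $1+\texte^{-c/\rho}+o(1)$, not $1+o(1)$: the island maximizing the low-$\xi$ part is different from the island whose exceedance sum is near $1$, and the lemma is a statement about the total per island, which the decomposition loses.

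Relatedly, the Bernstein step cannot deliver the $o(L^{-d})$ tail you need for the union bound over $\sim L^d$ centers. With individual summands bounded by $M:=\texte^{-2A/\rho}$ — a constant independent of $L$ — and total variance $\sigma^2\to0$, Bernstein gives, for fixed $\epsilon>0$,
\begin{equation*}
\Prob\Bigl(\sum_x Y_x>\epsilon\Bigr)\le\exp\Bigl(-\frac{\epsilon^2/2}{\sigma^2+M\epsilon/3}\Bigr)\;\underset{L\to\infty}\longrightarrow\;\exp\Bigl(-\frac{3\epsilon}{2M}\Bigr),
\end{equation*}
a constant that does not decay in $L$; multiplying by $L^d$ blows up. The obstruction is precisely the polynomially small (but not $o(L^{-d})$) probability that a single $x$ in a ball of size $R_L^d$ has $\xi(x)$ just below $\widehat a_L-2A$.

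The fix is to treat $\LL_B(\xi-\widehat a_L)$ as a whole, without splitting. Writing $Y_x:=\texte^{(\xi(x)-\widehat a_L)/\rho}$, Assumption~\ref{A:dexp} gives $\Prob(Y_x>y)=L^{-dy(1+o(1))}$ uniformly for $y$ in compacts of $(0,\infty)$, so $Y_x$ is essentially $\text{Exponential}(d\ln L)$. For $\lambda=(1-u)d\ln L$ with $u\in(0,1)$ one has $\E[\texte^{\lambda Y_x}]\le 1+\lambda/(d\ln L-\lambda)=1/u$ (up to $1+o(1)$ corrections), whence, for a ball $B$ with $|B|=O(R_L^d)$,
\begin{equation*}
\Prob\Bigl(\sum_{x\in B}Y_x>1+\delta\Bigr)\le\texte^{-\lambda(1+\delta)}\,u^{-|B|}= L^{-d(1-u)(1+\delta)}\,u^{-|B|}.
\end{equation*}
Because $R_L^d\ll(\ln L)^{d\alpha}$ with $d\alpha<1$ by~\eqref{e:addpropR_L}, taking $u\downarrow0$ slowly (say $u=1/\ln_2 L$) gives $u^{-|B|}=L^{o(1)}$, so the tail is $L^{-d(1+\delta)+o(1)}$. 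Union bound over the $\le(2L+1)^d$ centers then yields $L^{-d\delta+o(1)}$, and Borel--Cantelli along a geometric subsequence together with the monotonicity trick you already anticipate completes the proof. This exponential-moment computation is the continuous analogue of your $k$-tuple count; the point is that it handles the full range of values of $\xi(x)$ at once, which the split at $\widehat a_L-2A$ prevents.
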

\begin{proof}
This is a consequence of Lemma~\ref{l:size_comps} and a straightforward extension of Corollary 2.12 in \cite{GM98} 
with $R$ substituted by $n_A R_L$.
\end{proof}

We will now combine the previous two lemmas with results from \cite{BK16}, \cite{GH99} and \cite{GKM07} to
obtain upper and lower bounds around $\widehat{a}_L$ for the potential in relevant islands.
\begin{lemma}[Upper bound for the potential inside relevant islands]
\label{l:potential_rel_islands_upbd}
Assume \twoeqref{e:propertiesR_L}{e:addpropR_L}.
For all $\delta \in (0,1)$ small enough, 
there exist $A_1 > 4d$ and $\nu_1 \in \N$ such that, 
for all $A>0$, a.s.\ eventually as $L \to \infty$,
\begin{align}\label{e:potential_rel_islands_uppbd}
\sup_{\CC \in \mathfrak{C}^{\delta}_{L,A}} \, \sup_{z \in \CC \setminus B_{\nu_1}(z_\CC)} \xi(z) & \le \widehat{a}_{L} - 2A_1. 
\end{align}
\end{lemma}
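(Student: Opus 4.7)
The plan is to cast the potential on a relevant island as a near-maximizer of the variational problem~\eqref{e:variationalproblemchi} and exploit the associated ``$\LL$-budget'' to rule out large values of $\xi$ far from $z_\CC$.

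For $\CC \in \mathfrak{C}^\delta_{L,A}$, introduce the recentered potential
\begin{equation}
V_L(x) := \xi(z_\CC + x) - \widehat{a}_L, \qquad x \in \CC - z_\CC,
\end{equation}
extended by $-\infty$ elsewhere. Lemma~\ref{l:maxpotential} gives $V_L \le o(1)$ with $V_L(0) = o(1)$ almost surely; Lemma~\ref{l:uppbddcurlyL} gives $\LL(V_L) \le 1 + o(1)$; and Lemma~\ref{l:lbddcurlyL} combined with the membership $\CC \in \mathfrak{C}^\delta_{L,A}$ yields $\lambda^{\ssup 1}(V_L) \ge -\chi - \delta$. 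After a vanishing shift $c_L := \rho (\ln \LL(V_L))^+$, the potential $V_L - c_L$ is a centered competitor in~\eqref{e:variationalproblemchi} with eigenvalue at least $-\chi - \delta - o(1)$, hence a near-maximizer.

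Argue by contradiction: suppose for some $A_1 > 4d$ and every $\nu_1$, one can find, along a sequence of $L$'s, an $x_L \in (\CC_L - z_{\CC_L}) \setminus B_{\nu_1}$ with $V_L(x_L) > -2A_1$. Two behaviors must be ruled out. If $|x_L| \to \infty$ along a subsequence, the mass $e^{V_L(x_L)/\rho} \ge e^{-2A_1/\rho}$ escapes in the pointwise limit: any pointwise subsequential limit $V_*$ of $V_L$ satisfies $\LL(V_*) \le 1 - e^{-2A_1/\rho}$ by a Fatou-type estimate, while still obeying $\lambda^{\ssup 1}(V_*) \ge -\chi - \delta$ via finite-subdomain approximation. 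Renormalizing $V_*$ so that $\LL = 1$ then strictly lifts its eigenvalue above $-\chi$ once $\delta$ is small, contradicting the definition of $\chi$ in~\eqref{e:variationalproblemchi}. If $(x_L)$ stays bounded, a diagonal subsequential limit $V_*$ of $V_L - c_L$ lies in $\MM^*_\rho$ and satisfies $V_*(x_*) \ge -2A_1$ for some $|x_*| > \nu_1$.

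The main obstacle is the bounded case, which demands a uniform-in-$V$ decay for centered near-maximizers of~\eqref{e:variationalproblemchi}: for each $A_1 > 4d$ there must exist a $\nu_1$ such that every such $V$ satisfies $V(x) \le -2A_1$ outside $B_{\nu_1}$. For a single $V \in \MM^*_\rho$ this is immediate from $\LL(V) \le 1$ combined with $V(0) = 0$; uniformity over $V$ is supplied by compactness of the (centered, normalized) near-maximizer set in the product topology, a shape-theoretic fact from~\cite{GH99} and~\cite{GKM07}. The threshold $A_1 > 4d$ enters through~\eqref{e:monot_princev}: a single point with $\xi$-value above $\widehat{a}_L - 2A_1$ forces a local eigenvalue above $\widehat{a}_L - 2A_1 - 2d$, and the margin $A_1 > 4d \ge 2\chi$ ensures a quantitative clash with the extremal value $\widehat{a}_L - \chi - \delta$ once $\delta$ is chosen small enough, sealing the contradiction.
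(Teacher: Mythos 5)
Your approach is genuinely different from the paper's, which gives a direct quantitative argument; but as you yourself flag, the compactness step in the bounded case carries a gap, and I do not think the gap can be closed without re-importing the paper's key ingredient. The issue is this: from $\LL(V)\le 1$ and $V\le 0$ you only learn that the level set $S_V:=\{x\colon V(x)>-2A_1\}$ has at most $\texte^{2A_1/\rho}$ \emph{elements}, not that it has bounded \emph{diameter}. For a single $V\in\MM^*_\rho$ you do get a finite radius $\nu_1(V)$ because $S_V$ is finite, but the map $V\mapsto\nu_1(V)$ is not upper semicontinuous in the product (pointwise) topology: a sequence of near-maximizers can place one of its $\OO(1)$ high points at distance $n$ from the origin and still converge pointwise to a member of $\MM^*_\rho$. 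So even granting that the near-maximizer set $\{V\le 0,\,V(0)=0,\,\LL(V)\le1,\,\lambda^{(1)}(V)\ge-\chi-\delta\}$ is compact in $[-\infty,0]^{\Z^d}$ (which itself needs $\lambda^{(1)}$ to be u.s.c.\ under pointwise limits, requiring that eigenfunctions do not escape), compactness does not yield a uniform $\nu_1$. The paper's proof precisely addresses the missing point: it shows that the high points \emph{cannot be spatially separated}, via the spectral splitting estimate (Theorem~2.1 of~\cite{BK16}, eq.~\eqref{e:upbd_potrelisl4}) combined with the $\LL$-budget~\eqref{e:upbd_potrelisl5}--\eqref{e:upbd_potrelisl7}. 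That argument is the actual content of the lemma and has no analogue in your proposal; what you cite from~\cite{GH99,GKM07} gives closeness of near-maximizers to $\MM^*_\rho$ and a uniform \emph{lower} bound on a fixed ball (used in Lemma~\ref{l:potential_rel_islands_lwbd}), not a uniform spatial cutoff for the level sets.

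Two smaller points. First, Case~1 (escaping $x_L$) is in the right spirit, but ``still obeying $\lambda^{(1)}(V_*)\ge-\chi-\delta$ via finite-subdomain approximation'' needs care: Rayleigh--Ritz makes $\lambda^{(1)}$ lower semicontinuous under pointwise convergence, not upper semicontinuous, and the bound you need is an inequality in the hard direction. Second, the explanation of $A_1>4d$ in your last paragraph does not hold up: from~\eqref{e:monot_princev} a point with $\xi>\widehat a_L-2A_1$ forces a local eigenvalue $>\widehat a_L-2A_1-2d$, which for $A_1>4d$ sits \emph{below} $\widehat a_L-\chi-\delta$, so no contradiction results. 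In the paper's argument, $A_1>4d$ serves instead to make $\eta_{A_1}=(1+A_1/(4d))^{-1}<1/2$ so that the spectral splitting bound $2d\eta_{A_1}^{2r-1}<\delta$ is achievable, and the constraint $A_1>\chi+\delta$ ensures $\lambda^{(1)}_\CC-A_1>\widehat a_L-2A_1$ so that Theorem~2.1 of~\cite{BK16} applies. The actual value of $A_1$ is pinned down by the explicit relation $A_1=-\tfrac12\rho\ln(\texte^{2\delta/\rho}-\texte^{-2\delta/\rho})$ in~\eqref{e:upbd_potrelisl1}, a formula your compactness scheme would not produce.
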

\begin{proof}
We follow the proof of Lemma 4.8 of \cite{BK16}.
Fix $\delta \in (0,1)$ small enough such that 
\begin{equation}\label{e:upbd_potrelisl1}
A_1 := - \tfrac12  \rho \ln \left( \texte^{\frac{2 \delta}{\rho}} - \texte^{-\frac{2 \delta}{\rho}} \right) > 4d > \chi+\delta,
\end{equation}
and let $r \in \N$ 
be such that $2d \eta_{A_1}^{2r-1} < \delta$ where $\eta_A := (1+A/4d)^{-1}$.
For $\CC \in \mathfrak{C}^\delta_{L,A}$, 
let 
\begin{equation}\label{e:upbd_potrelisl2}
S := \{ x \in \CC \colon\, \xi(x) > \widehat{a}_L - 2A_1\}.
\end{equation}
We claim that
\begin{equation}\label{e:upbd_potrelisl3}
\diam S \le 2(r+1) |S|.
\end{equation}
Indeed, suppose by contradiction that~\eqref{e:upbd_potrelisl3} does not hold.
Then $S=S_1 \cup S_2$ with $\dist(S_1,S_2) \ge 2(r+1)$.
Let $S^r_i := \{x \in \CC \colon\, \dist(x, S_i) \le r\}$, $i=1,2$.
Then, by~\eqref{e:ev_separatedsets},
\begin{equation}\label{e:upbd_potrelisl4}
\lambda^{\ssup 1}_{S^r_1} \vee \lambda^{\ssup 1}_{S^r_2} = \lambda^{\ssup 1}_{S^r_1 \cup S^r_2}
> \lambda^{\ssup 1}_\CC - 2d \eta_{A_1}^{2r-1} > \widehat{a}_L - \chi - 2 \delta
\end{equation}
where for the first inequality we use Theorem 2.1 of \cite{BK16} applied to $D:=\CC$ 
(note that $\lambda^{\ssup 1}_\CC - A_1 > \widehat{a}_L - 2A_1$ since $\CC$ is assumed to be in $\mathfrak C^\delta_{L,A}$, i.e., 
such that $\lambda^{\ssup 1}_\CC>\widehat{a}_L-\chi-\delta$, and by \eqref{e:upbd_potrelisl1}),
and the last inequality follows by our choice of~$r$.
Supposing without loss of generality that $\lambda^{\ssup 1}_{S^r_1} \ge \lambda^{\ssup 1}_{S^r_2}$,
by Lemma~\ref{l:lbddcurlyL} and~\eqref{e:upbd_potrelisl4} we have

\begin{equation}\label{e:upbd_potrelisl5}
\LL_{S^r_1}\left(\xi - \widehat{a}_L\right) \ge \texte^{(\chi_{S^r_1} - \chi - 2 \delta )/\rho} \ge \texte^{-\frac{2\delta}{\rho}}.
\end{equation}
By Lemma~\ref{l:uppbddcurlyL}, we may suppose that $\LL_\CC(\xi-\widehat{a}_L) \le \texte^{2\delta/\rho}$.
Then, for any $x \in S_2$,

\begin{equation}\label{e:upbd_potrelisl6}
\LL_{S^r_1}\left(\xi - \widehat{a}_L \right) \le \LL_{\CC} \left(\xi - \widehat{a}_L \right) - \texte^{\frac{\xi(x)-\widehat{a}_L}{\rho}} \le \texte^{\frac{2\delta}{\rho}} - \texte^{\frac{\xi(x)-\widehat{a}_L}{\rho}}.
\end{equation}
Combining \twoeqref{e:upbd_potrelisl5}{e:upbd_potrelisl6} we obtain
\begin{equation}\label{e:upbd_potrelisl7}
\xi(x) - \widehat{a}_L \le \rho \ln \left( \texte^{\frac{2 \delta}{\rho}} - \texte^{-\frac{2 \delta}{\rho}} \right) = - 2 A_1,
\end{equation}
contradicting $x \in S$. Therefore,~\eqref{e:upbd_potrelisl3} holds.
To conclude, note that
\begin{equation}\label{e:upbd_potrelisl8}
\texte^{\frac{2\delta}{\rho}} \ge \LL_\CC(\xi-\widehat{a}_L) \ge \texte^{-\frac{2 A_1}{\rho}}|S|.
\end{equation}
Since $z_\CC \in S$ by~\eqref{e:monot_princev} and~\eqref{e:upbd_potrelisl1}, 
the inequalities \eqref{e:upbd_potrelisl3} and~\eqref{e:upbd_potrelisl8} 
now imply~\eqref{e:potential_rel_islands_uppbd}
with $\nu_1 := \lceil 2 (r+1) \texte^{\frac{2 (A_1+\delta)}{\rho}} \rceil$.
\end{proof}

\begin{lemma}[Lower bound for the potential in relevant islands]
\label{l:potential_rel_islands_lwbd}
Suppose that $R_L$ is such that \twoeqref{e:propertiesR_L}{e:addpropR_L}
hold.
For any $\nu \in \N$, there exist $A^*, \delta > 0$ such that, 
for all $A>0$,
the following is true
a.s.\ eventually as $L \to \infty$:
\begin{align}\label{e:potential_rel_islands_lwbd}
\inf_{\CC \in \mathfrak{C}^\delta_{L,A}} \, \inf_{z \in B_{\nu}(z_\CC)} \xi(z) \ge \widehat{a}_{L} - 2A^*.
\end{align}
\end{lemma}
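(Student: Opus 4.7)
The plan is to argue by contradiction, relying on the observation that, for each relevant island $\CC\in\mathfrak{C}^\delta_{L,A}$, the shifted potential
\begin{equation}
V_{\CC,L}(y) := \xi(y+z_\CC) - \widehat{a}_L \quad \text{for } y+z_\CC\in\CC,
\end{equation}
extended by $-\infty$ on $\Z^d\setminus(\CC-z_\CC)$, is asymptotically \emph{near-optimal} for the variational problem in~\eqref{e:variationalproblemchi}. Indeed, Lemma~\ref{l:maxpotential} gives $V_{\CC,L}(0)=o(1)$ and $V_{\CC,L}\le o(1)$ on $\Z^d$ almost surely, so $0$ achieves the maximum; Lemma~\ref{l:uppbddcurlyL} yields $\LL(V_{\CC,L})\le 1+o(1)$; and the relevant-island condition translates into $\lambda^{\ssup 1}(V_{\CC,L})=\lambda^{\ssup 1}_\CC-\widehat{a}_L>-\chi-\delta$. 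Moreover, since $z_\CC\in\Pi_{L,A}$, definition~\eqref{def_D_L,A} gives $B_{R_L}(z_\CC)\subset\CC$, so $V_{\CC,L}$ is real-valued on $B_\nu$ as soon as $R_L\ge\nu$.

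The next ingredient is a \emph{shape theorem} for the variational problem, as developed in~\cite{GH99} (see also~\cite{GKM07}): any sequence of centered potentials $V_n\in[-\infty,0]^{\Z^d}$ with $V_n(0)\to 0$, $\LL(V_n)\to 1$, and $\lambda^{\ssup 1}(V_n)\to-\chi$ admits a subsequence converging pointwise on finite subsets of $\Z^d$ to some $V_*\in\MM^*_\rho$. Since every element of $\MM^*_\rho$ is real-valued by definition (see~\eqref{e:defMrho}), and $\MM^*_\rho$ is sequentially compact in the pointwise topology (a direct consequence of the shape theorem applied to $\MM^*_\rho$ itself), the quantity
\begin{equation}
A^*(\nu) := 1+\tfrac12\sup\bigl\{-V_*(y)\colon V_*\in\MM^*_\rho,\ y\in B_\nu\bigr\}
\end{equation}
is finite. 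The plan is to set $A^*:=A^*(\nu)$, which depends only on $\nu$.

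Finally, if the conclusion failed for every $\delta>0$ (with this fixed $A^*$), then by diagonal extraction along $\delta_k\to 0$ (and, by the negation, corresponding choices $A_k>0$ and $L_j^{(k)}\to\infty$) one would obtain, with positive probability, relevant islands $\CC_j$ and points $y_j\in B_\nu$ with $V_{\CC_j,L_j}(y_j)<-2A^*$. Passing to a subsequence with $y_j\equiv y_*$ using the finiteness of $B_\nu$, the shape theorem would furnish a pointwise subsequential limit $V_*\in\MM^*_\rho$ satisfying $V_*(y_*)\le -2A^*$, in contradiction with the definition of $A^*(\nu)$.

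The hard part is establishing the shape theorem underpinning this argument---in particular, the pointwise tightness of near-optimizers on finite subsets of $\Z^d$. Tightness would follow from the combination of the centering $\max V_n=V_n(0)\to 0$ (providing upper bounds) and exponential localization of the near-optimal principal eigenfunction around its maximum, analogous to Proposition~\ref{prop:localizationinnerdomain}(i) here and to the localization results in~\cite{GKM07}; this prevents significant mass of $\texte^{V_n/\rho}$ from escaping to infinity. Identification of subsequential limits as elements of $\MM^*_\rho$ would then proceed via upper semi-continuity of $\LL$ and continuity of $\lambda^{\ssup 1}$ under pointwise convergence on finite sets. A minor secondary technicality is the $-\infty$ extension of $V_{\CC,L}$ outside $\CC-z_\CC$, which is rendered innocuous by Lemma~\ref{l:size_comps} together with~\eqref{def_D_L,A}, ensuring that any fixed bounded subset of $\Z^d$ is eventually contained in $\CC_j-z_{\CC_j}$.
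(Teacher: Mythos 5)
Your strategy is the same in spirit as the paper's: the recentered potential $V_{\CC,L}$ is a near-optimizer of the variational problem~\eqref{e:variationalproblemchi}, and the lower bound should come from the fact that near-optimizers are uniformly close (on any fixed box $B_\nu$) to the compact set $\MM^*_\rho$, whose elements are finite pointwise. This is exactly the structure of the paper's proof, which constructs $V^*(x):=\xi(x+z_\CC)-\widehat{a}_L-\delta$ on $\CC-z_\CC$ and compares it to $\MM^*_\rho$.

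However, the crucial ingredient --- what you yourself flag as ``the hard part'' --- is left unproven. You sketch a qualitative ``shape theorem'' (subsequential convergence of near-optimizers to $\MM^*_\rho$), but to make your contradiction argument close you actually need a \emph{quantitative} stability statement: for every $\varepsilon>0$ there is $\delta>0$ such that every centered $V$ with $\LL(V)\le1$ and $\lambda^{\ssup 1}(V)>-\chi-2\delta$ satisfies $\inf_{\overline{V}\in\MM^*_\rho}\sup_{B_\nu}|V-\overline{V}|\le\varepsilon$. The paper simply cites (3.6) of~\cite{GKM07}, which furnishes exactly this implication (recorded here as~\eqref{e:lwbd_potrelisl2}), and cites (3.21) of~\cite{GKM07} together with Theorem~2 and Proposition~3 of~\cite{GH99} for the finiteness of $A^*:=-\inf_{V\in\MM^*_\rho}\inf_{B_\nu}V$. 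Once these references are invoked, the argument is a short direct one: $V^*$ satisfies the hypotheses, its principal eigenvalue exceeds $-\chi-2\delta$ because $\CC\in\mathfrak{C}^\delta_{L,A}$, hence $V^*$ lies within $A^*$ of some $\overline{V}\in\MM^*_\rho$ on $B_\nu$, and the conclusion follows from $\overline{V}\ge -A^*$ on $B_\nu$. No contradiction or diagonal extraction is needed.

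Your contradiction argument also has a quantifier issue worth noting. Negating the lemma yields, for each $\delta_k\downarrow 0$, an event $E_k$ of positive probability on which a bad island appears infinitely often; since $\mathfrak{C}^{\delta_{k+1}}_{L,A}\subseteq\mathfrak{C}^{\delta_k}_{L,A}$, these events are nested and $\bigcap_k E_k$ may well be null, so the ``diagonal extraction'' does not obviously produce a single positive-probability event carrying near-optimizers with $\lambda^{\ssup 1}\to -\chi$. A fixed-$\delta$ subsequence cannot replace this, since for fixed $\delta$ the eigenvalues do not converge to $-\chi$ and your qualitative shape theorem does not apply. The quantitative stability result from~\cite{GKM07} sidesteps all of this, which is precisely why the paper uses it.
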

\begin{proof}
Recall the definition of $\MM^*_\rho $ in~\eqref{e:defMrho}.
We note that Lemma 3.2(i) of \cite{GKM07} holds for $\MM^*_\rho$ in place of $\MM_\rho$,
as can be inferred from the proof.
In particular, $\MM^*_\rho \neq \emptyset$ and, 
by Lemma 3.1 therein, all $V \in \MM^*_\rho$ satisfy $\LL(V) = 1$.
On the other hand, by (3.21) in \cite{GKM07} together with Theorem 2 and Proposition 3 of \cite{GH99} (see also (5.44) therein),
\begin{equation}\label{e:lwbd_potrelisl1}
A^* := - \inf_{V \in \MM^*_\rho} \inf_{x \in B_{\nu}} V(x) < \infty.
\end{equation}
Fix, by (3.6) in \cite{GKM07}, $\delta >0$ small enough such that
\begin{equation}\label{e:lwbd_potrelisl2}
\left\{\begin{aligned}
& V \in [-\infty,0]^{\Z^d}, \, 0 \in \textnormal{argmax}(V), \, \mathcal{L}(V) \le 1 \\
& \;\;\; \textnormal{ and } \inf_{\overline{V} \in \MM^*_\rho} \sup_{x \in B_\nu} |V(x)-\overline{V}(x)| > A^*
\end{aligned}
\right\}\quad \Rightarrow \quad \lambda^{\ssup 1}(V) < -\chi - 2\delta.
\end{equation}
Fix $\CC \in \mathfrak{C}^\delta_{L,A}$ and define
\begin{equation}\label{e:lwbd_potrelisl3}
V^*(x) := \left\{ 
\begin{array}{ll}
\xi(x+z_\CC)-\widehat{a}_L - \delta & \text{ if } x + z_\CC \in \CC,\\
- \infty & \text{ otherwise.}
\end{array}
\right.
\end{equation}
By Lemma~\ref{l:maxpotential}, $V^* \in [-\infty,0)^{\Z^d}$ a.s.\ eventually as $L \to \infty$, 
and $0 \in \textnormal{argmax}(V^*)$ by the definition of $z_\CC$.
Furthermore, $\mathcal{L}(V^*) = \mathcal{L}_\CC(\xi - \widehat{a}_L - \delta)$
which is a.s.\ smaller than $1$ for large $L$ by Lemma~\ref{l:uppbddcurlyL}.
Now, since $\CC \in \mathfrak{C}^\delta_{L,A}$, 
we have $\lambda^{\ssup 1}(V^*) = \lambda^{\ssup 1}_\CC -\widehat{a}_L - \delta > -\chi - 2\delta$,
and thus
the conclusion follows from \twoeqref{e:lwbd_potrelisl1}{e:lwbd_potrelisl2}.
\end{proof}

We end this subsection with a comparison between the islands and capitals with large local eigenvalues,
which will be crucial in the proof of Proposition~\ref{prop:PPPconv} below.
\begin{lemma}
\label{l:comparisoncapitalsislands}
Assume \twoeqref{e:propertiesR_L}{e:addpropR_L}. 
There exists a constant $c_1 > 0$ such that,
for all $A > 0$ large enough and $\delta > 0$ small enough, 
the following occurs with probability tending to one as $L \to \infty$:
\begin{enumerate}
\item[(i)] If $\CC \in \mathfrak{C}^{\delta}_{L,A}$, then $z_\CC \in \scrC$, $ (\ln L)^{\kappa/2} < \varrho_{z_\CC} < R_L$ and
\begin{equation}\label{e:compevcapisl}
0 \le \lambda^{\ssup 1}_\CC - \lambda^{\scrC}(z_\CC) \le \texte^{-c_1 (\ln L)^{\kappa/2}}.
\end{equation}
\item[(ii)] For all $z \in \scrC$ such that $B_{\varrho_z}(z) \subset B_L$ 
and $\lambda^{\scrC}(z) > \widehat{a}_L - \chi - \delta$, 
there exists $\CC \in \mathfrak{C}^\delta_{L,A}$ such that $z = z_\CC$ and~\eqref{e:compevcapisl} holds.
\end{enumerate}
\end{lemma}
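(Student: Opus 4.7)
The main input is that, on any relevant island $\CC \in \mathfrak{C}^\delta_{L,A}$, the potential is concentrated at its peak $z_\CC$ (Lemma~\ref{l:potential_rel_islands_upbd}) and takes the near-maximal value $\widehat{a}_L + o(1)$ there: by \eqref{e:monot_princev}, $\xi(z_\CC) \ge \lambda^{\ssup 1}_\CC > \widehat a_L - \chi - \delta$, while Lemma~\ref{l:maxpotential} gives $\xi(z_\CC) \le \widehat a_L + o(1)$. Using $\widehat a_L = (\rho + o(1)) \ln_2 L$, the radius $\varrho_{z_\CC} = \exp(\kappa \xi(z_\CC)/\rho)(1+o(1))$ is then of order $(\ln L)^{\kappa(1+o(1))}$, hence by \eqref{e:propertiesR_L} it is squeezed between $(\ln L)^{\kappa/2}$ and $R_L$ almost surely eventually.

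For part~(i), I would first show $z_\CC \in \scrC$: given any $y \in B_{\varrho_{z_\CC}}(z_\CC)$, either $\xi(y) \le \widehat a_L - 2A < \xi(z_\CC)$ for $A$ sufficiently large, or $y \in \Pi_{L,A}$; in the latter case $z_\CC \in B_{R_L}(y)$ (as $\varrho_{z_\CC} < R_L$), so $y$ lies in the same connected component $\CC$ of $D_{L,A}$ as $z_\CC$, and hence $\xi(y) \le \xi(z_\CC)$ by definition of $z_\CC$. The eigenvalue comparison \eqref{e:compevcapisl} cannot be argued by pure monotonicity because $B_{\varrho_{z_\CC}}(z_\CC)$ need not be contained in $\CC$. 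Instead I would route both $\lambda^{\ssup 1}_\CC$ and $\lambda^\scrC(z_\CC)$ through $\lambda^{\ssup 1}_{B_{\varrho_{z_\CC}}(z_\CC) \cup \CC}$ via Theorem~2.1 of \cite{BK16}, in the spirit of the proof of Lemma~\ref{l:properties_opt_comps}(iv): the connectivity argument above together with Lemma~\ref{l:potential_rel_islands_upbd} forces $\xi \le \widehat a_L - 2\min\{A, A_1\}$ on $(B_{\varrho_{z_\CC}}(z_\CC) \cup \CC) \setminus B_{\nu_1}(z_\CC)$, and because $\varrho_{z_\CC} - \nu_1 \gg (\ln L)^{\kappa/2}$ the BK16 spectral decay yields a comparison error bounded by $2d\, \eta_{A_1}^{\varrho_{z_\CC}-\nu_1} \le \exp(-c_1 (\ln L)^{\kappa/2})$ for a suitable $c_1 > 0$.

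For part~(ii), choose $A$ so large that $2A > \chi + \delta$. By \eqref{e:monot_princev}, the hypothesis $\lambda^\scrC(z) > \widehat a_L - \chi - \delta$ yields $\xi(z) \ge \lambda^\scrC(z) > \widehat a_L - 2A$, so $z \in \Pi_{L,A}$; let $\CC$ be the connected component of $D_{L,A}$ containing $z$. The connected set $B_{\varrho_z}(z)$ lies in $B_{R_L}(z) \subset D_{L,A}$ and contains $z$, so $B_{\varrho_z}(z) \subset \CC$, whence $\lambda^{\ssup 1}_\CC \ge \lambda^\scrC(z) > \widehat a_L - \chi - \delta$ and $\CC \in \mathfrak{C}^\delta_{L,A}$. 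Applying part~(i) to this $\CC$, $z_\CC \in \scrC$; since $A_1 > 4d > \chi + \delta$ (for $\delta$ small) we also have $\xi(z) > \widehat a_L - 2A_1$, so Lemma~\ref{l:potential_rel_islands_upbd} forces $z \in B_{\nu_1}(z_\CC)$. Because $\varrho_z \ge (\ln L)^{\kappa/2} \gg \nu_1$, this places $z_\CC$ inside $B_{\varrho_z}(z)$; the capital property then gives $\xi(z_\CC) \le \xi(z)$, which combined with $\xi(z) \le \xi(z_\CC)$ and the continuity of the law of $\xi(0)$ forces $z = z_\CC$, and \eqref{e:compevcapisl} for this $\CC$ follows from part~(i).

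The hard part is the eigenvalue comparison in part~(i): two domains that share only a neighborhood of $z_\CC$ must be simultaneously compared, so a single spectral gap around the peak has to control the difference of both principal eigenvalues from the auxiliary enlarged domain. The remaining steps amount to careful bookkeeping to align the constants $A$, $A_1$, $\delta$, $\nu_1$ produced by Lemmas~\ref{l:maxpotential}, \ref{l:size_comps} and~\ref{l:potential_rel_islands_upbd}.
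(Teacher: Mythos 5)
Your part~(i) rests on a claim that is false (up to boundary effects) and, more importantly, logically undercuts your own proof of~\eqref{e:compevcapisl}. You write that the eigenvalue comparison ``cannot be argued by pure monotonicity because $B_{\varrho_{z_\CC}}(z_\CC)$ need not be contained in $\CC$.'' But the very connectivity argument you use to show $z_\CC \in \scrC$ already establishes the inclusion: since $z_\CC \in \Pi_{L,A}$ and $\varrho_{z_\CC} < R_L$, the set $B_{\varrho_{z_\CC}}(z_\CC) \cap B_L \subset B_{R_L}(z_\CC) \cap B_L \subset D_{L,A}$, and being connected and containing $z_\CC$ it lies in the single component $\CC$ (the portion outside $B_L$ is a boundary event of vanishing probability and is not the obstruction you raise). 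So monotonicity \eqref{e:monot_princev} directly gives $\lambda^{\scrC}(z_\CC) = \lambda^{\ssup 1}_{B_{\varrho_{z_\CC}}(z_\CC)} \le \lambda^{\ssup 1}_\CC$, which is precisely the left inequality in~\eqref{e:compevcapisl}. Your replacement scheme --- pushing both $\lambda^{\ssup 1}_\CC$ and $\lambda^\scrC(z_\CC)$ up to $\lambda^{\ssup 1}_{B_{\varrho_{z_\CC}}(z_\CC)\cup\CC}$ (which, given the inclusion, is just $\lambda^{\ssup 1}_\CC$ again) and then controlling the gap via Theorem~2.1 of \cite{BK16} --- is two-sided, so even if carried out carefully it would yield only $|\lambda^{\ssup 1}_\CC - \lambda^{\scrC}(z_\CC)| \le \texte^{-c_1(\ln L)^{\kappa/2}}$, losing the sign that \eqref{e:compevcapisl} asserts. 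That is a genuine gap, not a cosmetic one.

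With the inclusion $B_{\varrho_{z_\CC}}(z_\CC) \subset \CC$ in hand, the upper bound in~\eqref{e:compevcapisl} is a one-sided shrinking argument: by Lemma~\ref{l:potential_rel_islands_upbd}, $\CC \cap \Pi_{L,A_1} \subset B_{\nu_1}(z_\CC)$, so the set $\{x \in \CC \colon \dist(x,\Pi_{L,A_1}) \le (\ln L)^{\kappa/2}\}$ sits inside $B_{(\ln L)^{\kappa/2}+\nu_1}(z_\CC) \subset B_{\varrho_{z_\CC}}(z_\CC)$; applying Theorem~2.1 of \cite{BK16} with $D=\CC$ then bounds $\lambda^{\ssup 1}_\CC - \lambda^{\ssup 1}_{B_{\varrho_{z_\CC}}(z_\CC)}$ by $2d\,\eta_{A_1}^{2(\ln L)^{\kappa/2}-1} \le \texte^{-c_1(\ln L)^{\kappa/2}}$ with $c_1 = \ln(1+A_1/(4d))$. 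Your bookkeeping for the squeeze $(\ln L)^{\kappa/2} < \varrho_{z_\CC} < R_L$, your verification of $z_\CC \in \scrC$, and your part~(ii) --- deducing $z\in\Pi_{L,A}$, $B_{\varrho_z}(z)\subset\CC$, $\CC\in\mathfrak{C}^\delta_{L,A}$, and $z=z_\CC$ via Lemma~\ref{l:potential_rel_islands_upbd} --- all match the paper's argument and are correct.
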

\begin{proof}
Let $A, \delta>0$ satisfy the hypotheses of Lemmas~\ref{l:properties_opt_comps} and~\ref{l:potential_rel_islands_upbd},
and let $A_1>0$, $\nu_1 \in \N$ as in Lemma~\ref{l:potential_rel_islands_upbd}. 
We may assume that $2A>A_1$.
For \emph{(i)}, note that, 
if $\CC \in \mathfrak{C}^\delta_{L,A}$, 
then $(\ln L)^{\kappa/2} + \nu_1 < \varrho_{z_\CC} \le \max_{z \in B_L} \varrho_z < R_L $ for all $L$ large enough 
by~\eqref{e:defvarrhoz},~\eqref{e:maxpotential},~\eqref{e:monot_princev} and~\eqref{e:propertiesR_L},
and thus $z_\CC \in \scrC$.
By Lemma~\ref{l:potential_rel_islands_upbd}, 
the set $\{x \in \CC \colon\, \dist(x, \Pi_{L,A_1}) \le (\ln L)^{\kappa/2}\}$ is contained in $B_{\varrho_{z_\CC}}(z_\CC)$
and thus~\eqref{e:compevcapisl} follows by Theorem~2.1 of \cite{BK16} with $c_1 := \ln (1+A_1/(4d))$.
For \emph{(ii)}, note that, again by~\eqref{e:monot_princev}, $\xi(z) > \widehat{a}_L -A_1$ and thus $z \in \Pi_{L,A}$.
Letting $\CC \in \mathfrak{C}_{L,A}$ such that $z \in \CC$, note that $B_{\varrho_z}(z) \subset \CC$ since $\varrho_z < R_L$,
and thus $\CC \in \mathfrak{C}^\delta_{L,A}$. Since $\varrho_z > \nu_1$, $z=z_\CC$ by Lemma~\ref{l:potential_rel_islands_upbd},
and~\eqref{e:compevcapisl} follows by item \emph{(i)}.
\end{proof}


\subsection{Connectivity properties of the potential field}
\label{ss:connect_prop}\noindent
In this section, we provide bounds on the number of points in which the potential achieves high values inside connected sets of the lattice.
These will be important in the proof of Proposition~\ref{prop:massclass}.
\RV We will use the following Chernoff bound:\eRV
\begin{lemma}
\label{l:conc_binomial}
Let $\textnormal{Bin($p,n$)}$  denote  
a Binomial random variable with parameters $p$ and $n$.
Then
\begin{equation}\label{e:conc_binomial}
P \bigl(\textnormal{Bin($p,n$)} > u \bigr) \le \exp \left\{-u \left( \ln \frac{u}{np} - 1 \right) \right\} \quad \text{ for all } u > 0.
\end{equation}
\end{lemma}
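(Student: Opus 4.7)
The plan is to prove this via the standard exponential Chernoff method, separating the trivial regime from the interesting one. First I would note that if $u \le np$, then $\ln(u/np) - 1 \le -1 < 0$, so the right-hand side of~\eqref{e:conc_binomial} exceeds $\texte^{u} > 1$ and the inequality holds trivially. Hence I may assume $u > np$ in what follows.

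In the non-trivial regime, I would apply Markov's inequality to the exponential moment: for any $t > 0$, writing $X := \text{Bin}(n,p)$,
\begin{equation}
P(X > u) \le \texte^{-tu}\, \E[\texte^{tX}] = \texte^{-tu} (1-p+p\texte^t)^n.
\end{equation}
The moment generating function identity uses independence of the Bernoulli summands. Combining this with the elementary bound $1 + x \le \texte^x$ applied to $x = p(\texte^t-1)$ yields
\begin{equation}
(1-p+p\texte^t)^n = \bigl(1+p(\texte^t-1)\bigr)^n \le \exp\bigl\{np(\texte^t-1)\bigr\},
\end{equation}
so that $P(X>u) \le \exp\{-tu + np(\texte^t-1)\}$ for every $t > 0$.

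Finally I would optimize the exponent over $t$. Setting $t := \ln(u/np)$, which is positive since $u > np$, gives $np\,\texte^t = u$ and hence
\begin{equation}
-tu + np(\texte^t - 1) = -u\ln(u/np) + u - np \le -u\bigl(\ln(u/np) - 1\bigr),
\end{equation}
where the last inequality uses $-np \le 0$. This yields the claimed bound. There is no real obstacle here; the only subtlety worth noting is the split according to the sign of $\ln(u/np)-1$, which is what allows the single estimate to be quoted uniformly in $u > 0$ without a case analysis later in the paper.
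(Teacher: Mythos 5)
Your proof is correct and follows essentially the same Chernoff-bound approach as the paper: bound the moment generating function via $1+x\le\texte^x$, apply Markov's inequality, and optimize at $t=\ln(u/np)$. The only cosmetic differences are that you keep the tighter bound $\texte^{np(\texte^t-1)}$ and discard $-np$ at the end (the paper drops it one step earlier via $(1+p(\texte^\alpha-1))^n\le\texte^{np\texte^\alpha}$), and that you explicitly dispose of the trivial regime $u\le np$, which the paper leaves implicit.
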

\begin{proof}
Write $E \left[\exp\{\alpha \text{Bin($p,n$)} \}  \right] = \left\{1 + p (\texte^\alpha-1) \right\}^{n} \le \texte^{ n p \texte^\alpha}$, apply Markov's inequality and optimize over $\alpha>0$.
\end{proof}

Our first lemma reads as follows.
\begin{lemma}[Number of intermediate peaks of the potential]
\label{l:bound_mediumpoints}
For each $\beta \in (0,1)$, there exists $\varepsilon \in (0,\beta/2)$ such that, a.s.\ eventually as $L \to \infty$,
for all finite connected subsets $\Lambda \subset \Z^d$ with $\Lambda \cap B_L \neq \emptyset$ and $|\Lambda|\ge (\ln L)^{\beta}$,
\begin{equation}\label{e:bound_mediumpoints}
N_{\Lambda} :=|\{z \in \Lambda \colon\, \xi(z) > (1-\varepsilon) \widehat{a}_L \}| \le \frac{|\Lambda|}{(\ln L)^\varepsilon}.
\end{equation} 
\end{lemma}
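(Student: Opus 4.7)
The plan is to combine a Chernoff tail bound on a Binomial count with a standard lattice-animal enumeration and close with Borel--Cantelli. Fix $\beta\in(0,1)$ and pick $\varepsilon\in(0,\beta/2)$ (further smallness to be determined). First I would establish the asymptotics of $p_L:=\textnormal{Prob}(\xi(0)>(1-\varepsilon)\widehat{a}_L)$. Writing $p_L=\exp(-\texte^{F((1-\varepsilon)\widehat{a}_L)})$ and using Assumption~\ref{A:dexp} together with the fact, recalled after \eqref{e:prophata}, that $\widehat{a}_L=(\rho+o(1))\ln_2 L$, I get $F((1-\varepsilon)\widehat{a}_L)=(1-\varepsilon)(1+o(1))\ln_2 L$, hence
\begin{equation}
-\ln p_L \;\ge\; (\ln L)^{1-\varepsilon}(1+o(1))\quad\text{as }L\to\infty .
\end{equation}

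Next I would handle a single connected set. For fixed connected $\Lambda\subset\Z^d$ with $|\Lambda|=n$, the count $N_\Lambda$ is $\mathrm{Bin}(p_L,n)$, so Lemma~\ref{l:conc_binomial} with $u:=n/(\ln L)^{\varepsilon}$ yields, for all $L$ large enough,
\begin{equation}
\textnormal{Prob}\bigl(N_\Lambda>u\bigr)\;\le\;\exp\!\Bigl\{-\tfrac{n}{(\ln L)^\varepsilon}\bigl[\ln(u/(np_L))-1\bigr]\Bigr\}
\;\le\;\exp\bigl\{-c\, n\,(\ln L)^{1-2\varepsilon}\bigr\},
\end{equation}
since $\ln(u/(np_L))=-\varepsilon\ln_2 L-\ln p_L\ge\tfrac12(\ln L)^{1-\varepsilon}$ eventually. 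To pass to all connected sets, I invoke the classical lattice-animal bound: the number of connected subsets of $\Z^d$ of size $n$ containing a prescribed site is at most $K^n$ for a dimension-dependent constant $K=K(d)$. Since $\Lambda\cap B_L\neq\emptyset$, the number of such $\Lambda$ of size $n$ is therefore at most $|B_L|\,K^n\le(2L+1)^d K^n$.

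A union bound then gives, for all $L$ large enough,
\begin{equation}
\textnormal{Prob}\Bigl(\exists\,\Lambda\text{ connected, }\Lambda\cap B_L\neq\emptyset,\,|\Lambda|\ge(\ln L)^\beta,\,N_\Lambda>|\Lambda|/(\ln L)^\varepsilon\Bigr)
\end{equation}
\begin{equation}
\le\;(2L+1)^d\!\!\sum_{n\ge(\ln L)^\beta}\!\!K^n\,\exp\bigl\{-c\,n\,(\ln L)^{1-2\varepsilon}\bigr\}
\;\le\;L^{2d}\,\exp\bigl\{-c'\,(\ln L)^{1+\beta-2\varepsilon}\bigr\},
\end{equation}
the last step absorbing $n\ln K$ into the exponent (since $(\ln L)^{1-2\varepsilon}\gg\ln K$) and bounding the geometric sum by twice its leading term using $n\ge(\ln L)^\beta$. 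Because $\varepsilon<\beta/2$, the exponent $1+\beta-2\varepsilon$ strictly exceeds $1$, so this probability is summable in $L\in\N$; Borel--Cantelli yields the claim almost surely eventually.

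The main technical delicacy is making sure that the tail of $p_L$, the Chernoff exponent, the lattice-animal factor $K^n$, and the prefactor $|B_L|$ all balance in the right direction; the constraint $\varepsilon<\beta/2$ is precisely what makes $1+\beta-2\varepsilon>1$, which both kills the $|B_L|$ factor and gives summability in $L$. No additional obstacle arises for very large $n$: the Chernoff exponent scales linearly in $n$ and thus dominates the $K^n$ factor uniformly, so arbitrarily large connected $\Lambda$ (as allowed by the statement) are handled in the same sweep.
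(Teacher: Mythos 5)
Your proposal is correct and follows essentially the same route as the paper: a Chernoff bound on the Binomial count, a lattice-animal enumeration, a union bound over starting points, and Borel--Cantelli. The only cosmetic difference is that you derive the tail estimate $-\ln p_L\asymp(\ln L)^{1-\varepsilon+o(1)}$ directly from Assumption~\ref{A:dexp} and $\widehat{a}_L\sim\rho\ln_2 L$, whereas the paper cites Lemma~6.1 of \cite{BK16} for the weaker but sufficient bound $p_L\le\exp\{-(\ln L)^{1-\beta/2}\}$; one small imprecision in your write-up is the phrase $-\ln p_L\ge(\ln L)^{1-\varepsilon}(1+o(1))$, which should really be $-\ln p_L=(\ln L)^{1-\varepsilon+o(1)}$ (the $o(1)$ sits in the exponent, not as a multiplicative prefactor), but this is harmless --- it at worst forces you to replace $1-\varepsilon$ by $1-2\varepsilon$ in the Chernoff exponent, and since the lemma only asserts existence of \emph{some} $\varepsilon\in(0,\beta/2)$, the summability condition $1+\beta-c\varepsilon>1$ survives for $\varepsilon$ small.
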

\begin{proof}
Let $\varepsilon \in (0, \beta/2 )$ 
be small enough so that, for all $L$ large enough,
\begin{equation}\label{e:bound_mp1}
p_L := \textnormal{Prob} \left( \xi(0) > (1-\varepsilon)\widehat{a}_L \right) \le \exp\left\{-(\ln L)^{1-\frac{\beta}{2}}\right\}.
\end{equation}
This is possible by e.g.\ Lemma 6.1 in \cite{BK16}.
Now fix a point $x \in B_L$ and $n \in \N$.
The number of connected subsets $\Lambda \subset \Z^d$ with $|\Lambda|=n$ and $x \in \Lambda$
is at most $\texte^{c_0 n}$ for some $c_0 > 0$ independent of $x$ (see e.g.\ \cite{G99}, Section 4.2).
For such a $\Lambda$, the random variable~$N_{\Lambda}$ has a Bin($p_L$, $n$)-distribution.
Using~\eqref{e:conc_binomial} and a union bound, we obtain
\begin{multline}\label{e:bound_mp4}
\qquad
\textnormal{Prob} \Bigl( \exists\, \text{ connected } \Lambda \ni x, \; |\Lambda|=n \text{ and } N_{\Lambda} > n/ (\ln L)^\varepsilon \Bigr) 
\\
 \le \exp \left\{ -n \left((\ln L)^{1 - \tfrac{\beta}{2} - \varepsilon} -c_0 - \frac{1+ \varepsilon\ln_2 L}{(\ln L)^{\varepsilon}}\right) \right\}.
 \qquad
\end{multline}
When $L$ is large enough, the expression in parentheses above is at least $\frac12 (\ln L)^{1- \tfrac{\beta}{2} - \varepsilon }$. 
Summing over $n \ge (\ln L)^\beta$ and $x \in B_L$, we get
\begin{equation}\label{e:bound_mp5}
\textnormal{Prob} \left( \begin{aligned}
&\exists\, \text{ connected } \Lambda \text{ such that } \Lambda \cap B_L \neq \emptyset,
\\ 
&|\Lambda| \ge (\ln L)^\beta \text{ and }~\eqref{e:bound_mediumpoints} \text{ does not hold}
\end{aligned}
\right)
\le c_1 L^d \exp \left\{-c_2 (\ln L)^{1+\tfrac{\beta}{2}-\varepsilon} \right\}
\end{equation}
for some positive constants $c_1, c_2$. By our choice of $\varepsilon$, 
\eqref{e:bound_mp5} is summable on~$L$, so the conclusion follows from the Borel-Cantelli lemma.
\end{proof}

A similar computation bounds the number of high exceedances of the potential.
\begin{lemma}[Number of high exceedances of the potential]
\label{l:boundhighexceedances}
For each $A>0$, there is a constant $C\ge 1$ such that, for all $\delta \in (0,1)$, the following holds
a.s.\ eventually as $L \to \infty$: For
all finite connected subsets $\Lambda \subset \Z^d$ with $\Lambda \cap B_L \neq \emptyset$ and $|\Lambda|\ge C (\ln L)^\delta$
it holds that
\begin{equation}\label{e:boundhighexceedances}
|\Lambda \cap \Pi_{L, A}| \le \frac{|\Lambda|}{(\ln L)^\delta}.
\end{equation} 
\end{lemma}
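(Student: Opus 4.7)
The proof parallels that of Lemma~\ref{l:bound_mediumpoints}, with the key difference that the cutoff $\widehat{a}_L-2A$ (for fixed $A$) is not close enough to $\widehat{a}_L$ to make $p_L := \Prob(\xi(0) > \widehat{a}_L - 2A)$ super-polynomially small in $L$; instead, $p_L$ will only be a polynomially small power of $L$, and this is what forces us to take $C$ large (depending on $A$) in the hypothesis $|\Lambda| \ge C(\ln L)^\delta$.

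First I would estimate $p_L$. By Assumption~\ref{A:dexp} we have $F'(r) \to 1/\rho$, so the function $F$ in \eqref{e:defF} satisfies
\begin{equation*}
F(\widehat{a}_L - 2A) = F(\widehat{a}_L) - \tfrac{2A}{\rho} + o(1) = \ln(d\ln L) - \tfrac{2A}{\rho} + o(1)
\end{equation*}
as $L\to\infty$, where the last equality uses $\Prob(\xi(0) > \widehat{a}_L) = L^{-d}$, i.e., $e^{F(\widehat{a}_L)} = d\ln L$. Exponentiating twice yields $p_L = L^{-\gamma(1+o(1))}$ with $\gamma := d\texte^{-2A/\rho} > 0$, hence $p_L \le L^{-\gamma/2}$ for all $L$ sufficiently large.

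Next, for fixed $x\in B_L$ and $n\in\N$, the number of connected subsets $\Lambda\subset\Z^d$ of size $n$ containing $x$ is at most $\texte^{c_0 n}$ for some constant $c_0=c_0(d)$, and for any such $\Lambda$ the count $|\Lambda \cap \Pi_{L,A}|$ is stochastically dominated by a $\text{Bin}(p_L,n)$ random variable. Applying Lemma~\ref{l:conc_binomial} with $u = n/(\ln L)^\delta$, using $\ln(1/p_L) \ge \tfrac{\gamma}{2}\ln L$ and $\ln(\ln L)^{\delta} = \delta\ln_2 L = o(\ln L)$, I obtain for $L$ large
\begin{equation*}
P\bigl(\text{Bin}(p_L,n) > n/(\ln L)^\delta\bigr) \le \exp\Bigl\{-\tfrac{\gamma}{4}\,\tfrac{n}{(\ln L)^\delta}\cdot \ln L\Bigr\} = \exp\Bigl\{-\tfrac{\gamma}{4} n\,(\ln L)^{1-\delta}\Bigr\}.
\end{equation*}
A union bound over the $\texte^{c_0 n}$ connected sets absorbs the $c_0 n$ factor (since $(\ln L)^{1-\delta}\to\infty$), so the probability that the desired bound fails for some connected $\Lambda\ni x$ of size $n$ is at most $\exp\{-\tfrac{\gamma}{5}n\,(\ln L)^{1-\delta}\}$.

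I would then sum over $x\in B_L$ (giving a factor $O(L^d)$) and over $n\ge C(\ln L)^\delta$. Using $n(\ln L)^{1-\delta}\ge C\ln L$ on this range, the sum over $n$ is geometric and bounded by a constant times $L^{-\gamma C/5}$, so the total probability that \eqref{e:boundhighexceedances} fails for some admissible $\Lambda$ is at most $c_1 L^{d - \gamma C/5}$. Choosing $C = C(A,d) := \lceil 10(d+1)/\gamma\rceil$ makes this summable in $L$, and the Borel--Cantelli lemma concludes the proof. The main (minor) obstacle is carrying out the asymptotics of $p_L$ rigorously from Assumption~\ref{A:dexp}, but this is routine given that $\widehat{a}_L \to \infty$ and $F'$ has a limit; everything else is a direct adaptation of the proof of Lemma~\ref{l:bound_mediumpoints}.
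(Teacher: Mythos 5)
Your proof is correct and takes essentially the same route as the paper: estimate $p_L=\Prob(0\in\Pi_{L,A})$ by a negative power of $L$, then run the Chernoff/union-bound/Borel--Cantelli argument from Lemma~\ref{l:bound_mediumpoints} with a large constant $C$ in the hypothesis to compensate for the weaker tail. The only cosmetic difference is that the paper simply cites Lemma~6.1 of \cite{BK16} for $p_L\le L^{-\epsilon}$ (some $\epsilon\in(0,1)$) and sets $C>2(d+1)/\epsilon$, whereas you unroll that citation and derive $p_L\le L^{-\gamma/2}$, $\gamma=d\,\texte^{-2A/\rho}$, directly from Assumption~\ref{A:dexp}; your computation is correct and is indeed what that cited lemma supplies.
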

\begin{proof}
Proceed as for Lemma~\ref{l:bound_mediumpoints} first noting that, by Lemma~6.1 in~\cite{BK16},
\begin{equation}\label{e:bound_he1}
p_L := \textnormal{Prob} \left( 0 \in \Pi_{L, A} \right) \le L^{-\epsilon}
\end{equation}
for some $\epsilon\in (0,1)$ and all large enough $L$, and then taking $C > 2(d+1)/\epsilon$.
\end{proof}


\subsection{Spectral bounds}
\label{ss:specbounds}\noindent
Here we state some spectral bounds for the Feynman-Kac formula.
The results in this section are deterministic, i.e.,
they hold for any fixed choice of potential $\xi \in \R^{\Z^d}$.

Fix a finite connected subset $\Lambda \subset \Z^d$, 
and let $H_\Lambda$ denote 
\RV
the Anderson Hamiltonian in $\Lambda$
with zero Dirichlet boundary conditions,
as described after \eqref{e:introl2descr}.
\eRV
For $z \in \Lambda$, let $u^z_\Lambda$ be the positive solution of
\begin{equation}\label{e:PAM}
\begin{aligned}
\partial_t u(x,t) & = H_{\Lambda} u(x,t), \quad x \in \Lambda, \, t>0,\\
u(x,0) & = \1_z(x), \qquad \quad x \in \Lambda, \\
\end{aligned}
\end{equation}
and set $U^z_\Lambda(t) := \sum_{x \in \Lambda} u^z_\Lambda(x,t)$.
The solution admits the Feynman-Kac representation
\begin{equation}\label{e:FKformula}
u^z_\Lambda(x,t) = \E_z \left[ \exp \left\{\int_0^t \xi(X_s) \textd s \right\} \1 \{\tau_{\Lambda^{\cc}}>t, X_t = x\} \right]
\end{equation}
where $\tau_{\Lambda^\cc}$ is as in~\eqref{e:deftauLambda}. It also admits the spectral representation
\begin{equation}\label{e:specrepr}
u^z_\Lambda(x,t) = \sum_{k=1}^{|\Lambda|} \texte^{t \lambda^{\ssup k}_\Lambda} \phi_\Lambda^{\ssup k}(z) \phi^{\ssup k}_\Lambda(x),
\end{equation}
where $\lambda^{\ssup 1}_\Lambda \RV \ge \eRV \lambda^{\ssup 2}_\Lambda \ge \cdots \ge \lambda^{\ssup{|\Lambda|}}_\Lambda$ and $\phi^{\ssup 1}_\Lambda, \phi^{\ssup 2}_\Lambda, \ldots, \phi^{\ssup{|\Lambda|}}_\Lambda$
are respectively the eigenvalues and corresponding orthonormal eigenfunctions of $H_\Lambda$.
One may exploit these representations to obtain bounds for one in terms of the other,
as shown by the following lemma.

\begin{lemma}[Bounds on the solution]
\label{l:bounds_mass}

For any $z \in \Lambda$ and any $t > 0$,
\begin{multline}\label{e:bounds_mass}
\qquad
\texte^{t \lambda^{\ssup 1}_\Lambda} \phi^{\ssup 1}_\Lambda(z)^2 
\le \E_z \left[ \texte^{\int_0^t \xi(X_s) \textd s} \1_{\{\tau_{\Lambda^\cc} > t, X_t = z\}} \right] 
\\
\le \E_z \left[ \texte^{\int_0^t \xi(X_s) \textd s} \1_{\{\tau_{\Lambda^\cc} > t\}} \right]
\le \texte^{t \lambda^{\ssup 1}_\Lambda} |\Lambda|^{3/2}.
\qquad
\end{multline}
\end{lemma}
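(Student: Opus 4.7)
The plan is to use the two complementary representations of $u^z_\Lambda(x,t)$ --- the Feynman-Kac formula \eqref{e:FKformula} and the spectral expansion \eqref{e:specrepr} --- and chain the three inequalities by evaluating each at the most convenient representation.

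For the lower bound, I would identify $\E_z[\texte^{\int_0^t \xi(X_s)\textd s}\1_{\{\tau_{\Lambda^{\cc}}>t,\,X_t=z\}}]$ with $u^z_\Lambda(z,t)$ via \eqref{e:FKformula} and then insert the spectral representation \eqref{e:specrepr} at $x=z$:
\begin{equation}
u^z_\Lambda(z,t)=\sum_{k=1}^{|\Lambda|}\texte^{t\lambda^{\ssup k}_\Lambda}\phi^{\ssup k}_\Lambda(z)^2.
\end{equation}
Every term of this sum is non-negative, so retaining only $k=1$ yields the desired lower bound $\texte^{t\lambda^{\ssup 1}_\Lambda}\phi^{\ssup 1}_\Lambda(z)^2$.

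The middle inequality is immediate from $\1_{\{\tau_{\Lambda^\cc}>t,\,X_t=z\}}\le\1_{\{\tau_{\Lambda^\cc}>t\}}$ and the positivity of the exponential weight.

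For the upper bound, I would express the right-hand Feynman-Kac quantity as the total restricted mass $U^z_\Lambda(t)=\sum_{x\in\Lambda}u^z_\Lambda(x,t)$ and plug in \eqref{e:specrepr} to get
\begin{equation}
U^z_\Lambda(t)=\sum_{k=1}^{|\Lambda|}\texte^{t\lambda^{\ssup k}_\Lambda}\,\phi^{\ssup k}_\Lambda(z)\sum_{x\in\Lambda}\phi^{\ssup k}_\Lambda(x).
\end{equation}
Since each $\phi^{\ssup k}_\Lambda$ is $\ell^2(\Lambda)$-normalized, $|\phi^{\ssup k}_\Lambda(z)|\le 1$, and Cauchy-Schwarz on the counting measure of $\Lambda$ gives $|\sum_{x\in\Lambda}\phi^{\ssup k}_\Lambda(x)|\le|\Lambda|^{1/2}$. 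Bounding each $\texte^{t\lambda^{\ssup k}_\Lambda}$ by $\texte^{t\lambda^{\ssup 1}_\Lambda}$ and summing the $|\Lambda|$ terms yields the claimed $|\Lambda|^{3/2}$ factor. There is no real obstacle; the only subtlety is remembering to take absolute values inside the sum over $k$ before applying the uniform bound on eigenvalues, which is harmless because this only weakens the inequality in the direction we need.
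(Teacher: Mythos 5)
Your proposal is correct and follows exactly the route the paper takes: identify the Feynman--Kac quantities with $u^z_\Lambda(z,t)$ and $U^z_\Lambda(t)$ via \eqref{e:FKformula}, substitute the spectral expansion \eqref{e:specrepr}, and then drop non-negative terms (lower bound) or apply the triangle/Cauchy--Schwarz inequalities with the $\ell^2$-normalization (upper bound). The paper's proof is just the one-line remark that these bounds follow from \twoeqref{e:FKformula}{e:specrepr}; your write-up supplies exactly the omitted details.
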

\begin{proof}
The first and last inequalities follow directly from \twoeqref{e:FKformula}{e:specrepr};
 the middle inequality is elementary.
\end{proof}

The second lemma bounds the Feynman-Kac formula integrated up to an exit time.
\begin{lemma}[Mass up to an exit time]
\label{l:mass_out}
For any $z \in \Lambda$ and $ \gamma  > \lambda^{\ssup 1}_\Lambda$,
\begin{equation}\label{e:mass_out}
\E_{z} \left[ \exp \left\{ \int_0^{\tau_{\Lambda^\cc}} (\xi(X_s) -  \gamma ) \textd s \right\} \right] 
\le 1 + \frac{2d|\Lambda|}{ \gamma  - \lambda^{\ssup 1}_\Lambda}.
\end{equation}
\end{lemma}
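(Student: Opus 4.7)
The strategy is to reduce \eqref{e:mass_out} to a resolvent-type spectral estimate for $H_\Lambda$. Let $v(z)$ denote the left-hand side of \eqref{e:mass_out} and introduce the auxiliary quantity
\begin{equation*}
w(z) \;:=\; \E_z\Bigl[\int_0^{\tau_{\Lambda^\cc}} \texte^{\int_0^s(\xi(X_r)-\gamma)\,\textd r}\,\textd s\Bigr].
\end{equation*}
Along each realization of the walk $X$, the map $r\mapsto \xi(X_r)$ is piecewise constant, so its primitive is absolutely continuous and the fundamental theorem of calculus yields the pathwise identity
\begin{equation*}
\texte^{\int_0^{\tau_{\Lambda^\cc}}(\xi(X_s)-\gamma)\,\textd s}-1 \;=\; \int_0^{\tau_{\Lambda^\cc}} \bigl(\xi(X_s)-\gamma\bigr)\,\texte^{\int_0^s(\xi(X_r)-\gamma)\,\textd r}\,\textd s,
\end{equation*}
valid $\P_z$-a.s.\ (note that $\tau_{\Lambda^\cc}<\infty$ a.s.\ because $\Lambda$ is finite).

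\medskip

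\noindent\textbf{Step 1: link $v$ to $w$.} I will take $\E_z$ of the above identity and bound the integrand from above by using only the positive part of $\xi(X_s)-\gamma$ (the contribution of the negative part is nonpositive and may be dropped). For $s<\tau_{\Lambda^\cc}$ the walk lies in $\Lambda$, and the hypothesis $\gamma>\lambda^{\ssup 1}_\Lambda$ combined with the lower bound in~\eqref{e:monot_princev} gives
\begin{equation*}
\xi(X_s)-\gamma \;\le\; \max_{y\in\Lambda}\xi(y)-\gamma \;\le\; \bigl(\lambda^{\ssup 1}_\Lambda+2d\bigr)-\gamma \;<\; 2d.
\end{equation*}
Consequently $(\xi(X_s)-\gamma)_{+}\le 2d$ throughout $[0,\tau_{\Lambda^\cc})$, whence $v(z)-1\le 2d\,w(z)$.

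\medskip

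\noindent\textbf{Step 2: bound $w$ spectrally.} By Fubini and the Feynman--Kac formula~\eqref{e:FKformula},
\begin{equation*}
w(z) \;=\; \int_0^\infty \texte^{-\gamma s}\,U^z_\Lambda(s)\,\textd s.
\end{equation*}
To control $U^z_\Lambda(s)=\langle \mathbf{1}_\Lambda,\texte^{sH_\Lambda}\1_z\rangle_{\ell^2(\Lambda)}$, I will use Cauchy--Schwarz together with the operator-norm bound $\|\texte^{sH_\Lambda}\|_{\ell^2\to\ell^2}=\texte^{s\lambda^{\ssup 1}_\Lambda}$ (which is immediate from the spectral representation~\eqref{e:specrepr}); this gives $U^z_\Lambda(s)\le \sqrt{|\Lambda|}\,\texte^{s\lambda^{\ssup 1}_\Lambda}\le |\Lambda|\,\texte^{s\lambda^{\ssup 1}_\Lambda}$. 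Integration in $s$ then yields $w(z)\le |\Lambda|/(\gamma-\lambda^{\ssup 1}_\Lambda)$, which combined with Step~1 delivers~\eqref{e:mass_out}. I do not anticipate any genuine obstacle: the lemma is essentially a resolvent estimate in probabilistic disguise, and the only point requiring minor care is the pathwise validity of the FTC identity used above, which is routine given the piecewise-constant structure of $s\mapsto \xi(X_s)$.
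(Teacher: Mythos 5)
The paper gives no proof of this lemma — it simply cites Lemma 4.2 of \cite{GKM07} — so there is no in-paper argument to compare against. Your self-contained proof is correct, and the key steps are clean. The pathwise FTC identity is valid because $t\mapsto\int_0^t(\xi(X_r)-\gamma)\,\textd r$ is absolutely continuous along any trajectory, and the passage to expectations is safe for the reason you don't quite spell out but which is easy to supply: the random variable $\int_0^{\tau_{\Lambda^\cc}}(\xi(X_s)-\gamma)\,\texte^{\int_0^s(\xi(X_r)-\gamma)\,\textd r}\,\textd s$ equals $\texte^{\int_0^{\tau_{\Lambda^\cc}}(\xi(X_s)-\gamma)\,\textd s}-1\ge -1$, so it is bounded below and its expectation is well-defined; bounding it above pathwise by $2d\,w$ (using $\xi(X_s)-\gamma\le 2d$ on $\Lambda$ via~\eqref{e:monot_princev}) then gives $v(z)-1\le 2d\,w(z)$. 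Tonelli justifies the exchange defining $w$ as a Laplace transform of $U^z_\Lambda$, and the Cauchy--Schwarz plus operator-norm estimate $U^z_\Lambda(s)=\langle\1_\Lambda,\texte^{sH_\Lambda}\1_z\rangle\le\sqrt{|\Lambda|}\,\texte^{s\lambda^{\ssup 1}_\Lambda}$ is correct and in fact sharper than the $|\Lambda|^{3/2}$ bound of Lemma~\ref{l:bounds_mass}, which is why you end up with the stronger $\sqrt{|\Lambda|}$ in place of $|\Lambda|$ before relaxing to match the statement. The source in \cite{GKM07} derives the same estimate through the boundary-value problem $(\Delta+\xi-\gamma)v=0$ on $\Lambda$, $v\equiv 1$ on $\Lambda^\cc$, and then bounds the resolvent $(\gamma-H_\Lambda)^{-1}$; your route via the FTC and the Laplace transform of the truncated semigroup reaches the same resolvent estimate in a more directly probabilistic way. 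Both are short; yours has the minor advantage of avoiding the detour through the Dirichlet problem.
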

\begin{proof}
See Lemma 4.2 in \cite{GKM07}.
\end{proof}

The next lemma is a well-known representation for the principal eigenfunction.
\begin{lemma}
\label{l:reprEF}
For any $x,y \in \Lambda$,
\begin{equation}\label{e:FKrepr_ef}
\frac{\phi^{\ssup 1}_\Lambda(x)}{\phi^{\ssup 1}_\Lambda(y)} = \E_x \left[ \exp \left\{ \int_0^{\tau_y} \left(\xi(X_u) - \lambda^{\ssup 1}_\Lambda \right) \textd u \right\} \1 \{\tau_y < \tau_{\Lambda^\cc} \} \right].
\end{equation}
\end{lemma}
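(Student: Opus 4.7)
The plan is to derive \eqref{e:FKrepr_ef} by applying optional stopping to a Feynman-Kac martingale built from the eigenfunction $\phi := \phi^{\ssup 1}_\Lambda$, stopped at the random time $\tau := \tau_y \wedge \tau_{\Lambda^\cc}$. First I would note that since $\Lambda$ is finite and connected, item~(3) in the list of consequences of \eqref{e:RRformula} ensures $\phi > 0$ on $\Lambda$, so that the ratio on the left-hand side of \eqref{e:FKrepr_ef} is well defined and the division by $\phi(y)$ at the end is legitimate.

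Next I would introduce, under $\P_x$, the process
\begin{equation*}
M_t := \phi(X_{t \wedge \tau}) \exp\Bigl\{\int_0^{t \wedge \tau} \bigl(\xi(X_u) - \lambda^{\ssup 1}_\Lambda\bigr)\,\textd u\Bigr\}, \qquad t \ge 0,
\end{equation*}
and verify that $(M_t)_{t \ge 0}$ is a martingale. This is a routine application of the multiplicative Dynkin formula for continuous-time jump processes: the absolutely continuous drift of $M_t$ is proportional to
$[(\Delta + \xi - \lambda^{\ssup 1}_\Lambda)\phi](X_t)\1\{t < \tau\}$, which vanishes identically for $t < \tau$ because $X_t \in \Lambda$ there and $\phi$ is an eigenfunction of $H_\Lambda$ with eigenvalue $\lambda^{\ssup 1}_\Lambda$. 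Integrability is automatic from the crude bound $|M_{t \wedge \tau}| \le \|\phi\|_\infty \exp\{\|\xi - \lambda^{\ssup 1}_\Lambda\|_{\ell^\infty(\Lambda)}\, \tau_{\Lambda^\cc}\}$ combined with the fact that the exit time of a continuous-time random walk from a finite connected set has exponential moments.

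Then I would apply the optional stopping theorem at the a.s.\ finite time $\tau$ to conclude $\phi(x) = \E_x[M_0] = \E_x[M_\tau]$, and decompose the right-hand side according to which of $\tau_y, \tau_{\Lambda^\cc}$ occurs first. On $\{\tau_y < \tau_{\Lambda^\cc}\}$ one has $X_\tau = y$, while on the complementary event $X_\tau \in \Lambda^\cc$ and so $\phi(X_\tau) = 0$ by the Dirichlet boundary condition. This yields
\begin{equation*}
\phi(x) = \phi(y)\, \E_x\Bigl[\exp\Bigl\{\int_0^{\tau_y}\bigl(\xi(X_u)-\lambda^{\ssup 1}_\Lambda\bigr)\textd u\Bigr\} \1\{\tau_y < \tau_{\Lambda^\cc}\}\Bigr],
\end{equation*}
and dividing by $\phi(y) > 0$ gives \eqref{e:FKrepr_ef}. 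The only genuinely technical point is the generator computation underlying the martingale property; this is standard for continuous-time Markov chains, and can alternatively be read off from the semigroup identity $u_\Lambda^z(x,t) = \sum_k \texte^{t\lambda_\Lambda^{\ssup k}}\phi_\Lambda^{\ssup k}(z)\phi_\Lambda^{\ssup k}(x)$ of \eqref{e:specrepr} by differentiating a suitable $h$-transform, so I do not anticipate any real obstacle.
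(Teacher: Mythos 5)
Your martingale/optional-stopping plan is sound in outline and, if completed, would give a self-contained proof of \eqref{e:FKrepr_ef}, whereas the paper simply cites Proposition~3.3 of [MP16]. The construction of $M_t$, the generator computation that kills the drift on $\{t<\tau\}$ via the eigenvalue equation, and the decomposition at $\tau$ using $\phi^{\ssup 1}_\Lambda\equiv 0$ on $\Lambda^\cc$ are all correct. The gap is in the integrability step you use to justify $\E_x[M_\tau]=\E_x[M_0]$.

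The proposed domination $|M_{t\wedge\tau}|\le\|\phi^{\ssup 1}_\Lambda\|_\infty\exp\{C\,\tau_{\Lambda^\cc}\}$ with $C:=\|\xi-\lambda^{\ssup 1}_\Lambda\|_{\ell^\infty(\Lambda)}$ does hold pathwise, but the dominating function is not integrable in general. The exit time of the rate-$2d$ walk from a finite connected $\Lambda$ satisfies $\E_x[\texte^{\gamma\tau_{\Lambda^\cc}}]<\infty$ precisely when $\gamma<-\lambda^{\ssup 1}_\Lambda(0)$, where $\lambda^{\ssup 1}_\Lambda(0)<0$ is the principal Dirichlet eigenvalue of the plain Laplacian on~$\Lambda$. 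For $\xi$ constant on $\Lambda$ one has $\xi(z)-\lambda^{\ssup 1}_\Lambda\equiv-\lambda^{\ssup 1}_\Lambda(0)$, so $C=-\lambda^{\ssup 1}_\Lambda(0)$ sits exactly at the critical threshold and your bound has infinite expectation; ``exponential moments'' of the exit time do not extend up to that rate. The rescue is that the relevant stopping time is $\tau=\tau_y\wedge\tau_{\Lambda^\cc}$, i.e.\ the exit time from $\Lambda\setminus\{y\}$, for which there \emph{is} a genuine spectral gap: by the strict inequality in item~(3) after \eqref{e:RRformula} (valid because $\Lambda$ is finite, connected, and $\Lambda\setminus\{y\}\subsetneq\Lambda$), $\lambda^{\ssup 1}_{\Lambda\setminus\{y\}}<\lambda^{\ssup 1}_\Lambda$. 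Writing $\phi^{\ssup 1}_\Lambda(x)=\E_x[M_{t\wedge\tau}]=\E_x[M_\tau\1\{\tau\le t\}]+\E_x[M_t\1\{\tau>t\}]$, the first term increases to $\E_x[M_\tau]$ by monotone convergence as $t\to\infty$, while the second is bounded, via Lemma~\ref{l:bounds_mass} applied to $\Lambda\setminus\{y\}$, by $\|\phi^{\ssup 1}_\Lambda\|_\infty\,|\Lambda|^{3/2}\exp\{t(\lambda^{\ssup 1}_{\Lambda\setminus\{y\}}-\lambda^{\ssup 1}_\Lambda)\}\to 0$. This gives $\E_x[M_\tau]=\phi^{\ssup 1}_\Lambda(x)$, after which your decomposition according to $\{\tau_y<\tau_{\Lambda^\cc}\}$ and division by $\phi^{\ssup 1}_\Lambda(y)>0$ finish the proof.
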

\begin{proof}
See e.g.\  Proposition~3.3 in \cite{MP16}. 
\end{proof}

Our last lemma bounds the Feynman-Kac formula when the random walk is restricted to hit a subset,
 and is the principal ingredient in the proof of Proposition~\ref{prop:boundbyprincipalef_forproof}.
\begin{lemma}[Bound by principal eigenfunction]
\label{l:bound_top_ef}
For all $t>0$, $z,x \in \Lambda$ and $\Gamma \subset \Lambda$,
\begin{align}\label{e:bound_top_ef}
\E_z \left[\texte^{\int_0^t \xi(X_s) \textd s} \1\{X_t = x, \tau_{\Lambda^\cc}>t \ge \tau_{\Gamma}\} \right]
\le U_\Lambda^z(t) \,\phi^{\ssup 1}_\Lambda(x)\,
\sup_{y \in \Gamma} \left\{|\phi^{\ssup 1}_\Lambda(y)|^{-3} \right\}.
\end{align}
\end{lemma}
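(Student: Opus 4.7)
The plan is to combine reversibility of the killed semigroup with the strong Markov property at $\tau_\Gamma$, a Chapman--Kolmogorov inequality, and a stopping-time extension of Lemma~\ref{l:reprEF}. Since $\Delta$ is self-adjoint on $\ell^2(\Lambda)$, the kernel $u^y_\Lambda(z,s)$ is symmetric in $(y,z)$, and both events $\{\tau_\Gamma\le t\}$ and $\{\tau_{\Lambda^\cc}>t\}$ are invariant under time-reversal of a path on $[0,t]$. Consequently, the left-hand side of~\eqref{e:bound_top_ef} equals
\[
\E_x\Bigl[\texte^{\int_0^t\xi(X_u)\textd u}\,\1\{X_t=z,\;\tau_\Gamma\le t,\;\tau_{\Lambda^\cc}>t\}\Bigr],
\]
making $\tau_\Gamma$ a stopping time of a walk launched from~$x$. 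I would then apply the strong Markov property at $\tau_\Gamma$ to decompose this as
\[
\sum_{y\in\Gamma}\E_x\Bigl[\texte^{\int_0^{\tau_\Gamma}\xi(X_u)\textd u}\,\1\{\tau_\Gamma\le t,\,\tau_\Gamma<\tau_{\Lambda^\cc},\,X_{\tau_\Gamma}=y\}\,u^y_\Lambda(z,t-\tau_\Gamma)\Bigr].
\]

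The inner factor is controlled by the Chapman--Kolmogorov identity $U^z_\Lambda(t)=\sum_{w\in\Lambda}u^z_\Lambda(w,t-s)\,U^w_\Lambda(s)$, valid for each $s\in[0,t]$. Retaining only the summand $w=y$ with $s=\tau_\Gamma$ and using the symmetry $u^y_\Lambda(z,t-\tau_\Gamma)=u^z_\Lambda(y,t-\tau_\Gamma)$ yields $u^y_\Lambda(z,t-\tau_\Gamma)\le U^z_\Lambda(t)/U^y_\Lambda(\tau_\Gamma)$; together with the lower bound $U^y_\Lambda(\tau_\Gamma)\ge u^y_\Lambda(y,\tau_\Gamma)\ge\texte^{\tau_\Gamma\lambda^{\ssup 1}_\Lambda}\phi^{\ssup 1}_\Lambda(y)^2$ from Lemma~\ref{l:bounds_mass}, this gives
\[
u^y_\Lambda(z,t-\tau_\Gamma)\,\le\,U^z_\Lambda(t)\,\texte^{-\tau_\Gamma\lambda^{\ssup 1}_\Lambda}\,\phi^{\ssup 1}_\Lambda(y)^{-2}.
\]
Substituting back, factoring out $U^z_\Lambda(t)$, dropping the now superfluous $\1\{\tau_\Gamma\le t\}$, and recognising that $\phi^{\ssup 1}_\Lambda(y)^{-2}$ coincides with $\phi^{\ssup 1}_\Lambda(X_{\tau_\Gamma})^{-2}$ on $\{X_{\tau_\Gamma}=y\}$ reduces the task to bounding
\[
\E_x\Bigl[\texte^{\int_0^{\tau_\Gamma}(\xi(X_u)-\lambda^{\ssup 1}_\Lambda)\,\textd u}\,\phi^{\ssup 1}_\Lambda(X_{\tau_\Gamma})^{-2}\,\1\{\tau_\Gamma<\tau_{\Lambda^\cc}\}\Bigr].
\]

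The final ingredient is the identity
\[
\phi^{\ssup 1}_\Lambda(x)\,=\,\E_x\Bigl[\texte^{\int_0^{\tau_\Gamma}(\xi(X_u)-\lambda^{\ssup 1}_\Lambda)\,\textd u}\,\phi^{\ssup 1}_\Lambda(X_{\tau_\Gamma})\,\1\{\tau_\Gamma<\tau_{\Lambda^\cc}\}\Bigr],
\]
a stopping-time extension of Lemma~\ref{l:reprEF} that follows either by fixing $y\in\Gamma$ in Lemma~\ref{l:reprEF} and decomposing $\tau_y$ at $\tau_\Gamma$ via the strong Markov property (then summing over the possible values of $X_{\tau_\Gamma}=y'\in\Gamma$), or by optional stopping at $\tau_\Gamma\wedge\tau_{\Lambda^\cc}$ applied to the local martingale $\texte^{-\lambda^{\ssup 1}_\Lambda s+\int_0^s\xi(X_u)\textd u}\,\phi^{\ssup 1}_\Lambda(X_s)$, which vanishes at $\tau_{\Lambda^\cc}$ since $\phi^{\ssup 1}_\Lambda\equiv 0$ on $\Lambda^\cc$. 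Splitting $\phi^{\ssup 1}_\Lambda(X_{\tau_\Gamma})^{-2}=\phi^{\ssup 1}_\Lambda(X_{\tau_\Gamma})\cdot\phi^{\ssup 1}_\Lambda(X_{\tau_\Gamma})^{-3}$ and bounding the second factor by $\sup_{y\in\Gamma}\phi^{\ssup 1}_\Lambda(y)^{-3}$ then yields
\[
\E_x\Bigl[\texte^{\int_0^{\tau_\Gamma}(\xi-\lambda^{\ssup 1}_\Lambda)}\,\phi^{\ssup 1}_\Lambda(X_{\tau_\Gamma})^{-2}\,\1\{\tau_\Gamma<\tau_{\Lambda^\cc}\}\Bigr]\,\le\,\phi^{\ssup 1}_\Lambda(x)\,\sup_{y\in\Gamma}\phi^{\ssup 1}_\Lambda(y)^{-3},
\]
which combines with the preceding bound to give~\eqref{e:bound_top_ef}. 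The main subtlety is assembling the exponent $-3$ correctly: two factors of $\phi^{\ssup 1}_\Lambda(y)^{-1}$ originate from the principal-eigenvalue lower bound on $U^y_\Lambda(\tau_\Gamma)$ supplied by Lemma~\ref{l:bounds_mass}, and the third arises from the eigenfunction identity at the stopping time~$\tau_\Gamma$.
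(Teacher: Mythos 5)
Your proof is correct and uses essentially the same ingredients as the paper's: time reversal, the strong Markov property at $\tau_\Gamma$, the lower bound $u^y_\Lambda(y,s)\geq \texte^{s\lambda^{\ssup 1}_\Lambda}\phi^{\ssup 1}_\Lambda(y)^2$ from Lemma~\ref{l:bounds_mass}, and the Feynman--Kac eigenfunction representation of Lemma~\ref{l:reprEF}. The differences are only organizational: you invoke a Chapman--Kolmogorov identity to extract $U^z_\Lambda(t)$ at once and bundle the per-$y$ uses of Lemma~\ref{l:reprEF} into the stopping-time identity $\phi^{\ssup 1}_\Lambda(x)=\E_x\bigl[\texte^{\int_0^{\tau_\Gamma}(\xi-\lambda^{\ssup 1}_\Lambda)\textd u}\,\phi^{\ssup 1}_\Lambda(X_{\tau_\Gamma})\,\1\{\tau_\Gamma<\tau_{\Lambda^\cc}\}\bigr]$, whereas the paper obtains its intermediate bound by restricting the expectation defining $w(y,t)$ to paths that remain at $y$ for duration $s$, applies Lemma~\ref{l:reprEF} term by term, and bounds $\sum_{y\in\Gamma}w(y,t)\le U^z_\Lambda(t)$ only at the very end.
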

\begin{proof}
We adapt the proof of Theorem 4.1 of \cite{GKM07}. Fix~$z\in\Z^d$ and, for $x\in\Z^d$ and $t>0$, denote
\begin{equation}\label{e:boundtopef1}
w(x,t) := \E_x \left[\texte^{\int_0^t \xi(X_s) \textd s} \1\{X_t = z, \tau_{\Lambda^\cc} >t \ge \tau_{\Gamma}\} \right].
\end{equation}
Note that, by invariance under time reversal, 
\eqref{e:boundtopef1} is equal to the left-hand side of~\eqref{e:bound_top_ef}.
It will suffice to show that, 
for any $0 < s \le t$ and $y \in \Gamma$,
\begin{equation}\label{e:boundtopef2}
\E_y \left[ \texte^{\int_0^{t-s} \xi(X_u) \textd u}  \1_{\{X_{t-s}=z, \tau_{\Lambda^\cc} > t-s\}}\right]
\le \texte^{-s \lambda^{\ssup 1}_{\Lambda} } |\phi^{\ssup 1}_\Lambda(y)|^{-2} w(y,t).
\end{equation}
 Indeed,  
 by the strong Markov property, $w(x,t)$ equals
\begin{multline}
\label{e:boundtopef3}
\quad \sum_{y \in \Gamma} \E_x\left[ \texte^{\int_0^{\tau_y} \xi(X_u) \textd u} \1_{\{\tau_{\Lambda^\cc}>\tau_y = \tau_{\Gamma} \le t \}} 
\left(\E_y \left[ \texte^{\int_0^{t-s} \xi(X_u) \textd u} \1_{\{X_{t-s}=z, \tau_{\Lambda^\cc} > t-s\}} \right]\right)_{s = \tau_y} \right] 
\\
\le \sum_{y \in \Gamma} |\phi^{\ssup 1}_\Lambda(y)|^{-2} w(y,t) \E_x\left[ \texte^{\int_0^{\tau_y} \left( \xi(X_u) - \lambda^{\ssup 1}_\Lambda \right) \textd u} \1_{\{\tau_{\Lambda^\cc} > \tau_y \}} \right] 
\\  
=  \, \phi^{\ssup 1}_\Lambda(x) \sum_{y \in \Gamma} |\phi^{\ssup 1}_\Lambda(y)|^{-3} w(y,t)
\le \phi^{\ssup 1}_\Lambda(x) \sup_{y \in \Gamma} \left\{ |\phi^{\ssup 1}_\Lambda(y)|^{-3} \right\}
U_\Lambda^z(t),
\quad
\end{multline}
where for the second line we used~\eqref{e:boundtopef2} and, for the last one, we invoked~\eqref{e:FKrepr_ef} and one more time applied the invariance under time reversal.

To prove~\eqref{e:boundtopef2}, restrict to $X_s = y$ inside the expectation defining $w(y,t)$ to obtain
\begin{equation}\label{e:boundtopef4}
w(y,t) \ge \E_y \left[\texte^{\int_0^s \xi(X_u) \textd u} \1_{\{X_s = y, \tau_{\Lambda^\cc} > s \}} \right] \E_y \left[\texte^{\int_0^{t-s}\xi(X_u) \textd u} \1_{\{X_{t-s}=z, \tau_{\Lambda^\cc}>t-s\}}\right].
\end{equation}
By Lemma~\ref{l:bounds_mass},
\begin{equation}\label{e:boundtopef5}
\E_y \left[\texte^{\int_0^s \xi(X_u) \textd u} \1_{\{X_s = y, \tau_{\Lambda^\cc} > s \}} \right] \ge \texte^{s \lambda^{\ssup 1}_\Lambda} |\phi^{\ssup 1}_\Lambda(y)|^2,
\end{equation}
implying~\eqref{e:boundtopef2} as desired.
\end{proof}

\RV
The proof above has the following corollary, which will be used in Section~\ref{s:localprofiles}.
\begin{corollary}
\label{cor:bdEFwithtimetrans}
For $\Gamma \subset \Lambda$, let $w(x,t)$ be defined as in \eqref{e:boundtopef1}.
Then
\begin{equation}\label{e:bdEFwithtimetrans}
w(x,t-s) \le \texte^{-s \lambda^{\ssup 1}_\Lambda} \phi^{\ssup 1}_\Lambda(x) \sup_{y \in \Gamma} \left\{|\phi^{\ssup 1}_\Lambda(y)|^{-5}\right\} \sum_{y \in \Gamma} w(y,t), \quad x \in \Lambda, \, 0 \le s \le t.
\end{equation}
\end{corollary}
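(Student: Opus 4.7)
The plan is to recycle, as far as possible verbatim, the strong Markov decomposition that appears in the proof of Lemma~\ref{l:bound_top_ef}, but to apply it to $w(x, t-s)$ instead of $w(x, t)$ and to keep the additional time parameter $s$ on the books throughout. Explicitly, decomposing on the value $y \in \Gamma$ of $X_{\tau_\Gamma}$ and using the strong Markov property at $\tau_\Gamma$ gives
\begin{equation}
w(x, t-s) = \sum_{y \in \Gamma} \E_x \Bigl[ \texte^{\int_0^{\tau_y} \xi(X_u) \textd u}\, \1_{\{ \tau_{\Lambda^\cc} > \tau_y = \tau_\Gamma \le t-s \}}\, g(y, t-s-\tau_y) \Bigr],
\end{equation}
where $g(y,r) := \E_y [ \texte^{\int_0^r \xi(X_u) \textd u} \1_{\{X_r = z,\, \tau_{\Lambda^\cc} > r\}} ]$.

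Next, the key intermediate inequality \eqref{e:boundtopef2} from the proof of Lemma~\ref{l:bound_top_ef} reads $g(y, t-s') \le \texte^{-s' \lambda^{\ssup 1}_\Lambda} |\phi^{\ssup 1}_\Lambda(y)|^{-2} w(y,t)$ for any $y \in \Gamma$ and any $0 \le s' \le t$. Applying this with $s' := s + \tau_y$, which lies in $[0,t]$ on the event in the expectation, and substituting into the display, we obtain
\begin{equation}
w(x, t-s) \le \texte^{-s \lambda^{\ssup 1}_\Lambda} \sum_{y \in \Gamma} |\phi^{\ssup 1}_\Lambda(y)|^{-2}\, w(y, t)\; \E_x \Bigl[ \texte^{\int_0^{\tau_y} (\xi(X_u) - \lambda^{\ssup 1}_\Lambda) \textd u} \1_{\{\tau_{\Lambda^\cc} > \tau_y = \tau_\Gamma \le t-s\}} \Bigr].
\end{equation}
Dropping the restriction $\tau_y = \tau_\Gamma \le t-s$ in favour of the weaker event $\{\tau_y < \tau_{\Lambda^\cc}\}$ and invoking Lemma~\ref{l:reprEF}, the inner expectation is bounded by $\phi^{\ssup 1}_\Lambda(x)/\phi^{\ssup 1}_\Lambda(y)$, whence
\begin{equation}
w(x, t-s) \le \texte^{-s \lambda^{\ssup 1}_\Lambda} \phi^{\ssup 1}_\Lambda(x) \sum_{y \in \Gamma} |\phi^{\ssup 1}_\Lambda(y)|^{-3}\, w(y,t).
\end{equation}

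To conclude, we pass from $|\phi^{\ssup 1}_\Lambda(y)|^{-3}$ to $\sup_{y' \in \Gamma}|\phi^{\ssup 1}_\Lambda(y')|^{-5}$ and pull this factor out of the sum. This step is free, because $\phi^{\ssup 1}_\Lambda$ is normalised in $\ell^2(\Lambda)$, so $|\phi^{\ssup 1}_\Lambda(y)| \le 1$ for every $y \in \Lambda$ and therefore $|\phi^{\ssup 1}_\Lambda(y)|^{-3} \le |\phi^{\ssup 1}_\Lambda(y)|^{-5}$. This yields \eqref{e:bdEFwithtimetrans}.

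I do not anticipate any genuine obstacle; the argument is a bookkeeping extension of the $s=0$ case handled in Lemma~\ref{l:bound_top_ef}. The only point that requires care is the verification that \eqref{e:boundtopef2} is applicable with the shifted exponent $s+\tau_y$, but the derivation of that inequality inside the proof of Lemma~\ref{l:bound_top_ef} only uses that the time parameter lies in $[0,t]$, which is ensured by the indicator $\1_{\{\tau_\Gamma \le t-s\}}$. The apparently slack exponent~$-5$ in the statement (as opposed to the sharper~$-3$ that naturally appears) is accommodated freely by the $\ell^2$-normalisation and is presumably chosen for downstream convenience in Section~\ref{s:localprofiles}.
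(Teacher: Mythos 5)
Your proof is correct, and it uses the same core ingredients as the paper's (the strong Markov decomposition at $\tau_\Gamma$, the estimate \eqref{e:boundtopef2}, and Lemma~\ref{l:reprEF}), but it assembles them more economically. The paper's proof instead applies the next-to-last inequality from the chain \eqref{e:boundtopef3} with $t$ replaced by $t-s$, yielding $w(x,t-s) \le \phi^{\ssup 1}_\Lambda(x)\,\sup_{y\in\Gamma}|\phi^{\ssup 1}_\Lambda(y)|^{-3}\sum_{y\in\Gamma}w(y,t-s)$, and then applies \eqref{e:boundtopef2} a second time to bound $w(y,t-s)\le\texte^{-s\lambda^{\ssup 1}_\Lambda}|\phi^{\ssup 1}_\Lambda(y)|^{-2}w(y,t)$; combining the two suprema produces the exponent $-5$. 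Your version instead applies \eqref{e:boundtopef2} only once, with the combined time shift $s'=s+\tau_y$, which lands directly on the sharper bound with exponent $-3$, and then relaxes it to $-5$ via $|\phi^{\ssup 1}_\Lambda|\le 1$. This is a genuine (if small) improvement, and it shows the $-5$ in the statement could be replaced by $-3$. Your closing speculation about the origin of the $-5$ is slightly off: it is not chosen for downstream convenience but is simply what the paper's two-pass route naturally produces ($-3$ from the Lemma's chain times $-2$ from the second application). Your handling of the degenerate case $s'=0$ (possible when $s=0$ and $x\in\Gamma$) is correctly dispatched — \eqref{e:boundtopef2} is trivially valid there since $g(y,t)=w(y,t)$ for $y\in\Gamma$ and $|\phi^{\ssup 1}_\Lambda(y)|\le 1$.
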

\begin{proof}
Applying the next-to-last inequality in \eqref{e:boundtopef3} with $t$ substituted by $t-s$, we get
\begin{equation}\label{e:prcorbdEF}
w(x,t-s) \le \phi^{\ssup 1}_\Lambda(x) \sup_{y \in \Gamma} \left\{ |\phi^{\ssup 1}_\Lambda(y)|^{-3} \right\} \sum_{y \in \Gamma} w(y,t-s).
\end{equation}
Now use \eqref{e:boundtopef2}, noting that $w(y,t-s)$ is not larger than its left-hand side.
\end{proof}
\eRV


\section{Path expansions}
\label{s:pathexpansions}\nopagebreak\noindent
In this section, we develop a setup to bound the contribution 
of certain specific
classes of random walk paths to the Feynman-Kac formula.
This leads to Propositions~\ref{prop:massclass}--\ref{prop:massclasslargeR} below,
which are the key to the proof of Propositions~\ref{prop:randomtruncation_forproof}--\ref{prop:pathsavoidingZ_forproof}
in Section~\ref{s:massdecomp}, and Propositions~\ref{prop:closetoZinshorttime}--\ref{prop:boundforpathconc} in Section~\ref{s:pathconc}.

\subsection{Key propositions}
To start, we define various sets of nearest-neighbor paths in $\Z^d$ as follows.
For $\ell \in \N_0$ and subsets $\Lambda, \Lambda' \subset \Z^d$, 
define

\begin{equation}
\scrP_\ell(\Lambda,\Lambda') := \left\{ (\pi_0, \ldots, \pi_{\ell}) \in (\Z^d)^{\ell+1} \colon\, 
\begin{aligned}
&\pi_0 \in \Lambda, \pi_{\ell} \in \Lambda',
\\
&|\pi_{i} - \pi_{i-1}|=1 \;\forall\, 1 \le i \le \ell 
\end{aligned}
\right\}
\end{equation}
and set

\RS
\begin{equation}
\scrP(\Lambda, \Lambda') := \bigcup_{\ell \in \N_0} \scrP_\ell(\Lambda,\Lambda'),
\qquad 
\scrP_\ell := \scrP_\ell(\Z^d,\Z^d),
\qquad 
\scrP := \scrP(\Z^d,\Z^d). 
\end{equation}
\eRS
When $\Lambda$ or~$\Lambda'$ 
consists of a single point, we write $x$ instead of $\{x\}$. If $\pi \in \scrP_\ell$, we set $|\pi| := \ell$.
We write $\supp(\pi) := \{\pi_0, \ldots, \pi_{|\pi|}\}$ to denote
the set of points visited by $\pi$.

Let $X=(X_t)_{t\ge0}$ be a continuous-time simple symmetric random walk with total jump rate~$2d$; 
this is the process that ``drives''
the Feynman-Kac formula. We denote by $(T_n)_{n \in \N_0}$ the sequence of
its jump times (with $T_0 := 0$). 
For $\ell \in \N_0$, let $\pi^{\ssup \ell}(X) := (X_0, \ldots, X_{T_{\ell}})$
be the path in $\scrP_\ell$ consisting of the first $\ell$ steps of $X$ and,
for $t  \ge 0$, let
\RS
\begin{equation}\label{e:defpathX0t}
\pi(X_{0,t}) = \pi^{\ssup{\ell_t}}(X), \quad \text{ where } \ell_t \in \N_0 \, \text{ satisfies } \, T_{\ell_t} \le t < T_{\ell_t+1},
\end{equation}
\eRS
denote the path in $\scrP$ consisting of all the steps
taken by $X$ between times $0$ and $t$.

For $\pi \in \scrP$, $L \in \N$ and $A>0$, we define
\begin{equation}\label{e:deflambdaLApi}
\lambda_{L,A}(\pi) := \sup \left\{\lambda^{\ssup 1}_\CC \colon\, \CC \in \mathfrak{C}_{L,A}, \, \supp(\pi)\cap \CC \cap \Pi_{L,A} \neq \emptyset \right\},
\end{equation}
with the convention $\sup \emptyset = -\infty$. 
This is the largest principal eigenvalue among the components of~$\mathfrak C_{L,A}$ 
\RS that have a point of high exceedance visited by \eRS the path.

The main results of this section are the following two propositions.
\begin{proposition}\label{prop:massclass}
Let $R_L$ satisfy \twoeqref{e:propertiesR_L}{e:addpropR_L}.
For any $A > 0$,
there exists a constant $c_A >0$
such that the following holds a.s.\ eventually as $L \to \infty$:
For each $x \in B_L$, each $\NN \subset \scrP(x,\Z^d)$
satisfying $\supp(\pi) \subset B_L$ and $\max_{1 \le \ell \le |\pi|} |\pi_\ell -x| \ge \ln L$ for all $\pi \in \mathcal{N}$, 
each assignment $\pi\mapsto ( \gamma_\pi , z_\pi)\in \R \times \Z^d$ such that
\begin{equation}
\label{e:cond_massclass1}
 \gamma_\pi \ge \lambda_{L,A}(\pi) \vee (\widehat{a}_L - A) + \texte^{-R_L}
\end{equation}
and
\begin{equation}
\label{e:cond_massclass2}
z_\pi \in \supp(\pi) \cup 
\bigcup_{ \substack{\CC \in \mathfrak{C}_{L,A} \colon \\ \supp(\pi) \cap \CC \cap \Pi_{L,A} \neq \emptyset}} \CC
\end{equation}
are true for all $\pi \in \NN$, \RS and all $t \ge 0$, \eRS
\begin{equation}\label{e:mass_class}
\ln \E_x \left[ \texte^{\int_0^t \xi(X_s) \textd s} \1\{\pi(X_{0,t}) \in \mathcal{N}\}\right]
\le \sup_{\pi \in \mathcal{N}} \Big\{ t \gamma_\pi - \left(\RV \ln_3(d L) \eRV - c_A \right) |z_\pi-x| \Big\}. 
\end{equation}
\end{proposition}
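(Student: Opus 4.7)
My plan is to adapt the path-expansion scheme of \cite{MP16} to our setting, bounding the contribution of each admissible $\pi$ via spectral estimates inside the islands it visits and then summing the results with the help of Lemmas~\ref{l:size_comps} and~\ref{l:boundhighexceedances}. First I will split the expectation into a sum over $\pi \in \NN$ of $\E_x[\texte^{\int_0^t \xi(X_s)\textd s}\mathbf{1}_{\{\pi(X_{0,t}) = \pi\}}]$, and for each fixed $\pi$ integrate out the holding times of the continuous-time walk conditionally on following $\pi$; this yields an integral of $\exp\{\sum_i \xi(\pi_i)\sigma_i\}$ over the simplex $\{\sigma \ge 0 : \sum_i \sigma_i = t\}$, weighted by $\texte^{-2dt}$. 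Shifting each $\xi(\pi_i)$ by the prescribed $\gamma_\pi$ extracts the factor $\texte^{t \gamma_\pi}$, and by \eqref{e:cond_massclass1} the shifted potential $\xi - \gamma_\pi$ is bounded by $-A$ off the islands $\CC \in \mathfrak{C}_{L,A}$, so the remaining task is to control a non-negative but spectrally tempered contribution coming from the time the walk spends inside islands.

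Next, I will factor the shifted path integral via the strong Markov property applied at the successive hitting and exit times of those $\CC \in \mathfrak{C}_{L,A}$ whose intersection with $\supp(\pi) \cap \Pi_{L,A}$ is non-empty. Each excursion into such a $\CC$ contributes at most $1 + 2d|\CC|\texte^{R_L}$ by Lemma~\ref{l:mass_out} applied with $\gamma = \gamma_\pi$, since \eqref{e:cond_massclass1} forces $\gamma_\pi - \lambda^{\ssup 1}_\CC \ge \texte^{-R_L}$, while each off-island interval contributes an exponential decay at rate at least $A$ in the time spent there. Lemma~\ref{l:size_comps} gives $|\CC| \le (2 n_A R_L + 1)^d$, and Lemma~\ref{l:boundhighexceedances} bounds the number of $\Pi_{L,A}$-points along $\pi$, and thereby the number of distinct islands visited, by $|\pi|/(\ln L)^\delta$ for some $\delta > 0$; so the in-island factors combine to at most $\exp\{C_A |\pi|/(\ln L)^\delta\}$, which is negligible on the exponential scale compared to the per-step cost to be extracted below.

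Finally, I will sum the resulting per-path bounds over $\pi \in \NN$. Paths of fixed length $\ell$ starting at $x$ number at most $(2d)^\ell$, and the Poisson weight $\texte^{-2dt} t^\ell/\ell!$ controls the density of $\ell$-step trajectories in time $t$. By the hypothesis on $\NN$, each admissible $\pi$ satisfies $\max_{1 \le k \le |\pi|}|\pi_k - x| \ge \ln L$ and hence $|\pi| \ge \ln L$, while \eqref{e:cond_massclass2} forces $|\pi| \ge |z_\pi - x| - O(R_L)$. Applying Stirling and optimizing over $\ell \ge \max\{\ln L,\, |z_\pi - x| - O(R_L)\}$ yields, uniformly in $\pi \in \NN$, an entropic factor bounded by $\exp\{-(\ln_3(dL) - c_A)|z_\pi - x|\}$; the triple logarithm reflects the slow rate at which the Poisson decay $(t/\ell)^\ell$ beats the combinatorial count $(2d)^\ell$ once $\ell$ is constrained to be at least $\ln L$. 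Taking the supremum over $\pi \in \NN$ then produces \eqref{e:mass_class}. The main obstacle will be the middle step: organizing the spectral factorization so that the many possible orderings of island visits along $\pi$ do not incur a super-polynomial combinatorial overhead --- the constraint \eqref{e:cond_massclass2}, tying $z_\pi$ to a point on $\pi$ or inside a visited island, is precisely what makes the per-path count summable while retaining the explicit $|z_\pi - x|$-dependence in \eqref{e:mass_class}.
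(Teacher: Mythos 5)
Your opening move --- conditioning on the jump sequence, integrating out holding times via Lemma~\ref{l:path_eval}, and shifting by $\gamma_\pi$ --- is the right way in, and this part matches the paper's Lemma~\ref{l:mass_in}/Lemma~\ref{l:fixed_class} strategy. The gap is in where you locate the source of the crucial $\ln_3(dL)$ rate. You attribute it to Poisson/Stirling entropy of the jump count: ``the slow rate at which the Poisson decay $(t/\ell)^\ell$ beats the combinatorial count $(2d)^\ell$.'' But the proposition must hold for \emph{all} $t\ge 0$, and for $t$ large the walk has ample time to traverse $|z_\pi-x|$ steps without any entropic penalty --- the Poisson weight gives no $t$-independent decay --- yet the bound \eqref{e:mass_class} still subtracts $(\ln_3(dL)-c_A)|z_\pi-x|$ regardless of $t$. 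So the entropy argument cannot produce the claimed factor. The actual source is the per-step factor in the path-evaluation formula, $\frac{2d}{2d+\gamma_\pi-\xi(\pi_i)}$: when $\xi(\pi_i)\le(1-\varepsilon)\widehat a_L$ this is of order $1/\ln_2 L$ (since $\gamma_\pi\ge\widehat a_L-A$ and $\widehat a_L\sim\rho\ln_2 L$), contributing $-\ln_3 L + O(1)$ to the log per such step. You characterize the off-island contribution as merely ``exponential decay at rate at least $A$,'' which only gives the constant-rate factor $q_A^n$ and hence $\exp\{-c_A|z_\pi-x|\}$, strictly weaker than \eqref{e:mass_class}.

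The missing ingredient that turns this into the stated bound is Lemma~\ref{l:bound_mediumpoints}: along any connected set of size $\ge(\ln L)^\beta$ intersecting $B_L$, at most a vanishing fraction of sites have $\xi > (1-\varepsilon)\widehat a_L$. Combined with $|\supp(\pi)|\ge\ln L$ and $|\supp(\pi)|\ge|z_\pi-x|-O(R_L)$ (from Lemma~\ref{l:size_comps} and \eqref{e:cond_massclass2}), this shows that the number $k^{L,\varepsilon}_\pi$ of ``medium'' sites along $\pi$'s off-island excursions is $|z_\pi-x|(1-o(1))$, and multiplying the per-site factor $\approx 1/\ln_2 L$ over those sites gives exactly $\exp\{(c-\ln_3 L)k^{L,\varepsilon}_\pi\}$. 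Your worry about combinatorial overhead from the island-visit orderings is real but the paper resolves it differently than you anticipate: it groups paths into equivalence classes determined by their outside-island excursions (a path's in-island portions are factored out via Lemma~\ref{l:mass_out} and Lemma~\ref{l:bounds_mass}), proves $|\widetilde{\scrP}^{(m,n)}_x|\le(c_1R_L^d)^m(2d)^n$ by an explicit injection, and absorbs the polynomial-in-$R_L$ overhead per excursion into the $\texte^{-\vartheta_L R_L}$ gap in \eqref{e:cond_massclass1} together with the minimal excursion length $R_L$. In short: the correct mechanism is spectral (low potential at medium sites), not entropic, and the equivalence-class bookkeeping, not $z_\pi$'s constraint, is what keeps the path count summable.
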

\RV
\noindent
We write $\ln_3(dL)$ instead of $\ln_3 L$ in \eqref{e:mass_class} for convenience, as $|z| \le dL$ for any $z \in B_L$.
\eRV

While we assume \twoeqref{e:propertiesR_L}{e:addpropR_L} in most of the paper,
the proof of Proposition~\ref{prop:boundforpathconc} will require us to work without~\eqref{e:addpropR_L}.
In this setting, we have the following:
\begin{proposition}\label{prop:massclasslargeR}
Fix $A>0$ and let $n_A \in \N$ as in Lemma~\ref{l:size_comps}.
For any $R_L \in \N$ that obeys \eqref{e:propertiesR_L}
and any $\vartheta_L \in \N$ such that $\vartheta_L \ll \ln_3 L$ as $L \to \infty$, 
the following holds a.s.\ eventually as $L \to \infty$:
For each $x \in B_L$, each $\NN \subset \scrP(x,\Z^d)$
satisfying $\supp(\pi) \subset B_L$ and $\max_{1 \le \ell \le |\pi|} |\pi_\ell -x| \ge (n_A+1) R_L$ for all $\pi \in \mathcal{N}$, 
each  $\pi\mapsto \gamma_\pi \in \R$ satisfying
\begin{equation}\label{e:condmassclasslargeR}
 \gamma_\pi  \ge \lambda_{L,A}(\pi) \vee (\widehat{a}_L - A) + \texte^{-\vartheta_L R_L}
\quad \forall \,  \pi \in \NN,
\end{equation}
\RS and all $t \ge 0$, \eRS
\begin{equation}\label{e:massclasslargeR}
\ln \E_x \left[ \texte^{\int_0^t \xi(X_s) \textd s} \1\{\pi(X_{0,t}) \in \mathcal{N}\}\right]
\le t \sup_{\pi \in \mathcal{N}} \gamma_\pi - \tfrac12 R_L \ln_3 L. 
\end{equation}
\end{proposition}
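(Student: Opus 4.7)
The plan is to establish \eqref{e:massclasslargeR} through a path-expansion argument in the spirit of Proposition~\ref{prop:massclass}, adapted to the weaker spectral-gap hypothesis \eqref{e:condmassclasslargeR} and to the absence of the refined size bound~\eqref{e:addpropR_L} on~$R_L$. We decompose the expectation according to the skeleton path of $X$:
\begin{equation*}
\E_x\Bigl[\texte^{\int_0^t\xi(X_s)\textd s}\1\{\pi(X_{0,t})\in\NN\}\Bigr] = \sum_{\pi\in\NN}\E_x\Bigl[\texte^{\int_0^t\xi(X_s)\textd s}\1\{\pi(X_{0,t})=\pi\}\Bigr].
\end{equation*}
For each fixed $\pi$, we partition $[0,t]$ into alternating sub-intervals during which $X$ lies inside versus outside the union of those islands $\CC\in\mathfrak C_{L,A}$ meeting $\supp(\pi)\cap\Pi_{L,A}$. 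On each inside sub-interval, we apply Lemma~\ref{l:mass_out} with threshold $\gamma=\gamma_\pi$; by Lemma~\ref{l:size_comps} ($|\CC|\le(n_AR_L)^d$) and \eqref{e:condmassclasslargeR} ($\gamma_\pi-\lambda^{\ssup 1}_\CC\ge\texte^{-\vartheta_LR_L}$), the centered Feynman-Kac weight picks up a factor at most $1+2d(n_AR_L)^d\texte^{\vartheta_LR_L}=e^{O(\vartheta_LR_L)}$ per inside excursion. On each outside sub-interval, $\xi\le\widehat a_L-2A\le\gamma_\pi-A$, producing exponential decay of the centered weight at rate $A$.

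After assembling these per-excursion estimates and summing over $\pi\in\NN$ using the Poisson representation of the jump times of $X$ and the combinatorial count of nearest-neighbor skeletons, the bound reduces to $e^{t\sup_\pi\gamma_\pi}$ multiplied by a purely combinatorial factor. The condition $\max_{1\le\ell\le|\pi|}|\pi_\ell-x|\ge(n_A+1)R_L$ forces $|\pi|\ge(n_A+1)R_L$, and the entropy cost per skeleton jump (of the same flavor as the factor $\ln_3(dL)-c_A$ per unit $\ell^1$-distance appearing in Proposition~\ref{prop:massclass}) then combines with this length lower bound to deliver the target penalty $e^{-\tfrac12R_L\ln_3L}$.

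The principal obstacle is that each inside-excursion cost $e^{O(\vartheta_LR_L)}$ here is much larger than the $e^{O(R_L)}$ analogue in Proposition~\ref{prop:massclass}, so naively the product over multiple excursions could swamp the target penalty. This is resolved by the hypothesis $\vartheta_L\ll\ln_3L$: distinct islands sit in distinct connected components of $D_{L,A}$, so each inside excursion is bracketed by at least one skeleton step lying outside $D_{L,A}$; amortized over these off-island steps, the per-step excursion cost $O(\vartheta_L)$ is dominated by the per-step entropy cost $\sim\ln_3L$. The looser coefficient $\tfrac12$ in the conclusion, compared to the coefficient $n_A+1$ one would obtain by naively assigning the full entropy penalty to every jump of $\pi$, supplies exactly the slack needed to absorb the accumulated excursion factors, repeated excursions into the same island, and combinatorial losses in the skeleton count. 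The delicate bookkeeping of how often the walk re-enters a given island, and how those re-entries interleave with the sojourn-time integrals, will be the most technically involved step.
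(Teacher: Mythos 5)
Your overall strategy matches the paper's: expand the expectation along excursions between $\Pi_{L,A}$ and $D_{L,A}^\cc$, bound each inside excursion by $\texte^{O(\vartheta_L R_L)}$ via Lemma~\ref{l:mass_out} and the spectral-gap hypothesis~\eqref{e:condmassclasslargeR}, and amortize this cost against an $\ln_3 L$-per-step penalty coming from the at-least-$R_L$ outside steps bracketing each inside excursion, with $\vartheta_L\ll\ln_3 L$ closing the argument. The paper implements this via the equivalence-class machinery of Section~\ref{ss:equivclasspaths} (Lemma~\ref{l:fixed_class} plus the multiplicity bound~\eqref{e:propmassclass3}) and the key lower bound~\eqref{e:propmassclass7.1} on $k_\pi^{L,\varepsilon}$.

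However, the mechanism you give for the outside-excursion penalty is not correct, and this is where the argument would fail if run literally. The inequality $\xi\le\widehat a_L-2A\le\gamma_\pi-A$ does give $\xi-\gamma_\pi\le -A$ off the high sites, but after integrating over the exponential holding times (Lemma~\ref{l:path_eval}) this produces only the \emph{bounded} factor $q_A=(1+A/2d)^{-1}$ per step. The resulting penalty $q_A^{R_L}=\texte^{-cR_L}$ with $c$ a constant is far too weak to dominate the inside-excursion cost $\texte^{\vartheta_L R_L}$ once $\vartheta_L\to\infty$. Neither is the $\ln_3 L$ factor an ``entropy'' cost: summing over all $(2d)^n$ nearest-neighbor skeletons exactly cancels the per-step skeleton probability, producing no net entropy penalty at all. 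The actual source of the $\ln_3 L$ per step is the \emph{moderately low} sites, i.e.\ those with $\xi\le(1-\varepsilon)\widehat a_L$: at such sites the gap $\gamma_\pi-\xi$ is of order $\varepsilon\widehat a_L\sim\varepsilon\rho\ln_2 L$, so the per-step factor $2d/(2d+\gamma_\pi-\xi)$ is $\lesssim(\ln_2 L)^{-1}=\texte^{-\ln_3 L}$ — this is Lemma~\ref{l:mass_in}. To turn ``number of moderately low sites visited'' ($k^{L,\varepsilon}_\pi$) into a lower bound $\gtrsim R_L$ you need Lemma~\ref{l:bound_mediumpoints} (intermediate peaks are a vanishing fraction of any long connected set), combined with the case analysis in~\eqref{e:propmassclass7.1} for $m=0$, $m=1$, $m\ge2$; in particular, the threshold $(n_A+1)R_L$ in the hypothesis (rather than $R_L$) is needed in the $m=0$ case to exclude paths confined to a single island. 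Without these ingredients the amortization you are attempting does not close.
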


The key to the proof of Propositions~\ref{prop:massclass}--\ref{prop:massclasslargeR} 
is Lemma~\ref{l:fixed_class} below,
whose proof in turn depends on intermediate results obtained in the next two sections.
We emphasize that all of these results are deterministic, i.e., they hold for any fixed potential $\xi \in \R^{\Z^d}$.

\subsection{Mass of the solution along excursions}
\label{ss:mass_fixed_path}\noindent
The first step to control the contribution of a path to the mass
is to control the contribution of excursions
 outside of  $\Pi_{L,A}$  (recall~\eqref{defPi}). 
A useful result is the following:
\begin{lemma}[Path evaluation]
\label{l:path_eval}
For any $\ell\in\N_0$, $\pi \in \scrP_\ell$
and $\gamma  > \max_{i < |\pi|} \xi(\pi_i)-2d$,
\begin{equation}\label{e:path_eval}
\E_{\pi_0} \left[\exp\left\{\int_0^{T_{\ell}} (\xi(X_s) -  \gamma ) \textd s\right\} \,\middle|\, \pi^{\ssup {\ell}}(X) = \pi  \right]
= \prod_{i=0}^{\ell-1} \frac{2d}{2d +  \gamma  - \xi(\pi_i)}.
\end{equation}
\end{lemma}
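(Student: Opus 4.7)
The plan is to exploit the standard structure of a continuous-time random walk: conditionally on the embedded discrete skeleton $\pi^{\ssup \ell}(X) = \pi$, the holding times $\tau_i := T_{i+1} - T_i$ for $i = 0, 1, \ldots, \ell-1$ are i.i.d.\ Exponential random variables with rate $2d$ (the total jump rate), independent of the sequence of visited sites. This is just the definition of the continuous-time random walk with generator $\Delta$: the walk waits at each site an Exp($2d$) time, and then jumps uniformly to a nearest neighbor, the two steps being independent.

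First I would rewrite the integral along the conditioned trajectory as
\begin{equation*}
\int_0^{T_\ell}(\xi(X_s) - \gamma) \, \textd s = \sum_{i=0}^{\ell-1}(\xi(\pi_i) - \gamma)\tau_i,
\end{equation*}
since $X_s = \pi_i$ for $s \in [T_i, T_{i+1})$. Under the conditioning, this is a sum of independent (not identically distributed) random variables $(\xi(\pi_i) - \gamma)\tau_i$, so taking expectations factors into a product:
\begin{equation*}
\E_{\pi_0}\Bigl[\exp\Bigl\{\sum_{i=0}^{\ell-1}(\xi(\pi_i) - \gamma)\tau_i\Bigr\} \,\Big|\, \pi^{\ssup \ell}(X) = \pi\Bigr] = \prod_{i=0}^{\ell-1} \E\bigl[\exp\{(\xi(\pi_i) - \gamma)\tau_i\}\bigr].
\end{equation*}

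Then I would evaluate each factor using the moment generating function of an Exp($2d$) random variable: for any $\alpha < 2d$,
\begin{equation*}
\E[\texte^{\alpha \tau_i}] = \int_0^\infty 2d \, \texte^{-2d s} \texte^{\alpha s} \, \textd s = \frac{2d}{2d - \alpha}.
\end{equation*}
Applying this with $\alpha = \xi(\pi_i) - \gamma$, the finiteness condition $\alpha < 2d$ becomes $\gamma > \xi(\pi_i) - 2d$, which holds for every $i < |\pi|$ precisely by the assumption $\gamma > \max_{i < |\pi|} \xi(\pi_i) - 2d$. Each factor thus becomes $2d/(2d + \gamma - \xi(\pi_i))$, and multiplying over $i = 0, \ldots, \ell-1$ yields the claimed identity.

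There is no real obstacle here; the statement is essentially just a careful bookkeeping of the independence of holding times from the jump chain, together with the elementary MGF of an exponential distribution. The only point requiring minor care is that the integral only runs up to $T_\ell$ (not $T_{\ell+1}$), so the holding time at the terminal vertex $\pi_\ell$ does not appear, which is why the product runs only up to $\ell - 1$ and the hypothesis on $\gamma$ only involves $i < |\pi|$.
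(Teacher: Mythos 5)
Your proof is correct and follows essentially the same approach as the paper's: conditioning on the skeleton, using the independence of the i.i.d.\ Exp($2d$) holding times, and evaluating the exponential moment generating function, with the hypothesis on $\gamma$ ensuring finiteness. The paper's proof is just a one-sentence version of the same argument.
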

\begin{proof}
The left-hand side of~\eqref{e:path_eval} can be directly evaluated using the fact that 
$T_\ell$ is the sum of~$\ell$ i.i.d.\ Exp($2d$) random variables that are 
independent of $\pi^{\ssup {\ell}}(X)$. 
The condition on  $\gamma$  ensures that all integrals are finite.
\end{proof}

For a path $\pi \in \scrP$, $L \in \N$ and $\varepsilon \in (0,1)$, we write
\begin{equation}\label{e:def_Mpi}
M^{L,\varepsilon}_\pi := \big| \bigl\{ \RS x \in \{\pi_0, \ldots, \pi_{|\pi|-1}\} \eRS \colon\, \xi(x) \le (1-\varepsilon)\widehat{a}_L\bigr\}\big|,
\end{equation}
\RS with the interpretation that $M^{L,\varepsilon}_\pi = 0$ if $|\pi|=0$. \eRS
Then we~have:

\begin{lemma}[Mass of excursions]
\label{l:mass_in}
For any $A, \varepsilon>0$, there exist $c > 0$
 and $L_0 \in \N$  such that,
for all $L \ge L_0$,
all  $\gamma > \widehat{a}_L - A$ 
and all $\pi \in \scrP$ satisfying $\pi_i \notin \Pi_{L,A}$ for all $i < \ell:=|\pi|$,
\begin{equation}\label{e:mass_in}
\E_{\pi_0} \left[ \exp \left\{ \int_0^{T_{\ell}}(\xi(X_t) - \gamma ) \textd s \right\} \,\middle|\, \pi^{\ssup {\ell}}(X) = \pi \right] 
\le q_A^{\ell} \texte^{\left(c -\ln_3 L \right) M^{L,\varepsilon}_\pi},
\end{equation}
where $q_A :=  (1+A/2d)^{-1}$.
\end{lemma}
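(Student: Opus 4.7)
The plan is to apply Lemma~\ref{l:path_eval} to rewrite the conditional expectation as an explicit deterministic product, and then to bound the factors by splitting the vertices $\pi_0,\ldots,\pi_{\ell-1}$ according to whether $\xi(\pi_i)$ lies below or above the threshold $(1-\varepsilon)\widehat{a}_L$.

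First I would verify the hypothesis of Lemma~\ref{l:path_eval}. Since $\pi_i\notin\Pi_{L,A}$ for $i<\ell$ we have $\xi(\pi_i)\le\widehat{a}_L-2A$, and combined with $\gamma>\widehat{a}_L-A$ this gives $\gamma-\xi(\pi_i)\ge A$, well above the required threshold $-2d$. Lemma~\ref{l:path_eval} then yields
\begin{equation*}
\E_{\pi_0}\!\left[\exp\!\Big\{\!\int_0^{T_\ell}(\xi(X_s)-\gamma)\,\textd s\Big\}\,\Big|\,\pi^{\ssup\ell}(X)=\pi\right]=\prod_{i=0}^{\ell-1}\frac{2d}{2d+\gamma-\xi(\pi_i)}.
\end{equation*}
Every factor is therefore at most $q_A=2d/(2d+A)$, which by itself gives the weaker bound $q_A^{\ell}$.

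To extract the additional gain, I would isolate the $M^{L,\varepsilon}_\pi$ indices at which $\xi(\pi_i)\le(1-\varepsilon)\widehat{a}_L$. For each such index,
\begin{equation*}
\gamma-\xi(\pi_i)\ge(\widehat{a}_L-A)-(1-\varepsilon)\widehat{a}_L=\varepsilon\widehat{a}_L-A.
\end{equation*}
Using $\widehat{a}_L=(\rho+o(1))\ln_2 L$ (see the remark after Lemma~\ref{l:maxpotential}), for all $L$ larger than some $L_0=L_0(A,\varepsilon)$ this is at least $\tfrac12\varepsilon\rho\ln_2 L$, whence
\begin{equation*}
\frac{2d}{2d+\gamma-\xi(\pi_i)}\le\frac{2d}{2d+\tfrac12\varepsilon\rho\ln_2 L}\le q_A\,\texte^{\,c-\ln_3 L},
\end{equation*}
where the last inequality holds (for $L_0$ large enough) with $c$ any constant exceeding $\ln\bigl(2(2d+A)/(\varepsilon\rho)\bigr)$, since $\ln_2 L\cdot\frac{2d}{2d+\tfrac12\varepsilon\rho\ln_2 L}\to\frac{4d}{\varepsilon\rho}$ as $L\to\infty$. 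Multiplying the improved bound over the $M^{L,\varepsilon}_\pi$ "low" indices and the bound $q_A$ over the remaining $\ell-M^{L,\varepsilon}_\pi$ indices yields exactly $q_A^{\ell}\,\texte^{(c-\ln_3 L)M^{L,\varepsilon}_\pi}$, as claimed.

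The proof is essentially a calibration exercise built on Lemma~\ref{l:path_eval}; no real obstacle arises beyond choosing $c$ and $L_0$ so that the asymptotic inequality $\frac{2d}{2d+\tfrac12\varepsilon\rho\ln_2 L}\le q_A\texte^{c-\ln_3 L}$ holds uniformly in $\gamma$ and $\pi$, which is a matter of bookkeeping. The case $\ell=0$ is trivial since both sides equal $1$.
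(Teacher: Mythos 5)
Your proof is correct and follows essentially the same route as the paper's: apply Lemma~\ref{l:path_eval}, split the factors by the threshold $(1-\varepsilon)\widehat{a}_L$, bound ``high'' factors by $q_A$ and ``low'' factors using $\gamma-\xi(\pi_i)\ge\varepsilon\widehat{a}_L-A$ together with $\widehat{a}_L=(\rho+o(1))\ln_2 L$; the constants you obtain differ from the paper's ($\ln(1\vee 5d(q_A\varepsilon\rho)^{-1})$) only in bookkeeping. One small imprecision: $M^{L,\varepsilon}_\pi$ counts \emph{distinct} low sites, not low indices, so a path revisiting a low site gives more than $M^{L,\varepsilon}_\pi$ low factors; your argument still goes through because the improved per-factor bound $q_A\texte^{c-\ln_3 L}$ is itself $\le q_A$ once $L\ge L_0$, so applying it to only $M^{L,\varepsilon}_\pi$ of the low indices and $q_A$ to the rest is a valid upper bound, but you should phrase it that way rather than asserting there are exactly $M^{L,\varepsilon}_\pi$ low indices.
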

Note that the statement of Lemma~\ref{l:mass_in} allows for $\pi_{\ell} \in \Pi_{L,A}$.
\begin{proof}
By our assumptions on $\pi$ and $\gamma$, we can use Lemma~\ref{l:path_eval}.
Splitting the product on the right-hand side of~\eqref{e:path_eval} according to whether
$\xi(\pi_i)$ is larger than $(1-\varepsilon)\widehat{a}_L$ or not,
and using  that $\xi(\pi_i) \le \widehat{a}_L - 2A$ for all  $i < |\pi|$,
we bound the left-hand side of \eqref{e:mass_in} by
\begin{equation}\label{e:mass_in1}
q_A^{\ell} \left[q_A \frac{\varepsilon \widehat{a}_L - A}{2d}\right]^{-|\{i < \ell \colon\, \xi(\pi_i) \le (1-\varepsilon)\widehat{a}_L\}|}.
\end{equation}
For large $L$, $\widehat{a}_L \ge \frac12 \rho \ln_2 L $ 
and the number within square brackets in~\eqref{e:mass_in1} exceeds $q_A \varepsilon \rho (\ln_2 L) / 5d > 1$.
Since $|\{i < |\pi| \colon\, \xi(\pi_i) \le (1-\varepsilon)\widehat{a}_L\}| \ge M^{L,\varepsilon}_\pi$, 
\eqref{e:mass_in} holds with $c := \ln (1 \vee 5d (q_A \varepsilon \rho)^{-1})$.
\end{proof}


\subsection{Equivalence classes of paths}
\label{ss:equivclasspaths}\noindent
Here we develop a setup similar as in Section 6.3 of \cite{MP16}.
The idea is to categorize paths $\pi \in \scrP$ according to their
excursions between $\Pi_{L,A}$ and $D_{L,A}^\cc$  (cf.\  \twoeqref{defPi}{def_D_L,A}) 
and then apply the results from Sections~\ref{ss:specbounds} and~\ref{ss:mass_fixed_path}.
Note that $\dist(\Pi_{L,A}, D_{L,A}^\cc) \ge R_L$. 

First we discuss the concatenation of paths.
If $\pi$ and $\pi'$ are two paths in $\scrP$
such that $\pi_{|\pi|} = \pi'_0$,
we define their concatenation as
\begin{equation}\label{def_concat}
\pi \circ \pi' := (\pi_0, \ldots, \pi_{|\pi|}, \pi'_1, \ldots, \pi'_{|\pi'|}) \in \scrP.
\end{equation}
Note that $|\pi \circ \pi'| = |\pi| + |\pi'|$.
\RV
If $\pi_{|\pi|} \neq \pi'_0$,
we can still define the \emph{shifted concatenation} of $\pi$ and $\pi'$
as $\pi \circ \hat{\pi}'$ where 
$\hat{\pi}' := (\pi_{|\pi|}, \pi_{|\pi|}  + \pi'_1 - \pi'_0, \ldots, \pi_{|\pi|} + \pi'_{|\pi'|} - \pi'_0)$.
The shifted concatenation of multiple paths is then defined inductively via associativity.
\eRV

If a path $\pi \in \scrP$ intersects $\Pi_{L,A}$,
then it can be decomposed into an initial path, a sequence of  excursions between $\Pi_{L,A}$ and $D_{L,A}^\cc$,
and a terminal path. Explicitly, 
there exists $m_\pi \in \N $ such that
\begin{equation}\label{e:concat1}
\pi = \check{\pi}^{\ssup 1} \circ \hat{\pi}^{\ssup 1} \circ \cdots \circ \check{\pi}^{\ssup {m_\pi}} \circ \hat{\pi}^{\ssup {m_\pi}} \circ \bar{\pi},
\end{equation}
where the paths in~\eqref{e:concat1} satisfy
\begin{equation}\label{e:concat2}
\begin{alignedat}{9}
\check{\pi}^{\ssup 1} & \in  \scrP(\Z^d, \Pi_{L,A}) 
&\qquad\text{and}\qquad& 
\check{\pi}^{\ssup 1}_i & \notin  \Pi_{L,A}, & \quad\, 0\le i < |\check{\pi}^{\ssup 1}|, 
\\
\check{\pi}^{\ssup k} & \in  \scrP(D_{L,A}^{c}, \Pi_{L,A}) 
&\qquad\text{and}\qquad& 
\check{\pi}^{\ssup k}_i & \notin  \Pi_{L,A}, & \quad\, 0\le i < |\check{\pi}^{\ssup k}|, \; 2 \le k \le m_\pi, 
\\
\hat{\pi}^{\ssup k} & \in  \scrP(\Pi_{L,A}, D_{L,A}^\cc) 
&\qquad\text{and}\qquad& 
\hat{\pi}^{\ssup k}_i & \in  D_{L, A}, & \quad\, 0\le i < |\hat{\pi}^{\ssup k}|, \; 1 \le k \le m_{\pi} - 1, 
\\
\hat{\pi}^{\ssup {m_\pi}} & \in  \scrP(\Pi_{L,A}, \Z^d) 
&\qquad\text{and}\qquad& 
\hat{\pi}^{\ssup {m_\pi}}_i & \in  D_{L,A}, & \quad\, 0\le i < |\hat{\pi}^{\ssup {m_\pi}}|, 
\end{alignedat}
\end{equation}
while
\begin{equation}\label{e:concat3}
\begin{array}{ll} 
\bar{\pi} \in \scrP(D^c_{L,A}, \Z^d), \, \bar{\pi}_i \notin \Pi_{L,A} \; \forall\, i \ge 0 & \text{ if } \hat{\pi}^{\ssup {m_\pi}} \in \scrP(\Pi_{L,A}, D^\cc_{L, A}), \\
\bar{\pi}_0 \in D_{L,A}, |\bar{\pi}| = 0  & \text{ otherwise.}
\end{array}
\end{equation}
Note that the decomposition \twoeqref{e:concat1}{e:concat3} is unique, and that
the paths $\check{\pi}^{\ssup 1}$, $\hat{\pi}^{\ssup {m_\pi}}$ and $\bar{\pi}$ can have zero length.
\RS If $\pi$ is contained in $B_L$, so are all the paths in the decomposition. \eRS

For $L \in \N$ and $\varepsilon > 0$, whenever $\supp(\pi) \cap \Pi_{L,A} \ne \emptyset$,
we define
\begin{align}
n_\pi := \sum_{i=1}^{m_\pi} |\check{\pi}^{\ssup i}| + |\bar{\pi}| \qquad \text{ and } \qquad
k^{L,\varepsilon}_\pi := \sum_{i=1}^{m_\pi} M^{L,\varepsilon}_{\check{\pi}^{\ssup i}} + M^{L,\varepsilon}_{\bar{\pi}} \label{e:defnpikpi}
\end{align}
to be respectively the total time spent in exterior excursions and the sum of the numbers of moderately low points of the potential visited 
by exterior excursions (excluding their last point). In the case when 
$\supp(\pi) \cap \Pi_{L,A} = \emptyset$,
we set $m_\pi := 0$, $n_\pi := |\pi|$ and $k^{L,\varepsilon}_\pi := M^{L,\varepsilon}_{\pi}$.
Recall from~\eqref{e:deflambdaLApi} that, in this case, $\lambda_{L,A}(\pi) = -\infty$.

We say that $\pi, \pi' \in \scrP$ are \emph{equivalent}, written $\pi' \sim \pi$, 
if $m_{\pi} = m_{\pi'}$, $\check{\pi}'^{\ssup i}=\check{\pi}^{\ssup i}$ for all $i=1,\ldots,m_{\pi}$ and $\bar{\pi}' = \bar{\pi}$ if $\bar{\pi}_0 \in D^\cc_{L,A}$.
If $\pi' \sim \pi$, then $n_{\pi'}$, $k^{L, \varepsilon}_{\pi'}$ and $\lambda_{L,A}(\pi')$ are all equal to the counterparts for $\pi$.

To state our key lemma, we define, for $m,n \in \N_0$,
\begin{equation}\label{e:defPmn}
\scrP^{(m,n)} = \left\{ \pi \in \scrP \colon\, m_\pi = m, n_\pi = n \right\},
\end{equation}
and we denote by
\begin{equation}\label{def_CLA}
C_{L,A}:= \max \{|\CC| \colon\, \CC \in \mathfrak{C}_{L,A}\}
\end{equation}
the maximal size of the islands in $\mathfrak{C}_{L,A}$.

\begin{lemma}
\label{l:fixed_class}
For any $A,\varepsilon > 0$, there exist $c>0$ and $L_0 \in \N$ such that, for all $L \ge L_0$, 
all $m,n \in \N_0$,  all $\pi \in \scrP^{(m,n)}$ with $\supp(\pi) \subset B_L$,  
all $ \gamma > \lambda_{L,A}(\pi) \vee (\widehat{a}_L -A)$
and all $t \ge 0$,
\begin{multline}\label{e:fixed_class}
\qquad
\E_{\pi_0} \left[ \texte^{\int_0^t (\xi(X_s) - \gamma) \textd s} \1\{\pi(X_{0,t}) \sim \pi\} \right]  
\\\le \left(C_{L,A}^{3/2} \right)^{\1_{\{m>0\}}} \left(1+\frac{ 2d \, C_{L,A}}{\gamma - \lambda_{L,A}(\pi)} \right)^m \left(\frac{q_A}{2d}\right)^n \texte^{\left(c-\ln_3 L \right) k^{L,\varepsilon}_{\pi}}.
\qquad
\end{multline}
\end{lemma}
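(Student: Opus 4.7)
The plan is to decompose the trajectory $(X_s)_{s \in [0,t]}$ according to its alternation between exterior excursions (outside $\Pi_{L,A}$) and interior excursions (inside islands $\CC \in \mathfrak{C}_{L,A}$), apply the strong Markov property at each transition, and then bound each piece separately using the deterministic tools from Sections~\ref{ss:specbounds} and~\ref{ss:mass_fixed_path}.

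First I would dispose of the case $m_\pi = 0$: here $X_{0,t}$ never meets $\Pi_{L,A}$, and the equivalence $\pi(X_{0,t}) \sim \pi$ pins the whole trajectory to the single exterior path $\pi$ of length $n$, with $\xi(X_s) \le \widehat{a}_L - 2A$ throughout. Lemma~\ref{l:path_eval} (whose hypothesis $\gamma > \max_i \xi(\pi_i) - 2d$ follows from $\gamma > \widehat{a}_L - A$ and $\xi(\pi_i) \le \widehat{a}_L - 2A$) applied together with the factor $(2d)^{-n}$ for the probability of the prescribed jump sequence, followed by Lemma~\ref{l:mass_in}, yields directly the bound $(q_A/(2d))^n \texte^{(c-\ln_3 L) k^{L,\varepsilon}_\pi}$, matching~\eqref{e:fixed_class} with $m = 0$.

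For $m_\pi \ge 1$, I would introduce the alternating stopping times $\sigma_k := \inf\{s \ge \sigma'_{k-1} : X_s \in \Pi_{L,A}\}$ and $\sigma'_k := \inf\{s \ge \sigma_k : X_s \in D_{L,A}^{\cc}\}$, with $\sigma'_0 := 0$, and iterate the strong Markov property at these times. On $\{\pi(X_{0,t}) \sim \pi\}$, the walk traces $\check{\pi}^{\ssup k}$ on $[\sigma'_{k-1}, \sigma_k]$ for $k=1,\dots,m_\pi$, stays inside the unique island $\CC_k \in \mathfrak{C}_{L,A}$ containing $X_{\sigma_k}$ on $[\sigma_k, \sigma'_k]$, and in the case $\hat{\pi}^{\ssup{m_\pi}}$ exits $D_{L,A}$ (case (a)) traces $\bar{\pi}$ on $[\sigma'_{m_\pi}, t]$; otherwise (case (b)) $\sigma'_{m_\pi} > t$ and $\bar{\pi}$ is trivial. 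This factorizes the expectation into three kinds of factors. Each exterior segment ($\check{\pi}^{\ssup k}$ for $k=1,\dots,m_\pi$ and, in case (a), $\bar{\pi}$) contributes at most $(q_A/(2d))^{|\cdot|} \texte^{(c-\ln_3 L) M^{L,\varepsilon}_{\cdot}}$ via Lemmas~\ref{l:path_eval} and~\ref{l:mass_in} combined with the $(2d)^{-|\cdot|}$ probability weight. Each interior excursion that reaches $D_{L,A}^{\cc}$ --- all $m_\pi$ of them in case (a), the first $m_\pi - 1$ in case (b) --- contributes $\E_{X_{\sigma_k}}[\texte^{\int_0^{\tau_{\CC_k^{\cc}}}(\xi - \gamma)\textd s}]$, bounded by $1 + 2d C_{L,A}/(\gamma - \lambda_{L,A}(\pi))$ via Lemma~\ref{l:mass_out} and the inequality $\lambda^{\ssup 1}_{\CC_k} \le \lambda_{L,A}(\pi)$, which holds since $\CC_k$ meets $\supp(\pi) \cap \Pi_{L,A}$. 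In case (b) only, the final interior excursion remains in $\CC_{m_\pi}$ up to time $t$ and contributes at most $\texte^{(t-\sigma_{m_\pi})(\lambda^{\ssup 1}_{\CC_{m_\pi}} - \gamma)} |\CC_{m_\pi}|^{3/2} \le C_{L,A}^{3/2}$ by Lemma~\ref{l:bounds_mass} (using $\gamma > \lambda^{\ssup 1}_{\CC_{m_\pi}}$). Multiplying all factors, using $\sum_k |\check{\pi}^{\ssup k}| + |\bar{\pi}| = n$ and $\sum_k M^{L,\varepsilon}_{\check{\pi}^{\ssup k}} + M^{L,\varepsilon}_{\bar{\pi}} = k^{L,\varepsilon}_\pi$, and invoking $C_{L,A} \ge 1$ to uniformize the two cases, yields exactly the right-hand side of~\eqref{e:fixed_class}.

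The main obstacle is the bookkeeping of the case split (a) versus (b), which is what forces the factor $(C_{L,A}^{3/2})^{\1_{\{m>0\}}}$ in the statement. Some care is also needed to check that each application of the strong Markov property integrates the interior excursion freely (its exit point need not be pinned, as only an upper bound is used), while the subsequent exterior segment is pinned to begin at $\check{\pi}^{\ssup{k+1}}_0$ via its prescribed jump sequence.
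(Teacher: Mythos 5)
Your proposal is correct and follows essentially the same argument as the paper: the same decomposition of the trajectory into alternating exterior and interior excursions, the same three deterministic estimates (Lemmas~\ref{l:path_eval}/\ref{l:mass_in} for the exterior segments, Lemma~\ref{l:mass_out} for interior excursions that exit $D_{L,A}$, and Lemma~\ref{l:bounds_mass} for the terminal interior excursion), and the same case split depending on whether $\bar\pi_0$ lies in $D_{L,A}$ or $D_{L,A}^\cc$. The only cosmetic difference is that you apply the strong Markov property at all the alternating stopping times at once, whereas the paper organizes the same iteration as an induction on $m$; your observation that the exit point of each interior excursion is dropped to get an upper bound is exactly the inequality in the paper's display~\eqref{e:fixedclass6}.
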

\begin{proof}
Fix $A, \varepsilon>0$ and let $c>0$, $L_0 \in \N$ be as given by Lemma~\ref{l:mass_in}.
For $0 \le s \le t <\infty$, set $I_s^t := \texte^{\int_s^t(\xi(X_u) - \gamma ) \textd u}$. 
Our strategy is to prove the claim by induction on~$m$.

Suppose first that $m=1$, let $\ell := |\check{\pi}^{\ssup 1}|$ and set
$z := \check{\pi}^{\ssup 1}_{\ell}$.
There are two possibilities: either $\bar{\pi}_0$ belongs to $D_{L, A}$ or not. Focussing first on the case $\bar{\pi}_0\in D_{L,A}$, which in particular implies $|\bar{\pi}|=0$, the strong Markov property yields
\begin{align}
\label{e:fixedclass1}
\E_{\pi_0} \Bigl[I_0^t &\1_{\{ \pi(X_{0,t}) \sim \pi \}} \Bigr]
= \E_{\pi_0} \left[I_0^{T_\ell} I_{T_\ell}^t  \1_{\{ \pi^{\ssup \ell}(X) = \check{\pi}^{\ssup 1}\}}\mathbbm1_{\{T_\ell < t\}}\1_{\{X_{s+T_\ell} \in D_{L,A} \, \forall s\in[0, t - T_\ell]\}} \right] \nonumber\\
&= \E_{\pi_0} \left[I_0^{T_\ell} \1_{\{ \pi^{\ssup \ell}(X) = \check{\pi}^{\ssup 1}\}}\1_{\{ T_\ell < t \}} \left(\E_z \left[ I_{0}^{t-s} \1_{\{\tau_{D^\cc_{L,A}} > t-s \}} \right] \right)_{s=T_\ell} \right].
\end{align}
Since $z \in \Pi_{L,A}$, we may write $\CC_z$ to denote the island in $\mathfrak{C}_{L,A}$ containing $z$.  As $\tau_{D^\cc_{L,A}} = \tau_{\CC^\cc_z}$ $\P_z$-a.s., 
Lemma~\ref{l:bounds_mass} and our hypothesis on~$\gamma$ bound the inner expectation in~\eqref{e:fixedclass1} by $|\CC_z|^{3/2}$. 
Applying Lemma~\ref{l:mass_in}, we further bound~\eqref{e:fixedclass1} by
\begin{equation}\label{e:fixedclass2}
|\CC_z|^{3/2} \E_{\pi_0} \left[ I_0^{T_\ell} \1_{\{ \pi^{\ssup \ell}(X) = \check{\pi}^{\ssup 1} \}}\right]
\le C_{L,A}^{3/2} \left(\frac{q_A}{2d}\right)^{\ell} \texte^{\left(c-\ln_3 L \right) M^{L,\varepsilon}_{\check{\pi}^{\ssup 1}}},
\end{equation}
thus proving~\eqref{e:fixed_class} in the case $m=1$, $\bar{\pi}_0\in D_{L,A}$.

Assume next $x := \bar{\pi}_0 \in D^\cc_{L,A}$.
Abbreviating $\sigma := \inf\{ s> T_\ell \colon\, X_s \notin D_{L,A} \}$, 
write
\begin{align}\label{e:fixedclass3}
\E_{\pi_0} \left[I_0^t \1_{\{ \pi(X_{0,t}) \sim \pi \}} \right]
\le \E_{\pi_0} \left[I_0^{\sigma} \1_{\{ \pi^{\ssup \ell}(X) = \check{\pi}^{\ssup 1}, \sigma < t \}} 
\left( \E_x \left[I_0^{t-s} \1_{\{\pi(X_{0,t-s}) = \bar{\pi}\}} \right] \right)_{s=\sigma} \right].
\end{align}
Let $\ell_* := |\bar{\pi}|$ and note that, 
since $\bar{\pi}_{\ell_*} \notin \Pi_{L,A}$,
by the hypothesis on $\gamma$ we have
\begin{align}\label{e:fixedclass4}
\E_x \left[I_0^{t-s} \1_{\{\pi(X_{0,t-s}) = \bar{\pi}\}} \right]
\le \E_x \left[I_0^{T_{\ell_*}}\1_{\{\pi^{\ssup{\ell_*}}(X) = \bar{\pi}\}} \right]
\le \left(\frac{q_A}{2d}\right)^{\ell_*} \texte^{\left(c-\ln_3 L \right) M^{L,\varepsilon}_{\bar{\pi}}}
\end{align}
by Lemma~\ref{l:mass_in}.
On the other hand, by Lemmas~\ref{l:mass_out} and~\ref{l:mass_in},
\begin{align}\label{e:fixedclass5}
\E_{\pi_0} \left[I_0^{\sigma} \1_{\{ \pi^{\ssup \ell}(X) = \check{\pi}^{\ssup 1}\}} \right]
& = \E_{\pi_0} \left[I_0^{T_\ell} \1_{\{ \pi^{\ssup \ell}(X) = \check{\pi}^{\ssup 1} \}} \right] \E_z \left[I_0^{\tau_{\CC^\cc_z}} \right]\nonumber\\
& \le \left( 1 + \frac{2d \, C_{L,A}}{\gamma  - \lambda_{L,A}(\pi)} \right) \left(\frac{q_A}{2d}\right)^{\ell} \texte^{\left(c-\ln_3 L \right) M^{L,\varepsilon}_{\check{\pi}^{\ssup 1}}}.
\end{align}
Putting together~\eqref{e:fixedclass3}--\eqref{e:fixedclass5}, we finish the proof of the case $m=1$.

By induction, assume now that the statement is proven for some fixed $m \ge 1$,
and let $\pi \in \scrP^{(m+1,n)}$. 
Define $\pi' := \check{\pi}^{\ssup 2} \circ \hat{\pi}^{\ssup 2} \circ \cdots \circ \check{\pi}^{\ssup {m+1}} \circ \hat{\pi}^{\ssup {m+1}} \circ \bar{\pi}$.
Then $\pi' \in \scrP^{(m,n')}$ where $n = |\check{\pi}^{\ssup 1}| + n'$, 
and $k^{L,\varepsilon}_\pi = k^{L,\varepsilon}_{\pi'}+M^{L,\varepsilon}_{\check{\pi}^{\ssup 1}}$.
Setting $\ell := |\check{\pi}^{\ssup 1}|$, $\sigma := \inf\{ s> T_\ell \colon\; X_s \notin D_{L,A}\}$
and $x:=\check{\pi}^{\ssup 2}_0$, we get
\begin{equation}\label{e:fixedclass6}
\E_{\pi_0} \left[ I_0^t \1_{\{\pi(X_{0,t}) \sim \pi \}}\right]
\le \E_{\pi_0} \left[I_0^\sigma \1_{\{\pi^{\ssup \ell}(X) = \check{\pi}^{\ssup 1}, \sigma < t\}} \left(\E_x \left[ I_0^{t-s} \1_{\{\pi(X_{0,t-s}) \sim \pi'\}}\right] \right)_{s=\sigma} \right],
\end{equation}
from which~\eqref{e:fixed_class} follows using the induction hypothesis and~\eqref{e:fixedclass5}.
The case $m=0$ follows from equation~\eqref{e:fixedclass4} after substituting $\bar{\pi}$ by $\pi$ and $t-s$ by $t$.
\end{proof}


\subsection{Proof of Propositions~\ref{prop:massclass}--\ref{prop:massclasslargeR}}
\noindent
We are now ready to present the proofs of the above key propositions.
\begin{proof}[Proof of Proposition~\ref{prop:massclasslargeR}]
The proof is based on Lemma~\ref{l:fixed_class} 
and results from Sections~\ref{ss:properties_components}--\ref{ss:connect_prop}.
Fix $A>0$ and, for $\beta$ as in~\eqref{e:propertiesR_L}, take $\varepsilon \in (0,\beta/2)$
as in Lemma~\ref{l:bound_mediumpoints}.
Let $L_0 \in \N$ be as given by Lemma~\ref{l:fixed_class}
and take $L \ge L_0$ so large that the conclusions of Lemmas~\ref{l:bound_mediumpoints} and \ref{l:size_comps} hold.
Fix $x \in B_L$.
Recall the definition of $\scrP^{(m,n)}$. Noting that the relation $\sim$ is an equivalence relation in $\scrP^{(m,n)}$,
define
\begin{equation}\label{e:propmassclass2}
\widetilde{\scrP}^{(m,n)}_x := \{\text{equivalence classes of the paths in } \scrP(x,\Z^d) \cap \scrP^{(m,n)}  \}.
\end{equation}
We first claim that, for a constant $c_1 \in \N$, a.s.~eventually as $L\to\infty$,
\begin{equation}\label{e:propmassclass3}
|\widetilde{\scrP}^{(m,n)}_x| \le (c_1 R_{L}^{d})^m (2d)^n \qquad \forall \, m,n \in \N_0.
\end{equation}
Indeed,~\eqref{e:propmassclass3} is clear if $m=0$.
To prove it in the case $m \ge 1$,
write, for $\Lambda \subset \Z^d$, $\partial \Lambda := \{z \notin \Lambda \colon\, \dist(z, \Lambda)=1\}$.
By Lemma~\ref{l:size_comps}, there is a $c_0 \in \N$ such that 
\begin{equation}\label{e:propmassclass4}
|\partial \CC| \le 2 d|\CC| \le c_0 R_L^d \;\; \forall\,  \CC \in \mathfrak{C}_{L,A} \;\;  \text{a.s.~eventually as } L\to\infty.
\end{equation}
We then define a map $\Phi\colon\widetilde{\scrP}^{(m,n)}_x \to\scrP_n(x,\Z^d) \times \{1, \ldots, c_0 R_{L}^{d} + 1 \}^m$ as follows:
For each $\Lambda \subset \Z^d$ with $1 \le |\Lambda| \le c_0  R_{L}^{d}$,
fix an injection $f_\Lambda\colon \Lambda \to \{1, \ldots, c_0 R_{L}^{d} \}$.
\RV
Given a path $\pi \in \scrP^{(m,n)} \cap \scrP(x,\Z^d)$, decompose~$\pi$ as in~\eqref{e:concat1}, 
and denote by $\widetilde{\pi}$ the shifted concatenation, as defined after \eqref{def_concat}, 
of $\check\pi^{\ssup 1}, \ldots, \check\pi^{\ssup m}$, $\bar{\pi}$.
\eRV
Note that, for each $2\le k\le m$, the starting point $\check\pi^{\ssup k}_0$ lies in $\partial\CC_k$ for some~$\CC_k\in \mathfrak{C}_{L,A}$,
while $\bar{\pi}_0 = \bar{\pi}_0 \in \partial \overline{\CC} \cup \overline{\CC}$ for some $\overline{\CC} \in \mathfrak{C}_{L,A}$.
Thus we may set
\begin{equation}
\Phi(\pi):= \left\{
\begin{array}{ll}
\bigl(\widetilde \pi,f_{\partial \CC_2}(\check{\pi}^{\ssup 2}_0),\dots,f_{\partial \CC_m}(\check{\pi}^{\ssup{m}}_0), c_0 R_L^d + 1 \bigr)
& \text{ if } \bar{\pi}_0 \in \overline{\CC} \subset D_{L,A}, \\
\bigl(\widetilde \pi,f_{\partial \CC_2}(\check{\pi}^{\ssup 2}_0),\dots,f_{\partial \CC_m}(\check{\pi}^{\ssup{m}}_0), f_{\partial \bar{\CC}}(\bar{\pi}_0) \bigr)
& \text{ if } \bar{\pi}_0 \in \partial \overline{\CC} \subset D_{L,A}^\cc.
\end{array}\right.
\end{equation}
As is readily checked, $\Phi(\pi)$ depends only on the equivalence class of~$\pi$ and, 
when restricted to equivalence classes, $\Phi$ is injective.
Thus~\eqref{e:propmassclass3} follows with e.g.\ $c_1 := 2 c_0$.

Take now $\NN \subset \scrP(x, \Z^d)$ as in the statement, and set
\begin{equation}\label{e:propmassclass1}
\widetilde{\mathcal{N}}^{(m,n)} := \{\text{equivalence classes of paths in } \NN \cap \scrP^{(m,n)}\} \subset \widetilde{\scrP}^{(m,n)}_x.
\end{equation}
\RS Choose for each $\MM \in \widetilde{\NN}^{(m,n)}$ a representative $\pi_\MM \in \MM$ and use \eqref{e:propmassclass3} to write \eRS
\begin{align}\label{e:propmassclass6}
& \E_x \left[ \texte^{\int_0^t \xi(X_s) \textd s} \1_{\{\pi(X_{0,t}) \in \mathcal{N}\}} \right] 
= \sum_{m, n \in \N_0} \RS \sum_{\MM \in \widetilde{\mathcal{N}}^{(m,n)}}\E_x \left[ \texte^{\int_0^t \xi(X_s) \textd s} \1_{\{\pi(X_{0,t}) \sim \pi_\MM \}} \right] \eRS \nonumber\\
& \quad\qquad \le \sum_{m, n \in \N_0} (c_1 R_{L}^{d})^m (2d)^n \sup_{\pi \in \NN^{(m,n)}} \E_x \left[ \texte^{\int_0^t \xi(X_s) \textd s} \1_{\{\pi(X_{0,t}) \sim \pi\}} \right],
\end{align}
where we use the convention $\sup \emptyset = 0$.
For fixed $\pi \in \mathcal{N}^{(m,n)}$,
by \eqref{e:cond_massclass1} we may apply~\eqref{e:fixed_class}, 
Lemma~\ref{l:size_comps} and~\eqref{e:propertiesR_L} to obtain, for all $L$ large enough,
\begin{align}\label{e:propmassclass7}
(c_1 R_{L}^{d})^m (2d)^n \E_x \left[ \texte^{\int_0^t \xi(X_s) \textd s} \1_{\{\pi(X_{0,t}) \sim \pi\}} \right] 
\le \texte^{t \gamma_\pi } \left(R_L^{4d}\texte^{\vartheta_L R_L} \right)^m  q_A^n \texte^{\left(c-\ln_3 L \right) k^{L,\varepsilon}_\pi}.
\end{align}
We now  claim that, for large enough $L$,
\begin{equation}\label{e:propmassclass7.1}
k_\pi^{L,\varepsilon} \ge \left\{(m-1)\vee 1\right\} R_L \{1-(\ln L)^{-\varepsilon} - R_L^{-1}\}.
\end{equation}
Indeed, when $m=0$, $|\supp(\pi)| \ge \max_{1 \le \ell \le |\pi|} |\pi_\ell -x| \ge (n_A+1) R_L$ by assumption.
When $m\ge 2$, $|\supp(\check{\pi}^{\ssup i})| \ge R_L$ for all $2 \le i \le m$.
When $m=1$, there are two cases: if $\supp(\check{\pi}^{\ssup 1}) \cap D^\cc_{L,A} \neq \emptyset$, then $|\supp(\check{\pi}^{\ssup 1})|\ge R_L$
while, if $\supp(\check{\pi}^{\ssup 1}) \subset D_{L,A}$, then $|\supp(\bar{\pi})| \ge R_L$ by Lemma~\ref{l:size_comps}.
Thus~\eqref{e:propmassclass7.1} holds by \eqref{e:defnpikpi}, \eqref{e:def_Mpi} and Lemma~\ref{l:bound_mediumpoints}.
Using~\eqref{e:propmassclass7.1}, \eqref{e:propertiesR_L} and $\vartheta_L \ll \ln_3 L$, 
we may further bound~\eqref{e:propmassclass7} by
\begin{multline}
\label{e:propmassclass8}
\qquad
\left[ R_L^{8d} \texte^{2 \vartheta_L R_L} \texte^{-(2 \vartheta_L+\tfrac12) R_L} \right]^{(m-1)\vee 1} q_A^n \texte^{t \gamma_\pi } \texte^{\left(c+1+2\vartheta_L-\ln_3 L \right) k^{L,\varepsilon}_\pi}
\\
\le \left(\texte^{-\tfrac{R_L}{3}} \right)^{(m-1)\vee 1} q_A^n \texte^{t \gamma_\pi} \texte^{\left(c+1+2 \vartheta_L-\ln_3 L \right) k^{L,\varepsilon}_\pi}.
\end{multline}
\eRV
Inserting this back into~\eqref{e:propmassclass6}, we obtain
\begin{equation}\label{e:intermediatemassclass}
\E_x \left[ \texte^{\int_0^t \xi(X_s) \textd s} \1_{\{\pi(X_{0,t}) \in \mathcal{N}\}} \right]
\le \sup_{\pi \in \mathcal{N}} \exp \left\{ t \gamma_\pi + \left(c+1+2\vartheta_L-\ln_3 L \right) k^{L,\varepsilon}_\pi \right\}.
\end{equation}
Now~\eqref{e:massclasslargeR} follows from~\eqref{e:intermediatemassclass}, \eqref{e:propmassclass7.1}, \eqref{e:propertiesR_L} and $\vartheta_L \ll \ln_3 L$.
\end{proof}

\begin{proof}[Proof of Proposition~\ref{prop:massclass}]
Note that, for large $L$, the assumptions of Proposition~\ref{prop:massclass} 
imply those of Proposition~\ref{prop:massclasslargeR} with $\vartheta_L \equiv 1$,
and thus we may use~\eqref{e:intermediatemassclass}.
We proceed to bound $k^{L,\varepsilon}_{\pi}$ using assumption~\eqref{e:addpropR_L}.
Recall that we take $\beta$ as in \eqref{e:propertiesR_L}
and $\varepsilon \in (0,\beta/2)$ as in Lemma~\ref{l:bound_mediumpoints}. 
Let $C \ge 1$ be as in Lemma~\ref{l:boundhighexceedances}
and, for $\alpha \in (0,1/d)$ as in~\eqref{e:addpropR_L},
take $\delta \in (\alpha d, 1)$ and set $\varepsilon':= \delta - \alpha d >0$.
We assume that $L$ is so large that the conclusions of Lemma~\ref{l:bound_mediumpoints} (with $\beta$,$\varepsilon$ as above) and Lemma~\ref{l:boundhighexceedances} (with $\delta$ as above) are in place.

Note that, by Lemma~\ref{l:size_comps},
there exists a constant $c_2 \in (0, \infty)$ such that
\begin{equation}\label{e:propmassclass10}
k^{L,\varepsilon}_\pi \ge M^{L,\varepsilon}_{\pi} -  |\supp(\pi) \cap \Pi_{L, A}| c_2 R^d_L.
\end{equation}
By our assumptions on $\NN$, 
$|\supp(\pi)| \ge \ln L \ge C (\ln L)^\delta$ for large $L$.
By Lemma~\ref{l:boundhighexceedances},
\begin{equation}\label{e:propmassclass10.5}
 |\supp(\pi) \cap \Pi_{L, A}| \le \frac{|\supp(\pi)|}{(\ln L)^\delta} \le \frac{|\supp(\pi)|}{R^d_L (\ln L)^{\varepsilon'}} 
\end{equation}
by~\eqref{e:addpropR_L}.
By Lemma~\ref{l:bound_mediumpoints}, 
$M^{L,\varepsilon}_{\pi} +1 \ge |\supp(\pi)|\{1-(\ln L)^{-\varepsilon}\}$.
Thus
\begin{equation}\label{e:propmassclass11}
k^{L,\varepsilon}_\pi \ge |\supp(\pi)| \left\{1 - (\ln L)^{-1} - (\ln L)^{-\varepsilon} - c_2 (\ln L)^{-\varepsilon'} \right\}.
\end{equation} 
Now, by Lemma~\ref{l:size_comps} and \eqref{e:cond_massclass2},
$|\supp(\pi)| \ge |z_\pi -x| - n_A R_L$;
this  in conjunction
with $|\supp(\pi)|\ge \ln L$ implies
\begin{equation}\label{e:propmassclass11.5}
|\supp(\pi)| \ge |z_\pi - x| \left(1 - \frac{n_A R_L}{\ln L} \right).
\end{equation}
From \twoeqref{e:propmassclass11}{e:propmassclass11.5} and~\eqref{e:addpropR_L} we obtain
$\left(c+3-\ln_3 L \right) k^{L,\varepsilon}_\pi \le \left(c+4-\ln_3 (d L) \right) |z_\pi - x|$
for large enough $L$, which together with~\eqref{e:intermediatemassclass} (with $\vartheta_L \equiv 1$) implies~\eqref{e:mass_class}.
\end{proof}

\section{Analysis of the cost functional}
\label{s:cost}\nopagebreak\noindent
In this section, we identify the order statistics of $\Psi_t$
and give the proofs of Theorem~\ref{thm:aging_locus} and Propositions~\ref{prop:goodevent_forproof}--\ref{prop:stabilitygap}.
Motivated by Proposition~\ref{prop:massclass} and Lemma~\ref{l:comparisoncapitalsislands}, 
we define the following generalization of the cost functional:
For $t > 0$ and $c \in \R$, let
\begin{equation}\label{e:defgenPsi}
\Psi_{t,c}(z) := \lambda^{\scrC}(z) - \left(\ln_3^+ |z| - c \right)^+ \frac{|z|}{t}, \;\;\; z \in \scrC,
\end{equation}
where $\lambda^\scrC(z)$ is as in~\eqref{e:deflambdascrCz}.
Arguing as for~\eqref{e:welldefined}, we can see that, almost surely,
\begin{equation}\label{e:welldefgenPsi}
|\{z \in \scrC \colon \Psi_{t,c}(z) > \eta \}| < \infty \quad \text{ for all } t > 0, \eta \in \R,
\end{equation}
and thus we may define $\Psi^{\ssup k}_{t,c}$ and $Z^{\ssup k}_{t,c}$ analogously to the corresponding objects for $\Psi_t$.

\RV
Let us now identify the scale $a_t$ in Theorem~\ref{thm:locus}.
Noting that $r_t$ is strictly increasing for large enough $t$,
we may take $t \mapsto L^*_t \in \N$ such that $L^*_{r_t} = L_t$.
Set $N_t := \lfloor \tfrac12  \sqrt{ \rho t/ d }\rfloor$, $\widehat{N}_t := N_{L^*_t}$
and define $a_t$ as the smallest positive number satisfying
\begin{equation}\label{e:defat}
\textnormal{Prob} \left( \lambda^{\ssup 1}_{B_{\widehat{N}_t}} > a_t \right) = \left(\frac{(\ln t) (\ln_2 t) \ln_3 t}{t}\right)^{d/2}.
\end{equation}
Such an $a_t$ exists  (for $t$ large enough) since the principal Dirichlet eigenvalue of $H$ in $B_{\widehat{N}_t}$ is continuously distributed.
Moreover, since $\widehat{N}_t$ is non-decreasing and the right-hand side of \eqref{e:defat} is eventually non-increasing, 
by \eqref{e:monot_princev} we can take $a_t$ non-decreasing as well.

Note that, as $t \to \infty$,
\begin{equation}\label{e:growthL*t}
L^*_t \sim \frac d \rho t (\ln t) (\ln_2 t) \ln_3 t \quad \text{ and } \quad 2 \widehat{N}_t \sim \sqrt{t (\ln t) (\ln_2 t) \ln_3 t}.
\end{equation}
\eRV
An important result of \cite{BK16} (Theorem 2.4 therein) is that, for any $\theta \in \R$,
\begin{equation}\label{e:maxorderlambda}
\lim_{t \to \infty} \frac{t^d}{(2 \widehat{N}_t)^d} \textnormal{Prob}\left( \lambda^{\ssup 1}_{B_{\widehat{N}_t}} > a_t + \theta d_t \right) = \texte^{-\theta},
\end{equation}
where $d_t$ is as in~\eqref{e:def_fundam_scales}.
A strengthened version of this statement 
\RV
(more precisely, \eqref{e:convmeasure} with $\widehat{Y}_t(0)$ as in \eqref{e:defhatYt} below) 
\eRV
will allow us to identify the order statistics of $\Psi_{t,c}$.
Together with Theorem~2.3 and Lemma~6.8 in \cite{BK16}, 
\eqref{e:maxorderlambda} implies that $a_t = \widehat{a}_t - \chi + o(1)$.
In particular, $a_t = (\rho+o(1)) \ln_2 t$.

For $0<a\le b <\infty$, $c \in \R$ and $k \in \N$, we define the events
\begin{equation}\label{e:defgoodevents}
\begin{aligned}
\EE^{\ssup k}_{t,a,b,c} := & \left\{ \min_{i=1,\ldots, k} \left(\Psi^{\ssup i}_{at,c} - \Psi^{\ssup {i+1}}_{at,c}\right) \wedge \left(\Psi^{\ssup i}_{bt,c} - \Psi^{\ssup {i+1}}_{bt,c}\right)> d_t e_t \right\} \\
& \;\;\; \cap \bigcap_{s \in [at,bt]}  \left\{ a_{r_t}+ d_t g_t > \Psi^{\ssup 1}_{s,c} \ge \Psi^{\ssup k}_{s,c} > a_{r_t} - d_t g_t \right\}\\  
& \;\;\; \cap \bigcap_{s \in [at,bt]} \left\{r_t f_t < \min_{1\le i \le k } |Z^{\ssup i}_{s,c}|  \le \max_{1\le i \le k } |Z^{\ssup i}_{s,c}| < r_t g_t  \right\}.
\end{aligned}
\end{equation}
When $c=0$ and/or $k=1$, we omit them in the notation.

For $a \in (0,\infty)$, 
let $\CC([a,\infty),\R^n)$, resp.\ $\DD([a,\infty),\R^n)$,
denote the set of continuous, resp.\ c\`adl\`ag, functions from $[a,\infty)$ to $\R^n$, 
both equipped with the Skorohod topology (i.e., the $J_1$ topology).
The following result is the main objective of this section.
\begin{proposition}\label{prop:PPPconv}
For all $c \in \R$, $k \in \N$ and  $a > 0$, the stochastic process
\begin{equation*}
\left(\left( \frac{\Psi^{\ssup 1}_{\theta t,c} - a_{r_t}}{d_{r_t}}, \frac{\lambda^\scrC(Z^{\ssup 1}_{\theta t, c}) - a_{r_t}}{d_{r_t}}, \frac{Z^{\ssup 1}_{\theta t,c}}{r_t} \right), \ldots, \left( \frac{\Psi^{\ssup k}_{\theta t,c } -  a_{r_t}}{d_{r_t}}, \frac{\lambda^\scrC(Z^{\ssup k}_{\theta t, c}) - a_{r_t}}{d_{r_t}}, \frac{Z^{\ssup k}_{\theta t,c}}{r_t} \right) \right)_{\theta \in [a,\infty)}
\end{equation*}
belongs a.s.\ to $(\CC([a, \infty), \R) \times \DD([a, \infty), \R) \times \DD([a,\infty), \R^d))^k$ and converges in distribution as $t \to \infty$ with respect to the Skorohod topology of $\DD \left([a, \infty), (\R \times \R \times \R^d)^k \right)$ to the process
\begin{equation}\label{e:limitPPP}
\left(\left(\overline{\Psi}^{\ssup 1}_\theta, \overline{\Lambda}^{\ssup 1}_\theta, \overline{Z}^{\ssup 1}_\theta \right), \ldots, \left( \overline{\Psi}^{\ssup k}_\theta, \overline{\Lambda}^{\ssup k}_\theta, \overline{Z}^{\ssup k}_\theta  \right) \right)_{\theta \in [a,\infty)}
\end{equation}
where $\overline{\Psi}^{\ssup i}_\theta := \overline{\Lambda}^{\ssup i}_\theta - \tfrac{1}{\theta} |\overline{Z}^{\ssup i}_\theta|$
and $(\overline{\Lambda}^{\ssup i}_\theta, \overline{Z}^{\ssup i}_\theta)_{i=1}^k$ are the $k$ first ordered maximizers of the functional
$\psi_\theta(\lambda, z) = \lambda - \frac{|z|}{\theta}$
over the points $(\lambda,z)$ of a Poisson point process on $\R \times \R^d$ with intensity $\texte^{-\lambda} \textd \lambda \otimes \textd z$,
chosen in such a way that $\overline{\Psi}^{\ssup i}_\theta$ is continuous and $\overline{\Lambda}^{\ssup i}_\theta$,  $\overline{Z}^{\ssup i}_\theta$ c\`adl\`ag.
In particular, the probability of the event $\EE^{\ssup k}_{t,a,b,c}$ defined in~\eqref{e:defgoodevents} converges to $1$ as $t \to \infty$
and, for any fixed $\theta\in (0,\infty)$, the random vector
\begin{equation*}
\left( \frac{\Psi^{\ssup 1}_{\theta t,c} - a_{r_t}}{d_{r_t}}, \frac{Z^{\ssup 1}_{\theta t,c}}{r_t} \right), \ldots, \left( \frac{\Psi^{\ssup k}_{\theta t,c } -  a_{r_t}}{d_{r_t}}, \frac{Z^{\ssup k}_{\theta t,c}}{r_t} \right)
\end{equation*}
converges in law  to a random vector in $(\R \times \R^d)^k$ with distribution given by
\begin{equation}\label{e:limitingdensity1ddist}
\1\{\psi_1 > \cdots > \psi_k \}\texte^{- \left( \frac1\theta|z_1|+\cdots+ \frac1\theta|z_k| + \psi_1 + \cdots + \psi_k + (2 \theta)^d \texte^{-\psi_k}\right)} \prod_{i=1}^k \textd \psi_i \otimes \textd z_i.
\end{equation}
\end{proposition}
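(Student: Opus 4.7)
The plan is to prove the proposition by encoding the capitals and their eigenvalues into a rescaled point process, establishing its Poisson limit via the coarse-graining results of Lemmas~\ref{l:properties_opt_comps} and~\ref{l:comparisoncapitalsislands} together with~\eqref{e:maxorderlambda}, and then expressing the top $k$ order statistics of $\Psi_{t,c}$ as a continuous functional of this process. Functional convergence will follow from the continuous mapping theorem in Skorohod's $J_1$ topology, and both the finite-dimensional density~\eqref{e:limitingdensity1ddist} and the asymptotics $\Prob(\EE^{\ssup k}_{t,a,b,c}) \to 1$ will be read off from the resulting Poisson structure.

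First I would introduce the rescaled marked point process
\begin{equation*}
\eta_t := \sum_{z \in \scrC \cap B_{L_t^*}} \delta_{\bigl((\lambda^\scrC(z) - a_{r_t})/d_{r_t},\, z/r_t\bigr)}
\end{equation*}
on $\R \times \R^d$ and show that $\eta_t$ converges in distribution, for the vague topology on Radon measures, to a Poisson point process $\eta$ of intensity $\texte^{-\lambda}\textd\lambda \otimes \textd z$. To this end, I would fix a compact rectangle $K = [-M,M]\times[-M,M]^d$, use Lemma~\ref{l:comparisoncapitalsislands} to replace those capitals whose rescaling lies in $K$ by the points $z_\CC$ of relevant islands $\CC \in \mathfrak{C}^\delta_{L_t^*,A}$ (the gap in~\eqref{e:compevcapisl} being negligible on the scale $d_{r_t}$), and then apply Lemma~\ref{l:properties_opt_comps} with $L = L^*_t$ and $N_L = \widehat{N}_t$ to compare the island eigenvalues with the Dirichlet principal eigenvalues of $H$ in disjoint boxes of side $2\widehat{N}_t+1$ arranged on a lattice. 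Independence across these boxes together with~\eqref{e:maxorderlambda} identifies the Poisson intensity via convergence of Laplace functionals, with the spatial factor $\textd z$ arising from the ratio $(2\widehat{N}_t)^d/r_t^d$ of box-size to macroscopic-scale.

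Next I would identify $\psi_\theta$ as the scaling limit of the cost functional. On the high-probability event that all relevant capitals satisfy $r_t f_t \le |z| \le r_t g_t$, one has $\ln_3^+|z| = \ln_3 t + o(1)$ and $r_{\theta t}/r_t \to 1$ uniformly for $\theta$ in compact subsets of $(0,\infty)$, and so
\begin{equation*}
\frac{\Psi_{\theta t,c}(z) - a_{r_t}}{d_{r_t}} = \frac{\lambda^\scrC(z)-a_{r_t}}{d_{r_t}} - \frac{|z|/r_t}{\theta} + o(1) = \psi_\theta\bigl(\tilde\lambda,\, z/r_t\bigr) + o(1),
\end{equation*}
with $\tilde\lambda := (\lambda^\scrC(z) - a_{r_t})/d_{r_t}$, uniformly on the support of $\eta_t$ restricted to compacts. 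Hence, up to vanishing errors, the quantity to be tracked is a deterministic functional $\Phi_\theta^{\ssup k}(\eta_t)$ that returns the top $k$ maximizers of $\psi_\theta$ over the points of the input configuration, together with their $\tilde\lambda$- and $z$-coordinates. The crux is then to verify that the map $\eta \mapsto (\Phi_\theta^{\ssup k}(\eta))_{\theta \in [a,\infty)}$ is almost surely continuous, from locally finite configurations to $\DD([a,\infty), (\R \times \R \times \R^d)^k)$, at realizations of the limiting PPP. This reduces to showing, almost surely, that (i) the top $k$ maximizers of $\psi_\theta$ are unique outside a countable set of $\theta$; (ii) the location process $\theta \mapsto \overline Z^{\ssup i}_\theta$ admits a c\`adl\`ag modification whose jump times do not accumulate in compact intervals of $(0,\infty)$; and (iii) $\psi_\theta(\lambda,z) \to -\infty$ as $|z|\to\infty$ fast enough, uniformly for $\theta \in [a,b]$, so that the argmax depends only on the restriction of $\eta$ to a large enough compact set. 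Parts (i)--(ii) are standard consequences of the Poisson structure and the $\texte^{-\lambda}\textd\lambda$ tail, while the uniform control needed in (iii), anchored by the weakest penalty $|z|/a$ at the left endpoint, is the main technical obstacle.

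Once the continuous mapping theorem yields the desired functional convergence, the density~\eqref{e:limitingdensity1ddist} at fixed $\theta$ is obtained by a direct Mecke computation on the PPP: the ordered top $k$ values $\psi_1 > \cdots > \psi_k$ together with their locations $(z_1,\ldots,z_k)$ have joint density $\prod_{i=1}^k \texte^{-\lambda_i}\textd\lambda_i\,\textd z_i$ (with $\lambda_i = \psi_i + |z_i|/\theta$), weighted by the void probability
\begin{equation*}
\exp\Bigl\{-\int_{\R \times \R^d}\1\{\lambda-|z|/\theta > \psi_k\}\texte^{-\lambda}\textd\lambda\,\textd z\Bigr\} = \exp\bigl\{-(2\theta)^d \texte^{-\psi_k}\bigr\};
\end{equation*}
the change of variables $\lambda_i \mapsto \psi_i$ then produces exactly~\eqref{e:limitingdensity1ddist}. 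Finally, $\Prob(\EE^{\ssup k}_{t,a,b,c}) \to 1$ follows from the a.s.\ boundedness, the positive spectral gaps and the non-accumulation of jumps of the top $k$ maximizers of $\psi_\theta$ over compact intervals of $\theta$, all of which are inherited from the Poisson structure of the limit.
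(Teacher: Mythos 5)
Your overall architecture matches the paper's: convert the cost functional into an argmax problem over a rescaled point process, prove Poisson convergence using Lemmas~\ref{l:properties_opt_comps} and~\ref{l:comparisoncapitalsislands} together with~\eqref{e:maxorderlambda}, apply a continuous-mapping argument in the Skorohod topology, and read the density~\eqref{e:limitingdensity1ddist} off the Poisson structure via the void-probability formula. However, there is a genuine gap precisely at the point you flag as ``the main technical obstacle,'' your item (iii), and you do not resolve it.

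The issue is that vague convergence of $\eta_t$ on the ordinary locally compact space $\R\times\R^d$, where relatively compact sets are bounded in \emph{both} coordinates, gives no control over points with $|z|/r_t$ large. Such points could in principle carry large $\lambda$ values and contribute to $\sup \psi_\theta$, so the argmax functional is not a continuous functional of $\eta_t$ in this topology, and the continuous-mapping theorem cannot be invoked without an additional tightness statement showing that, uniformly in $t$, the top $k$ maximizers of $\psi_\theta$ over $\supp(\eta_t)$ lie in a fixed compact set with high probability. The paper does not run into this because it works on the compactified space $\mathfrak{E}$ of Appendix~\ref{s:compactification}, chosen so that each $\HH^\theta_\eta = \{\psi_\theta > \eta\}$ is \emph{relatively compact}; vague convergence on $\mathfrak{E}$ then directly yields convergence of the counts $\PP_t(\HH^\theta_\eta)$, which is exactly what the argmax needs. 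Establishing this stronger convergence requires the tail estimate~\eqref{e:PPPconvcompact_cond2}, proved in Appendix~\ref{s:tailestimate} via the Chernoff-type bound of Lemma~\ref{l:bdddevpotential} and the change-of-measure argument in Lemma~\ref{l:controltaillocev}; this is the substance behind your ``weakest penalty $|z|/a$'' remark. Without the compactification and the tail estimate, your step from Poisson convergence to functional convergence of the order statistics does not go through. A secondary and related gap is that your $t$-dependent cost correction $\ln_3^+|z| = \ln_3 t + o(1)$ must be promoted to a statement about a family of deformation maps $\vartheta_t$ converging to the identity in a way compatible with the Skorohod topology; the paper handles this carefully in Lemma~\ref{l:continuityPhiskorohod} (conditions~\eqref{e:propvarthetat1}--\eqref{e:propvarthetat2}) and the time-change argument in Appendix~\ref{s:propertiescost}, which again relies on the compactification to reduce to finitely many points.
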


From this we immediately get:
\begin{proof}[Proof of Proposition~\ref{prop:goodevent_forproof}]
\RV
Directly follows from Proposition~\ref{prop:PPPconv}, \eqref{e:def_fundam_scales} and $a_t \sim \rho \ln_2 t$.
\eRV
\end{proof}

\RV
With the help of the results from Section~\ref{s:preparation}, we also obtain:
\begin{proof}[Proof of Proposition~\ref{prop:seprelcap}]
In light of Proposition~\ref{prop:PPPconv}, Lemma~\ref{l:comparisoncapitalsislands}, Lemma~\ref{l:properties_opt_comps}(iii),
and Lemmas~\ref{l:maxpotential}--\ref{l:size_comps},
the result follows by setting
\begin{equation*}
\scrC_t := \left\{ z \in \scrC \colon\, B_{\varrho_z}(z) \subset B_{L_t}, \, \lambda^{\scrC}(z) > a_{r_t} - d_t g_t  \right\}
\end{equation*}
and noting that $\lambda^{\scrC}(z) \ge \Psi_s(z)$, $a_{r_t} = \widehat{a}_{L_t} - \chi + o(1)$ and $d_t g_t = o(1)$.
\end{proof}
\eRV

Note that the part of Theorem~\ref{thm:locus} concerning $(Z_t)_{t > 0}$ already follows from Proposition~\ref{prop:PPPconv}.
Another useful consequence is the following comparison between $\Psi_{t,c}$ and $\Psi_{t}$.
\begin{lemma}
\label{l:bound_Psi_tc}
For any $c \in \R$ and $0<a \le b < \infty$, on \RS $\EE^{\ssup 2}_{t,a,b} \cap \EE^{\ssup 2}_{t,a,b,c}$ \eRS
the following holds:
\begin{equation}
\sup_{s \in [at, bt]} \Bigl|\,\,\sup_{z \neq Z_s} \Psi_{s,c}(z) - \Psi^{\ssup 2}_{s} \Bigr|   \le o(d_t b_t \epsilon_t), \label{e:bound_Psi_tc1}
\end{equation}
and
\begin{equation}
\sup_{s \in [at, bt]} \left|\Psi_{s,c}(Z_{s}) - \Psi^{\ssup 1}_{s} \right|  \le o(d_t b_t \epsilon_t). \label{e:bound_Psi_tc2}
\end{equation}
\end{lemma}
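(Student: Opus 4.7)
The plan is to use that $\Psi_s$ and $\Psi_{s,c}$ differ only by a small linear shift in $|z|$ on the range where the dominant points live. Indeed, from \eqref{e:defPsi} and \eqref{e:defgenPsi}, for $|z|$ sufficiently large depending only on $c$ (so that $\ln_3^+|z|\ge c$),
\begin{equation*}
\Psi_s(z) - \Psi_{s,c}(z) = -c |z|/s.
\end{equation*}
On $\EE^{\ssup 2}_{t,a,b} \cap \EE^{\ssup 2}_{t,a,b,c}$, for large enough $t$ the four points $Z_s = Z^{\ssup 1}_s$, $Z^{\ssup 2}_s$, $Z^{\ssup 1}_{s,c}$, $Z^{\ssup 2}_{s,c}$ all lie in $(r_t f_t, r_t g_t)$ for every $s \in [at, bt]$. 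Since $s \geq at$ and $|z| \leq r_t g_t$ throughout this range, the displayed difference is bounded in absolute value by $|c| r_t g_t / (at) = (|c|/a)\, d_t g_t / \ln_3 t$, which by $g_t / (\epsilon_t \ln_3 t) \ll b_t$ from \eqref{e:relation_scales} is $o(d_t b_t \epsilon_t)$.

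Given this, \eqref{e:bound_Psi_tc2} follows immediately: since $|Z_s|$ lies in the good range,
\begin{equation*}
\Psi_{s,c}(Z_s) = \Psi_s(Z_s) + o(d_t b_t \epsilon_t) = \Psi^{\ssup 1}_s + o(d_t b_t \epsilon_t)
\end{equation*}
uniformly in $s \in [at, bt]$. For \eqref{e:bound_Psi_tc1}, one proves matching upper and lower bounds. The lower bound is direct: $\sup_{z \neq Z_s} \Psi_{s,c}(z) \geq \Psi_{s,c}(Z^{\ssup 2}_s) = \Psi^{\ssup 2}_s + o(d_t b_t \epsilon_t)$, using $Z^{\ssup 2}_s \neq Z_s$ together with $|Z^{\ssup 2}_s|$ in the good range. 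For the upper bound, split according to whether $Z^{\ssup 1}_{s,c} = Z_s$: in this case the sup equals $\Psi^{\ssup 2}_{s,c} = \Psi_{s,c}(Z^{\ssup 2}_{s,c})$ and, since $Z^{\ssup 2}_{s,c} \neq Z^{\ssup 1}_s$ with $|Z^{\ssup 2}_{s,c}|$ in the good range, the comparison gives $\Psi_s(Z^{\ssup 2}_{s,c}) + o(d_t b_t \epsilon_t) \leq \Psi^{\ssup 2}_s + o(d_t b_t \epsilon_t)$; otherwise the sup equals $\Psi^{\ssup 1}_{s,c} = \Psi_{s,c}(Z^{\ssup 1}_{s,c})$ and, since now $Z^{\ssup 1}_{s,c} \neq Z^{\ssup 1}_s$ with $|Z^{\ssup 1}_{s,c}|$ in the good range, the same comparison again yields the bound.

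The argument is essentially a routine case analysis; no step presents a real obstacle. The one nontrivial ingredient is the scale relation $g_t \ll b_t \epsilon_t \ln_3 t$ (rearranged from \eqref{e:relation_scales}), which is precisely what makes the penalty discrepancy negligible on the relevant region. Note that the endpoint gap condition built into $\EE^{\ssup 2}_{t,a,b}$ and $\EE^{\ssup 2}_{t,a,b,c}$ is not actually needed here: what is used is only the uniform control $|Z^{\ssup i}_{s,\cdot}| \in (r_t f_t, r_t g_t)$ for $i=1,2$ and $s\in[at,bt]$, which forces the top-two maximizers of both functionals to inhabit a common range where $\Psi_s$ and $\Psi_{s,c}$ are uniformly close.
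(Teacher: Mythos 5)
Your argument is correct and is essentially the same as the paper's: both identify that on the region $r_t f_t<|z|<r_t g_t$ one has $\Psi_{s,c}(z)-\Psi_s(z)=c|z|/s$ (the positive part is eventually inactive there), then use that the top-two maximizers of both functionals lie in that region on the good event, run the same two-sided comparison (the paper folds your explicit case split on $Z^{\ssup1}_{s,c}=Z_s$ into the one-line remark that the sup is attained at $Z^{\ssup1}_{s,c}$ or $Z^{\ssup2}_{s,c}$), and close with the scale relation $g_t \ll b_t\epsilon_t\ln_3 t$. Your aside that the gap conditions in $\EE^{\ssup2}_{t,a,b}$ and $\EE^{\ssup2}_{t,a,b,c}$ are not actually invoked — only the location bounds $|Z^{\ssup i}_{s,\cdot}|\in(r_tf_t,r_tg_t)$ — is a correct reading of what the proof uses, and applies equally to the paper's version.
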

\begin{proof}
The inner supremum in~\eqref{e:bound_Psi_tc1} is attained at $Z^{\ssup 1}_{s,c}$ if $Z^{\ssup 1}_{s,c} \neq Z_s$,
or $Z^{\ssup 2}_{s,c}$ if $Z^{\ssup 1}_{s,c} = Z_s$.
\RS Since $r_t f_t < |Z^{\ssup 1}_{s,c}| \vee |Z^{\ssup 2}_{s,c}| \vee |Z^{\ssup 2}_s| < r_t g_t$ on $\EE^{\ssup 2}_{t,a,b} \cap \EE^{\ssup 2}_{t,a,b,c}$, 
we can write
\begin{equation}\label{e:prbdPsitc1}
\begin{aligned}
-|c| \frac{r_t g_t}{a t} \le \Psi_{s,c}(Z^{\ssup 2}_s) - \Psi^{\ssup 2}_s 
& \le \sup_{z \neq Z_{s}} \Psi_{s,c}(z) - \Psi^{\ssup 2}_s \\
& \le \sup_{ r_t f_t < |z| < r_t g_t} \left\{ \Psi_{s,c}(z) - \Psi_s(z) \right\} < |c| \frac{r_t g_t}{ a t},
\end{aligned}
\end{equation}
\eRS so \eqref{e:bound_Psi_tc1} follows by by \eqref{e:def_fundam_scales} and \eqref{e:relation_scales}. 
The bound \eqref{e:bound_Psi_tc2} is obtained analogously.
\end{proof}

The proof of Proposition~\ref{prop:PPPconv} is based on a point process approach, which we describe next.
This approach will also allow us to prove Proposition~\ref{prop:stabilitygap} and Theorem~\ref{thm:aging_locus}.


\subsection{A point process approach}
\label{ss:PPapproach}\noindent
The key to the proofs of Proposition~\ref{prop:PPPconv} and Theorem~\ref{thm:aging_locus} is the convergence of the set
$\{(\lambda^{\scrC}(z), z)\colon z \in \scrC\}$ after suitable rescaling to (the support of) a Poisson point process.
We follow the setup and notation of \cite{R87} for point processes;
some arguments are for brevity relegated to the appendices.

Since we will need to apply the stated Poisson convergence to infer convergence of certain non-local minimizing functions, we will
need to compactify some sets of $\R \times \R^d$ as follows.
Embed $\R \times \R^d$ in a locally compact 
Polish space $\mathfrak{E}$ such that the set
\begin{equation}\label{e:defHH}
\HH^\theta_\eta := \left\{ (\lambda, z) \in \R\times \R^d \colon\, \lambda > \frac{|z|}{\theta} + \eta \right\} \subset \mathfrak{E}
\end{equation}
is relatively compact for any $\eta \in \R$ and $\theta \in (0,\infty)$ and,
for each  compact $K \subset \mathfrak{E}$, 
there exist $\theta >0, \eta \in \R$ such that $K \cap (\R \times \R^d) \subset \HH^\theta_\eta$.
A suitable choice of $\mathfrak{E}$ is given in Appendix~\ref{s:compactification}.
Note that a Poisson point process in $\R \times \R^d$ with intensity $\texte^{-\lambda} \textd\lambda \otimes \textd z$ 
can be extended to $\mathfrak{E}$ as the latter measure is a Radon measure on $\mathfrak{E}$. 
Denote by $\scrMp = \scrMp(\mathfrak{E})$ the set of point measures (i.e., $\N_0$-valued Radon measures) on
$\mathfrak{E}$. We equip $\scrMp$ 
with the topology of vague convergence,
and let $\supp(\PP)$ denote the support of $\PP \in \scrMp$.

Let us denote 
\begin{equation}\label{e:defPPt}
\PP_t := \sum_{z \in \scrC} \delta_{\left( Y_t(z),\,\, z/t \right)}
\quad
\text{where}
\quad 
Y_t(z) := \frac{\lambda^{\scrC}(z) - a_t}{d_t}.
\end{equation}
Our convergence result for $\PP_t$ reads as follows.
\begin{proposition}\label{prop:PPPconvNNt}
The point process $\PP_t$ defined in~\eqref{e:defPPt} belongs almost surely to $\scrMp$, 
and converges in distribution as $t \to \infty$ with respect to the vague topology of $\scrMp$
to a Poisson point process supported in $\R \times \R^d \subset \mathfrak{E}$ 
with intensity measure $\texte^{-\lambda} \textd\lambda \otimes \textd z$.
\end{proposition}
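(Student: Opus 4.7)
The plan is to invoke Kallenberg's convergence theorem for simple point processes (see Proposition~3.22 in \cite{R87}): since the limiting Poisson point process has Radon intensity $\mu := \texte^{-\lambda}\textd\lambda\otimes\textd z$ on $\mathfrak{E}$ and is simple, it suffices to verify, for a convergence-determining family of relatively compact Borel sets $F \subset \mathfrak{E}$ with $\mu(\partial F) = 0$, that $\E[\PP_t(F)] \to \mu(F)$ and $\Prob(\PP_t(F) = 0) \to \texte^{-\mu(F)}$. By construction of $\mathfrak{E}$, every such $F$ lies inside some $\HH^\theta_\eta$; on the latter set, local finiteness of $\PP_t$ follows as in the proof of Proposition~\ref{prop:welldefined} from the almost sure eventual bound $\lambda^\scrC(z)\le\xi(z)\le 2\rho\ln_2|z|$ (Lemma~\ref{l:maxpotential}), which in particular yields $\PP_t \in \scrMp$.

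The strategy is to reduce to an independent-box setting via the coarse-graining scheme of \cite{BK16}. It suffices to take $F$ of the form $F = \{(\lambda,z)\colon \lambda > \eta + \tfrac{1}{\theta}|z|,\, z/t \in K\}$ for $K \subset \R^d$ bounded with Lebesgue-null boundary. Fix $L_t = L^*_t$, which by \eqref{e:growthL*t} is $\gg t$ and hence forces every $z \in \scrC$ with $z/t \in K$ into $B_{L_t}$ for all large $t$, and fix $\mathsf{A} > 0$ large and $\delta > 0$ small enough that the hypotheses of Lemmas~\ref{l:properties_opt_comps} and~\ref{l:comparisoncapitalsislands} are simultaneously met. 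By Lemma~\ref{l:comparisoncapitalsislands}, on an event of probability tending to one the map $\CC \mapsto z_\CC$ gives a bijection between $\mathfrak{C}^\delta_{L_t,\mathsf{A}}$ and capitals $z \in \scrC$ satisfying $B_{\varrho_z}(z)\subset B_{L_t}$ and $\lambda^\scrC(z) > \widehat{a}_{L_t}-\chi-\delta$, preserving eigenvalues up to $\texte^{-c_1(\ln L_t)^{\kappa/2}} = o(d_t)$. Lemma~\ref{l:properties_opt_comps}(ii)--(iv) then assigns each such island to a unique macroscopic box $B_{\widehat{N}_t}(y)$ with $y \in (2\widehat{N}_t+1)\Z^d$ (using $N_{L^*_t}=\widehat{N}_t$), matching eigenvalues within an error $(\eta_\mathsf{A})^{R_{L_t}} = o(d_t)$.

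This reduces $\PP_t(F)$, up to $o(1)$ in probability, to the variable
\begin{equation*}
\widetilde{N}_t(F) := \bigl|\bigl\{y \in (2\widehat{N}_t+1)\Z^d \cap tK\colon\, \lambda^{\ssup 1}_{B_{\widehat{N}_t}(y)} > a_t + d_t(\eta + |y|/(\theta t))\bigr\}\bigr|.
\end{equation*}
By translation invariance of the law of $\xi$ and a uniform version of \eqref{e:maxorderlambda},
\begin{equation*}
\Prob\bigl(\lambda^{\ssup 1}_{B_{\widehat{N}_t}(y)} > a_t + d_t(\eta + |y|/(\theta t))\bigr) = (1+o(1)) \bigl(2\widehat{N}_t/t\bigr)^d \texte^{-\eta-|y|/\theta}
\end{equation*}
uniformly for $y \in tK$. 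Summing over $y \in (2\widehat{N}_t+1)\Z^d \cap tK$, the factor $(2\widehat{N}_t/t)^d$ is precisely the $d$-dimensional volume element of the grid rescaled by $1/t$, and (since $\widehat{N}_t/t \to 0$) the resulting expression is a Riemann sum converging to $\int_K \texte^{-\eta-|z|/\theta}\,\textd z = \mu(F)$. Since the boxes $B_{\widehat{N}_t}(y)$ are pairwise disjoint, the underlying events are independent with individually vanishing probability $p_y$, whence $\Prob(\widetilde{N}_t(F)=0) = \prod_y (1-p_y) = \exp\bigl(-\sum_y p_y + o(1)\bigr) \to \texte^{-\mu(F)}$, completing both conditions of Kallenberg's criterion.

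The main obstacle is simultaneously controlling the three approximation layers (capitals vs.\ islands via Lemma~\ref{l:comparisoncapitalsislands}; islands vs.\ pairwise disjoint boxes via Lemma~\ref{l:properties_opt_comps}(iii)--(iv); Riemann-sum convergence over the macroscopic grid) so that each eigenvalue error is $o(d_t)$ and therefore negligible after division by $d_t$ in the definition of $Y_t$. This requires $R_{L_t}$ to be placed carefully within the window permitted by \eqref{e:propertiesR_L}--\eqref{e:addpropR_L}; fortunately the two lemmas are calibrated exactly for this purpose. A secondary, routine point is extending convergence from the specific sets $F$ above to a full convergence-determining class for the vague topology on $\scrMp(\mathfrak{E})$, which follows from a standard monotone-class or Laplace-functional argument once the bounded-set estimates above are in place.
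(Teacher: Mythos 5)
Your proposal follows the same overall route as the paper: reduce to the i.i.d.\ setting of disjoint macroscopic boxes via the capitals--islands--boxes comparison (Lemmas~\ref{l:comparisoncapitalsislands} and~\ref{l:properties_opt_comps}), then appeal to a classical Poisson limit. The paper, however, first proves convergence of the auxiliary process $\widehat{\PP}_t$ (built from the truly independent variables $\widehat{Y}_t(x)$ on the grid $(2\widehat{N}_t+1)\Z^d$) via a Grigelionis-type theorem (the Proposition~3.21 version encapsulated in Lemma~\ref{l:PPPconvcompact}), and then transfers the convergence to $\PP_t$ by exhibiting, on a high-probability event, a bijection $T_t$ between $\supp(\widehat{\PP}_t)\cap\HH^\theta_\eta$ and $\supp(\PP_t)\cap\HH^\theta_\eta$ with sup-displacement tending to zero. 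You instead try to verify Kallenberg's two-condition criterion (Proposition~3.22 of \cite{R87}) directly for $\PP_t$.

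That restructuring leaves a genuine gap in the mean condition. Your reduction gives $\PP_t(F)=\widetilde{N}_t(F)$ on an event of probability $1-o(1)$, and $\E[\widetilde{N}_t(F)]\to\mu(F)$; but these two facts do not yield $\E[\PP_t(F)]\to\mu(F)$. Convergence in probability of integer-valued quantities does not give convergence of means unless one has uniform integrability (for instance a uniform second-moment bound on $\PP_t(F)$), and your proposal supplies none. The paper avoids the issue entirely: once $\widehat{\PP}_t\Rightarrow\PP_\infty$ is established, the coupling is enough, because weak convergence of simple point processes is stable under small perturbations of the atom locations; the mean of $\PP_t(F)$ is never computed. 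A secondary point, which you pass over but which is needed even for your avoidance-probability step, is the boundary argument: the statement that $\PP_t(F)=\widetilde{N}_t(F)$ with high probability hinges on the fact that, with probability tending to one, no atom of $\widehat{\PP}_t$ lands in the shrinking band $\HH^\theta_{\eta-\varepsilon_t}\setminus\HH^\theta_{\eta+\varepsilon_t}$ (cf.\ \eqref{e:prconvPPPNNt5}). This is not automatic and uses simplicity of the limit. Both gaps disappear if you reorganize to match the paper: apply your Kallenberg/Riemann-sum computation to $\widehat{\PP}_t$ (where the mean condition is a deterministic sum and the avoidance probability factors exactly by independence of disjoint boxes), then transfer to $\PP_t$ via the bijection rather than trying to verify the mean condition for $\PP_t$ directly.
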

The proof of the Proposition~\ref{prop:PPPconvNNt} relies on the following lemma.
\begin{lemma}
\label{l:PPPconvcompact}
Let $\mu$ be a Radon measure on $\R$ such that $\mu \otimes \text{d} z$ is a Radon measure on $\mathfrak{E}$.
Let $\widehat{N}_t \in \N_0$ such that $\widehat{N}_t \ll t$ as $t \to \infty$, 
and assume that, for each $t >0$,
$(\widehat{Y}_t(z))_{z \in (2 \widehat{N}_t+1) \Z^d}$ is a collection of i.i.d.\ 
real-valued random variables satisfying the following two conditions:
\begin{enumerate}
\item[(i)] For each $s \in \R$,
\begin{equation}\label{e:PPPconvcompact_cond1}
\lim_{t \to \infty} \frac{t^d}{(2 \widehat{N}_t)^d} \textnormal{Prob} \left( \widehat{Y}_t(0) > s \right) = \mu(s, \infty).
\end{equation}
\item[(ii)] For each $\theta > 0$, $\eta \in \R$,
\begin{equation}\label{e:PPPconvcompact_cond2}
\lim_{n \to \infty} \limsup_{t \to \infty} \sum_{x \in (2\widehat{N}_t+1)\Z^d \colon |x| \ge t n } \textnormal{Prob} \left( \widehat{Y}_t(0) > \frac{|x|}{\theta t} + \eta \right) = 0.
\end{equation}
\end{enumerate}
Then, for each $t>0$ large enough,  the point process 
\begin{equation}
\label{e:defhatNNt}
\widehat{\PP}_t := \sum_{x \in (2\widehat{N}_t+1)\Z^d} \delta_{\left(\widehat{Y}_t(x), \,\,x/t \right)}
\end{equation}
belongs almost surely to $\scrMp$, and converges in distribution as $t \to \infty$ 
with respect to the vague topology of $\scrMp$
to a Poisson point process in $\R \times \R^d \subset \mathfrak{E}$
with intensity measure $ \mu \otimes \textd z $.
\end{lemma}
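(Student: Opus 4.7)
The plan is to apply a standard Poisson limit theorem for null arrays of independent point masses on the locally compact space $\mathfrak{E}$; a convenient reference is Proposition~3.21 of \cite{R87}. Since the $\widehat{Y}_t(x)$ are i.i.d.\ and the positions $x/t$ are deterministic, each $\delta_{(\widehat{Y}_t(x),\, x/t)}$ is an independent point measure, so it only remains to verify local finiteness, the null-array condition, and intensity convergence on relatively compact continuity sets of $\mu\otimes\textd z$.

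First I would check $\widehat{\PP}_t \in \scrMp$ a.s.\ for $t$ large enough. Given a relatively compact $K \subset \mathfrak{E}$, pick $\theta, \eta$ with $K \cap (\R\times\R^d) \subset \HH^\theta_\eta$; splitting the sum
\begin{equation*}
\sum_{x\in(2\widehat{N}_t+1)\Z^d}\textnormal{Prob}\Bigl(\widehat{Y}_t(0) > \tfrac{|x|}{\theta t}+\eta\Bigr)
\end{equation*}
at $|x|=nt$ and using (ii) to bound the tail together with (i) plus a Riemann-sum estimate to bound the bulk shows this sum is finite for $t$ large, so Borel--Cantelli gives $\widehat{\PP}_t(K)<\infty$ a.s. Second, I would verify intensity convergence on the generating class of rectangles $B=(s,\infty)\times C$ with $\mu(\{s\})=0$, $C\subset\R^d$ bounded Borel with $|\partial C|=0$, and $B\subset\HH^\theta_\eta$; here
\begin{equation*}
E[\widehat{\PP}_t(B)] = \textnormal{Prob}(\widehat{Y}_t(0)>s)\cdot\bigl|\{x\in(2\widehat{N}_t+1)\Z^d: x/t\in C\}\bigr|,
\end{equation*}
and by (i) the first factor equals $\mu(s,\infty)(2\widehat{N}_t/t)^d(1+o(1))$, while the counting factor is $|C|(t/(2\widehat{N}_t))^d(1+o(1))$ (using $\widehat{N}_t/t\to 0$ and that $C$ is a continuity set of Lebesgue measure), so the product converges to $(\mu\otimes\textd z)(B)$. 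Inclusion--exclusion to differences of such rectangles and an approximation argument --- with tail control supplied by (ii) --- then extend the convergence to every relatively compact continuity set.

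To conclude, I would note that the null-array condition is immediate: for any $A \subset \HH^\theta_\eta$,
\begin{equation*}
\max_{x\in(2\widehat{N}_t+1)\Z^d}\textnormal{Prob}((\widehat{Y}_t(0),\, x/t)\in A) \;\le\; \textnormal{Prob}(\widehat{Y}_t(0)>\eta) \;=\; O\bigl((\widehat{N}_t/t)^d\bigr) \;\to\; 0
\end{equation*}
by (i) and $\widehat{N}_t\ll t$. Since each summand is a single point mass (hence automatically simple), Proposition~3.21 of \cite{R87} yields the claimed vague convergence $\widehat{\PP}_t \Rightarrow \mathrm{PRM}(\mu\otimes\textd z)$. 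The main technical point I expect to require the most care is the joint use of (i) and (ii): relatively compact subsets of $\mathfrak{E}$ are contained in the unbounded regions $\HH^\theta_\eta$, so the intensity convergence must simultaneously control the local density (from~(i)) and the $z$-direction tails (from~(ii)), and the balance between these two inputs governs the Riemann-sum approximation above.
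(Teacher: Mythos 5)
Your proposal is correct and follows essentially the same route as the paper: both rest on (a generalization of) Proposition~3.21 of Resnick, verify local finiteness of $\widehat{\PP}_t$ from the expected count via condition~(ii), check the null-array condition via compactness and~(i), and establish intensity convergence first on bounded rectangles (where only~(i) enters) before extending to general compacta of $\mathfrak{E}$ using~(ii). One minor cosmetic point: Borel--Cantelli is unnecessary for the local-finiteness step --- finiteness of the expected count of a nonnegative integer-valued random variable already gives a.s.\ finiteness, which is exactly the observation the paper makes.
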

\begin{proof}
Note first that, by~\eqref{e:PPPconvcompact_cond2}, 
when $t$ is large enough, the expected value of $\widehat{\PP}_t(\HH^\theta_\eta)$ is finite for all $\theta>0, \eta \in \R$, 
and hence $\widehat{\PP}_t \in \scrMp$.
The claimed convergence may be proved by
a straightforward generalization of Proposition 3.21 of \cite{R87},
with $[0,\infty)$ therein substituted by $\R^d$ and $E$ therein substituted by $\R$
(see also \cite[Lemma~2.4]{HMS08}).
Indeed, we only need to verify (3.20) and (3.21) in \cite{R87}.
For (3.21), we note that, for any compact $K \subset \mathfrak{E}$, 
there exists $\eta \in \R$ such that $K \cap (\R \times \R^d) \subset [\eta, \infty) \times \R^d$,
and thus (3.21) follows from~\eqref{e:PPPconvcompact_cond1}. 
For (3.20), it suffices to prove that
\begin{equation}\label{e:convmeasure}
\sum_{x \in (2\widehat{N}_t+1)\Z^d} \textnormal{Prob} \left( \widehat{Y}_t(0) \in \cdot \right) \otimes \delta_{x/t}(\textd z) \,\,\underset{t \to \infty}\longrightarrow\,\,
\mu \otimes \textd z \; \text{ vaguely in } \scrMp.
\end{equation}
Indeed, by~\eqref{e:PPPconvcompact_cond1}, 
the convergence in~\eqref{e:convmeasure} holds 
when evaluated on functions with support contained
in the closure of a set of the form $[-n, \infty) \times [-n, n]^d \subset \mathfrak{E}$ with $n \in \N$.
This is extended to functions compactly supported in $\mathfrak{E}$ by
applying~\eqref{e:PPPconvcompact_cond2} and the fact that, for any compact $K \subset \mathfrak{E}$,
there exists $\theta > 0$, $\eta \in \R$ such that $K \cap \R \times \R^d \subset \HH^\theta_\eta$.
\end{proof}

We can now proceed to:
\begin{proof}[Proof of Proposition~\ref{prop:PPPconvNNt}]
We will first apply Lemma~\ref{l:PPPconvcompact} to an auxiliary process. Let
\begin{equation}\label{e:defhatYt}
\widehat{Y}_t(x) := \frac{\lambda^{\ssup 1}_{B_{\widehat{N}_t}(x)}- a_t}{d_t}, \quad x \in (2 \widehat{N}_t+1) \Z^d,
\end{equation}
and let $\widehat{\PP}_t$ be defined as in~\eqref{e:defhatNNt}.
\RV
Note that $\widehat{Y}_t(x)$, $x \in (2 \widehat{N}_t+1) \Z^d$, are i.i.d.\
since the corresponding boxes are disjoint.
\eRV
We claim the following:
\begin{equation}\label{e:PPPconvhatNNt}
\text{The statement of Proposition~\ref{prop:PPPconvNNt} holds for } \widehat{\PP}_t \text{ in place of } \PP_t.
\end{equation}
Indeed, condition~\eqref{e:PPPconvcompact_cond1} follows from~\eqref{e:maxorderlambda},
while~\eqref{e:PPPconvcompact_cond2} is proved in Appendix~\ref{s:tailestimate}.

Arguing as in the proof of Proposition~\ref{prop:welldefined}, 
we see that, almost surely, $\PP_t \in \scrMp$ for all large enough $t$. 
By~\eqref{e:PPPconvhatNNt} and since both $\PP_t$ and $\widehat{\PP}_t$ are simple, 
it suffices to show that, for any $\theta \in (0,\infty)$ and $\eta \in \R$, 
with probability tending to $1$ as $t \to \infty$ there exists a bijection
\begin{equation}\label{e:prconvPPPNNt1}
T_t \colon \supp(\widehat{\PP}_t) \cap \HH^\theta_\eta \to \supp(\PP_t) \cap \HH^\theta_\eta
\end{equation}
such that
\begin{equation}\label{e:prconvPPPNNt2}
\sup_{\Xi \in \supp(\widehat{\PP}_t) \cap \HH^\theta_\eta} \dist\left(T_t(\Xi),\Xi \right) 
\,\,\underset{t \to \infty}\longrightarrow\,\,0 \;\; \text{ in probability}.
\end{equation}
To that end, pick $x \in (2\widehat{N}_t+1)\Z^d$ such that $(\widehat{Y}_t(x), x/t) \in \HH^\theta_\eta$. 
We first claim that, a.s.\ eventually as $t \to \infty$,
all such $x$ satisfy
\begin{equation}\label{e:prconvPPPNNt2.1}
B_{\widehat{N}_t}(x) \subset B_{L^*_t} \qquad \text{ and } \qquad \lambda^{\ssup 1}_{B_{\widehat{N}_t}(x)} > \widehat{a}_{L^*_t} - \chi + o(1).
\end{equation}
Indeed, the second claim above follows from~\eqref{e:prophata}.
If the first were violated,
then by~\eqref{e:monot_princev}, Lemma~\ref{l:maxpotential}
and the fact that $s \mapsto 2\rho (d_t)^{-1} \ln_2 s - s /( \theta t)$ 
is decreasing for $s \ge 2 d \theta t \ln t$,
we would have, a.s.\ eventually as $t \to \infty$,
\begin{equation}\label{e:prconvPPPNNt2.2}
\frac{\lambda^{\ssup 1}_{B_{\widehat{N}_t}(x)} - a_t}{d_t} - \frac{|x|}{\theta t} 
\le \frac{2 \rho \ln_2 |x|}{d_t} - \frac{|x|}{\theta t} 
\le \frac{2 \rho \ln_2 L^*_t}{d_t} - \frac{L^*_t - \widehat{N}_t}{\theta t} 
\,\underset{t \to \infty}\longrightarrow -\infty
\end{equation}
by~\eqref{e:growthL*t}, contradicting $(\widehat{Y}_t(x), x/t) \in \HH^\theta_\eta$.
This finishes the proof of~\eqref{e:prconvPPPNNt2.1}.
Now, since $\widehat{N}_t = N_{L^*_t}$, by Lemmas~\ref{l:properties_opt_comps} and~\ref{l:comparisoncapitalsislands}
there exists, with probability tending to $1$ as $t\to \infty$, a unique $z \in \scrC$ satisfying
\begin{equation}\label{e:prconvPPPNNt2.5}
B_{\varrho_{z}}(z) \subset B_{\widehat{N}_t}(x) \;\; \text{ and } \;\; 
\lambda^{\ssup 1}_{B_{\widehat{N}_t}(x)} - \lambda^{\scrC}(z) \le 2 \texte^{-c_1 (\ln L^*_t)^{\kappa/2}},
\end{equation}
which allows us to define an injective map
\begin{equation}\label{e:prconvPPPNNt3}
T_t\left(\widehat{Y}_t(x), \frac{x}{t} \right) := \left(Y_t(z), \frac{z}{t} \right) 
\in \supp(\PP_t). 
\end{equation}
Let us verify that $T_t$ satisfies the desired properties. Indeed, 
\eqref{e:prconvPPPNNt2} follows since
\begin{equation}
\label{e:prconvPPPNNt4}
\left|\widehat{Y}_t(x) - Y_t(z) \right| + \left|\frac{z - x}{\theta t} \right| 
\le \frac{2 \texte^{-c_1 (\ln L^*_t)^{\kappa/2}}}{d_t} + d \frac{\widehat{N}_t}{ \theta t} =: \varepsilon_t \to 0 \text{ as } t \to \infty,
\end{equation}
and thus we only need to show that, with probability tending to $1$ as $t \to \infty$, 
\eqref{e:prconvPPPNNt3} is in $\HH^\theta_\eta$ and $T_t$ is surjective.
Indeed, by~\eqref{e:PPPconvhatNNt}, with probability tending to $1$ as $t \to \infty$,
\begin{equation}
\label{e:prconvPPPNNt5}
\widehat{\PP}_t\left(\HH^\theta_{\eta-\varepsilon_t} \setminus \HH^\theta_{\eta+\varepsilon_t}\right) = 0,
\end{equation}
implying by~\eqref{e:prconvPPPNNt4} that~\eqref{e:prconvPPPNNt3} is in $\HH^{\theta}_\eta$.
Moreover, if $(Y_t(z), z/t) \in \HH^{\theta}_\eta$ for some $z \in \scrC$,
then as before $\lambda^{\scrC}(z) > \widehat{a}_{L^*_t} - \chi + o(1)$ 
and $B_{\varrho_z}(z) \subset B_{L^*_t}$.
Thus, by Lemmas~\ref{l:comparisoncapitalsislands} and~\ref{l:properties_opt_comps},
there exists $x \in (2 \widehat{N}_t + 1) \Z^d$ such that~\eqref{e:prconvPPPNNt2.5} and~\eqref{e:prconvPPPNNt4} hold, 
implying by~\eqref{e:prconvPPPNNt5} that $(Y_t(z), z/t)$ is the image by $T_t$ of a point in $\supp(\widehat{\PP}_t) \cap \HH^\theta_\eta$.
This finishes the proof.
\end{proof}


\subsection{Order statistics: proof of Propositions~\ref{prop:PPPconv} and~\ref{prop:stabilitygap} and Theorem~\ref{thm:aging_locus}}
\label{ss:order_stat_pen_func}\noindent
Our next task is to translate \twoeqref{e:defPsik}{e:defZk} (and generalizations thereof) in terms of maps defined on point measures.
We start with some necessary notation.

Denote by $\widehat{\scrM}_{\text{P}}$ the set of measures $\PP$ on $\R \times \R^d$
that can be represented as 
\begin{equation}\label{e:reprPP}
\PP = \sum_{i \in \II} \delta_{(\lambda_i, z_i)} \;\;\; \text{ for some } \; \II \subset \N \text{ and } (\lambda_i, z_i) \in \R \times \R^d,
\end{equation}
i.e., $\widehat{\scrM}_{\text{P}}$ is the set of
$\N_0$-valued $\sigma$-finite Borel measures on $\R \times \R^d$.

Fix a measurable map \RV $\vartheta\colon\R \times \R^d \to \R^d$. 
To prove our main results, we will only need to consider $\vartheta$ independent of the first coordinate, 
but we keep the setup here more general for possible future applications.
\eRV
For a measure $\PP \in \widehat{\scrM}_{\text{P}}$ as in~\eqref{e:reprPP}, 
we define
\begin{equation}\label{e:defPPvartheta}
\PP^\vartheta := \sum_{i \in \II} \delta_{(\lambda_i, \vartheta(\lambda_i, z_i))},
\end{equation}
and we set
\begin{equation}\label{e:defscrMPTT}
\scrM_{\textnormal{P}, \vartheta} := \{ \PP \in \widehat{\scrM}_{\textnormal{P}} \colon\, \PP^\vartheta \in \scrMp \}.
\end{equation}
Finally, we generalise~\eqref{e:defpsitheta} by setting, for $\theta > 0$,
\begin{equation}\label{e:defpsivarthetatheta}
\psi^\vartheta_\theta (\lambda, z) := \lambda - \frac{|\vartheta(\lambda, z)|}{\theta}, \qquad (\lambda, z) \in \R \times \R^d.
\end{equation}
Now, for $\PP \in \scrM_{\textnormal{P}, \vartheta}$ and $\theta > 0$,
we set, recursively for $i \in \N$, $i \le |\supp(\PP)|$,
\begin{equation}
\begin{aligned}
& \Psi^{\ssup i}_\vartheta(\PP)(\theta) := \\
& \quad \;\;\;\; \sup \left\{ \psi^\vartheta_\theta(\lambda, z) \colon\, (\lambda, z) \in \supp(\PP) \setminus \left\{\Xi^{\ssup 1}_\vartheta(\PP)(\theta), \ldots, \Xi^{\ssup {i-1}}_\vartheta(\PP)(\theta) \right\} \right\}, \label{e:defPsiNi}
\end{aligned}
\end{equation}
\begin{equation}
\begin{aligned}
& \mathfrak{S}^{\ssup i}_\vartheta(\PP)(\theta) := \\
& \quad \left\{ (\lambda, z) \in \supp(\PP) \setminus \left\{\Xi^{\ssup 1}_\vartheta(\PP)(\theta), \ldots, \Xi^{\ssup {i-1}}_\vartheta(\PP)(\theta) \right\} \colon\, \psi^\vartheta_\theta(\lambda, z) = \Psi^{\ssup i}_\vartheta(\PP) (\theta) \right\}
\end{aligned}
\end{equation}
and
\begin{equation}
\Xi^{\ssup i}_\vartheta(\PP)(\theta) \in \, \left\{ (\lambda, z) \in \mathfrak{S}^{\ssup i}_\vartheta(\PP)(\theta) \colon\, (\lambda, z) \succeq (\lambda', z') \,\forall\, (\lambda', z') \in \mathfrak{S}^{\ssup i}_\vartheta(\PP)(\theta) \right\}, \label{e:defXiNi}
\end{equation}
where $\succeq$ is the usual lexicographical order of $\R \times \R^d$ as introduced right before~\eqref{e:defPsik}.
Note that $\Xi^{\ssup i}_\vartheta(\PP)$ is well defined since the set in~\eqref{e:defXiNi} has cardinality $1$.
We put
\begin{equation}\label{e:defPhi}
\left( \Lambda^{\ssup i}_\vartheta(\PP), Z^{\ssup i}_\vartheta(\PP) \right) := \Xi^{\ssup i}_{\vartheta}(\PP) \quad \text{ and } \quad \Phi^{\ssup i}_\vartheta(\PP) := \left(\Psi^{\ssup i}_\vartheta(\PP), \Lambda^{\ssup i}_{\vartheta}(\PP), Z^{\ssup i}_\vartheta(\PP) \right).
\end{equation}
\RV
In the case $\vartheta(\lambda, z) = z$ for all $(\lambda,z) \in \R \times \R^d$, we omit $\vartheta$ from the notation.

\smallskip
As functions of $\theta$, the objects defined above enjoy the following properties: \eRV
\begin{lemma}
\label{l:propPhi}
For any $\vartheta:\R \times \R^d \to \R^d$ and any $\PP \in \scrM_{\textnormal{P}, \vartheta}$, the following hold:
\begin{enumerate}
\item[(i)] $\Psi^{\ssup 1}_{\vartheta}(\PP)$, $\Lambda^{\ssup 1}_\vartheta(\PP)$ and \RV $|\vartheta(\Xi^{\ssup 1}_\vartheta(\PP))|$ \eRV 
are non-decreasing \RV in $\theta$. \eRV
Moreover, if $\theta_0 < \theta_1$ and $\Xi^{\ssup 1}_\vartheta(\PP)(\theta_0) \neq \Xi^{\ssup 1}_\vartheta(\PP)(\theta_1)$,
then they are strictly smaller at $\theta_0$ than at $\theta_1$.

\item[(ii)] For any $a \in (0,\infty)$ and any $i \in \N$, $i \le |\supp(\PP)|$,
\begin{equation}\label{e:propPhi1}
\Psi^{\ssup i}_\vartheta(\PP) \in \CC([a,\infty), \R) \;\;\; \textnormal{ and } \;\;\; \Xi^{\ssup i}_\vartheta(\PP) \in \DD([a,\infty), \R \times \R^d).
\end{equation}
The set of discontinuities of $\Xi^{\ssup i}_\vartheta(\PP)$ is discrete and, 
if $\supp(\PP^{\vartheta}) \cap (\R \times \{0\}) = \emptyset$,
then $\Psi^{\ssup 1}_\vartheta(\PP)$ is strictly increasing.
\end{enumerate}
\end{lemma}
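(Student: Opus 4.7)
The plan is to derive part (i) from two opposing optimality comparisons and part (ii) from a local-finiteness reduction. For (i), fix $\theta_0 < \theta_1$ and write $\Xi_j = (\lambda_j, z_j) := \Xi^{\ssup 1}_\vartheta(\PP)(\theta_j)$ and $v_j := \vartheta(\Xi_j)$ for $j=0,1$. The defining maximality gives $\lambda_0 - |v_0|/\theta_0 \ge \lambda_1 - |v_1|/\theta_0$ and $\lambda_1 - |v_1|/\theta_1 \ge \lambda_0 - |v_0|/\theta_1$. Adding these yields $(|v_1|-|v_0|)(\theta_0^{-1}-\theta_1^{-1}) \ge 0$, hence $|v_1| \ge |v_0|$, and the second inequality then forces $\lambda_1 - \lambda_0 \ge (|v_1|-|v_0|)/\theta_1 \ge 0$. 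Monotonicity of $\Psi^{\ssup 1}_\vartheta(\PP)$ follows because $\theta \mapsto \psi^\vartheta_\theta(\lambda,z)$ is non-decreasing for each fixed $(\lambda,z) \in \supp(\PP)$, so $\Psi^{\ssup 1}_\vartheta(\PP)(\theta_1) \ge \psi^\vartheta_{\theta_1}(\Xi_0) \ge \psi^\vartheta_{\theta_0}(\Xi_0) = \Psi^{\ssup 1}_\vartheta(\PP)(\theta_0)$. When $\Xi_0 \ne \Xi_1$, at least one of the two optimality inequalities above must be strict---otherwise $|v_0|=|v_1|$ and $\lambda_0=\lambda_1$ would follow, making $\psi^\vartheta_\theta(\Xi_0)=\psi^\vartheta_\theta(\Xi_1)$ for every $\theta$ and forcing $\Xi_0=\Xi_1$ by the lex tie-breaking in~\eqref{e:defXiNi}---and tracing this strictness through the derivation yields the claimed strict inequalities.

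For (ii), the assumption $\PP^\vartheta \in \scrMp$ provides local finiteness of the $\psi^\vartheta_\theta$-superlevel sets, uniformly over compact $\theta$-intervals: given $[a,b] \subset (0,\infty)$ and $\eta \in \R$, the set $\HH^a_\eta$ from~\eqref{e:defHH} is relatively compact in $\mathfrak{E}$ and the collection $\{(\lambda,z)\in\supp(\PP)\colon\,\psi^\vartheta_\theta(\lambda,z)>\eta \text{ for some }\theta\in[a,b]\}$ maps via $(\lambda,z) \mapsto (\lambda,\vartheta(\lambda,z))$ into $\HH^a_\eta$, which meets $\supp(\PP^\vartheta)$ at only finitely many points. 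Hence $\Psi^{\ssup i}_\vartheta(\PP)$ restricted to $[a,b]$ is the $i$-th order statistic of a finite family of continuous functions $\theta \mapsto \psi^\vartheta_\theta(\lambda,z)$ (each affine in $1/\theta$) and is therefore continuous; correspondingly, $\Xi^{\ssup i}_\vartheta(\PP)$ is piecewise constant on the open dense set where the argmax is unique and changes only at the finitely many $\theta$'s where two candidates tie in $\psi^\vartheta_\theta$-value, giving the c\`adl\`ag property and the discreteness of the jump set. Right-continuity is built into the construction through the lex tie-breaking together with part (i): at a transition the new maximizer has strictly larger $|\vartheta|$ (and $\lambda$), so is precisely the one selected by lex.

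The strict monotonicity of $\Psi^{\ssup 1}_\vartheta(\PP)$ under $\supp(\PP^\vartheta)\cap(\R\times\{0\})=\emptyset$ will be immediate: the assumption gives $|\vartheta(\lambda,z)|>0$ for all $(\lambda,z)\in\supp(\PP)$, so each function $\theta\mapsto\psi^\vartheta_\theta(\lambda,z)=\lambda - |\vartheta(\lambda,z)|/\theta$ is strictly increasing, and one concludes by $\Psi^{\ssup 1}_\vartheta(\PP)(\theta_1) \ge \psi^\vartheta_{\theta_1}(\Xi_0) > \psi^\vartheta_{\theta_0}(\Xi_0) = \Psi^{\ssup 1}_\vartheta(\PP)(\theta_0)$.

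The principal obstacle I expect is the careful handling of right-continuity of $\Xi^{\ssup i}_\vartheta(\PP)$ at transition points where several candidates may coincide in $\psi^\vartheta_\theta$-value: one must compare the lex-maximal selection at the transition $\theta^*$ with the unique maximizer just to the right of $\theta^*$ and verify they agree. This rests on the observation from (i) that at any such jump the arriving maximizer has strictly larger $\Lambda$ (or at least $|\vartheta|$), which together with the lex rule pins down the right-continuous branch; an analogous comparison with the left limit identifies the jump discontinuity.
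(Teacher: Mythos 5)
Your proof of part~(i) is the same calculation as the paper's: both derive the two-sided comparison $\theta_0(\lambda_1-\lambda_0)\le |v_1|-|v_0|\le\theta_1(\lambda_1-\lambda_0)$ from optimality at the two times, extract monotonicity, and rule out the double-equality case by the lex tie-break. For part~(ii), your reduction to a finite family via relative compactness of $\HH^\theta_\eta$ and the order-statistics viewpoint for $\Psi^{\ssup i}_\vartheta(\PP)$ matches the paper's strategy; the one organizational difference is that the paper handles the c\`adl\`ag property of $\Xi^{\ssup i}_\vartheta(\PP)$ for $i\ge 2$ by induction through the recursive identity $\Phi^{\ssup k}_\vartheta(\PP)(\theta)=\sum_\Xi\1_{\{\Xi^{\ssup 1}_\vartheta(\PP)(\theta)=\Xi\}}\Phi^{\ssup{k-1}}_\vartheta(\PP_\Xi)(\theta)$ (which both supplies right-continuity and yields continuity of $\Psi^{\ssup k}_\vartheta$ across jump times of $\Xi^{\ssup 1}_\vartheta$ almost for free), whereas you argue directly that the argmax chain changes only at crossings and that the lex rule selects the right-continuous branch; you correctly flag this as the remaining delicacy, and the key observation you identify---that at a tie the $\psi^\vartheta_\theta$-order just to the right sorts by $|\vartheta|$ (equivalently, within the tie, by $\lambda$), hence agrees with the lex pick---is exactly what makes the direct argument go through. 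One small slip: the collection $\{(\lambda,z)\in\supp(\PP)\colon\psi^\vartheta_\theta(\lambda,z)>\eta\text{ for some }\theta\in[a,b]\}$ maps into $\HH^b_\eta$, not $\HH^a_\eta$, since $\psi^\vartheta_\theta$ is increasing in $\theta$ and $\HH^a_\eta\subset\HH^b_\eta$; $\HH^b_\eta$ is still relatively compact, so the conclusion is unaffected.
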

The proof of Lemma~\ref{l:propPhi} is postponed to Appendix~\ref{s:propertiescost}.
It already implies the properties claimed for $\Psi^{\ssup k}_t, Z^{\ssup k}_t$
at the end of Section~\ref{ss:concentration_locus}:
indeed, they follow from the representation
\begin{equation}\label{e:reprZt}
(\Psi^{\ssup k}_t, \lambda^{\scrC}(Z^{\ssup k}_t), Z^{\ssup k}_t) = \Phi^{\ssup k}_\vartheta(\mathcal{P}_{\scrC})(t), \;\;\;  \vartheta(\lambda, z) := z \ln_3^+ |z|, \; \mathcal{P}_{\scrC} := \sum_{z \in \scrC} \delta_{\left(\lambda^{\scrC}(z), \, z \right)}.
\end{equation}
Note that $\PP_\scrC \in \scrM_{\text{P}, \vartheta}$ a.s.\ by~\eqref{e:welldefined}, and that $|\vartheta(\lambda_1, z_1)| > |\vartheta(\lambda_0, z_0)|$ implies $|z_1| > |z_0|$.

Next we consider continuity of $\PP \mapsto \Phi^{\ssup i}(\PP)$ with respect to the Skorohod topology,
i.e., specializing to the case $\vartheta(\lambda, z) = z$.
To this end, we define the following subsets of $\scrMp$, indexed by $a \in (0, \infty)$:
\begin{equation}\label{e:defscrM*}
\begin{aligned}
\widetilde{\scrM}^{a}_\textnormal{P} := \Bigl\{ \PP \in \scrMp \colon\, & \supp(\PP) \subset \R \times \R^d \setminus \left( \R \times \{0\}\right), \\ 
& (\lambda, z) \mapsto \lambda \text{ is injective over } \supp(\PP), \\
& \PP(\partial \HH^\theta_\eta)\le 1 \; \forall  \theta \in \{a\} \cup \big( (0, \infty) \cap \Q \big), \eta \in \R, \\  
& \PP(\partial \HH^\theta_\eta) \le 2 \; \forall  \theta \in (0,\infty), \eta \in \R, \\
& |\{\eta \in \R \colon\, \PP(\partial \HH^\theta_\eta) = 2\}| \le 1 \;\forall\theta \in (0,\infty)\Bigr\}.
\end{aligned}
\end{equation}
Then we have:
\begin{lemma}\label{l:continuityPhiskorohod}
Fix $a \in (0,\infty)$ and $\PP \in \widetilde{\scrM}^{a}_{\textnormal{P}}$.
Let $\vartheta_t : \R \times \R^d \to \R^d$, $t>0$, satisfy
\begin{align}
\text{(i) } & \vartheta_t(\lambda, z) \underset{t \to \infty}\longrightarrow z \text{ locally uniformly for } (\lambda, z) \in \R \times \left(\R^d \setminus \{0\}\right), \text{ and } \label{e:propvarthetat1}\\
\text{(ii) } & \exists \,  c_* > 0 \text{ such that, for all } \eta \in \R \text{ and } \delta > 0, \; \liminf_{t \to \infty} \inf_{\lambda \ge \eta, |z|\ge \delta} \frac{|\vartheta_t(\lambda, z)|}{|z|} \ge c_*. \label{e:propvarthetat2}
\end{align}
Let $\PP_t \in \scrMp \cap \scrM_{\textnormal{P}, \vartheta_t}$ such that $\PP_t \underset{t \to \infty}\longrightarrow \PP$ vaguely in $\scrMp$.
Then also $\PP_t^{\vartheta_t} \to \PP$ vaguely and, 
for all $k \in \N$, $k \le |\supp(\PP)|$,
\begin{equation}\label{e:propPhi2}
\left(\Phi^{\ssup i}_{\vartheta_t}(\PP_t) \right)_{1 \le i \le k} \,\underset{t \to \infty}\longrightarrow\, 
\left(\Phi^{\ssup i}(\PP)\right)_{1 \le i \le k}
\end{equation}
in the Skorohod topology of $\DD([a, \infty), (\R \times \R \times \R^d)^k)$.
In particular, $(\Phi^{\ssup i})_{1 \le i \le k}$ is continuous at $\PP$ with respect to the Skorohod topology.
\end{lemma}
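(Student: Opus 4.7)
The plan is to first derive vague convergence of $\PP_t^{\vartheta_t}$ and then use the genericity built into $\widetilde{\scrM}_{\text{P}}^a$ to match the $k$ leading maximizers of $\psi_\theta$ for $\PP_t^{\vartheta_t}$ with those for $\PP$, through an explicit time-change.

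First I would prove $\PP_t^{\vartheta_t}\to\PP$ vaguely on $\mathfrak{E}$. Fix $f\in C_c(\mathfrak{E})$; by the defining property of the compactification, $\supp(f)\subset\HH^{\theta_0}_{\eta_0}$ for some $\theta_0,\eta_0$. If $(\lambda,\vartheta_t(\lambda,z))\in\supp(f)$, then $\lambda\ge\eta_0$ and $|\vartheta_t(\lambda,z)|\le M$ for some fixed $M$, and (ii) yields $|z|\le 2M/c_*$ for large $t$. Hence the relevant pre-images of $\supp(f)$ lie in a fixed compact $K'\subset\R\times\R^d$ bounded away from $\R\times\{0\}$. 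On $K'$, (i) gives $\vartheta_t\to\mathrm{id}$ uniformly, and combined with $\PP_t\to\PP$ vaguely (plus the injectivity of first coordinates in $\widetilde{\scrM}_{\text{P}}^a$, which precludes spurious coalescence), one concludes $\PP_t^{\vartheta_t}(f)\to\PP(f)$. This reduces the remaining argument to the case $\vartheta_t\equiv\mathrm{id}$.

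Second, fix $T>a$. By Lemma~\ref{l:propPhi}(i)-(ii), $\Lambda^{\ssup 1}(\PP)$ and $|Z^{\ssup 1}(\PP)|$ are bounded on $[a,T]$ and $\Xi^{\ssup i}(\PP)$ has only finitely many jump times $a<\theta_1<\cdots<\theta_m<T$ for each $i\le k$; by unifying these lists I may assume the $\theta_j$'s are the jump times of all $\Xi^{\ssup i}(\PP)$, $i\le k$. Choose $\theta^\star\gg T$ and $\eta^\star\in\R$ so large that the trajectories $\Xi^{\ssup i}(\PP)([a,T])$, $i\le k$, lie in the interior of $\HH^{\theta^\star}_{\eta^\star}\subset\mathfrak{E}$ together with any point of $\supp(\PP)$ that can realize the first $k+1$ maxima of $\psi_\theta$ at some $\theta\in[a,T]$. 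On the compact $K:=\overline{\HH^{\theta^\star}_{\eta^\star}}$, $\PP$ places finitely many atoms $(\lambda_j,z_j)_{j\le N}$ with distinct $\lambda_j$; vague convergence and the genericity of $\PP$ yield atoms $(\lambda^{(t)}_j,z^{(t)}_j)$ of $\PP_t^{\vartheta_t}$ in a slight enlargement of $K$, converging to the $(\lambda_j,z_j)$ and exhausting $\supp(\PP_t^{\vartheta_t})\cap K$ for all large $t$. On each subinterval $[a,T]\setminus\bigcup_j(\theta_j-\delta,\theta_j+\delta)$, the ordering of $\{\psi_\theta(\lambda_j,z_j)\}_{j\le N}$ persists uniformly in $\theta$, so the leading $k$ maximizing indices for $\PP_t^{\vartheta_t}$ match those for $\PP$ once $t$ is large.

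Third, I would construct the Skorohod time-change. At each $\theta_j$, the jump of $\Xi^{\ssup i}(\PP)$ corresponds, by the last two conditions in \eqref{e:defscrM*}, to exactly two atoms of $\PP$ with equal $\psi_{\theta_j}$-values swapping order; their $\PP_t^{\vartheta_t}$-approximants swap at a unique $\theta^{(t)}_j\to\theta_j$. Let $\lambda_t:[a,T]\to[a,T]$ be the piecewise-linear bijection fixing $a,T$ and mapping each $\theta_j$ to $\theta^{(t)}_j$; then $\sup|\lambda_t(\theta)-\theta|\to 0$, and by construction $\Phi^{\ssup i}_{\vartheta_t}(\PP_t)\circ\lambda_t\to\Phi^{\ssup i}(\PP)$ uniformly on $[a,T]$ for every $i\le k$ (with the $\Psi^{\ssup i}$-component converging uniformly because $\psi_\theta$ depends continuously on $\theta$ and the points). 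This delivers \eqref{e:propPhi2} on $[a,T]$; since $T$ is arbitrary, the convergence extends to $[a,\infty)$. Taking $\vartheta_t\equiv\mathrm{id}$ and arbitrary $\PP_t\to\PP$ in $\scrMp$ specializes to the continuity claim.

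The chief technical obstacle is to build a single time-change valid simultaneously for all $i\le k$, and to show that no new jumps of $\Xi^{\ssup i}_{\vartheta_t}(\PP_t)$ appear in $[a,T]\setminus\bigcup_j(\theta_j-\delta,\theta_j+\delta)$. The former is handled by coalescing the finite jump-time lists of the first $k$ maximizers; the latter uses the requirements $\PP(\partial\HH^\theta_\eta)\le 1$ for rational $\theta$ and $|\{\eta:\PP(\partial\HH^\theta_\eta)=2\}|\le 1$ for every $\theta$, which together exclude triple-ties and simultaneous-tie scenarios that could generate extra, uncontrollable crossings in the approximating processes.
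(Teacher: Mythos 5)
Your proposal is essentially the paper's argument, differently organized: both match atoms via vague convergence inside a relatively compact $\HH^\theta_\eta$ window, use the genericity built into $\widetilde{\scrM}^a_{\textnormal{P}}$ to exclude simultaneous or multi-way ties, and build a piecewise-linear Skorohod time change through the finitely many jump times. The main structural difference is that the paper constructs the time change by induction on the number of jumps $\ell_*$ (via explicit bijections $T_t$, $\TT_t$, concatenating one-jump changes), while you build it globally on $[a,T]$ and then let $T\to\infty$; this is a rearrangement of the same ideas rather than a different route.

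One step in your first paragraph is not literally correct: from $\supp(f)$ compact in $\mathfrak{E}$ you cannot deduce $|\vartheta_t(\lambda,z)|\le M$ for a fixed $M$. By the definition of the metric $\mathbf{d}$ in Appendix~\ref{s:compactification}, compact subsets of $\mathfrak{E}$ (for instance $\overline{\HH^{\theta_0}_{\eta_0}}$ itself) contain points $(\lambda,w)\in\R\times\R^d$ with $\lambda$ and $|w|$ simultaneously arbitrarily large, since such points accumulate on the compactifying boundary $[0,\infty)$. Lemma~\ref{l:compactsetscontained} only gives $\supp(f)\cap(\R\times\R^d)\subset\HH^{\theta_0}_{\eta_0}$; combining this with (ii) yields $(\lambda,z)\in\HH^{2\theta_0/c_*}_{\eta_0}$ for the relevant atoms of $\PP_t$, a set relatively compact in $\mathfrak{E}$ but unbounded in $\R^{1+d}$, so your claimed ``fixed compact $K'\subset\R\times\R^d$ bounded away from $\R\times\{0\}$'' does not follow directly. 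The fix is what \twoeqref{e:prcontPhi6}{e:prcontPhi7} in the paper's proof supply: $\PP$ has only finitely many atoms in that $\HH$-set, all lying in a Euclidean compact bounded away from $\R\times\{0\}$, vague convergence forces the corresponding atoms of $\PP_t$ to converge to them, and only then does (i) give the uniform control you invoke. With this patch the remainder of your argument is sound.
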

Lemma~\ref{l:continuityPhiskorohod} will be also proved in Appendix~\ref{s:propertiescost}.
We now use it to finish:
\begin{proof}[Proof of Proposition~\ref{prop:PPPconv}]
By Lemma~\ref{l:propPhi}, we may realise the processes in~\eqref{e:limitPPP} as
\begin{equation}\label{e:repr_barPsiZ}
\left(\overline{\Psi}^{\ssup i}_\theta, \overline{\Lambda}^{\ssup i}_\theta, \overline{Z}^{\ssup i}_\theta \right) 
= \Phi^{\ssup i}(\PP_\infty)(\theta)
\end{equation}
where 
$\PP_\infty$ is a Poisson point process on $\R \times \R^d$ with intensity $\texte^{-\lambda} \textd\lambda \otimes \textd z$.
Note that, for each~$a>0$, $\PP_\infty \in \widetilde{\scrM}^a_\textnormal{P}$ almost surely.
On the other hand, we also have the representation
\begin{equation}\label{e:reprPsiPP}
\left(\frac{\Psi^{\ssup i}_{\theta t,c}-a_{r_t}}{d_{r_t}}, \frac{\lambda^{\scrC}(Z^{\ssup i}_{\theta t, c}) - a_{r_t}}{d_{r_t}}, \frac{Z^{\ssup i}_{\theta t, c}}{r_t} \right) = 
\Phi^{\ssup i}_{\vartheta_t} \left( \PP_{r_t}\right)(\theta)
\end{equation}
where $\PP_t$ is as in~\eqref{e:defPPt} and
\begin{equation}
\label{e:defvarthetat}
\vartheta_t(\lambda, z) :=  z \left(\frac{\ln_3^+|r_t z| - c}{\ln_3 t} \right)^+ \frac{d_t}{d_{r_t}}.
\end{equation}
Note that, by~\eqref{e:welldefgenPsi}, $\PP_{r_t} \in \scrM_{\textnormal{P}, \vartheta_t}$ almost surely for all $t$ large enough.
The convergence claimed in Proposition~\ref{prop:PPPconv}
now follows by Proposition~\ref{prop:PPPconvNNt} and Lemma~\ref{l:continuityPhiskorohod} together with~\eqref{e:repr_barPsiZ},\twoeqref{e:reprPsiPP}{e:defvarthetat} and the Skorohod representation theorem; in fact,
\begin{equation}\label{e:jointconvergence}
\left(\PP_{r_t}, \left(\Phi^{\ssup i}_{\vartheta_t}(\PP_{r_t})(\theta) \right)_{\theta \in [a, \infty), 1 \le i \le k} \right) 
\underset{t \to \infty}{\overset{\text{law}}\longrightarrow}
\left(\PP_\infty, \left(\Phi^{\ssup i}(\PP_\infty)(\theta) \right)_{\theta \in [a, \infty), 1 \le i \le k} \right).
\end{equation}
The statement regarding $\EE^{\ssup k}_{a,b,c}$ follows from the distributional convergence since $d_{r_t} = d_t (1+o(1))$ and,
by the continuity properties of $\overline{\Psi}^{\ssup i}_\theta$ and $\overline{Z}^{\ssup i}_\theta$,
\begin{equation}\label{e:prPPPconvaux1}
\begin{aligned}
-\infty < & \inf_{ \theta \in [a,b]} \overline{\Psi}^{\ssup i}_\theta \le \sup_{ \theta \in [a,b]} \overline{\Psi}^{\ssup i}_\theta < \infty, \qquad \;\;
0 < \inf_{ \theta \in [a,b]} |\overline{Z}^{\ssup i}_\theta| \le \sup_{ \theta \in [a,b]} |\overline{Z}^{\ssup i}_\theta| < \infty \\
\text{ and } \;\; & \left( \overline{\Psi}^{\ssup i}_a - \overline{\Psi}^{\ssup {i+1}}_a \right) \wedge \left( \overline{\Psi}^{\ssup i}_b - \overline{\Psi}^{\ssup {i+1}}_b \right) > 0
\end{aligned}
\end{equation}
hold almost surely for each $i \in \N$.
The expression for the density in~\eqref{e:limitingdensity1ddist}
follows from an analogous computation as performed in the proof of Proposition~3.2 in \cite{ST14}.
\end{proof}

Next we interpret the event in Theorem~\ref{thm:aging_locus} in terms of the underlying point measure,
which is still kept rather general:
\begin{lemma}
\label{l:nojumpintermsofPP}
For any $\vartheta:\R \times \R^d\to\R^d$, any $\PP \in \scrM_{\textnormal{P}, \vartheta}$ and any $0 < a < b < \infty$,
the following statements are equivalent:
\begin{align}
&
\begin{aligned}
\text{(1)} \quad & \Lambda^{\ssup 1}_\vartheta(\PP)(a) = \Lambda^{\ssup 1}_\vartheta(\PP)(b); \\
\text{(2)} \quad & \Xi^{\ssup 1}_\vartheta(\PP)(\theta) = \Xi^{\ssup 1}_\vartheta(\PP)(a) \, \text{ for all } \theta \in [a, b]; \\
\text{(3)} \quad & 
\PP \left\{ (\lambda, z) \colon
\begin{array}{l}
\psi^\vartheta_b(\lambda,z) > \psi^\vartheta_b(\Xi^{\ssup 1}_\vartheta(\PP)(a)), \text{ or}\\
\psi^\vartheta_b(\lambda,z) = \psi^\vartheta_b(\Xi^{\ssup 1}_\vartheta(\PP)(a)) \text{ and }
\lambda > \Lambda^{\ssup 1}_\vartheta(\PP)(a)
\end{array} \right\} = 0.\\
\end{aligned}
\label{e:nojumpintermsofPP}\\
& \hspace{-28pt} \text{\RV If $\vartheta$ does not depend on $\lambda$, then (1)--(3) are also equivalent to: \eRV} \nonumber\\
& \, \text{(4)} \quad Z^{\ssup 1}_\vartheta(\PP)(a) = Z^{\ssup 1}_\vartheta(\PP)(b). \label{e:nojumpsPPspecial}
\end{align}
\end{lemma}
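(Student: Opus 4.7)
The plan is to prove the chain of equivalences $(1)\Leftrightarrow(2)\Leftrightarrow(3)$ first, and then deduce the extra equivalence with $(4)$ under the additional hypothesis on $\vartheta$; essentially all work is powered by Lemma~\ref{l:propPhi}(i), i.e.\ the fact that on $[a,b]$ the quantities $\Psi^{\ssup 1}_\vartheta(\PP)$, $\Lambda^{\ssup 1}_\vartheta(\PP)$ and $|\vartheta(\Xi^{\ssup 1}_\vartheta(\PP))|$ are all non-decreasing and strictly increase across any jump of $\Xi^{\ssup 1}_\vartheta(\PP)$.

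For $(1)\Leftrightarrow(2)$: the implication $(2)\Rightarrow(1)$ is immediate by reading off the first coordinate. Conversely, if $\Lambda^{\ssup 1}_\vartheta(\PP)(a)=\Lambda^{\ssup 1}_\vartheta(\PP)(b)$, monotonicity forces $\Lambda^{\ssup 1}_\vartheta(\PP)(\theta)$ to be constant on $[a,b]$; any $\theta\in(a,b]$ with $\Xi^{\ssup 1}_\vartheta(\PP)(\theta)\neq\Xi^{\ssup 1}_\vartheta(\PP)(a)$ would, by the second part of Lemma~\ref{l:propPhi}(i), give $\Lambda^{\ssup 1}_\vartheta(\PP)(a)<\Lambda^{\ssup 1}_\vartheta(\PP)(\theta)$, a contradiction. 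Hence $(2)$.

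For $(2)\Leftrightarrow(3)$: assuming $(2)$, in particular $\Xi^{\ssup 1}_\vartheta(\PP)(b)=\Xi^{\ssup 1}_\vartheta(\PP)(a)$; the defining maximality property~\eqref{e:defPsiNi}--\eqref{e:defXiNi} at $\theta=b$ then prohibits any other point of $\supp(\PP)$ from strictly exceeding $\Xi^{\ssup 1}_\vartheta(\PP)(a)$ in $\psi^\vartheta_b$-value or, in the case of a tie, exceeding it in lexicographic order, which covers in particular the case of a strictly larger $\lambda$; hence $(3)$. Conversely, $(3)$ says that no point of $\supp(\PP)$ has $\psi^\vartheta_b$-value strictly above $\psi^\vartheta_b(\Xi^{\ssup 1}_\vartheta(\PP)(a))$, so $\Xi^{\ssup 1}_\vartheta(\PP)(a)\in\mathfrak S^{\ssup 1}_\vartheta(\PP)(b)$. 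Since $\Xi^{\ssup 1}_\vartheta(\PP)(b)$ is the lexicographic maximum of this set, it either equals $\Xi^{\ssup 1}_\vartheta(\PP)(a)$ — in which case $(1)$, and therefore $(2)$, holds — or it strictly dominates it in lex order. The first option of lex dominance, a strictly larger $\lambda$-coordinate, is excluded by the hypothesis $(3)$; the second option, equal $\lambda$-coordinate but a strictly larger $z$-coordinate, would yield $\Lambda^{\ssup 1}_\vartheta(\PP)(a)=\Lambda^{\ssup 1}_\vartheta(\PP)(b)$ together with $\Xi^{\ssup 1}_\vartheta(\PP)(a)\neq\Xi^{\ssup 1}_\vartheta(\PP)(b)$, contradicting Lemma~\ref{l:propPhi}(i). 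Hence $(2)$ holds.

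For the final equivalence with $(4)$ under the hypothesis that $\vartheta$ does not depend on $\lambda$: $(2)\Rightarrow(4)$ is trivial by reading off the $z$-coordinate. For the converse, when $\vartheta(\lambda,z)=\vartheta(z)$ we have $|\vartheta(\Xi^{\ssup 1}_\vartheta(\PP))|=|\vartheta(Z^{\ssup 1}_\vartheta(\PP))|$, so $(4)$ forces $|\vartheta(\Xi^{\ssup 1}_\vartheta(\PP))|$ to take the same value at $a$ and $b$; monotonicity then renders it constant on $[a,b]$, and the strict-increase statement in Lemma~\ref{l:propPhi}(i) applied to this quantity rules out any jump of $\Xi^{\ssup 1}_\vartheta(\PP)$ in $[a,b]$, giving $(2)$. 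The only subtlety in the whole proof is the lexicographic tie-breaking handled in the $(3)\Rightarrow(2)$ step, where the case of a tie in $\lambda$ but different $z$ has to be dispatched via Lemma~\ref{l:propPhi}(i); everything else is routine bookkeeping, and no other serious obstacle is expected.
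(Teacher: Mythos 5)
Your proof is correct and follows essentially the same approach as the paper's, which merely states that the implications follow from Lemma~\ref{l:propPhi}(i) and the definitions. You organize the argument as biconditionals $(1)\Leftrightarrow(2)$ and $(2)\Leftrightarrow(3)$ rather than the paper's cyclic $(1)\Rightarrow(2)\Rightarrow(3)\Rightarrow(1)$, and your $(4)\Rightarrow(2)$ rather than $(4)\Rightarrow(1)$, but these are cosmetic differences; the genuine content --- that strict monotonicity of $\Lambda^{\ssup 1}_\vartheta(\PP)$ (respectively $|\vartheta(\Xi^{\ssup 1}_\vartheta(\PP))|$) across jumps forces constancy of $\Xi^{\ssup 1}_\vartheta(\PP)$, and that the lexicographic tie-break in $(3)\Rightarrow(2)$ requires dispatching the equal-$\lambda$ case via Lemma~\ref{l:propPhi}(i) --- is exactly as the paper intends, and you have spelled it out cleanly.
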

\begin{proof}
The implication $(1) \Rightarrow (2)$ follows from Lemma~\ref{l:propPhi}(i),
and $(2) \Rightarrow (3) \Rightarrow (1)$ 
are easily verified using the definition of $\Xi^{\ssup i}_\vartheta$.
It is clear that $(2) \Rightarrow (4)$ and,
when $\vartheta$ does not depend on $\lambda$,
 $(4) \Rightarrow (1)$ also follows from Lemma~\ref{l:propPhi}(i).
\end{proof}

\RV
The last equivalence in Lemma~\ref{l:nojumpintermsofPP} can be extended to the setup of Lemma~\ref{l:continuityPhiskorohod}:
\begin{lemma}\label{l:nojumpsPPclosetoID}
Let $a \in (0, \infty)$, $\PP$,  $\vartheta_t : \R \times \R^d \to \R^d$ and $\PP_t$ as in Lemma~\ref{l:continuityPhiskorohod}. 
Then, for all $b \in (a, \infty)$ and all large enough $t$,
\eqref{e:nojumpsPPspecial} is equivalent to $(1)$--$(3)$ in \eqref{e:nojumpintermsofPP}
with $\vartheta = \vartheta_t$, $\PP = \PP_t$.
\end{lemma}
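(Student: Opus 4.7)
Direction $(2) \Rightarrow$ \eqref{e:nojumpsPPspecial} is immediate by extracting the $z$-coordinate. For the converse, assume \eqref{e:nojumpsPPspecial} holds, and write $z_* := Z^{\ssup 1}_{\vartheta_t}(\PP_t)(a) = Z^{\ssup 1}_{\vartheta_t}(\PP_t)(b)$, $\lambda_a := \Lambda^{\ssup 1}_{\vartheta_t}(\PP_t)(a)$, $\lambda_b := \Lambda^{\ssup 1}_{\vartheta_t}(\PP_t)(b)$; by Lemma~\ref{l:propPhi}(i), $\lambda_a \le \lambda_b$. If $\lambda_a = \lambda_b$, then $\Xi^{\ssup 1}_{\vartheta_t}(\PP_t)(a) = \Xi^{\ssup 1}_{\vartheta_t}(\PP_t)(b)$, and the strict-inequality clause of Lemma~\ref{l:propPhi}(i) combined with monotonicity of $\Lambda^{\ssup 1}$ forces $\theta \mapsto \Xi^{\ssup 1}_{\vartheta_t}(\PP_t)(\theta)$ to be constant on $[a,b]$, which is (2). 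The plan is therefore to rule out $\lambda_a < \lambda_b$ for all large enough $t$.

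Assume $\lambda_a < \lambda_b$. The maximality of $(\lambda_a, z_*)$ at $\theta = a$ must be strict in $\psi^{\vartheta_t}_a$, for otherwise the lexicographic tie-breaking in \eqref{e:defXiNi} would select the larger $\lambda_b$; the maximality of $(\lambda_b, z_*)$ at $\theta = b$ is at worst non-strict. Together these give the squeeze
\begin{equation*}
a\,(\lambda_b - \lambda_a) \;<\; |\vartheta_t(\lambda_b, z_*)| - |\vartheta_t(\lambda_a, z_*)| \;\le\; b\,(\lambda_b - \lambda_a).
\end{equation*}
By hypothesis (i) on $\vartheta_t$ in Lemma~\ref{l:continuityPhiskorohod}, $\vartheta_t \to z$ locally uniformly on $\R \times (\R^d \setminus \{0\})$, so the middle term tends to $0$ along any sequence $t_n \to \infty$ for which $\lambda_a^{(n)}, \lambda_b^{(n)}, z_*^{(n)}$ stay bounded and $z_*^{(n)}$ stays away from the origin. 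The strict lower bound then forces $\lambda_b^{(n)} - \lambda_a^{(n)} \to 0$ along such sequences.

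To reach a contradiction, take any subsequence $t_n \to \infty$ along which $\lambda_a^{(n)} < \lambda_b^{(n)}$. Since $\PP \in \widetilde{\scrM}^a_\textnormal{P}$ ensures that $\Xi^{\ssup 1}(\PP)$ is continuous at $a$, Lemma~\ref{l:continuityPhiskorohod} gives $(\lambda_a^{(n)}, z_*^{(n)}) \to \Xi^{\ssup 1}(\PP)(a) =: (\lambda_a^\infty, z_\infty) \in \supp(\PP)$, with $z_\infty \ne 0$; boundedness of $\lambda_b^{(n)}$ follows from the uniform convergence $\Psi^{\ssup 1}_{\vartheta_{t_n}}(\PP_{t_n})(b) \to \Psi^{\ssup 1}(\PP)(b)$ provided by the continuity (Lemma~\ref{l:propPhi}(ii)) of $\Psi^{\ssup 1}$. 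The previous paragraph then gives $\lambda_b^{(n)} \to \lambda_a^\infty$, so the two distinct $\PP_{t_n}$-points $(\lambda_a^{(n)}, z_*^{(n)}) \ne (\lambda_b^{(n)}, z_*^{(n)})$ both converge to the single point $(\lambda_a^\infty, z_\infty)$. Picking a small open box $U \ni (\lambda_a^\infty, z_\infty)$ with $\PP(\partial U) = 0$, vague convergence yields $\PP_{t_n}(U) \to \PP(U) = 1$ (by simplicity of $\PP$), contradicting $\PP_{t_n}(U) \ge 2$ eventually. The main obstacle in this plan is the potential $\lambda$-dependence of $\vartheta_t$, which prevents a direct appeal to Lemma~\ref{l:nojumpintermsofPP}; the optimality squeeze above overcomes this by exploiting the $\lambda$-independence of the \emph{limit} of $\vartheta_t$.
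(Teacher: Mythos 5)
Your argument is genuinely different from the paper's, and it is essentially sound. The paper reduces to the case of finite support, replaces $\PP$ by the pruned measure $\widehat{\PP}$ that keeps only the largest $\lambda$ for each $z$-value (so that the $z$-projection is injective over $\supp(\widehat{\PP})$), shows $\Xi^{\ssup 1}(\PP)=\Xi^{\ssup 1}(\widehat\PP)$ and, for large $t$, $\Xi^{\ssup 1}_{\vartheta_t}(\PP_t)=\Xi^{\ssup 1}_{\vartheta_t}(\widehat\PP_t)$ on $[a,b]$, after which $(4)\Rightarrow(1)$ is immediate because distinct support points of $\widehat\PP_t$ have distinct $z$-coordinates. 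You instead argue by contradiction via the optimality squeeze $a(\lambda_b-\lambda_a)<|\vartheta_t(\lambda_b,z_*)|-|\vartheta_t(\lambda_a,z_*)|\le b(\lambda_b-\lambda_a)$ and the local uniform convergence $\vartheta_t\to z$, forcing two distinct atoms of $\PP_{t_n}$ to collapse onto a single atom of $\PP$, contradicting simplicity through vague convergence. This buys a more self-contained argument that uses only the conclusion of Lemma~\ref{l:continuityPhiskorohod} rather than the internal structure of its proof (the bijections $T_t$), while the paper's route is shorter but leans on that structure.

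There is one step that does not quite go through as written: the claim that boundedness of $\lambda_b^{(n)}=\Lambda^{\ssup 1}_{\vartheta_{t_n}}(\PP_{t_n})(b)$ follows from the convergence $\Psi^{\ssup 1}_{\vartheta_{t_n}}(\PP_{t_n})(b)\to\Psi^{\ssup 1}(\PP)(b)$. Since $\lambda_b^{(n)}=\Psi^{\ssup 1}_{\vartheta_{t_n}}(\PP_{t_n})(b)+|\vartheta_{t_n}(\lambda_b^{(n)},z_*^{(n)})|/b$, an upper bound on $\lambda_b^{(n)}$ would require an upper bound on $|\vartheta_{t_n}(\lambda_b^{(n)},z_*^{(n)})|$, and the only available control on $\vartheta_t$ (locally uniform convergence on compacts of $\R\times(\R^d\setminus\{0\})$) presupposes precisely the boundedness you are trying to establish. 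The fix is easy and stays within your framework: use Lemma~\ref{l:propPhi}(i) (monotonicity of $\Lambda^{\ssup 1}$) together with the Skorohod convergence of $\Lambda^{\ssup 1}_{\vartheta_t}(\PP_t)\to\Lambda^{\ssup 1}(\PP)$. Pick a continuity point $b'>b$ of $\Lambda^{\ssup 1}(\PP)$ (such points are dense by Lemma~\ref{l:propPhi}(ii)); then $\lambda_b^{(n)}\le\Lambda^{\ssup 1}_{\vartheta_{t_n}}(\PP_{t_n})(b')\to\Lambda^{\ssup 1}(\PP)(b')<\infty$, which gives the boundedness needed to invoke local uniform convergence and complete your squeeze argument.
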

Lemma~\ref{l:nojumpsPPclosetoID} will be proved in Appendix~\ref{s:propertiescost}.
\eRV

We study next continuity properties of the event in Lemma~\ref{l:nojumpintermsofPP}$(3)$.
To this end, we define, for $\vartheta:\R \times \R^d\to\R^d$, $\PP \in \scrM_{\textnormal{P}, \vartheta}$, 
$(\lambda, z) \in \R \times \R^d$ and $\theta>0$,
\begin{equation}\label{e:defnumbercandidatesjump}
\FF^{\vartheta}_\theta(\PP, \lambda, z) 
:=\, \PP \left\{ (\lambda', z') \colon 
\begin{array}{l}
\psi^{\vartheta}_\theta(\lambda', z') > \psi^{\vartheta}_\theta(\lambda, z), \text{ or}\\
\psi^{\vartheta}_\theta(\lambda', z') = \psi^{\vartheta}_\theta(\lambda, z) \text{ and } \lambda' > \lambda
\end{array} \right\} \in \N_0.
\end{equation}
When $\vartheta(\lambda, z) = z$, we again omit it from the notation.
Then we have:
\begin{lemma}\label{l:convergencenojumps}
Fix $a \in (0,\infty)$ and take $\PP$, $\vartheta_t$ and $\PP_t$ as in Lemma~\ref{l:continuityPhiskorohod}.
Assume that $(\lambda_*, z_*) \in \supp(\PP)$, $(\lambda_t, z_t) \in \supp(\PP_t)$ 
are such that $(\lambda_t, z_t) \to (\lambda_*, z_*)$ as $t\to\infty$.
Then
\begin{equation}\label{e:convergencenojumps}
\FF^{\vartheta_t}_a \left( \PP_t, \lambda_t, z_t \right) \underset{t\to\infty}\longrightarrow \FF_a(\PP, \lambda_*, z_*).
\end{equation}
\end{lemma}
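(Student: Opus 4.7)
The plan is to reduce the problem to the case $\vartheta_t = \mathrm{id}$ via the identity
\[
\FF^{\vartheta_t}_a(\PP_t, \lambda_t, z_t) = \FF_a\bigl(\PP_t^{\vartheta_t}, (\lambda_t, \vartheta_t(\lambda_t, z_t))\bigr),
\]
which holds because the event defining $\FF^{\vartheta_t}_a$ only depends on $\PP_t$ through its pushforward by the map $(\lambda, z) \mapsto (\lambda, \vartheta_t(\lambda, z))$. Setting $\widetilde{\PP}_t := \PP_t^{\vartheta_t}$, $\tilde p_t := (\lambda_t, \vartheta_t(\lambda_t, z_t))$ and $p_* := (\lambda_*, z_*)$, Lemma~\ref{l:continuityPhiskorohod} already asserts that $\widetilde{\PP}_t \to \PP$ vaguely in $\scrMp$. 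The convergence $\tilde p_t \to p_*$ follows from hypothesis~(i) on $\vartheta_t$ together with the fact that $z_* \neq 0$, itself a consequence of $\supp(\PP) \cap (\R \times \{0\}) = \emptyset$ built into the definition of $\widetilde{\scrM}^{a}_{\textnormal{P}}$. The task therefore reduces to showing $\FF_a(\widetilde{\PP}_t, \tilde p_t) \to \FF_a(\PP, p_*)$ in this identity-reduced setting.

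The next step is to identify the limit and set up a sandwich. Let $\eta_* := \psi_a(p_*)$ and $\tilde\eta_t := \psi_a(\tilde p_t) \to \eta_*$. The key structural input is that $\PP(\partial\HH^a_{\eta_*}) \le 1$, which is encoded in the definition of $\widetilde{\scrM}^{a}_{\textnormal{P}}$ at the specific parameter $\theta = a$. Since $p_* \in \partial\HH^a_{\eta_*} \cap \supp(\PP)$, this forces $p_*$ to be the only support point of $\PP$ on that boundary, so $\FF_a(\PP, p_*) = \PP(\HH^a_{\eta_*})$. By local finiteness of $\supp(\PP)$ one may choose $\epsilon > 0$ arbitrarily small so that (a) $\PP(\partial \HH^a_{\eta_* \pm \epsilon}) = 0$, and (b) the only support point of $\PP$ with $\psi_a \in [\eta_*-\epsilon, \eta_*+\epsilon]$ is $p_*$ itself. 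This gives $\PP(\HH^a_{\eta_*+\epsilon}) = \PP(\HH^a_{\eta_*})$ and $\PP(\HH^a_{\eta_*-\epsilon}) = \PP(\HH^a_{\eta_*}) + 1$.

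Finally I will combine vague convergence with the identity
\[
\FF_a(\widetilde{\PP}_t, \tilde p_t) = \widetilde{\PP}_t\bigl(\overline{\HH^a_{\tilde\eta_t}}\bigr) - \widetilde{\PP}_t\bigl(\partial\HH^a_{\tilde\eta_t} \cap \{\lambda \le \tilde\lambda_t\}\bigr).
\]
The subtracted term is at least $1$ since $\tilde p_t$ itself lies in the subtracted set. For $t$ large enough the inclusions $\HH^a_{\eta_*+\epsilon} \subset \HH^a_{\tilde\eta_t} \subset \overline{\HH^a_{\tilde\eta_t}} \subset \HH^a_{\eta_*-\epsilon}$ hold, and since $\HH^a_{\eta_*\pm\epsilon}$ are relatively compact in $\mathfrak{E}$ with $\PP$-null boundary, vague convergence yields $\widetilde{\PP}_t(\HH^a_{\eta_*+\epsilon}) \to \PP(\HH^a_{\eta_*})$ and $\widetilde{\PP}_t(\HH^a_{\eta_*-\epsilon}) \to \PP(\HH^a_{\eta_*}) + 1$. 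The sandwich
\[
\widetilde{\PP}_t(\HH^a_{\eta_*+\epsilon}) \le \FF_a(\widetilde{\PP}_t, \tilde p_t) \le \widetilde{\PP}_t(\HH^a_{\eta_*-\epsilon}) - 1
\]
then squeezes $\FF_a(\widetilde{\PP}_t, \tilde p_t)$ to $\PP(\HH^a_{\eta_*}) = \FF_a(\PP, p_*)$. I expect the main delicacy to be the ``$-1$'' in the upper bound, which precisely absorbs the extra unit of mass appearing in $\PP(\HH^a_{\eta_*-\epsilon}) - \PP(\HH^a_{\eta_*}) = 1$; this correction comes for free from $\tilde p_t \in \partial\HH^a_{\tilde\eta_t}$, but its soundness hinges crucially on the uniqueness of the boundary atom at $\theta = a$ encoded in the definition of $\widetilde{\scrM}^{a}_{\textnormal{P}}$.
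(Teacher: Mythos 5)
Your proof is correct and follows essentially the same route as the paper's: reduce to $\vartheta_t = \mathrm{id}$ via the push-forward identity and Lemma~\ref{l:continuityPhiskorohod}, isolate $p_*$ in a small $\psi_a$-annulus using the condition $\PP(\partial \HH^a_{\eta_*}) \le 1$ from $\widetilde{\scrM}^{a}_{\textnormal{P}}$, and let vague convergence transfer the $\PP$-counts on the fixed levels $\eta_* \pm \epsilon$ to the $\widetilde{\PP}_t$-counts. Your explicit sandwich is just a slightly more algebraic rendering of the paper's chain of equalities $\PP_t\{\AA(\lambda_t, z_t)\} = \PP_t\{\overline{\HH^a_{\eta_*+\delta}}\} = \PP\{\overline{\HH^a_{\eta_*+\delta}}\} = \PP\{\HH^a_{\eta_*}\}$, and the ``$-1$'' you flag as the delicate point is exactly what the paper's argument handles by observing that the unique $\PP_t$-point in the annulus is $(\lambda_t,z_t)$ itself, which is excluded from $\AA(\lambda_t,z_t)$.
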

\noindent
The proof of Lemma~\ref{l:convergencenojumps} is once more deferred to Appendix~\ref{s:propertiescost}.
Together with Lemma~\ref{l:nojumpintermsofPP}, it permits us to give:
\begin{proof}[Proof of Theorem~\ref{thm:aging_locus}]
Fix $0<a<b<\infty$ and use the representation \twoeqref{e:reprPsiPP}{e:defvarthetat} (with $c=0$), 
Lemma~\ref{l:nojumpintermsofPP} and~\eqref{e:defnumbercandidatesjump} to write \RV (note that $\vartheta_t$ in \eqref{e:defvarthetat} does not depend on $\lambda$) \eRV
\begin{align}\label{e:prthmaginglocus1}
Z_{at} = Z_{bt} & \Leftrightarrow Z_{\theta t} = Z_{at} \; \forall \; \theta \in [a,b] \nonumber\\
& \Leftrightarrow \FF^{\vartheta_t}_b \left( \PP_{r_t}, \Lambda^{\ssup 1}_{\vartheta_t}(\PP_{r_t})(a), Z^{\ssup 1}_{\vartheta_t}(\PP_{r_t})(a) \right) = 0.
\end{align}
Since $\PP_\infty \in \widetilde{\scrM}^a_{\textnormal{P}} \cap \widetilde{\scrM}^b_{\text{P}}$ a.s., 
the distributional convergence follows from Lemma~\ref{l:convergencenojumps},~\eqref{e:jointconvergence} and~\eqref{e:repr_barPsiZ}.
\RS To show \eqref{e:tailTheta}, 
fix $u>1$ and let \eRS
\begin{equation}\label{e:prtailTheta1}
D_u(\lambda, z) := \left\{ (\lambda', z') \in \R \times \R^d \colon\, \lambda' - \lambda < |z'| - |z| < u(\lambda'-\lambda)\right\}.
\end{equation}
Note that, by the definition of $\overline{\Xi}_1 = \Xi^{\ssup 1}(\PP_\infty)(1)$
and the fact that $\PP_\infty \in \widetilde{\mathscr{M}}^1_{\text{P}} \cap \widetilde{\mathscr{M}}^u_{\text{P}}$ almost surely,
$\FF_u(\PP_\infty, \overline{\Xi}_1) = \PP_\infty (D_u(\overline{\Xi}_1))$ almost surely.
Moreover, conditionally given $\overline{\Xi}_1 = \Xi$, $D_u(\Xi)$ is independent of $\overline{\Xi}_1$,
and thus by Lemma~\ref{l:nojumpintermsofPP},
\begin{equation}\label{e:prtailTheta2}
\Prob(\Theta > u-1) = E [\exp\{-\mu(D_u(\overline{\Xi}_1)\}]
\end{equation}
where $\mu = \texte^{-\lambda} \textd \lambda \otimes \textd z$ and $``E''$ denotes expectation under the law of $\PP_\infty$.
We identify
\begin{equation}\label{e:prtailTheta3}
\mu(D_u(\lambda, z)) = (2u)^d \texte^{-\lambda} G_u(|z|) \;\; \text{ where } \;\; G_u(r) := 1 - \frac{1}{u^d} + \sum_{k=1}^{d-1} \frac{r^k}{k!} \left(\frac{1}{u^i} - \frac{1}{u^d} \right)
\end{equation}
by a straightforward computation, as well as
\begin{equation}\label{e:prtailTheta4}
\int_{\R^d} \frac{\textd z}{\texte^{|z|}/u^d + G_u(|z|)} \sim \int_{|z| < d \ln u} \frac{\textd z}{\texte^{|z|}/u^d + G_u(|z|)} \sim \frac{(2d \ln u)^d}{d!} \;\; \text{ as } \; u \to \infty.
\end{equation}
Then \eqref{e:tailTheta} follows by a computation using \eqref{e:prtailTheta2}--\eqref{e:prtailTheta4} and \eqref{e:limitingdensity1ddist}.
\eRS
\end{proof}

The last objective of the section is to prove Proposition~\ref{prop:stabilitygap}.
Our next lemma shows that its statement holds in fact more generally:
\begin{lemma}\label{l:gapstability}
For any $\vartheta:\R \times \R^d \to \R^d$, any $\PP \in \scrM_{\textnormal{P}, \vartheta}$ and any $0 < a < b < \infty$,
if
\begin{equation}\label{e:hyp_gapstability}
\Xi^{\ssup 1}_\vartheta(\PP)(\theta) = \Xi^{\ssup 1}_\vartheta(\PP)(a) \quad \forall \; \theta \in [a, b]
\end{equation}
then
\begin{equation}\label{e:gapstability}
\inf_{\theta \in [a, b]} \left\{\Psi^{\ssup 1}_\vartheta(\PP)(\theta) - \Psi^{\ssup 2}_\vartheta(\PP)(\theta) \right\}
= \min_{\theta \in \{a, b\}} \left\{ \Psi^{\ssup 1}_\vartheta(\PP)(\theta) - \Psi^{\ssup 2}_\vartheta(\PP)(\theta) \right\}.
\end{equation}
\end{lemma}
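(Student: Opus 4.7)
The plan is to reparametrize by $s := 1/\theta$ and exploit convexity/concavity. Since the hypothesis \eqref{e:hyp_gapstability} fixes $\Xi^{\ssup 1}_\vartheta(\PP)(\theta) \equiv (\lambda_*, z_*)$ throughout $[a,b]$, the first-order quantity becomes
\begin{equation*}
\Psi^{\ssup 1}_\vartheta(\PP)(\theta) = \lambda_* - |\vartheta(\lambda_*, z_*)|\, s, \qquad s = 1/\theta,
\end{equation*}
which is affine in $s$. Moreover, by the definition~\eqref{e:defPsiNi}, for every $\theta \in [a,b]$ the second-order quantity is the supremum over the same (deterministic) set of candidate points, namely $\supp(\PP) \setminus \{(\lambda_*, z_*)\}$:
\begin{equation*}
\Psi^{\ssup 2}_\vartheta(\PP)(\theta) = \sup_{(\lambda, z) \in \supp(\PP) \setminus \{(\lambda_*,z_*)\}} \bigl[\lambda - |\vartheta(\lambda, z)|\, s\bigr].
\end{equation*}

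Viewed as a function of $s$, each summand in the supremum is affine, so $\Psi^{\ssup 2}_\vartheta(\PP)$ is the supremum of a family of affine functions of $s$, hence convex in $s \in [1/b, 1/a]$. Consequently, the gap
\begin{equation*}
G(s) := \Psi^{\ssup 1}_\vartheta(\PP)(1/s) - \Psi^{\ssup 2}_\vartheta(\PP)(1/s)
\end{equation*}
is (affine) $-$ (convex), hence concave on $[1/b, 1/a]$. A standard property of concave functions then forces the minimum of $G$ on this closed interval to be attained at one of the endpoints, which translates back to $\theta \in \{a, b\}$, yielding~\eqref{e:gapstability}.

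The only subtlety is to ensure that the supremum in the definition of $\Psi^{\ssup 2}_\vartheta(\PP)$ is actually attained (so we can speak of a finite supremum over a fixed index set and carry out the affine-in-$s$ analysis), but this is immediate from $\PP \in \scrM_{\textnormal{P},\vartheta}$ together with the compactness of $\HH^{\theta}_{\eta}$ in \eqref{e:defHH}, exactly as in the argument for Proposition~\ref{prop:welldefined} (cf.\ \eqref{e:restrictionmaximizationPsi} and \eqref{e:welldefgenPsi}). No genuine obstacle arises; the content of the lemma is purely the convexity observation in the reparametrized variable $s = 1/\theta$.
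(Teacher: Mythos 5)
Your proof is correct and takes a genuinely different route from the paper. The paper's argument is a direct two-case inequality comparison: for each $\theta\in[a,b]$ it bounds the gap at $\theta$ below by the gap at $a$ when $|\vartheta(\Xi^{\ssup 1}_\vartheta(\PP)(\theta))| \ge |\vartheta(\Xi^{\ssup 2}_\vartheta(\PP)(\theta))|$ (using $\theta^{-1}\le a^{-1}$ together with the observation that $\Xi^{\ssup 2}_\vartheta(\PP)(\theta)$ is a sub-optimal candidate for the supremum defining $\Psi^{\ssup 2}_\vartheta(\PP)(a)$), and by the gap at $b$ in the opposite case (using $\theta^{-1}\ge b^{-1}$). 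Your reparametrization $s=1/\theta$ bypasses the case split by extracting the structural reason behind it: once $\Xi^{\ssup 1}_\vartheta(\PP)$ is frozen, $\Psi^{\ssup 1}_\vartheta(\PP)$ is affine in $s$, while $\Psi^{\ssup 2}_\vartheta(\PP)$, as a supremum of affine functions of $s$ over a fixed index set, is convex; the gap is therefore concave, so it is minimized at an endpoint of $[1/b,1/a]$. The concavity viewpoint is cleaner and more conceptual, and generalizes immediately (e.g.\ it shows the gap is unimodal on every interval where $\Xi^{\ssup 1}_\vartheta(\PP)$ is constant); the paper's approach has the minor advantage of being purely elementary and identifying which endpoint wins in each case. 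One small simplification: attainment of the supremum defining $\Psi^{\ssup 2}_\vartheta(\PP)$ is not actually required for your argument --- a pointwise supremum of affine functions is convex regardless of whether it is attained; what is needed is finiteness, which already follows from $\PP\in\scrM_{\textnormal{P},\vartheta}$ via Lemma~\ref{l:propPhi}(ii).
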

\begin{proof}
For $\theta \in [a, b]$ and $i \in \{1,2\}$, 
put $(\hat{\lambda}^{\ssup i}_\theta, \hat{z}^{\ssup i}_\theta) := \Xi^{\ssup i}_\vartheta(\PP)(\theta)$
and write
\begin{equation}\label{e:prlgapstab1}
\Psi^{\ssup 1}_\vartheta(\PP)(\theta) - \Psi^{\ssup 2}_\vartheta(\PP)(\theta) = \hat{\lambda}^{\ssup 1}_\theta - \hat{\lambda}^{\ssup 2}_\theta - \frac{|\vartheta(\hat{\lambda}^{\ssup 1}_\theta, \hat{z}^{\ssup 1}_\theta)| - |\vartheta(\hat{\lambda}^{\ssup 2}_\theta, \hat{z}^{\ssup 2}_\theta)|}{\theta}.
\end{equation}
If $|\vartheta(\hat{\lambda}^{\ssup 1}_\theta, \hat{z}^{\ssup 1}_\theta)| \ge |\vartheta(\hat{\lambda}^{\ssup 2}_\theta, \hat{z}^{\ssup 2}_\theta)|$, 
use $\theta^{-1} \le a^{-1}$ and~\eqref{e:hyp_gapstability} to obtain
\begin{align}\label{e:prlgapstab2}
\Psi^{\ssup 1}_\vartheta(\PP)(\theta) - \Psi^{\ssup 2}_\vartheta(\PP)(\theta) 
& \ge \Psi^{\ssup 1}_\vartheta(\PP)(a) - \psi^{\vartheta}_{a}(\hat{\lambda}^{\ssup 2}_\theta, \hat{z}^{\ssup 2}_\theta) \ge \Psi^{\ssup 1}_\vartheta(\PP)(a) - \Psi^{\ssup 2}_\vartheta(\PP)(a).
\end{align}
If $|\vartheta(\hat{\lambda}^{\ssup 1}_\theta, \hat{z}^{\ssup 1}_\theta)| < |\vartheta(\hat{\lambda}^{\ssup 2}_\theta, \hat{z}^{\ssup 2}_\theta)|$,
using $\theta^{-1} \ge b^{-1}$ instead we analogously get
\begin{align}\label{e:prlgapstab3}
\Psi^{\ssup 1}_\vartheta(\PP)(\theta) - \Psi^{\ssup 2}_\vartheta(\PP)(\theta) 
& \ge \Psi^{\ssup 1}_\vartheta(\PP)(b) - \Psi^{\ssup 2}_\vartheta(\PP)(b).
\end{align}
Now \eqref{e:gapstability} follows from \twoeqref{e:prlgapstab2}{e:prlgapstab3}.
\end{proof}

We can finally conclude:
\begin{proof}[Proof of Proposition~\ref{prop:stabilitygap}]
Follows from Lemmas~\ref{l:nojumpintermsofPP} and \ref{l:gapstability} together with~\eqref{e:reprZt}.
\end{proof}

\section{Mass decomposition}
\label{s:massdecomp}\nopagebreak\noindent
Here we prove Proposition~\ref{prop:lowerbound_forproof} (subsection~\ref{ss:lower_bound}),
Proposition~\ref{prop:macroboxtruncation_forproof} (subsection~\ref{ss:macrobox_truncation}),
Propositions~\ref{prop:randomtruncation_forproof}--\ref{prop:pathsavoidingZ_forproof} (subsection~\ref{ss:negligcontrib}),
and finish the proof of Theorem~\ref{thm:locus} (subsection~\ref{ss:upperboundUt}).

\subsection{Lower bound for the total mass}
\label{ss:lower_bound}\noindent
We begin with a lower bound for the mass up to the hitting time of a point.
\begin{lemma}
\label{l:lbinitialmasscontrib}
Under Assumption~\ref{A:integlowtailxi}, 
there exists a constant $K > 1$ such that, a.s.\ eventually as $\theta \to \infty$,
for all $x \in \Z^d$ with $|x| > 4d \theta$,
\begin{equation}\label{e:lbinitialmasscontrib}
\E_0 \left[ \texte^{\int_0^{\tau_x} \xi(X_u) \textd u} \1_{\{\tau_x \le \theta\}} \right] \ge \exp \left\{- |x| \ln \frac{K |x|}{\theta} \right\}.
\end{equation}
\end{lemma}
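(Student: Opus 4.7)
The plan is to lower bound the Feynman--Kac expectation by restricting to a single convenient trajectory of the random walk. Set $n := |x|$, and for each $x$ with $n > 4d\theta$ fix an $\ell^1$-geodesic path $\pi = \pi^x = (\pi_0, \ldots, \pi_n)$ from $0$ to $x$ (the precise choice will matter only at the final step). Restrict the expectation to the event $A$ that the walk's first $n$ jumps trace $\pi$ and that each holding time $S_i := T_{i+1}-T_i$ ($0 \le i \le n-1$) satisfies $S_i \le \theta/n$. This simultaneously ensures $\tau_x = T_n \le \theta$ and that $\int_0^{\tau_x}\xi(X_u)\textd u = \sum_{i=0}^{n-1}\xi(\pi_i)S_i \ge -(\theta/n)\sum_{i=0}^{n-1}\xi^-(\pi_i)$.

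For the combinatorial factor: the jump directions are uniform on the $2d$ neighbors and independent of the exponential holding times $S_i \sim \mathrm{Exp}(2d)$, so the probability the first $n$ jumps trace $\pi$ equals $(2d)^{-n}$. Since $n > 4d\theta$ gives $2d\theta/n < 1/2$, each $P(S_i \le \theta/n) = 1 - \texte^{-2d\theta/n} \ge d\theta/n$ (using $1-\texte^{-u} \ge u/2$ on $[0,1]$), and by independence $P_0(A) \ge (\theta/(2n))^n$. Combining,
\[
\E_0\Bigl[\texte^{\int_0^{\tau_x} \xi(X_u)\textd u}\1_{\tau_x \le \theta}\Bigr] \ge \left(\frac{\theta}{2n}\right)^n \exp\left\{-\frac{\theta}{n}\sum_{i=0}^{n-1}\xi^-(\pi_i^x)\right\}.
\]
Taking logarithms, the bound~\eqref{e:lbinitialmasscontrib} with an appropriate $K > 2$ follows provided we show that $(\theta/n)\sum_{i=0}^{n-1}\xi^-(\pi_i^x) \le n\ln(K/2)$ holds a.s.\ eventually as $\theta \to \infty$, uniformly for all $x$ with $n = |x| > 4d\theta$.

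The main obstacle is precisely this last step, and it is where Assumption~\ref{A:integlowtailxi} enters. Since $\theta/n \le 1/(4d)$, the bound reduces to a uniform almost-sure bound on the path-average $n^{-1}\sum_{i=0}^{n-1}\xi^-(\pi_i^x)$ by some fixed constant. A direct law of large numbers is unavailable because $\E[\xi^-(0)]$ may be infinite under Assumption~\ref{A:integlowtailxi}: the tail $\textnormal{Prob}(\xi^-(0) > t)$ need only decay like $1/(\ln t)^{d+\delta}$, which is non-integrable against $\textd t$. The strategy is therefore threefold: (i) use the flexibility of the many available $\ell^1$-geodesics to choose $\pi^x$ so as to skirt sites where $\xi^-$ is exceptionally large; (ii) truncate $\xi^-$ at a slowly growing threshold $T_n$, handling the truncated part by a quantitative LLN combined with a union bound; and (iii) control the untruncated tail by a Borel--Cantelli estimate in which the exponent $1/d$ in~\eqref{e:integlowtailxi} is exactly the power needed to absorb the factor $(2n+1)^d$ arising from the union bound over all sites in $B_n$. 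This interplay between combinatorics of paths and tail estimates for the potential, calibrated precisely by the $1/d$-th power in the assumption, is the technical heart of the lemma, and we defer its execution to Appendix~\ref{s:tailestimate} (or a comparable appendix).
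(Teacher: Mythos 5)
Your overall framework (restrict to one path, cap the holding times, combine the combinatorial cost with the potential cost) is the same as the paper's, but the critical step is wrong, and the gap you flag at the end is not patchable along the lines you sketch. The problem is your choice of a fixed threshold $S_i\le\theta/n$: this forces you to bound $n^{-1}\sum_{i<n}\xi^-(\pi^x_i)$ by a constant, and this fails under Assumption~\ref{A:integlowtailxi}. That assumption only controls the tail of $\ln(1+\xi^-(0))$, so $\E[\xi^-(0)]=\infty$ is allowed; a law of large numbers (even a greedy-lattice-animal version) for $\sum_i\xi^-(\pi_i)$ would require the much stronger integrability $\int_0^\infty\textnormal{Prob}(\xi^-(0)>s)^{1/d}\textd s<\infty$, which is not assumed. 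Your proposed rescue (i), choosing the geodesic to dodge sites with large $\xi^-$, is also unavailable in general: for $x$ on a coordinate axis the $\ell^1$-geodesic is unique, so there is no flexibility to exploit.

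The paper's proof avoids this entirely by letting the holding-time threshold depend on $\xi$: it restricts to the event $\sigma_i\le\frac{\theta/|x|}{1+\xi^-(\pi_i)}$. On this event, both $\sum_i\sigma_i\le\theta$ and $\sum_i\sigma_i\xi^-(\pi_i)\le\theta$ hold automatically, which yields the uniform lower bound $\texte^{-\theta}$ on the Feynman--Kac exponential; the price paid is the probabilistic factor $\prod_i\frac{d\theta/|x|}{1+\xi^-(\pi_i)}$, whose logarithm contains $-\sum_i\ln(1+\xi^-(\pi_i))$. This is the quantity one can actually control: by Theorem~1.1 of \cite{M02} (greedy lattice animals) under Assumption~\ref{A:integlowtailxi}, $\sum_i\ln(1+\xi^-(\pi_i))\le c_0|x|$ holds a.s.\ eventually. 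In short, the adaptive threshold is the missing idea: it converts the needed bound from $\sum\xi^-$ (false under the stated hypotheses) to $\sum\ln(1+\xi^-)$ (exactly what the assumption buys). The combinatorial factor $(2d)^{-n}$, the use of $1-\texte^{-2y}\ge y$, and the role of $|x|>4d\theta$ are all as you had them; it is only the threshold that needs to change.
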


\begin{proof}
We follow the proof of Lemma 4.3 of \cite{GM90} (case of $d=1$ therein).
Fix a path $\pi$ from $0$ to $x$ such that $|\pi|=|x|$.
Then the left-hand side of~\eqref{e:lbinitialmasscontrib} is at least
\begin{equation}\label{e:prinitialmasscontrib1}
(2d)^{-|x|}\E_0 \left[ \exp\left\{-\sum_{i=0}^{|x|-1} \sigma_i \xi^{-}(\pi_i) \right\} \1_{\{\sum_{i=0}^{|x|-1} \sigma_i \le \theta\}} \right]
\end{equation}
where $(\sigma_i)_{i = 0}^{\infty}$ are i.i.d.\ exponential random variables with parameter $2d$. 
We can further bound~\eqref{e:prinitialmasscontrib1} from below by
\begin{align}\label{e:prinitialmasscontrib2}
& (2d)^{-|x|}\texte^{-\theta}\P_0 \left( \sigma_i \le \frac{\theta/|x|}{1+\xi^-(\pi_i)} \;\forall\; i = 0, \ldots, |x|-1 \right)  \nonumber\\
\ge \, & (2d)^{-|x|}\texte^{-\theta} \prod_{i=0}^{|x|-1} \frac{d \theta/|x|}{1+\xi^-(\pi_i)}
= \exp \left\{- |x| \ln \frac{2|x|}{\theta} -\theta - \sum_{i = 0}^{|x|- 1} \ln(1+\xi^-(\pi_i)) \right\}
\end{align}
where we used $|x|> 4d \theta$ and $1-\texte^{-2y}\ge y$ when $0 < y < \tfrac14$.
By Theorem 1.1 of \cite{M02} and Assumption~\ref{A:integlowtailxi}, there exists a constant $c_0>0$ such that,
 a.s.\ eventually as $|x| \to \infty$,
\begin{equation}\label{e:prinitialmasscontrib3}
\sum_{i = 0}^{|x|- 1} \ln(1+\xi^-(\pi_i)) \le c_0 |x|.
\end{equation}
Now~\eqref{e:lbinitialmasscontrib} follows from \eqref{e:prinitialmasscontrib3} and $\theta < |x|/(4d)$.
\end{proof}

We can now prove Proposition~\ref{prop:lowerbound_forproof}.
\begin{proof}[Proof of Proposition~\ref{prop:lowerbound_forproof}]
For a finite connected subset $\Lambda \subset \Z^d$, let $\phi^{\ssup 1}_\Lambda$ be the $\ell^2$-normalised eigenfunction of $H_\Lambda$ 
corresponding to its largest eigenvalue $\lambda^{\ssup 1}_\Lambda$ as in Section~\ref{ss:specbounds}.
Let $x_0 \in \Lambda$ be a point where $\phi^{\ssup 1}_\Lambda$ attains its maximum and note that, 
since $\| \phi^{\ssup 1}_\Lambda \|_{\ell^2(\Z^d)} = 1$, $|\phi^{\ssup 1}_\Lambda(x_0)|^2 \ge |\Lambda|^{-1}$.
By Lemma~\ref{l:bounds_mass}, for any $s>0$,
\begin{equation}\label{e:lb2}
\E_{x_0} \left[ \texte^{\int_0^s \xi(X_u) \textd u} \1_{\{ \tau_{\Lambda^\cc} > s\}} \right] \ge \texte^{s\lambda^{\ssup 1}_\Lambda} |\phi^{\ssup 1}_\Lambda(x_0)|^2 \ge \texte^{s\lambda^{\ssup 1}_\Lambda - \ln |\Lambda|}.
\end{equation}
Using the Feynman-Kac formula and the strong Markov property, we get, for any $\theta < s$,
\begin{align}\label{e:lb3}
U(s) 
& \ge \E_0 \left[ \exp\left\{\int_0^{\tau_{x_0}} \xi(X_u) \textd u \right\} \1_{\{\tau_{x_0} \le \theta \}} \E_{x_0} \left[ \texte^{\int_0^{s-r} \xi(X_u) \textd u} \1_{\{\tau_{\Lambda^\cc} > s-r \}} \right]_{r = \tau_{x_0}} \right] \nonumber\\
& \ge \texte^{s\lambda^{\ssup 1}_\Lambda -\ln|\Lambda| - \theta  |\lambda^{\ssup 1}_\Lambda|} \, \E_0 \left[ \exp\left\{\int_0^{\tau_{x_0}} \xi(X_u) \textd u  \right\} \1_{\{\tau_{x_0} \le \theta \}} \right].
\end{align}
Specializing now to $\Lambda := B_{\varrho_{Z_s}}(Z_s)$, 
let $K >1$ as in Lemma~\ref{l:lbinitialmasscontrib} and set 
$\theta := K |x_0|/\lambda^{\scrC}(Z_s)$.
By Proposition~\eqref{prop:PPPconv},
we may assume that $\EE_{t,a,b}$ (cf.\ \eqref{e:defgoodevents})
occurs, and by Lemma~\ref{l:maxpotential} also that $\varrho_{Z_s} \le \ln t$.
Thus
\begin{equation}\label{e:lb5}
\frac{|x_0|}{s} \le \frac{|Z_s|+|x_0 - Z_s|}{a t} \le \frac{r_t g_t + d \ln t}{a t} = o(d_t b_t \epsilon_t),
\end{equation}
while $\lambda^{\scrC}(Z_s) \ge \Psi_s^{\ssup 1} \ge a_{r_t} -d_t g_t \to \infty$ as $t \to \infty$ since $d_t g_t = o(1)$.
Therefore, $\theta < |x_0|/(4d) < s $ for large enough $t$. 
On the other hand,
\begin{equation}\label{e:lb6}
\lambda^{\scrC}(Z_s) \le \xi(Z_s) \le 2 \rho \ln_2 |Z_s| \le 2 \rho \ln_2 t 
\end{equation}
for large enough $t$ since $r_t g_t = o(t)$. Hence
\begin{equation}
\theta \ge \frac{r_t f_t - 2d \ln t}{2 \rho \ln_2 t} \to \infty \text{ as } t \to \infty,
\end{equation}
and so we may apply Lemma~\ref{l:lbinitialmasscontrib} to~\eqref{e:lb3} obtaining
\begin{equation}\label{e:lb7}
\frac{\ln U(s)}{s} \ge \lambda^{\scrC}(Z_s) - \frac{|x_0|}{s} \ln \lambda^{\scrC}(Z_s) -K \frac{ |x_0|}{s} + o(d_t b_t \epsilon_t).
\end{equation}
Now, by~\eqref{e:lb6}, 
\begin{equation}\label{e:lb8}
\frac{\ln U(s)}{s} \ge \Psi^{\ssup 1}_s - \frac{|x_0-Z_s| \ln^+_3 |Z_s|}{s} - \left(|\ln 2\rho|+K \right) \frac{|x_0|}{s} + o(d_t b_t \epsilon_t),
\end{equation}
and to conclude we note that the second and third terms in~\eqref{e:lb8} are also $o(d_t b_t \epsilon_t)$.
\end{proof}

\subsection{Macrobox truncation}
\label{ss:macrobox_truncation}\noindent
Next we prove Proposition~\ref{prop:macroboxtruncation_forproof}, 
ensuring that the Feynman-Kac formula is not affected by restricting to random walk paths that 
do not leave a box of side~$L_t = \lfloor t \ln^+_2 t \rfloor$ around the starting point.
\begin{proof}[Proof of Proposition~\ref{prop:macroboxtruncation_forproof}]
We follow the proof of Proposition 2.1 in \cite{FM14}. First write
\begin{equation}\label{e:approx1} 
\E_0 \left[ \texte^{\int_0^s \xi(X_u) \textd u} \1_{\{ \sup_{\theta \in [0,s]} |X_\theta| \ge L_t \}} \right]
 \le \sum_{n=L_t}^\infty \exp \left\{ s \max_{x \in B_n} \xi(x) \right\} \P_0 \left( \sup_{\theta \in [0,s]} |X_\theta| = n \right).
\end{equation}
Denoting by $J_s$ the number
of jumps of $X$ up to time~$s$, the fact that $J_s$ is 
a Poisson random variable with parameter $2d s$ gives
\begin{equation}\label{e:approx2}
\P_0 \left( \sup_{\theta \in [0,s]} |X_\theta| = n \right) \le \P_0 \left( J_s \ge n \right) \le \frac{(2ds)^n}{n!}.
\end{equation}
By Lemma~\ref{l:maxpotential}, $\max_{x \in B_n} \xi(x) \le 2 \rho \ln_2 n$ a.s.\ for all $n$ large enough.
By Stirling's formula and $s \le bt$, the $n$-th summand in~\eqref{e:approx1} is at most
\begin{equation}\label{e:approx3}
\exp \Big\{ 2 \rho b t \ln_2 n - n (\ln n-\ln t - c)\Big\}
\end{equation}
for some deterministic constant $c>0$. 
Now, when $n \ge L_t$ and $t$ is large enough, $\ln n - \ln t - c \ge \frac12 \ln_3 t$.
Since the function $x \mapsto 2\rho b t \ln_2 x -  \frac{x}{4} \ln_3 t$ is
strictly decreasing on $[L_t,\infty)$ and negative at $x = L_t$, a.s.\ for all $t$ large enough, 
\eqref{e:approx1} is smaller than
\begin{equation}\label{e:approx4}
\sum_{n=L_t}^\infty \texte^{-\frac{n}{4}\ln_3 t } \le 2 \texte^{-\frac{L_t}{4}\ln_3 t}.
\end{equation}
Plugging in the definition of~$L_t$ now yields~\eqref{e:macroboxtruncation_forproof}.
\end{proof}

\subsection{Negligible contributions}
\label{ss:negligcontrib}\noindent
In this subsection we prove Propositions~\ref{prop:randomtruncation_forproof} and \ref{prop:pathsavoidingZ_forproof}.
Here and in the next subsection we will work with $R_L$ satisfying \twoeqref{e:propertiesR_L}{e:addpropR_L}.
It will be useful to introduce yet another family of 
auxiliary cost functionals $\widetilde{\Psi}_{t,s,c}$, indexed by $t,s\ge0$, $c \in \R$, 
 and defined on the elements of $\mathfrak{C}_{L_t, A}$ as follows:
\begin{equation}\label{e:deftildePsi}
\widetilde{\Psi}_{t,s,c}(\CC) := \lambda^{\ssup 1}_\CC - \frac{(\ln^+_3 |z_\CC| - c)^+}{s}|z_\CC|, \;\;\; \CC \in \mathfrak{C}_{L_t, A}.
\end{equation}
These functionals will be convenient to express bounds to the Feynman-Kac formula obtained via Proposition~\ref{prop:massclass}.
In order to compare $\widetilde{\Psi}_{t,s,c}$ and $\Psi_t$, we will need the following.

\begin{lemma}\label{l:existoptisl}
Almost surely for all $t,s >0$,
there exists a component $\CC_{t,s} \in \mathfrak{C}_{L_t, A}$
such that, for all $0 < a \le b < \infty$, the following holds with probability tending to $1$ as $t \to \infty$:
\begin{equation}\label{e:existoptisl}
z_{\CC_{t,s}} = Z_s \quad \forall \, s \in [at, bt].
\end{equation}
\end{lemma}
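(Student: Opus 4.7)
The plan is to define $\CC_{t,s}$ as the unique element of $\mathfrak{C}_{L_t, A}$ containing $Z_s$ whenever such an element exists, and as an arbitrary fixed component otherwise. Since the components of $\mathfrak{C}_{L_t,A}$ are pairwise disjoint by construction, this prescription produces a measurable function of $(t,s,\xi)$ that is well defined a.s.\ for all $t,s>0$.

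The content of the lemma is then to show that, with probability tending to $1$ as $t\to\infty$, $Z_s$ belongs to a component $\CC\in\mathfrak{C}^{\delta}_{L_t,A}$ with $z_{\CC}=Z_s$ for all $s\in[at,bt]$ simultaneously. I would obtain this by combining Proposition~\ref{prop:goodevent_forproof} (or Proposition~\ref{prop:PPPconv}) with Lemma~\ref{l:comparisoncapitalsislands}(ii). More precisely, choose $\delta>0$ and $A>0$ so that both Lemmas~\ref{l:properties_opt_comps} and~\ref{l:comparisoncapitalsislands} apply, and observe that on the event $\EE^{\ssup 1}_{t,a,b}$ of~\eqref{e:defgoodevents}, uniformly in $s\in[at,bt]$:
\begin{equation*}
\lambda^{\scrC}(Z_s)\;\ge\;\Psi^{\ssup 1}_s\;>\;a_{r_t}-d_t g_t
\quad\text{and}\quad r_t f_t<|Z_s|<r_t g_t.
\end{equation*}

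Next I would verify the two geometric conditions required by Lemma~\ref{l:comparisoncapitalsislands}(ii). First, since $a_{r_t}=\widehat{a}_{L_t}-\chi+o(1)$ (cf.\ the discussion after~\eqref{e:maxorderlambda} together with $L^*_{r_t}=L_t$) and $d_t g_t=o(1)$, for all $t$ large enough one has $\lambda^{\scrC}(Z_s)>\widehat{a}_{L_t}-\chi-\delta$. Second, $r_t g_t\ll L_t$: indeed $L_t\sim t\ln_2 t$ while $r_t\sim \rho t/(d(\ln t)\ln_3 t)$, and under~\eqref{e:relscales1}--\eqref{e:relation_scales} the scale $g_t$ may be taken polynomial in $\epsilon_t\ln_3 t$, so $r_t g_t=o(L_t)$. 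Combined with Lemma~\ref{l:maxpotential} and~\eqref{e:defvarrhoz}, this yields $\varrho_{Z_s}\le R_{L_t}$ a.s.\ eventually and therefore $B_{\varrho_{Z_s}}(Z_s)\subset B_{L_t}$. Lemma~\ref{l:comparisoncapitalsislands}(ii) then delivers some $\CC\in\mathfrak{C}^{\delta}_{L_t,A}$ with $z_{\CC}=Z_s$, and since $Z_s\in\CC$ and the elements of $\mathfrak{C}_{L_t,A}$ are disjoint, this $\CC$ must be the one chosen as $\CC_{t,s}$.

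The only nontrivial points are checking that the uniformity in $s\in[at,bt]$ is preserved throughout (which is supplied by the functional form of Proposition~\ref{prop:PPPconv} and by the fact that Lemmas~\ref{l:maxpotential} and~\ref{l:comparisoncapitalsislands} hold a.s.\ eventually in $L_t$, a parameter independent of $s$) and tracking the scale comparison $r_t g_t\ll L_t$; I do not anticipate any serious obstacle beyond these bookkeeping matters.
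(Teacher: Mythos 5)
Your proposal is correct and follows essentially the same route as the paper's own proof: both define $\CC_{t,s}$ via Lemma~\ref{l:comparisoncapitalsislands}(ii) on the good event and fall back to an arbitrary (measurable) component otherwise, then appeal to Proposition~\ref{prop:PPPconv} (through the event $\EE^{\ssup 1}_{t,a,b}$) to verify the hypotheses of that lemma uniformly in $s\in[at,bt]$. Your version merely spells out the scale bookkeeping ($a_{r_t}=\widehat{a}_{L_t}-\chi+o(1)$, $r_tg_t\ll L_t$, $\varrho_{Z_s}\le R_{L_t}$) that the paper leaves implicit.
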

\begin{proof}
By Lemma~\ref{l:comparisoncapitalsislands},
there exists a $\delta>0$ such that,
with probability tending to $1$ as $t \to \infty$,
whenever $|Z_s| + 2d \varrho_{Z_s} < L_t$ and $\lambda^{\scrC}(Z_s) > \widehat{a}_{L_t} - \chi - \delta$ 
we can find a unique $\CC_{t,s} \in \mathfrak{C}_{L_t, A}$ with $z_{\CC_{t,s}} = Z_s$.
Fixing $\CC^*_t \in \mathfrak{C}_{L_t, A}$ in an arbitrary (measurable) fashion,
we define $\CC_{t,s} = \CC^*_t$ when either the conclusion of Lemma~\ref{l:comparisoncapitalsislands} does not hold,
or when $Z_s$ does not satisfy the properties above.
By Proposition~\ref{prop:PPPconv}, $\CC_{t,s}$ satisfies~\eqref{e:existoptisl}
with probability tending to $1$ as $t \to \infty$.
\end{proof}
When $t = s$, we write $\CC_t$ instead of $\CC_{t,s}$.
The following lemma relates $\widetilde{\Psi}_{t,s,c}$ to $\Psi_t$.
\begin{lemma}
\label{l:compPsitildePsi}
For all $A>0$ large enough and any $0<a \le b < \infty$, $\delta > 0$ and $c \in \R$,
\begin{align}\label{e:compPsitildePsi}
\CC_{t,s} \in \mathfrak{C}^{\delta}_{L_t, A} \;\;\; \text{ and } \;\;\;
\left|\widetilde{\Psi}_{t,s,c} (\CC_{t,s}) - \Psi^{\ssup 1}_s \right| \vee \left| \max_{\CC \neq \CC_{t,s}}\widetilde{\Psi}_{t,s,c}(\CC) - \Psi^{\ssup 2}_s \right| \le  o(d_t b_t \epsilon_t)
\end{align}
hold for all $s \in [at, bt]$ with probability tending to $1$ as $t \to \infty$. 
\end{lemma}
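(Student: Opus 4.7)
The plan is to leverage Lemma~\ref{l:comparisoncapitalsislands}, which provides a near-isometry between the relevant capitals $\{z \in \scrC : \lambda^{\scrC}(z) > \widehat{a}_{L_t} - \chi - \delta\}$ and the islands $\mathfrak{C}^{\delta}_{L_t,A}$, together with the quantitative control on $\Psi^{\ssup 1}_s, \Psi^{\ssup 2}_s$ and $|Z^{\ssup 1}_s|, |Z^{\ssup 2}_s|$ afforded by Proposition~\ref{prop:PPPconv} via the event $\EE^{\ssup 2}_{t,a,b}$. First, I would fix $A$ large and $\delta$ small enough for Lemma~\ref{l:comparisoncapitalsislands} to apply, and restrict to the high-probability event on which $\EE^{\ssup 2}_{t,a,b}$ holds together with the conclusions of Lemmas~\ref{l:comparisoncapitalsislands}, \ref{l:existoptisl} and~\ref{l:maxpotential}. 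On this event, since $z_{\CC_{t,s}}=Z_s$ and, by~\eqref{e:defgoodevents}, $|Z_s|<r_t g_t$, the penalty term in~\eqref{e:defPsi} is $O(r_t g_t \ln_3 t/s) = O(d_t g_t) = o(1)$, so that $\lambda^{\scrC}(Z_s) = \Psi^{\ssup 1}_s + o(1) \ge a_{r_t} - o(1) > \widehat{a}_{L_t}-\chi-\delta$, using $a_{r_t}=\widehat{a}_{L_t}-\chi+o(1)$ (which follows from \eqref{e:prophata} applied with $\ln L_t \sim \ln r_t \sim \ln t$); Lemma~\ref{l:comparisoncapitalsislands}(ii) then places $\CC_{t,s}$ in $\mathfrak{C}^{\delta}_{L_t,A}$.

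For the first bound in \eqref{e:compPsitildePsi}, I would write
\begin{equation*}
\widetilde{\Psi}_{t,s,c}(\CC_{t,s}) - \Psi^{\ssup 1}_s
= \bigl(\lambda^{\ssup 1}_{\CC_{t,s}} - \lambda^{\scrC}(Z_s)\bigr) + |Z_s|\,\Bigl[\tfrac{\ln^+_3|Z_s|}{s} - \tfrac{(\ln^+_3|Z_s|-c)^+}{s}\Bigr].
\end{equation*}
The first summand is at most $\texte^{-c_1(\ln L_t)^{\kappa/2}}$ by Lemma~\ref{l:comparisoncapitalsislands}(i), while the second is bounded by $|c|\,|Z_s|/s \le |c|\,r_t g_t/(at) = |c|\,d_t g_t/(a\ln_3 t)$. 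Both are $o(d_t b_t \epsilon_t)$ thanks to the scale relation $g_t/\ln_3 t \ll b_t \epsilon_t$ drawn from~\eqref{e:relation_scales}.

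For the second bound, I plan to prove matching inequalities. For the lower bound, Lemma~\ref{l:comparisoncapitalsislands}(ii) applied to $Z^{\ssup 2}_s$ (which, by the argument above, also satisfies $\lambda^{\scrC}(Z^{\ssup 2}_s)>\widehat{a}_{L_t}-\chi-\delta$ on the good event) yields some $\CC^{**}\in\mathfrak{C}^{\delta}_{L_t,A}$ with $z_{\CC^{**}}=Z^{\ssup 2}_s \neq Z_s$, so $\CC^{**}\neq\CC_{t,s}$; the same three-term computation gives $\widetilde{\Psi}_{t,s,c}(\CC^{**}) \ge \Psi^{\ssup 2}_s - o(d_t b_t \epsilon_t)$. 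For the upper bound, any $\CC\notin\mathfrak{C}^{\delta}_{L_t,A}$ satisfies $\widetilde{\Psi}_{t,s,c}(\CC)\le \lambda^{\ssup 1}_\CC \le \widehat{a}_{L_t}-\chi-\delta = a_{r_t}-\delta+o(1)$, which is strictly below $\Psi^{\ssup 2}_s \ge a_{r_t}-d_t g_t$ for small $\delta$; hence any maximizer $\CC$ of $\widetilde{\Psi}_{t,s,c}$ over $\mathfrak{C}_{L_t,A}\setminus\{\CC_{t,s}\}$ lies in $\mathfrak{C}^{\delta}_{L_t,A}$, has $z_\CC\neq Z_s$, and the same computation yields $\widetilde{\Psi}_{t,s,c}(\CC)\le \Psi_s(z_\CC)+o(d_t b_t \epsilon_t)\le \Psi^{\ssup 2}_s+o(d_t b_t \epsilon_t)$. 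Uniformity in $s\in[at,bt]$ is built into the underlying event from Proposition~\ref{prop:PPPconv}. The main technical point requiring care is the precision of the asymptotic $a_{r_t}=\widehat{a}_{L_t}-\chi+o(1)$: one needs an $o(1)$ error smaller than any fixed $\delta$, which is ensured by \eqref{e:prophata} once $L_t$ and $r_t$ are on the same logarithmic scale.
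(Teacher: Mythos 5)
Your argument for membership $\CC_{t,s}\in\mathfrak{C}^\delta_{L_t,A}$ and for the first bound in \eqref{e:compPsitildePsi} is sound, and so is the lower bound on $\max_{\CC\neq\CC_{t,s}}\widetilde{\Psi}_{t,s,c}(\CC)$, since there you only need $|Z^{\ssup 2}_s|<r_t g_t$, which the event $\EE^{\ssup 2}_{t,a,b}$ provides. The genuine gap is in the upper bound on $\max_{\CC\neq\CC_{t,s}}\widetilde{\Psi}_{t,s,c}(\CC)$ when $c>0$. There you assert that ``the same computation yields $\widetilde{\Psi}_{t,s,c}(\CC)\le\Psi_s(z_\CC)+o(d_tb_t\epsilon_t)$,'' but that computation bounds the discrepancy by
\begin{equation*}
\widetilde{\Psi}_{t,s,c}(\CC)-\Psi_s(z_\CC)\le \texte^{-c_1(\ln L_t)^{\kappa/2}}+\frac{\bigl(\ln_3^+|z_\CC|-(\ln_3^+|z_\CC|-c)^+\bigr)|z_\CC|}{s}\le \texte^{-c_1(\ln L_t)^{\kappa/2}}+\frac{c\,|z_\CC|}{s},
\end{equation*}
and for this to be $o(d_tb_t\epsilon_t)$ you need $|z_\CC|\ll r_t b_t\epsilon_t\ln_3 t$, which in view of \eqref{e:relation_scales} is of order $r_t g_t$. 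The event $\EE^{\ssup 2}_{t,a,b}$ only controls $|Z^{\ssup 1}_s|$ and $|Z^{\ssup 2}_s|$ (the maximizers of $\Psi_s$), not the position of the maximizer of $\Psi_{s,c}$ or $\widetilde{\Psi}_{t,s,c}$. The constraints $\CC\in\mathfrak{C}^\delta_{L_t,A}$ and $\Psi_s(z_\CC)\le\Psi^{\ssup 2}_s$ only force $|z_\CC|\lesssim r_t\ln t$, which is far too weak: they leave a range $r_t g_t\lesssim|z_\CC|\lesssim r_t\ln t$ in which the $c$-discrepancy term is not $o(d_tb_t\epsilon_t)$ and you cannot rule out that the maximizer of $\widetilde{\Psi}_{t,s,c}$ over $\CC\neq\CC_{t,s}$ sits there.

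The paper closes exactly this hole differently: it matches $\widetilde{\Psi}_{t,s,c}(\CC)$ to $\Psi_{s,c}(z_\CC)$ (the $c$-shifted functional, not $\Psi_s$), and then invokes Lemma~\ref{l:bound_Psi_tc}, which compares $\sup_{z\neq Z_s}\Psi_{s,c}(z)$ to $\Psi^{\ssup 2}_s$. That lemma in turn requires the event $\EE^{\ssup 2}_{t,a,b}\cap\EE^{\ssup 2}_{t,a,b,c}$, i.e.\ the version of the good event for the same $c$; Proposition~\ref{prop:PPPconv} supplies it for any fixed $c$. On that event the maximizers $Z^{\ssup 1}_{s,c}$, $Z^{\ssup 2}_{s,c}$ of $\Psi_{s,c}$ are known to lie in $(r_t f_t, r_t g_t)$, which is precisely the position control you are missing. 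To repair your argument, either invoke $\EE^{\ssup 2}_{t,a,b,c}$ and Lemma~\ref{l:bound_Psi_tc} as the paper does, or directly restrict to that event and note the maximizer $\CC^*$ of $\widetilde{\Psi}_{t,s,c}$ over $\mathfrak{C}^\delta_{L_t,A}\setminus\{\CC_{t,s}\}$ corresponds (via Lemma~\ref{l:comparisoncapitalsislands}) to either $Z^{\ssup 1}_{s,c}$ or $Z^{\ssup 2}_{s,c}$, both of which have $\ell^1$-norm below $r_t g_t$.
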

\begin{proof}
Fix $A, \delta >0$ as in Lemma~\ref{l:comparisoncapitalsislands}.
By this lemma and Proposition~\ref{prop:PPPconv},
if $\CC \notin \mathfrak{C}^\delta_{L_t, A}$ then
$\widetilde{\Psi}_{t,s,c}(\CC) \le \lambda^{\ssup 1}_\CC < \Psi^{\ssup 2}_s$,
while, if $z \in \scrC$ and $\CC \in \mathfrak{C}^\delta_{L_t,A}$ 
are related as in Lemma~\ref{l:comparisoncapitalsislands}, then
\begin{equation}\label{e:prcompPsitildePsi1}
\widetilde{\Psi}_{t,s,c}(\CC) = \Psi_{s,c}(z) + o(d_t b_t \epsilon_t).
\end{equation}
By Proposition~\ref{prop:PPPconv} and \eqref{e:monot_princev}, $Z_s$, $Z^{\ssup 1}_{s,c}$ and $Z^{\ssup 2}_{s,c}$ 
all satisfy the conditions of Lemma~\ref{l:comparisoncapitalsislands}(ii) with $L=L_t$,
and thus \eqref{e:prcompPsitildePsi1} and Lemma~\ref{l:bound_Psi_tc} together imply \eqref{e:compPsitildePsi}.
\end{proof}

\RS We proceed to the proofs of Propositions~\ref{prop:randomtruncation_forproof}--\ref{prop:pathsavoidingZ_forproof}. \eRS
Recall~\eqref{e:deflambdaLApi} and consider the following classes of paths: First set
\begin{equation}
\mathcal{N}^{\ssup 0}_{t,s} := \bigl\{\pi \in \scrP(0,\Z^d) \colon\, \supp(\pi) \subset B_{L_t}, \supp(\pi)\cap (D^\circ_{t,s})^\cc \neq \emptyset \bigr\}
\end{equation}
and then let
\begin{equation}
\mathcal{N}^{\ssup 1}_{t,s}:=\bigl\{\pi\in \mathcal{N}^{\ssup 0}_{t,s}\colon
\lambda_{L_t,A}(\pi) \le \lambda^{\ssup 1}_{\CC_{t,s}}\bigr\}
\quad\text{and}\quad
\mathcal{N}^{\ssup 2}_{t,s}:=\mathcal{N}^{\ssup 0}_{t,s}\smallsetminus \mathcal{N}^{\ssup 1}_{t,s},
\end{equation}
where $\CC_{t,s}$ is as in Lemma~\ref{l:existoptisl}.
Note that, if $\tau_{(D^\circ_{t,s})^\cc} \le s < \tau_{B^\cc_{L_t}}$, then $\pi(X_{0,s}) \in \mathcal{N}^{\ssup 1}_{t,s} \cup \mathcal{N}^{\ssup 2}_{t,s}$
and hence we may bound the contribution of each class of paths separately.
This is carried out in the following lemma, using Proposition~\ref{prop:massclass}.
\begin{lemma}
\label{l:reduce_to_inner_box}
For all $A>0$ large enough, there exists $c>0$
such that, for all $0<a \le b < \infty$,
\begin{equation}
\label{e:lemma_reducetoinnerbox1}
\ln \E_0 \left[ \texte^{\int_0^s \xi(X_u) \textd u} \1_{\{\pi(X_{0,s}) \in \mathcal{N}^{\ssup 1}_{t,s}\}} \right] 
\le s \widetilde{\Psi}_{t,s,c}(\CC_{t,s}) - (\ln_3(d L_t)-c) \RV h_t \eRV |Z_s| + o(t d_t b_t)
\end{equation}
and
\begin{equation}
\ln \E_0 \left[ \texte^{\int_0^s \xi(X_u) \textd u} \1_{\{\pi(X_{0,s}) \in \mathcal{N}^{\ssup 2}_{t,s}\}} \right] \le s \max_{\CC \neq \CC_{t,s}} \widetilde{\Psi}_{t,s,c}(\CC) + o(t d_t b_t) \label{e:lemma_reducetoinnerbox2}
\end{equation}
hold for all $s \in [at,bt]$ with probability tending to $1$ as $t\to\infty$.
\end{lemma}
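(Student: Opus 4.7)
The strategy is to apply Proposition~\ref{prop:massclass} separately to each class of paths, choosing $(\gamma_\pi, z_\pi)$ so that the resulting bound translates naturally into the $\widetilde{\Psi}_{t,s,c}$-form. Throughout, take $A$ large enough so that Lemma~\ref{l:compPsitildePsi} holds with some $\delta>0$ satisfying $A>\chi+\delta$, set $c:=c_A$ with $c_A$ as in Proposition~\ref{prop:massclass}, and work on the high-probability events from Proposition~\ref{prop:goodevent_forproof} and Lemmas~\ref{l:existoptisl}--\ref{l:compPsitildePsi}. In particular $|Z_s|>r_tf_t\gg\ln L_t$, so the length condition in Proposition~\ref{prop:massclass} is automatic for every $\pi\in\mathcal{N}^{\ssup 0}_{t,s}$.

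For $\mathcal{N}^{\ssup 1}_{t,s}$, set $\gamma_\pi:=\lambda^{\ssup 1}_{\CC_{t,s}}+\texte^{-R_{L_t}}$ (which dominates $\lambda_{L_t,A}(\pi)\vee(\widehat{a}_{L_t}-A)+\texte^{-R_{L_t}}$ by the class definition together with Lemma~\ref{l:compPsitildePsi}) and pick any $z_\pi\in\supp(\pi)\setminus D^\circ_{t,s}$, which exists since $\pi$ exits $D^\circ_{t,s}$ and gives $|z_\pi|\ge|Z_s|(1+h_t)$. Proposition~\ref{prop:massclass} then yields the upper bound $s\lambda^{\ssup 1}_{\CC_{t,s}}-(\ln_3(dL_t)-c)|Z_s|(1+h_t)+s\texte^{-R_{L_t}}$. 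The identity $s\widetilde{\Psi}_{t,s,c}(\CC_{t,s})=s\lambda^{\ssup 1}_{\CC_{t,s}}-(\ln_3^+|Z_s|-c)^+|Z_s|$ then rewrites this as $s\widetilde{\Psi}_{t,s,c}(\CC_{t,s})-(\ln_3(dL_t)-c)h_t|Z_s|$ plus a residual $\bigl\{(\ln_3^+|Z_s|-c)^+-(\ln_3(dL_t)-c)\bigr\}|Z_s|+s\texte^{-R_{L_t}}$. Since $L_t=\lfloor t\ln^+_2 t\rfloor$ and $|Z_s|\in[r_tf_t,r_tg_t]$ both yield iterated logs of the form $\ln_3 t+o(1/\ln_2 t)$, the bracket has magnitude $O(1/\ln_2 t)$, so the residual is bounded by $O(r_tg_t/\ln_2 t)=o(td_tb_t)$ by~\eqref{e:relation_scales} (since $g_t\ll b_t\ln_3 t$). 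This establishes~\eqref{e:lemma_reducetoinnerbox1}.

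For $\mathcal{N}^{\ssup 2}_{t,s}$, partition the class by the component attaining the maximum: for each $\CC\in\mathfrak{C}_{L_t,A}\setminus\{\CC_{t,s}\}$ with $\lambda^{\ssup 1}_\CC>\lambda^{\ssup 1}_{\CC_{t,s}}$, let $\mathcal{N}^{\ssup{2,\CC}}_{t,s}$ consist of those $\pi\in\mathcal{N}^{\ssup 2}_{t,s}$ whose largest visited local eigenvalue equals $\lambda^{\ssup 1}_\CC$, and apply Proposition~\ref{prop:massclass} with $\gamma_\pi:=\lambda^{\ssup 1}_\CC+\texte^{-R_{L_t}}$ and $z_\pi:=z_\CC$ (admissible since $\pi$ visits $\CC\cap\Pi_{L_t,A}$). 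Because $|z_\CC|\le dL_t$ forces $\ln_3^+|z_\CC|\le\ln_3(dL_t)$, a direct case check on whether $\ln_3^+|z_\CC|\gtrless c$ produces
\begin{equation*}
s\lambda^{\ssup 1}_\CC-(\ln_3(dL_t)-c)|z_\CC|\;\le\; s\widetilde{\Psi}_{t,s,c}(\CC)\;\le\; s\max_{\CC'\ne\CC_{t,s}}\widetilde{\Psi}_{t,s,c}(\CC').
\end{equation*}
Union-bounding over the at most $|B_{L_t}|$ contributing components adds at most $\ln|B_{L_t}|=O(\ln t)=o(td_tb_t)$ (using $tb_t\gg t/\ln_3 t$), giving~\eqref{e:lemma_reducetoinnerbox2}.

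The main obstacle is the bookkeeping around the residual term $\bigl\{(\ln_3^+|Z_s|-c)^+-(\ln_3(dL_t)-c)\bigr\}|Z_s|$, i.e.\ the discrepancy between the weight on $|Z_s|$ delivered by Proposition~\ref{prop:massclass} (which reflects the size of the macrobox $B_{L_t}$) and the weight built into $\widetilde{\Psi}_{t,s,c}$ (which reflects the true location $Z_s$). Pinning this down to $o(td_tb_t)$ is precisely what forces the tight tuning of the auxiliary scales $f_t,g_t,h_t,b_t,\epsilon_t$ in~\eqref{e:relation_scales}; uniformity in $s\in[at,bt]$ then comes for free, as every probabilistic input (Proposition~\ref{prop:goodevent_forproof}, Lemmas~\ref{l:existoptisl} and~\ref{l:compPsitildePsi}) holds uniformly on this interval with probability tending to one.
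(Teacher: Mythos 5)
Your proof is correct and follows essentially the same route as the paper: apply Proposition~\ref{prop:massclass} to each class with $\gamma_\pi$ tied to the best relevant local eigenvalue and $z_\pi$ chosen to make $|z_\pi|$ large, then translate the resulting bound into the $\widetilde{\Psi}_{t,s,c}$-form by comparing $\ln_3^+|z_\pi|$ with $\ln_3(dL_t)$. Two small detours relative to the paper are worth noting, though both are harmless: for $\mathcal{N}^{\ssup 2}_{t,s}$ you partition by the dominant island and then union-bound, paying an extra $\ln|B_{L_t}|=O(\ln t)=o(td_tb_t)$, whereas Proposition~\ref{prop:massclass} already allows $\gamma_\pi,z_\pi$ to depend on $\pi$, so one can apply it once to the whole class with $\gamma_\pi=\lambda_{L_t,A}(\pi)+d_t/\ln_3 t$ and $z_\pi=z_{\CC_\pi}$ and avoid the partition entirely; and for $\mathcal{N}^{\ssup 1}_{t,s}$ you estimate the magnitude of the iterated-log discrepancy, while it suffices to note that $|Z_s|<r_tg_t\ll t<dL_t$ forces $\ln_3^+|Z_s|\le\ln_3(dL_t)$, so the residual $\{(\ln_3^+|Z_s|-c)^+-(\ln_3(dL_t)-c)\}|Z_s|$ is non-positive and the only thing left to control is the slack from $\gamma_\pi$ (here $s\,\texte^{-R_{L_t}}$, in the paper $s\,d_t/\ln_3 t$), which is $o(td_tb_t)$.
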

\begin{proof}
On $\EE_{t,a,b}$ (cf.~\eqref{e:defgoodevents}), $\inf_{s \in [at, bt]}|Z_s| \gg \ln L_t$ and so we may apply Proposition~\ref{prop:massclass}
to $\mathcal{N}^{\ssup 1}_{t,s}$ and $\mathcal{N}^{\ssup 2}_{t,s}$.
Choose $\gamma_\pi$, $z_\pi$ as follows.
For $\pi \in \mathcal{N}^{\ssup 1}_{t,s}$,
let $\gamma_\pi = \lambda^{\ssup 1}_{\CC_{t,s}} + d_t/ \ln_3 t$
and take $z_\pi$ arbitrarily in $\supp(\pi) \cap (D^\circ_{t,s})^\cc \neq \emptyset$.
If $\pi \in \mathcal{N}^{\ssup 2}_{t,s}$, then $\supp(\pi)\cap\Pi_{L_t, A}\neq \emptyset$
and we may set $\gamma_\pi = \lambda_{L_t,A}(\pi) + d_t/\ln_3 t$, 
$z_\pi = z_{\CC_\pi}$ where $\CC_\pi \in \mathfrak{C}_{L_t,A}$ is such that $\lambda_{L_t,A}(\pi) = \lambda^{\ssup 1}_{\CC_\pi}$.
Note that, by Lemma~\ref{l:compPsitildePsi}, we may assume that $\lambda^{\ssup 1}_{\CC_{t,s}} > \widehat{a}_{L_t} - A$.
Then \twoeqref{e:lemma_reducetoinnerbox1}{e:lemma_reducetoinnerbox2} 
follow by substituting our choice of $\gamma_\pi, z_\pi$ in~\eqref{e:mass_class}, 
using the definition of $\widetilde{\Psi}_{t,s,c}$, the fact that $|z_\pi|>|Z_s|(1+h_t)$ 
for $\pi \in \NN^{\ssup 1}_{t,s}$ and noting that $d_t/\ln_3 t = o(d_t b_t)$ by~\eqref{e:relation_scales}.
\end{proof}

\begin{proof}[Proof of Proposition~\ref{prop:randomtruncation_forproof}]
This now follows from Lemmas~\ref{l:compPsitildePsi}--\ref{l:reduce_to_inner_box}, Proposition~\ref{prop:PPPconv}, the definition of~$d_t$ and~$r_t$ in~\eqref{e:def_fundam_scales} and the relations between the various error scales in~\eqref{e:relation_scales}.
\end{proof}

Next we turn to Proposition~\ref{prop:pathsavoidingZ_forproof}.
Note that paths avoiding $B_\nu(Z_s)$ do not necessarily exit an $\ell^1$-ball of radius $\ln L_t$,
so we may not directly use Proposition~\ref{prop:massclass}.
As points in $\Pi_{L,A}$ are typically far away from the origin,
this can be remedied by considering
\begin{equation}\label{e:def_neglig_paths2}
\begin{aligned}
\mathcal{N}^{\ssup 3}_{t} & := \left\{ \pi \in \scrP(0,\Z^d) \colon\, \supp(\pi) \subset B_{L_t} \setminus \Pi_{L_t, A_1} \right\}, \\
\mathcal{N}^{\ssup 4}_{t,s} & := \left\{ \pi \in \scrP(0,\Z^d) \colon\, \supp(\pi) \subset B_{L_t} \setminus B_{\nu}(Z_s), \supp(\pi) \cap \Pi_{L_t, A_1} \neq \emptyset\right\},
\end{aligned}
\end{equation}
\RS where $A_1>4d$ is fixed as in Lemma~\ref{l:potential_rel_islands_upbd}. \eRS
Since $\tau_{B_{\nu}(Z_s)} \wedge \tau_{B_{L_t}^\cc} > s\,$ implies $\,\pi(X_{0,s}) \in \mathcal{N}^{\ssup 3}_{t} \cup \NN^{\ssup 4}_{t,s}$,
we may again control the contribution of each set separately. 
For $\NN^{\ssup 3}_{t}$ this is an easy task since, for any $A, s > 0$,
\begin{equation}\label{e:pathsoutsidePi}
\ln \E_0 \left[ \texte^{\int_0^s \xi(X_u) \textd u} \1\{\tau_{B^\cc_{L_t}} \wedge \tau_{\Pi_{L_t, A}} > s\}\right] \le s (\widehat{a}_{L_t} - 2 A)
\end{equation}
by the definition of $\Pi_{L_t, A}$.
For $\mathcal{N}^{\ssup 4}_{t,s}$, we may again apply Proposition~\ref{prop:massclass}:

\begin{lemma}
\label{l:reduc_finite_box}
There exist $\nu_1 \in \N$ and $c > 0$ such that,
for all $A>0$ large enough and all $0<a \le b< \infty$,
the following holds with probability tending to $1$ as $t \to \infty$:
For all $\nu \ge \nu_1$, $s \in [at,bt]$ and $\theta > 0$,
\begin{equation}\label{e:reduc_finite_box}
\ln \E_0 \left[\texte^{\int_0^\theta \xi(X_u) \textd u} \1_{\{\pi(X_{0,\theta}) \in \mathcal{N}^{\ssup 4}_{t,s} \}} \right] 
\le \theta \left( \max_{\CC \neq \CC_{t,s}} \widetilde{\Psi}_{t,\theta,c}(\CC) \vee (\widehat{a}_{L_t} - 4d ) + o(d_t b_t) \right),
\end{equation}
where $o(d_t b_t)$ does not depend on $\theta$.
\end{lemma}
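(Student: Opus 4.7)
My plan is to apply Proposition~\ref{prop:massclass} to $\NN^{\ssup 4}_{t,s}$ with the parameter $A$ there set equal to $A_1$, and then translate the resulting bound into one involving $\widetilde{\Psi}_{t,\theta,c}(\CC)$ for $\CC\neq\CC_{t,s}$. The crucial observation is that, although a path $\pi\in\NN^{\ssup 4}_{t,s}$ may pass through the island $\CC_{t,s}$ (which it should not be ``penalised'' by, since we want a bound in terms of the non-optimal islands), it cannot visit any high-exceedance point of $\CC_{t,s}$ once we force $\nu \ge \nu_1$. Indeed, by Lemma~\ref{l:compPsitildePsi} (choosing $A = A_1$ and a suitable $\delta$) we have $\CC_{t,s}\in\mathfrak{C}^{\delta}_{L_t,A_1}$ with probability tending to $1$, and then Lemma~\ref{l:potential_rel_islands_upbd} gives $\CC_{t,s}\cap\Pi_{L_t,A_1}\subset B_{\nu_1}(Z_s)$. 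Thus, whenever $\nu\ge\nu_1$, no path $\pi\in\NN^{\ssup 4}_{t,s}$ hits $\CC_{t,s}\cap\Pi_{L_t,A_1}$, so $\lambda_{L_t,A_1}(\pi)$ is attained at some island $\CC_\pi\neq\CC_{t,s}$ that $\pi$ does hit through $\Pi_{L_t,A_1}$.

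Before invoking Proposition~\ref{prop:massclass}, I would verify its distance hypothesis: by a Borel-Cantelli argument based on $\Prob(\xi(0)>\widehat{a}_L-A_1)\approx L^{-d\texte^{-A_1/\rho}}$ and $|B_{\ln L}|=O((\ln L)^d)$, we have $\Pi_{L,A_1}\cap B_{\ln L}=\emptyset$ almost surely eventually as $L\to\infty$, so each $\pi\in\NN^{\ssup 4}_{t,s}$ automatically satisfies $\max_{1\le\ell\le|\pi|}|\pi_\ell|\ge \ln L_t$. With this in hand, for each $\pi\in\NN^{\ssup 4}_{t,s}$ I would take
\[
\gamma_\pi:=\lambda_{L_t,A_1}(\pi)\vee(\widehat{a}_{L_t}-A_1)+\texte^{-R_{L_t}},
\]
and choose $z_\pi=z_{\CC_\pi}$ on the event $\{\lambda_{L_t,A_1}(\pi)\ge\widehat{a}_{L_t}-A_1\}$, and $z_\pi$ equal to some $\pi_\ell$ with $|\pi_\ell|\ge\ln L_t$ otherwise. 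Proposition~\ref{prop:massclass} then yields
\[
\ln\E_0\bigl[\texte^{\int_0^\theta\xi(X_u)\,\textd u}\1_{\{\pi(X_{0,\theta})\in\NN^{\ssup 4}_{t,s}\}}\bigr]\le \sup_{\pi\in\NN^{\ssup 4}_{t,s}}\bigl\{\theta\gamma_\pi-\bigl(\ln_3(dL_t)-c_{A_1}\bigr)|z_\pi|\bigr\}+O(\theta\texte^{-R_{L_t}}),
\]
and since $\texte^{-R_{L_t}}=o(d_tb_t)$ with no dependence on $\theta$, this accounts for the uniform-in-$\theta$ error term.

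To conclude, I would take $c:=c_{A_1}$ and split on the two cases defining $\gamma_\pi$. When $\lambda_{L_t,A_1}(\pi)\ge\widehat{a}_{L_t}-A_1$, the bound on the right becomes $\theta\lambda^{\ssup 1}_{\CC_\pi}-(\ln_3(dL_t)-c)|z_{\CC_\pi}|+o(\theta d_tb_t)$, and using $|z_{\CC_\pi}|\le L_t$ (hence $\ln_3^+|z_{\CC_\pi}|\le\ln_3(dL_t)$) one checks
\[
\theta\lambda^{\ssup 1}_{\CC_\pi}-(\ln_3(dL_t)-c)|z_{\CC_\pi}|\le \theta\widetilde{\Psi}_{t,\theta,c}(\CC_\pi)\le\theta\max_{\CC\neq\CC_{t,s}}\widetilde{\Psi}_{t,\theta,c}(\CC),
\]
where the second inequality uses $\CC_\pi\neq\CC_{t,s}$. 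In the complementary case, dropping the negative $-(\ln_3(dL_t)-c)|z_\pi|$ term and using $A_1>4d$ from Lemma~\ref{l:potential_rel_islands_upbd} gives the bound $\theta(\widehat{a}_{L_t}-4d)+o(\theta d_tb_t)$. The main technical hurdle here is, in my view, ensuring the mutual coherence of the constants $A_1$, $\nu_1$, $\delta$ across Lemmas~\ref{l:potential_rel_islands_upbd}, \ref{l:compPsitildePsi} and Proposition~\ref{prop:massclass}, and keeping the error terms independent of $\theta$; once that bookkeeping is done, the inequality~\eqref{e:reduc_finite_box} follows directly.
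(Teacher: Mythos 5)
Your overall strategy is the same as the paper's: apply Proposition~\ref{prop:massclass} to $\NN^{\ssup 4}_{t,s}$ with the $A_1$-islands, use Lemma~\ref{l:potential_rel_islands_upbd} to show that paths avoiding $B_\nu(Z_s)$ (for $\nu\ge\nu_1$) cannot hit the $A_1$-high-exceedance points of the optimal island, verify the distance hypothesis via $\Pi_{L_t,A_1}\cap B_{\ln L_t}=\emptyset$, and split at the floor $\widehat{a}_{L_t}-4d$. However, there is a real gap in the step where you pass to the bound involving $\widetilde{\Psi}_{t,\theta,c}$. You write $\widetilde{\Psi}_{t,\theta,c}(\CC_\pi)\le\max_{\CC\neq\CC_{t,s}}\widetilde{\Psi}_{t,\theta,c}(\CC)$ ``since $\CC_\pi\neq\CC_{t,s}$,'' but $\CC_\pi$ is an island of $\mathfrak{C}_{L_t,A_1}$, whereas $\widetilde{\Psi}_{t,\theta,c}$ (see~\eqref{e:deftildePsi}) and the maximum in~\eqref{e:reduc_finite_box} are formulated over $\mathfrak{C}_{L_t,A}\setminus\{\CC_{t,s}\}$ with the large section-wide $A$; thus $\CC_\pi$ is not in the index set of the maximum, and the comparison ``$\CC_\pi\neq\CC_{t,s}$'' sets side by side objects living in two different families. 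For the same reason your assertion ``$\CC_{t,s}\in\mathfrak{C}^\delta_{L_t,A_1}$'' is not correct as stated: $\CC_{t,s}$ is defined in Lemma~\ref{l:existoptisl} as a connected component of $D_{L_t,A}$, not of $D_{L_t,A_1}$, and moreover $A_1$ need not satisfy the ``$A$ large enough'' hypothesis that Lemma~\ref{l:compPsitildePsi} inherits from Lemmas~\ref{l:properties_opt_comps} and~\ref{l:comparisoncapitalsislands}.

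What is missing is the translation from $A_1$-islands to $A$-islands, which the paper makes explicit: since $\Pi_{L_t,A_1}\subset\Pi_{L_t,A}$, every $\CC_\pi\in\mathfrak{C}_{L_t,A_1}$ lies inside some $\CC'_\pi\in\mathfrak{C}_{L_t,A}$; one then uses $\lambda^{\ssup 1}_{\CC_\pi}\le\lambda^{\ssup 1}_{\CC'_\pi}$ and $z_{\CC_\pi}=z_{\CC'_\pi}$ to transfer the bound into $\widetilde{\Psi}_{t,\theta,c}(\CC'_\pi)$, which \emph{is} in the maximum's index set. The exclusion $\CC'_\pi\neq\CC_{t,s}$ is then obtained precisely by your mechanism, but applied with $\CC_{t,s}\in\mathfrak{C}^\delta_{L_t,A}$: Lemma~\ref{l:potential_rel_islands_upbd} gives $\CC_{t,s}\cap\Pi_{L_t,A_1}\subset B_{\nu_1}(Z_s)$, the path visits an $A_1$-high-exceedance point outside $B_\nu(Z_s)\supset B_{\nu_1}(Z_s)$ and hence outside $\CC_{t,s}$, and since that point belongs to $\CC'_\pi$, it forces $\CC'_\pi\neq\CC_{t,s}$. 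The rest of your write-up (choice of $\gamma_\pi$, uniformity of the error in $\theta$, and the $\widehat{a}_{L_t}-4d$ floor) is fine once this two-family bookkeeping is inserted.
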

\begin{proof}
Let $\delta$, $A_1 > 4d$ and $\nu_1$ be as in Lemma~\ref{l:potential_rel_islands_upbd},
and assume that $t$ is large enough for the conclusions of this lemma to hold with $L=L_t$.
We may assume $A>A_1$.

We will apply Proposition~\ref{prop:massclass} using the islands of $\mathfrak{C}_{L_t,A_1}$.
We are justified to do so because, by Lemma~\ref{l:maxpotential},
$\Pi_{L_t, A_1} \cap B_{\ln L_t} = \emptyset$ almost surely when $t$ is large, and thus all 
$\pi \in \mathcal{N}^{\ssup 4}_{t,s}$ exit a box of radius $\ln L_t$.
Let $c=c_{A_1}$ be as in~\eqref{e:mass_class}.
Since $A>A_1$,
\begin{equation}\label{e:reduc_fin_box1}
\forall  \CC \in \mathfrak{C}_{L_t,A_1}, \, \exists \, \CC' \in \mathfrak{C}_{L_t,A} \text{ s.t.\ } \CC \subset \CC'.
\end{equation}
Recall the definition of $\lambda_{L,A}(\pi)$ in~\eqref{e:deflambdaLApi}.
For $\pi \in \mathcal{N}^{\ssup 4}_{t,s}$, let $z_\pi := z_{\CC_\pi}$ where $\CC_\pi \in \mathfrak{C}_{L_t,A_1}$ is such that
$\pi \cap \CC \cap \Pi_{L, A_1} \neq \emptyset$ and $\lambda_{L_t,A_1}(\pi) = \lambda^{\ssup 1}_{\CC_\pi}$. 
Note that $z_\pi = z_{\CC'_\pi}$ where $\CC_\pi \subset \CC'_\pi \in \mathfrak{C}_{L_t,A}$.
When $t$ is large enough, $\CC_{t,s} \in \mathfrak{C}_{L_t,A}^\delta$ by Lemma~\ref{l:compPsitildePsi}; 
hence, by Lemma~\ref{l:potential_rel_islands_upbd} and the definition of $\NN^{\ssup 4}_{t,s}$,
$\CC'_\pi \neq \CC_{t,s}  = \emptyset$.
From this we conclude that
\begin{equation}
\begin{aligned}
\label{e:reduc_fin_box2}
\theta \lambda_{L_t,A_1}(\pi) - & (\ln_3 (d L_t) - c )|z_\pi| = \theta \lambda^{\ssup 1}_{\CC_\pi} - (\ln_3 (d L_t) -c )|z_{\CC_{\pi}}| \\
& \le \theta \sup \left\{\lambda^{\ssup 1}_{\CC'} - (\ln^+_3 |z_{\CC'}| - c)^+ \frac{|z_{\CC'}|}{\theta} \colon\, \CC' \in \mathfrak{C}_{L_t,A} \setminus \{\CC_{t,s}\} \right\}.
\end{aligned}
\end{equation}
Choosing now $\gamma_\pi = \lambda_{L_t,A_1}(\pi) \vee (\widehat{a}_{L_t}-4d) + d_t/\ln_3 t$,
\eqref{e:reduc_finite_box} follows from~\eqref{e:mass_class},~\eqref{e:reduc_fin_box2}
and~\eqref{e:relation_scales}.
\end{proof}

\begin{proof}[Proof of Proposition~~\ref{prop:pathsavoidingZ_forproof}]
Proposition~\ref{prop:pathsavoidingZ_forproof} now follows
from~\eqref{e:pathsoutsidePi} with $A=A_1$ together with Lemma~\ref{l:reduc_finite_box} applied to $\theta=s$, 
Lemma~\ref{l:compPsitildePsi} and the fact that, by Proposition~\ref{prop:PPPconv},
$\Psi^{\ssup 2}_s >(\widehat{a}_{L_t} - 4d)$ for all $s \in [at, bt]$ with probability tending to~$1$ as~$t \to \infty$.
\end{proof}

\subsection{Upper bound for the total mass and proof of Theorem~\ref{thm:locus}}
\label{ss:upperboundUt}\noindent
We will prove Theorem~\ref{thm:locus} by comparing $\tfrac1t \ln U(t)$ to $\Psi^{\ssup 1}_t$ and then applying Proposition~\ref{prop:PPPconv}.
The last missing ingredient is the following upper bound for $U(t)$.
Recall that we assume \twoeqref{e:propertiesR_L}{e:addpropR_L}.

\begin{lemma}[Upper bound for the total mass]
\label{l:upperboundUt}
For any $0<a\le b < \infty$,
\begin{equation}\label{e:upperboundUt}
\sup_{s \in [at, bt]} \Bigl\{ \ln U(s) - s \Psi^{\ssup 1}_s \Bigr\} \le o(t d_t b_t)
\end{equation}
holds with probability tending to~$1$ as~$t \to \infty$.
\end{lemma}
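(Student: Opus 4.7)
The plan is to establish a matching upper bound to Proposition~\ref{prop:lowerbound_forproof} by decomposing the Feynman--Kac expectation defining $U(s)$ into three path classes: (i) paths exiting $B_{L_t}$; (ii) paths that remain in $B_{L_t}$ but never visit $\Pi_{L_t,A}$; and (iii) paths in $B_{L_t}$ that do visit $\Pi_{L_t,A}$, for a fixed large constant $A$. The first class is bounded by Proposition~\ref{prop:macroboxtruncation_forproof} by $\exp\{-\tfrac18 t(\ln_2 t)\ln_3 t\}$, which is negligible relative to $\exp\{s\Psi^{\ssup 1}_s\}$ since $\Psi^{\ssup 1}_s=\Theta(\ln_2 t)$ on the good event. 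The second class is bounded by \eqref{e:pathsoutsidePi} by $\exp\{s(\widehat{a}_{L_t}-2A)\}$; since Proposition~\ref{prop:goodevent_forproof} combined with $a_{r_t}=\widehat{a}_{L_t}-\chi+o(1)$ gives $\Psi^{\ssup 1}_s=\widehat{a}_{L_t}-\chi+o(1)$ with probability tending to $1$, choosing $A$ with $2A>\chi+1$ renders this contribution at most $\exp\{s\Psi^{\ssup 1}_s-s/2\}$, also negligible.

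For the third (dominant) class, I would apply Proposition~\ref{prop:massclass} to $\mathcal{N}_t:=\{\pi\in\scrP(0,\Z^d)\colon \supp(\pi)\subset B_{L_t},\,\supp(\pi)\cap \Pi_{L_t,A}\ne\emptyset\}$, with the assignments $\gamma_\pi:=\lambda_{L_t,A}(\pi)\vee(\widehat{a}_{L_t}-A)+d_t/\ln_3 t$ and $z_\pi:=z_{\CC_\pi}$, where $\CC_\pi\in\mathfrak{C}_{L_t,A}$ is an island visited by $\pi$ that achieves $\lambda_{L_t,A}(\pi)$. The hypothesis $\max_\ell|\pi_\ell|\ge \ln L_t$ for paths in $\mathcal{N}_t$ is ensured with probability tending to~$1$ by a union bound using $\textnormal{Prob}(\xi(0)>\widehat{a}_{L_t}-2A)=L_t^{-d\exp(-2A/\rho)+o(1)}$, which gives $\Pi_{L_t,A}\cap B_{\ln L_t}=\emptyset$ eventually. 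The output of Proposition~\ref{prop:massclass} is then
\[
\ln\E_0\Bigl[\texte^{\int_0^s\xi(X_u)\textd u}\1\{\pi(X_{0,s})\in\mathcal{N}_t\}\Bigr]\le \sup_{\CC\in\mathfrak{C}_{L_t,A}}\Bigl\{s\bigl(\lambda^{\ssup 1}_\CC\vee(\widehat{a}_{L_t}-A)\bigr)-\bigl(\ln_3(dL_t)-c_A\bigr)|z_\CC|\Bigr\}+o(td_tb_t).
\]
Since $|z_\CC|\le dL_t$ implies $(\ln^+_3|z_\CC|-c_A)^+\le \ln_3(dL_t)-c_A$, a direct computation shows that the bracketed expression is at most $s\widetilde{\Psi}_{t,s,c_A}(\CC)+O(sd_t/\ln_3 t)$ when $\lambda^{\ssup 1}_\CC\ge \widehat{a}_{L_t}-A$, and at most $s(\widehat{a}_{L_t}-A)\le s\Psi^{\ssup 1}_s-s/2$ otherwise. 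Invoking Lemma~\ref{l:compPsitildePsi}, the supremum of $\widetilde{\Psi}_{t,s,c_A}(\CC)$ is attained at $\CC_{t,s}$ and equals $\Psi^{\ssup 1}_s+o(d_tb_t)$, so the contribution of $\mathcal{N}_t$ is at most $\exp\{s\Psi^{\ssup 1}_s+o(td_tb_t)\}$, uniformly in $s\in[at,bt]$. Summing the three contributions and taking logarithms yields \eqref{e:upperboundUt}.

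The main obstacle is the well-known mismatch between the crude spectral bound $\E_0[\texte^{\int_0^s\xi(X_u)\textd u}\1\{\tau_{(D^\circ_{t,s})^\cc}>s\}]\le|D^\circ_{t,s}|^{3/2}\exp\{s\lambda^{\ssup 1}_{D^\circ_{t,s}}\}$ and the sharp upper bound we need: the former exceeds $\exp\{s\Psi^{\ssup 1}_s\}$ by a factor of order $\exp\{|Z_s|\ln_3 t\}=\exp\{\Theta(td_t)\}$, because $\lambda^{\ssup 1}_{D^\circ_{t,s}}$ fails to see the distance penalty built into $\Psi_s$. The path-expansion machinery of Proposition~\ref{prop:massclass} bridges precisely this gap by paying for the transit from the origin to each visited island through the factor $\exp\{-|z_\pi|\ln_3(dL_t)\}$; the rest of the argument is a matter of carefully choosing $A$, $\gamma_\pi$, and $z_\pi$ so that this path-expansion bound matches $s\widetilde{\Psi}_{t,s,c_A}$, which by Lemma~\ref{l:compPsitildePsi} is asymptotically $s\Psi^{\ssup 1}_s$.
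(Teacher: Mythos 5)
Your proof is correct and follows essentially the same route as the paper: the same three-way decomposition of the Feynman--Kac expectation (paths exiting $B_{L_t}$ via Proposition~\ref{prop:macroboxtruncation_forproof}, paths avoiding $\Pi_{L_t,A}$ via~\eqref{e:pathsoutsidePi}, and paths visiting $\Pi_{L_t,A}$ via Proposition~\ref{prop:massclass} with $\gamma_\pi=\lambda_{L_t,A}(\pi)\vee(\widehat{a}_{L_t}-A)+d_t/\ln_3 t$ and $z_\pi=z_{\CC_\pi}$), then comparison to $\widetilde{\Psi}_{t,s,c}$ and Lemma~\ref{l:compPsitildePsi}. One small slip: your stated threshold $2A>\chi+1$ is calibrated for the class-(ii) bound $\widehat{a}_{L_t}-2A$, but the non-dominant case of class (iii) gives $\widehat{a}_{L_t}-A$ and so needs $A>\chi+\varepsilon$ outright; since $A$ is taken large anyway, this is cosmetic.
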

\begin{proof}
Applying Proposition~\ref{prop:massclass} to the set of paths
\begin{equation}\label{e:defN5}
\NN^{\ssup 5}_{t} := \left\{ \pi \in \scrP(0,\Z^d) \colon\,  \supp(\pi) \subset B_{L_t}, \supp(\pi) \cap \Pi_{L_t, A} \neq \emptyset \right\}
\end{equation}
with $\gamma_\pi := \lambda_{L_t, A}(\pi) \vee (\widehat{a}_{L_t} - A) + d_t/ \ln_3 t$ and 
$z_\pi := z_{\CC_{\pi}}$ where $\CC_{\pi} \in \mathfrak{C}_{L_t, A}$ satisfies $\lambda_{L_t, A}(\pi) = \lambda^{\ssup 1}_{\CC_\pi}$, we obtain
\begin{equation}
\label{e:prupperbdUt1}
\ln \E_0 \left[\texte^{\int_0^s \xi(X_u) \textd u} \1_{\{\pi_{0,s}(X) \in \NN^{\ssup 5}_t \}} \right] 
\le
s \max_{\CC \in \mathfrak{C}_{L_t, A}} \widetilde{\Psi}_{t,s,c}(\CC) + o(t d_t b_t) 
\le s \Psi^{\ssup 1}_s + o(t d_t b_t)
\end{equation}
with probability tending to~$1$ as~$t \to \infty$ by \eqref{e:mass_class}, \eqref{e:deftildePsi},  Lemma~\ref{l:compPsitildePsi}, \eqref{e:def_fundam_scales} and \eqref{e:relation_scales}.
Now \eqref{e:upperboundUt} follows by \eqref{e:prupperbdUt1} together with \eqref{e:pathsoutsidePi} and Propositions~\ref{prop:macroboxtruncation_forproof} and \ref{prop:PPPconv}.
\end{proof}

\begin{proof}[Proof of Theorem~\ref{thm:locus}]
Proposition~\ref{prop:lowerbound_forproof} and Lemma~\ref{l:upperboundUt} imply that, for any $0<a\le b<\infty$,
\begin{equation}\label{e:prthmlocus1}
\lim_{t \to \infty} \sup_{s \in [at, bt]}\frac{\left| \tfrac1s \ln U(s) - \Psi^{\ssup 1}_s\right|}{d_t} = 0 \;\;\; \text{ in probability,}
\end{equation}
and thus the theorem follows from Proposition~\ref{prop:PPPconv} and $d_{r_t} = d_t (1+o(1))$. 
\end{proof}

\section{Localization}
\label{s:localization}
\nopagebreak\noindent
In this section we prove Propositions~\ref{prop:boundbyprincipalef_forproof}--\ref{prop:localizationinnerdomain}, dealing with 
localization of the solution to the PAM as well as the eigenfunction~$\phi_{t,s}^\circ$. The proof of the former proposition is actually quite short:

\begin{proof}[Proof of Proposition~\ref{prop:boundbyprincipalef_forproof}]
By~\eqref{e:relation_scales} and \eqref{e:boundsZ},
$B_\nu(Z_s) \subset D^{\circ}_{t, s}$ for all $s \in [at,bt]$
with probability tending to $1$ as $t \to \infty$,
and thus we may apply Lemma~\ref{l:bound_top_ef} to $\Lambda = D^\circ_{t,s}$, $z=0$, $\Gamma = B_{\nu}(Z_s)$.
\end{proof}

We now turn to the proof of Proposition~\ref{prop:localizationinnerdomain}.
The first step is to obtain a spectral gap in the inner domain $D^\circ_{t,s}$,
which is a consequence of our choice of the scale $h_t$ in~\eqref{e:relation_scales}.
Recall the following useful formulas for the second largest eigenvalue of the Anderson Hamiltonian
in a subset of $\Z^d$:
For $\Lambda \subset \Z^d$, let $\lambda^{\ssup k}_\Lambda$, $\phi^{\ssup k}_\Lambda$ be the eigenvalues and eigenvectors
of $H_\Lambda$ as in Section~\ref{ss:specbounds}.
Then we may write
\begin{equation}\label{e:minmaxlambda2}
\lambda^{\ssup 2}_\Lambda = \sup \left\{ \langle (\Delta+\xi) \phi, \phi \rangle \colon\, \phi \in \R^{\Z^d}, \supp \phi \subset \Lambda, \|\phi\|_{\ell^2(\Z^d)} = 1, \phi \perp \phi^{\ssup 1}_\Lambda \right\}.
\end{equation}
A consequence of~\eqref{e:minmaxlambda2} and~\eqref{e:RRformula} is that, 
if $\Lambda_1, \Lambda_2 \subset \Z^d$ satisfy $\dist(\Lambda_1, \Lambda_2) \ge 2$, then
\begin{equation}\label{e:2ndev_separatedsets}
\lambda^{\ssup 1}_{\Lambda_1} \ge \lambda^{\ssup 1}_{\Lambda_2}
\quad\Rightarrow\quad
\lambda^{\ssup 2}_{\Lambda_1 \cup \Lambda_2} = \max \left\{ \lambda^{\ssup 2}_{\Lambda_1}, \lambda^{\ssup 1}_{\Lambda_2} \right\}.
\end{equation}

In the following, we assume that the scale sequence~$R_L$ obeys \twoeqref{e:propertiesR_L}{e:addpropR_L}.
Recall the component $\CC_{t,s} \in \mathfrak{C}_{L_t, A}$ from Lemma~\ref{l:existoptisl}, and the notation  
$\GG_{t,s}:= \{\Psi^{\ssup 1}_{s} - \Psi^{\ssup 2}_{s} > e_t d_t \}$.
We then have: 
\begin{lemma}[Spectral gap]
\label{l:spectralgap}
For any $A>0$ large enough and any $0<a\le b < \infty$, it holds with probability tending to $1$ as $t \to \infty$ that, for all $s \in [at,bt]$, on $\GG_{t,s}$,
\begin{equation}
\lambda^{\ssup 1}_{\CC_{t,s}}  > \sup_{\substack{\CC \in \mathfrak{C}_{L_t,A} \setminus \{\CC_{t,s}\} \colon \\
\dist(\CC, D^{\circ}_{t,s}) \le (\ln t)^2}} \lambda^{\ssup 1}_\CC + d_t e_t + o(d_t e_t)
\label{e:spectralgap1}
\end{equation}
and
\begin{equation}
\lambda^{\ssup 1}_{D^\circ_{t,s}}  > \lambda^{\ssup 2}_{D^\circ_{t,s}} +d_t e_t + o(d_t e_t). \label{e:spectralgap2}
\end{equation}
\end{lemma}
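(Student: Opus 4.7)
The plan is to treat the two parts separately, with part \eqref{e:spectralgap2} reducing to part \eqref{e:spectralgap1} via a coarse-graining decomposition of $D^\circ_{t,s}$. Two preliminary estimates will be used throughout: first, $d_t b_t \epsilon_t = o(d_t e_t)$, which follows from \eqref{e:relscales1}--\eqref{e:relation_scales} since $b_t \epsilon_t \ll f_t h_t \epsilon_t \ll g_t h_t \ll e_t$ using $f_t \to 0$, $g_t \to \infty$ and $\epsilon_t \in (0,1)$; and second, the coarse-graining error $(\eta_A)^{R_{L_t}}$ from \cite{BK16} is $o(d_t e_t)$ because $R_{L_t} \gg (\ln L_t)^\beta$ gives super-polynomial decay in $\ln t$, whereas $d_t e_t$ decays only polylogarithmically.

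For \eqref{e:spectralgap1}, the idea is to write $\lambda^{\ssup 1}_\CC = \widetilde{\Psi}_{t,s,c}(\CC) + g_c(|z_\CC|)/s$ with $g_c(x) := (\ln_3^+ x - c)^+ x$, and fix $c$ as in Lemma~\ref{l:compPsitildePsi}. That lemma then yields, uniformly in $\CC \in \mathfrak{C}_{L_t,A}\setminus\{\CC_{t,s}\}$,
\begin{equation*}
\widetilde{\Psi}_{t,s,c}(\CC_{t,s}) - \widetilde{\Psi}_{t,s,c}(\CC) \ge \Psi^{\ssup 1}_s - \Psi^{\ssup 2}_s - o(d_t b_t \epsilon_t) \ge d_t e_t + o(d_t e_t)
\end{equation*}
on $\GG_{t,s}$. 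The residual $[g_c(|Z_s|)-g_c(|z_\CC|)]/s$ is non-negative when $|z_\CC|\le|Z_s|$, while in the opposite case the hypothesis $\dist(\CC,D^\circ_{t,s})\le(\ln t)^2$ combined with $\diam(\CC)\le n_A R_{L_t}$ (Lemma~\ref{l:size_comps}) forces $|z_\CC|-|Z_s|\le h_t|Z_s|+(\ln t)^2+n_AR_{L_t}$. Using $g_c'(x) = O(\ln_3 t)$ in the relevant range, $|Z_s|<r_tg_t$ and $r_t/t=d_t/\ln_3 t$, the residual is of order $O\bigl(h_t g_t d_t + (\ln_3 t)(\ln t)^2/t\bigr) = o(d_t e_t)$, where the crucial input is $g_t h_t \ll e_t$ from \eqref{e:relation_scales}.

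For \eqref{e:spectralgap2}, first note the inclusion $\CC_{t,s}\subset D^\circ_{t,s}$, which follows from $\diam(\CC_{t,s})\le n_A R_{L_t}\ll h_t r_t f_t \le h_t|Z_s|$ together with $z_{\CC_{t,s}}=Z_s$; by \eqref{e:monot_princev} this gives $\lambda^{\ssup 1}_{D^\circ_{t,s}} \ge \lambda^{\ssup 1}_{\CC_{t,s}}$. For the matching upper bound on $\lambda^{\ssup 2}_{D^\circ_{t,s}}$, I would apply the min-max formula \eqref{e:minmaxlambda2} with test function $\phi^{\ssup 1}_{\CC_{t,s}}$ extended by zero, and decompose any competitor $\psi$ into its restrictions to $\CC_{t,s}$ and $E := D^\circ_{t,s}\setminus \CC_{t,s}$. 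Combined with a perturbative comparison analogous to Theorem~2.1 of \cite{BK16}, this yields
\begin{equation*}
\lambda^{\ssup 2}_{D^\circ_{t,s}} \le \max\bigl\{\lambda^{\ssup 2}_{\CC_{t,s}},\, \lambda^{\ssup 1}_E\bigr\} + (\eta_A)^{R_{L_t}}.
\end{equation*}
Lemma~\ref{l:properties_opt_comps}(i) gives $\lambda^{\ssup 2}_{\CC_{t,s}}\le\lambda^{\ssup 1}_{\CC_{t,s}}-\tfrac12\rho\ln 2$, which easily absorbs $d_t e_t$. A further application of the coarse-graining bound of \cite{BK16} to $E$ expresses $\lambda^{\ssup 1}_E$ as the maximum of $\lambda^{\ssup 1}_\CC$ over $\CC\in\mathfrak{C}_{L_t,A}\setminus\{\CC_{t,s}\}$ intersecting $D^\circ_{t,s}$, and of $\widehat{a}_{L_t}-2A+2d$ (from the exterior region of $E$ where $\xi\le\widehat{a}_{L_t}-2A$), up to error $(\eta_A)^{R_{L_t}}$; the first quantity is controlled by part \eqref{e:spectralgap1}, and the second is dominated by $\lambda^{\ssup 1}_{\CC_{t,s}}-d_t e_t$ when $A$ is large, using $\lambda^{\ssup 1}_{\CC_{t,s}}\ge\widehat{a}_{L_t}-\chi+o(1)$ from $\CC_{t,s}\in\mathfrak{C}^\delta_{L_t,A}$.

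The main obstacle is the extension of the coarse-graining estimate of Theorem~2.1 of \cite{BK16}, stated only for the principal eigenvalue, to control the second eigenvalue on $D^\circ_{t,s}$. This requires careful handling of the Dirichlet splitting $D^\circ_{t,s}=\CC_{t,s}\cup E$, including islands of $\mathfrak{C}_{L_t,A}$ potentially cut off by $\partial D^\circ_{t,s}$, together with the boundary-term analysis ensuring that the $(\eta_A)^{R_{L_t}}$ error persists in the second-eigenvalue estimate. Once this spectral approximation is in hand, the reduction to part \eqref{e:spectralgap1} and Lemma~\ref{l:properties_opt_comps}(i) proceeds as outlined above.
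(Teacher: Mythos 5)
Your proof of \eqref{e:spectralgap1} is essentially identical to the paper's: the paper writes $\lambda^{\ssup 1}_{\CC_{t,s}} - \lambda^{\ssup 1}_\CC \ge d_te_t + o(d_tb_t) - [|z_\CC|\ln^+_3|z_\CC| - |Z_s|\ln^+_3|Z_s|]/s$ via Lemma~\ref{l:compPsitildePsi} (which is your $\widetilde\Psi$-plus-residual decomposition, just spelled out), and then bounds the residual exactly as you do, using $|z_\CC| \le |Z_s|(1+h_t) + (\ln t)^2 + n_A R_{L_t}$, $|Z_s| < r_t g_t$, $r_t/t = d_t/\ln_3 t$, and $g_t h_t \ll e_t$. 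Your preliminary observations that $d_t b_t \epsilon_t = o(d_t e_t)$ and $(\eta_A)^{R_{L_t}} = o(d_t e_t)$ are both correct and both used.

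For \eqref{e:spectralgap2} the conceptual skeleton is also the same, but the specific decomposition differs, and the ``main obstacle'' you flag does not arise on the paper's route. You split $D^\circ_{t,s}$ into the \emph{adjacent} pieces $\CC_{t,s}$ and $E := D^\circ_{t,s}\setminus\CC_{t,s}$ and then need a second-eigenvalue version of the coarse-graining bound to push the interaction into the $(\eta_A)^{R_{L_t}}$ error; that is genuine extra work. The paper instead applies Theorem~2.1 of \cite{BK16} directly to $D = D^\circ_{t,s}$: that theorem compares the low eigenvalues of $D$ to those of the union $\Lambda$ of all islands of $\mathfrak{C}_{L_t,A}$ meeting $D$, with error $2d(\eta_A)^{R_{L_t}}$. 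Since distinct islands are at pairwise $\ell^1$-distance $\ge 2$, the union $\Lambda$ is already a disjoint-at-distance-$2$ family, so the elementary identities \eqref{e:ev_separatedsets} and \eqref{e:2ndev_separatedsets} give
$\lambda^{\ssup 2}_\Lambda = \bigl(\sup_{\CC\ne\CC_{t,s}} \lambda^{\ssup 1}_\CC\bigr) \vee \lambda^{\ssup 2}_{\CC_{t,s}}$
with no further perturbative argument. One then combines \eqref{e:spectralgap1}, Lemma~\ref{l:properties_opt_comps}(i) for the gap inside $\CC_{t,s}$, and $(\eta_A)^{R_{L_t}} = o(d_te_t)$, exactly as you outline. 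The paper also records two reductions you should add to a complete write-up: it suffices to treat the case $\lambda^{\ssup 2}_{D^\circ_{t,s}} > \lambda^{\ssup 1}_{D^\circ_{t,s}} - A/4$ (otherwise \eqref{e:spectralgap2} is trivial), and the containment $\CC_{t,s}\subset D^\circ_{t,s}$ together with $\CC_{t,s}\in\mathfrak{C}^\delta_{L_t,A}$ (from Lemma~\ref{l:compPsitildePsi}) guarantees $\lambda^{\ssup 1}_{D^\circ_{t,s}} - A \ge \widehat a_{L_t} - 2A$, which is the hypothesis under which Theorem~2.1 of \cite{BK16} applies. So there is no gap in the result, but your route requires you to supply the second-eigenvalue coarse-graining estimate yourself, whereas the paper's route reuses the existing theorem on $D^\circ_{t,s}$ and never splits into adjacent sets.
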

\begin{proof}
Let $t$ be large enough such that the conclusion of Lemma~\ref{l:size_comps} is in place with $L=L_t$. 
Then, for any $\CC \in \mathfrak{C}_{L_t,A} \setminus \{ \CC_{t,s} \}$, 
by~\eqref{e:deftildePsi} and Lemma~\ref{l:compPsitildePsi}, 
on $\GG_{t,s}$ we have
\begin{equation}\label{e:specgap0}
\lambda^{\ssup 1}_{\CC_{t,s}} - \lambda^{\ssup 1}_\CC 
\ge d_t e_t + o(d_t b_t) - \frac{|z_\CC| \ln^+_3|z_\CC| -|Z_s| \ln^+_3 |Z_s|}{s}
\end{equation}
with probability tending to~$1$ as $t \to \infty$.
By Proposition~\ref{prop:PPPconv} and Lemma~\ref{l:size_comps}, 
we may assume that $|Z_s| \ge t^{1/2}$ and that, for all $\CC \in \mathfrak{C}_{L_t, A}$ such that
 $\dist(\CC,D^\circ_{t,s}) \le (\ln t)^2$,
$|z_\CC| \le |Z_s|(1+h_t)+ (\ln t)^2 + n_A R_{L_t} < t$. 
With the help of~\eqref{e:def_fundam_scales},~\eqref{e:relation_scales} and~\eqref{e:propertiesR_L}, we can see that 
the right-hand side of \eqref{e:specgap0} is at least
\begin{align}
\label{e:specgap1}
d_t e_t + o(d_t b_t) - 2 (\ln_3 t) \,\frac{|Z_s| h_t + (\ln t)^2}{s}
& \ge d_t e_t + o(d_t b_t) - 2 (\ln_3 t) \,\frac{r_t g_t h_t + (\ln t)^2 }{a t} \nonumber\\
& = d_t e_t + o(d_t e_t),
\end{align}
thus proving~\eqref{e:spectralgap1}.

To show~\eqref{e:spectralgap2}, we may assume $\lambda^{\ssup 2}_{D^\circ_{t,s}} > \lambda^{\ssup 1}_{D^\circ_{t,s}} - A/4$
since otherwise~\eqref{e:spectralgap2} is trivially satisfied.
For $A> \chi+1$ large enough, take $\delta \in (0,1)$ as in Lemma~\ref{l:properties_opt_comps}.
By Lemma~\ref{l:size_comps}, Proposition~\ref{prop:goodevent_forproof} and Lemma~\ref{l:compPsitildePsi}, 
we may assume that $\CC_{t,s} \subset D^\circ_{t,s}$ and $\CC_{t,s} \in \mathfrak{C}^\delta_{L_t, A}$.
Thus, by~\eqref{e:spectralgap1}, $\lambda^{\ssup 1}_{D^\circ_{t,s}} -A \ge \lambda^{\ssup 1}_{\CC_{t,s}} - A \ge \widehat{a}_{L_t} - 2A$.
Applying Theorem 2.1 of \cite{BK16} to $D := D^\circ_{t,s}$ together with \eqref{e:ev_separatedsets} and \eqref{e:2ndev_separatedsets}, we obtain
\begin{equation}\label{e:specgap2}
\lambda^{\ssup 2}_{D^\circ_{t,s}} < \left(\sup_{\CC \neq \CC_{t,s} \colon \CC \cap D^\circ_{t,s} \neq \emptyset } \lambda^{\ssup 1}_\CC \right) \vee \lambda^{\ssup 2}_{\CC_{t,s}} + 2d (\eta_A)^{R_{L_t}}, \;\;\; \text{ where } \; \eta_A := \left(1+\frac{A}{4d}\right)^{-1}.
\end{equation}
Now, by Lemma~\ref{l:properties_opt_comps}(i),~\eqref{e:spectralgap1} and~\eqref{e:specgap2},
\begin{equation}\label{e:specgap3}
\lambda^{\ssup 1}_{D^\circ_{t,s}}-\lambda^{\ssup 2}_{D^\circ_{t,s}}  > \left\{d_t e_t + o(d_t e_t) \right\} \wedge \tfrac12 \rho \ln 2 - 2d (\eta_A)^{R_t},
\end{equation}
which proves~\eqref{e:spectralgap2} since $(\eta_A)^{R_t} = o(d_t e_t)$ by~\eqref{e:def_fundam_scales},~\eqref{e:relation_scales} and~\eqref{e:propertiesR_L}.
\end{proof}

We are now in position to finish the proof.
\begin{proof}[Proof of Proposition~\ref{prop:localizationinnerdomain}(i)]
We can use the proof of Theorem 1.4 in~\cite{BK16} with the following three main modifications:
\begin{enumerate}
\item 
In the part of the proof dealing with large distances, Theorem 2.5 of~\cite{BK16} is invoked, with the generic component $\CC$ appearing in its statement now set to~$\CC_{t,s}$ (which we may and do assume to be contained in $D^{\circ}_{t,s}$). 
For that we need to show that, with probability tending to $1$ as $t \to \infty$,
\begin{equation}\label{e:localis1}
\left\| \phi^\circ_{t,s} \1_{\CC_{t,s}}\right\|_2 > \frac12  \;\;\;\;\; \forall  s \in [at, bt].
\end{equation}
The proof of Theorem 2.5 then shows that this inequality characterizes~$\CC$.

\item Still in the part dealing with large distances, we use~\eqref{e:spectralgap2} instead of Lemma 8.1 of~\cite{BK16}.

\item In the second part of the proof dealing with short distances, use~\eqref{e:potential_rel_islands_uppbd}
instead of Lemma 4.8 of~\cite{BK16}.
\end{enumerate}
With these modifications, the proof goes through in our case.

In order to complete the proof, it thus remains to establish~\eqref{e:localis1}. Let $D := D^\circ_{t,s} \setminus \CC_{t,s}$. 
We first claim that, with probability tending to $1$ as $t\to \infty$,
\begin{align}\label{e:localis2}
\lambda^{\ssup 1}_D \le \lambda^{\ssup 1}_{\CC_{t,s}} - d_t e_t + o(d_t e_t).
\end{align}
Indeed, take $A>\chi+\delta$.
By Lemma~\ref{l:compPsitildePsi}, we may assume that $\CC_{t,s} \in \mathfrak{C}^\delta_{L_t, A}$, 
and thus we may also assume that $\lambda^{\ssup 1}_D > \widehat{a}_{L_t} - A$
since otherwise~\eqref{e:localis2} is satisfied.
By Theorem 2.1 of~\cite{BK16} and~\eqref{e:ev_separatedsets},
\begin{align}\label{e:localis2.5}
\lambda^{\ssup 1}_D \le \sup \bigl\{\lambda^{\ssup 1}_\CC \colon\, \CC \in \mathfrak{C}_{L_t,A}\setminus\{\CC_{t,s}\}, \CC \cap D^\circ_{t,s} \neq \emptyset\bigr\} + 2d (\eta_A)^{R_{L_t}}
\end{align}
where $\eta_A :=(1+A/(4d))^{-1}$,
so~\eqref{e:localis2} follows by Lemma~\ref{l:spectralgap},~\eqref{e:def_fundam_scales},~\eqref{e:relation_scales} and~\eqref{e:propertiesR_L}.
Now, for $x \in D$, the eigenfunction $\phi^\circ_{t,s}$ satisfies the equation
\begin{equation}\label{e:localis3}
\left(-H_D - \lambda^{\ssup 1}_{D^\circ_{t,s}}\right) \phi^\circ_{t,s} (x) = \sum_{y \in \partial D, |y-x|=1} \phi^{\circ}_{t,s}(y)
\end{equation}
where $H_D$ is the Anderson operator in $D$ with Dirichlet boundary conditions
and $\partial D := \{x \in D^\circ_{t,s} \setminus D \colon\, \exists \; y \in D, |y-x|=1\}$.
By Lemma 4.2 of \cite{BK16},
\begin{equation}\label{e:localis4}
\left\|\phi^{\circ}_{t,s} \1_{\partial D}\right\|_{\ell^2(\Z^d)} \le 
 \{1+ A/(2d) \}^{-2 R_{L_t}} \le (\eta_A)^{R_{L_t}}. 
\end{equation}
Using \twoeqref{e:localis3}{e:localis4} together with the operator norm of the resolvent of $-H_D$
and the Cauchy-Schwarz inequality, we obtain
\begin{align}\label{e:localis5}
\left\| \phi^{\circ}_{t,s} \1_D \right\|_{\ell^2(\Z^d)}
& \le \dist(\lambda^{\ssup 1}_{D^\circ_{t,s}}, \textnormal{Spec}(-H_D))^{-1} 2d (\eta_A)^{R_{L_t}} \nonumber\\
& \le (\ln t)^2 (\eta_A)^{R_{L_t}} = o(1),
\end{align}
where the last line holds by~\eqref{e:localis2}, 
$\lambda^{\ssup 1}_{D^\circ_{t,s}} \ge \lambda^{\ssup 1}_{\CC_{t,s}}$,~\eqref{e:def_fundam_scales},~\eqref{e:relation_scales} and~\eqref{e:propertiesR_L}.
As $\| \phi^{\circ}_{t,s}\|_{\ell^2(\Z^d)}=1$, this implies~\eqref{e:localis1} as desired.
\end{proof}

\begin{proof}[Proof of Proposition~\ref{prop:localizationinnerdomain}(ii)]
To prove~\eqref{e:localizationinnerdomain2}, 
we use~\eqref{e:localizationinnerdomain1}, the representation~\eqref{e:FKrepr_ef} 
and Lemma~\ref{l:potential_rel_islands_lwbd}.
Let $c_1, c_2$ as in~\eqref{e:localizationinnerdomain1}. 
Since $\phi_{t,s}^\circ$ is normalized in $\ell^2(\Z^d)$, 
there exists $\nu_0 = \nu_0(c_1, c_2)$ such that, for all $\nu \ge \nu_0$,
\begin{equation}\label{e:lwbdef1}
\max_{y \in B_{\nu}(Z_s)} \phi_{t,s}^\circ(y) \ge \max_{y \in B_{\nu_0}(Z_s)} \phi_{t,s}^\circ(y) \ge (2 |B_{\nu_0}|)^{-\frac12} =: \varepsilon_0 > 0.
\end{equation}
Fix $\nu \ge \nu_0$ and let~$A^*, \delta$ and~$A$ be as in Lemma~\ref{l:potential_rel_islands_lwbd}.
When~$t$ is large, the conclusion of this lemma holds with $L:=L_t$.
By Lemma~\ref{l:compPsitildePsi}, we may assume that $\CC_{t,s} \in \mathfrak{C}^\delta_{L_t, A}$,
and thus~\eqref{e:potential_rel_islands_lwbd} holds for $\CC_{t,s}$.
On the other hand, by~\eqref{e:monot_princev}, \eqref{e:boundsZ} and Lemma~\ref{l:maxpotential}, we have
\begin{equation}\label{e:lwbdef2}
\lambda^{\ssup 1}_{D^\circ_{t,s}} \le \max_{x \in D^\circ_{t,s}} \xi(x) \le \max_{x \in B_{L_t}} \xi(x) \le \widehat{a}_{L_t}+1,
\end{equation}
with probability tending to $1$ as $t \to \infty$.
Since $Z_s = z_{\CC_{t,s}}$, for any $z \in B_{\nu}(Z_s)$,
\begin{equation}\label{e:lwbdef3}
\lambda^{\ssup 1}_{D^\circ_{t,s}} -\xi(z) \le 2A^* +1 =: A'.
\end{equation}
Let $\bar{x} \in B_\nu(Z_s)$ with $\phi^\circ_{t,s}(\bar{x}) = \max_{y \in B_\nu(Z_s)} \phi^\circ_{t,s}(y)$.
For $y \in B_\nu(Z_s)$, fix a shortest-distance path $\pi$ from $y$ to $\bar{x}$ inside $B_\nu(Z_s)$.
Then
\begin{equation}
\begin{aligned}
\label{e:lwbdef4}
\E_{y} \biggl[ \exp \Bigl\{\int_0^{\tau_{\bar{x}}} \bigl(\xi(X_s) &- \lambda^{\ssup 1}_{D^\circ_{t,s}} \bigr) \textd s \Bigr\}\1\{\tau_{\bar{x}} < \tau_{(D^\circ_{t,s})^\cc} \}\biggr] 
\\
\ge & \;\E_{y} \left[ \exp \left\{\int_0^{T_{|\pi|}} \left(\xi(X_s) - \lambda^{\ssup 1}_{D^\circ_{t,s}} \right) \textd s \right\}\1\{\pi^{\ssup {|\pi|}}(X) = \pi \}\right] 
\\
= & \prod_{i=0}^{|\pi|-1} \frac{1}{2d+\lambda^{\ssup 1}_{D^\circ_{t,s}} -\xi(\pi_i) } \ge (2d+A')^{-2d\nu} =: \varepsilon_1 > 0
\end{aligned}
\end{equation}
by Lemma~\ref{l:path_eval} and~\eqref{e:lwbdef3}. 
To conclude, invoke~\eqref{e:FKrepr_ef} to write
\begin{equation}\label{e:lwbd5}
\phi^\circ_{t,s}(y) = \phi^\circ_{t,s}(\bar{x}) 
\E_{y} \left[\exp\left\{ \int_0^{\tau_{\bar{x}}} \left(\xi(X_s) - \lambda^{\ssup 1}_{D^\circ_{t,s}} \right) \textd s \right\} \1\{ \tau_{\bar{x}} < \tau_{(D^\circ_{t,s})^\cc} \} \right] \ge \varepsilon_0 \varepsilon_1
\end{equation}
by~\eqref{e:lwbdef1} and~\eqref{e:lwbdef4}.
The claim now follows with $\varepsilon_\nu := \varepsilon_0 \varepsilon_1>0$.
\end{proof}

\section{Path localization}
\label{s:pathconc}\nopagebreak\noindent
In this section, we prove Propositions~\ref{prop:closetoZinshorttime} and~\ref{prop:boundforpathconc};
these proofs come in Sections~\ref{ss:fastapproachlocus} and \ref{ss:pathconclocus}, respectively.
We assume throughout that $A>0$ and $\nu \in \N$ have been fixed at sufficiently
large values to satisfy the hypotheses of all previous results.
We also assume that $R_L$ obeys \twoeqref{e:propertiesR_L}{e:addpropR_L}.
In order to avoid repetition, 
statements inside proofs are tacitly assumed to hold with probability tending to $1$ as $t \to \infty$.

\subsection{Fast approach to the localization center}
\label{ss:fastapproachlocus}\noindent
Recall the component $\CC_t = \CC_{t,t} \in \mathfrak{C}_{L_t,A}$ from Lemma~\ref{l:existoptisl}. 
We first show that, under $Q^{\ssup \xi}_t$, the random walk exits a box of radius $\ln L_t$ by time $\epsilon_t t$,
at least on the event that a neighborhood of the localization center $Z_t$ is hit by time~$t$.
\begin{lemma}
\label{l:exitsmallboxfast}
In probability under the law of $\xi$,
\begin{equation}\label{e:exitsmallboxfast}
\frac{1}{U(t)} \E_0 \left[\texte^{\int_0^t \xi(X_u) \textd u} \1\{\tau_{(D^\circ_{t,t})^\cc} > t \ge \tau_{B_\nu(Z_t)}, \tau_{B^\cc_{\lfloor \ln L_t \rfloor}} > \epsilon_t t \} \right] \,\,\underset{t \to \infty}\longrightarrow\,\,0.
\end{equation}
\end{lemma}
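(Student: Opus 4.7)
The plan is to decompose the Feynman-Kac expectation at the deterministic time $\epsilon_t t$ via the Markov property, crudely bound each of the two resulting pieces through the $\ell^2$-spectral estimate of Lemma~\ref{l:bounds_mass}, and then compare the result to the sharp lower bound for $U(t)$ from Proposition~\ref{prop:lowerbound_forproof}. The key heuristic is that on the event in \eqref{e:exitsmallboxfast}, throughout $[0,\epsilon_t t]$ the walk sits in $B_{\lfloor \ln L_t\rfloor}$, where by Lemma~\ref{l:maxpotential} the potential is at most $\widehat a_{\ln L_t}+o(1)\sim \rho \ln_3 t$, whereas the optimal strategy would accumulate potential at rate $a_t \sim \rho\ln_2 t$. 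The resulting deficit $\epsilon_t t (a_t - \widehat a_{\ln L_t}) \sim \rho\,\epsilon_t t \ln_2 t$ diverges, since $\epsilon_t \ln_3 t \to \infty$ forces $\epsilon_t \ln_2 t \ln t \to \infty$.

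Concretely, write $N_t$ for the numerator in \eqref{e:exitsmallboxfast} (the expectation). Applying the Markov property at time $\epsilon_t t$ and using $B_{\lfloor \ln L_t \rfloor}\subset D^\circ_{t,t}$ (which holds on the good event by \eqref{e:boundsZ}), I obtain
\begin{equation*}
N_t \;\le\; \sum_{y \in B_{\lfloor \ln L_t\rfloor}} A_t(y)\,B_t(y),
\end{equation*}
where $A_t(y) := \E_0\bigl[e^{\int_0^{\epsilon_t t} \xi(X_u)\,du}\1\{\tau_{B^\cc_{\lfloor\ln L_t\rfloor}}>\epsilon_t t,\,X_{\epsilon_t t}=y\}\bigr]$ and $B_t(y) := \E_y\bigl[e^{\int_0^{(1-\epsilon_t)t}\xi(X_u)\,du}\1\{\tau_{(D^\circ_{t,t})^\cc}>(1-\epsilon_t)t\}\bigr]$. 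Lemma~\ref{l:bounds_mass} gives
\begin{equation*}
A_t(y) \le |B_{\lfloor\ln L_t\rfloor}|^{3/2}\, e^{\epsilon_t t\, \lambda^{\ssup 1}_{B_{\lfloor\ln L_t\rfloor}}},
\qquad B_t(y) \le |D^\circ_{t,t}|^{3/2}\, e^{(1-\epsilon_t) t\, \lambda^{\ssup 1}_{D^\circ_{t,t}}}.
\end{equation*}
Lemma~\ref{l:maxpotential} then gives $\lambda^{\ssup 1}_{B_{\lfloor\ln L_t\rfloor}} \le \widehat a_{\ln L_t}+o(1)$ almost surely eventually. For $\lambda^{\ssup 1}_{D^\circ_{t,t}}$ the crucial upper bound $\lambda^{\ssup 1}_{D^\circ_{t,t}} \le a_t+O(d_t g_t)$ with probability tending to one will follow by identifying it, up to error $(\eta_A)^{R_{L_t}}=o(d_t)$, with $\lambda^{\ssup 1}_{\CC_t}$ via Theorem~2.1 of \cite{BK16} (exactly as in the proof of Lemma~\ref{l:spectralgap}), then invoking Lemma~\ref{l:compPsitildePsi} and Proposition~\ref{prop:goodevent_forproof} to write $\lambda^{\ssup 1}_{\CC_t} = \Psi^{\ssup 1}_t+|Z_t|\ln_3^+|Z_t|/t = a_t+O(d_t g_t)$.

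Combining these two factors with Proposition~\ref{prop:lowerbound_forproof}, which yields $\ln U(t) \ge t a_t - O(t d_t g_t) + o(t d_t b_t \epsilon_t)$ on the same high-probability event, and absorbing the polynomial factor $|B_{\lfloor \ln L_t\rfloor}|^{5/2}|D^\circ_{t,t}|^{3/2}$ into $O(\ln t)$, I arrive at
\begin{equation*}
\ln\bigl(N_t/U(t)\bigr) \;\le\; -\epsilon_t t \bigl(a_t-\widehat a_{\ln L_t}\bigr) + O(t d_t g_t) + o(t d_t b_t \epsilon_t) + O(\ln t).
\end{equation*}
Since $a_t - \widehat a_{\ln L_t} = \rho \ln_2 t\,(1+o(1))$ while $t d_t g_t = O(t g_t/\ln t)$ and the assumption $g_t/(\epsilon_t \ln_3 t)\ll b_t\to 0$ in \eqref{e:relation_scales} implies $g_t \ll \epsilon_t \ln t \ln_2 t$, the right-hand side tends to $-\infty$, proving \eqref{e:exitsmallboxfast}.

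The main obstacle is ensuring that the error terms genuinely dominate the leading deficit: the naive choice $\lambda^{\ssup 1}_{D^\circ_{t,t}} \le \widehat a_{L_t} = a_t+\chi+o(1)$ from Lemma~\ref{l:maxpotential} introduces a spurious linear term $\chi(1-\epsilon_t) t$ which would ruin the estimate. Hence one must use the coarse-graining comparison between $\lambda^{\ssup 1}_{D^\circ_{t,t}}$ and $\lambda^{\ssup 1}_{\CC_t}$, just as in Lemma~\ref{l:spectralgap}, to obtain the sharp $O(d_t)$ correction.
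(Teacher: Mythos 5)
Your proof is correct, and it takes a genuinely different — and more elementary — route than the paper. The paper's proof (Section 9.1) applies the path-expansion Proposition~\ref{prop:massclass} to the class $\NN^{\ssup 6}_{t,x}$ of paths starting in $B_{\lfloor\ln L_t\rfloor}$, staying in $D^\circ_{t,t}$, and \emph{hitting} $B_{\nu}(Z_t)$; this exploits the hitting constraint to produce the sharp bound $(1-\epsilon_t)t\lambda^{\ssup 1}_{\CC_t} - |Z_t|\ln_3|Z_t| + o(td_tb_t)$, in which the entropy term $-|Z_t|\ln_3|Z_t|$ cancels exactly against the one in $t\Psi^{\ssup 1}_t$ from the lower bound on $\ln U(t)$, leaving only the deficit $-\epsilon_t t(\lambda^{\ssup 1}_{\CC_t} - 2\rho\ln_3 t)$. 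You instead discard the hitting constraint and use the crude $\ell^2$-spectral bound of Lemma~\ref{l:bounds_mass}, which leaves an uncancelled surplus $|Z_t|\ln_3^+|Z_t| \approx td_tg_t$. This would be fatal if the deficit were only $O(td_t)$, but your observation that the deficit scales as $\epsilon_t t\,\rho\ln_2 t$, while $td_tg_t = O(tg_t/\ln t)$ with $g_t \ll \epsilon_t\ln_3 t$, makes the surplus negligible, so the argument closes. What you identify correctly — and what makes the simplification possible — is that the cost of staying in $B_{\lfloor\ln L_t\rfloor}$ for time $\epsilon_t t$ is of order $\epsilon_t t\ln_2 t$, vastly larger than the $O(td_tg_t)$ fluctuations of every other term. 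Your observation that the naive bound $\lambda^{\ssup 1}_{D^\circ_{t,t}}\le\widehat a_{L_t}$ introduces a linear-in-$t$ error of order $\chi t$ — and hence that the coarse-graining identification $\lambda^{\ssup 1}_{D^\circ_{t,t}} = \lambda^{\ssup 1}_{\CC_t} + O((\eta_A)^{R_{L_t}})$ is indispensable — is also correct; the paper uses the same identification implicitly through Proposition~\ref{prop:massclass}. The only caveat is a minor one: you should remark that $B_{\lfloor\ln L_t\rfloor}\subset D^\circ_{t,t}$ requires $\ln L_t \ll r_t f_t$, which holds on the good event since $f_t$ decays slower than any power of $\ln t$; you invoke \eqref{e:boundsZ} for this, which is the right citation.
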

\begin{proof}
\RS Note that, by Proposition~\ref{prop:PPPconv}, $B_\nu(Z_t) \subset B^\cc_{\lfloor \ln L_t \rfloor}(x)$ for any $x \in B_{\lfloor \ln L_t \rfloor}$. \eRS
For such $x$, we may apply Proposition~\ref{prop:massclass} to the set of paths
\begin{equation}\label{e:prexitsmallboxfast1}
\NN^{\ssup 6}_{t,x} := \left\{ \pi \in \scrP(x,\Z^d) \colon \supp(\pi) \subset D^\circ_{t,t}, \, \supp(\pi) \cap B_{\nu}(Z_t) \neq \emptyset  \right\}
\end{equation}
with $\gamma_\pi = \lambda^{\ssup 1}_{\CC_t} + d_t / \ln_3 t$ and $z_\pi \in B_{\nu}(Z_t)$ arbitrary,
which is justified by Lemma~\ref{l:spectralgap}, Lemma~\ref{l:compPsitildePsi}, 
\eqref{e:propertiesR_L} and \eqref{e:def_fundam_scales}.
Since $|z_\pi-x| \ge |Z_t| - d \nu -d \lfloor \ln L_t \rfloor$, we obtain
\begin{multline}
\qquad
\label{e:prexitsmallboxfast2}
\ln \E_x \left[ \texte^{\int_0^{(1-\epsilon_t)t} \xi(X_u) \textd u } \1{\{\tau_{(D^\circ_{t,t})^\cc}> (1-\epsilon_t)t \ge \tau_{B_{\nu}(Z_t)}\}} \right] 
\\
\le (1-\epsilon_t)t \lambda^{\ssup 1}_{\CC_t} - |Z_t|\ln_3 |Z_t|  + o(t d_t b_t)
\qquad
\end{multline}
by \eqref{e:def_fundam_scales} and \eqref{e:relation_scales}.
On the other hand,  by Lemma~\ref{l:maxpotential}, a.s.\ eventually as $t \to \infty$,
\begin{equation}\label{e:smallpaths}
\ln \E_0 \left[\texte^{\int_0^s \xi(X_u) \textd u} \1\{\tau_{B^\cc_{\lfloor \ln L_t\rfloor}}>s\} \right] \le s \max_{x \in B_{\lfloor \ln L_t \rfloor}} \xi(x) \le s \, 2 \rho \ln_3 t \;\;\;\; \forall  s \ge 0.
\end{equation}
Now use the Markov property at time $\epsilon_t t$, \twoeqref{e:prexitsmallboxfast2}{e:smallpaths} and
Proposition~\ref{prop:lowerbound_forproof} to obtain
\begin{multline}
\qquad
\label{e:prexitsmallboxfast3}
\frac{1}{U(t)}\E_0 \left[\texte^{\int_0^t \xi(X_u) \textd u} \1{\{\tau_{(D^\circ_{t,t})^\cc} > t \ge \tau_{B_\nu(Z_t)}, \tau_{B^\cc_{\lfloor \ln L_t \rfloor}} > \epsilon_t t \}} \right]
\\
\le \exp \left\{ t(\widetilde{\Psi}_t^{\ssup 1}-\Psi_t^{\ssup 1})  -\epsilon_t t(\lambda^{\ssup 1}_{\CC_t} - 2 \rho \ln_3 t) + o(t d_t b_t)     \right\}
\qquad
\end{multline}
which goes to $0$ as $t \to \infty$ by Lemma~\ref{l:compPsitildePsi}, \eqref{e:relscales1} and $\epsilon_t \gg (\ln_3 t)^{-1}$.
\end{proof}

The following result can be seen as an alternative version of Lemma~\ref{l:reduc_finite_box}.
\begin{lemma}
\label{l:uptotimeepsilont}
There exists a constant $c > 0$ such that,
with probability tending to $1$ as $t \to \infty$,
\begin{multline}
\qquad
\label{e:uptotimeepsilont}
\ln \E_0 \left[\texte^{ \int_0^{\epsilon_t t} \xi(X_u) \textd u } \1 \{\tau_{B_{\nu}(Z_t)} \wedge \tau_{(D^\circ_{t,t})^\cc}> \epsilon_t t \ge \tau_{B^\cc_{\lfloor \ln L_t \rfloor}}, X_{\epsilon_t t} = x \} \right]
\\
\le \epsilon_t t \max_{\CC \neq \CC_t} \lambda^{\ssup 1}_\CC - \left(\ln_3 (d L_t) - c \right)|x| + o(\epsilon_t t d_t b_t)
\qquad
\end{multline}
for all $x \in \Z^d$, and $o(\epsilon_t t d_t b_t)$ in~\eqref{e:uptotimeepsilont} does not depend on $x$.
\end{lemma}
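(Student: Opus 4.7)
The plan is to apply Proposition~\ref{prop:massclass} (with $A$ replaced by $A_1$, $\nu_1$ as in Lemma~\ref{l:potential_rel_islands_upbd}, and $\nu \ge \nu_1$) to the class of paths
\[
\NN_{t,x} := \{\pi \in \scrP(0, x) \colon\, \supp(\pi) \subset B_{L_t}, \,\supp(\pi) \cap B_\nu(Z_t) = \emptyset, \,\supp(\pi) \cap B^\cc_{\lfloor \ln L_t \rfloor} \neq \emptyset\}.
\]
With probability tending to $1$ as $t \to \infty$, the event inside the expectation in~\eqref{e:uptotimeepsilont} forces $\pi(X_{0,\epsilon_t t}) \in \NN_{t,x}$ (using $D^\circ_{t,t} \subset B_{L_t}$, valid by \eqref{e:boundsZ}). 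The exit condition ensures $\max_{1\le\ell\le|\pi|}|\pi_\ell|\ge \ln L_t$, so the hypothesis on $\NN$ in Proposition~\ref{prop:massclass} is met.

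The key choices are $z_\pi := x$ (which belongs to $\supp(\pi)$ since $x$ is the endpoint, so \eqref{e:cond_massclass2} holds) and $\gamma_\pi := \lambda_{L_t, A_1}(\pi) \vee (\widehat{a}_{L_t} - A_1) + \texte^{-R_{L_t}}$. The crucial claim is that
\[
\gamma_\pi \le \sup_{\CC \in \mathfrak{C}_{L_t,A}\setminus\{\CC_t\}} \lambda^{\ssup 1}_\CC + \texte^{-R_{L_t}}
\quad \text{for all } \pi \in \NN_{t,x}.
\]
To see this, if $\pi$ meets $\Pi_{L_t,A_1}$ inside some $\CC \in \mathfrak{C}_{L_t,A_1}$, then (by \eqref{e:reduc_fin_box1}) $\CC \subset \CC'$ for some $\CC' \in \mathfrak{C}_{L_t,A}$; and by Lemma~\ref{l:potential_rel_islands_upbd} together with $\nu \ge \nu_1$, the avoidance of $B_\nu(Z_t)$ forces $\CC' \neq \CC_t$, so that $\lambda^{\ssup 1}_\CC \le \lambda^{\ssup 1}_{\CC'} \le \max_{\CC''\neq \CC_t}\lambda^{\ssup 1}_{\CC''}$. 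The trivial case $\widehat{a}_{L_t}-A_1$ is absorbed once we note, via Proposition~\ref{prop:PPPconv} and Lemma~\ref{l:compPsitildePsi}, that for $A_1>\chi$ and $t$ large the second-best island eigenvalue exceeds $\widehat{a}_{L_t}-A_1$ with high probability.

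Applying~\eqref{e:mass_class} with $s=\epsilon_t t$ and $c := c_{A_1}$ then yields
\[
\ln \E_0\!\Bigl[\texte^{\int_0^{\epsilon_t t}\xi(X_u)\,\textd u}\,\1\{\pi(X_{0,\epsilon_t t})\in \NN_{t,x}\}\Bigr] \le \epsilon_t t\Bigl(\max_{\CC\neq \CC_t}\lambda^{\ssup 1}_\CC + \texte^{-R_{L_t}}\Bigr) - (\ln_3(dL_t)-c)|x|,
\]
uniformly in $x$. The residual term $\epsilon_t t\,\texte^{-R_{L_t}}$ is $o(\epsilon_t t d_t b_t)$: by~\eqref{e:propertiesR_L}, $R_{L_t} \gg (\ln t)^\beta$, so $\texte^{-R_{L_t}}$ decays super-polynomially in $\ln t$, while $d_t b_t \sim \rho b_t/(d\ln t)$ decays at most polynomially since by~\eqref{e:relscales1}--\eqref{e:relation_scales} $b_t$ is (up to constants) a power of $\epsilon_t \ln_3 t$.

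The main technical point, inherited from the proof of Lemma~\ref{l:reduc_finite_box}, is the two-scale argument in the second paragraph: we must apply the path-expansion estimate relative to the finer cover $\mathfrak{C}_{L_t,A_1}$ (so that the threshold $\widehat{a}_L-A$ in~\eqref{e:cond_massclass1} is achievable for the paths that actually contribute) while controlling $\gamma_\pi$ using the coarser cover $\mathfrak{C}_{L_t,A}$ (where the uniqueness of $\CC_t$ as a relevant island and the containment $\mathfrak{C}_{L_t,A_1}\subset\mathfrak{C}_{L_t,A}$ together force any visited island to be distinct from $\CC_t$). The uniformity in $x$ is not an additional difficulty, as it comes for free from the sup structure of~\eqref{e:mass_class} once $z_\pi=x$ is chosen.
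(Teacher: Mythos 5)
Your proof is correct and takes essentially the same route as the paper's: both apply Proposition~\ref{prop:massclass} with the finer island cover $\mathfrak{C}_{L_t,A_1}$, use Lemma~\ref{l:potential_rel_islands_upbd} and the avoidance of $B_\nu(Z_t)$ to rule out $\CC_t$ as a visited island, invoke Lemma~\ref{l:compPsitildePsi} and Proposition~\ref{prop:PPPconv} to handle the trivial threshold, and set $z_\pi = x$ to produce the $|x|$-dependence uniformly. The only cosmetic difference is the choice of $\gamma_\pi$: you take the $\pi$-dependent minimal choice $\lambda_{L_t,A_1}(\pi)\vee(\widehat{a}_{L_t}-A_1)+\texte^{-R_{L_t}}$ and then bound it uniformly, whereas the paper takes the constant $\gamma_\pi = \max_{\CC\neq\CC_t}\lambda^{\ssup 1}_\CC + d_t/\ln_3 t$ and verifies \eqref{e:cond_massclass1} directly; both yield the same bound since $\texte^{-R_{L_t}} = o(d_t b_t)$ and $d_t/\ln_3 t = o(d_t b_t)$ by \eqref{e:propertiesR_L} and \eqref{e:relation_scales}.
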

\begin{proof}
Let $A>A_1$ where $A_1>4d$ is as in Lemma~\ref{l:potential_rel_islands_upbd}, 
and define the set of paths
\begin{equation}\label{e:pruptimeepsilont1}
\NN^{\ssup 7}_{t,x} := \left\{ \pi \in \scrP(0,x) \colon\; D^\circ_{t,t} \supset \supp(\pi) \not \subset B_{\lfloor \ln L_t \rfloor}, \, \supp(\pi) \cap B_{\nu}(Z_t) = \emptyset \right\}.
\end{equation}
We wish to apply Proposition~\ref{prop:massclass} to $\NN^{\ssup 7}_{t,x}$
using the islands of $\mathfrak{C}_{L_t, A_1}$ (i.e., with $L = L_t$, $A=A_1$ therein),
similarly as in the proof of Lemma~\ref{l:reduc_finite_box}.
To this end, we take, 
for $\pi \in \NN^{\ssup 7}_{t,s}$, 
$\gamma_\pi := \max_{\CC \neq \CC_t}\lambda^{\ssup 1}_\CC + d_t / \ln_3 t$ 
(where the maximum is taken over $\CC \in \mathfrak{C}_{L_t, A} \setminus \CC_t$), 
and $z_\pi := x$. 
Let us check that $\gamma_\pi$ satisfies~\eqref{e:cond_massclass1}.
Indeed, by Lemma~\ref{l:compPsitildePsi} we may assume that $\max_{\CC \neq \CC_t} \lambda^{\ssup 1}_\CC > \widehat{a}_{L_t} - A_1$.
Reasoning as in the arguments leading to \twoeqref{e:reduc_fin_box1}{e:reduc_fin_box2},
we obtain $\lambda_{L_t, A_1}(\pi) \le \max_{\CC \neq \CC_t} \lambda^{\ssup 1}_\CC$ 
for all $\pi \in \NN^{\ssup 7}_{t,x}$, so~\eqref{e:cond_massclass1} follows.
Inserting our choice of $\gamma_\pi, z_\pi$ in~\eqref{e:mass_class} 
and using~\eqref{e:relation_scales},
we obtain~\eqref{e:uptotimeepsilont} with $c = c_{A_1}$.
\end{proof}

We can now finish the proof of Proposition~\ref{prop:closetoZinshorttime}.
\begin{proof}[Proof of Proposition~\ref{prop:closetoZinshorttime}]
The key point is to show that, for some constant $c>0$ and uniformly in $x \in \Z^d$,
\begin{equation}\label{e:prfastappr1}
\begin{aligned}
\E_0 \biggl[ &\texte^{\int_0^t \xi(X_u) \textd u} \1 \Big\{ \tau_{(D^\circ_{t,t})^\cc} > t \ge \tau_{B_\nu(Z_t)} > \epsilon_t t \ge \tau_{B^\cc_{\lfloor \ln L_t \rfloor}}, X_{\epsilon_t t} =x \Big\} \biggr]  \\
& \; \le \exp \left\{\epsilon_t t \sup_{\CC \neq \CC_t} \lambda^{\ssup 1}_\CC + (1-\epsilon_t)t \lambda^{\ssup 1}_{\CC_t} - (\ln_3 (d L_t) - c)|Z_t| + o(\epsilon_t t d_t b_t) \right\}.
\end{aligned}
\end{equation}
Indeed, assuming~\eqref{e:prfastappr1},  Proposition~\ref{prop:lowerbound_forproof}, Lemma~\ref{l:compPsitildePsi} and \eqref{e:boundsZ} allow us to write 
\begin{equation}
\begin{aligned}
\label{e:prfastappr6}
 \frac{1}{U(t)} \E_0 \biggl[ &\texte^{\int_0^t \xi(X_u) \textd u} \1_{\{ \tau_{(D^\circ_{t,t})^\cc} > t \ge \tau_{B_\nu(Z_t)} > \epsilon_t t \ge \tau_{B^\cc_{\lfloor \ln L_t \rfloor}} \}} \biggr] \\
&\le  \frac{|D^\circ_{t,t}|}{U(t)} \sup_{x \in \Z^d} 
\E_0 \left[ \texte^{\int_0^t \xi(X_u) \textd u} \1_{\{ \tau_{(D^\circ_{t,t})^\cc} > t \ge \tau_{B_\nu(Z_t)} > \epsilon_t t \ge \tau_{B^\cc_{\lfloor \ln L_t \rfloor}}, X_{\epsilon_t t} =x \}} \right] \\
&\le \exp \left\{ -\epsilon_t t (\lambda^{\ssup 1}_{\CC_t} - \max_{\CC \neq \CC_t} \lambda^{\ssup 1}_\CC) + o(\epsilon_t t d_t b_t)\right\} 
\underset{t \to \infty}\longrightarrow 0 \;\; \text{ in probability}
\end{aligned}
\end{equation}
by Lemma~\ref{l:spectralgap} and~\eqref{e:relation_scales}.
This and Lemma~\ref{l:exitsmallboxfast} yield~\eqref{e:closetoZinshorttime}.

In order to prove~\eqref{e:prfastappr1}, suppose first that $\dist(x, B_{\nu}(Z_t)) \ge \ln L_t$.
Then we may apply Proposition~\ref{prop:massclass} to the set of paths
\begin{equation}\label{e:prfastappr2}
\NN^{\ssup 8}_{t,x} := \left\{ \pi \in \scrP(x, \Z^d) \colon\, \supp(\pi) \subset D^\circ_{t,t}, \supp(\pi) \cap B_{\nu}(Z_t) \neq \emptyset \right\}
\end{equation}
with $\gamma_\pi = \lambda^{\ssup 1}_{\CC_t} + d_t / \ln_3 t$ and $z_\pi \in B_{\nu}(Z_t) \cap \supp(\pi)$ arbitrary, obtaining 
\begin{multline}
\qquad
\label{e:prfastappr3}
\ln \E_x \left[ \texte^{\int_0^{(1-\epsilon_t)t} \xi(X_u) \textd u} \1_{\{\tau_{(D^\circ_{t,t})^\cc}>(1-\epsilon_t)t \ge \tau_{B_{\nu}(Z_t)}\}}\right]
\\
\le (1-\epsilon_t)t \lambda^{\ssup 1}_{\CC_t} - (\ln_3 (d L_t) -c_A)|Z_t - x| + o(\epsilon_t t d_t b_t)
\qquad
\end{multline}
since $|z_\pi-x| \ge |Z_t -x| - d \nu$.
Noting that both~\eqref{e:prfastappr3} and~\eqref{e:uptotimeepsilont}
remain true if we substitute $c$ and $c_A$ by $c \vee c_A$,
\eqref{e:prfastappr1} follows by applying the Markov property at time $\epsilon_t t$ and then using
\eqref{e:prfastappr3}, Lemma~\ref{l:uptotimeepsilont} and the triangle inequality.

If instead $\dist(x, B_{\nu}(Z_t)) < \ln L_t$, we may bound using Lemma~\ref{l:bounds_mass}
\begin{align}
\label{e:prfastappr4}
\E_x \left[ \texte^{\int_0^{(1-\epsilon_t)t} \xi(X_u) \textd u} \1_{\{\tau_{(D^\circ_{t,t})^\cc}> (1-\epsilon_t)t \ge \tau_{B_{\nu}(Z_t)}\}}\right]
& \le \texte^{(1-\epsilon_t)t \lambda^{\ssup 1}_{D^\circ_{t,t}} } |D^\circ_{t,t}|^{\tfrac32} 
\nonumber\\
& \le \exp \left\{(1-\epsilon_t)t \lambda^{\ssup 1}_{D^\circ_{t,t}} + o(\epsilon_t t d_t b_t) \right\}
\end{align}
by \eqref{e:boundsZ} and \eqref{e:relscales1}.
By Theorem~2.1 of \cite{BK16} together with Lemma~\ref{l:spectralgap}, \eqref{e:ev_separatedsets} and \eqref{e:propertiesR_L},
\begin{equation}\label{e:prfastappr5}
\lambda^{\ssup 1}_{D^\circ_{t,t}} < \lambda^{\ssup 1}_{\CC_t} + o(\epsilon_t d_t b_t).
\end{equation}
Since $|x| > |Z_t| - d \nu - \ln L_t$,
\eqref{e:prfastappr1} again follows using the Markov property 
together with \twoeqref{e:prfastappr4}{e:prfastappr5} and Lemma~\ref{l:uptotimeepsilont}.
\end{proof}

\subsection{Local concentration}
\label{ss:pathconclocus}\noindent
In this section, we address the principal ingredient needed for the proof of path localization,
culminating in the proof of Proposition~\ref{prop:boundforpathconc}.

For $L \in \N$, let $\widetilde{\epsilon}_L := \inf \{\epsilon_s \colon\, s > 0, L_s = L \}$.
\RS
Note that $\widetilde{\epsilon}_{L_t} \le \epsilon_t$.
Using \eqref{e:defLt} and $\lim_{t \to \infty} \epsilon_t \ln_3 t = \infty$, it is straightforward to show that also $\lim_{L \to \infty} \widetilde{\epsilon}_L \ln_3 L = \infty$.
\eRS
Let
\begin{equation}\label{e:deftildeR}
\widetilde{R}_L := \left\lfloor \frac{\widetilde{\epsilon}_L \ln L}{2(n_A+1)} \right\rfloor.
\end{equation}
Note that $\widetilde{R}_L$ satisfies~\eqref{e:propertiesR_L} but \emph{not}~\eqref{e:addpropR_L}.
Furthermore, $(n_A + 1) \widetilde{R}_{L_t} \le \tfrac12 \epsilon_t \ln t$.

Let $\widetilde{\mathfrak{C}}_{L, A}$ be the analogue of $\mathfrak{C}_{L,A}$ using the radius $\widetilde{R}_L$,
and let $\widetilde{\CC}_t \in \widetilde{\mathfrak{C}}_{L, A}$ such that $Z_t \in \widetilde{\CC}_t \cap \Pi_{L_t, A}$.
This is well-defined with probability tending to $1$ as $t \to \infty$ since, by~\eqref{e:monot_princev}
and Proposition~\ref{prop:PPPconv}, we may assume that $Z_t \in \Pi_{L_t, A}$.
Note that, without assuming~\eqref{e:addpropR_L}, we cannot use Lemma~\ref{l:comparisoncapitalsislands};
in particular, it may be that $Z_t \neq z_{\widetilde{\CC}_t}$.
Nonetheless, we still have the following.

\begin{lemma}
\label{l:specgaplnL}
With probability tending to $1$ as $t \to \infty$, 
\RS
\begin{equation}\label{e:containspecgap}
\widetilde{\CC}_t \subset D^\circ_{t,t}, \qquad \lambda^{\ssup 1}_{D^\circ_{t,t}} \ge \lambda^{\ssup 1}_{\widetilde{\CC}_t} > \widehat{a}_{L_t} - \chi + o(1)
\end{equation}
and
\eRS
\begin{align}
\lambda^{\ssup 1}_{\widetilde{\CC}_t} & > \sup_{\widetilde{\CC} \in \widetilde{\mathfrak{C}}_{L_t, A} \setminus \{\widetilde{\CC}_t\} \colon \widetilde{\CC} \cap D^\circ_{t,t} \neq \emptyset} \lambda^{\ssup 1}_{\widetilde{\CC}} + d_t e_t + o(d_t e_t). \label{e:specgaplnL}
\end{align}
In particular, $\lambda^{\ssup 1}_{\widetilde{\CC}_t} = \max \{ \lambda^{\ssup 1}_{\widetilde{\CC}} \colon\, \widetilde{\CC} \in \widetilde{\mathfrak{C}}_{L_t, A}, \widetilde{\CC} \cap D^\circ_{t,t} \neq \emptyset\}$.
\end{lemma}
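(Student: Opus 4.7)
The plan is to parallel the strategy of Lemma~\ref{l:spectralgap} while handling the fact that $\widetilde{R}_{L_t}$ only satisfies \eqref{e:propertiesR_L} and not \eqref{e:addpropR_L}; this will be achieved by transferring bounds between $\widetilde{\mathfrak{C}}_{L_t,A}$ and $\mathfrak{C}_{L_t,A}$ (i.e., between islands built with the large and the small radii, respectively) via the coarse-graining approximation from Theorem~2.1 of \cite{BK16}. First I would verify that $Z_t \in \Pi_{L_t,A}$: indeed, by the Rayleigh--Ritz bound \eqref{e:monot_princev}, $\xi(Z_t)\ge\lambda^{\scrC}(Z_t)\ge\Psi^{\ssup 1}_t$, which by Proposition~\ref{prop:PPPconv} and the relation $a_{r_t}=\widehat{a}_{L_t}-\chi+o(1)$ is bounded below by $\widehat{a}_{L_t}-\chi+o(1)>\widehat{a}_{L_t}-2A$ for $A>\chi$. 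Then Lemma~\ref{l:size_comps} applied with radius $\widetilde{R}_{L_t}$ yields $\widetilde{\CC}_t\subset B_{n_A\widetilde{R}_{L_t}}(Z_t)$ with $n_A\widetilde{R}_{L_t}\le\tfrac12\epsilon_t\ln t$. Combined with $|Z_t|\ge r_t f_t$ (from \eqref{e:boundsZ}) and the scale relations \eqref{e:relation_scales}, this gives $n_A\widetilde{R}_{L_t}\ll|Z_t|h_t$, so $\widetilde{\CC}_t\subset D^\circ_{t,t}$, and monotonicity \eqref{e:monot_princev} delivers the inequality $\lambda^{\ssup 1}_{D^\circ_{t,t}}\ge\lambda^{\ssup 1}_{\widetilde{\CC}_t}$.

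For the lower bound on $\lambda^{\ssup 1}_{\widetilde{\CC}_t}$, I use that $Z_t\in\Pi_{L_t,A}$ to conclude $B_{\widetilde{R}_{L_t}}(Z_t)\subset D_{L_t,A}\cap B_{L_t}\subset\widetilde{\CC}_t$. By Lemma~\ref{l:maxpotential} together with $\widehat{a}_L\sim\rho\ln_2 L$, $\varrho_{Z_t}\le(\ln L_t)^{\kappa+o(1)}$, whereas $\widetilde{R}_{L_t}\asymp\widetilde{\epsilon}_{L_t}\ln L_t$ with $\widetilde{\epsilon}_L\gg(\ln_3 L)^{-1}\gg(\ln L)^{\kappa-1}$ (since $\kappa<1$), hence $\varrho_{Z_t}\ll\widetilde{R}_{L_t}$. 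Thus $B_{\varrho_{Z_t}}(Z_t)\subset\widetilde{\CC}_t$, and \eqref{e:monot_princev} gives $\lambda^{\ssup 1}_{\widetilde{\CC}_t}\ge\lambda^{\scrC}(Z_t)\ge\Psi^{\ssup 1}_t>a_{r_t}-d_t g_t=\widehat{a}_{L_t}-\chi+o(1)$, completing \eqref{e:containspecgap}.

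The main obstacle is the spectral gap \eqref{e:specgaplnL}. Fix $\widetilde{\CC}\in\widetilde{\mathfrak{C}}_{L_t,A}\setminus\{\widetilde{\CC}_t\}$ with $\widetilde{\CC}\cap D^\circ_{t,t}\neq\emptyset$. We may assume $\lambda^{\ssup 1}_{\widetilde{\CC}}>\widehat{a}_{L_t}-A$, as otherwise the bound follows from the preceding step. The plan is then to apply Theorem~2.1 of \cite{BK16} to $D=\widetilde{\CC}$ \emph{using the smaller radius} $R_{L_t}$ (which satisfies both \eqref{e:propertiesR_L} and \eqref{e:addpropR_L}), obtaining a $\CC\in\mathfrak{C}_{L_t,A}$ with $\CC\cap\widetilde{\CC}\neq\emptyset$ and $\lambda^{\ssup 1}_{\widetilde{\CC}}\le\lambda^{\ssup 1}_\CC+2d(\eta_A)^{R_{L_t}}$. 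Since $D_{L_t,A}\subset\widetilde{D}_{L_t,A}$ and $\widetilde{\CC}$ is a connected component of the latter, connectivity forces $\CC\subset\widetilde{\CC}$; combined with $\CC_t\subset\widetilde{\CC}_t\neq\widetilde{\CC}$, this gives $\CC\neq\CC_t$. The bound $\diam(\widetilde{\CC})\le n_A\widetilde{R}_{L_t}\le(\ln t)^2$ implies $\dist(\CC,D^\circ_{t,t})\le(\ln t)^2$, so Lemma~\ref{l:spectralgap} is applicable and yields $\lambda^{\ssup 1}_\CC\le\lambda^{\ssup 1}_{\CC_t}-d_t e_t+o(d_t e_t)\le\lambda^{\ssup 1}_{\widetilde{\CC}_t}-d_t e_t+o(d_t e_t)$, using also $\CC_t\subset\widetilde{\CC}_t$. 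The residual $(\eta_A)^{R_{L_t}}$ is $o(d_t e_t)$ by \eqref{e:propertiesR_L} and \eqref{e:relation_scales}, establishing \eqref{e:specgaplnL}; the ``In particular'' statement is then immediate.
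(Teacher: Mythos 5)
Your proof is correct and follows essentially the same route as the paper's: the containment $\widetilde{\CC}_t \subset D^\circ_{t,t}$ via the scale relations, the transfer of the spectral-gap estimate from $\mathfrak{C}_{L_t,A}$ to $\widetilde{\mathfrak{C}}_{L_t,A}$ via Theorem~2.1 of \cite{BK16} (with the smaller radius $R_{L_t}$), the observation that any $\CC\cap\widetilde{\CC}\neq\emptyset$ forces $\CC\subset\widetilde{\CC}\neq\widetilde{\CC}_t$ so $\CC\neq\CC_t$, and finally an appeal to Lemma~\ref{l:spectralgap}. The only small deviation is in the lower bound of \eqref{e:containspecgap}: you derive $\lambda^{\ssup 1}_{\widetilde{\CC}_t}\ge\lambda^{\scrC}(Z_t)\ge\Psi^{\ssup 1}_t$ directly by checking $\varrho_{Z_t}\ll\widetilde{R}_{L_t}$ so that $B_{\varrho_{Z_t}}(Z_t)\subset\widetilde{\CC}_t$, whereas the paper simply notes $\CC_t\subset\widetilde{\CC}_t$ and invokes Lemma~\ref{l:compPsitildePsi}; both routes are valid and of comparable length.
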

\begin{proof}
Let us start with \eqref{e:containspecgap}. Note that, by \eqref{e:relation_scales} and \eqref{e:boundsZ}, 
$h_t |Z_t| > h_t f_t r_t \gg \widetilde{R}_{L_t}$, implying the containment;
the inequality between eigenvalues then follows by \eqref{e:monot_princev}. 
Now fix $R_L \le \widetilde{R}_L$ satisfying \twoeqref{e:propertiesR_L}{e:addpropR_L}
and let $\CC_t = \CC_{t,t} \in \mathfrak{C}_{L_t, A}$ as in Lemma~\ref{l:existoptisl}.
Then $\CC_t \subset \widetilde{\CC}_t$ and thus $\lambda^{\ssup 1}_{\widetilde{\CC}_t} \ge \lambda^{\ssup 1}_{\CC_t}$.
In particular, the remaining inequality in \eqref{e:containspecgap} follows by Lemma~\ref{l:compPsitildePsi}.
Moving to \eqref{e:specgaplnL}, 
fix $\widetilde{\CC} \in \widetilde{\mathfrak{C}}_{L_t, A} \setminus \{\widetilde{\CC}_t\}$, $\widetilde{\CC} \cap D^{\circ}_{t,t} \neq \emptyset$. 
Applying Theorem~2.1 of \cite{BK16} to $D:=\widetilde{\CC}$ and then~\eqref{e:ev_separatedsets} and Lemma~\ref{l:size_comps}, we get
\begin{align}\label{e:prspecgaplnL1}
\lambda^{\ssup 1}_{\widetilde{\CC}} 
 \le \sup_{\CC \in \mathfrak{C}_{L_t, A} \colon\, \CC \cap \widetilde{\CC} \neq \emptyset} \lambda^{\ssup 1}_{\CC} + 2d (\eta_A)^{R_{L_t}} \le \sup_{\substack{\CC \in \mathfrak{C}_{L_t, A} \setminus \{\CC_t\} \colon \\ \dist(\CC, D^\circ_{t,t}) \le (\ln t)^2 }} \lambda^{\ssup 1}_{\CC} + 2d (\eta_A)^{R_{L_t}}
\end{align}
where $\eta_A := (1+A/(4d))^{-1}$. Hence~\eqref{e:specgaplnL} follows from Lemma~\ref{l:spectralgap}.
\end{proof}

We can now give the proof of Proposition~\ref{prop:boundforpathconc}.
\begin{proof}[Proof of Proposition~\ref{prop:boundforpathconc}]
Let $n_A \in \N$ be as in Lemma~\ref{l:size_comps}. Fix $x \in B_\nu(Z_t)$ and define
\begin{equation}\label{e:prboundpathconc1}
\NN^{\ssup 9}_{t,x} := \left\{ \pi \in \scrP(x, \Z^d) \colon\, \supp(\pi) \subset D^\circ_{t,t}, \max_{1 \le \ell \le |\pi|} |\pi_\ell -x| > (n_A+1) \widetilde{R}_{L_t} \right\}.
\end{equation}
Let $\vartheta_L := 3(n_A+1) \lfloor \widetilde{\epsilon}^{-1}_L \rfloor$
and note that
\begin{equation}\label{e:propsvarthetaL}
\vartheta_L \ll \ln_3 L \; \text{ as } L \to \infty \qquad \text{ and } \qquad \vartheta_L \widetilde{R}_L \ge \ln L \; \text{ for all } L \text{ large enough.}
\end{equation}
Choosing $\gamma_\pi := \lambda^{\ssup 1}_{\widetilde{\CC}_t} + 2/t$, 
by Lemma~\ref{l:specgaplnL} and~\eqref{e:propsvarthetaL},
we may apply Proposition~\ref{prop:massclasslargeR}
(using the islands of $\widetilde{\mathfrak{C}}_{L_t, A}$)
to $\NN^{\ssup 9}_{t,x}$, obtaining, for all $0 \le s \le t$,
\begin{align}\label{e:prboundpathconc2}
\E_x \left[\texte^{\int_0^s \xi(X_u) \textd u} \1_{\{ \tau_{(D^\circ_{t,t})^\cc}>s, \, \sup_{0 \le u \le s} |X_u-x|> \tfrac12 \epsilon_t \ln t  \}} \right]
\le \texte^2 \exp \left\{ s \lambda^{\ssup 1}_{\widetilde{\CC}_t} - \tfrac12 \widetilde{R}_{L_t} \ln_3 L_t \right\}
\end{align}
since $\tfrac12 \epsilon_t \ln t \ge (n_A+1) \widetilde{R}_{L_t}$.
Now note that,
by Lemma~\ref{l:bounds_mass} and Proposition~\ref{prop:localizationinnerdomain}(ii),
\begin{equation}\label{e:prboundpathconc3}
\E_x \left[ \texte^{\int_0^s \xi(X_u) \textd u} \right] \ge \E_x \left[ \texte^{\int_0^s \xi(X_u) \textd u} \1 \{\tau_{D^\circ_{t,t}} > s, X_s = x\}\right] \ge 
\varepsilon_\nu^2 \exp \left\{ s \lambda^{\ssup 1}_{D^\circ_{t,t}} \right\}.
\end{equation}
\RV
Noting that $\widetilde{R}_L \ln_3 L \gg \ln L$, 
\eqref{e:boundforpathconc} follows from \twoeqref{e:prboundpathconc2}{e:prboundpathconc3}, \eqref{e:containspecgap} and $L_t > t$.
\eRV
\end{proof}


\section{Local profiles}
\label{s:localprofiles}\nopagebreak\noindent
In this section, we prove Propositions~\ref{prop:improvmassconc}--\ref{prop:shapes} 
dealing with the local ``shapes'' of the solution to the PAM  and of
the potential configuration in the vicinity of the localization center,
starting with the latter.
In the following we will assume that $A > 0$ and $\nu \in \N$ 
have been taken large enough so as to satisfy the hypotheses of all previous results.

\begin{proof}[Proof of Proposition~\ref{prop:shapes}]
Fix $0<a \le b< \infty$.
Let $\text{d}(\cdot, \cdot)$ be a metric under which $[-\infty, 0]^{\Z^d}$ is compact and has the topology of pointwise convergence. 
Since for each $R \in \N$ the principal Dirichlet eigenvalue of $\Delta + V_\rho$ in $B_R$ is simple,
there exists $\varepsilon_R >0$ such that
\begin{equation}\label{e:prapproxlocef1}
\text{d}(V,V_\rho) < \varepsilon_R \;\; \Rightarrow \;\; \sup_{x \in B_R} \left|V(x) - V_\rho(x) \right| \; \vee \; \left\| v^R_V - v^R_\rho \right\|_{\ell^1} < \frac{1}{R},
\end{equation}
where $v^R_V$, resp.,~$v^R_\rho$ are the principal Dirichlet eigenfunctions of $ \Delta + V$, resp., $\Delta + V_\rho$ in~$B_R$, both normalised in $\ell^1$.
Under Assumption~\ref{A:uniqueness}, Lemma~3.2(i) in \cite{GKM07} shows that the quantity
\begin{equation*}\label{e:prapproxlocef2}
\FF(\varepsilon) := -\chi - \sup \left\{ \lambda^{\ssup 1}(V) \colon\, V \in [-\infty,0]^{\Z^d}, \LL(V) \le 1, 0 \in \textnormal{argmax}(V), \text{d}(V,V_\rho) \ge \varepsilon \right\}
\end{equation*}
is strictly positive for $\varepsilon > 0$.
By Lemmas~\ref{l:maxpotential}, \ref{l:uppbddcurlyL} and \ref{l:compPsitildePsi},
there exists a deterministic non-increasing function $\delta_t>0$ such that  $\delta_t \to 0$ as $t \to \infty$ and the following holds
with probability tending to $1$ as $t \to \infty$: 
\begin{equation}
\begin{aligned}
\label{e:prapproxlocef3}
\max_{x \in B_{L_t}} \xi(x) < \widehat{a}_{L_t} + \delta_t, \qquad
\inf_{s \in [at, bt]} \lambda^{\ssup 1}_{\CC_{t,s}} > \widehat{a}_{L_t} - \chi - \delta_t
\end{aligned}
\end{equation}
and
\begin{equation}
\label{e:prapproxlocef3.5}
\sup_{s \in [at, bt]} \LL_{\CC_{t,s}}(\xi - \widehat{a}_{L_t} - \delta_t) \le 1.
\end{equation}
Letting $t_R>0$ with $t_R \to \infty$ be
such that $\delta_{t}< \tfrac12 \FF(\varepsilon_R)$ for all $t \ge t_R$, we define 
\begin{equation}\label{e:defmut}
\mu_t := \inf \{R \in \N \colon\, t_{R+1} > t \}.
\end{equation}
Then $\mu_t \to \infty$, and we can take $\mu_t \ll (\ln t)^\kappa$ by making $t_R$ grow sufficiently fast with~$R$.
\RS By \eqref{e:propertiesR_L}, \eqref{e:boundsZ} and Lemma~\ref{l:size_comps}, we may assume that $B_{\mu_t}(Z_s) \subset \CC_{t,s} \subset B_{L_t}$. \eRS
Defining
\begin{equation}\label{e:prapproxlocef4}
V^*(x) := \left\{ \begin{array}{ll} \xi(x+Z_s) - \widehat{a}_{L_t} - \delta_t & \text{ if } x +Z_s \in \CC_{t,s},\\
                   - \infty & \text{ otherwise,}\\
                   \end{array}\right.
\end{equation}
we have $V^* \in [-\infty,0]^{\Z^d}$, $\LL(V^*) = \LL_{\CC_{t,s}}(\xi-\widehat{a}_{L_t} -\delta_t) \le 1$ and $0 \in \textnormal{argmax}(V^*)$.
Furthermore,  $\lambda^{\ssup 1}(V^*) = \lambda^{\ssup 1}_{\CC_{t,s}}-\widehat{a}_{L_t} - \delta_t > -\chi - \FF(\varepsilon_{\mu_t})$.
Since $v^{\mu_t}_{V^*}(\cdot)=\phi^\bullet_{t,s}(\cdot+Z_s)$,
\begin{equation}\label{e:prapproxlocef5}
\sup_{x \in \mu_t} \left| \xi(x+Z_s) - \widehat{a}_{L_t} - V_\rho(x) \right| \; \vee \; \|\phi^\bullet_{t,s}(Z_s+\cdot)-v^{\mu_t}_\rho(\cdot) \|_{\ell^1} < \frac{1}{\mu_t} + \delta_t
\end{equation}
by~\eqref{e:prapproxlocef1} and the definition of $\FF(\varepsilon)$.
To conclude, we observe that $\widehat{a}_{L_t} = \widehat{a}_t + o(1)$ and that, by Lemma 3.3(iii) of \cite{GKM07}, 
$\lim_{t \to \infty}\|v^{\mu_t}_\rho-v_\rho\|_{\ell^1} = 0$.
\end{proof}

Next we prove Proposition~\ref{prop:improvmassconc} by adapting the strategy of Section 8.2 of \cite{GKM07}.
The proof is based on two lemmas whose proofs will be postponed to subsequent subsections. 
Fix $\mu_t \in \N$, $1 \ll \mu_t \ll R_t$, which is enough by~\eqref{e:propertiesR_L}. 
We will again decompose the solution with the help of the Feynman-Kac representation,
which states that, for a function $f:\Z^d\to [0,\infty)$, $f \not \equiv 0$, the function
\begin{equation}\label{FKformulawithfunction}
(x,t) \mapsto \E_x \left[\texte^{\int_0^t \xi(X_s) \textd s} f(X_t) \right]
\end{equation}
is the unique positive solution of the equation~\eqref{PAM} with initial condition $f$.

Fix an auxiliary function $t\mapsto T_t \in \N$ such that $\sqrt{\mu_t} \ll T_t \ll \mu_t$. 
For notational convenience we set $B_{t,s}:=B_{\mu_t}(Z_s)$.
Using~\eqref{FKformulawithfunction},
we may write $u(x,s)=u^{\ssup 1}(x,s;t)+u^{\ssup 2}(x,s;t)$ where
\begin{equation}\label{e:defuI}
u^{\ssup 1}(x,s;t):= \E_x \left[ \texte^{\int_0^s \xi(X_u) \textd u} \1_{\{ X_s = 0, \tau_{B^\cc_{t,s}} > T_t \}}\right]
\end{equation}
and $u^{\ssup 2}$ is defined by replacing $\tau_{B^\cc_{t,s}} > T_t$ by the complementary inequality.
The first lemma shows that the contribution of $u^{\ssup 2}$ is negligible.

\begin{lemma}
\label{l:doesnotexitlocalboxveryfast}
For any $0 < a \le b < \infty$,
\begin{equation}\label{e:doesnotexitlocalboxveryfast}
\lim_{t \to \infty} \sup_{s \in [at,bt]} \1_{\GG_{t,s}} \sum_{x \in \Z^d} \frac{u^{\ssup 2}(x,s;t)}{U(s)} =0 \;\; \textnormal{ in probability.}
\end{equation}
\end{lemma}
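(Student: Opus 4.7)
The plan is to decompose $\sum_x u^{\ssup 2}(x,s;t)$ according to whether $x$ lies in $B_{t,s}$ or outside it, and to bound each contribution separately using a combination of the spectral localization results of Section~\ref{s:localization} and the strong Markov property at the first exit from $B_{t,s}$. The intuition is that, under time reversal, paths contributing to $u^{\ssup 2}$ must visit $\partial B_{t,s}$ within the last $T_t$ units of time, where the normalised solution is already exponentially small in $\mu_t$ by Section~\ref{s:localization}, while the mass accumulated over the short interval of length $T_t \ll \mu_t$ is too small to compensate.

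For $x \notin B_{t,s}$, the starting hitting time $\tau^x_{B^\cc_{t,s}}$ equals zero, so $u^{\ssup 1}(x,s;t)=0$ and $u^{\ssup 2}(x,s;t)=u(x,s)$. Summing over such $x$ and invoking Propositions~\ref{prop:boundbyprincipalef_forproof} and~\ref{prop:localizationinnerdomain} to derive a uniform bound of the form $u(x,s)/U(s) \le C \texte^{-c|x-Z_s|}$ on $\GG_{t,s}$, the geometric tail sum yields a contribution of order $\texte^{-c\mu_t}$, which vanishes since $\mu_t \to \infty$.

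For $x \in B_{t,s}$, the plan is to apply the strong Markov property at $\tau := \tau^x_{B^\cc_{t,s}}$, obtaining
\begin{equation*}
u^{\ssup 2}(x,s;t) = \E_x\Bigl[\texte^{\int_0^\tau \xi(X_u)\textd u}\1\{\tau \le T_t\}\,u(X_\tau,s-\tau)\Bigr],
\end{equation*}
where, by invariance under time reversal, $u(y,r):=\E_y[\texte^{\int_0^r \xi(X_u)\textd u}\1\{X_r=0\}]$ coincides with the standard solution to \twoeqref{PAM}{PAMinitial} evaluated at $(y,r)$. Three ingredients should then combine to bound the right-hand side: first, $u(X_\tau, s-\tau)/U(s-\tau) \le C \texte^{-c \mu_t}$ uniformly in $\tau \in [0,T_t]$, which follows from Propositions~\ref{prop:boundbyprincipalef_forproof} and~\ref{prop:localizationinnerdomain} applied at time $s-\tau$, together with the fact that $Z_{s-\tau}=Z_s$ on $\GG_{t,s}$ with high probability by Proposition~\ref{prop:stabilitygap} and Theorem~\ref{thm:aging_locus} (since $T_t = o(t)$); second, $U(s-\tau)/U(s) \le \texte^{-\tau \lambda^\bullet(1+o(1))}$ with $\lambda^\bullet := \lambda^{\ssup 1}_{B_{t,s}}$, which follows from Proposition~\ref{prop:lowerbound_forproof}, Lemma~\ref{l:upperboundUt}, Lemma~\ref{l:compPsitildePsi} and the continuity of $\theta \mapsto \Psi^{\ssup 1}_\theta$ from Lemma~\ref{l:propPhi}; and third, Lemma~\ref{l:mass_out} applied with $\gamma = \lambda^\bullet + \epsilon$. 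Absorbing the first two into the expectation would give
\begin{equation*}
\frac{u^{\ssup 2}(x,s;t)}{U(s)} \le C \texte^{-c\mu_t + \epsilon T_t}\Bigl(1 + \tfrac{2d|B_{t,s}|}{\epsilon}\Bigr),
\end{equation*}
and the choice $\epsilon := c\mu_t/(2T_t)$, admissible since $T_t \ll \mu_t$, produces the decay $\texte^{-c\mu_t/2}$. Summing over $x \in B_{t,s}$ contributes at most a factor $|B_{t,s}| = O(\mu_t^d)$, leaving a bound that still tends to zero.

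The hard part will be to ensure that all the auxiliary high-probability events---the stability $Z_{s-\tau}=Z_s$, the exponential localization of $u(\cdot, s-\tau)/U(s-\tau)$ around $Z_{s-\tau}$, the continuity estimate for $\Psi^{\ssup 1}$, and the uniform comparison $\lambda^\bullet \approx \Psi^{\ssup 1}_s$---hold \emph{simultaneously}, uniformly over $s \in [at,bt]$ and $\tau \in [0,T_t]$, on $\GG_{t,s}$. This joint control requires combining Propositions~\ref{prop:goodevent_forproof}, \ref{prop:stabilitygap}, \ref{prop:boundbyprincipalef_forproof} and~\ref{prop:localizationinnerdomain} with the spectral comparison Lemma~\ref{l:compPsitildePsi}, and is the main technical point to be verified.
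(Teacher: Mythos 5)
Your overall plan shares the paper's skeleton---split into $x\in B_{t,s}$ vs.\ $x\notin B_{t,s}$, strong Markov property at the first exit, exponential decay of the eigenfunction near $Z_s$, smallness of the accumulated mass over the short window $T_t$---but the central step for $x\in B_{t,s}$ has a genuine gap: the pointwise bound $u(X_\tau,s-\tau)/U(s-\tau)\le Ce^{-c\mu_t}$ that you want to feed into the Markov decomposition is not a consequence of Propositions~\ref{prop:boundbyprincipalef_forproof} and~\ref{prop:localizationinnerdomain}. Those propositions control only the part of $u(y,\theta)$ coming from paths in $\RR^\nu_{t,s,\theta}$; the complementary contribution (paths that leave $D^\circ_{t,s}$ or never visit $B_\nu(Z_s)$) is only shown to have small $\ell^1$-mass relative to $U$, not a pointwise exponential bound at a given site $y\in\partial B_{t,s}$. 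The paper resolves exactly this by further decomposing $u=u_1+u_2$ according to the event $\RR^\nu_{t,s,\theta}$ (cf.~\eqref{defRtstheta}--\eqref{e:defu2}): the $u_1$ part is bounded pointwise via Corollary~\ref{cor:bdEFwithtimetrans} (a time-translated refinement of the principal-eigenfunction bound, which your proposal omits) together with Proposition~\ref{prop:localizationinnerdomain}, while the $u_2$ part is pulled back to time $s$ via the elementary inequality $u_2(x,s-\theta)\le e^{\theta(2d-\xi(0))}u_2(x,s)$ and then controlled in $\ell^1$ via~\eqref{e:proofthm_massconc1}--\eqref{e:proofthm_massconc2}. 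Without some such splitting, your claimed uniform bound does not follow.

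Two further simplifications you should be aware of: the paper never needs the stability $Z_{s-\tau}=Z_s$ or Proposition~\ref{prop:stabilitygap}/Theorem~\ref{thm:aging_locus}, because it defines $u_1,u_2$ and the domains $D^\circ_{t,s}$, $B_\nu(Z_s)$ with respect to the \emph{outer} time $s$ throughout, even when evaluating at the shifted time $s-\theta$. Likewise the ratio $U(s-\tau)/U(s)$ is never estimated separately; the relevant time-translated factor comes directly out of the Markov step. Your invocation of Lemma~\ref{l:mass_out} is also unnecessary, since inside $B_{t,s}\subset D^\circ_{t,s}$ one has $\max_{x\in B_{t,s}}\xi(x)-\lambda^\circ_{t,s}\le 2d$ by~\eqref{e:monot_princev}, so the Feynman--Kac mass up to the exit time is bounded crudely by $e^{2dT_t}$, which suffices because $T_t\ll\mu_t$.
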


Finally, the second lemma controls the distance between $u^{\ssup 1}$ and $\phi^\bullet_{t,s}$.

\begin{lemma}
\label{l:localcomparison}
For any $0< a \le b < \infty$,
\begin{equation}\label{e:localcomparison}
\lim_{t \to \infty} \sup_{s \in [at, bt]} \1_{\GG_{t,s}} \sum_{x \in \Z^d}  \left| \frac{u^{\ssup 1}(x,s;t)}{U(s)} - \phi^\bullet_{t,s}(x) \right| = 0 \;\; \textnormal{ in probability.}
\end{equation}
\end{lemma}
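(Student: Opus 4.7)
The plan is to combine the reversibility of $X$, the Markov property at time $s-T_t$, and the spectral decomposition of the Dirichlet heat kernel of $H$ in $B_{t,s}:=B_{\mu_t}(Z_s)$, with a uniform spectral gap that isolates the principal-eigenfunction contribution. Throughout I work on the event $\GG_{t,s}$ and freely use that its complement has vanishing probability in the relevant regime.

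By reversibility of $X$ we have $u(x,s)=\E_x\bigl[\texte^{\int_0^s\xi(X_u)\textd u}\1\{X_s=0\}\bigr]$, so $u^{\ssup 1}(x,s;t)=\E_x\bigl[\texte^{\int_0^s\xi(X_u)\textd u}\1\{X_s=0,\,\tau_{B_{t,s}^\cc}>T_t\}\bigr]$. Applying the Markov property at time $T_t$ and using reversibility once more yields
\begin{equation*}
u^{\ssup 1}(x,s;t)=\sum_{y\in B_{t,s}}p^D_{B_{t,s}}(T_t;x,y)\,u(0,y,s-T_t),
\end{equation*}
where $p^D_{B_{t,s}}(T_t;x,y):=\E_x\bigl[\texte^{\int_0^{T_t}\xi}\1\{X_{T_t}=y,\,\tau_{B_{t,s}^\cc}>T_t\}\bigr]$ is the Dirichlet heat kernel of $H$ in $B_{t,s}$. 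Expanding in the $\ell^2(B_{t,s})$-orthonormal eigenbasis $\{\phi^{\ssup k}_{B_{t,s}}\}_k$ with eigenvalues $\lambda^{\ssup 1}_{B_{t,s}}>\lambda^{\ssup 2}_{B_{t,s}}\ge\cdots$ produces
\begin{equation*}
u^{\ssup 1}(x,s;t)=V_{t,s}\,\texte^{T_t\lambda^{\ssup 1}_{B_{t,s}}}\phi^{\ssup 1}_{B_{t,s}}(x)+E(x),
\end{equation*}
where $V_{t,s}:=\sum_{y\in B_{t,s}}\phi^{\ssup 1}_{B_{t,s}}(y)\,u(0,y,s-T_t)$ and $E(x)$ collects the contribution of all modes $k\ge 2$.

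A Cauchy--Schwarz estimate based on $\sum_k|\phi^{\ssup k}_{B_{t,s}}(x)\phi^{\ssup k}_{B_{t,s}}(y)|\le 1$ (itself a consequence of $\sum_k|\phi^{\ssup k}_{B_{t,s}}(z)|^2=1$) together with the monotonicity of eigenvalues then yields $\sum_{x\in B_{t,s}}|E(x)|\le|B_{t,s}|\,\texte^{T_t\lambda^{\ssup 2}_{B_{t,s}}}\,U(s-T_t)$. The crucial additional input is a uniform lower bound $\lambda^{\ssup 1}_{B_{t,s}}-\lambda^{\ssup 2}_{B_{t,s}}\ge\gamma>0$ on $\GG_{t,s}$ for $t$ large, which I would derive from Proposition~\ref{prop:shapes}: since $\xi(\cdot+Z_s)-\widehat{a}_t\to V_\rho$ locally uniformly on $B_{\mu_t}$, and $\Delta+V_\rho$ on $\Z^d$ has a simple principal eigenvalue $-\chi$ strictly above the rest of its spectrum, the top two eigenvalues of $H_{B_{t,s}}-\widehat{a}_t$ converge (by standard perturbative arguments, as $\mu_t\to\infty$) to the corresponding eigenvalues of $\Delta+V_\rho$, which are strictly separated. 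Since $|B_{t,s}|=(2\mu_t+1)^d$ grows subpolynomially in $t$ and $T_t\to\infty$, the factor $\texte^{T_t(\lambda^{\ssup 2}_{B_{t,s}}-\lambda^{\ssup 1}_{B_{t,s}})}$ decays faster than any polynomial in $\mu_t$.

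Summing the identity above over $x$ and invoking Lemma~\ref{l:doesnotexitlocalboxveryfast} gives $V_{t,s}\,\texte^{T_t\lambda^{\ssup 1}_{B_{t,s}}}\|\phi^{\ssup 1}_{B_{t,s}}\|_{\ell^1}=U(s)(1-o(1))-\sum_xE(x)$, so using $\phi^\bullet_{t,s}=\phi^{\ssup 1}_{B_{t,s}}/\|\phi^{\ssup 1}_{B_{t,s}}\|_{\ell^1}$ and the triangle inequality, the target $\ell^1$-distance is bounded by $o(1)+2\sum_x|E(x)|/U(s)$. The main obstacle will be showing that $\sum_x|E(x)|/U(s)=o(1)$, which boils down to estimating $\texte^{T_t\lambda^{\ssup 2}_{B_{t,s}}}U(s-T_t)/U(s)$ up to subexponential factors. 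A crude application of Proposition~\ref{prop:lowerbound_forproof} and Lemma~\ref{l:upperboundUt} only yields $U(s-T_t)/U(s)\le\texte^{-T_t\Psi^{\ssup 1}_s+o(td_tb_t)}$, where $\Psi^{\ssup 1}_s\approx\lambda^{\ssup 1}_{B_{t,s}}$ but the additive error $o(td_tb_t)$ can dwarf the gap gain $\gamma T_t$. To circumvent this, I would use a more precise lower bound $U(s)\ge U(s-T_t)\,\texte^{T_t\lambda^{\ssup 1}_{B_{t,s}}-o(T_t)}$ obtained by restricting the Feynman--Kac expectation at time $s$ to paths that spend the last $T_t$ time units inside $B_{t,s}$ near $Z_s$, then applying the Markov property at time $s-T_t$ together with Lemma~\ref{l:bounds_mass}. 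This cancels the $\texte^{T_t\lambda^{\ssup 1}_{B_{t,s}}}$ factor and leaves $|B_{t,s}|\,\texte^{-\gamma T_t+o(T_t)}=o(1)$, closing the argument.
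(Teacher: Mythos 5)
Your overall strategy---spectral decomposition on $B_{t,s}$, isolation of the principal mode, and a uniform spectral gap to kill the remainder---matches the paper's proof in outline. However, there are two substantive problems with the way you close the argument.

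First, and most importantly, your derivation of the spectral gap goes via Proposition~\ref{prop:shapes}, which requires Assumption~\ref{A:uniqueness}. But Lemma~\ref{l:localcomparison} feeds into Proposition~\ref{prop:improvmassconc}, which is stated and must hold \emph{without} the uniqueness assumption. Relying on Proposition~\ref{prop:shapes} therefore makes your proof strictly weaker in scope than the statement you are trying to prove. The paper instead proves a dedicated spectral-gap lemma (Lemma~\ref{l:specgap_localbox}) that uses only Lemma~\ref{l:compPsitildePsi} to place $\CC_{t,s}$ in $\mathfrak{C}^\delta_{L_t,A}$, then invokes Lemma~\ref{l:properties_opt_comps}(i) for the gap $\lambda^{\ssup 1}_{\CC_{t,s}}-\lambda^{\ssup 2}_{\CC_{t,s}}>\tfrac12\rho\ln 2$, and transfers to $B_{t,s}\subset\CC_{t,s}$ via the min-max characterization together with Theorem~2.1 of \cite{BK16}. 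None of this needs uniqueness. You would need to replace your gap argument by this route (or an equivalent one) for the proof to cover the stated generality.

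Second, your final estimate $U(s)\ge U(s-T_t)\,\texte^{T_t\lambda^{\ssup 1}_{B_{t,s}}-o(T_t)}$ does not follow from the ingredients you name. Restricting the Feynman--Kac integral at time $s$ to paths that stay in $B_{t,s}$ over the last $T_t$ units and applying Lemma~\ref{l:bounds_mass} yields at best $U(s)\ge\sum_{y\in B_{t,s}}u(y,s-T_t)\,\texte^{T_t\lambda^{\ssup 1}_{t,s}}|\phi^{\ssup 1}_{t,s}(y)|^2$, which involves the \emph{weighted} mass $\sum_y u(y,s-T_t)|\phi^{\ssup 1}_{t,s}(y)|^2$ rather than $U(s-T_t)$. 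Converting the weighted sum to a constant multiple of $U(s-T_t)$ requires the mass-concentration estimate~\eqref{e:massconcwithgap} (applied at time $s-T_t$ with $Z_{s-T_t}=Z_s$, which itself needs Proposition~\ref{prop:stabilitygap}) together with the uniform eigenfunction lower bound from Lemma~\ref{l:localizationlocalbox}(ii); you cite neither. The paper circumvents this entirely by writing the spectral identity $\langle u(\cdot,s-T_t),\phi^{\ssup 1}_{t,s}\rangle = \texte^{-T_t\lambda^\bullet_{t,s}}\bigl(\langle u(\cdot,s),\phi^{\ssup 1}_{t,s}\rangle - \langle u^{\ssup 2}(\cdot,s;t),\phi^{\ssup 1}_{t,s}\rangle\bigr)$, then applying~\eqref{e:massconcwithgap} and Lemma~\ref{l:localizationlocalbox} at time $s$ directly (see~\eqref{e:prloccomp13}), and a matching $\ell^2$ upper bound via the $u_1/u_2$ split and Corollary~\ref{cor:bdEFwithtimetrans}. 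Your route could be made to work, but as written it is missing exactly the two inputs that carry the weight of the argument.
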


\begin{proof}[Proof of Proposition~\ref{prop:improvmassconc}]
Follows directly from Lemmas~\ref{l:doesnotexitlocalboxveryfast}--\ref{l:localcomparison}.
\end{proof}

The remainder of this section is devoted to the proofs of Lemmas~\ref{l:doesnotexitlocalboxveryfast}--\ref{l:localcomparison}.
In order to avoid repetition, we fix here $0<a \le b < \infty$, and all statements made in what follows are assumed to hold 
for all $s \in [at,bt]$ with probability tending to $1$ as $t \to \infty$.

\subsection{Contribution of $u^{\ssup 2}$}
\label{ss:prooflemma_doesnotexitlocalboxveryfast}
\vspace{-20pt}
\begin{proof}[Proof of Lemma~\ref{l:doesnotexitlocalboxveryfast}]
Recall that $B_{t,s} = B_{\mu_t}(Z_s)$. Since $u^{\ssup 2}(x,s;t) \le u(x,s)$, \eqref{e:massconcwithgap} implies 
\begin{equation}\label{e:prnegluII1}
\lim_{t \to \infty} \sup_{s \in [at,bt]} \1_{\GG_{t,s}} \sum_{x \notin B_{t,s}} \frac{u^{\ssup 2}(x,s;t)}{U(s)} = 0 \;\; \textnormal{ in probability,}
\end{equation}
so we only need to control the sum over $x \in B_{t,s}$. By the strong Markov property,
\begin{align}\label{e:expruII}
u^{\ssup 2}(x,s;t) = \E_x \left[ \exp \left\{\int_0^{\tau_{B^\cc_{t,s}}} \xi(X_\theta) \textd \theta \right\} u(X_{\tau_{B^\cc_{t,s}}}, s-\tau_{B^\cc_{t,s}}) \1_{\{X_s = 0, \tau_{B^\cc_{t,s}} \le T_t \}} \right].
\end{align}
Consider the event
\begin{equation}
\RR^{\nu}_{t,s,\theta} := \left\{\tau_{(D^\circ_{t,s})^\cc} > \theta \ge \tau_{B_{\nu}(Z_s)} \right\}, \label{defRtstheta}
\end{equation}
introduce the functions
\begin{align}
u_1(x,\theta)  & := \E_x \left[ \texte^{\int_0^\theta \xi(X_u) \textd u} \1_{\{X_\theta=0\} \cap \RR^\nu_{t,s,\theta}}\right],
\label{e:defu1}\\
u_2(x,\theta)  & := \E_x \left[ \texte^{\int_0^\theta \xi(X_u) \textd u} \1_{\{X_\theta=0\} \cap (\RR^\nu_{t,s,\theta})^\cc}\right] = u(x,\theta) - u_1(x,\theta),\label{e:defu2}
\end{align}
and define $u^{\ssup 2}_i(x,s;t)$, $i=1,2$, by substituting~$u_i$ for~$u$ in~\eqref{e:expruII}. Then, clearly, we have
$u^{\ssup 2}(x,s;t) = u^{\ssup 2}_1(x,s;t) + u^{\ssup 2}_2(x,s;t)$.  Our strategy is to separately estimate the contribution of~$u^{\ssup 2}_1$ and $u^{\ssup 2}_2$.
Starting with $u^{\ssup 2}_2$, we claim that, for all $\theta <s$,
\begin{equation}\label{e:prnegluII2}
u_2(x,s-\theta) \le \texte^{\theta(2d - \xi(0))}u_2(x,s).
\end{equation}
Indeed,~\eqref{e:prnegluII2} can be obtained from~\eqref{e:defu2} with $\theta=s$
by intersecting with the event $(R^{\nu}_{t,s, s-\theta})^\cc \cap \{ X_u=0 \,\forall\, u \in [s-\theta, s]\}$
and applying the Markov property. The inequality~\eqref{e:prnegluII2} in turn shows
\begin{equation}\label{e:prnegluII3}
\sum_{x \in B_{t,s}} \frac{u^{\ssup 2}_2(x,s;t)}{U(s)} \le |B_{\mu_t}| \, \texte^{T_t(2d+|\xi(0)|+2 \rho \ln_2 t)} \sum_{x \in \Z^d}\frac{u_2(x,s)}{U(s)},
\end{equation}
where we bound $\xi(X_\theta)\le 2 \rho \ln_2 t$ by Lemma~\ref{l:maxpotential} noting that $B_{t,s} \subset B_{t}$.
By \twoeqref{e:proofthm_massconc1}{e:proofthm_massconc2} (and invariance under time-reversal of the law of $X$), 
on $\GG_{t,s}$ we can bound~\eqref{e:prnegluII3} by
\begin{equation}\label{e:prnegluII4}
|B_{\mu_t}| \, \exp \left\{ - t (\ln t)^{-2} + T_t(2d +|\xi(0)|+2\rho \ln_2 t) \right\},
\end{equation}
which tends to $0$ as $t \to \infty$.

Thus we are left with controlling $u^{\ssup 2}_1$.
\RV
To this end, recall the setup of Lemma~\ref{l:bound_top_ef} and note that,
taking $z=0$, $\Lambda = D^{\circ}_{t,s}$ and $\Gamma = B_\nu(Z_s)$,
we have $u_1(x,\theta) = w(x,\theta)$ with $w$ as defined in \eqref{e:boundtopef1}.
Then Corollary~\ref{cor:bdEFwithtimetrans} and Proposition~\ref{prop:localizationinnerdomain} give,
\eRV
on $\GG_{t,s}$,
\begin{align}\label{e:prnegluII5}
u_1(x,s-\theta) \le \texte^{-\theta \lambda^\circ_{t,s}} \left( \inf_{y \in \Gamma} \phi^{\circ}_{t,s}(y) \right)^{-5} \phi^\circ_{t,s}(x) \sum_{y \in \Gamma} u_1(y,s) \le \texte^{-\theta \lambda^\circ_{t,s}} \varepsilon_\nu^{-5} \phi^\circ_{t,s}(x) U(s),
\end{align}
where $\lambda^\circ_{t,s}$ is the largest Dirichlet eigenvalue of $H_{D^\circ_{t,s}}$
and $\varepsilon_\nu$ is as in Proposition~\ref{prop:localizationinnerdomain}(ii).
Inserting~\eqref{e:prnegluII5} in the definition of $u^{\ssup 2}_1$, we obtain, for some constant $c_0>0$,
\begin{equation}\label{e:prnegluII6}
\sum_{x \in B_{t,s}} \frac{u^{\ssup 2}_1(x,s;t)}{U(s)} \le c_0 \mu_t^d \sup_{x \notin B_{t,s}} \phi^\circ_{t,s}(x) \sup_{x \in B_{t,s}} \E_x \left[\texte^{\int_0^{\tau_{B^\cc_{t,s}}} (\xi(X_u) - \lambda^\circ_{t,s}) \textd u} \1_{\{\tau_{B^\cc_{t,s}}\le T_t \}} \right].
\end{equation}
Since $B_{t,s} \subset D^\circ_{t,s}$, \eqref{e:monot_princev} shows that $\max_{x \in B_{t,s}} \xi(x)-\lambda^\circ_{t,s} \le 2d$.
Applying Proposition~\ref{prop:localizationinnerdomain}(i), on $\GG_{t,s}$ we may further bound~\eqref{e:prnegluII6} by
\begin{equation}\label{e:prnegluII7}
c_0 c_1 \mu_t^d \texte^{-c_2 \mu_t+ 2 d T_t}. 
\end{equation}
By our choice of $T_t$, \eqref{e:prnegluII7} tends to $0$ as $t \to \infty$, concluding the proof of Lemma~\ref{l:doesnotexitlocalboxveryfast}.
\end{proof}

\subsection{Contribution of $u^{\ssup 1}$}
\label{ss:prooflemma_localcomparison}\noindent
Let $\lambda^{\ssup k}_{t,s}$, $\phi^{\ssup k}_{t,s}$ be the ordered Dirichlet eigenvalues and respective orthonormal eigenfunctions of the Anderson
operator in $B_{t,s}$. We extend the eigenfunctions to be $0$ outside of $B_{t,s} = B_{\mu_t}(Z_s)$.
In our previous notation, 
$\lambda^\bullet_{t,s} = \lambda^{\ssup 1}_{t,s}$ and $\phi^\bullet_{t,s}= \phi^{\ssup 1}_{t,s}/\|\phi^{\ssup 1}_{t,s}\|_{\ell^1(\Z^d)}$.
We start with the following important fact.

\begin{lemma}
\label{l:specgap_localbox}
For any $0 < a \le b < \infty$, with probability tending to $1$ as $t \to \infty$,
\begin{equation}
\inf_{s \in [at,bt]} \lambda^{\ssup 1}_{t,s}  > \widehat{a}_{L_t} - \chi +o(1), \label{e:eiglocboxlarge}
\end{equation}
and
\begin{equation}
\inf_{s \in [at,bt]} \lambda^{\ssup 1}_{t,s}-\lambda^{\ssup 2}_{t,s}  \ge \tfrac13 \rho \ln 2 \label{e:specgap_localbox}.
\end{equation}
\end{lemma}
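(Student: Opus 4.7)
The strategy is to sandwich $\lambda^{\ssup 1}_{t,s}$ and $\lambda^{\ssup 2}_{t,s}$ between the top two Dirichlet eigenvalues of $H_{\CC_{t,s}}$, where $\CC_{t,s}\in\mathfrak{C}^{\delta}_{L_t,A}$ is the relevant island supplied by Lemma~\ref{l:compPsitildePsi} (with $z_{\CC_{t,s}}=Z_s$ and $\lambda^{\ssup 1}_{\CC_{t,s}}\ge\widehat{a}_{L_t}-\chi+o(1)$). The key input is the exponential localization of the principal eigenfunction $\phi$ of $H_{\CC_{t,s}}$ near $Z_s$, i.e.\ $|\phi(x)|\le c_1\texte^{-c_2|x-Z_s|}$, available from Theorem~2.5 of~\cite{BK16}, whose hypotheses hold thanks to Lemma~\ref{l:potential_rel_islands_upbd}.

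For~\eqref{e:eiglocboxlarge}, I would set $\psi:=\phi\,\1_{B_{t,s}}$. Since $\mu_t\to\infty$, the decay of $\phi$ forces $\|\psi\|_{\ell^2}^2=1-O(\texte^{-2c_2\mu_t})$, and a direct computation with the Dirichlet-form identity $\langle \Delta f,f\rangle=-\sum_{\{x,y\}}(f(x)-f(y))^2$, combined with $|\xi|=O(\ln_2 t)$ on $B_{L_t}$ from Lemma~\ref{l:maxpotential}, yields $\langle H\psi,\psi\rangle=\lambda^{\ssup 1}_{\CC_{t,s}}+o(1)$. The Rayleigh--Ritz formula~\eqref{e:RRformula} then delivers
\begin{equation*}
\lambda^{\ssup 1}_{t,s}\ge \langle H\psi,\psi\rangle/\|\psi\|_{\ell^2}^2\ge \lambda^{\ssup 1}_{\CC_{t,s}}+o(1)\ge \widehat{a}_{L_t}-\chi+o(1),
\end{equation*}
which is~\eqref{e:eiglocboxlarge}.

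For the spectral gap~\eqref{e:specgap_localbox}, given the lower bound above it suffices to establish $\lambda^{\ssup 2}_{t,s}\le\lambda^{\ssup 1}_{\CC_{t,s}}-\tfrac{1}{2}\rho\ln 2+o(1)$, in analogy with the proof of~\eqref{e:spectralgap2} in Lemma~\ref{l:spectralgap}. Choosing $A$ large enough (exceeding $A_1$ and $\chi+\delta$), Step~1 ensures $\lambda^{\ssup 1}_{t,s}-A>\widehat{a}_{L_t}-2A$, so that Theorem~2.1 of~\cite{BK16} applies to $D:=B_{t,s}$; combined with \eqref{e:ev_separatedsets}--\eqref{e:2ndev_separatedsets}, it bounds $\lambda^{\ssup 2}_{t,s}$ by the maximum of $\lambda^{\ssup 2}_{\CC_{t,s}}$ and $\sup\{\lambda^{\ssup 1}_{\CC}\colon \CC\ne\CC_{t,s},\,\CC\cap B_{t,s}\ne\emptyset\}$, plus an error $O((\eta_A)^{R_{L_t}})=o(1)$. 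Lemma~\ref{l:properties_opt_comps}(i) bounds the first term by $\lambda^{\ssup 1}_{\CC_{t,s}}-\tfrac{1}{2}\rho\ln 2$. For the second, Lemma~\ref{l:properties_opt_comps}(iii) with auxiliary scale $N_L\gg\mu_t$ (permitted since $\mu_t\ll(\ln t)^{\kappa}$) excludes all other relevant islands from $B_{t,s}$, while any remaining (non-relevant) island meeting $B_{t,s}$ contributes $\lambda^{\ssup 1}\le\widehat{a}_{L_t}-\chi-\delta$. Combining with Step~1 delivers $\lambda^{\ssup 1}_{t,s}-\lambda^{\ssup 2}_{t,s}\ge\tfrac{1}{2}\rho\ln 2-o(1)\ge\tfrac{1}{3}\rho\ln 2$ for all large $t$.

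\textbf{Main obstacle.} The crux lies in Step~2, and it is two-fold. First, Theorem~2.1 of~\cite{BK16} is naturally phrased for domains large relative to the coarse-graining radius $R_{L_t}$, whereas $\mu_t\ll R_{L_t}$; a re-examination of its proof, exploiting that $\Pi_{L_t,A_1}\cap B_{t,s}$ is confined to the fixed-radius ball $B_{\nu_1}(Z_s)\subset B_{t,s}$ by Lemma~\ref{l:potential_rel_islands_upbd}, will be necessary. Second, the straightforward bound on the non-relevant contribution gives only a gap of size $\delta$, so either $\delta$ has to be chosen commensurately large (and compatibility with Lemma~\ref{l:potential_rel_islands_upbd} verified) or a finer probabilistic input ruling out secondary near-maximal islands inside $B_{t,s}$---via the Poisson statistics of Proposition~\ref{prop:PPPconv} applied on the small spatial scale $\mu_t\ll r_t$---must be added.
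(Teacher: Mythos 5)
Your Step 1 is a workable alternative to the paper's approach: the paper obtains the lower bound on $\lambda^{\ssup 1}_{t,s}$ from Theorem~2.1 of~\cite{BK16} applied to $D:=\CC_{t,s}$ (using Lemma~\ref{l:potential_rel_islands_upbd} to place $\Pi_{L_t,A_1}\cap\CC_{t,s}$ inside $B_{\nu_1}(Z_s)\subset B_{t,s}$, so that $B_{t,s}$ is an $r$-neighborhood of those peaks with $r=\mu_t-\nu_1$), which is essentially equivalent to your truncated-eigenfunction Rayleigh--Ritz computation. Either route delivers \eqref{e:eiglocboxlarge}.

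Step 2 is where you go off track, and the obstacle you flag is an artifact of a wrong decomposition. You are trying to bound $\lambda^{\ssup 2}_{t,s}$ by comparing $B_{t,s}$ with other islands of $\mathfrak{C}_{L_t,A}$, modelling the argument on Lemma~\ref{l:spectralgap} for $D^\circ_{t,s}$. But $D^\circ_{t,s}$ is a large domain containing many islands, whereas $B_{t,s}=B_{\mu_t}(Z_s)$ is \emph{strictly contained in the single island} $\CC_{t,s}$: since $Z_s\in\Pi_{L_t,A}$, we have $B_{R_{L_t}}(Z_s)\subset D_{L_t,A}$, hence $B_{R_{L_t}}(Z_s)\subset\CC_{t,s}$, and $\mu_t\ll(\ln t)^\kappa\ll(\ln L_t)^\beta\ll R_{L_t}$ gives $B_{t,s}\subset\CC_{t,s}$. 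Once this inclusion is observed, domain monotonicity (the min--max characterization of $\lambda^{\ssup 2}$, cf.\ \eqref{e:minmaxlambda2}) immediately gives $\lambda^{\ssup 2}_{t,s}\le\lambda^{\ssup 2}_{\CC_{t,s}}$, with \emph{no} comparison against other islands and \emph{no} application of Theorem~2.1 to the small box. Combined with $\lambda^{\ssup 1}_{\CC_{t,s}}-\lambda^{\ssup 2}_{\CC_{t,s}}>\tfrac12\rho\ln 2$ from Lemma~\ref{l:properties_opt_comps}(i) and the Step 1 lower bound $\lambda^{\ssup 1}_{t,s}>\lambda^{\ssup 1}_{\CC_{t,s}}-o(1)$, this gives the gap $\tfrac12\rho\ln 2-o(1)\ge\tfrac13\rho\ln 2$ directly. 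Your self-identified obstacles --- applying Theorem~2.1 of~\cite{BK16} on a domain of size $\mu_t\ll R_{L_t}$, and obtaining a gap of size $\delta$ rather than $\tfrac13\rho\ln 2$ --- both disappear, because there are no "other islands inside $B_{t,s}$" to control at all.
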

\begin{proof}
By Lemma~\ref{l:compPsitildePsi}, may assume that $\lambda^{\ssup 1}_{\CC_{t,s}} > \widehat{a}_{L_t}-\chi +o(1)$. 
Thus, by Lemma~\ref{l:properties_opt_comps}(i),
\begin{equation}\label{e:prspecgaplocbox1}
\lambda^{\ssup 1}_{\CC_{t,s}} - \lambda^{\ssup 2}_{\CC_{t,s}} > \tfrac12 \rho \ln 2.
\end{equation}
Since $B_{t,s} \subset \CC_{t,s}$, $\lambda^{\ssup 2}_{t,s} \le \lambda^{\ssup 2}_{\CC_{t,s}}$ by the minimax formula 
(see e.g.\ the proof of Lemma 4.3 in \cite{BK16}).
Furthermore, by Lemma~\ref{l:potential_rel_islands_upbd} together with Theorem 2.1 of \cite{BK16} 
(note that $\lambda^{\ssup 1}_{\CC_{t,s}}-A_1>\widehat{a}_{L_t} -2A_1$),
\begin{equation}\label{e:prspecgaplocbox2}
\lambda^{\ssup 1}_{t,s} > \lambda^{\ssup 1}_{\CC_{t,s}} - 2d \left(1 + \frac{A_1}{4d} \right)^{1-2(\mu_t-\nu_1)}.
\end{equation}
Now \twoeqref{e:eiglocboxlarge}{e:specgap_localbox} follow from \twoeqref{e:prspecgaplocbox1}{e:prspecgaplocbox2}.
\end{proof}

Lemma~\ref{l:specgap_localbox} will allow us to prove the following localization property for $\phi^{\ssup 1}_{t,s}$.
\begin{lemma}
\label{l:localizationlocalbox}
There exist $c_1, c_2 \in (0,\infty)$ and, for $R \in \N$, $\varepsilon^\bullet_R > 0$ 
such that, for all $0<a\le b <\infty$, the following holds with probability tending to $1$ as $t \to \infty$: For all $s \in [at,bt]$,
\begin{equation}
\phi^{\ssup 1}_{t,s}(x)  \le c_1 \texte^{-c_2 |x -Z_s|} \qquad \forall  x \in \Z^d, \label{e:localizationlocalbox}
\end{equation}
and
\begin{equation}
\phi^{\ssup 1}_{t,s}(y)  \ge \varepsilon^\bullet_R \qquad \forall  y \in B_R(Z_s). \label{e:lbeflocalbox}
\end{equation}
\end{lemma}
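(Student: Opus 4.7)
The plan is to mirror the strategy used for Proposition~\ref{prop:localizationinnerdomain}, exploiting the fact that $B_{t,s} = B_{\mu_t}(Z_s) \subset \CC_{t,s}$ for large~$t$ (by Lemma~\ref{l:existoptisl} together with Lemma~\ref{l:size_comps} and the condition $\mu_t \ll R_{L_t}$). This inclusion makes things simpler than in the $D^\circ_{t,s}$ case: there is no longer a competition between multiple islands inside the domain, so we can work directly within $\CC_{t,s}$.

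For part~(i), I would fix $A>0$ large and $\delta>0$ small so that Lemma~\ref{l:potential_rel_islands_upbd} applies, yielding constants $A_1>4d$ and $\nu_1\in\N$ with
\begin{equation*}
\xi(z)\le\widehat a_{L_t}-2A_1\qquad\forall\,z\in\CC_{t,s}\setminus B_{\nu_1}(Z_s),
\end{equation*}
where I use $Z_s=z_{\CC_{t,s}}$ (Lemma~\ref{l:existoptisl}) and the fact that $\CC_{t,s}\in\mathfrak C^\delta_{L_t,A}$ (Lemma~\ref{l:compPsitildePsi}). Combined with the bound $\lambda^{\ssup 1}_{t,s}\ge\widehat a_{L_t}-\chi+o(1)$ from \eqref{e:eiglocboxlarge}, this produces a uniform gap $\lambda^{\ssup 1}_{t,s}-\xi(x)\ge 2A_1-\chi+o(1)>0$ on $B_{t,s}\setminus B_{\nu_1}(Z_s)$. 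From here I would invoke the resolvent/tilting argument behind Theorem~1.4 of~\cite{BK16} (exactly as in the short-distance part of the proof of Proposition~\ref{prop:localizationinnerdomain}(i), with Lemma~4.2 of~\cite{BK16} used to iterate the bound): for $r\ge\nu_1$,
\begin{equation*}
\|\phi^{\ssup 1}_{t,s}\1_{B_{t,s}\setminus B_r(Z_s)}\|_{\ell^2(\Z^d)}\le (1+A_1/(2d))^{-(r-\nu_1)},
\end{equation*}
from which the pointwise decay $\phi^{\ssup 1}_{t,s}(x)\le c_1\texte^{-c_2|x-Z_s|}$ follows via a standard Combes--Thomas-type propagation argument. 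The spectral gap \eqref{e:specgap_localbox} is what makes this uniform in $s\in[at,bt]$.

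For part~(ii), I would first use part~(i) to locate a point with substantial mass near $Z_s$: by exponential decay there is an $R_0\in\N$ (independent of $t,s$) with $\sum_{|x-Z_s|>R_0}\phi^{\ssup 1}_{t,s}(x)^2\le\tfrac12$, so $\ell^2$-normalization yields $\bar x\in B_{R_0}(Z_s)$ with $\phi^{\ssup 1}_{t,s}(\bar x)\ge(2|B_{R_0}|)^{-1/2}=:\varepsilon_0$. For $R\ge R_0$ and any $y\in B_R(Z_s)$, I would apply Lemma~\ref{l:reprEF} to $\Lambda=B_{t,s}$, giving
\begin{equation*}
\phi^{\ssup 1}_{t,s}(y)=\phi^{\ssup 1}_{t,s}(\bar x)\,\E_y\!\left[\exp\!\Bigl\{\int_0^{\tau_{\bar x}}\!(\xi(X_u)-\lambda^{\ssup 1}_{t,s})\,\textd u\Bigr\}\1_{\{\tau_{\bar x}<\tau_{B^\cc_{t,s}}\}}\right],
\end{equation*}
and then apply Lemma~\ref{l:potential_rel_islands_lwbd} (with $\nu=R$) to get $\xi(z)\ge\widehat a_{L_t}-2A^*$ for $z\in B_R(Z_s)$, hence $\lambda^{\ssup 1}_{t,s}-\xi(z)\le A':=2A^*+\chi+1$. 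Restricting the expectation to a shortest-distance path from $y$ to $\bar x$ inside $B_R(Z_s)$ (of length at most $2dR$) and using Lemma~\ref{l:path_eval} then gives $\varepsilon^\bullet_R:=\varepsilon_0(2d+A')^{-2dR}>0$.

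The main obstacle is the exponential decay in part~(i), because one needs the resolvent bound to be uniform in $s$ and $t$ despite $\mu_t\to\infty$; the key input making this work is precisely the combination of Lemma~\ref{l:potential_rel_islands_upbd} (uniform upper bound on $\xi$ away from $Z_s$) with the uniform spectral gap of Lemma~\ref{l:specgap_localbox}. The lower bound in part~(ii) is then a relatively routine path-expansion argument once part~(i) is in hand.
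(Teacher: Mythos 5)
Your proof follows essentially the same route as the paper and is correct: part~(i) via Lemma~\ref{l:potential_rel_islands_upbd} and Lemma~4.2 of~\cite{BK16} fed by~\eqref{e:eiglocboxlarge}, part~(ii) via Lemma~\ref{l:reprEF}, Lemma~\ref{l:potential_rel_islands_lwbd} and Lemma~\ref{l:path_eval}.

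Two minor remarks. First, once you have the $\ell^2$ bound
$\sum_{x\in B_{t,s}\setminus B_r(Z_s)}|\phi^{\ssup 1}_{t,s}(x)|^2 \le (1+A_1/(2d))^{-2(r-\nu_1)}$ for all $r\ge\nu_1$, the pointwise decay follows immediately by taking $r=|x-Z_s|$ and noting that $\phi^{\ssup 1}_{t,s}(x)^2$ is a single term of the sum; no further Combes--Thomas-type propagation step is needed. Second, the uniformity in $s\in[at,bt]$ in part~(i) is not driven by the spectral gap~\eqref{e:specgap_localbox}, which plays no role here, but by the uniform eigenvalue lower bound~\eqref{e:eiglocboxlarge} together with the uniform bound $\xi\le\widehat a_{L_t}-2A_1$ on $\CC_{t,s}\setminus B_{\nu_1}(Z_s)$ from Lemma~\ref{l:potential_rel_islands_upbd}; these two are what feed Lemma~4.2 of~\cite{BK16}. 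The spectral gap is what you need later, in the proof of Lemma~\ref{l:localcomparison}, to compare $u^{\ssup 1}$ with $\phi^\bullet_{t,s}$, but not for the eigenfunction decay established here.
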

\begin{proof}
Fix $A_1$, $\nu_1$ as in Lemma~\ref{l:potential_rel_islands_upbd} and take $r > \nu_1$.
By Lemma 4.2 of \cite{BK16} and~\eqref{e:eiglocboxlarge},
\begin{equation}\label{e:prlocallocalbox1}
\sum_{x \in B_{t,s}\setminus B_r(Z_s)} |\phi^{\ssup 1}_{t,s}(x)|^2 \le \left(1 + \frac{A_1}{2d} \right)^{-2(r-\nu_1)},
\end{equation}
proving~\eqref{e:localizationlocalbox}. The bound \eqref{e:lbeflocalbox} is obtained using 
\eqref{e:localizationlocalbox} and Lemma~\ref{l:potential_rel_islands_lwbd}
as in the proof of Proposition~\ref{prop:localizationinnerdomain}(ii).
\end{proof}

We can now finish the proof of Lemma~\ref{l:localcomparison}.
\begin{proof}[Proof of Lemma~\ref{l:localcomparison}]

Using the Markov property, we can write
\begin{equation}\label{e:expruI}
u^{\ssup 1}(x,s;t) = \E_x \left[ \texte^{\int_0^{T_t} \xi(X_u) \textd u} u(X_{T_t}, s-T_t) \1_{\{\tau_{B^\cc_{t,s}}>T_t\}}\right].
\end{equation}
Since 
\begin{equation}
(x,T) \mapsto \E_x \left[ \texte^{\int_0^{T} \xi(X_u) \textd u} u(X_{T}, s-T_t) \1_{\{\tau_{B^\cc_{t,s}}>T\}}\right]
\end{equation}
solves the parabolic equation~\eqref{e:PAM} with $\Lambda := B_{t,s}$ and initial condition $u(\cdot,s-T_t) \1_{B_{t,s}}$,
an eigenvalue expansion as~\eqref{e:specrepr} gives
\begin{equation}\label{e:specrepuI}
u^{\ssup 1}(x,s;t) = \sum_{k=1}^{|B_{t,s}|} \texte^{T_t \lambda^{\ssup k}_{t,s}} \phi^{\ssup k}_{t,s}(x) \langle \phi^{\ssup k}_{t,s}, u(\cdot, s-T_t)\rangle,
\end{equation}
where $\langle \cdot, \cdot \rangle$ is the canonical inner product in $\ell^2(\Z^d)$.

Set $U^{\ssup 1}(s;t):= \sum_{x \in \Z^d} u^{\ssup 1}(x,s;t)$ and note that, by Lemma~\ref{l:doesnotexitlocalboxveryfast},
\begin{equation}\label{e:prloccomp1}
\lim_{t \to \infty} \sup_{s \in [at, bt]} \1_{\GG_{t,s}} \, \left| \frac{U^{\ssup 1}(s;t)}{U(s)} - 1 \right| =0 \;\; \textnormal{ in probability.} 
\end{equation}
It is thus enough to show~\eqref{e:localcomparison} with $U(s)$ substituted by $U^{\ssup 1}(s;t)$. Using~\eqref{e:specrepuI}, write
\begin{equation}\label{e:prloccomp2}
\frac{u^{\ssup 1}(x,s;t)}{U^{\ssup1}(s;t)} = \frac{\phi^{\ssup 1}_{t,s}(x) + E_{t,s}(x)}{\|\phi^{\ssup 1}_{t,s}\|_{\ell^1(\Z^d)} + \sum_{x \in \Z^d} E_{t,s}(x)}
\end{equation}
where
\begin{equation}\label{e:defEts}
E_{t,s}(x) := \sum_{k=2}^{|B_{t,s}|} \texte^{-T_t(\lambda^{\ssup 1}_{t,s} - \lambda^{\ssup k}_{t,s})} \phi^{\ssup k}_{t,s}(x) \frac{\langle \phi^{\ssup k}_{t,s}, u(\cdot, s-T_t)\rangle}{\langle \phi^{\ssup 1}_{t,s}, u(\cdot, s-T_t)\rangle}.
\end{equation}
Once we show that
\begin{equation}\label{e:prloccomp3}
\lim_{t \to \infty} \sup_{s \in [at,bt]} \1_{\GG_{t,s}} \, \|E_{t,s}\|_{\ell^1(\Z^d)} =0 \;\; \textnormal{ in probability,}
\end{equation}
the desired conclusion will follow by the bound (recall $\phi^\bullet_{t,s} = \phi^{\ssup 1}_{t,s}/\|\phi^{\ssup 1}_{t,s}\|_{\ell^1}$)
\begin{equation}\label{e:prloccomp2.5}
\left\| \frac{u^{\ssup 1}(\cdot,s;t)}{U^{\ssup1}(s;t)} - \phi^\bullet_{t,s}(\cdot)\right\|_{\ell^1(\Z^d)} 
\le \frac{2 \|E_{t,s}\|_{\ell^1(\Z^d)}}{1 - \|E_{t,s}\|_{\ell^1(\Z^d)}},
\end{equation}
where we used that $\|\phi^{\ssup 1}_{t,s}\|_{\ell^1(\Z^d)} \ge \|\phi^{\ssup 1}_{t,s}\|^2_{\ell^2(\Z^d)} =1$.
To prove \eqref{e:prloccomp3}, we first use the Cauchy-Schwarz inequality and Parseval's identity to obtain
\begin{align}\label{e:prloccomp4}
|E_{t,s}(x)| & \le \frac{\texte^{-T_t(\lambda^{\ssup 1}_{t,s} - \lambda^{\ssup 2}_{t,s})}}{\langle \phi^{\ssup 1}_{t,s}, u(\cdot, s-T_t)\rangle} 
\left(\sum_{k=1}^{|B_{t,s}|} \langle \phi^{\ssup k}_{t,s}, \1_x \rangle^2 \right)^{\tfrac12}\left(\sum_{k=1}^{|B_{t,s}|} \langle \phi^{\ssup k}_{t,s}, u(\cdot, s-T_t)\rangle^2 \right)^{\tfrac12}\nonumber\\
& = \texte^{-T_t(\lambda^{\ssup 1}_{t,s} - \lambda^{\ssup 2}_{t,s})} \frac{\|u(\cdot, s-T_t) \|_{\ell^2(B_{t,s})}}{\langle \phi^{\ssup 1}_{t,s}, u(\cdot, s-T_t)\rangle} \1_{B_{t,s}}(x).
\end{align}
Now it suffices to show that, for some positive constants $c_0, c_1$, on $\GG_{t,s}$
\begin{equation}
\|u(\cdot, s-T_t) \|_{\ell^2(\Z^d)} \le c_0 \, \texte^{-T_t \lambda^{\bullet}_{t,s}} \, U(s), \label{e:prloccomp5}
\end{equation}
and
\begin{equation}
\langle \phi^{\ssup 1}_{t,s}, u(\cdot, s-T_t)\rangle \ge c_1 \, \texte^{-T_t \lambda^{\bullet}_{t,s}} \, U(s); \label{e:prloccomp6}
\end{equation}
indeed, using \twoeqref{e:prloccomp4}{e:prloccomp6} and~\eqref{e:specgap_localbox}, we can bound
\begin{equation}\label{e:prloccomp7}
\sup_{s \in [at,bt]} \1_{\GG_{t,s}} \, \|E_{t,s}\|_{\ell^1(\Z^d)} \le \frac{c_0}{c_1} (2\mu_t+1)^{d} \texte^{-\frac{\rho \ln 2}{3} T_t}
\end{equation}
which tends to $0$ as $t \to \infty$ by our choice of $T_t$.
Thus it only remains to prove \twoeqref{e:prloccomp5}{e:prloccomp6}.
We start with \eqref{e:prloccomp5}. By the triangle inequality,
\begin{equation}\label{e:prloccomp8}
\|u(\cdot, s-T_t)\|_{\ell^2(\Z^d)} \le \|u_1(\cdot, s-T_t)\|_{\ell^2(\Z^d)} + \|u_2(\cdot, s-T_t)\|_{\ell^2(\Z^d)}
\end{equation}
where $u_1$, $u_2$ are defined as in \twoeqref{e:defu1}{e:defu2}.
Reasoning as in \twoeqref{e:prnegluII2}{e:prnegluII4},
we can see that, on $\GG_{t,s}$,
\begin{multline}
\qquad
\label{e:prloccomp9}
\frac{\|u_2(\cdot, s-T_t)\|_{\ell^2(\Z^d)}}{U(s)} \le \frac{\|u_2(\cdot, s-T_t)\|_{\ell^1(\Z^d)}}{U(s)} 
\\
\le \exp \left\{ T_t(2d + |\xi(0)|) - t (\ln t)^{-2} \right\}
\ll \texte^{-T_t \lambda^\bullet_{t,s} }
\qquad
\end{multline}
since $\lambda^{\bullet}_{t,s} \le \max_{x \in B_{t,s}} \xi(x) \le 2\rho \ln_2 t$ by Lemma~\ref{l:maxpotential}.
Using~\eqref{e:prnegluII5} we get, on $\GG_{t,s}$,
\begin{equation}\label{e:prloccomp10}
\frac{\|u_1(\cdot, s-T_t)\|_{\ell^2(\Z^d)}}{U(s)} \le \varepsilon_\nu^{-5} \texte^{-T_t \lambda^{\circ}_{t,s}} \le \varepsilon_\nu^{-5} \texte^{- T_t \lambda^{\bullet}_{t,s}  }
\end{equation}
since $\lambda^{\circ}_{t,s} \ge \lambda^{\bullet}_{t,s}$. This shows~\eqref{e:prloccomp5}.
For~\eqref{e:prloccomp6}, let $u^{\ssup 1}$, $u^{\ssup 2}$ be as in~\eqref{e:defuI} and write
\begin{align}\label{e:prloccomp11}
\langle u(\cdot, s), \phi^{\ssup 1}_{t,s} \rangle 
& = \langle u^{\ssup 1}(\cdot, s;t), \phi^{\ssup 1}_{t,s} \rangle + \langle u^{\ssup 2}(\cdot, s;t), \phi^{\ssup 1}_{t,s} \rangle \nonumber\\
& = \texte^{T_t \lambda^{\bullet}_{t,s}} \langle u(\cdot, s-T_t), \phi^{\ssup 1}_{t,s} \rangle + \langle u^{\ssup 2}(\cdot, s;t), \phi^{\ssup 1}_{t,s} \rangle
\end{align}
(where we used the spectral representation \eqref{e:specrepuI}) to obtain
\begin{equation}\label{e:prloccomp12}
\langle u(\cdot, s-T_t), \phi^{\ssup 1}_{t,s} \rangle = \texte^{-T_t \lambda^\bullet_{t,s}} \left\{ \langle u(\cdot, s), \phi^{\ssup 1}_{t,s} \rangle  - \langle u^{\ssup 2}(\cdot, s;t), \phi^{\ssup 1}_{t,s} \rangle \right\}.
\end{equation}
Fix $R \in \N$ such that~\eqref{e:massconcwithgap} holds with $\delta < \tfrac12$ and, 
for this $R$, take $\varepsilon^\bullet_R > 0$ as in~\eqref{e:lbeflocalbox}. 
Then on $\GG_{t,s}$ we can estimate
\begin{equation}\label{e:prloccomp13}
\langle u(\cdot, s), \phi^{\ssup 1}_{t,s} \rangle \ge \sum_{x \in B_R(Z_s)} \phi^{\ssup 1}_{t,s}(x) u(x,s)
\ge \varepsilon^\bullet_R (1-\delta)U(s) > \tfrac 12 \varepsilon^\bullet_R U(s).
\end{equation}
On the other hand, by Lemma~\ref{l:doesnotexitlocalboxveryfast}, the second term inside the brackets in~\eqref{e:prloccomp12} 
multiplied by $\1_{\GG_{t,s}}$ is smaller than $\varepsilon^\bullet_R U(s)/4$ with probability tending to $1$, 
proving~\eqref{e:prloccomp6} with $c_1 = \tfrac14 \varepsilon^\bullet_R$. 
This concludes the proof of Lemma~\ref{l:localcomparison}.
\end{proof}

\appendix

\section{A tail estimate}
\label{s:tailestimate}\nopagebreak\noindent
In this section, 
we prove~\eqref{e:PPPconvcompact_cond2} for $\widehat{Y}_t$ given by~\eqref{e:defhatYt} 
using an approach from \cite{BK16}.
We will strongly rely on Assumption~\ref{A:dexp}.
The first step concerns the tail of $\xi$.

\begin{lemma}
\label{l:bdddevpotential}
For any $\varepsilon>0$, there exists $t_0>0$ such that, for all $t\ge t_0$,
\begin{equation}\label{e:bdddevpotential}
t^d \,\textnormal{Prob} \left( \xi(0) > \widehat{a}_t + s d_t \right) \le \texte^{-s (1- \varepsilon)} \qquad \forall  s \ge 0.
\end{equation}
\end{lemma}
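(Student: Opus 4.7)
The plan is to translate the inequality into a statement about $F$ and then apply the mean value theorem, using Assumption~\ref{A:dexp} to control $F'$.

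First I would take logarithms twice. Since $F(r) = \ln\ln(1/\textnormal{Prob}(\xi(0)>r))$ and $\textnormal{Prob}(\xi(0)>\widehat{a}_t) = t^{-d}$ by definition of $\widehat{a}_t$, we have $F(\widehat{a}_t) = \ln(d\ln t)$. A direct computation shows that \eqref{e:bdddevpotential} is equivalent to
\[
F(\widehat{a}_t + sd_t) \ge \ln\bigl(d\ln t + s(1-\varepsilon)\bigr) = F(\widehat{a}_t) + \ln\!\left(1 + \tfrac{s(1-\varepsilon)}{d\ln t}\right).
\]
Using $\ln(1+u)\le u$ for $u\ge 0$, it suffices to prove
\[
F(\widehat{a}_t + sd_t) - F(\widehat{a}_t) \ge \tfrac{s(1-\varepsilon)}{d\ln t} \qquad \forall\, s\ge 0.
\]

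Next, I would recall that $d_t = \rho/(d\ln t)$, so the desired lower bound reads $(1-\varepsilon)\,sd_t/\rho$. By Assumption~\ref{A:dexp}, $F'(r)\to 1/\rho$ as $r\to\infty$; since $\widehat{a}_t\to\infty$, for any fixed $\varepsilon>0$ there exists $t_0$ such that $F'(r)\ge (1-\varepsilon)/\rho$ for all $r\ge \widehat{a}_{t_0}$. By monotonicity of $\widehat{a}_t$, the same lower bound holds on $[\widehat{a}_t,\infty)$ for every $t\ge t_0$. The mean value theorem applied on $[\widehat{a}_t,\widehat{a}_t + sd_t]$ then yields
\[
F(\widehat{a}_t + sd_t) - F(\widehat{a}_t) \ge \tfrac{1-\varepsilon}{\rho}\, s d_t = \tfrac{s(1-\varepsilon)}{d\ln t},
\]
which is exactly what is needed.

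There is no real obstacle here: the only point worth a second glance is that the bound must hold uniformly for all $s\ge 0$, including $s$ so large that $\widehat{a}_t + sd_t$ is very big. This is automatic once $F'\ge (1-\varepsilon)/\rho$ on the half-line $[\widehat{a}_t,\infty)$, which follows from the limit $F'(r)\to 1/\rho$. Thus the proof is essentially a one-line application of the mean value theorem, modulo the cosmetic reduction to an inequality for~$F$.
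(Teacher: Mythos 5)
Your proof is correct and follows essentially the same route as the paper's: reduce the claim to the inequality $F(\widehat{a}_t + sd_t) - F(\widehat{a}_t) \ge (1-\varepsilon)sd_t/\rho$ via an elementary bound (your $\ln(1+u)\le u$ is the same tool as the paper's $\texte^x - 1 \ge x$, after one fewer iterated logarithm), and then conclude by the mean value theorem together with $F'(r)\to 1/\rho$. The presentation differs cosmetically but the mathematical content is identical.
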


\begin{proof}
Recall the definition of $F$ in~\eqref{e:defF}. 
Note that $t^d = \exp(\texte^{F(\widehat{a}_t)})$ to write
\begin{multline}
\qquad
\label{e:prbdddevpot1}
-\ln \left\{ t^d \textnormal{Prob} \left( \xi(0) > \widehat{a}_t + s d_t \right) \right\} 
\\
= \texte^{F(\widehat{a}_t)} \left( \texte^{F(\widehat{a}_t + s d_t)- F(\widehat{a}_t)} - 1 \right) \ge \texte^{F(\widehat{a}_t)} \left\{ F(\widehat{a}_t + s d_t)- F(\widehat{a}_t) \right\}
\qquad
\end{multline}
where in the last inequality we used $\texte^x-1 \ge x$.
Using~\eqref{e:assumpF'} and the Mean Value Theorem, we obtain $F(\widehat{a}_t + sd_t)-F(\widehat{a}_t) \ge  s d_t (1-\varepsilon) / \rho $
for all $s \ge 0$ if $t$ is large enough. Since $d_t = \rho \texte^{-F(\widehat{a}_t)}$,~\eqref{e:bdddevpotential} follows from~\eqref{e:prbdddevpot1}.
\end{proof}
Lemma~\ref{l:bdddevpotential} will allow us to reduce the sum in~\eqref{e:PPPconvcompact_cond2} to $|x| \le 6 d \theta t / d_t$. 

\begin{corollary}\label{cor:uppersumneglig}
For any $\eta \in \R$, $\theta \in (0,\infty)$, 

\begin{equation}\label{e:uppersumneglig}
\lim_{t \to \infty} \sum_{\begin{subarray}{c}
x \in (2 \widehat{N}_t+1)\Z^d \\
|x|>  6 d \theta t / d_t
\end{subarray}}
 \textnormal{Prob} \left(\widehat{Y}_t(0) > \frac{|x|}{\theta t} + \eta \right) = 0.
\end{equation}
\end{corollary}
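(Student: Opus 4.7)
The plan is to reduce the probability concerning $\widehat{Y}_t(0)$ to a one-point tail bound on $\xi(0)$, then apply Lemma~\ref{l:bdddevpotential} and perform a summation over the sublattice $(2\widehat{N}_t+1)\Z^d$. First, using $\lambda^{\ssup 1}_{B_{\widehat{N}_t}}\le\max_{y\in B_{\widehat{N}_t}}\xi(y)$ and a union bound,
\begin{equation}
\textnormal{Prob}\Bigl(\widehat{Y}_t(0)>\alpha\Bigr)=\textnormal{Prob}\Bigl(\lambda^{\ssup 1}_{B_{\widehat{N}_t}}>a_t+\alpha d_t\Bigr)\le(2\widehat{N}_t+1)^d\,\textnormal{Prob}\bigl(\xi(0)>a_t+\alpha d_t\bigr).
\end{equation}
The paragraph after~\eqref{e:maxorderlambda} states that $a_t=\widehat{a}_t-\chi+o(1)$, and $\chi\in[0,2d]$, so there exists a constant $C$ (independent of $t$) with $|a_t-\widehat{a}_t|\le C$ for all $t$ large, and I may take $C<6d$. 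Thus
\begin{equation}
a_t+\alpha d_t\ge\widehat{a}_t+(\alpha-C/d_t)\,d_t.
\end{equation}

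Next, for $\alpha=|x|/(\theta t)+\eta$ with $|x|>6d\theta t/d_t$, I have $\alpha>6d/d_t+\eta$, which makes $s:=\alpha-C/d_t>(6d-C)/d_t+\eta$ positive for all sufficiently large $t$. Fixing any $\varepsilon\in(0,1)$ and applying Lemma~\ref{l:bdddevpotential} to this $s$ gives, for large $t$,
\begin{equation}
(2\widehat{N}_t+1)^d\,\textnormal{Prob}\bigl(\xi(0)>a_t+\alpha d_t\bigr)\le\frac{(2\widehat{N}_t+1)^d}{t^d}\,\exp\Bigl\{-(1-\varepsilon)\bigl(\alpha-C/d_t\bigr)\Bigr\}.
\end{equation}
By~\eqref{e:growthL*t}, $(2\widehat{N}_t+1)^d/t^d=O\bigl((\ln t\cdot\ln_2 t\cdot\ln_3 t)^{d/2}\,t^{-d/2}\bigr)$, which in particular is bounded.

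Now I sum over $x\in(2\widehat{N}_t+1)\Z^d$ with $|x|>6d\theta t/d_t$. Writing $x=(2\widehat{N}_t+1)y$ with $y\in\Z^d$, this sum becomes
\begin{equation}
\sum_{|y|>R_t}\exp\!\left\{-\frac{(1-\varepsilon)(2\widehat{N}_t+1)}{\theta t}|y|\right\},\qquad R_t:=\frac{6d\theta t/d_t}{2\widehat{N}_t+1}.
\end{equation}
A standard shell estimate (comparing with an integral) yields this sum is at most $C_d\,\bigl(\theta t/(2\widehat{N}_t+1)\bigr)^d R_t^{d-1}\,\exp\{-(1-\varepsilon)\cdot 6d/d_t\}$ times a constant depending only on $d$, because the dimensionless product $\alpha M=(1-\varepsilon)\cdot 6d/d_t$ tends to infinity.

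Multiplying through, the full sum is bounded by a polynomial factor in $\ln t$ times
\begin{equation}
\frac{(2\widehat{N}_t+1)^d}{t^d}\cdot\left(\frac{\theta t}{2\widehat{N}_t+1}\right)^d\,\exp\!\Bigl\{(1-\varepsilon)(C-6d)/d_t+O(1)\Bigr\}=O(1)\cdot\exp\!\Bigl\{(1-\varepsilon)(C-6d)/d_t\Bigr\}.
\end{equation}
Since $C-6d<0$ and $1/d_t\to\infty$ as $t\to\infty$, this quantity tends to~$0$, proving~\eqref{e:uppersumneglig}.

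The only real subtlety is the interplay between the $-\chi$ shift separating $a_t$ from $\widehat{a}_t$ (an $O(1)$ correction which becomes $O(1/d_t)=O(\ln t)$ after dividing by $d_t$) and the exponent produced by Lemma~\ref{l:bdddevpotential}. The threshold $6d\theta t/d_t$ is chosen precisely so that the resulting exponent $(C-6d)/d_t$ has the correct sign; any threshold of the form $\kappa\theta t/d_t$ with $\kappa>2d$ would in fact suffice, since $\chi\le 2d$.
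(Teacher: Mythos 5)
Your overall strategy differs from the paper's: after the union bound and Lemma~\ref{l:bdddevpotential}, the paper \emph{weakens} the constraint from $|x|>6d\theta t/d_t$ to $|x|>Lt/(2\widehat{N}_t+1)$ for fixed $L$, identifies the resulting expression as a Riemann sum for $\int_{|z|\ge L}\texte^{-\frac14(|z|/\theta+2\eta)}\,\textd z$, and then sends $L\to\infty$; you instead keep the diverging threshold $6d\theta t/d_t$ and estimate the lattice tail directly. Both routes can work, but there is a mistake in your shell estimate that matters.

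For $\beta R\gg 1$ one has $\sum_{y\in\Z^d,\,|y|>R}\texte^{-\beta|y|}\asymp \beta^{-1}R^{d-1}\texte^{-\beta R}$ (the $\ell^1$-shell at radius $n$ has $\asymp n^{d-1}$ points, and $\int_R^\infty x^{d-1}\texte^{-\beta x}\,\textd x=\beta^{-d}\Gamma(d,\beta R)\sim\beta^{-1}R^{d-1}\texte^{-\beta R}$). The power of $\beta^{-1}=\theta t/\bigl((1-\varepsilon)(2\widehat{N}_t+1)\bigr)$ should therefore be $1$, not $d$ as in your formula $\bigl(\theta t/(2\widehat{N}_t+1)\bigr)^d R_t^{d-1}$. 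This is not a harmless bookkeeping slip. With the correct power $1$, the prefactor $(2\widehat{N}_t+1)^d/t^d$ combines with $\theta t/(2\widehat{N}_t+1)$ and $R_t^{d-1}=\bigl(6d\theta t/(d_t(2\widehat{N}_t+1))\bigr)^{d-1}$ to give a constant times $d_t^{-(d-1)}\asymp(\ln t)^{d-1}$, genuinely a polynomial in $\ln t$, which the exponential $\texte^{(1-\varepsilon)(C-6d)/d_t}=t^{(1-\varepsilon)(C-6d)d/\rho}$ kills. With the power $d$ as written, however, the prefactor and $(\theta t/(2\widehat{N}_t+1))^d$ collapse to the constant $\theta^d$, leaving the stray factor $R_t^{d-1}\sim\bigl(t\ln t/(\ln_2 t\,\ln_3 t)\bigr)^{(d-1)/2}$, which is polynomial in $t$, not $\ln t$. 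For $d\ge 2$ and $\rho$ large (e.g.\ $\rho>8d^2/(d-1)$, taking $C$ near $2d$) this beats the decaying power $t^{(1-\varepsilon)(C-6d)d/\rho}$ and your bound does not tend to $0$. Since the corollary must hold for every $\rho>0$, this is a genuine gap; correcting the shell estimate to $\beta^{-1}R^{d-1}\texte^{-\beta R}$ makes your concluding claim about a ``polynomial factor in $\ln t$'' true and repairs the proof.
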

\begin{proof}
Recall that $\max_{x \in B_{\widehat{N}_t}} \xi(x) \ge \lambda^{\ssup 1}_{B_{\widehat{N}_t}}$ by~\eqref{e:monot_princev}.
Using $a_t = \widehat{a}_t - \chi + o(1)$ and $\chi \le 2d$, we obtain, for each $L \in \N$,

\begin{equation}
\begin{aligned}
\label{e:pruppersumneg1}
\limsup_{t \to \infty} &\sum_{\begin{subarray}{c}
x \in (2 \widehat{N}_t+1)\Z^d \\
|x|>  6 d \theta t / d_t
\end{subarray}} 
\textnormal{Prob} \left(\widehat{Y}_t(0) > \frac{|x|}{\theta t} + \eta \right) 
\\
\le & \limsup_{t \to \infty} \sum_{\begin{subarray}{c}
x \in (2 \widehat{N}_t+1)\Z^d 
\\
|x|>  6 d \theta t / d_t
\end{subarray}}  |B_{\widehat{N}_t}|\textnormal{Prob} \left(\xi(0) > \widehat{a}_t + \frac{d_t}{2} \left( \frac{|x|}{\theta t} + 2\eta \right) \right) 
\\
\le & \limsup_{t \to \infty} \sum_{\begin{subarray}{c}
x \in \Z^d \\ |x|>  L t /(2\widehat{N}_t + 1)
\end{subarray}} \frac{|B_{\widehat{N}_t}|}{t^d} \exp \left\{ - \tfrac14 \left(\frac{|x|(2\widehat{N}_t+1)}{\theta t} + 2 \eta \right) \right\} \\
= & \int_{|z| \ge L} \texte^{- \tfrac14 \left(\frac{|z|}{\theta} + 2 \eta \right)} \textd z
\end{aligned}
\end{equation}
by Lemma~\ref{l:bdddevpotential} and~\eqref{e:def_fundam_scales}. Since the integral 
converges to $0$ as $L \to \infty$,~\eqref{e:uppersumneglig} follows.
\end{proof}

To control the sum in~\eqref{e:PPPconvcompact_cond2} with $|x| \le t 6 d \theta / d_t$, we will use the following lemma.
\begin{lemma}
\label{l:controltaillocev}
There exist $c_0, \varepsilon>0$ such that, for all large enough $t$ and all $s \ge 0$,
\begin{equation}\label{e:controltaillocev}
\frac{t^d}{(2\widehat{N}_t)^d} \textnormal{Prob} \left( \widehat{Y}_t(0) > s \right) \le 4 \, \texte^{-c_0 s} + t^{-\varepsilon}.
\end{equation}
\end{lemma}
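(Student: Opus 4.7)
The plan is to establish the bound by splitting the range of~$s$ into two complementary regimes, controlled respectively by a crude union bound and by the coarse-graining machinery of \cite{BK16}.

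First I would handle the large-$s$ regime, $s \ge K\ln t$ for some constant $K > \chi d/\rho$ to be fixed. Here the Rayleigh-Ritz bound \eqref{e:monot_princev} gives $\lambda^{\ssup 1}_{B_{\widehat{N}_t}} \le \max_{x\in B_{\widehat{N}_t}}\xi(x)$, and a union bound yields
\[
\textnormal{Prob}\bigl(\widehat{Y}_t(0)>s\bigr)\le (2\widehat{N}_t+1)^d\,\textnormal{Prob}\bigl(\xi(0)>a_t+sd_t\bigr).
\]
Using $a_t=\widehat{a}_t-\chi+o(1)$, I would write $a_t+sd_t = \widehat{a}_t+(s-\chi/d_t+o(1))d_t$. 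Since $\chi/d_t = (\chi d/\rho)\ln t+o(1)$, the choice $K>\chi d/\rho$ ensures $s-\chi/d_t+o(1)\ge \tfrac{1}{2}s + \tfrac{1}{2}(K-\chi d/\rho)\ln t$ for~$t$ large. Applying Lemma~\ref{l:bdddevpotential} with this shifted argument and dividing by $(2\widehat{N}_t)^d/t^d$ (which is bounded below by a positive constant times $|B_{\widehat{N}_t}|/(2\widehat{N}_t)^d$) gives an estimate of the form $C\texte^{-c_0 s}\cdot t^{-\varepsilon}$, which is comfortably dominated by the right-hand side of~\eqref{e:controltaillocev}.

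For the moderate-$s$ regime, $0\le s\le K\ln t$, the naive max-of-potential estimate is too lossy, as it misses the $-\chi$ shift that separates the typical location $\widehat{a}_t$ of $\max_{B_{\widehat{N}_t}}\xi$ from the typical location $a_t$ of $\lambda^{\ssup 1}_{B_{\widehat{N}_t}}$. To recover it, I would mimic the upper-bound half of the proof of the limit \eqref{e:maxorderlambda} in \cite{BK16}. Specifically: choose a scale $R_t\in\N$ growing slowly enough that $\chi_{B_{R_t}}\to\chi$, partition $B_{\widehat{N}_t}$ into roughly $(2\widehat{N}_t/R_t)^d$ disjoint boxes of side $R_t$, and apply Theorem~2.1 of \cite{BK16} to bound $\lambda^{\ssup 1}_{B_{\widehat{N}_t}}$ from above by the maximum of $\lambda^{\ssup 1}$ over islands of $\mathfrak{C}_{\widehat{N}_t,A}$, up to an error $2d(1+A/4d)^{-R_t}$. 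For each sub-box~$Q$, combining Lemma~\ref{l:lbddcurlyL} with the sharp tail of Lemma~\ref{l:bdddevpotential} (applied to a sparse set of near-optimal configurations as in the proof of Theorem~2.4 of \cite{BK16}) yields a bound of the form $\textnormal{Prob}(\lambda^{\ssup 1}_Q > a_t + sd_t)\le C|Q|t^{-d}\texte^{-c_0 s}$. Summing over the $(2\widehat{N}_t/R_t)^d$ sub-boxes and multiplying by $t^d/(2\widehat{N}_t)^d$ yields the desired $4\texte^{-c_0 s}$ contribution, while the bad events (potentials that fail the size/containment conditions of Lemmas~\ref{l:size_comps}--\ref{l:potential_rel_islands_upbd}) contribute the remaining $t^{-\varepsilon}$ term.

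The main obstacle lies in the moderate-$s$ regime: one must choose $R_t$ so that simultaneously (a) the coarse-graining error in Theorem~2.1 of \cite{BK16} is $o(d_t)$, forcing $R_t\gg \ln_2 t$; (b) the correction $\chi_{B_{R_t}}-\chi$ tends to zero; and (c) the union bound over $(2\widehat{N}_t/R_t)^d$ sub-boxes, combined with the polynomial-in-$R_t$ factors coming from $|Q|$ and from the number of near-optimal configurations, does not destroy the sharp $\texte^{-c_0 s}$ decay. This delicate balance is exactly the one engineered in the proof of Theorem~2.4 in \cite{BK16}, and the technical content of Lemma~\ref{l:controltaillocev} is that the construction there yields not merely pointwise convergence but a uniform upper envelope of the stated form.
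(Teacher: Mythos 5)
Your splitting into a large-$s$ regime (handled by the crude $\lambda^{\ssup 1}\le\max\xi$ union bound) and a moderate-$s$ regime is a reasonable structuring, and the large-$s$ argument you sketch is essentially what the paper does in Corollary~\ref{cor:uppersumneglig}, which is a separate preliminary step. But Lemma~\ref{l:controltaillocev} itself is proved in the paper by a single argument valid for all $s\ge 0$, and it is in the moderate-$s$ regime that your plan has a genuine gap: you gesture at ``mimicking the upper-bound half of the proof of Theorem~2.4 of \cite{BK16}'' with ``a sparse set of near-optimal configurations,'' but you never actually produce the mechanism that extracts the $\texte^{-c_0 s}$ factor. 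Converting the pointwise-in-$s$ convergence \eqref{e:maxorderlambda} into a uniform upper envelope is precisely the nontrivial content of the lemma, and you assert without justification that ``the construction there yields not merely pointwise convergence but a uniform upper envelope.'' That is not a proof step; it is a restatement of what must be shown.

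The paper's actual argument is quite different and does not attempt to re-run the BK16 coarse-graining with $s$-dependent bookkeeping. Instead it introduces a change-of-measure (tilting) trick: the deterministic map $\FF_{t,s}$ of \eqref{e:defFF} compresses the part of the potential above level $a_t-1$ so that $\xi\in G_{t,s}$ forces the shifted field $\xi_{t,s}=\FF_{t,s}(\xi)$ into the $s=0$ event $G_{t,0}$; the Radon--Nikodym derivative of the pushed-forward law is then bounded via Corollary~\ref{cor:bdddensity} (which rests on the precise density estimate of Lemma~\ref{l:bounddensityxi}). On the good event $G_{t,0}$ there is always at least one site with $\xi(x)>a_t$, whose potential is shifted by the full amount $sd_t$, and this single site already contributes the exponential factor $\texte^{-2c_0 s}$ to the density ratio; the remaining sites only cost a bounded constant because $\LL_{B_{R_t}}(\xi-\widehat{a}_t)\le\ln 2$ on $G_{t,0}$. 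This yields \eqref{e:prlemmacontroltailev3}, i.e.\ $\textnormal{Prob}(\lambda^{\ssup 1}_{B_{R_t}}\ge a_t+sd_t)\le 2\texte^{-c_0 s}\,\textnormal{Prob}(\lambda^{\ssup 1}_{B_{R_t}}\ge a_t)+o(t^{-(d+\varepsilon_0)})$, reducing the problem to the $s=0$ case where Lemma~7.6 of \cite{BK16} already supplies an upper bound; Lemma~7.7 of \cite{BK16} then passes from $B_{R_t}$ to $B_{\widehat{N}_t}$. Without something playing the role of this tilting step, your moderate-$s$ plan does not close, and the ``delicate balance'' you defer to is not available off the shelf.
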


Before we prove Lemma~\ref{l:controltaillocev}, let us finish the proof of~\eqref{e:PPPconvcompact_cond2}.

\begin{proof}[Proof of~\eqref{e:PPPconvcompact_cond2}]
By Corollary~\ref{cor:uppersumneglig}, we may restrict the sum over $|x| \le t 6 d \theta / d_t$.
Fix $\eta \in \R$. Taking $n\ge \theta |\eta|$, if $|x| \ge nt$ then $|x|/(\theta t) + \eta \ge 0$. 
Thus we may bound, by Lemma~\ref{l:controltaillocev},
\begin{multline}
\label{e:proof_prconvmeasure11}
\sum_{\begin{subarray}{c}
x \in (2 \widehat{N}_t+1)\Z^d \\ n t \le |x| \le t 6d \theta/d_t
\end{subarray}}
\textnormal{Prob} \left( \widehat{Y}_t(0) > \frac{|x|}{\theta t} + \eta\right)\\
\le \frac{c_2 (\ln t)^d}{t^{\varepsilon}} \,\,+ \!\!\!\!\!\!\sum_{\begin{subarray}{c}
x \in \Z^d \\ n t \le |x|(2 \widehat{N}_t +1) \le t 6d \theta /d_t
\end{subarray}}
\frac{(2 \widehat{N}_t)^d}{t^d} 4 \exp \left\{ - c_0 \left( \frac{|x|(2\widehat{N}_t+1)}{\theta t}+ \eta \right) \right\}
\end{multline}
for a constant $c_2>0$ and all large enough $t$. 
To conclude~\eqref{e:PPPconvcompact_cond1}, note that the right-hand side of \eqref{e:proof_prconvmeasure11} converges as $t \to \infty$ to
\begin{equation}\label{e:proof_prconvmeasure12}
4 \int_{|z| \ge n} \texte^{-c_0 \left( \frac{|z|}{\theta} + \eta\right)} \textd z,
\end{equation}
which converges itself to $0$ as $n \to \infty$.
\end{proof}

The remainder of this section is dedicated to the proof of Lemma~\ref{l:controltaillocev}.
Note that, by Assumption~\ref{A:dexp}, 
$\xi(0)$ has a density $f$ with respect to Lebesgue measure given by
\begin{equation}\label{e:exprf}
f(r) = \left\{ \begin{array}{ll} F'(r) \exp \left\{ F(r)-\texte^{F(r)} \right\}, & r > \essinf \xi(0),\\
0 & \text{otherwise.}
\end{array}\right.
\end{equation}
The following bound holds for $f$.
\begin{lemma}
\label{l:bounddensityxi}
Fix a finite $\Lambda \subset \Z^d$ and two functions $\alpha, \varphi \colon \Lambda \to \R$.
Then, as $t \to \infty$,
\begin{equation}\label{e:bounddensityxi}
\prod_{x \in \Lambda} \frac{f(\widehat{a}_t + \varphi(x) + \alpha(x) d_t)}{f(\widehat{a}_t + \varphi(x))}
\le \exp \left \{ -(1+ o(1))\sum_{x \in \Lambda} \alpha(x) \texte^{\frac{\varphi(x)}{\rho}} + o(1) \LL_\Lambda(\varphi)\right\}
\end{equation}
where $\LL_\Lambda(\varphi)$ is as in~\eqref{e:defcurlyL}.
If $\alpha(x) \ge0$ and $|\varphi(x)| \le M$, 
then $o(1)$ only depends on $M$.
If $|\alpha(x)| \vee |\varphi(x)| \le M$, 
then equality holds in \eqref{e:bounddensityxi}
with $o(1)$ only depending on $M$.
\end{lemma}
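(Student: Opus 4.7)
The plan is to take logarithms and analyze the explicit form $\ln f(r) = \ln F'(r) + F(r) - \texte^{F(r)}$. Setting $r = \widehat{a}_t + \varphi(x)$ and $s = \alpha(x) d_t$, I would decompose
\begin{equation*}
\ln \frac{f(r+s)}{f(r)} = \ln \frac{F'(r+s)}{F'(r)} + \bigl[F(r+s) - F(r)\bigr] - \bigl[\texte^{F(r+s)} - \texte^{F(r)}\bigr]
\end{equation*}
and estimate each of the three terms separately, using Assumption~\ref{A:dexp}, the defining identity $\texte^{F(\widehat{a}_t)} = d \ln t$ (from \eqref{e:defhata}), and the resulting relation $d_t = \rho/\texte^{F(\widehat{a}_t)}$. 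The heuristic behind the bound is that the third (dominant) term produces the main contribution $-(1+o(1))\alpha(x)\texte^{\varphi(x)/\rho}$, while the first two terms vanish uniformly per site as $t \to \infty$.

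For the first two terms, I would use that $F'(u) \to 1/\rho$ as $u \to \infty$ combined with $\widehat{a}_t \to \infty$ and $|s| = O(d_t) \to 0$: the ratio $F'(r+s)/F'(r) \to 1$, so $\ln F'(r+s)/F'(r) = o(1)$ per $x$; likewise $F(r+s)-F(r) = \int_0^s F'(r+u)\,\textd u = (s/\rho)(1+o(1)) = O(d_t)$ per $x$. Each of these contributions is $o(1)$ per site, and since the boundedness assumptions force $\mathcal{L}_\Lambda(\varphi) \ge |\Lambda|\texte^{-M/\rho}$, they can be absorbed into $o(1)\,\mathcal{L}_\Lambda(\varphi)$. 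For the third term, write
\begin{equation*}
\texte^{F(r+s)} - \texte^{F(r)} = \texte^{F(r)} \bigl(\texte^{F(r+s)-F(r)} - 1\bigr),
\end{equation*}
expand the inner exponential using $F(r+s) - F(r) = (s/\rho)(1+o(1)) \to 0$ to get $\texte^{F(r+s)-F(r)} - 1 = (s/\rho)(1+o(1))$, and control the prefactor via
\begin{equation*}
F(\widehat{a}_t + \varphi(x)) - F(\widehat{a}_t) = \int_0^{\varphi(x)} F'(\widehat{a}_t + u)\,\textd u = \varphi(x)/\rho + o(1),
\end{equation*}
which together with $\texte^{F(\widehat{a}_t)} = \rho/d_t$ yields $\texte^{F(r)} = (\rho/d_t)\texte^{\varphi(x)/\rho}(1+o(1))$. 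Substituting $s = \alpha(x)d_t$ then gives $\texte^{F(r+s)} - \texte^{F(r)} = (1+o(1))\alpha(x)\texte^{\varphi(x)/\rho}$, whose negative is exactly the main term in \eqref{e:bounddensityxi}. Summing the resulting per-site identity (or inequality) over $x \in \Lambda$ completes the proof.

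The main technical point will be ensuring uniformity of the $o(1)$ quantities as required by the two special clauses. Under $\alpha(x)\ge 0$ and $|\varphi(x)|\le M$, the constraint $|\varphi(x)|\le M$ makes every Taylor expansion above uniform in $\varphi(x)$ (by continuity of $F'$ on compacts together with $F'\to 1/\rho$), and the sign constraint $\alpha(x) \ge 0$ lets us absorb the $(1+o(1))$ factor on the main term into a one-sided inequality whose slack is again bounded by $o(1)\,\texte^{\varphi(x)/\rho}$. For the equality statement, the additional bound $|\alpha|\le M$ makes the expansion $\texte^{F(r+s)-F(r)} - 1 = (s/\rho)(1+o(1))$ two-sided with error uniform in $(\varphi,\alpha)$, turning the inequality into an equality. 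The only bookkeeping subtlety is that the per-site errors of size $o(1)$ arising from Terms~1 and 2 must be compared against $\mathcal{L}_\Lambda(\varphi)$ rather than $|\Lambda|$, which is legitimate precisely because on $\{|\varphi|\le M\}$ these two quantities are comparable up to a constant depending on $M$.
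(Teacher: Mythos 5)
The paper's own proof of Lemma~\ref{l:bounddensityxi} is just a citation to Lemma~7.5 of~\cite{BK16}, so there is no argument in this paper to compare against. Your self-contained derivation --- take logarithms of $f(r)=F'(r)\exp\{F(r)-\texte^{F(r)}\}$, split into the three pieces $A:=\ln\tfrac{F'(r+s)}{F'(r)}$, $B:=F(r+s)-F(r)$ and $-C:=-\texte^{F(r)}[\texte^{F(r+s)-F(r)}-1]$, and use $\texte^{F(\widehat a_t)}=\rho/d_t$ --- is the natural route and is the approach one expects in~\cite{BK16}, so this is a useful fleshing-out of a reference the paper treats as a black box.

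There is, however, one place where the argument as written does not cover what the lemma (and in particular Corollary~\ref{cor:bdddensity}) requires. You estimate Term~$B$ via ``$|s|=O(d_t)\to0$, so $B=(s/\rho)(1+o(1))=O(d_t)$ per $x$, which is $o(1)$ per site and absorbable into $o(1)\LL_\Lambda(\varphi)$.'' That is fine for fixed $\alpha$, and for the second (equality) clause where $|\alpha|\le M$. But in the first uniformity clause, $\alpha(x)\ge0$ is \emph{unbounded}: Corollary~\ref{cor:bdddensity} feeds $s\alpha(x)$ into the lemma's $\alpha(x)$ slot with $s\ge0$ arbitrary, so $s=\alpha(x)d_t$ is not $O(d_t)$ and $B$ is not $o(1)$ per site. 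The fix is to absorb $B$ into the \emph{main} term rather than into $o(1)\LL_\Lambda(\varphi)$: writing $B\le(\alpha(x)d_t/\rho)(1+\epsilon)$ with $\epsilon$ small uniformly for $r\ge\widehat a_t-M$, one has
\begin{equation*}
B \;\le\; \alpha(x)\,\texte^{\varphi(x)/\rho}\cdot\frac{d_t\,\texte^{-\varphi(x)/\rho}}{\rho}(1+\epsilon)
\;\le\; \alpha(x)\,\texte^{\varphi(x)/\rho}\cdot\frac{d_t\,\texte^{M/\rho}}{\rho}(1+\epsilon),
\end{equation*}
and the prefactor $d_t\texte^{M/\rho}(1+\epsilon)/\rho\to0$ uniformly for $|\varphi|\le M$; together with $\texte^{\Delta F}-1\ge\Delta F$ for $\Delta F\ge0$ this puts $B-C\le-(1+o(1;M))\,\alpha(x)\texte^{\varphi(x)/\rho}$ as required. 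Your closing remark about ``absorbing the $(1+o(1))$ factor on the main term'' gestures in this direction, but the explicit claim $|s|=O(d_t)$ a few lines earlier is not true in the regime the first clause must cover, so this step needs to be made explicit rather than left implicit.
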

\begin{proof}
One can follow the reasoning leading to the proof of Lemma 7.5 in \cite{BK16}.
\end{proof}

Fix now $c_0 := \tfrac14 \texte^{-2(d+1)/\rho}$; this will the constant appearing in~\eqref{l:controltaillocev}.
The following corollary is a convenient rephrasing of~\eqref{e:bounddensityxi}.
\begin{corollary}\label{cor:bdddensity}
There exists $t_0 >0$ such that, for all $t \ge t_0$,
$s \ge 0$, $\Lambda \subset \Z^d$ 
and all $\alpha, \varphi \colon \Lambda \to \R$
with $\alpha(x) \ge 0$, $-2(d+1) \leq \varphi(x) \leq 1$,
\begin{equation}\label{e:corbdddensity}
\prod_{x \in \Lambda} \frac{f(\widehat{a}_t + \varphi(x) + s \alpha(x)d_t)}{f(\widehat{a}_t + \varphi(x))} \le \exp \left\{ - 2 c_0 s \sum_{x \in \Lambda} \alpha(x) + \LL_{\Lambda}(\varphi) \right\}.
\end{equation}
\end{corollary}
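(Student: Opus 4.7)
The plan is to deduce Corollary~\ref{cor:bdddensity} as a direct quantitative consequence of Lemma~\ref{l:bounddensityxi}, exploiting the sign condition $\alpha(x) \ge 0$ and the two-sided bound on $\varphi(x)$ to turn the $o(1)$ error terms in~\eqref{e:bounddensityxi} into harmless constants.

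First I would apply Lemma~\ref{l:bounddensityxi} with $\alpha$ replaced by $s\alpha$. Since $s \ge 0$ and $\alpha(x) \ge 0$, the non-negativity assumption is preserved, and since $|\varphi(x)| \le 2(d+1)$ by hypothesis, the $o(1)$ quantities appearing in~\eqref{e:bounddensityxi} depend only on the constant $M := 2(d+1)$, i.e.\ they are genuine $o(1)$ as $t\to\infty$, uniformly in $\Lambda$, $\alpha$ and $\varphi$ satisfying the stated constraints. This gives
\begin{equation*}
\prod_{x \in \Lambda} \frac{f(\widehat a_t + \varphi(x) + s\alpha(x) d_t)}{f(\widehat a_t + \varphi(x))}
\le \exp\Bigl\{ -(1+o(1))\, s\!\sum_{x \in \Lambda}\!\alpha(x)\, \texte^{\varphi(x)/\rho} \;+\; o(1)\,\LL_\Lambda(\varphi) \Bigr\}.
\end{equation*}

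Next I would bound the two terms in the exponent separately. For all $t$ large enough, $1 + o(1) \ge 1/2$, and the key point is the lower bound $\varphi(x) \ge -2(d+1)$, which yields $\texte^{\varphi(x)/\rho} \ge \texte^{-2(d+1)/\rho} = 4 c_0$ by the definition of $c_0$. Combining these, and using $\alpha(x) \ge 0$,
\begin{equation*}
-(1+o(1))\, s\sum_{x \in \Lambda}\alpha(x)\,\texte^{\varphi(x)/\rho} \;\le\; -\tfrac{1}{2}\, s \cdot 4c_0 \sum_{x\in\Lambda} \alpha(x) \;=\; -2 c_0 s \sum_{x\in\Lambda}\alpha(x).
\end{equation*}
For the second term, since $\LL_\Lambda(\varphi) \ge 0$ and $o(1) \le 1$ for $t$ large enough, we simply bound $o(1)\,\LL_\Lambda(\varphi) \le \LL_\Lambda(\varphi)$. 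Choosing $t_0$ so that both $1+o(1) \ge 1/2$ and $o(1) \le 1$ hold for $t \ge t_0$ (and this $t_0$ is deterministic, depending only on $d$ and $\rho$ through $M$), the two bounds combine to give~\eqref{e:corbdddensity}.

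There is essentially no obstacle here: the corollary is a packaging result that rephrases Lemma~\ref{l:bounddensityxi} in a form that is convenient for later applications, by (a) absorbing the $o(1)$ prefactor of the leading term into the constant $c_0$ via the worst-case exponential $\texte^{-2(d+1)/\rho}$, and (b) replacing the $o(1)\LL_\Lambda(\varphi)$ correction by the full $\LL_\Lambda(\varphi)$. The only care needed is to check that $t_0$ can be chosen independently of $\Lambda$, $\alpha$, $\varphi$ and $s$, which is exactly the content of the uniformity claim in Lemma~\ref{l:bounddensityxi} under the assumptions $\alpha \ge 0$ and $|\varphi| \le M$.
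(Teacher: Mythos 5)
Your proof is correct and is exactly the intended derivation: the paper gives no explicit argument, stating only that the corollary is a ``convenient rephrasing'' of~\eqref{e:bounddensityxi}. You correctly apply Lemma~\ref{l:bounddensityxi} with $\alpha$ replaced by $s\alpha$, use the two-sided bound on $\varphi$ (so $M=2(d+1)$ controls the $o(1)$ terms uniformly) and the sign of $\alpha$, then absorb the $1+o(1)$ prefactor and the worst-case factor $\texte^{\varphi(x)/\rho}\ge\texte^{-2(d+1)/\rho}=4c_0$ into the constant $2c_0$, and bound $o(1)\,\LL_\Lambda(\varphi)\le\LL_\Lambda(\varphi)$.
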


We can now prove Lemma~\ref{l:controltaillocev}.
\begin{proof}[Proof of Lemma~\ref{l:controltaillocev}]
For $t >0$ such that $a_t > \essinf \xi(0)+1$, define the continuous map
\begin{equation}\label{e:defFF}
\FF_{t,s}(r) := \left\{ \begin{array}{ll}
					r &\quad\text{ if } r \le a_t-1,\\
					r - s d_t &\quad \text{ if } r \ge a_t + s d_t,\\
					\text{linear, } &\quad \text{ otherwise.}
					\end{array}\right.
\end{equation}
Then $\FF_{t,s}$ is bijective with inverse
\begin{equation}\label{e:defFF-1}
\FF^{-1}_{t,s}(r) := \left\{ \begin{array}{ll}
					r &\quad \text{ if } r \le a_t-1,\\
					r + s d_t &\quad \text{ if } r \ge a_t,\\
					\text{linear, } &\quad \text{ otherwise.}
					\end{array}\right.
\end{equation}
Let $\xi_{t,s}(x) := \FF_{t,s}(\xi(x))$.
Then $\xi_{t,s}(x)$ has a density with respect to $\xi(x)$ given by
\begin{equation}\label{e:densityxis}
\frac{\textd\xi_{t,s}(x)}{\textd\xi(x)}(r) = 
\left\{ \begin{array}{ll} 1 & \text{ if } r \le a_t -1, \\
 \left(1 + s d_t \right)^{\1\{r < a_t\}} \frac{f(\FF^{-1}_{t,s}(r))}{f(r)} & \text{ otherwise.}
 \end{array}\right.
\end{equation}
Recalling that $\lambda^{\ssup 1}_{B_R}(\xi)$ denotes the principal Dirichlet eigenvalue of $\Delta + \xi$ in $B_R$, define
\begin{align}\label{e:defeventG}
G_{t,s} := \left\{\xi \colon\, \lambda^{\ssup 1}_{B_{R_t}}(\xi) > a_t + s d_t, \, \LL_{B_{R_t}}(\xi -\widehat{a}_t) \le \ln 2, \, \max_{x \in B_{R_t}} \xi(x) \le \widehat{a}_t + 1  \right\}.
\end{align}
Since $\xi(x)-s d_t \le \xi_{t,s}(x) \le \xi(x)$, $\xi \in G_{t,s}$ implies $\xi_{t,s} \in G_{t,0}$. Write
\begin{align}\label{e:exprdevxis}
 \text{Prob} \left( \xi_{t,s} \in G_{t,0} \right) =  E \left[\1_{G_{t,0}}(\xi) \left(1 + s d_t \right)^{|\{x \in B_{R_t} \colon\, a_t-1 < \xi(x) < a_t|\}}
\!\!\!\!\!\prod_{\begin{subarray}{c}
x \in B_{R_t} \\ \xi(x) > a_t -1
\end{subarray}}\!\!\!\!\!
 \frac{f(\FF^{-1}_{t,s}(\xi(x))) }{f(\xi(x))} \right]
\end{align}
where $E$ denotes expectation with respect to $\textnormal{Prob}$.
Bound the middle term in~\eqref{e:exprdevxis} by
\begin{equation}\label{e:prbddconvmeas2}
\left(1 + s d_t \right)^{|B_{R_t}|} \le \texte^{s d_t (2R_t+1)^d} \le \texte^{s c_0}
\end{equation}
for large $t$ by~\eqref{e:propertiesR_L}.
For the product term, define $\varphi(x) := \xi(x) - \widehat{a}_t \le 1$ on $\GG_{t,0}$,
and $\alpha(x) \in [0,1]$ by the equation $\xi(x) + s d_t \alpha(x)  = \FF^{-1}_{t,s}(\xi(x))$.
Note that, if $\alpha(x) \neq 0$, then $\varphi(x) > a_t - 1 -\widehat{a}_t \geq -2(d+1)$ 
for large $t$; thus, by Corollary~\ref{cor:bdddensity},
\begin{equation}\label{e:prlemmacontroltailev1}
\prod_{x \in B_{R_t} \colon\, \xi(x) > a_t -1} \frac{f(\FF^{-1}_{t,s}(\xi(x))) }{f(\xi(x))} \le 2 \exp \left\{ -2 c_0 s \sum_{x \in B_{R_t} \colon\, \xi(x) > a_t -1} \alpha(x) \right\}
\end{equation}
since $\LL_{B_{R_t}}( \varphi) \le \ln 2$ on $G_{t,0}$.
Moreover, by \eqref{e:monot_princev}, on $\GG_{t,0}$ we have $\xi(x) > a_t$ for some $x \in B_{R_t}$ and thus also $\alpha(x) = 1 $.
Noting now that, by~\eqref{e:bdddevpotential} and Lemma~6.4 of \cite{BK16},
\begin{equation}\label{e:prlemmacontroltailev2}
\textnormal{Prob} \left( \lambda^{\ssup 1}_{B_{R_t}}(\xi) >  a_t  +s d_t \right) \le \textnormal{Prob} \left( \xi \in G_{t,s} \right) + o(t^{-(d+\varepsilon_0)})
\end{equation}
for some $\varepsilon_0>0$, we obtain by \twoeqref{e:defeventG}{e:prlemmacontroltailev2}
\begin{equation}\label{e:prlemmacontroltailev3}
\text{Prob} \left( \lambda^{\ssup 1}_{B_{R_t}}(\xi) \ge a_t + s d_t \right) 
\le 2 \texte^{-c_0 s} \text{Prob} \left( \lambda^{\ssup 1}_{B_{R_t}}(\xi) \ge a_t \right) + o(t^{-(d+\varepsilon_0)}).
\end{equation}
To pass the estimate to $\lambda^{\ssup 1}_{B_{\widehat{N}_t}}(\xi)$, note first that, by Lemma 7.6 of \cite{BK16},
\begin{equation}\label{e:prlemmacontroltailev4}
\limsup_{t \to \infty} \frac{t^d}{(2 R_t)^d}\text{Prob} \left( \lambda^{\ssup 1}_{B_{R_t}}(\xi) \ge a_t \right) \le 1,
\end{equation}
and thus for large $t$ the right-hand side of~\eqref{e:prlemmacontroltailev3} is at most $3 \, \texte^{-c_0 s} (2 R_t/t)^d + o(t^{-(d+\varepsilon_0)})$.
Moreover, by Lemma 7.7 of \cite{BK16} applied to $t_L :  = a_L-\widehat{a}_L +s d_L$ and $R'_L := (\ln_2 L)^2$,
\begin{equation}\label{e:prlemmacontroltailev5}
\frac{t^d}{(2 \widehat{N}_t)^d} \text{Prob} \left( \lambda^{\ssup 1}_{B_{\widehat{N}_t}}(\xi) \ge a_t + sd_t\right) \le \widehat{N}^{-d}_t + 4 \, \texte^{-c_0 s} + o(t^{-\varepsilon_0})
\end{equation}
for $t$ large enough, noting that $o(L^{-d})$ and $o(1)$ in equation (7.27) of \cite{BK16} are uniform on the sequence $t_L$. 
Note that the factor $2$ multiplying $R_t$ and $\widehat{N}_t$ here and not in \cite{BK16} appears
since our boxes have side-length $2R+1$ while theirs $R$.
Recalling that $\widehat{N}_t \gg t^{\beta}$ for some $\beta > 0$ and taking 
$\varepsilon := \varepsilon_0 \wedge (\beta d)$, the lemma is proved. 
\end{proof}

\section{Compactification}
\label{s:compactification}
\nopagebreak\noindent
Let $\mathfrak{E} := (\R \times \R^d) \cup [0,\infty)$
be equipped with a metric $\mathbf{d}$ defined by setting,
for $\theta, \theta' \in [0,\infty)$ and $(\lambda, z), (\lambda', z') \in \R \times \R^d$,
\begin{equation}\label{e:defmetric}
\begin{aligned}
\mathbf{d}(\theta, \theta') & := \left|\theta - \theta'\right|, \qquad \; \mathbf{d}(\theta, (\lambda, z)) := \texte^{-\lambda} + \left|\frac{|z|}{1 \vee \lambda}-\theta \right|,\\
\mathbf{d}((\lambda, z), (\lambda', z')) & := \texte^{-\lambda \wedge \lambda'} \left(1 - \texte^{-|\lambda - \lambda'| - |z - z'|} \right) + \left|\frac{|z|}{1 \vee \lambda}-\frac{|z'|}{1 \vee \lambda'}\right|.\\
\end{aligned}
\end{equation}
One may verify that $\mathbf{d}$ is indeed a metric under which $\mathfrak{E}$ is separable, complete and locally compact. Moreover:
\begin{lemma}
\label{l:setsarecompact}
For any $(\theta, \eta) \in (0,\infty) \times \R$, the set $\HH^\theta_\eta \subset \mathfrak{E}$ defined in~\eqref{e:defHH} 
is relatively compact.
\end{lemma}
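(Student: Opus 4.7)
My plan is to prove relative compactness by showing that every sequence in $\HH^\theta_\eta$ admits a convergent subsequence in the complete metric space $(\mathfrak{E},\mathbf{d})$. Let $(\lambda_n,z_n) \in \HH^\theta_\eta$, so $\lambda_n > |z_n|/\theta + \eta$; in particular $\lambda_n \ge \eta$ for every $n$.

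First I would dispose of the easy case where some subsequence of $\lambda_n$ is bounded. Along such a subsequence, $|z_n| < \theta(\lambda_n-\eta)$ is also bounded, so after a further extraction $(\lambda_n,z_n) \to (\lambda_*,z_*)$ in the usual Euclidean sense in $\R\times\R^d$. On any bounded subset of $\R\times\R^d$ the function $(\lambda,z)\mapsto (\lambda,z)$ induces a bi-Lipschitz equivalence between the Euclidean distance and the two summands defining $\mathbf{d}$ on $\R\times\R^d$ (the prefactor $\texte^{-\lambda\wedge\lambda'}$ is bounded above and below, $1-\texte^{-x}\asymp x$ for bounded $x\ge 0$, and $(\lambda,z)\mapsto |z|/(1\vee\lambda)$ is continuous), so $\mathbf{d}((\lambda_n,z_n),(\lambda_*,z_*))\to 0$ and the limit lies in $\R\times\R^d\subset\mathfrak{E}$.

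Second, suppose $\lambda_n\to\infty$. Then
\begin{equation}
0 \le \frac{|z_n|}{1\vee\lambda_n} < \frac{\theta(\lambda_n-\eta)}{1\vee\lambda_n} \underset{n\to\infty}\longrightarrow \theta,
\end{equation}
so the sequence $|z_n|/(1\vee\lambda_n)$ stays in a bounded subinterval of $[0,\infty)$. After extracting a further subsequence we may assume $|z_n|/(1\vee\lambda_n)\to\theta_* \in [0,\theta]$. Reading off the middle line of \eqref{e:defmetric},
\begin{equation}
\mathbf{d}\bigl(\theta_*,(\lambda_n,z_n)\bigr) = \texte^{-\lambda_n} + \Bigl|\tfrac{|z_n|}{1\vee\lambda_n}-\theta_*\Bigr| \underset{n\to\infty}\longrightarrow 0,
\end{equation}
so $(\lambda_n,z_n)\to \theta_*\in[0,\infty)\subset\mathfrak{E}$ in the metric $\mathbf{d}$.

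Since every sequence in $\HH^\theta_\eta$ has a $\mathbf{d}$-convergent subsequence with limit in $\mathfrak{E}$, the closure $\overline{\HH^\theta_\eta}$ is sequentially compact in the metric space $(\mathfrak{E},\mathbf{d})$, hence compact. The only real point to check is the second case, where one must correctly identify the ``point at infinity'' to which the sequence converges; this is precisely why $\mathfrak{E}$ was enlarged by the half-line $[0,\infty)$ encoding the asymptotic ratios $|z|/\lambda$.
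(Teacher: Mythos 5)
Your proof is correct and follows essentially the same strategy as the paper: split a sequence according to whether $\lambda_n$ stays bounded or tends to $+\infty$, and in the latter case use that $|z_n|/(1\vee\lambda_n)$ is eventually $\le\theta$ so that the sequence converges to a point $\theta_*\in[0,\theta]$ of the half-line. The paper first writes out $\overline{\HH^\theta_\eta}$ explicitly and then argues sequential compactness of the closure; you work directly with sequences in $\HH^\theta_\eta$ and invoke the standard fact that a subset of a metric space is relatively compact iff every sequence in it has a convergent subsequence in the ambient space, which avoids the explicit closure computation and spells out the limit in the divergent case a bit more than the paper does.
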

\begin{proof}
Note that the closure of $\HH^\theta_\eta$ in $\mathfrak{E}$ is given by
\begin{equation}\label{e:prsetsarecomp1}
\overline{\HH^\theta_\eta} = \left\{ (\lambda, z) \in \R\times \R^d \colon\, \lambda - \frac{|z|}{\theta} \ge \eta\right\} \cup [0,\theta].
\end{equation}
Fix a sequence $(\Xi_n)_{n \in \N}$ in $\overline{\HH^\theta_\eta}$ and consider the following three cases:
\begin{enumerate}
\item $\Xi_n \in [0, \theta]$ for infinitely many $n$;

\item There is an infinite subsequence $\Xi_{n_j} = (\lambda_j, z_j) \in \R \times \R^d$ and $(\lambda_j)_{j\in \N}$ is bounded,
implying that $\{\Xi_{n_j} \colon\, j \in \N\}$ is contained in a compact subset of $\R \times \R^d$;

\item There is an infinite subsequence $\Xi_{n_j} = (\lambda_j, z_j) \in \R \times \R^d$ and $\lim_{j \to \infty}\lambda_j =\infty$.
Note that $\limsup_{j \to \infty} |z_j|/\lambda_j \le \theta$.
\end{enumerate}
As is directly checked, in each case there exists a subsequence converging in $\mathfrak{E}$ to a point of $\overline{\HH^\theta_\eta}$, thus proving the claim.
\end{proof}

We finish the section with the following important property of $\mathfrak{E}$.
\begin{lemma}
\label{l:compactsetscontained}
For any compact set $K \subset \mathfrak{E}$, there exist $\theta \in (0,\infty)$ and  $\eta \in \R$ such that $K \cap (\R \times \R^d) \subset \HH^{\theta}_\eta$.
\end{lemma}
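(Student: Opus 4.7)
The plan is to argue by contradiction, exploiting the fact that $K$ is compact (hence sequentially compact) in $(\mathfrak{E},\mathbf{d})$ while the metric $\mathbf{d}$ on $\mathfrak{E}$ is designed so that any sequence in $\R\times\R^d$ whose ``slope'' $|z|/\lambda$ stays bounded must approach either a bona-fide point of $\R\times\R^d$ or a point of $[0,\infty)$. Concretely, I would suppose that the conclusion fails: then for every $n\in\N$ (applied to $\theta=n$ and $\eta=-n$) there exists $(\lambda_n,z_n)\in K\cap(\R\times\R^d)$ with $\lambda_n \le |z_n|/n - n$, which in particular forces $|z_n| \ge n\lambda_n + n^2$.

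Next, using compactness of $K$ in $\mathfrak{E}$, I would extract a subsequence $(\lambda_{n_k},z_{n_k})$ converging in $\mathbf{d}$ to some $\Xi\in K$, and then rule out both possibilities for $\Xi$. If $\Xi=\theta^*\in[0,\infty)$, the definition of $\mathbf{d}$ in \eqref{e:defmetric} yields $\texte^{-\lambda_{n_k}}\to 0$ (so $\lambda_{n_k}\to\infty$) and $|z_{n_k}|/(1\vee\lambda_{n_k})\to\theta^*$; but $|z_{n_k}|/\lambda_{n_k}\ge n_k + n_k^2/\lambda_{n_k}\ge n_k\to\infty$, contradicting $\theta^*<\infty$. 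If instead $\Xi=(\lambda^*,z^*)\in\R\times\R^d$, I would observe that $\lambda_n\le|z_n|/n - n$ precludes the sequence from staying bounded in the Euclidean sense: either $|z_{n_k}|$ is unbounded (so $z_{n_k}\not\to z^*$ in $\R^d$) or $|z_{n_k}|$ is bounded and then $\lambda_{n_k}\to -\infty$, in which case the prefactor $\texte^{-\lambda_{n_k}\wedge\lambda^*}=\texte^{-\lambda_{n_k}}$ in $\mathbf{d}((\lambda_{n_k},z_{n_k}),(\lambda^*,z^*))$ diverges while $1-\texte^{-|\lambda_{n_k}-\lambda^*|-|z_{n_k}-z^*|}\to 1$, so the metric distance blows up rather than tending to zero. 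Either subcase contradicts $\mathbf{d}((\lambda_{n_k},z_{n_k}),\Xi)\to 0$.

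The only mildly delicate step is the second case, where one must carefully read off from the somewhat nonstandard metric \eqref{e:defmetric} which sequences in $\R\times\R^d$ actually converge to a point of $\R\times\R^d$; once one notes that convergence in $\mathbf{d}$ to $(\lambda^*,z^*)$ forces $\lambda_{n_k}$ to remain bounded below, the contradiction with $\lambda_n\le|z_n|/n - n$ is immediate. No other difficulties arise, and the whole argument fits comfortably in a short paragraph once the case analysis is laid out.
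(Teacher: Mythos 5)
Your argument is correct, but it takes a genuinely different route from the paper's proof. The paper covers $K$ by the open sets $\HH^{\theta_x}_{\eta_x}\cup[0,\theta_x)$ and extracts a finite subcover, then takes $\theta=\max_i\theta_{x_i}$ and $\eta=\min_i\eta_{x_i}$; this is a two-line topological argument, relying implicitly on the fact that each $\HH^\theta_\eta\cup[0,\theta)$ is open in $(\mathfrak E,\mathbf d)$. You instead run a sequential-compactness/contradiction argument, which is more elementary and avoids having to verify openness of these sets, at the cost of a longer case analysis that reads behavior off the metric~\eqref{e:defmetric} directly. Both are legitimate since $\mathfrak E$ is a metric space.

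One point you gloss over: in Case~2 with $|z_{n_k}|$ unbounded, you write ``so $z_{n_k}\not\to z^*$ in $\R^d$'' and declare a contradiction, but $\mathbf d$ is not the Euclidean metric, so failure of Euclidean convergence does not automatically contradict $\mathbf d$-convergence. The missing (easy) observation is that $\mathbf d\bigl((\lambda,z),(\lambda^*,z^*)\bigr)\ge\texte^{-\lambda^*}\bigl(1-\texte^{-|\lambda-\lambda^*|-|z-z^*|}\bigr)$, since $\texte^{-\lambda\wedge\lambda^*}\ge\texte^{-\lambda^*}$; thus $\mathbf d$-convergence to a point of $\R\times\R^d$ does force Euclidean convergence. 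You note the analogous fact for $\lambda$ in your closing paragraph but should state it for $z$ as well. With that sentence added, the proof is complete. (Incidentally, this same bound handles the bounded subcase without needing to compute the prefactor separately: $\lambda_{n_k}\to-\infty$ contradicts $\lambda_{n_k}\to\lambda^*$.)
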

\begin{proof}
Cover each $x \in K$ with an open set $\HH^{\theta_x}_{\eta_x} \cup [0, \theta_x)$ for some $\theta_x >0, \eta_x \in \R$.
Use compactness to extract a finite subcover corresponding to $x_1, \ldots, x_N$ and set $\theta:= \max_{i=1}^N \theta_{x_i}$, $\eta := \min_{i=1}^N \eta_{x_i}$
to obtain the result.
\end{proof}


\section{Properties of the cost functional}
\label{s:propertiescost}\noindent
In this section we prove Lemmas~\ref{l:propPhi}, \ref{l:continuityPhiskorohod}, \ref{l:nojumpsPPclosetoID} and \ref{l:convergencenojumps}.
\begin{proof}[Proof of Lemma~\ref{l:propPhi}(i)]
Fix $\theta_0 < \theta_1$ and set $(\lambda_i, z_i)=\Xi^{\ssup 1}_\vartheta(\PP)(\theta_i)$, $i=0, 1$.
Then
\begin{equation}\label{e:prmonotLambda1}
\theta_0(\lambda_1 - \lambda_0) \le |\vartheta(\lambda_1, z_1)| - |\vartheta(\lambda_0, z_0)| \le \theta_1 (\lambda_1 - \lambda_0)
\end{equation}
by the definition of $\Psi^{\ssup 1}_\vartheta(\PP)$,
so that all three functions are non-decreasing.
Now, if  $(\lambda_0, z_0)\neq (\lambda_1, z_1)$, then one of the inequalities above is strict,
since otherwise $\lambda_1 = \lambda_0$, $|\vartheta(\lambda_1, z_1)|= |\vartheta(\lambda_0, z_0)|$ 
and we would have $(\lambda_i, z_i) \in \mathfrak{S}^{\ssup 1}_{\vartheta}(\PP)(\theta_j)$ for all $i, j \in \{0,1\}$,
implying that $(\lambda_1, z_1) = (\lambda_0,z_0)$ by the definition of $\Xi^{\ssup 1}_\vartheta(\PP)$.
This concludes the proof.
\end{proof}

\begin{proof}[Proof of Lemma~\ref{l:propPhi}(ii)]
We will first consider the case $|\supp(\PP)|<\infty$. We may assume $|\supp(\PP)| \ge 2$ since otherwise there is nothing to prove.

Consider first the case $i=1$.
$\Psi^{\ssup 1}_\vartheta(\PP)$ is continuous as the pointwise maximum of finitely many continuous functions.
Lemma~\ref{l:propPhi}(i) implies that $\Xi^{\ssup 1}_\vartheta(\PP)$ jumps finitely many times, and thus has left limits; 
let us to show that it is c\`adl\`ag.
Fix $\theta_0 > 0$ and let $(\lambda_0, z_0) := \Xi^{\ssup 1}_\vartheta(\PP)(\theta_0)$.
Note first that, if $(\lambda, z) \in \mathfrak{S}^{\ssup 1}_\vartheta(\PP)(\theta_0)$,
then $\psi^\vartheta_\theta(\lambda,z) \le \psi^\vartheta_\theta(\lambda_0,z_0)$ for all $\theta \ge \theta_0$ 
because $\lambda \le \lambda_0$ by definition.
On the other hand, if $(\lambda, z) \notin \mathfrak{S}^{\ssup 1}_\vartheta(\PP)(\theta_0)$,
then there exists $\delta_{\lambda, z}>0$ such that $\psi^\vartheta_\theta(\lambda, z) < \psi^\vartheta_\theta(\lambda_0, z_0)$ for all $ \theta \in [\theta_0, \theta_0+\delta_{\lambda, z}]$.
Setting $\delta>0$ to be the smallest among these, we can see that
\begin{equation}\label{e:prpropPhi1}
(\lambda_0, z_0) \in \mathfrak{S}^{\ssup 1}_\vartheta(\PP)(\theta) \subset \mathfrak{S}^{\ssup 1}_\vartheta(\PP)(\theta_0) \;\; \forall\,  \theta \in [\theta_0, \theta_0+\delta]
\end{equation}
implying $\Xi^{\ssup 1}_\vartheta(\PP)(\theta) = \Xi^{\ssup 1}_\vartheta(\PP)(\theta_0)$ for all $\theta \in [\theta_0, \theta_0+\delta]$,
i.e., $\Xi^{\ssup 1}_\vartheta(\PP)$ is right-continuous.

Assume now by induction that the statement of Lemma~\ref{l:propPhi}(ii) has been proved in the case $|\supp(\PP)|<\infty$
for all $i \le k-1$, $k \ge 2$.
Note that, by the definition of $\Phi^{\ssup k}_\vartheta$,
\begin{equation}\label{e:prpropPhi2}
\Phi^{\ssup k}_\vartheta(\PP)(\theta)= \sum_{\Xi \in \supp(\PP)} \1_{\left\{\Xi^{\ssup 1}_\vartheta(\PP)(\theta)= \Xi \right\}} \Phi^{\ssup {k-1}}_\vartheta(\PP_{\Xi})(\theta)
\end{equation}
where $\PP_{\Xi}(\cdot) := \PP( \cdot \setminus \{\Xi\})$.
Since $\Xi^{\ssup 1}_\vartheta(\PP)$ is c\`adl\`ag, it follows from the induction hypothesis that $\Phi^{\ssup k}_\vartheta(\PP)$ is also c\`adl\`ag.
To prove in addition that $\Psi^{\ssup k}_\vartheta(\PP)$ is continuous, 
we only need to show that, if $\Xi_0:=\Xi^{\ssup 1}_\vartheta(\PP)(\theta-)\neq \Xi^{\ssup 1}_\vartheta(\PP)(\theta)=:\Xi$,
then $\Psi^{\ssup {k-1}}_\vartheta(\PP_{\Xi_0})(\theta) = \Psi^{\ssup {k-1}}_\vartheta(\PP_{\Xi})(\theta)$;
but this follows from the definition of $\Psi^{\ssup {k-1}}_\vartheta$ since, 
by the continuity of $\Psi^{\ssup 1}_\vartheta(\PP)$,
$\psi^\vartheta_\theta(\Xi_0) = \psi^\vartheta_\theta(\Xi)$.
This finishes the proof in the case $|\supp(\PP)| < \infty$.

The case $|\supp(\PP)|=\infty$ can be reduced to the previous one as follows.
First note that we may substitute $(0,\infty)$ by $[a,b]$ with $0 < a < b <\infty$ arbitrary.
Fix $i \in \N$. 
Since $\HH^a_\eta \uparrow \R \times \R^d$ as $\eta \to -\infty$, $\HH^b_{\eta}$ is relatively compact and $\PP^\vartheta \in \scrMp$, 
there exists an $\eta \in \R$ such that $i \le |\supp(\PP^\vartheta) \cap \HH^a_{\eta}| \le \PP^\vartheta(\HH^b_{\eta}) < \infty$. 
Noting that, on $[a,b]$,
$\Phi^{\ssup i}_\vartheta(\PP) = \Phi^{\ssup i}_\vartheta(\PP')$ 
where $\PP'(\cdot):=\PP(\cdot \cap \{(\lambda, z) \colon (\lambda, \vartheta(\lambda, z)) \in \HH^b_\eta\})$, 
we fall into the previous case.

For the last statements,
note that the proof above shows that $\Xi^{\ssup i}_\vartheta(\PP)$ jumps finitely
many times in each compact interval $[\theta_1, \theta_2] \subset (0,\infty)$.
Moreover, if $\vartheta(\Xi^{\ssup 1}_\vartheta(\PP)(\theta_1)) \neq 0$
and $\Xi^{\ssup 1}_\vartheta(\PP)$ is constant in $[\theta_1, \theta_2]$,
then $\Psi^{\ssup 1}_\vartheta(\PP)$ is strictly increasing in $[\theta_1, \theta_2]$.
\end{proof}

\begin{proof}[Proof of Lemma~\ref{l:continuityPhiskorohod}]
We first consider the case $1 \le |\supp(\PP)|<\infty$.
By Proposition~3.13 of \cite{R87}, 
for $t$ large enough there exist bijections $T_t : \supp(\PP) \to \supp(\PP_t)$ such that
\begin{equation}\label{e:mapsPPtPP}
\lim_{t\to\infty}\sup_{\Xi \in \supp(\PP)} \dist(T_t(\Xi),\Xi) = 0.
\end{equation}
Letting $\TT_t(\lambda, z):=(\lambda, \vartheta_t(\lambda, z))$, by~\eqref{e:propvarthetat1} and $\supp(\PP) \cap \R \times \{0\}=\emptyset$ 
we also have
\begin{equation}\label{e:convTT}
\lim_{t \to \infty} \sup_{\Xi \in \supp(\PP)} \dist( \TT_t \circ T_t (\Xi), \Xi) = 0,
\end{equation}
and $\TT_t \circ T_t$ is a bijection onto $\supp(\PP_t^{\vartheta_t})$. 
In particular, $\PP_t^{\vartheta_t} \to \PP$.

\RS
To characterize the jump times of our processes,
the following definition will be useful: 
For $\vartheta: \R \times \R^d$, $\Xi_i = (\lambda_i, z_i) \in \R \times \R^d$, $i=0,1$, and $\theta > 0$, let
\begin{align}\label{e:defFFjump}
\FF^{\vartheta}_\theta(\Xi_1, \Xi_0) := 
\left\{ \begin{array}{ll} \frac{|\vartheta(\Xi_1)|- |\vartheta(\Xi_0)| }{\lambda_1 - \lambda_0} & \text{ if } \lambda_1 > \lambda_0 \text{ and } \psi^\vartheta_\theta(\Xi_1) < \psi^\vartheta_\theta(\Xi_0), \\
\infty & \text{ otherwise.}\end{array} \right.
\end{align}
When $\vartheta(\lambda, z)=z$, we omit it from the notation.
\eRS

We now proceed with the proof. 
Let $a_0 := a$ and, recursively for $\ell \in \N$,
\begin{equation}\label{e:prcontPhi1}
a_{\ell} := \inf\{ \theta > a_{\ell-1} \colon\, \exists \, 1 \le i \le |\supp(\PP)|, \Xi^{\ssup i}_\vartheta(\PP)(\theta) \neq \Xi^{\ssup i}_\vartheta(\PP)(a_{\ell-1})\}.
\end{equation}
Note that $\Xi^{\ssup i}(\PP)$ jumps finitely many times:
for $i=1$ this follows by Lemma~\ref{l:propPhi}(i),
and for $i \ge 2$, by induction using~\eqref{e:prpropPhi2}.
Thus $\ell_* = \ell_*(a, \PP):= \inf\{\ell \ge 0 \colon\, a_{\ell+1}=\infty\} < \infty$.

We proceed by induction on $\ell_*$, starting with $\ell_* = 0$.
Since $\PP \in \widetilde{\scrM}^a_\text{P}$, the values $i \mapsto \psi_a(\Xi^{\ssup i}(\PP)(a))$ are all distinct,
which together with~\eqref{e:mapsPPtPP}--\eqref{e:convTT} implies that
$\Xi^{\ssup i}_{\vartheta_t}(\PP_t)(a) = T_t(\Xi^{\ssup i}(\PP)(a))$ for all $i$ when $t$ is large enough.
In particular,~\eqref{e:mapsPPtPP} implies the result in the case $\ell_* =0$.
Assume by induction that, for some $L \in \N$, the statement has been proved for all $a' \in (0, \infty)$ and $\PP' \in \widetilde{\scrM}^{a'}_{\text{P}}$ satisfying $|\supp(\PP')| < \infty$ and $\ell_*(a', \PP') \le L-1$, 
and suppose that $\ell_*=\ell_*(a, \PP)=L$ (in which case necessarily $|\supp(\PP)|\ge 2$).

Note now that, because $\PP \in \widetilde{\scrM}^{a}_{\text{P}}$,
there exists a unique $i_1$ such that both $\Xi^{\ssup {i_1}}(\PP)$ and $\Xi^{\ssup {i_1+1}}(\PP)$ jump at $a_1$
while $\Xi^{\ssup i}(\PP)$ is continuous at $a_1$ for all $i \notin \{i_1, i_1+1\}$.
Moreover, $\Xi^{\ssup {i_1}}(\PP)(a_1)$ is the point $\Xi \in \supp(\PP)$ minimizing $\FF_a(\Xi, \Xi^{\ssup {i_1}}(\PP)(a))$
(cf.\ \eqref{e:defFFjump}), $a_1 = \FF_a(\Xi^{\ssup {i_1}}(\PP)(a_1), \Xi^{\ssup {i_1}}(\PP)(a))$ and $\Xi^{\ssup {i_1+1}}(\PP)(a_1) = \Xi^{\ssup {i_1}}(\PP)(a)$.

Let $a^t_\ell$, $\ell^t_*$ be \RV the analogues of \eRV $a_\ell$, $\ell_*$ for~$\Xi^{\ssup i}_{\vartheta_t}(\PP_t)$,
and fix $a' \in (a_1, a_2) \cap \Q$.
By~\eqref{e:mapsPPtPP}--\eqref{e:convTT} and the previous discussion, when~$t$ is large enough, 
$\Xi^{\ssup i}_{\vartheta_t}(\PP_t)$ does not jump in $[a, a']$ for all $i \notin \{i_1, i_1+1\}$,
$\Xi^{\ssup {i_1}}_{\vartheta_t}(\PP_t)(a^t_1) = T_t(\Xi^{\ssup {i_1}}(\PP)(a_1))$, 
and $\Xi^{\ssup {i_1+1}}_{\vartheta_t}(\PP_t)(a^t_1) = \Xi^{\ssup {i_1}}_{\vartheta_t}(\PP_t)(a) = T_t(\Xi^{\ssup {i_1}}(\PP)(a))$.
\RS Moreover,  $a^t_2 > a' > a^t_1$ and
\[
a^t_1 = \FF_a^{\vartheta_t}( \Xi^{\ssup {i_1}}_{\vartheta_t}(\PP_t)(a^t_1), \Xi^{\ssup {i_1}}_{\vartheta_t})(a) )
=\FF_a(\TT_t \circ T_t(\Xi^{\ssup {i_1}}(\PP)(a_1)), \TT_t \circ T_t(\Xi^{\ssup {i_1}}(\PP)(a))),
\]
allowing us to conclude, by~\eqref{e:convTT},
\eRS
\begin{equation}\label{e:prcontPhi3}
|a_1 - a^t_1| 
\le \,\,\,\max_{\begin{subarray}{c}
\Xi_1, \Xi_2 \in \supp(\PP) \\ \FF_a(\Xi_1, \Xi_2) < \infty
\end{subarray}}
\left|\FF_a(\Xi_1, \Xi_2) - \FF_a(\TT_t \circ T_t(\Xi_1), \TT_t \circ T_t(\Xi_ 2))\right|
\underset{t \to \infty} \longrightarrow 0.
\end{equation}
Define now a time change $\sigma_t:[a, a']\to [a, a']$ by setting
\begin{equation}\label{e:prcontPhi4}
\sigma_t(a)=a, \;\;\; \sigma_t(a_1)=a^t_1, \;\;\; \sigma_t(a')=a' \;\;\; \text{ and linear otherwise.}
\end{equation}
Then, by the previous discussion together with \eqref{e:mapsPPtPP}, \eqref{e:convTT} and~\eqref{e:prcontPhi3},
\begin{equation}\label{e:prcontPhi5}
\lim_{t \to \infty} \, \sup_{1 \le i \le |\supp(\PP)|}  \, \sup_{\theta \in [a, a']} \, \left| \sigma_t(\theta) - \theta \right|  \vee \left| \Phi^{\ssup i}_{\vartheta_t}(\PP_t)(\sigma_t(\theta)) - \Phi^{\ssup i}(\PP)(\theta)\right| = 0.
\end{equation}
Since $\ell_*(a', \PP) = L-1$ and $\PP \in \widetilde{\scrM}^{a'}_{\text{P}}$,
by the induction hypothesis we can extend $\sigma_t$ to $[a, \infty)$ in such a way that~\eqref{e:prcontPhi5}
holds with $[a, a']$ substituted by $[a, \infty)$, finishing the proof in the case $|\supp(\PP)|<\infty$.

Consider now the case $|\supp(\PP)| = \infty$.
\RV
We may assume without loss of generality that $c_*$ in~\eqref{e:propvarthetat2} is not larger than $1$.
\eRV
Let us first show~\eqref{e:propPhi2}.
Fix $k \in \N$ and a point $b \in (a, \infty) \cap \Q$.
Note that, since $\PP \in \widetilde{\scrM}^{a}_{\text{P}}$, $
b$ is a continuity point of $\Phi^{\ssup i}(\PP)$ for all $1 \le i \le k$.
Let $\eta \in \R$ be negative enough such that, for all $t$ large enough,
\begin{equation}\label{e:prcontPhi6}
k \le |\supp(\PP) \cap \HH^{a}_\eta| = |\supp(\PP_t) \cap \HH^{a}_\eta| \le \PP_t(\HH^{2b/c_*}_\eta) = \PP(\HH^{2b/c_*}_\eta) < \infty,
\end{equation}
which is possible because $\PP \in \scrMp$ and $\PP_t \to \PP$.
Moreover, since $\supp(\PP) \cap \R \times \{0\} = \emptyset$, by~\eqref{e:propvarthetat1}--\eqref{e:propvarthetat2} 
we may also assume that
\begin{equation}\label{e:prcontPhi7}
k \le |\supp(\PP_t^{\vartheta_t}) \cap \HH^a_\eta| \quad \text{ and } \quad \supp(\PP_t^{\vartheta_t}) \cap \HH^b_\eta \subset \TT_t\left( \supp(\PP_t) \cap \HH^{2b/c_*}_\eta\right),
\end{equation}
where $\TT_t$ is defined right before~\eqref{e:convTT}.
Now~\eqref{e:prcontPhi6}--\eqref{e:prcontPhi7} imply that, on $[a,b]$,
$\Phi^{\ssup i}(\PP) = \Phi^{\ssup i}(\PP')$ and $\Phi^{\ssup i}_{\vartheta_t}(\PP_t) = \Phi^{\ssup i}_{\vartheta_t}(\PP'_t)$
for all $1 \le i \le k$, where $\PP'(\cdot) := \PP(\cdot \cap \HH^{2b/c_*}_\eta)$ and analogously for $\PP'_t$.
Since $\PP'_t \to \PP'$,~\eqref{e:propPhi2} follows by the previous case and Theorem~16.2 of \cite{B99}.
The convergence $\PP_t^{\vartheta_t} \to \PP$ follows from~\eqref{e:prcontPhi7},~\eqref{e:propvarthetat1} and $\PP_t \to \PP$ (note that $b$, $\eta$ above can be taken arbitrarily large, respec.\ negative).
\end{proof}

\RV
\begin{proof}[Proof of Lemma~\ref{l:nojumpsPPclosetoID}]
By Lemma~\ref{l:nojumpintermsofPP}, it is enough to show that $(4) \Rightarrow (1)$.
Arguing as at the end of the proof of Lemma~\ref{l:continuityPhiskorohod},
we reduce to the case $|\supp(\PP)| < \infty$.
Denote by $\pi: \R \times \R^d \to \R^d$ the projection on the second coordinate, i.e., 
$\pi(\lambda, z) = z$. 
For $z \in \pi(\supp(\PP))$, set $\lambda_z := \max \{\lambda \colon (\lambda, z) \in \supp(\PP)\}$
and define
$\widehat{\PP} := \sum_{z \in \pi(\supp(\PP))} \delta_{(\lambda_z, z)}$.
Note that $\pi$ is injective over $\supp(\widehat{\PP})$,
and that $\Xi^{\ssup 1}(\PP) = \Xi^{\ssup 1}(\widehat{\PP})$.
By \twoeqref{e:mapsPPtPP}{e:convTT}, when $t$ is large enough,
$\pi$ is injective over the support of $\widehat{\PP}_t:= \widehat{\PP} \circ T_t^{-1}$,
and moreover $\Xi^{\ssup 1}_{\vartheta_t}(\PP_t)(\theta) = \Xi^{\ssup 1}_{\vartheta_t} (\widehat{\PP}_t)(\theta)$
for all $\theta \in [a,b]$.
This concludes the proof.
\end{proof}
\eRV

\begin{proof}[Proof of Lemma~\ref{l:convergencenojumps}]
For $(\lambda, z) \in \R \times (\R^d \setminus \{0\})$, let
\begin{equation}\label{e:prlconvnojumps1}
\AA(\lambda, z) := \left\{ (\lambda', z') \in \R \times \R^d \colon\, 
\begin{array}{l} 
\psi_a(\lambda', z') > \psi_a(\lambda, z) \text{ or}\\
\psi_a(\lambda', z') = \psi_a(\lambda, z) \text{ and } \lambda'>\lambda
\end{array} \right\}.
\end{equation}
By the definition of $\PP^\vartheta$,
$\FF_a^{\vartheta}(\PP, \lambda, z) = \PP^\vartheta \left\{ \AA(\lambda, \vartheta(\lambda, z)) \right\}$.
Since $\vartheta_t(\lambda_t, z_t) \to z_*$ by~\eqref{e:propvarthetat1} and $\PP_t^{\vartheta_t} \to \PP$ by Lemma~\ref{l:continuityPhiskorohod},
we may assume that $\vartheta_t(\lambda, z) = z$ for all $(\lambda,z) \in \R \times \R^d$.

Now,  since $\PP \in \widetilde{\scrM}^a_{\text{P}}$, 
$\FF_a(\PP,\lambda_*, z_*) = \PP \left\{\HH^a_{\psi_a(\lambda_*, z_*)} \right\}$
and there exists a $\delta>0$ such that
\begin{equation}\label{e:prlconvnojumps2}
\PP \left\{\overline{\HH^{a}_{\psi_a(\lambda_*, z_*) - \delta}} \right\} = 1 + \PP \left\{ \overline{\HH^a_{\psi_a(\lambda_*, z_*) + \delta}} \right\}.
\end{equation}
On the other hand, since $\PP_t \to \PP$ and $(\lambda_t, z_t) \to (\lambda_*, z_*)$, 
when $t$ is large we also have
\begin{equation}\label{e:prlconvnojumps3}
\PP_t \left\{ \overline{\HH^a_{\psi_a(\lambda_*, z_*)\pm\delta}} \right\} = \PP \left\{ \overline{\HH^a_{\psi_a(\lambda_*, z_*)\pm\delta}} \right\} \; \text{ and } \; (\lambda_t, z_t) \in \HH^{a}_{\psi_b(\lambda_*, z_*)-\delta} \setminus \HH^a_{\psi_a(\lambda_*, z_*)+\delta}.
\end{equation}
In particular, for all $t$ large enough,
\begin{equation}\label{e:prlconvnojumps4}
\PP_t \left\{ \AA(\lambda_t, z_t) \right\} = \PP_t \left\{ \overline{\HH^a_{\psi_a(\lambda_*, z_*)+\delta}} \right\} 
= \PP \left\{ \overline{\HH^a_{\psi_a(\lambda_*, z_*)+\delta}} \right\} = \PP \left\{ \HH^a_{\psi_a(\lambda_*, z_*)} \right\},
\end{equation}
concluding the proof.
\end{proof}

\section*{Acknowledgments}
\nopagebreak\noindent
We thank an anonymous referee for several helpful comments and suggestions.
The research of  WK and RdS was supported by the German DFG project KO 2205/13-1 ``Random mass flow through random potential.'' MB was partially supported by the Simons Foundation and the Mathematische Forschungsinstitut Oberwolfach as a Simons visiting professor in Summer 2015, as well as by NSF grant DMS-1407558 and GA\v CR project P201/16-15238S.


\end{document}